\documentclass[a4paper]{amsart}

\usepackage{enumerate, amsmath, amsfonts, amssymb, amsthm, wasysym, graphics, graphicx, xcolor, url, hyperref, hypcap, a4wide, stmaryrd, shuffle, xargs, multicol, overpic, pdflscape, multirow, hvfloat, minibox, accents}
\usepackage{hyperxmp}
\hypersetup{pdfproducer={Vincent Pilaud}}
\hypersetup{colorlinks=true, citecolor=darkblue, linkcolor=darkblue, urlcolor=darkblue}
\usepackage[all]{xy}
\usepackage[bottom]{footmisc}
\usepackage{tikz}\usetikzlibrary{trees,snakes,shapes,arrows,matrix,calc}
\graphicspath{{figures/}}
\makeatletter
\def\input@path{{figures/}}
\makeatother

%%%%%%%%%%%%%%%%%%%%%%%%%%%%%%%%%%%%%%

\title{Cambrian Hopf Algebras}

\thanks{VP was partially supported by MICINN grant MTM2011-22792, and ANR grant EGOS~(12\,JS02\,002\,01)}

\author{Gr\'egory Chatel}
\author{Vincent Pilaud}

\address[GC]{LIGM, Univ.\,Paris-Est Marne-la-Vall\'ee}
\email{gregory.chatel@univ-paris-est.fr}

\address[VP]{CNRS \& LIX, \'Ecole Polytechnique, Palaiseau}
\email{vincent.pilaud@lix.polytechnique.fr}
\urladdr{http://www.lix.polytechnique.fr/~pilaud/}

%%%%%%%%%%%%%%%%%%%%%%%%%%%%%%%%%%%%%%

% theorems
\newtheorem{theorem}{Theorem}%[section]
\newtheorem{corollary}[theorem]{Corollary}
\newtheorem{proposition}[theorem]{Proposition}
\newtheorem{lemma}[theorem]{Lemma}
\newtheorem{definition}[theorem]{Definition}

\theoremstyle{definition}
\newtheorem{example}[theorem]{Example}
\newtheorem{remark}[theorem]{Remark}

% newcommands
% math special letters
\newcommand{\R}{\mathbb{R}} % reals
\newcommand{\N}{\mathbb{N}} % naturals
\newcommand{\Z}{\mathbb{Z}} % integers
\newcommand{\C}{\mathbb{C}} % complex
 % set of integers
\newcommand{\HH}{\mathbb{H}} % hyperplane
 % alternating group
\newcommand{\fS}{\mathfrak{S}} % symmetric group
\newcommand{\fP}{\mathfrak{P}} % partitions
 % circle
\newcommand{\fX}{\mathfrak{X}} % generating function
\renewcommand{\b}[1]{\mathbf{#1}} % bold letters

% math commands
\newcommand{\set}[2]{\left\{ #1 \;\middle|\; #2 \right\}} % set notation
\newcommand{\bigset}[2]{\big\{ #1 \;|\; #2 \big\}} % big set notation
 % bigg set notation
\newcommand{\ssm}{\smallsetminus} % small set minus
 % dot product
\newcommand{\symdif}{\,\triangle\,} % symmetric difference
 % the all one vector
\newcommand{\eqdef}{\mbox{\,\raisebox{0.2ex}{\scriptsize\ensuremath{\mathrm:}}\ensuremath{=}\,}} % :=
 % =:
\newcommand{\polar}{^\diamond} % polar
 % simplex
 % special binomial coefficient (ou {\begin{Bmatrix} #1 \\ #2 \end{Bmatrix}_{#3})
\renewcommand{\implies}{\Rightarrow} % imply sign
\newcommand{\sep}{|} % separation for ordered partitions
\newcommand{\bsep}{\,|\,} % big separation for ordered partitions

% Hopf algebras
\newcommand{\FQSym}{\mathsf{FQSym}} % quasi-symmetric functions
\newcommand{\Bax}{^{\upharpoonleft\!\!\downharpoonright}} % baxter
\newcommand{\PBT}{\mathsf{PBT}} % Planar binary tree algebra
\newcommand{\Rec}{\mathsf{Rec}} % recoils algebra
\newcommand{\CambTrees}{\mathrm{Camb}} % Cambrian trees
 % Baxter-Cambrian trees
\newcommand{\SchrCambTrees}{\mathrm{SchrCamb}} % Schroder-Cambrian trees
\newcommand{\Camb}{\mathsf{Camb}} % Cambrian algebra
\newcommand{\BaxCamb}{\mathsf{BaxCamb}} % Baxter-Cambrian algebra
\newcommand{\SchrCamb}{\mathsf{SchrCamb}} % Schroder-Cambrian algebra
\newcommand{\OrdPart}{\mathsf{OrdPart}} % ordered partition Hopf algebra
\newcommand{\WQSym}{\mathsf{WQSym}} % word quasi-symmetric functions
\newcommand{\NCQSym}{\mathsf{NCQSym}} % non-commutative quasi-symmetric functions
\newcommand{\Tril}{\mathsf{Tril}} % trilean Hopf algebra

% operations
\newcommand{\product}{\cdot} % product
\newcommand{\coproduct}{\triangle} % coproduct
\newcommand{\shiftedShuffle}{\,\bar\shuffle\,} % shifted shuffle
\newcommand{\convolution}{\star} % shifted shuffle
\newcommand{\mirror}[1]{\stackrel{\leftarrow}{#1}} % mirror image of a word

% bases
\newcommand{\F}{\mathbb{F}} % F-basis of FQSym
\newcommand{\G}{\mathbb{G}} % G-basis of FQSym
\newcommand{\PCamb}{\mathbb{P}} % P-basis of Camb
\newcommand{\QCamb}{\mathbb{Q}} % Q-basis of Camb*
\newcommand{\HCamb}{\mathbb{H}} % H-basis of Camb
\newcommand{\ECamb}{\mathbb{E}} % E-basis of Camb
\newcommand{\PBax}{\mathbb{P}} % P-basis of BaxCamb 
\newcommand{\QBax}{\mathbb{Q}} % Q-basis of BaxCamb*
\newcommand{\XRec}{\mathbb{X}} % X-basis of Rec
\newcommand{\indecomposables}{\mathrm{Ind}} % indecomposable elements

% correspondences
\newcommand{\CambCorresp}{\Theta} % bijection from the permutations to the leveled Cambrian trees
\newcommand{\BaxCorresp}{\CambCorresp\Bax} % bijection from the permutations to the twin leveled Cambrian trees
\newcommand{\SchrCambCorresp}{\Theta^\star} % bijection from the ordered partitions to the leveled Schroder-Cambrian trees
\newcommand{\surjectionPermAsso}{\PSymbol} % surjection from the permutations to the Cambrian trees
\newcommand{\surjectionSchrPermAsso}{\PSymbol^\star} % surjection from the ordered partitions to the Schroder-Cambrian trees
\newcommand{\surjectionPermAssoBax}{\surjectionPermAsso\Bax} % surjection from the permutations to the twin Cambrian trees
\newcommand{\surjectionAssoPara}{\mathbf{can}} % surjection from the Cambrian trees to the sequence of signs
\newcommand{\surjectionSchrAssoPara}{\mathbf{can}^\star} % surjection from the Schroder-Cambrian trees to the sequence of signs/zero
\newcommand{\surjectionPermPara}{\mathbf{rec}} % surjection from the permutations to the sequence of signs
\newcommand{\surjectionSchrPermPara}{\mathbf{rec}^\star} % surjection from the surjections to the sequence of signs/zero
\newcommand{\PSymbol}{\mathbf{P}} % P-symbol
\newcommand{\QSymbol}{\mathbf{Q}} % Q-symbol

% trees
\newcommandx{\graphG}[1][1=G]{\mathrm{#1}} % graph
\newcommandx{\tree}[1][1=T]{\mathrm{#1}} % tree
\newcommandx{\tuple}[1][1=T]{\mathcal{#1}} % tuple
\newcommandx{\poset}{{\tree[P]}} % poset
\newcommand{\ground}{\mathrm{V}} % vertex set
\newcommand{\signature}{\varepsilon} % signature of a Cambrian tree
\newcommand{\signatures}{\mathcal{E}} % signatures
\newcommand{\psignature}{\signature_p} % p-signature of a signed permutation
\newcommand{\vsignature}{\signature_v} % v-signature of a signed permutation
\newcommand{\update}{\delta} % update sequence
\newcommand{\extension}{\vartriangleleft} % extension signature
\newcommand{\generatingTree}{\mathcal{T}} % generating tree of permutations avoiding the two Cambrian patterns
\newcommand{\level}{m} % level in generating tree
\newcommand{\BC}{B} % number of Baxter-Cambrian classes
\newcommand{\BCMat}{\mathbf{B}} % matrix for computing the Baxter-Cambrian numbers
\newcommand{\switch}{\mathsf{switch}} % sign switch
\newcommand{\up}[1]{\overline{#1}}
\newcommand{\upr}[1]{{\red \overline{#1}}}
\newcommand{\upb}[1]{{\darkblue \overline{#1}}}
\newcommand{\upw}[1]{\underaccent{\phantom{.}}{\up{#1}}}
\newcommand{\down}[1]{\underline{#1}}
\newcommand{\downr}[1]{{\red \underline{#1}}}
\newcommand{\downb}[1]{{\darkblue \underline{#1}}}
\newcommand{\downw}[1]{\accentset{\phantom{.}}{\down{#1}}}
\newcommand{\simtilde}[1]{\accentset{\vspace{-.01cm}\sim}{#1}}
\newcommand{\uptilde}[1]{\accentset{\vspace{-.03cm}\sim}{#1}}
\newcommand{\downtilde}[1]{\underaccent{\,\sim}{#1}}
 % source set
 % target set
\newcommand{\linearExtensions}{\mathcal{L}} % linear extensions
\newcommand{\minimalLinearExtension}{\mu} % minimal linear extension of a Cambrian tree
\newcommand{\maximalLinearExtension}{\omega} % maximal linear extension of a Cambrian tree
\newcommand{\increasingTree}{\mathrm{IT}} % increasing tree
\newcommand{\decreasingTree}{\mathrm{DT}} % decreasing tree
\newcommand{\Tex}{\includegraphics{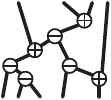}} % example of Cambrian tree
\newcommand{\TexTwin}{\includegraphics{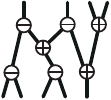}} % example of pair of twin Cambrian trees
\newcommand{\TexPair}{\includegraphics{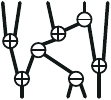}} % example of pair of Cambrian trees
\newcommand{\TexSchroder}{\includegraphics{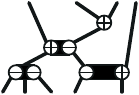}} % example of Schroder-Cambrian tree
\newcommand{\redMinus}{\raisebox{-.035cm}{\red$\boxminus$}} % minus boxed in red
\newcommand{\redPlus}{\raisebox{-.035cm}{\red$\boxplus$}} % plus boxed in red
\newcommand{\blueMinus}{{\darkblue\ominus}} % minus circled in blue
\newcommand{\bluePlus}{{\darkblue\oplus}} % plus circled in blue
\newcommand{\blueCirc}{{\text{\darkblue\tiny$\bigcirc$}}} % small blue circle
\newcommand{\redSquare}{{\text{\raisebox{-.05cm}{\red$\square$}}}} % small red square
\newcommand{\edgecut}[2]{\left( #1 \;\middle\|\; #2 \right)} % edge cut
\newcommand{\switchSign}{\chi} % switching signs between two signatures
\newcommand{\unionOp}[2]{{#1 \!\! \upharpoonleft\!\!\downharpoonright \! #2}} % graph obtained from two twin Cambrian trees
\newcommand{\graphTwin}{\unionOp{\tree_\circ}{\tree_\bullet}}
\newcommand{\dash}{\text{-}}

% Polytopes
\newcommandx{\Asso}[1][1=\signature]{\mathsf{Asso}(#1)} % associahedron
\newcommandx{\Perm}[1][1=n]{\mathsf{Perm}(#1)} % permutahedron
\newcommandx{\Para}[1][1=n]{\mathsf{Para}(#1)} % parallelepiped
\newcommandx{\polygon}[1][1=\signature]{\mathrm{P}^{#1}} % polygon
\newcommand{\Cone}{\mathrm{C}} % cone
\newcommandx{\BaxAsso}[1][1=\signature]{\mathsf{BaxAsso}(#1)} % baxter associahedron
 % hyperplane
 % half-space

% operators
\DeclareMathOperator{\conv}{conv} % convex hull
\DeclareMathOperator{\cone}{cone} % cone hull
 % linear span
 % arrangements
 % inversion set
\DeclareMathOperator{\coinv}{coinv} % coinversion set
 % codimension

% others
\newcommand{\fref}[1]{Figure~\ref{#1}} % reference figures
\newcommand{\ie}{\textit{i.e.}~} % id est
\newcommand{\eg}{\textit{e.g.}~} % exempli gratia
 % exempli gratia
 % vice versa
\newcommand{\versus}{\textit{vs.}~} % versus
 % also known as
\newcommand{\perse}{\textit{per se}} % per se
 % th for ordinals
 % st for ordinals
 % example
\definecolor{darkblue}{rgb}{0,0,0.7} % darkblue color
\definecolor{green}{RGB}{57,181,74} % green color
\newcommand{\darkblue}{\color{darkblue}} % darkblue command
\newcommand{\green}{\color{green}} % green command
\newcommand{\red}{\color{red}} % red command
\newcommand{\defn}[1]{\emph{\darkblue #1}} % emphasis of a definition
\usepackage{todonotes}

\newcommand{\para}[1]{\medskip\noindent\framebox{\textsc{#1}}} % paragraph
 % possibility to have one page of pictures
 % possibility to have one page of pictures
\DeclareRobustCommand{\verylonghookrightarrow}{\lhook\joinrel\relbar\joinrel\relbar\joinrel\relbar\joinrel\relbar\joinrel\relbar\joinrel\relbar\joinrel\relbar\joinrel\relbar\joinrel\relbar\joinrel\relbar\joinrel\rightarrow}
\DeclareRobustCommand{\verylongtwoheadrightarrow}{\relbar\joinrel\relbar\joinrel\relbar\joinrel\relbar\joinrel\relbar\joinrel\relbar\joinrel\relbar\joinrel\relbar\joinrel\relbar\joinrel\relbar\joinrel\twoheadrightarrow}
\DeclareRobustCommand{\verylongleftrightarrow}{\leftarrow\joinrel\relbar\joinrel\relbar\joinrel\relbar\joinrel\relbar\joinrel\relbar\joinrel\rightarrow}
\DeclareRobustCommand{\verylongrightarrow}{\joinrel\relbar\joinrel\relbar\joinrel\relbar\joinrel\relbar\joinrel\relbar\joinrel\rightarrow}
\newcommand{\contractionPoset}{\subseteq} % contraction poset
\newcommand{\SchrCambPoset}{<} % Schroder-Cambrian poset

% Tikz macros
%\usepackage{tikz}
\usetikzlibrary{positioning,arrows,matrix,chains,shadows,shapes}
\tikzstyle{Point} = [fill, radius=0.08]
\newcommand{\bannedGap}{}%{\textcolor{red}{{\scalebox{.5}{$\bf\times$}}}}
\newcommand{\freeGap}{\textcolor{blue}{{\scalebox{.5}{$\bullet$}}}}

% formating the part command
\makeatletter
\def\part{\@startsection{part}{1}%
\z@{.7\linespacing\@plus\linespacing}{.5\linespacing}%
{\LARGE\sffamily\centering}}
\@addtoreset{section}{part}
\makeatother

% formating the table of contents
\makeatletter
\def\l@section{\@tocline{1}{3pt}{0pc}{}{}}
\makeatother
\let\oldtocpart=\tocpart
\renewcommand{\tocpart}[2]{\hspace{0em}\bf\large\oldtocpart{#1}{#2}}
\let\oldtocsection=\tocsection
\renewcommand{\tocsection}[2]{\hspace{0em}\bf\oldtocsection{#1}{#2}}

\addtolength{\textheight}{.7cm}
%\addtolength{\textwidth}{2.25cm}
\addtolength{\voffset}{-.2cm}
%\addtolength{\hoffset}{-1cm}

%%%%%%%%%%%%%%%%%%%%%%%%%%%%%%%%%%%%%%

\begin{document}

\begin{abstract}
Cambrian trees are oriented and labeled trees which fulfill local conditions around each node generalizing the conditions for classical binary search trees. Based on the bijective correspondence between signed permutations and leveled Cambrian trees, we define the Cambrian Hopf algebra generalizing J.-L.~Loday and M.~Ronco's algebra on binary trees. We describe combinatorially the products and coproducts of both the Cambrian algebra and its dual in terms of operations on Cambrian trees. We also define multiplicative bases of the Cambrian algebra and study structural and combinatorial properties of their indecomposable elements. Finally, we extend to the Cambrian setting different algebras connected to binary trees, in particular S.~Law and N.~Reading's Baxter Hopf algebra on quadrangulations and S.~Giraudo's equivalent Hopf algebra on twin binary trees, and F.~Chapoton's Hopf algebra on all faces of the associahedron.
\end{abstract}

\maketitle

\vspace*{-.2cm}
\tableofcontents

%%%%%%%%%%%%%%%%%%%%%%%%%%%%%%%%%%%%%%

\section*{Introduction}

The background of this paper is the fascinating interplay between the combinatorial, geometric and algebraic structures of permutations, binary trees and binary sequences (see Table~\ref{tab:structures}):
\begin{enumerate}[$\star$]
\item \textbf{Combinatorially}, the descent map from permutations to binary sequences factors via binary trees through the BST insertion and the canopy map. These maps define lattice homomorphisms from the weak order via the Tamari lattice to the boolean~lattice.
\item \textbf{Geometrically}, the permutahedron is contained in Loday's associahedron~\cite{Loday} which is in turn contained in the parallelepiped generated by the simple roots. These polytopes are just obtained by deleting inequalities from the facet description of the permutahedron. See~\fref{fig:permutahedronAssociahedronCube}.
\item \textbf{Algebraically}, these maps translate to Hopf algebra inclusions from M.~Malvenuto and C.~Reute\-nauer's algebra on permutations~\cite{MalvenutoReutenauer} via J.-L.~Loday and M.~Ronco's algebra on binary trees~\cite{LodayRonco} to L.~Solomon's descent algebra~\cite{Solomon}.
\end{enumerate}

\bigskip

\begin{table}[h]
	\begin{tabular}{c@{\quad}||@{\quad}c@{\quad}|@{\quad}c@{\quad}|@{\quad}c}
		Combinatorics & Permutations & Binary trees & Binary sequences \\[.2cm]
		\multirow{2}{*}{Geometry} & Permutahedron & Loday's & Parallelepiped \\
		& $\conv(\fS_n)$ & associahedron~\cite{Loday} & gen. by $e_{i+1}-e_i$\\[.2cm]
		\multirow{2}{*}{Algebra} & Malvenuto-Reutenauer & Loday-Ronco & Solomon \\
		& Hopf algebra~\cite{MalvenutoReutenauer} & Hopf algebra~\cite{LodayRonco} & descent algebra~\cite{Solomon}
	\end{tabular}
	\vspace{-.1cm}
	\caption{Related combinatorial, geometric and algebraic structures.}
	\vspace{-.3cm}
	\label{tab:structures}
\end{table}

\begin{figure}[h]
  \centerline{\includegraphics[width=\textwidth]{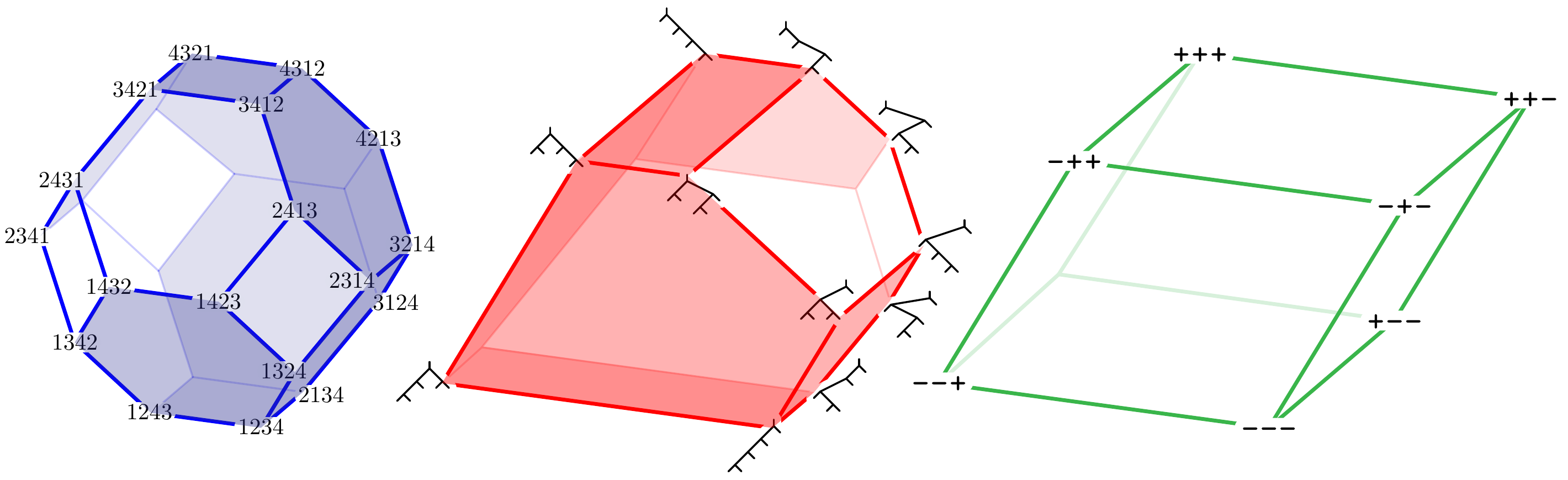}}
  \caption{The $3$-dimensional permutahedron (blue, left), Loday's associahedron (red, middle), and parallelepiped (green, right). Shaded facets are preserved to get the next polytope.}
  \label{fig:permutahedronAssociahedronCube}
\end{figure}

These structures and their connections have been partially extended in several directions in particular to the Cambrian lattices of N.~Reading~\cite{Reading-CambrianLattices, ReadingSpeyer} and their polytopal realizations by C.~Hohlweg, C.~Lange, and H.~Thomas~\cite{HohlwegLange, HohlwegLangeThomas}, to the graph associahedra of M.~Carr and S.~Devadoss~\cite{CarrDevadoss, Devadoss}, the nested complexes and their realizations as generalized associahedra by A.~Postnikov~\cite{Postnikov} (see also~\cite{PostnikovReinerWilliams, FeichtnerSturmfels, Zelevinsky}), or to the $m$-Tamari lattices of F.~Bergeron and L.-F.~Pr\'eville-Ratelle~\cite{BergeronPrevilleRatelle} (see also~\cite{BousquetMelouFusyPrevilleRatelle, BousquetMelouChapuyPrevilleRatelle}) and the Hopf algebras on these $m$-structures recently constructed by J.-C.~Novelli and J.-Y.~Thibon~\cite{NovelliThibon, Novelli}.

This paper explores combinatorial and algebraic aspects of Hopf algebras related to the type~$A$ Cambrian lattices. N.~Reading provides in~\cite{Reading-CambrianLattices} a procedure to map a signed permutation of~$\fS_n$ into a triangulation of a certain convex $(n+3)$-gon. The dual trees of these triangulations naturally extend rooted binary trees and were introduced and studied as ``spines''~\cite{LangePilaud} or ``mixed cobinary trees''~\cite{IgusaOstroff}. We prefer here the term ``Cambrian trees'' in reference to N.~Reading's work. The map~$\kappa$ from signed permutations to Cambrian trees is known to encode combinatorial and geometric properties of the Cambrian structures: the Cambrian lattice is the quotient of the weak order under the fibers of~$\kappa$, each maximal cone of the Cambrian fan is the incidence cone of a Cambrian tree~$\tree$ and is refined by the braid cones of the permutations in the fiber~$\kappa^{-1}(\tree)$, etc.

In this paper, we use this map~$\kappa$ for algebraic purposes. In the first part, we introduce the Cambrian Hopf algebra~$\Camb$ as a subalgebra of the Hopf algebra~$\FQSym_\pm$ on signed permutations, and the dual Cambrian algebra~$\Camb^*$ as a quotient algebra of the dual Hopf algebra~$\FQSym_\pm^*$. Their bases are indexed by all Cambrian trees. Our approach extends that of F.~Hivert, \mbox{J.-C.~Novelli} and \mbox{J.-Y.~Thibon}~\cite{HivertNovelliThibon-algebraBinarySearchTrees} to construct J.-L.~Loday and M.~Ronco's Hopf algebra on binary trees~\cite{LodayRonco} as a subalgebra of C.~Malvenuto and C.~Reutenauer's Hopf algebra on permutations~\cite{MalvenutoReutenauer}. We also use this map~$\kappa$ to describe both the product and coproduct in the algebras~$\Camb$ and~$\Camb^*$ in terms of simple combinatorial operations on Cambrian trees. From the combinatorial description of the product in~$\Camb$, we derive multiplicative bases of the Cambrian algebra~$\Camb$ and study the structural and enumerative properties of their indecomposable elements.

In the second part of this paper, we study Baxter-Cambrian structures, extending in the Cambrian setting the constructions of S.~Law and N.~Reading on quadrangulations~\cite{LawReading} and that of S.~Giraudo on twin binary trees~\cite{Giraudo}. We define Baxter-Cambrian lattices as quotients of the weak order under the intersections of two opposite Cambrian congruences. Their elements can be labeled by pairs of twin Cambrian trees, \ie Cambrian trees with opposite signatures whose union forms an acyclic graph. We study in detail the number of such pairs of Cambrian trees for arbitrary signatures. Following~\cite{LawReading}, we also observe that the Minkowski sums of opposite associahedra of C.~Hohlweg and C.~Lange~\cite{HohlwegLange} provide polytopal realizations of the Baxter-Cambrian lattices. Finally, we introduce the Baxter-Cambrian Hopf algebra~$\BaxCamb$ as a subalgebra of the Hopf algebra~$\FQSym_\pm$ on signed permutations and its dual~$\BaxCamb^*$ as a quotient algebra of the dual Hopf algebra~$\FQSym_\pm^*$. Their bases are indexed by pairs of twin Cambrian trees, and it is also possible to describe both the product and coproduct in the algebras~$\BaxCamb$ and~$\BaxCamb^*$ in terms of simple combinatorial operations on Cambrian trees. We also extend our results to arbitrary tuples of Cambrian trees, resulting to the Cambrian tuple algebra.

\enlargethispage{.6cm}
The last part of the paper is devoted to Schr\"oder-Cambrian structures. We consider Schr\"oder-Cambrian trees which correspond to all faces of all C.~Hohlweg and C.~Lange's associahedra~\cite{HohlwegLange}. We define the Schr\"oder-Cambrian lattice as a quotient of the weak order on ordered partitions defined in~\cite{KrobLatapyNovelliPhanSchwer}, thus extending N.~Reading's type~$A$ Cambrian lattices~\cite{Reading-CambrianLattices} to all faces of the associahedron. Finally, we consider the Schr\"oder-Cambrian Hopf algebra~$\SchrCamb$, generalizing the algebra defined by F.~Chapoton in~\cite{Chapoton}. 

%%%%%%%%%%%%%%%%%%%%%%%%%%%%%%%%%%%%%%

%\newpage
\part{The Cambrian Hopf Algebra}
\label{part:CambrianAlgebra}

%%%%%%%%%%%%%%%%%%%%%%%%%%%%%%%%%%%%%%

\section{Cambrian trees}
\label{sec:CambrianTrees}

In this section, we recall the definition and properties of ``Cambrian trees'', generalizing standard binary search trees. They were introduced independently by K.~Igusa and J.~Ostroff in~\cite{IgusaOstroff} as ``mixed cobinary trees'' in the context of cluster algebras and quiver representation theory and by C.~Lange and V.~Pilaud in~\cite{LangePilaud} as ``spines'' (\ie oriented and labeled dual trees) of triangulations of polygons to revisit the multiple realizations of the associahedron of C.~Hohlweg and C.~Lange~\cite{HohlwegLange}. Here, we use the term ``Cambrian trees'' to underline their connection with the type~$A$ Cambrian lattices of N.~Reading~\cite{Reading-CambrianLattices}. Although motivating and underlying this paper, these interpretations are not needed for the combinatorial and algebraic constructions presented here, and we only refer to them when they help to get geometric intuition on our~statements.

%%%%%%%%%%%%%%%%%

\subsection{Cambrian trees and increasing trees}

Consider a directed tree~$\tree$ on a vertex set~$\ground$ and a vertex~$v \in \ground$. We call \defn{children} (resp.~\defn{parents}) of~$v$ the sources of the incoming arcs (resp.~the targets of the outgoing arcs) at~$v$ and \defn{descendants} (resp.~\defn{ancestor}) \defn{subtrees} of~$v$ the subtrees attached to them. The main characters of our paper are the following trees, which generalize standard binary search trees. Our definition is adapted from~\cite{IgusaOstroff, LangePilaud}.

\begin{definition}
A \defn{Cambrian tree} is a directed tree~$\tree$ with vertex set~$\ground$ endowed with a bijective vertex labeling $p : \ground \to [n]$ such that for each vertex~$v \in \ground$,
\begin{enumerate}[(i)]
\item $v$ has either one parent and two children (its descendant subtrees are called \defn{left and right~subtrees}) or one child and two parents (its ancestor subtrees are called \defn{left and right subtrees});
\item all labels are smaller (resp.~larger) than~$p(v)$ in the left (resp.~right) subtree~of~$v$.
\end{enumerate}
The \defn{signature} of~$\tree$ is the $n$-tuple~$\signature(\tree) \in \pm^n$ defined by~$\signature(\tree)_{p(v)} = -$ if~$v$ has two children and~$\signature(\tree)_{p(v)} = +$ if~$v$ has two parents.
Denote by~$\CambTrees(\signature)$ the set of Cambrian trees with signature~$\signature$, by~$\CambTrees(n) = \bigsqcup_{\signature \in \pm^n} \CambTrees(\signature)$ the set of all Cambrian trees on~$n$ vertices, and by~$\CambTrees \eqdef \bigsqcup_{n \in \N} \CambTrees(n)$ the set of all Cambrian trees.
\end{definition}

\begin{definition}
An \defn{increasing tree} is a directed tree~$\tree$ with vertex set~$\ground$ endowed with a bijective vertex labeling~$q : \ground \to [n]$ such that~$v \to w$ in~$\tree$ implies~$q(v) < q(w)$.
\end{definition}

\begin{definition}
A \defn{leveled Cambrian tree} is a directed tree~$\tree$ with vertex set~$\ground$ endowed with two bijective vertex labelings~${p,q : V \to [n]}$ which respectively define a Cambrian and an increasing tree.
\end{definition}

\begin{figure}[b]
  \centerline{\includegraphics{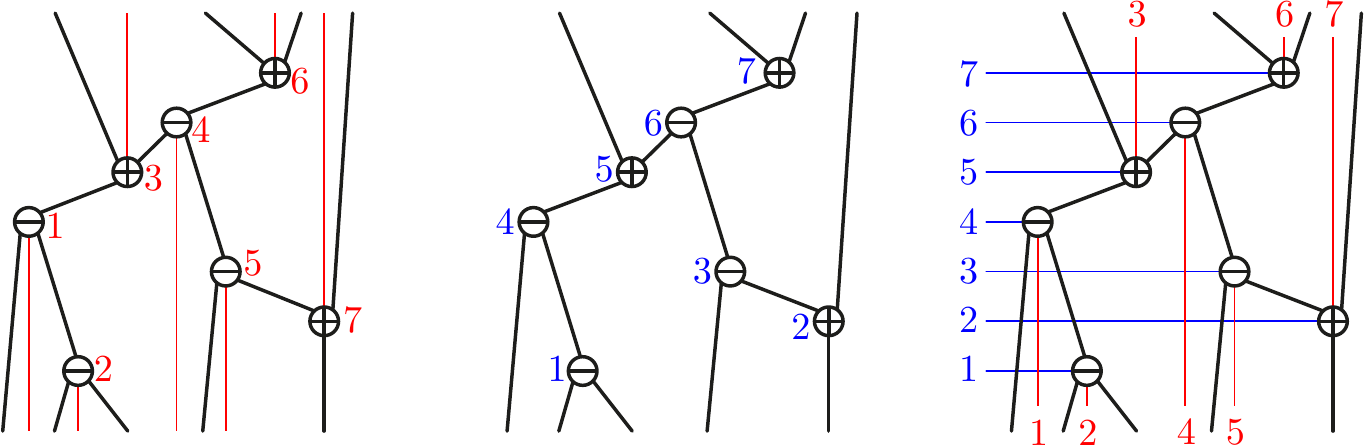}}
  \caption{A Cambrian tree (left), an increasing tree (middle), and a leveled Cambrian tree (right).}
  \label{fig:leveledCambrianTree}
\end{figure}

\enlargethispage{-.6cm}
In other words, a leveled Cambrian tree is a Cambrian tree endowed with a linear extension of its transitive closure.
\fref{fig:leveledCambrianTree} provides examples of a Cambrian tree (left), an increasing tree (middle), and a leveled Cambrian tree (right). All edges are oriented bottom-up. Throughout the paper, we represent leveled Cambrian trees on an $(n \times n)$-grid as follows (see \fref{fig:leveledCambrianTree}):
\begin{enumerate}[(i)]
\item each vertex~$v$ appears at position $(p(v), q(v))$;
\item negative vertices (with one parent and two children) are represented by~$\ominus$, while positive vertices (with one child and two parents) are represented by~$\oplus$;
\item we sometimes draw a vertical red wall below the negative vertices and above the positive vertices to mark the separation between the left and right subtrees of each vertex.
\end{enumerate}

\begin{remark}[Spines of triangulations]
\label{rem:triangulation}
Cambrian trees can be seen as spines (\ie oriented and labeled dual trees) of triangulations of labeled polygons. Namely, consider an $(n+2)$-gon~$\polygon$ with vertices labeled by~$0, \dots, n+1$ from left to right, and where vertex~$i$ is located above the diagonal~$[0,n+1]$ if~$\signature_i = +$ and below it if~$\signature_i = -$. We associate to a triangulation~$\sigma$ of~$\polygon$ its dual tree, with a node labeled by~$j$ for each triangle~$ijk$ of~$\sigma$ where~$i<j<k$, and an edge between any two adjacent triangles oriented from the triangle below to the triangle above their common diagonal. See \fref{fig:triangulation} and refer to~\cite{LangePilaud} for details. Throughout the paper, we denote by~$\tree^*$ the triangulation of~$\polygon$ dual to the $\signature$-Cambrian tree~$\tree$, and we use this interpretation to provide the reader with some geometric intuition of definitions and results of this paper.

\begin{figure}[h]
  \centerline{\includegraphics{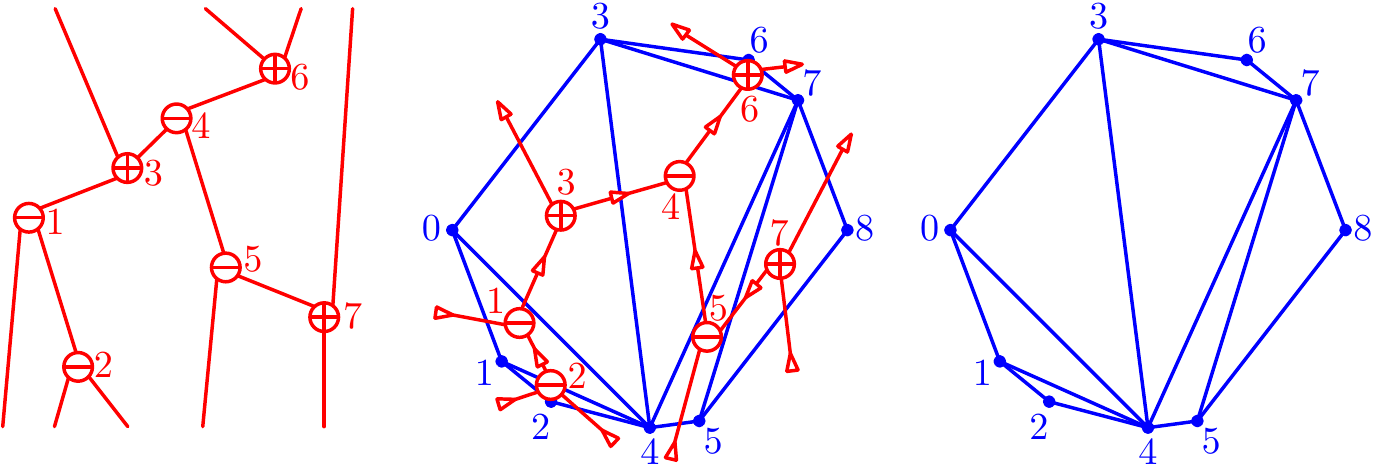}}
  \caption{Cambrian trees (left) and triangulations (right) are dual to each other (middle).}
  \label{fig:triangulation}
\end{figure}
\end{remark}

\begin{proposition}[\cite{LangePilaud, IgusaOstroff}]
For any signature~$\signature \in \pm^n$, the number of $\signature$-Cambrian trees is the Catalan number~$C_n = \frac{1}{n+1}\binom{2n}{n}$. Therefore,~$|\CambTrees(n)| = 2^n C_n$. See~\href{https://oeis.org/A151374}{\cite[A151374]{OEIS}}.
\end{proposition}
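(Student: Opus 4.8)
The plan is to promote the duality of Remark~\ref{rem:triangulation} to a genuine bijection: I would show that the spine construction recalled there is a bijection from the set of triangulations of the polygon~$\polygon$ onto~$\CambTrees(\signature)$. Since~$\polygon$ is an $(n+2)$-gon it has exactly~$C_n$ triangulations (the classical Catalan count), so the bijection yields $|\CambTrees(\signature)| = C_n$; the second formula then follows from $\CambTrees(n) = \bigsqcup_{\signature \in \pm^n} \CambTrees(\signature)$, since $|\CambTrees(n)| = \sum_{\signature \in \pm^n} C_n = 2^n C_n$.

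The first step is to check that the spine~$\tree$ of a triangulation~$\sigma$ of~$\polygon$ really is a $\signature$-Cambrian tree. One labels the node of~$\tree$ dual to a triangle~$ijk$ (with $i<j<k$) by its middle vertex~$j$. This is a bijective labeling $\ground \to [n]$: the vertices~$0$ and~$n+1$ are extreme, hence never a middle vertex, while the defining feature of the layout of~$\polygon$ is that, reading the boundary away from any vertex~$j$ and not through~$j$, the labels are first all larger than~$j$ and then all smaller than~$j$ (or the reverse); hence among the triangles incident to~$j$ exactly one has~$j$ as its middle vertex, so the labeling is onto~$[n]$ and therefore bijective. Condition~(i) then amounts to the fact that the degree of node~$j$ equals the number of non-boundary sides of its triangle, and that the orientation rule of Remark~\ref{rem:triangulation} (a dual edge runs from the triangle below a common diagonal to the one above it), together with the side of the base diagonal~$[0,n+1]$ on which~$j$ lies, forces the (at most one) neighbour across the side of the triangle opposite~$j$ to be the parent and the (at most two) remaining neighbours to be children precisely when~$\signature_j = -$, and dually when~$\signature_j = +$. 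Condition~(ii) holds because the two sides of the triangle meeting at~$j$ cut off two sub-polygons of~$\polygon$, carrying exactly the left and right subtrees of node~$j$, and by the same layout feature every vertex of the first has a label smaller than~$j$ while every vertex of the second has a label larger than~$j$. I expect this to be the main obstacle: none of it is deep, but reconciling the planar and orientation conventions on~$\polygon$ with conditions~(i)--(ii) must be done carefully; this is essentially the content of~\cite{LangePilaud} (and, in the ``mixed cobinary trees'' language, of~\cite{IgusaOstroff}), whose statements I would cite.

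It remains to see that the spine map is a bijection. Injectivity is clear, since the node labels together with the edges of~$\tree$ determine the diagonals of~$\sigma$, hence~$\sigma$ itself. For surjectivity I would induct on~$n$: a Cambrian tree on~$n \ge 1$ vertices has a node~$v$ of degree at most~$1$, which must be dual to an ``ear'' of the triangulation to be built; deleting~$v$ and standardizing the labels yields a Cambrian tree on~$n-1$ vertices for the signature obtained from~$\signature$ by deleting its~$p(v)$-th entry, the inductive hypothesis produces a triangulation of the corresponding $(n+1)$-gon, and one checks that re-inserting the vertex~$p(v)$ together with the ear triangle recovers a triangulation of~$\polygon$ whose spine is~$\tree$. (A more combinatorial variant would bypass the polygon and induct directly on~$n$ by decomposing a $\signature$-Cambrian tree at a distinguished ``root'' node, recovering the Catalan recursion; but identifying that node and controlling the label sets of the resulting pieces amounts to rebuilding the same correspondence, so I would favour the geometric route.)
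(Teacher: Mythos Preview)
Your proposal is correct and follows exactly the first of the four proof strategies the paper lists for this proposition: invoking the spine duality of Remark~\ref{rem:triangulation} between $\signature$-Cambrian trees and triangulations of the $(n+2)$-gon~$\polygon$, then citing the classical Catalan count. The paper itself does not spell out the bijectivity argument (it defers to~\cite{LangePilaud}), so your outline of injectivity and the inductive surjectivity via ears is a reasonable expansion; the paper additionally mentions three alternative routes (reorienting to a binary tree as in~\cite{IgusaOstroff}, the signature-independent generating tree of Proposition~\ref{prop:GeneratingTree}, and the sign-swapping bijections of Lemma~\ref{lem:switchSign}), but these are offered as alternatives rather than as the primary proof.
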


There are several ways to prove this statement (to our knowledge, the last two are original): 
\begin{enumerate}[(i)]
\item From the description of~\cite{LangePilaud} given in the previous remark, the number of $\signature$-Cambrian trees is the number of triangulations of a convex $(n+2)$-gon, counted by the Catalan number.
\item There are natural bijections between $\signature$-Cambrian trees and binary trees. One simple way is to reorient all edges of a Cambrian tree towards an arbitrary leaf to get a binary tree, but the inverse map is more difficult to explain, see~\cite{IgusaOstroff}.
\item Cambrian trees are in bijection with certain pattern avoiding signed permutations, see Section~\ref{subsec:CambrianClasses}. In Proposition~\ref{prop:GeneratingTree}, we show that the shape of the generating tree for these permutations is independent of~$\signature$.
\item In Lemma~\ref{lem:switchSign}, we give an explicit bijection between $\signature$- and $\signature'$-Cambrian trees, where~$\signature$ and~$\signature'$ only differ by swapping two consecutive signs or switching the sign of~$1$ (or that of~$n$).
\end{enumerate}

%%%%%%%%%%%%%%%%%

\subsection{Cambrian correspondence}
\label{subsec:CambrianCorrespondence}

\enlargethispage{-.4cm}
We represent graphically a permutation~$\tau \in \fS_n$ by the $(n \times n)$-table, with rows labeled by positions from bottom to top and columns labeled by values from left to right, and with a dot in row~$i$ and column~$\tau(i)$ for all~$i \in [n]$. (This unusual choice of orientation is necessary to fit later with the existing constructions of~\cite{LodayRonco, HivertNovelliThibon-algebraBinarySearchTrees}.)

A \defn{signed permutation} is a permutation table where each dot receives a~$+$ or~$-$ sign, see the top left corner of \fref{fig:insertionAlgorithm}. We could equivalently think of a permutation where the positions or the values receive a sign, but it will be useful later to switch the signature from positions to values. The \defn{p-signature} (resp.~\defn{v-signature}) of a signed permutation~$\tau$ is the sequence~$\psignature(\tau)$ (resp.~$\vsignature(\tau)$) of signs of~$\tau$ ordered by positions from bottom to top (resp.~by values from left to right). For a signature~$\signature \in \pm^n$, we denote by~$\fS_\signature$ (resp.~by~$\fS^\signature$) the set of signed permutations~$\tau$ with p-signature~$\psignature(\tau) = \signature$ (resp.~with v-signature~$\vsignature(\tau) = \signature$). Finally, we denote by
\[
\fS_\pm \eqdef \bigsqcup_{\substack{n \in \N \\ \signature \in \pm^n}} \fS_\signature = \bigsqcup_{\substack{n \in \N \\ \signature \in \pm^n}} \fS^\signature
\]
the set of all signed permutations.

In concrete examples, we underline negative positions/values while we overline positive positions/values: for example, we write~$\down{2}\up{7}\down{51}\up{3}\down{4}\up{6}$ for the signed permutation represented on the top left corner of \fref{fig:insertionAlgorithm}, where~${\tau = [2,7,5,1,3,4,6]}$, ${\psignature = {-}{+}{-}{-}{+}{-}{+}}$ and~${\vsignature = {-}{-}{+}{-}{-}{+}{+}}$.

Following~\cite{LangePilaud}, we now present an algorithm to construct a leveled $\signature$-Cambrian tree~$\CambCorresp(\tau)$ from a signed permutation~$\tau \in \fS^\signature$. \fref{fig:insertionAlgorithm} illustrates this algorithm on the permutation~$\down{2}\up{7}\down{51}\up{3}\down{4}\up{6}$. As a preprocessing, we represent the table of~$\tau$ (with signed dots in positions~$(\tau(i),i)$ for~$i \in [n]$) and draw a vertical wall below the negative vertices and above the positive vertices. We then sweep the table from bottom to top (thus reading the permutation~$\tau$ from left to right) as follows. The procedure starts with an incoming strand in between any two consecutive negative values. A negative dot~$\ominus$ connects the two strands immediately to its left and immediately to its right to form a unique outgoing strand. A positive dot~$\oplus$ separates the only visible strand (not hidden by a wall) into two outgoing strands. The procedure finishes with an outgoing strand in between any two consecutive positive values. See \fref{fig:insertionAlgorithm}.

\begin{figure}
  \centerline{\includegraphics[width=\textwidth]{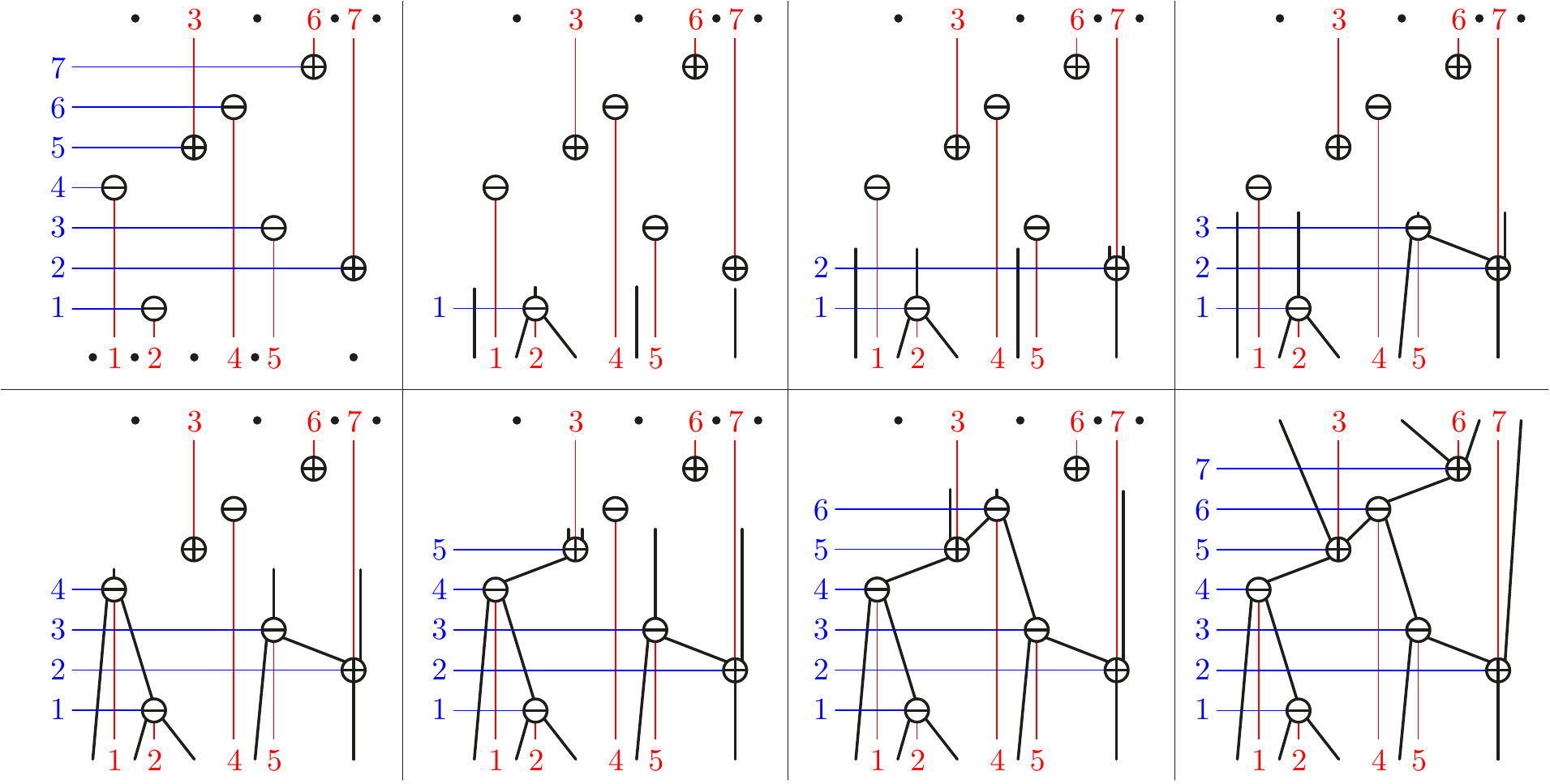}}
  \caption{The insertion algorithm on the signed permutation~$\down{2}\up{7}\down{51}\up{3}\down{4}\up{6}$.}
  \label{fig:insertionAlgorithm}
\end{figure}

\begin{proposition}[\cite{LangePilaud}]
The map~$\CambCorresp$ is a bijection from signed permutations to leveled Cambrian trees.
\end{proposition}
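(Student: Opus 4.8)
The plan is to exhibit an explicit inverse to~$\CambCorresp$, reading off a signed permutation from a leveled Cambrian tree by scanning its levels. Given a leveled Cambrian tree~$\tree$ with labelings~$p$ (Cambrian) and~$q$ (increasing), define~$\tau \in \fS_n$ by setting~$\tau(q(v)) = p(v)$ for each vertex~$v$, and sign the dot in position~$q(v)$ with the sign~$\signature(\tree)_{p(v)}$ (minus if~$v$ has two children, plus if~$v$ has two parents). Thus the $i$\ordinal{} vertex swept (in $q$-order) contributes a dot at height~$i$, in the column given by its $p$-label, with its node-type as sign. I would then check two things: that this~$\tau$ is a signed permutation lying in~$\fS^{\signature(\tree)}$, and that the two constructions are mutually inverse.

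First I would verify that running the insertion algorithm on this~$\tau$ recovers~$\tree$ together with its level structure. The key observation is that the insertion algorithm processes dots exactly in the order of increasing height, which is the $q$-order; so the level recorded for the vertex created/modified at step~$i$ is automatically~$i = q(v)$, matching the increasing labeling. It then remains to see that the strand-merging (at a~$\ominus$) and strand-splitting (at a~$\oplus$) operations reproduce precisely the arcs of~$\tree$. Here I would argue by induction on the number of vertices processed: after step~$i$, the partial tree built by the algorithm on the dots of $q$-value~$\le i$ coincides with the induced subforest of~$\tree$ on the vertices~$\set{v}{q(v) \le i}$. The induction step uses condition~(ii) in the definition of Cambrian trees — that all labels in the left (resp.\ right) subtree of~$v$ are smaller (resp.\ larger) than~$p(v)$ — to confirm that the two strands the algorithm attaches to a negative dot at column~$p(v)$ are exactly the roots of the left and right descendant subtrees of~$v$, and dually that the single visible strand below a positive dot at column~$p(v)$ is the root of its child subtree, split consistently with the walls. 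The walls are placed below negative columns and above positive columns in both the algorithm and the grid drawing, so the ``left vs.\ right'' bookkeeping is literally the same in the two pictures.

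Conversely, starting from a leveled Cambrian tree~$\CambCorresp(\tau)$ obtained from some signed permutation~$\tau \in \fS^\signature$, I would check that the read-off procedure returns~$\tau$: by construction the $q$-label of the vertex processed at step~$i$ is~$i$ and its $p$-label is~$\tau(i)$ (the column of the $i$\ordinal{} dot), while its node-type is~$\signature_{\tau(i)}$, which is exactly the sign carried by that dot since~$\vsignature(\tau) = \signature$. So the two maps are inverse bijections. That~$\CambCorresp(\tau)$ is genuinely a leveled Cambrian tree — i.e.\ that~$p$ defines a Cambrian tree of signature~$\signature$ and~$q$ an increasing tree — is part of the cited result of~\cite{LangePilaud}; I would simply invoke it, or re-derive the Cambrian conditions from the sweep (strands to the left of column~$p(v)$ only ever carry smaller values, those to the right only larger) and the increasing condition from the fact that every arc goes from a dot processed earlier to one processed later.

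The main obstacle is the inductive step identifying, at each negative (resp.\ positive) dot, the two strands merged (resp.\ the visible strand split) with the correct subtrees of~$\tree$ — in particular showing the algorithm never has a ``wrong'' strand available because of the wall structure, and that exactly two strands are present at a~$\ominus$ and exactly one visible strand at a~$\oplus$. This is where conditions~(i) and~(ii) of the Cambrian tree definition, together with the placement of the red walls, do all the work; once that bookkeeping is set up carefully the rest is routine. Since this correspondence is stated as Proposition~\cite{LangePilaud} in the source, in practice I would give the explicit inverse as above and refer to~\cite{LangePilaud} for the verification, spelling out only the level-ordering point that makes~$q$ match the sweep order.
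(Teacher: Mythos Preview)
The paper does not actually prove this proposition; it is imported from~\cite{LangePilaud} without argument. Your approach---defining the inverse by~$\tau(q(v)) = p(v)$ with sign~$\signature(\tree)_{p(v)}$ and checking the two composites are identities---is the natural one and is correct. Indeed, the paper itself implicitly relies on exactly this inverse later (in the proof of the Baxter-Cambrian correspondence), though it writes the formula there as~$\tau = q \circ p$, which should be read as your~$\tau = p \circ q^{-1}$.
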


\begin{remark}[Cambrian correspondence]
The \defn{Robinson-Schensted correspondence} is a bijection between permutations and pairs of standard Young tableaux of the same shape. Schensted's algorithm~\cite{Schensted} gives an efficient algorithmic way to create the pair of tableaux~$(\PSymbol(\tau), \QSymbol(\tau))$ corresponding to a given permutation~$\tau$ by successive insertions: the first tableau~$\PSymbol(\tau)$ (\defn{insertion tableau}) remembers the inserted elements of~$\tau$ while the second tableau~$\QSymbol(\tau)$ (\defn{recording tableau}) remembers the order in which the elements have been inserted. F.~Hivert, J.-C.~Novelli and \mbox{J.-Y.~Thibon} defined in~\cite{HivertNovelliThibon-algebraBinarySearchTrees} a similar correspondence, called \defn{sylvester correspondence}, between permutations and pairs of labeled trees of the same shape. In the sylvester correspondence, the first tree (insertion tree) is a standard binary search tree and the second tree (recording tree) is an increasing binary tree. The \defn{Cambrian correspondence} can as well be thought of as a correspondence between signed permutations and pairs of trees of the same shape, where the first tree (insertion tree) is Cambrian and the second tree (recording tree) is increasing. This analogy motivates the following definition.
\end{remark}

\begin{definition}
Given a signed permutation~$\tau \in \fS^\signature$, its \defn{$\PSymbol$-symbol} is the insertion Cambrian tree~$\surjectionPermAsso(\tau)$ defined by~$\CambCorresp(\tau)$ and its \defn{$\QSymbol$-symbol} is the recording increasing tree~$\QSymbol(\tau)$ defined by~$\CambCorresp(\tau)$.
\end{definition}

The following characterization of the fibers of~$\surjectionPermAsso$ is immediate from the description of the algorithm. We denote by~$\linearExtensions(\graphG)$ the set of linear extensions of a directed graph~$\graphG$.

\begin{proposition}
The signed permutations~$\tau \in \fS^\signature$ such that~$\surjectionPermAsso(\tau) = \tree$ are precisely the linear extensions of (the transitive closure of)~$\tree$. \end{proposition}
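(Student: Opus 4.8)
The plan is to derive the statement from two ingredients already at hand: the bijectivity of the Cambrian correspondence~$\CambCorresp$ (proved in~\cite{LangePilaud}), and the fact, recorded just after the definition, that a leveled Cambrian tree is nothing but a Cambrian tree together with a linear extension of its transitive closure, this extension being the one encoded by the increasing labeling~$q$. Concretely, I would first make explicit how a signed permutation is recovered from the leveled Cambrian tree it produces: sweeping the table of~$\tau$ from bottom to top is exactly reading~$\tau = [\tau(1),\dots,\tau(n)]$ from left to right, the dot processed at step~$i$ lies in column~$\tau(i)$ and becomes a vertex~$v$ of~$\CambCorresp(\tau)$ with $p(v) = \tau(i)$ and $q(v) = i$. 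Hence $\tau$ is recovered from $\CambCorresp(\tau)$ by listing the $p$-labels of its vertices in increasing order of their $q$-labels; equivalently, the word~$\tau$ is precisely the linear extension of~$\surjectionPermAsso(\tau)$ recorded by~$\QSymbol(\tau)$.

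This immediately gives one inclusion: if $\surjectionPermAsso(\tau) = \tree$, then $\tau \in \linearExtensions(\tree)$. For the reverse inclusion together with exhaustiveness, I would argue that every $L \in \linearExtensions(\tree)$ arises. Given such an~$L$, define $q \colon \ground \to [n]$ by letting $q(v)$ be the position of~$v$ in~$L$; since $L$ is a linear extension, every arc $v \to w$ of~$\tree$ satisfies $q(v) < q(w)$, so $q$ defines an increasing tree and $(\tree, p, q)$ is a leveled Cambrian tree. By surjectivity of~$\CambCorresp$ there is a signed permutation~$\tau$ with $\CambCorresp(\tau) = (\tree, p, q)$, and by the previous paragraph $\surjectionPermAsso(\tau) = \tree$ with reading word~$L$, so $\tau = L$. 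Injectivity of~$\CambCorresp$ then shows that $\set{\tau \in \fS^\signature}{\surjectionPermAsso(\tau) = \tree}$ is exactly $\linearExtensions(\tree)$.

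The only genuinely algorithmic point — and the step where a little care is needed — is the claim that $\QSymbol(\tau)$ is increasing, i.e.\ that the insertion order is monotone along the edges of~$\surjectionPermAsso(\tau)$: a negative dot~$\ominus$ merges the two strands immediately to its left and right, whose already-created top vertices thus become its children (smaller~$q$), while a positive dot~$\oplus$ splits the visible strand, the vertices later attached on the two outgoing strands becoming its parents (larger~$q$); so each vertex is inserted after all its descendants and before all its ancestors, which is what makes the reading word a linear extension. This is exactly the assertion that~$\CambCorresp$ takes values in leveled Cambrian trees, so in a write-up it can be quoted from~\cite{LangePilaud} rather than reproved, and indeed this is why the authors call the characterization ``immediate''.
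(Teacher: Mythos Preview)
Your argument is correct and is exactly the natural unpacking of what the paper calls ``immediate from the description of the algorithm'': the paper gives no proof beyond that remark, and your two inclusions via the bijectivity of~$\CambCorresp$ together with the observation that the increasing labeling~$q$ records a linear extension are precisely the content of that immediacy.
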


\begin{example}
When~$\signature = (+)^n$, the procedure constructs a binary search tree~$\surjectionPermAsso(\tau)$ pointing up by successive insertions from the left. Equivalently, $\surjectionPermAsso(\tau)$ can be constructed as the increasing tree of~$\tau^{-1}$. Here, the \defn{increasing tree}~$\increasingTree(\pi)$ of a permutation~$\pi = \pi' 1 \pi''$ is defined inductively by grafting the increasing tree~$\increasingTree(\pi')$ on the left and the increasing tree~$\increasingTree(\pi'')$ on the right of the bottom root labeled by~$1$. When~$\signature = (-)^n$, this procedure constructs bottom-up a binary search tree~$\surjectionPermAsso(\tau)$ pointing down. This tree would be obtained by successive binary search tree insertions from the right. Equivalently, $\surjectionPermAsso(\tau)$ can be constructed as the decreasing tree of~$\tau^{-1}$. Here, the \defn{decreasing tree}~$\decreasingTree(\pi)$ of a permutation~$\pi = \pi' n \pi''$ is defined inductively by grafting the decreasing tree~$\decreasingTree(\pi')$ on the left and the decreasing tree~$\decreasingTree(\pi'')$ on the right of the top root labeled by~$n$. These observations are illustrated on~\fref{fig:constantSigns}.

\begin{figure}[h]
  \centerline{\includegraphics{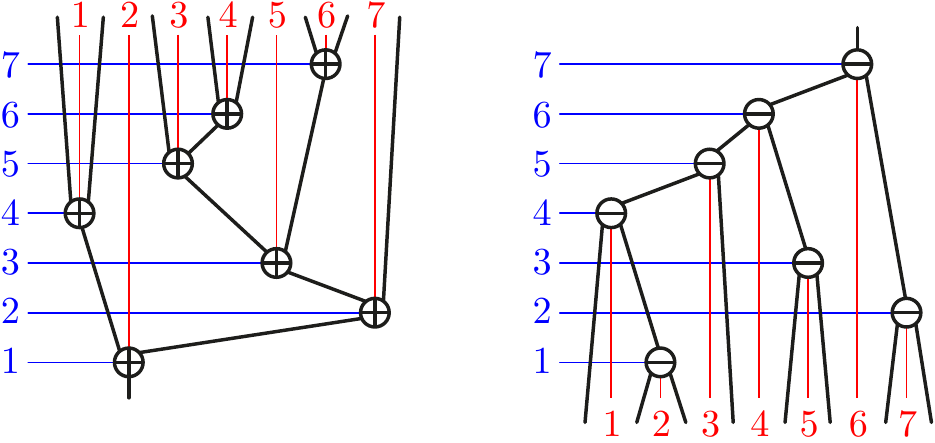}}
  \caption{The insertion procedure produces binary search trees when the signature is constant positive (left) or constant negative (right).}
  \label{fig:constantSigns}
\end{figure}
\end{example}

\begin{remark}[Cambrian correspondence on triangulations]
\label{rem:CambrianCorrespondenceTriangulations}
N.~Reading~\cite{Reading-CambrianLattices} first described the map~$\surjectionPermAsso$ on the triangulations of the polygon~$\polygon$ (remember Remark~\ref{rem:triangulation}). Namely, the triangulation~$\surjectionPermAsso(\tau)^*$ is the union of the paths~$\pi_0, \dots, \pi_n$ where~$\pi_i$ is the path between vertices~$0$ and~$n+1$ of~$\polygon$ passing through the vertices in the symmetric difference~$\signature^{-1}(-) \symdif \tau([i])$.
\end{remark}

%%%%%%%%%%%%%%%%%

\subsection{Cambrian congruence}

Following the definition of the sylvester congruence in~\cite{HivertNovelliThibon-algebraBinarySearchTrees}, we now characterize by a congruence relation the signed permutations~$\tau \in \fS^\signature$ which have the same $\PSymbol$-symbol~$\surjectionPermAsso(\tau)$. This Cambrian congruence goes back to the original definition of N.~Reading~\cite{Reading-CambrianLattices}.

\begin{definition}[\cite{Reading-CambrianLattices}]
\label{def:CambrianCongruence}
For a signature~$\signature \in \pm^n$, the \defn{$\signature$-Cambrian congruence} is the equivalence relation on~$\fS^\signature$ defined as the transitive closure of the rewriting rules
\begin{gather*}
UacVbW \equiv_\signature UcaVbW \quad\text{if } a < b < c \text{ and } \signature_b = -, \\
UbVacW \equiv_\signature UbVcaW \quad\text{if } a < b < c \text{ and } \signature_b = +,
\end{gather*}
where~$a,b,c$ are elements of~$[n]$ while~$U,V,W$ are words on~$[n]$. The \defn{Cambrian congruence} is the equivalence relation on all signed permutations~$\fS_\pm$ obtained as the union of all $\signature$-Cambrian congruences:
\[
\equiv \; \eqdef \bigsqcup_{\substack{n \in \N \\ \signature \in \pm^n}} \!\! \equiv_\signature.
\]
\end{definition}

\begin{proposition}
\label{prop:CambrianClass}
Two signed permutations~$\tau, \tau' \in \fS^\signature$ are $\signature$-Cambrian congruent if and only if they have the same $\PSymbol$-symbol:
\[
\tau \equiv_\signature \tau' \iff \surjectionPermAsso(\tau) = \surjectionPermAsso(\tau').
\]
\end{proposition}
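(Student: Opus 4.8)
The plan is to prove both implications of Proposition~\ref{prop:CambrianClass} by relating the Cambrian rewriting rules to the insertion algorithm~$\CambCorresp$ defining the $\PSymbol$-symbol~$\surjectionPermAsso$.

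\medskip\noindent\textbf{The direction $\surjectionPermAsso(\tau) = \surjectionPermAsso(\tau') \implies \tau \equiv_\signature \tau'$.}
By the proposition characterizing fibers of~$\surjectionPermAsso$, the set of signed permutations in~$\fS^\signature$ with a given $\PSymbol$-symbol~$\tree$ is exactly the set of linear extensions~$\linearExtensions(\tree)$ of (the transitive closure of)~$\tree$. So I would show that $\equiv_\signature$ has exactly the linear-extension sets as its classes, or at least that any two linear extensions of~$\tree$ are connected by the rewriting rules. The key observation is that two linear extensions of a poset that differ by an adjacent transposition (swapping two consecutive letters that are incomparable in~$\tree$) are always connected inside~$\linearExtensions(\tree)$, and any two linear extensions are connected by a chain of such adjacent transpositions (a standard connectivity fact for linear extensions of posets). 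So it suffices to check that whenever $\tau = U\,x\,y\,W$ and $\tau' = U\,y\,x\,W$ are both linear extensions of~$\tree = \surjectionPermAsso(\tau)$ with $x < y$ (say), then $\tau \equiv_\signature \tau'$ via one of the two rewriting rules. Here one uses that since both orders are valid, the vertices labeled $x$ and $y$ are incomparable in~$\tree$, hence $x$ and $y$ lie in a common left-or-right subtree of some vertex~$b$ that is created by the insertion procedure; analysing the insertion algorithm shows $b$ satisfies $x < b < y$ and that $b$ appears in the word~$U$ (if $\signature_b = +$) or in the word~$W$ (if $\signature_b = -$) — or more precisely, after re-bracketing, $\tau$ and $\tau'$ have the shape $U' a c V b W'$ resp.\ $U' c a V b W'$ required by the rules. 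This is the step I expect to be the most delicate: matching the ``geometry'' of incomparable vertices in a Cambrian tree (which subtree of which vertex they share) with the precise syntactic pattern $UacVbW$ vs.\ $UcaVbW$, and verifying no other obstruction prevents the swap.

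\medskip\noindent\textbf{The direction $\tau \equiv_\signature \tau' \implies \surjectionPermAsso(\tau) = \surjectionPermAsso(\tau')$.}
Since $\equiv_\signature$ is generated by the two rewriting rules, it suffices to check that a single application of a rewriting rule does not change the $\PSymbol$-symbol. So assume $\tau = UacVbW$ and $\tau' = UcaVbW$ with $a < b < c$ and $\signature_b = -$ (the case $\signature_b = +$ being symmetric, obtained for instance by the mirror/transpose symmetry of the construction). Running the insertion algorithm on~$\tau$ and~$\tau'$: the prefix~$U$ produces the same partial leveled tree in both cases. When we then insert the letters $a$ and $c$, I claim the insertion ignores the relative order of $a$ and $c$ at this stage: because $a < b < c$ and $\signature_b = -$, the negative vertex~$b$ that will be inserted later (it lies in~$V b W$, and we must argue $b$ is inserted strictly after both $a$ and $c$ in both words, which is automatic since $b$ appears after both $a$ and $c$ in both $\tau$ and $\tau'$) separates $a$ into its left subtree and $c$ into its right subtree. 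Before $b$ is inserted, $a$ and $c$ are attached in the same region of the partial tree, and since $a$ goes to the left of $b$ and $c$ to the right, swapping the order in which $a$ and $c$ are read does not affect where either one ends up, nor the shape; after processing~$V$, $b$, and~$W$ identically, the resulting Cambrian trees $\surjectionPermAsso(\tau)$ and $\surjectionPermAsso(\tau')$ coincide. Making this ``the strand/vertex $a$ and the strand/vertex $c$ don't interact before $b$ arrives'' argument precise — in terms of the strands of the insertion algorithm and the walls drawn below negative dots — is the technical heart of this direction, and it is essentially the same local analysis as in~\cite{HivertNovelliThibon-algebraBinarySearchTrees} for the sylvester congruence, adapted to handle both signs of~$b$ and the role of the red walls.

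\medskip\noindent\textbf{Summary of the obstacle.}
Both directions reduce to one local lemma: \emph{in the insertion algorithm, two vertices $a < c$ separated by a negative (resp.\ positive) vertex $b$ with $a < b < c$ that is processed after them can be read in either order without changing the insertion Cambrian tree, and conversely any adjacent swap preserving the $\PSymbol$-symbol is of this form.} Once this lemma is established — by tracking strands and walls through the sweep — the proposition follows by the generation of $\equiv_\signature$ from the rewriting rules (for one direction) and by the connectivity of $\linearExtensions(\tree)$ under adjacent transpositions together with the fiber description of~$\surjectionPermAsso$ (for the other).
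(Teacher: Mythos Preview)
Your proposal is correct and follows essentially the same approach as the paper: both reduce to the observation that two adjacent letters~$a<c$ in a linear extension of~$\tree$ can be swapped while remaining a linear extension precisely when they lie in distinct subtrees of some vertex~$b$ with~$a<b<c$, and that the sign of~$b$ then forces the position of~$b$ (before or after~$a,c$) to match one of the two rewriting rules. The only presentational difference is that the paper handles both implications uniformly via the fiber characterization~$(\surjectionPermAsso)^{-1}(\tree)=\linearExtensions(\tree)$, whereas you propose to verify the direction $\tau\equiv_\signature\tau'\implies\surjectionPermAsso(\tau)=\surjectionPermAsso(\tau')$ by a direct strand-tracking analysis of the insertion algorithm; this is a valid alternative but slightly more laborious than simply checking that the rewriting keeps you inside~$\linearExtensions(\tree)$. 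One small slip: you write that incomparable~$x,y$ ``lie in a common left-or-right subtree of some vertex~$b$'' --- you mean they lie in \emph{distinct} subtrees (one in the left, one in the right) of~$b$, which is indeed what you use afterwards.
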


\begin{proof}
It boils down to observe that two consecutive vertices~$a,c$ in a linear extension~$\tau$ of a $\signature$-Cambrian tree~$\tree$ can be switched while preserving a linear extension~$\tau'$ of~$\tree$ precisely when they belong to distinct subtrees of a vertex~$b$ of~$\tree$. It follows that the vertices~$a,c$ lie on either sides of~$b$ so that we have~$a < b < c$. If~$\signature_b = -$, then~$a,c$ appear before~$b$ and~$\tau = UacVbW$ can be switched to~$\tau' = UcaVbW$, while if~$\signature_b = +$, then~$a,c$ appear after~$b$ and~$\tau = UbVacW$ can be switched to~$\tau' = UbVcaW$.
\end{proof}

%%%%%%%%%%%%%%%%%

\subsection{Cambrian classes and generating trees}
\label{subsec:CambrianClasses}

We now focus on the equivalence classes of the Cambrian congruence. Remember that the \defn{(right) weak order} on~$\fS^\signature$ is defined as the inclusion order of coinversions, where a \defn{coinversion} of~$\tau \in \fS^\signature$ is a pair of values~$i < j$ such that~${\tau^{-1}(i) > \tau^{-1}(j)}$ (no matter the signs on~$\tau$). In this paper, we always work with the right weak order, that we simply call weak order for brevity. The following statement is due to N.~Reading~\cite{Reading-CambrianLattices}.

\begin{proposition}[\cite{Reading-CambrianLattices}]
All $\signature$-Cambrian classes are intervals of the weak order on~$\fS^\signature$.
\end{proposition}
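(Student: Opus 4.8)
The plan is to show that each $\signature$-Cambrian class is an interval of the weak order by exhibiting its minimum and maximum explicitly and arguing that the class is "order-convex". The natural strategy, following Reading and the sylvester analogue of Hivert--Novelli--Thibon, is to use the characterization of fibers of~$\surjectionPermAsso$ from the preceding proposition: the signed permutations~$\tau \in \fS^\signature$ with $\surjectionPermAsso(\tau) = \tree$ are exactly the linear extensions of (the transitive closure of)~$\tree$. So I would first reduce the statement to the following two facts about the poset~$\tree$: (a) the set~$\linearExtensions(\tree)$ of linear extensions of a $\signature$-Cambrian tree, read as permutations, has a unique weak-order-minimal element~$\minimalLinearExtension(\tree)$ and a unique weak-order-maximal element~$\maximalLinearExtension(\tree)$; and (b) if~$\sigma \le \rho$ in the weak order with $\sigma, \rho \in \linearExtensions(\tree)$, then every~$\tau$ with $\sigma \le \tau \le \rho$ also lies in~$\linearExtensions(\tree)$.

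For the convexity fact (b), I would use the cover relations of the weak order: $\tau \lessdot \tau'$ when $\tau'$ is obtained from~$\tau$ by swapping two adjacent letters in positions~$i, i+1$ that are in increasing order. By Proposition~\ref{prop:CambrianClass}, such a swap preserves the Cambrian class precisely when the two swapped values~$a < c$ lie in distinct subtrees of some vertex~$b$ of~$\tree$ with $a < b < c$. The key lemma will therefore be: if $\sigma, \rho \in \linearExtensions(\tree)$ and $\sigma \lessdot \tau \lessdot \dots \lessdot \rho$ is a saturated chain in the weak order, every intermediate swap is "Cambrian-admissible". The standard argument is that a swap of adjacent $a < c$ that leaves the Cambrian class is forced to be a swap between two values comparable in~$\tree$ (say $a <_{\tree} c$ or the relevant ordering), and such a swap changes the relative order of $a, c$ in a way that cannot be undone later while still reaching~$\rho \in \linearExtensions(\tree)$; this contradicts $\sigma, \rho$ being in the same fiber. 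Making this precise requires tracking, along the chain, the number of inversions between each pair comparable in~$\tree$, and observing it stays constant on any chain between two linear extensions of~$\tree$.

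For the existence of a minimum and maximum (fact (a)), I would invoke the general theory of linear extensions: the linear extensions of a finite poset~$P$, viewed inside the weak order on~$\fS_n$, form a connected subgraph under adjacent transpositions (this is classical), and one builds the minimal linear extension greedily by always choosing the smallest available minimal element of~$P$, the maximal one by always choosing the largest available maximal element. Uniqueness of these extremes within $\linearExtensions(\tree)$ combined with convexity (b) and connectivity then yields that $\linearExtensions(\tree)$ is exactly the weak-order interval $[\minimalLinearExtension(\tree), \maximalLinearExtension(\tree)]$: one inclusion is convexity, the other follows because any $\tau \in \linearExtensions(\tree)$ lies on a saturated chain from $\minimalLinearExtension(\tree)$ to $\maximalLinearExtension(\tree)$ staying inside the fiber.

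\textbf{The main obstacle} I anticipate is fact (b), the order-convexity: it is the step where the combinatorics of Cambrian trees genuinely enters, via the dichotomy $\signature_b = \pm$ in Definition~\ref{def:CambrianCongruence}. The delicate point is to rule out a saturated weak-order chain between two linear extensions of~$\tree$ that temporarily leaves the Cambrian class; one must argue that a "bad" swap (between $\tree$-comparable elements) creates an obstruction that persists, which amounts to an invariant argument on the partial order. I expect one can finesse this by noting that for $\tau \in \fS^\signature$ the inversions of~$\tau$ that are \emph{not} inversions of some fixed linear extension are exactly governed by the incomparable pairs of~$\tree$, so the fiber $\linearExtensions(\tree)$ is cut out inside the weak order by fixing the order of all $\tree$-comparable pairs --- and any interval defined by fixing the relative order of a prescribed set of pairs that happens to come from a poset is automatically order-convex. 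Alternatively, since Reading's result is being cited, I would simply reference~\cite{Reading-CambrianLattices} for this step and give the short self-contained argument above only for completeness.
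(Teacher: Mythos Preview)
First, note that the paper does not give its own proof of this proposition: it is simply stated with a citation to Reading and immediately used. So there is nothing to compare against except Reading's original argument, which goes through lattice congruences.

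Your convexity step~(b) is correct and clean: the fiber $\linearExtensions(\tree)$ is exactly the set of permutations in which, for every pair $i<j$ comparable in~$\tree$, the relative order of $i$ and $j$ is prescribed by~$\tree$. Equivalently, a fixed set of pairs must be coinversions and a disjoint fixed set must not; any such subset of~$\fS_n$ is automatically order-convex in the weak order.

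The genuine gap is in step~(a). You appeal to ``general theory of linear extensions'' and claim that the greedy linear extension (always pick the smallest available minimal element of~$P$) is the weak-order minimum of~$\linearExtensions(P)$ for an arbitrary finite poset~$P$. This is false. Take $P$ on $\{1,2,3\}$ with the single relation $3 <_P 1$. The linear extensions are $231$, $312$, $321$; your greedy recipe produces $231$, with coinversion set $\{(1,2),(1,3)\}$, while $312$ has coinversion set $\{(1,3),(2,3)\}$. These are incomparable in weak order, so $\linearExtensions(P)$ has no minimum and is not an interval. Linear extensions of a general poset do form a connected subgraph under adjacent transpositions, as you say, but that does not yield a unique weak-order minimum.

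What makes Cambrian trees special is precisely the content of Reading's theorem: the $\signature$-Cambrian congruence is a \emph{lattice} congruence of the weak order, and classes of any lattice congruence are automatically intervals. Your direct approach can be salvaged, but only by invoking Cambrian-specific structure (for instance, consecutive integers $i$ and $i+1$ are always comparable in~$\tree$, which already kills the counterexample above; more is needed in general). Your closing remark---just cite~\cite{Reading-CambrianLattices}---is in fact what the paper does, and is the honest move here.
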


Therefore, the $\signature$-Cambrian trees are in bijection with the weak order maximal permutations of~$\signature$-Cambrian classes. Using Definition~\ref{def:CambrianCongruence} and Proposition~\ref{prop:CambrianClass}, one can prove that these permutations are precisely the permutations in~$\fS^\signature$ that avoid the signed patterns~$b \dash ac$ with~$\signature_b = +$ and~$ac \dash b$ with~$\signature_b = -$ (for brevity, we write~$\up{b} \dash ac$ and~$ac \dash \down{b}$). It enables us to construct a generating tree~$\generatingTree_{\signature}$ for these permutations. This tree has~$n$ levels, and the nodes at level~$\level$ are labeled by the permutations of~$[\level]$ whose values are signed by the restriction of~$\signature$ to~$[\level]$ and avoiding the two patterns~$\up{b} \dash ac$ and~$ac \dash \down{b}$. The parent of a permutation in~$\generatingTree_\signature$ is obtained by deleting its maximal value. See ~\fref{fig:GeneratingTree} for examples of such trees. The following statement provides another proof that the number of~$\signature$-Cambrian trees on~$n$ nodes is always the Catalan number~$C_n = \frac{1}{n+1}\binom{2n}{n}$, as well as an explicit bijection between~$\signature$- and $\signature'$-Cambrian trees for distinct signatures~$\signature, \signature' \in \pm^n$.

\begin{figure}[h]
  \centerline{\begin{tikzpicture}
  \node(T1) at (0,0) {
    \begin{tikzpicture}[xscale=1.2, yscale=1.3]
      % Children of 231
      \node(P2341) at (0,0) {$\freeGap\up{2}\bannedGap\down{3}\bannedGap\down{4}\freeGap\down{1}\bannedGap$};
      \node(P4231) at (1,0) {$\freeGap\down{4}\freeGap\up{2}\bannedGap\down{3}\freeGap\down{1}\bannedGap$};
      % Children of 321
      \node(P3241) at (2,0) {$\freeGap\down{3}\bannedGap\up{2}\bannedGap\down{4}\freeGap\down{1}\bannedGap$};
      \node(P3421) at (3,0) {$\freeGap\down{3}\bannedGap\down{4}\freeGap\up{2}\freeGap\down{1}\bannedGap$};
      \node(P4321) at (4,0) {$\freeGap\down{4}\freeGap\down{3}\freeGap\up{2}\freeGap\down{1}\bannedGap$};
      % Children of 123
      \node(P1234) at (5,0) {$\freeGap\down{1}\bannedGap\up{2}\bannedGap\down{3}\bannedGap\down{4}\freeGap$};
      \node(P4123) at (6,0) {$\freeGap\down{4}\freeGap\down{1}\bannedGap\up{2}\bannedGap\down{3}\freeGap$};
      % Children of 132
      \node(P1324) at (7,0) {$\freeGap\down{1}\bannedGap\down{3}\bannedGap\up{2}\bannedGap\down{4}\freeGap$};
      \node(P1342) at (8,0) {$\freeGap\down{1}\bannedGap\down{3}\bannedGap\down{4}\freeGap\up{2}\freeGap$};
      \node(P4132) at (9,0) {$\freeGap\down{4}\freeGap\down{1}\bannedGap\down{3}\freeGap\up{2}\freeGap$};
      % Children of 312
      \node(P3124) at (10,0) {$\freeGap\down{3}\bannedGap\down{1}\bannedGap\up{2}\bannedGap\down{4}\freeGap$};
      \node(P3142) at (11,0) {$\freeGap\down{3}\bannedGap\down{1}\bannedGap\down{4}\freeGap\up{2}\freeGap$};
      \node(P3412) at (12,0) {$\freeGap\down{3}\bannedGap\down{4}\freeGap\down{1}\freeGap\up{2}\freeGap$};
      \node(P4312) at (13,0) {$\freeGap\down{4}\freeGap\down{3}\freeGap\down{1}\freeGap\up{2}\freeGap$};
      % Children of 21
      \node(P231) at (0.5,1) {$\freeGap\up{2}\bannedGap\down{3}\freeGap\down{1}\bannedGap$};
      \node(P321) at (3,1) {$\freeGap\down{3}\freeGap\up{2}\freeGap\down{1}\bannedGap$};
      % Children of 12
      \node(P123) at (5.5,1) {$\freeGap\down{1}\bannedGap\up{2}\bannedGap\down{3}\freeGap$};
      \node(P132) at (8,1) {$\freeGap\down{1}\bannedGap\down{3}\freeGap\up{2}\freeGap$};
      \node(P312) at (11.5,1) {$\freeGap\down{3}\freeGap\down{1}\freeGap\up{2}\freeGap$};
      % Children of 1
      \node(P21) at (1.75,2) {$\freeGap\up{2}\freeGap\down{1}\bannedGap$};
      \node(P12) at (8,2) {$\freeGap\down{1}\freeGap\up{2}\freeGap$};
      % Root
      \node(P1) at (4.875,3) {$\freeGap\down{1}\freeGap$};

      \draw (P21) -- (P1);
      \draw (P12) -- (P1);

      \draw (P123) -- (P12);
      \draw (P132) -- (P12);
      \draw (P312) -- (P12);

      \draw (P231) -- (P21);
      \draw (P321) -- (P21);

      \draw (P3412) -- (P312);
      \draw (P3124) -- (P312);
      \draw (P4312) -- (P312);
      \draw (P3142) -- (P312);

      \draw (P1342) -- (P132);
      \draw (P4132) -- (P132);
      \draw (P1324) -- (P132);

      \draw (P1234) -- (P123);
      \draw (P4123) -- (P123);

      \draw (P3241) -- (P321);
      \draw (P3421) -- (P321);
      \draw (P4321) -- (P321);

      \draw (P2341) -- (P231);
      \draw (P4231) -- (P231);
    \end{tikzpicture}
  };

  \node(T2) at (0, -5) {
    \begin{tikzpicture}[xscale=1.2, yscale=1.3]
      % Children of 123
      \node(P4123) at (0,0) {$\freeGap\up{1}\bannedGap\down{2}\bannedGap\down{3}\bannedGap\down{4}\freeGap$};
      \node(P1234) at (1,0) {$\freeGap\down{4}\freeGap\up{1}\bannedGap\down{2}\bannedGap\down{3}\freeGap$};
      % Children of 312
      \node(P4312) at (2,0) {$\freeGap\down{3}\bannedGap\up{1}\bannedGap\down{2}\bannedGap\down{4}\freeGap$};
      \node(P3412) at (3,0) {$\freeGap\down{3}\bannedGap\down{4}\freeGap\up{1}\bannedGap\down{2}\freeGap$};
      \node(P3124) at (4,0) {$\freeGap\down{4}\freeGap\down{3}\freeGap\up{1}\bannedGap\down{2}\freeGap$};
      % Children of 213
      \node(P4213) at (5,0) {$\freeGap\down{2}\bannedGap\up{1}\bannedGap\down{3}\bannedGap\down{4}\freeGap$};
      \node(P2134) at (6,0) {$\freeGap\down{4}\freeGap\down{2}\bannedGap\up{1}\bannedGap\down{3}\freeGap$};
      % Children of 231
      \node(P4231) at (7,0) {$\freeGap\down{2}\bannedGap\down{3}\bannedGap\up{1}\bannedGap\down{4}\freeGap$};
      \node(P2341) at (8,0) {$\freeGap\down{2}\bannedGap\down{3}\bannedGap\down{4}\freeGap\up{1}\freeGap$};
      \node(P2314) at (9,0) {$\freeGap\down{4}\freeGap\down{2}\bannedGap\down{3}\freeGap\up{1}\freeGap$};
      % Children of 321
      \node(P4321) at (10,0) {$\freeGap\down{3}\bannedGap\down{2}\bannedGap\up{1}\bannedGap\down{4}\freeGap$};
      \node(P3421) at (11,0) {$\freeGap\down{3}\bannedGap\down{2}\bannedGap\down{4}\freeGap\up{1}\freeGap$};
      \node(P3241) at (12,0) {$\freeGap\down{3}\bannedGap\down{4}\freeGap\down{2}\freeGap\up{1}\freeGap$};
      \node(P3214) at (13,0) {$\freeGap\down{4}\freeGap\down{3}\freeGap\down{2}\freeGap\up{1}\freeGap$};
      % Children of 12
      \node(P123) at (0.5,1) {$\freeGap\up{1}\bannedGap\down{2}\bannedGap\down{3}\freeGap$};
      \node(P312) at (3,1) {$\freeGap\down{3}\freeGap\up{1}\bannedGap\down{2}\freeGap$};
      % Children of 21
      \node(P213) at (5.5,1) {$\freeGap\down{2}\bannedGap\up{1}\bannedGap\down{3}\freeGap$};
      \node(P231) at (8,1) {$\freeGap\down{2}\bannedGap\down{3}\freeGap\up{1}\freeGap$};
      \node(P321) at (11.5,1) {$\freeGap\down{3}\freeGap\down{2}\freeGap\up{1}\freeGap$};
      % Children of 1
      \node(P12) at (1.75,2) {$\freeGap\up{1}\bannedGap\down{2}\freeGap$};
      \node(P21) at (8,2) {$\freeGap\down{2}\freeGap\up{1}\freeGap$};
      % Root 
      \node(P1) at (4.875,3) {$\freeGap\up{1}\freeGap$};

      \draw (P4321) -- (P321);
      \draw (P3421) -- (P321);
      \draw (P3241) -- (P321);
      \draw (P3214) -- (P321);

      \draw (P4231) -- (P231);
      \draw (P2341) -- (P231);
      \draw (P2314) -- (P231);

      \draw (P4213) -- (P213);
      \draw (P2134) -- (P213);

      \draw (P4312) -- (P312);
      \draw (P3412) -- (P312);
      \draw (P3124) -- (P312);

      \draw (P4123) -- (P123);
      \draw (P1234) -- (P123);

      \draw (P213) -- (P21);
      \draw (P231) -- (P21);
      \draw (P321) -- (P21);

      \draw (P123) -- (P12);
      \draw (P312) -- (P12);

      \draw (P12) -- (P1);
      \draw (P21) -- (P1);
    \end{tikzpicture}
  };
\end{tikzpicture}}
  \caption{The generating trees~$\generatingTree_\signature$ for the signatures~$\signature = {-}{+}{-}{-}$ (top) and ${\signature = {+}{-}{-}{-}}$ (bottom). Free gaps are marked with a blue dot.}
  \label{fig:GeneratingTree}
\end{figure}

\begin{proposition}
\label{prop:GeneratingTree}
For any signatures~$\signature, \signature' \in \pm^n$, the generating trees~$\generatingTree_\signature$ and~$\generatingTree_{\signature'}$ are isomorphic.
\end{proposition}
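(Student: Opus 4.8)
The plan is to equip every generating tree $\generatingTree_\signature$ with one and the same succession rule — the classical one counting the Catalan numbers — so that the trees $\generatingTree_\signature$ are all isomorphic to a single abstract labeled tree, the isomorphism being then produced by a routine recursion.

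\textbf{Step 1 (free gaps).} Fix $\signature \in \pm^n$ and a node $\tau$ at level $\level < n$ of $\generatingTree_\signature$, that is, a permutation of $[\level]$ whose values carry the signs $\signature_1, \dots, \signature_\level$ and which avoids the patterns $\up{b}\dash ac$ and $ac\dash\down{b}$. Call a \emph{gap} of $\tau$ one of the $\level+1$ slots before, between, or after its entries, and call a gap \emph{free} when inserting the new value $\level+1$ there yields again a permutation avoiding the two patterns. I would first describe these free gaps. Since $\level+1$ is the largest value, in any occurrence of $\up{b}\dash ac$ or $ac\dash\down{b}$ it can only play the role of $c$, in which case the role of $a$ is forced to be the entry immediately to its left; moreover inserting $\level+1$ at a gap only destroys the consecutive pair sitting there and creates a decreasing consecutive pair to its right, and neither of these operations can create a new occurrence of either pattern. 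From this I read off the criterion: the leftmost gap is always free, and the gap immediately after an entry $a$ is free if and only if no positive value larger than $a$ lies before $a$ and no negative value larger than $a$ lies after $a$ in $\tau$. In particular this criterion does not mention the sign $\signature_{\level+1}$. The same kind of bookkeeping, applied to the consecutive pair that gets merged when the value $\level+1$ is deleted, shows that the parent map on $\generatingTree_\signature$ is well defined, so $\generatingTree_\signature$ is genuinely a tree.

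\textbf{Step 2 (succession rule).} Suppose $\tau$ has $\ell$ free gaps and let $\tau'$ be obtained by inserting $\level+1$ at the $i$-th free gap of $\tau$ from the left, $1 \le i \le \ell$. A short computation, tracking how the two clauses of the criterion of Step 1 are affected by the inserted entry, shows that the inserted value kills the free gaps lying on one side of it and preserves those on the other side, the two sides being exchanged according to the sign $\signature_{\level+1}$ (a positive $\level+1$ plays the role of $b$ in a pattern $\up{b}\dash ac$ for the entries to its right, while a negative $\level+1$ plays the role of $b$ in $ac\dash\down{b}$ for the entries to its left), the gap immediately after $\level+1$ being always free. Concretely, $\tau'$ has $i+1$ free gaps if $\signature_{\level+1} = +$ and $\ell-i+2$ free gaps if $\signature_{\level+1} = -$. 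Either way, as $i$ runs through $\{1, \dots, \ell\}$ the children of $\tau$ realize exactly the labels $2, 3, \dots, \ell+1$, each exactly once. Since the unique node at level $1$ consists of a single signed value and has precisely $2$ free gaps for every signature, it follows that, for every $\signature \in \pm^n$, labeling each node of $\generatingTree_\signature$ by its number of free gaps turns it into the rooted labeled tree with root label $2$ in which a node with label $\ell$ has exactly one child of label $k$ for each $k \in \{2, \dots, \ell+1\}$.

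\textbf{Step 3 (conclusion and main obstacle).} Because the children of any node carry pairwise distinct labels, I can then build an isomorphism $\generatingTree_\signature \to \generatingTree_{\signature'}$ by recursion on the level: match the two roots, and whenever two nodes have been matched match the child of one with label $k$ to the child of the other with label $k$; an immediate induction shows this is a bijection preserving the parent relation, hence a tree isomorphism. The only real work is the case analysis of Step 2: one has to keep careful track of how the clauses ``no larger positive value before'' and ``no larger negative value after'' of the freeness criterion are modified by the insertion of $\level+1$ — which behaves in opposite ways according to the sign $\signature_{\level+1}$ — and to handle separately the insertion at the leftmost gap; everything else is formal.
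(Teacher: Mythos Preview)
Your proof is correct and follows essentially the same route as the paper's: both establish the succession rule that a node with $\ell$ free gaps has exactly one child with $k$ free gaps for each $k \in \{2,\dots,\ell+1\}$ (this is the paper's Lemma~\ref{lem:GeneratingTree}), and then conclude by matching children with the same label recursively. Your Step~1 spells out an explicit freeness criterion that the paper leaves implicit, but the core computation in Step~2 and the recursive matching in Step~3 are exactly the paper's argument.
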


For the proof, we consider the possible positions of~$\level+1$ in the children of a permutation~$\tau$ at level~$\level$ in~$\generatingTree_\signature$. Index by~$\{0, \dots, \level\}$ from left to right the gaps before the first letter, between two consecutive letters, and after the last letter of~$\tau$. We call \defn{free gaps} the gaps in~$\{0, \dots, \level\}$ where placing~$\level+1$ does not create a pattern~$ac \dash \down{b}$ or~$\up{b} \dash ac$. They are marked with a blue point in \fref{fig:GeneratingTree}.

\begin{lemma}
\label{lem:GeneratingTree}
A permutation with~$k$ free gaps has $k$ children in~$\generatingTree_\signature$, whose numbers of free gaps range from~$2$ to~$k+1$.
\end{lemma}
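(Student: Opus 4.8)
The plan is to analyze directly, for a permutation $\tau$ at level $\level$ in $\generatingTree_\signature$, which of the gaps $\{0,\dots,\level\}$ are free, and then to understand what happens to the set of free gaps when $\level+1$ is inserted into one of them. First I would record the elementary observation that inserting $\level+1$ into a gap $g$ cannot create a pattern in which $\level+1$ plays the role of $a$ or $c$ (those must be smaller than the ``$b$'') nor, since $\level+1 = b$ forces $\signature_{\level+1}$ to be its sign, can it create a forbidden pattern with $\level+1$ as $b$ unless that very sign is violated — but a pattern $ac\dash\down{b}$ or $\up b\dash ac$ with $b=\level+1$ would require two letters smaller than $\level+1$ on the correct side, which, given that $\tau$ already avoids both patterns, is exactly the condition defining a \emph{non}-free gap. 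Hence: the children of $\tau$ in $\generatingTree_\signature$ are in bijection with the free gaps of $\tau$, giving the first assertion. (One must also check that inserting into a free gap does not create a forbidden pattern among the old letters with $\level+1$ absent — it does not, since deleting $\level+1$ from the child returns $\tau$ — so the insertion genuinely lands in $\generatingTree_\signature$.)

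For the second assertion I would fix a child $\tau'$ obtained by inserting $\level+1$ into the $i$-th free gap of $\tau$, and compare the free gaps of $\tau'$ (there are $\level+2$ gaps now) with those of $\tau$. The key point is locality: whether a gap is free depends only on the letters to its left and to its right through the patterns $ac\dash\down b$ and $\up b\dash ac$, and inserting the \emph{largest} value $\level+1$ can only ever serve as a ``$b$'' in such a pattern (it is too big to be $a$ or $c$). So a gap of $\tau$ that was free and does not abut the newly inserted $\level+1$ stays free; a gap that was non-free stays non-free. Thus the only gaps whose status can change are (a) the two new gaps flanking $\level+1$, and (b) — because $\level+1$ is negative or positive — at most nothing else: if $\signature_{\level+1}=-$ then $\level+1$ can close off a pattern $ac\dash\down{\level+1}$, which only bears on the two flanking gaps; symmetrically if $\signature_{\level+1}=+$. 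I would then check the two flanking gaps explicitly. The gap immediately on the side of $\level+1$ away from where a forbidden pattern could form is free; the gap on the other side is free unless $\level+1$ already has a partner making $ac\dash\down{\level+1}$ (resp.\ $\up{\level+1}\dash ac$) imminent, which cannot happen right after insertion since there is only one letter equal to $\level+1$. So \emph{both} flanking gaps of $\level+1$ are free in $\tau'$. Consequently the free gaps of $\tau'$ are: all the free gaps of $\tau$ strictly to one side of the insertion point, all the free gaps of $\tau$ strictly to the other side, plus the two flanking gaps — but the flanking gaps replace the single inserted gap, so inserting into the $i$-th free gap (counting $1$ to $k$) yields exactly $i$ free gaps that are ``at or before'' $\level+1$ together with $k+1-i$ that are ``at or after'', for a total of $k+1$ free gaps. (Here the endpoint gaps $0$ and $\level+1$ of $\tau$, which are always free, are what pins the count; one verifies the arithmetic by noting the $i$-th free gap of $\tau$ is split into two free gaps, and every other free gap of $\tau$ survives.) As $i$ ranges over $\{1,\dots,k\}$, the number $k+1$ is constant, so in fact every child has exactly $k+1$ free gaps.

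Here the statement as quoted says the numbers ``range from $2$ to $k+1$,'' which suggests the intended bookkeeping is slightly different from ``always $k+1$'': presumably one of the two flanking gaps of $\level+1$ can fail to be free precisely when the sign $\signature_{\level+1}$ together with the neighbor already on that side forms the start of a forbidden pattern, so the child obtained from the $i$-th free gap has either $i + (k+1-i) - 1 = k$ or, at the extreme gaps, as few as $2$ free gaps. I would therefore redo the flanking-gap analysis case by case on $\signature_{\level+1}\in\{+,-\}$ and on whether the insertion gap is interior or terminal, tracking in each case exactly how many of the two new flanking gaps survive as free; summing the surviving free gaps to the left and to the right of $\level+1$ then gives a count that is $i' + 2$ (or similar) for the $i'$-th child, sweeping out $\{2,\dots,k+1\}$ as $i'$ runs over the free gaps. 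The main obstacle is precisely this flanking-gap case analysis: getting the sign conditions right so that the count of free gaps of the $i$-th child comes out to the claimed value, and in particular correctly identifying which neighbor letters can pair with $\level+1$ to kill the freeness of an adjacent gap. Everything else — the bijection between children and free gaps, and the locality/monotonicity of the freeness condition under inserting the maximum — is routine once the pattern-avoidance definitions are unwound.
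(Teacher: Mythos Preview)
Your proposal contains a genuine error in the locality claim. You assert that ``a gap of~$\tau$ that was free and does not abut the newly inserted~$\level+1$ stays free,'' and hence that only the two flanking gaps of~$\level+1$ can change status. This is false: the newly inserted~$\level+1$, being the maximum value, can serve as the witness~$b$ for \emph{every} gap on one entire side of it, not just the adjacent one. Concretely, suppose~$\signature_{\level+1} = -$ and we insert~$\level+1$ at gap~$j$. Then for any gap~$g$ with~$1 \le g \le j$, inserting~$\level+2$ at~$g$ produces the pattern~$\tau_g(\level+2) \dash \down{\level+1}$, which is exactly~$ac \dash \down{b}$ with~$a = \tau_g < b = \level+1 < c = \level+2$. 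So \emph{all} gaps~$1,\dots,j$ become non-free, regardless of whether they were free in~$\tau$. Symmetrically, if~$\signature_{\level+1} = +$, all gaps strictly to the right of~$\level+1$ are killed.

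This is why your count comes out to a constant~$k+1$, contradicting the lemma. The correct picture (and the paper's one-line proof) is: after inserting at free gap~$j$, the free gaps of the child are exactly gap~$0$, gap~$j+1$ (immediately after~$\level+1$, free because no value exceeds~$\level+1$), and the free gaps of~$\tau$ strictly after~$j$ (if~$\signature_{\level+1} = -$) or at positions~$\le j$ (if~$\signature_{\level+1} = +$). If~$j$ is the~$i$-th free gap of~$\tau$, this gives~$2 + (k-i)$ free gaps in the first case and~$1 + i$ in the second, and in either case the values sweep out~$\{2,\dots,k+1\}$ as~$i$ ranges over~$[k]$. Your late-stage hedge correctly senses that one flanking gap may die, but still misses that the damage extends to all non-flanking gaps on that side.
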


\begin{proof}
Let~$\tau$ be a permutation at level~$\level$ in~$\generatingTree_\signature$ with $k$ free gaps. Let~$\sigma$ be the child of~$\tau$ in~$\generatingTree_\signature$ obtained by inserting~$\level+1$ at a free gap~$j \in \{0, \dots, \level\}$. If~$\signature_{\level + 1}$ is negative (resp.~positive), then the free gaps of~$\sigma$ are $0$, $j+1$ and the free gaps of~$\tau$ after~$j$ (resp.~before~$j+1$). The result follows.
\end{proof}

\begin{proof}[Proof of Proposition~\ref{prop:GeneratingTree}]
Order the children of a permutation of~$\generatingTree_\signature$ from left to right by increasing number of free gaps as in \fref{fig:GeneratingTree}. Lemma~\ref{lem:GeneratingTree} shows that the shape of the resulting tree is independent of~$\signature$. It ensures that the trees~$\generatingTree_\signature$ and~$\generatingTree_{\signature'}$ are isomorphic and provides an explicit bijection between the~$\signature$-Cambrian trees and~$\signature'$-Cambrian trees.
\end{proof}

%%%%%%%%%%%%%%%%%

\subsection{Rotations and Cambrian lattices}

We now present rotations in Cambrian trees, a local operation which transforms a $\signature$-Cambrian tree into another $\signature$-Cambrian tree where a single oriented cut differs (see Proposition~\ref{prop:rotation}).

\begin{definition}
Let~$i \to j$ be an edge in a Cambrian tree~$\tree$, with~$i < j$. Let~$L$ denote the left subtree of~$i$ and~$B$ denote the remaining incoming subtree of~$i$, and similarly, let~$R$ denote the right subtree of~$j$ and~$A$ denote the remaining outgoing subtree of~$j$. Let~$\tree'$ be the oriented tree obtained from~$\tree$ just reversing the orientation of~$i \to j$ and attaching the subtrees~$L$ and~$A$ to~$i$ and the subtrees~$B$ and~$R$ to~$j$. The transformation from~$\tree$ to~$\tree'$ is called \defn{rotation} of the edge~$i \to j$. See \fref{fig:rotation}.

\begin{figure}[h]
  \centerline{\begin{minipage}{2cm} rotation \\ of $i \to j$ \end{minipage} \hspace{-1cm} $\begin{array}{c} \tree \\ \!\rotatebox{-90}{$\verylongrightarrow\;\;\;$} \\ \,\tree' \end{array} \quad \vcenter{\hbox{\includegraphics{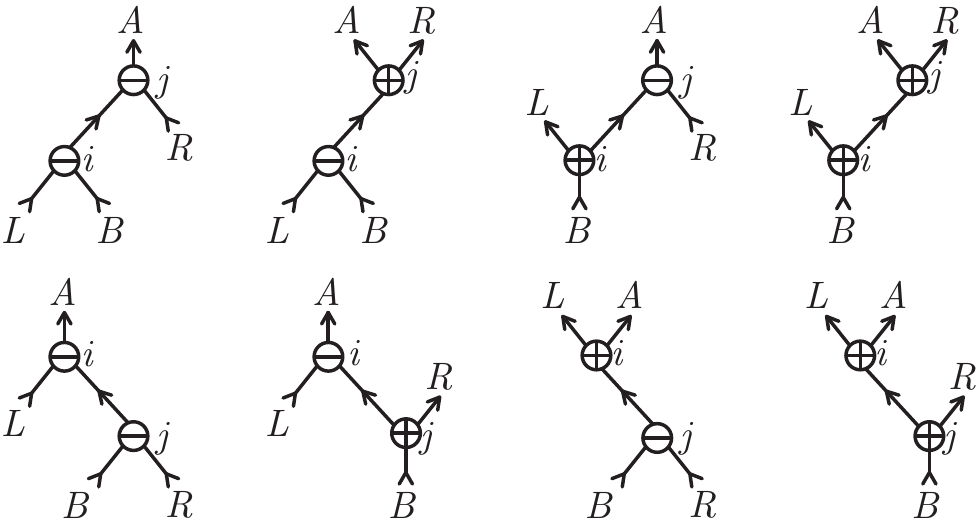}}}$}
  \caption{Rotations in Cambrian trees: the tree~$\tree$ (top) is transformed into the tree~$\tree'$ (bottom) by rotation of the edge~$i \to j$. The four cases correspond to the possible signs of~$i$~and~$j$.}
  \label{fig:rotation}
\end{figure}
\end{definition}

The following proposition states that rotations are compatible with Cambrian trees and their edge cuts. An \defn{edge cut} in a Cambrian tree~$\tree$ is the ordered partition~$\edgecut{X}{Y}$ of the vertices of~$\tree$ into the set~$X$ of vertices in the source set and the set~$Y$ of vertices in the target set of an oriented edge of~$\tree$.

\begin{proposition}[\cite{LangePilaud}]
\label{prop:rotation}
The result~$\tree'$ of the rotation of an edge~$i \to j$ in a $\signature$-Cambrian tree~$\tree$ is a $\signature$-Cambrian tree. Moreover, $\tree'$ is the unique $\signature$-Cambrian tree with the same edge cuts as~$\tree$, except the cut defined by the edge~$i \to j$.
\end{proposition}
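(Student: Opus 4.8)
The plan is to verify the statement in two stages: first that $\tree'$ is again a $\signature$-Cambrian tree, and then that it is characterized among $\signature$-Cambrian trees by its edge cuts. For the first stage, I would argue locally. A rotation only alters the attachments around the edge $i \to j$: the vertices $i$ and $j$ and the four subtrees $L, B, R, A$ defined in the statement. I would go through the four cases for the signs of $i$ and $j$ shown in \fref{fig:rotation} and check, in each, conditions (i) and (ii) of the definition of a Cambrian tree at the vertices $i$ and $j$ (no other vertex changes its local picture, so those are the only conditions to re-examine). Condition (i) — one parent and two children, or one child and two parents — is preserved by construction, since the rotation keeps the number of incoming and outgoing arcs at $i$ and at $j$ unchanged while only swapping which subtree is "left" and which is "right". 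For condition (ii), I would use the fact that in the original tree $\tree$ the edge $i \to j$ with $i < j$ already forces $L$ to lie below $i$, $R$ to lie above $j$, and the relative positions of $B$ and $A$; combined with $i<j$, reattaching $L$ and $A$ to $i$ and $B$ and $R$ to $j$ respects the left/right label inequalities. The one subtlety is the signs: a negative $i$ must split its two incoming subtrees as (smaller $\mid$ larger), a positive $i$ its two outgoing subtrees, and similarly for $j$; here I would lean on Figure~\ref{fig:rotation} to see that in each of the four sign combinations the new left/right assignment is the correct one.

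For the second stage, the key observation is that the edge cuts of a Cambrian tree $\tree$ on vertex set $[n]$ determine $\tree$ completely: an edge $u\to v$ of $\tree$ is equivalent to the data of the ordered partition $\edgecut{X}{Y}$ of $[n]$ into source side and target side, and conversely knowing all $n-1$ such ordered partitions one can reconstruct which vertex is incident to which cut on which side, hence the whole oriented tree. So it suffices to show that (a) every edge cut of $\tree$ other than the one defined by $i\to j$ is also an edge cut of $\tree'$, and vice versa, and (b) the edge $i\to j$ of $\tree$ and the edge $j\to i$ of $\tree'$ produce different ordered partitions. For (a): the rotation only moves the subtrees $L, B, R, A$ around the single edge $i\to j$, so every other edge of $\tree$ survives in $\tree'$ with the same two sides — an edge inside one of $L,B,R,A$, or the edge connecting $L$ to $i$, etc., partitions $[n]$ in exactly the same way before and after, because rotation permutes these four blocks among the two endpoints but does not break them apart. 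For (b): the cut of $i\to j$ in $\tree$ has source side $L \cup \{i\} \cup B$ (using $i<j$ and the notation of the definition) and target side the complement, whereas in $\tree'$ the reversed edge $j \to i$ has source side $B \cup \{j\} \cup R$; since $i$ is on the source side in $\tree$ and on the target side in $\tree'$, the two ordered partitions genuinely differ. Uniqueness of $\tree'$ then follows: any $\signature$-Cambrian tree with the prescribed $n-1$ edge cuts must have those cuts as its edge cuts, and the reconstruction map from edge cuts to trees is injective, so it equals $\tree'$.

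The main obstacle I anticipate is the bookkeeping in the second stage, specifically making precise the claim that "the edge cuts determine the tree" and that rotation preserves all cuts but one. One has to be careful that the four subtrees $L,B,R,A$ really are the only things that move, and that reattaching them does not accidentally coincide with some other cut of $\tree$ or fail to be a cut of $\tree'$; the cleanest way to handle this is to describe each edge of $\tree$ other than $i\to j$ explicitly (it is either internal to one of the four blocks, or it is the "outer" edge of one of $L,B,R,A$ joining that block to $i$ or to $j$) and track its partition of $[n]$ through the rotation. Since Proposition~\ref{prop:rotation} is quoted from~\cite{LangePilaud}, in the write-up I would likely give the short local verification for the first stage and refer to~\cite{LangePilaud} (or to the combinatorics of spines/triangulations via Remark~\ref{rem:triangulation}, where a rotation is just a diagonal flip and the statement is classical) for the details of the second.
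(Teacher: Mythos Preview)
The paper does not prove this proposition: it is stated with attribution to~\cite{LangePilaud} and no argument is given beyond the remark that rotation corresponds to a diagonal flip in the dual triangulation (Remark following the proposition, referring to~\cite[Lemma~13]{LangePilaud}). So there is nothing to compare your plan against; you are supplying a proof where the paper simply cites one.

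Your two-stage plan is sound, and your local verification of the Cambrian conditions at $i$ and $j$ is the right approach for the first stage. One point to tighten in the second stage: the uniqueness claim in the proposition is not that the $n-1$ edge cuts of $\tree'$ determine $\tree'$ (which is what your last sentence argues), but that $\tree'$ is the \emph{only} $\signature$-Cambrian tree, other than $\tree$ itself, whose edge-cut set contains the $n-2$ cuts $\{\text{cuts of }\tree\}\smallsetminus\{\text{cut of }i\to j\}$. In other words, you need that the remaining $(n-1)$th cut is forced once the other $n-2$ are fixed and the result is required to differ from $\tree$. Your injectivity-of-reconstruction argument does not quite give this. The cleanest fix is exactly the one you already gesture at: pass to the dual triangulation of $\polygon$ via Remark~\ref{rem:triangulation}, where the $n-2$ shared cuts become $n-2$ fixed diagonals, the missing diagonal bounds a quadrilateral, and that quadrilateral has precisely two diagonals --- yielding $\tree$ and $\tree'$ as the only two completions. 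Since you already planned to invoke the flip picture, this is a small adjustment rather than a real gap.
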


\begin{remark}[Rotations and flips]
Rotating an edge~$e$ in a $\signature$-Cambrian tree~$\tree$ corresponds to flipping the dual diagonal~$e^*$ of the dual triangulation~$\tree^*$ of the polygon~$\polygon$. See~\cite[Lemma~13]{LangePilaud}.
\end{remark}

Define the \defn{increasing rotation graph} on~$\CambTrees(\signature)$ to be the graph whose vertices are the $\signature$-Cambrian trees and whose arcs are increasing rotations~$\tree \to \tree'$, \ie where the edge~$i \to j$ in~$\tree$ is reversed to the edge~$i \leftarrow j$ in~$\tree'$ for~$i < j$. See \fref{fig:CambrianLattices} for an illustration. The following statement, adapted from N.~Reading's work~\cite{Reading-CambrianLattices}, asserts that this graph is acyclic, that its transitive closure defines a lattice, and that this lattice is closely related to the weak order. See \fref{fig:lattices}.

\hvFloat[floatPos=p, capWidth=h, capPos=r, capAngle=90, objectAngle=90, capVPos=c, objectPos=c]{figure}
{\includegraphics[width=1.65\textwidth]{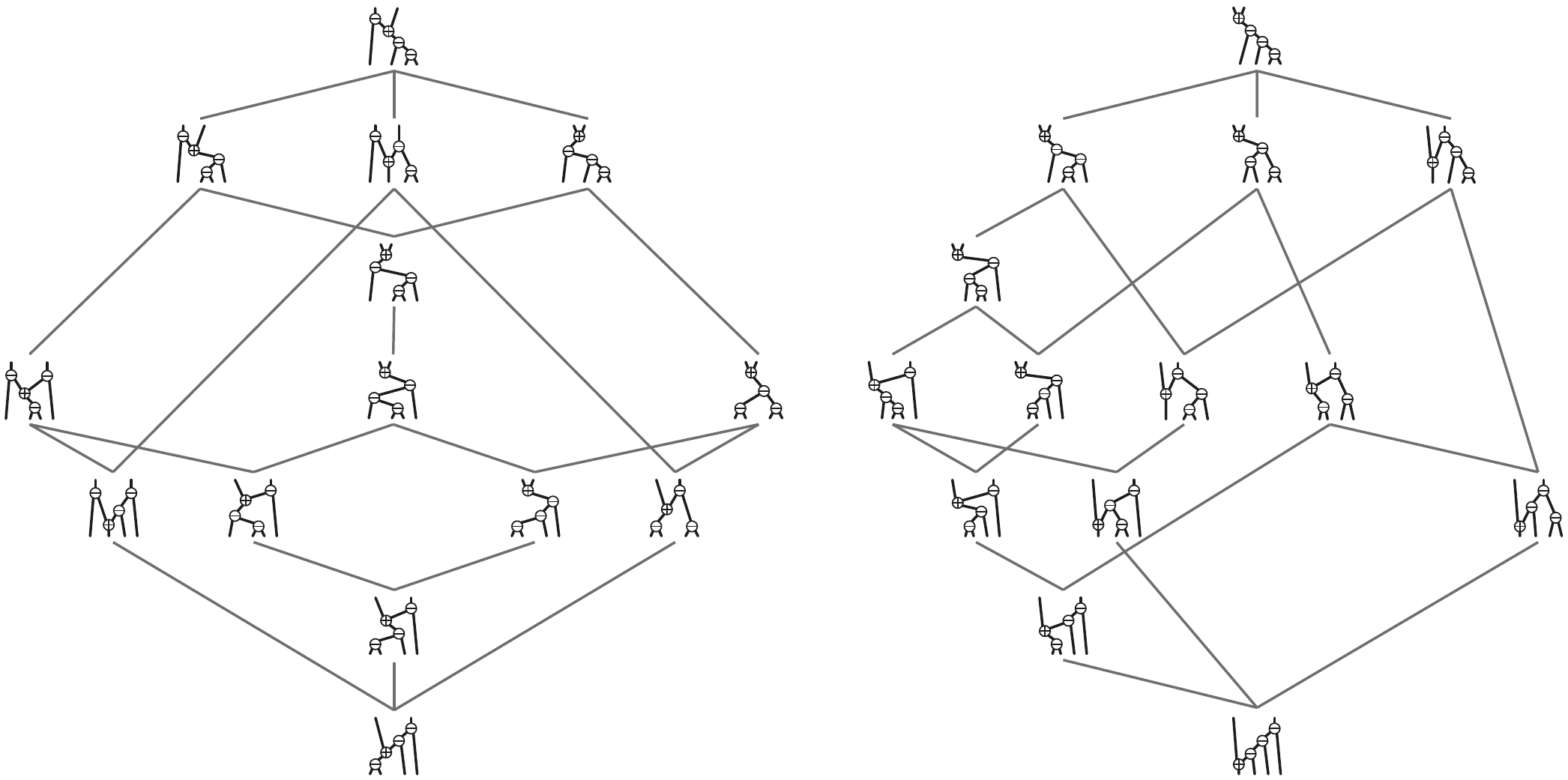}}
{The $\signature$-Cambrian lattices on $\signature$-Cambrian trees, for the signatures $\signature = {-}{+}{-}{-}$ (left) and ${\signature = {+}{-}{-}{-}}$~(right).}
{fig:CambrianLattices}

\begin{proposition}[\cite{Reading-CambrianLattices}]
The transitive closure of the increasing rotation graph on~$\CambTrees(\signature)$ is a lattice, called \defn{$\signature$-Cambrian lattice}. The map~$\surjectionPermAsso : \fS^\signature \to \CambTrees(\signature)$ defines a lattice homomorphism from the weak order on~$\fS^\signature$ to the $\signature$-Cambrian lattice on~$\CambTrees(\signature)$.
\end{proposition}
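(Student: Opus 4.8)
The plan is to realise the $\signature$-Cambrian lattice as the lattice quotient of the weak order on~$\fS^\signature$ by the $\signature$-Cambrian congruence~$\equiv_\signature$, and to identify the quotient map with~$\surjectionPermAsso$. I would use the classical order-theoretic characterization of lattice congruences of a finite lattice~$L$ (the one exploited throughout N.~Reading's work~\cite{Reading-CambrianLattices}): an equivalence relation~$\equiv$ on~$L$ is a lattice congruence if and only if \textbf{(i)}~every class is an interval~$[\pi_\downarrow(x),\pi_\uparrow(x)]$ of~$L$, \textbf{(ii)}~the map~$\pi_\downarrow$ sending an element to the bottom of its class is order preserving, and \textbf{(iii)}~the map~$\pi_\uparrow$ sending an element to the top of its class is order preserving. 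When this holds, the quotient~$L/{\equiv}$ ordered by~$[x]\le[y]\iff\pi_\downarrow(x)\le\pi_\downarrow(y)\iff\pi_\uparrow(x)\le\pi_\uparrow(y)$ is a lattice, and the projection~$L\to L/{\equiv}$ is a lattice homomorphism.

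First I would check this criterion for the weak order on~$\fS^\signature$ and the congruence~$\equiv_\signature$. Condition~\textbf{(i)} is precisely the statement of N.~Reading recalled above, that all $\signature$-Cambrian classes are intervals of the weak order. For~\textbf{(ii)} and~\textbf{(iii)} it suffices, by transitivity, to verify order preservation along the cover relations of the weak order: if~$\tau\lessdot\tau'$, so that~$\tau'$ is obtained from~$\tau$ by exchanging two values lying in consecutive positions to create exactly one new coinversion, one must show~$\pi_\downarrow(\tau)\le\pi_\downarrow(\tau')$ and~$\pi_\uparrow(\tau)\le\pi_\uparrow(\tau')$. Here I would use the description from Section~\ref{subsec:CambrianClasses} of the bottom (resp.\ top) element of a Cambrian class as the permutation of~$\fS^\signature$ avoiding the signed patterns~$\up{b}\dash ac$ and~$ac\dash\down{b}$, and run through the finitely many configurations of the exchange relative to these patterns; the outcome is that~$\pi_\downarrow(\tau')$ is obtained from~$\pi_\downarrow(\tau)$ either by the identity or by a single weak-order cover, and symmetrically for~$\pi_\uparrow$.

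Granting the criterion, the quotient~$\fS^\signature/{\equiv_\signature}$ is a lattice, its elements are the $\signature$-Cambrian classes, and by Proposition~\ref{prop:CambrianClass} these are in bijection with~$\CambTrees(\signature)$ via~$\surjectionPermAsso$, which is therefore identified with the quotient map. It remains to recognise the order of this quotient as the transitive closure of the increasing rotation graph. The quotient order is generated by the relations~$[\tau]\lessdot[\tau']$ coming from weak-order covers~$\tau\lessdot\tau'$ with~$\tau\not\equiv_\signature\tau'$; so I would show, by inspecting the insertion algorithm, that such a cover changes the insertion tree in exactly one edge cut, whence by Proposition~\ref{prop:rotation} the trees~$\surjectionPermAsso(\tau)$ and~$\surjectionPermAsso(\tau')$ are related by a rotation, necessarily increasing since~$\tau<\tau'$. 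Conversely every increasing rotation~$\tree\to\tree'$ lifts to such a cover: choose a linear extension of~$\tree$ in which the two endpoints of the rotated edge are consecutive — this is possible because the unique path between them in the tree is that edge — and exchange these two values to obtain a linear extension of~$\tree'$. Hence the quotient order coincides with the transitive closure of the increasing rotation graph, which is thus the announced lattice, and~$\surjectionPermAsso$ is a lattice homomorphism from the weak order on~$\fS^\signature$ to it.

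I expect the main obstacle to be the two finite case analyses hidden above: verifying order preservation of~$\pi_\downarrow$ and~$\pi_\uparrow$ through the pattern characterization, keeping track of the two signed patterns and of the sign of the jumped letter, and matching a weak-order cover with a rotation, which requires confirming through the four sign configurations of \fref{fig:rotation} that precisely one edge cut is altered. The passage from ``$\equiv_\signature$ is a lattice congruence'' to ``the quotient is a lattice with homomorphic projection~$\surjectionPermAsso$'' is then purely formal.
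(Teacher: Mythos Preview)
The paper does not supply its own proof here: the proposition is quoted from N.~Reading~\cite{Reading-CambrianLattices} without argument. Your outline is the standard route and essentially what Reading does --- establish that~$\equiv_\signature$ is a lattice congruence of the weak order via the interval-plus-order-preserving-projections criterion, and then identify the resulting quotient order with increasing rotations through Proposition~\ref{prop:rotation}.

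Two small corrections to your sketch. First, the patterns $\up{b}\dash ac$ and $ac\dash\down{b}$ you cite characterise the \emph{maximal} elements of the Cambrian classes (Section~\ref{subsec:CambrianClasses}); the minimal elements avoid the mirror patterns $\up{b}\dash ca$ and $ca\dash\down{b}$, so adjust whichever projection you analyse accordingly. Second, the criterion only requires $\pi_\downarrow(\tau)\le\pi_\downarrow(\tau')$ whenever $\tau\lessdot\tau'$, not that the two projections differ by at most one weak-order cover; the stronger claim is not what you should try to check directly, and targeting the bare inequality keeps the case analysis honest. With these adjustments the argument goes through.
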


Note that the minimal (resp.~maximal) $\signature$-Cambrian tree is an oriented path from~$1$ to~$n$ (resp.~from~$n$ to~$1$) with an additional incoming leaf at each negative vertex and an additional outgoing leaf at each positive vertex. See \fref{fig:CambrianLattices}.

\begin{example}
When~$\signature = (-)^n$, the Cambrian lattice is the classical Tamari lattice~\cite{TamariFestschrift}. It can be defined equivalently by left-to-right rotations in planar binary trees, by slope increasing flips in triangulations of~$\polygon[(-)^n]$, or as the quotient of the weak order by the sylvester congruence.
\end{example}

%%%%%%%%%%%%%%%%%

\subsection{Canopy}
\label{subsec:canopy}

The canopy of a binary tree was already used by J.-L.~Loday in~\cite{LodayRonco, Loday} but the name was coined by X.~Viennot~\cite{Viennot}. It was then extended to Cambrian trees (or spines) in~\cite{LangePilaud} to define a surjection from the associahedron~$\Asso$ to the parallelepiped~$\Para$ generated by the simple roots. The main observation is that the vertices~$i$ and~$i+1$ are always comparable in a Cambrian tree (otherwise, they would be in distinct subtrees of a vertex~$j$ which should then lie in between~$i$ and~$i+1$).

\begin{definition}
The \defn{canopy} of a Cambrian tree~$\tree$ is the sequence~$\surjectionAssoPara(\tree) \in \pm^{n-1}$ defined by ${\surjectionAssoPara(\tree)_i = -}$ if $i$ is above~$i+1$ in~$\tree$ and~${\surjectionAssoPara(\tree)_i = +}$ if $i$ is below~$i+1$ in~$\tree$.
\end{definition}

For example, the canopy of the Cambrian tree of \fref{fig:leveledCambrianTree}\,(left) is~${-}{+}{+}{-}{+}{-}$. The canopy of~$\tree$ behaves nicely with the linear extensions of~$\tree$ and with the Cambrian lattice. To state this, we define for a permutation~$\tau \in \fS^\signature$ the sequence~$\surjectionPermPara(\tau) \in \pm^{n-1}$, where~$\surjectionPermPara(\tree)_i = -$ if~$\tau^{-1}(i) > \tau^{-1}(i+1)$ and~$\surjectionPermPara(\tree)_i = +$ otherwise. In other words, $\surjectionPermPara(\tau)$ records the \defn{recoils} of the permutation~$\tau$, \ie the \defn{descents} of the inverse permutation of~$\tau$.

\begin{proposition}
\label{prop:commutativeDiagram}
The maps~$\surjectionPermAsso, \surjectionAssoPara$, and~$\surjectionPermPara$ define the following commutative diagram of lattice homomorphisms:
\[
\begin{tikzpicture}
  \matrix (m) [matrix of math nodes,row sep=1.5em,column sep=5em,minimum width=2em]
  {
     \fS^\signature  	&					& \pm^{n-1}	\\
						& \CambTrees(\signature) &			\\
  };
  \path[->>]
    (m-1-1) edge node [above] {$\surjectionPermPara$} (m-1-3)
                 edge node [below] {$\surjectionPermAsso$} (m-2-2.west)
    (m-2-2.east) edge node [below] {$\quad\surjectionAssoPara$} (m-1-3);
\end{tikzpicture}
\]
\end{proposition}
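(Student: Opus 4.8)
# Proof Proposal for Proposition \ref{prop:commutativeDiagram}

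The plan is to prove this in three stages: first establish that the triangle commutes at the level of sets, then verify that each of the three maps is a surjective lattice homomorphism, and finally observe that commutativity of lattice homomorphisms is inherited from commutativity of the underlying set maps.

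\medskip
\noindent\textbf{Step 1: Commutativity of the diagram.}
I would show that $\surjectionAssoPara \circ \surjectionPermAsso = \surjectionPermPara$ pointwise. Fix $\tau \in \fS^\signature$ and let $\tree = \surjectionPermAsso(\tau)$, so by Proposition~\ref{prop:CambrianClass} (or rather the proposition characterizing fibers) $\tau$ is a linear extension of $\tree$. I must check that $i$ is above $i+1$ in $\tree$ if and only if $\tau^{-1}(i) > \tau^{-1}(i+1)$. As noted in the paragraph introducing the canopy, $i$ and $i+1$ are always comparable in $\tree$: if they lay in distinct subtrees of some vertex $j$, then $j$ would satisfy $i < j < i+1$, impossible. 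Since $\tau$ is a linear extension of the transitive closure of $\tree$, if $i$ is above $i+1$ (\ie $i+1 \to \cdots \to i$ in $\tree$) then $i+1$ precedes $i$ in every linear extension, so $\tau^{-1}(i+1) < \tau^{-1}(i)$, giving $\surjectionPermPara(\tau)_i = -$; and symmetrically if $i$ is below $i+1$. This matches the definition of $\surjectionAssoPara(\tree)_i$, so the diagram commutes.

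\medskip
\noindent\textbf{Step 2: The three maps are surjective lattice homomorphisms.}
For $\surjectionPermAsso : \fS^\signature \to \CambTrees(\signature)$, this is exactly the statement recalled from N.~Reading~\cite{Reading-CambrianLattices} in the previous subsection: it is a lattice homomorphism from the weak order onto the $\signature$-Cambrian lattice, surjective because every $\signature$-Cambrian tree has a linear extension. For $\surjectionPermPara : \fS^\signature \to \pm^{n-1}$, I would note that this is the recoil map (descents of $\tau^{-1}$), and that it factors through the classical descent/recoil construction; surjectivity is clear, and the homomorphism property for the weak order (with the boolean-lattice order on $\pm^{n-1}$, componentwise with $- < +$) follows since a single weak-order covering step $\tau \lessdot \tau'$ swaps two adjacent values in $\tau^{-1}$ and hence changes at most one coordinate of the recoil sequence monotonically — one checks meets and joins are preserved. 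It remains to define the order on $\pm^{n-1}$ consistently and to verify $\surjectionAssoPara : \CambTrees(\signature) \to \pm^{n-1}$ is a lattice homomorphism; for this I would use Proposition~\ref{prop:rotation}: an increasing rotation of an edge $i \to j$ changes exactly one edge cut, and one analyzes which coordinates of the canopy can change, showing the canopy can only increase (flip some $-$ to $+$) along an increasing rotation, and that this respects joins and meets. Alternatively, and more cheaply, surjectivity of $\surjectionPermAsso$ together with Step 1 forces $\surjectionAssoPara$ to be a lattice homomorphism as soon as the other two are: given Cambrian trees $\tree_1, \tree_2$, lift them to weak-order representatives, use that $\surjectionPermAsso$ preserves joins and $\surjectionPermPara = \surjectionAssoPara \circ \surjectionPermAsso$ preserves joins, and surjectivity of $\surjectionPermAsso$ to conclude $\surjectionAssoPara(\tree_1 \vee \tree_2) = \surjectionAssoPara(\tree_1) \vee \surjectionAssoPara(\tree_2)$, and dually for meets.

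\medskip
\noindent\textbf{Step 3: Assembly and the main obstacle.}
With Steps 1 and 2 in hand the proposition follows immediately. I expect the main obstacle to be the cleanest treatment of $\surjectionAssoPara$: one wants to avoid redoing the rotation case analysis of \fref{fig:rotation} in detail. The slick route is the composition argument in Step 2 — deduce that $\surjectionAssoPara$ is a lattice homomorphism purely formally from surjectivity of $\surjectionPermAsso$ and the fact that the composite $\surjectionPermPara$ is one — so that the only genuine combinatorial content is the pointwise identity $\surjectionAssoPara \circ \surjectionPermAsso = \surjectionPermPara$ of Step 1, which is the comparability-of-consecutive-labels observation already made in the text, plus the elementary fact that the recoil map is a weak-order lattice homomorphism. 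A secondary technical point is pinning down that $\pm^{n-1}$ carries the boolean lattice structure (ordering $- < +$ componentwise, matching the canopy orientation convention) so that all three maps land in the same poset; this should be stated explicitly before the diagram.
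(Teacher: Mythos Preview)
The paper states this proposition without proof, treating it as a direct consequence of the preceding definitions together with N.~Reading's results~\cite{Reading-CambrianLattices}; there is no \texttt{proof} environment following the statement. Your proposal therefore supplies details the paper omits, and the overall structure is sound.

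Your Step~1 is exactly the argument the paper sketches informally in the paragraph introducing the canopy: consecutive labels~$i,i+1$ are always comparable in~$\tree$, and any linear extension respects that comparability, which forces $\surjectionAssoPara(\surjectionPermAsso(\tau))_i = \surjectionPermPara(\tau)_i$.

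Your Step~2 is correct, and the ``slick'' composition argument for $\surjectionAssoPara$ is valid: if $f : A \twoheadrightarrow B$ is a surjective lattice homomorphism and $g \circ f$ is a lattice homomorphism, then lifting any $b_1,b_2 \in B$ to preimages $a_1,a_2$ gives $g(b_1 \vee b_2) = g(f(a_1 \vee a_2)) = (g\circ f)(a_1) \vee (g\circ f)(a_2) = g(b_1) \vee g(b_2)$, and dually. This is cleaner than redoing the rotation case analysis. One minor caution: for $\surjectionPermPara$ you write that a weak-order cover ``changes at most one coordinate of the recoil sequence monotonically'', but monotonicity along covers only gives a poset map, not a lattice homomorphism. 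You do add ``one checks meets and joins are preserved'', which is the right caveat; the cleanest justification is that the recoil congruence is the coarsest Cambrian congruence (or, classically, that the descent map is a lattice congruence of the weak order), which Reading's framework already provides.
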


The fibers of these maps on the weak orders of~$\fS_\signature$ for $\signature = {-}{+}{-}{-}$ and $\signature = {+}{-}{-}{-}$ are represented in \fref{fig:lattices}.

\begin{figure}[h]
  \centerline{\includegraphics[width=\textwidth]{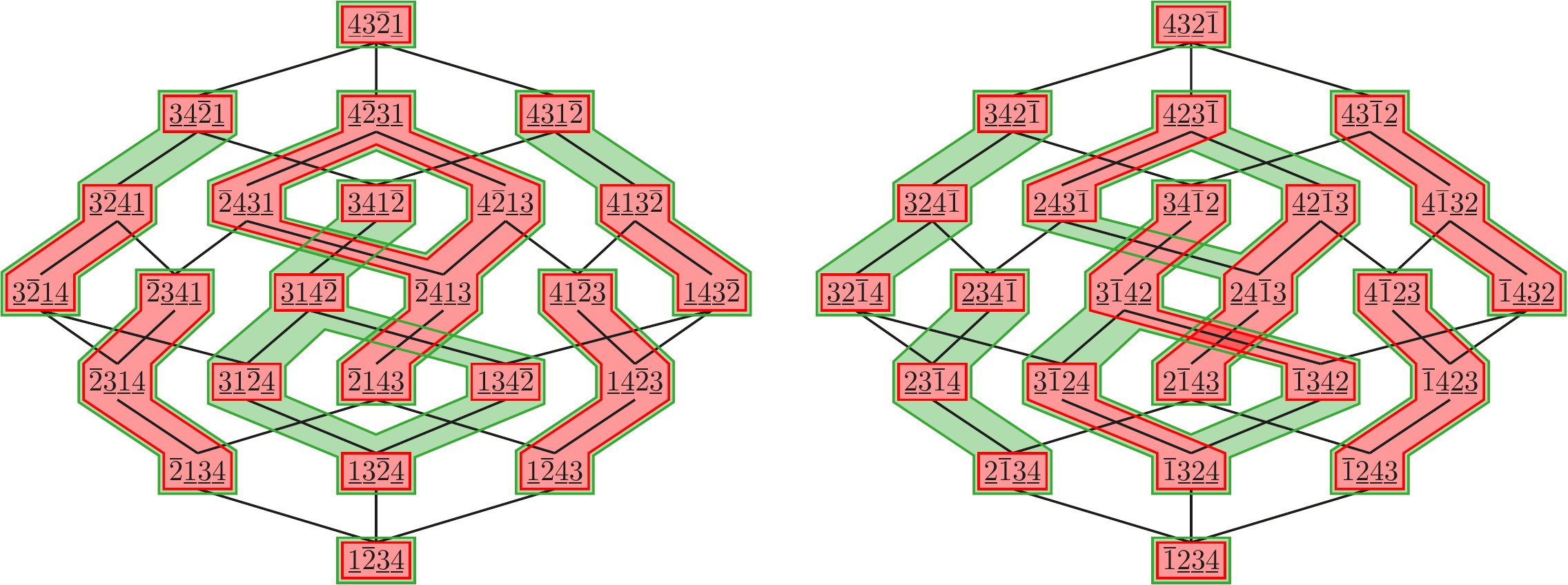}}
  \caption{The fibers of the maps~$\surjectionPermAsso$ (red) and~$\surjectionPermPara$ (green) on the weak orders of~$\fS_\signature$ for $\signature = {-}{+}{-}{-}$ (left) and $\signature = {+}{-}{-}{-}$ (right).}
  \label{fig:lattices}
\end{figure}

%%%%%%%%%%%%%%%%%

\subsection{Geometric realizations}
\label{subsec:geometricRealizations}

We close this section with geometric interpretations of the Cambrian trees, Cambrian classes, Cambrian correspondence, and Cambrian lattices. We denote by~$e_1, \dots, e_n$ the canonical basis of~$\R^n$ and by~$\HH$ the hyperplane of~$\R^n$ orthogonal to~$\sum e_i$. Define the \defn{incidence cone}~$\Cone(\tree)$ and the \defn{braid cone}~$\Cone\polar(\tree)$ of a directed tree~$\tree$ as
\[
\Cone(\tree) \eqdef \cone\set{e_i-e_j}{\text{for all } i \to j \text{ in } \tree}
\quad\text{and}\quad
\Cone\polar(\tree) \eqdef \set{\b{x} \in \HH}{x_i \le x_j \text{ for all } i \to j \text{ in } \tree}.
\]
These two cones lie in the space~$\HH$ and are polar to each other. For a permutation~${\tau \in \fS_n}$, we denote by~$\Cone(\tau)$ and~$\Cone\polar(\tau)$ the incidence and braid cone of the chain~$\tau(1) \to \dots \to \tau(n)$. Finally, for a sign vector~$\chi \in \pm^{n-1}$, we denote by~$\Cone(\tau)$ and~$\Cone\polar(\tau)$ the incidence and braid cone of the oriented path~$1 - \dots - n$, where~$i \to i+1$ if~$\chi_i = +$ and $i \leftarrow i+1$ if~$\chi_i = -$.

\vspace{1.5cm}
These cones (together with all their faces) form complete simplicial fans in~$\HH$:
\begin{enumerate}[(i)]
\item the cones~$\Cone\polar(\tau)$, for all permutations~$\tau \in \fS_n$, form the \defn{braid fan}, which is the normal fan of the \defn{permutahedron}~$\Perm \eqdef \conv\bigset{\sum_{i \in [n]} \tau(i) e_i}{\tau \in \fS_n}$;
\item the cones~$\Cone\polar(\tree)$, for all $\signature$-Cambrian trees~$\tree$, form the \defn{$\signature$-Cambrian fan}, which is the normal fan of the \defn{$\signature$-associahedron}~$\Asso$ of C.~Hohlweg and C.~Lange~\cite{HohlwegLange} (see also~\cite{LangePilaud});
\item the cones~$\Cone\polar(\chi)$, for all sign vectors~$\chi \in \pm^{n-1}$, form the \defn{boolean fan}, which is the normal fan of the parallelepiped~$\Para \eqdef \bigset{\b{x} \in \HH}{i(2n+1-i) \le 2 \sum_{j \le i} x_j \le i(i+1) \text{ for all } i \in [n]}$.
\end{enumerate}
In fact, $\Asso$ is obtained by deleting certain inequalities in the facet description of~$\Perm$, and similarly, $\Para$ is obtained by deleting facets of~$\Asso$. In particular, we have the geometric inclusions~$\Perm \subset \Asso \subset \Para$. See \fref{fig:permutahedraAssociahedraCubes} for $3$-dimensional examples.

\begin{figure}[h]
  \centerline{
  	\begin{overpic}[width=\textwidth]{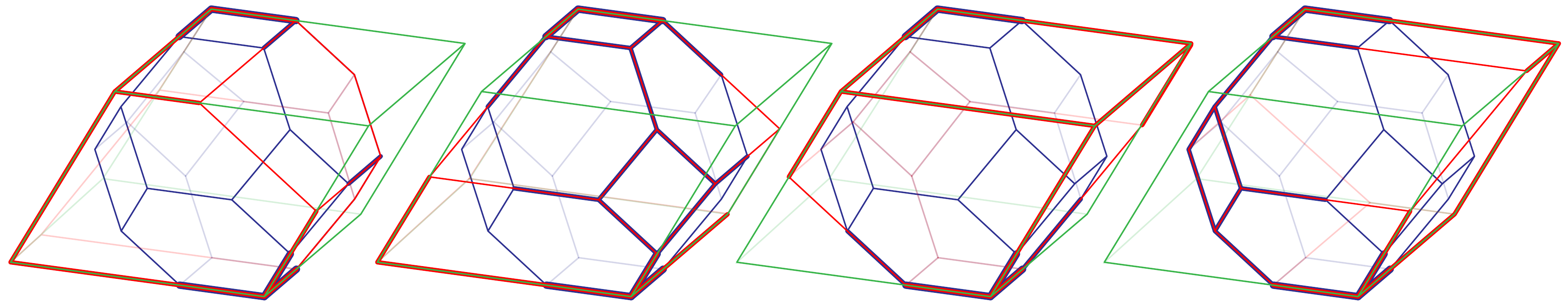}
			\put(12,-2){${-}{-}{-}{-}$}
			\put(36,-2){${-}{+}{-}{-}$}
			\put(59,-2){${-}{-}{+}{-}$}
			\put(82,-2){${-}{+}{+}{-}$}
	\end{overpic}
  }
  \vspace{.2cm}
  \caption{The polytope inclusion~$\Perm[4] \subset \Asso \subset \Para[4]$ for different signatures~$\signature \in \pm^4$. The permutahedron~$\Perm[4]$ is represented in red, the associahedron~$\Asso$ in blue, and the parallelepiped~$\Para[4]$ in green.}
  \label{fig:permutahedraAssociahedraCubes}
\end{figure}

\enlargethispage{-.4cm}
The incidence and braid cones also characterize the maps~$\surjectionPermAsso$, $\surjectionAssoPara$, and~$\surjectionPermPara$ as follows
\begin{gather*}
\tree = \surjectionPermAsso(\tau) \iff \Cone(\tree) \subseteq \Cone(\tau) \iff \Cone\polar(\tree) \supseteq \Cone\polar(\tau), \\
\chi = \surjectionAssoPara(\tree) \iff \Cone(\chi) \subseteq \Cone(\tree) \iff \Cone\polar(\chi) \supseteq \Cone\polar(\tree), \\
\chi = \surjectionPermPara(\tau) \iff \Cone(\chi) \subseteq \Cone(\tau) \iff \Cone\polar(\chi) \supseteq \Cone\polar(\tau).
\end{gather*}
In particular, Cambrian classes are formed by all permutations whose braid cone belong to the same Cambrian cone. Finally, the $1$-skeleta of the permutahedron~$\Perm$, associahedron~$\Asso$ and parallelepiped~$\Para$, oriented in the direction~$(n, \dots, 1) - (1, \dots, n) = \sum_{i \in [n]} (n+1-2i) \, e_i$ are the Hasse diagrams of the weak order, the Cambrian lattice and the boolean lattice respectively. These geometric properties originally motivated the definition of Cambrian trees in~\cite{LangePilaud}.

%%%%%%%%%%%%%%%%%%%%%%%%%%%%%%%%%%%%%%

\section{Cambrian Hopf Algebra}
\label{sec:CambrianAlgebra}

In this section, we introduce the Cambrian Hopf algebra~$\Camb$ as a subalgebra of the Hopf algebra~$\FQSym_\pm$ on signed permutations, and the dual Cambrian algebra~$\Camb^*$ as a quotient algebra of the dual Hopf algebra~$\FQSym_\pm^*$. We describe both the product and coproduct in these algebras in terms of combinatorial operations on Cambrian trees. These results extend the approach of F.~Hivert, J.-C.~Novelli and~J.-Y.~Thibon~\cite{HivertNovelliThibon-algebraBinarySearchTrees} to construct the algebra of \mbox{J.-L.~Loday} and M.~Ronco on binary trees~\cite{LodayRonco} as a subalgebra of the algebra of C.~Malvenuto and C.~Reutenauer on permutations~\cite{MalvenutoReutenauer}.

We immediately mention that a different generalization was studied by N.~Reading in~\cite{Reading-HopfAlgebras}. His idea was to construct a subalgebra of C.~Malvenuto and C.~Reutenauer's algebra~$\FQSym$ using equivalent classes of a congruence relation defined as the union~$\bigcup_{n \in \N} \equiv_{\signature_n}$ of $\signature_n$-Cambrian relation for one fixed signature~$\signature_n \in \pm^n$ for each~$n \in \N$. In order to obtain a valid Hopf algebra, the choice of~$(\signature_n)_{n \in \N}$ has to satisfy certain compatibility relations: N.~Reading characterizes the ``translational'' (resp.~``insertional'') families~$\equiv_n$ of lattice congruences on~$\fS_n$ for which the sums over the elements of the congruence classes of~$(\equiv_n)_{n \in \N}$ form the basis of a subalgebra (resp.~subcoalgebra) of~$\FQSym$. These conditions make the choice of~$(\signature_n)_{n \in \N}$ rather constrained. In contrast, by constructing a subalgebra of~$\FQSym_\pm$ rather than~$\FQSym$, we consider simultaneously all Cambrian relations for all signatures. In particular, our Cambrian algebra contains all Hopf algebras of~\cite{Reading-HopfAlgebras} as subalgebras.

%%%%%%%%%%%%%%%%%

\subsection{Signed shuffle and convolution products}
\label{subsec:products}

\enlargethispage{-.2cm}
For~$n,n' \in \N$, let
\[
\fS^{(n,n')} \eqdef \set{\tau \in \fS_{n+n'}}{\tau(1) < \dots < \tau(n) \text{ and } \tau(n+1) < \dots < \tau(n+n')}
\]
denote the set of permutations of~$\fS_{n+n'}$ with at most one descent, at position~$n$. 
The \defn{shifted concatenation}~$\tau\bar\tau'$, the \defn{shifted shuffle product}~$\tau \shiftedShuffle \tau'$, and the \defn{convolution product}~$\tau \convolution \tau'$ of two (unsigned) permutations~$\tau \in \fS_n$ and~$\tau' \in \fS_{n'}$ are classically defined by
\begin{gather*}
\tau\bar\tau' \eqdef [\tau(1), \dots, \tau(n), \tau'(1) + n, \dots, \tau'(n') + n] \in \fS_{n+n'}, \\
\tau \shiftedShuffle \tau' \eqdef \bigset{(\tau\bar\tau') \circ \pi^{-1}}{\pi \in \fS^{(n,n')}} 
\qquad\text{and}\qquad
\tau \convolution \tau' \eqdef \bigset{\pi \circ (\tau\bar\tau')}{\pi \in \fS^{(n,n')}}.
\end{gather*}
For example,
\begin{align*}
{\red 12} \shiftedShuffle {\darkblue 231} & = \{ {\red 12}{\darkblue 453}, {\red 1}{\darkblue 4}{\red 2}{\darkblue 53}, {\red 1}{\darkblue 45}{\red 2}{\darkblue 3}, {\red 1}{\darkblue 453}{\red 2}, {\darkblue 4}{\red 12}{\darkblue 53}, {\darkblue 4}{\red 1}{\darkblue 5}{\red 2}{\darkblue 3}, {\darkblue 4}{\red 1}{\darkblue 53}{\red 2}, {\darkblue 45}{\red 12}{\darkblue 3}, {\darkblue 45}{\red 1}{\darkblue 3}{\red 2}, {\darkblue 453}{\red 12} \}, \\
{\red 12} \convolution {\darkblue 231} & = \{ {\red 12}{\darkblue 453}, {\red 13}{\darkblue 452}, {\red 14}{\darkblue 352}, {\red 15}{\darkblue 342}, {\red 23}{\darkblue 451}, {\red 24}{\darkblue 351}, {\red 25}{\darkblue 341}, {\red 34}{\darkblue 251}, {\red 35}{\darkblue 241}, {\red 45}{\darkblue 231} \}.
\end{align*}
These operations can be visualized graphically on the tables of the permutations~$\tau, \tau'$. Remember that the table of~$\tau$ contains a dot at coordinates~$(\tau(i),i)$ for each~$i \in [n]$. The table of the shifted concatenation~$\tau\bar\tau'$ contains the table of~$\tau$ as the bottom left block and the table of~$\tau'$ as the top right block. The tables in the shifted shuffle product~$\tau \shiftedShuffle \tau'$ (resp.~in the convolution product~$\tau \convolution \tau'$) are then obtained by shuffling the rows (resp.~columns) of the table of~$\tau\bar\tau'$. In particular, we obtain the table of~$\tau$ if we erase all dots in the~$n'$ rightmost columns (resp. topmost rows) of a table in the shifted shuffle product~$\tau \shiftedShuffle \tau'$ (resp.~in the convolution product~$\tau \convolution \tau'$). See~\fref{fig:shuffleConvolution}.

\begin{figure}[b]
  \centerline{\includegraphics{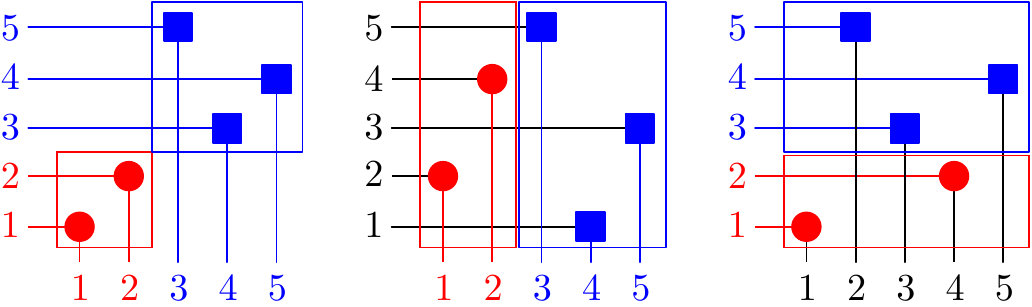}}
  \caption{The table of the shifted concatenation~$\tau\bar\tau'$ (left) has two blocks containing the tables of the permutations~$\tau = 12$ and~$\tau' = 231$. Elements of the shifted shuffle product~$\tau \shiftedShuffle \tau'$ (middle) and of the convolution product~$\tau \convolution \tau'$~(right) are obtained by shuffling respectively the rows and columns of the table of~$\tau\bar\tau'$.}
  \label{fig:shuffleConvolution}
\end{figure}

These definitions extend to signed permutations. The \defn{signed shifted shuffle product}~$\tau \shiftedShuffle \tau'$ is defined as the shifted product of the permutations where signs travel with their values, while the \defn{signed convolution product}~$\tau \convolution \tau'$ is defined as the convolution product of the permutations where signs stay at their positions. For example,
\begin{align*}
\upr{1}\downr{2} \shiftedShuffle \downb{23}\upb{1} & = \{ \upr{1}\downr{2}\downb{45}\upb{3}, \upr{1}\downb{4}\downr{2}\downb{5}\upb{3}, \upr{1}\downb{45}\downr{2}\upb{3}, \upr{1}\downb{45}\upb{3}\downr{2}, \downb{4}\upr{1}\downr{2}\downb{5}\upb{3}, \downb{4}\upr{1}\downb{5}\downr{2}\upb{3}, \downb{4}\upr{1}\downb{5}\upb{3}\downr{2}, \downb{45}\upr{1}\downr{2}\upb{3}, \downb{45}\upr{1}\upb{3}\downr{2}, \downb{45}\upb{3}\upr{1}\downr{2} \}, \\
\upr{1}\downr{2} \convolution \downb{23}\upb{1} & = \{ \upr{1}\downr{2}\downb{45}\upb{3}, \upr{1}\downr{3}\downb{45}\upb{2}, \upr{1}\downr{4}\downb{35}\upb{2}, \upr{1}\downr{5}\downb{34}\upb{2}, \upr{2}\downr{3}\downb{45}\upb{1}, \upr{2}\downr{4}\downb{35}\upb{1}, \upr{2}\downr{5}\downb{34}\upb{1}, \upr{3}\downr{4}\downb{25}\upb{1}, \upr{3}\downr{5}\downb{24}\upb{1}, \upr{4}\downr{5}\downb{23}\upb{1} \}.
\end{align*}
Note that the shifted shuffle is compatible with signed values, while the convolution is compatible with signed positions in the sense that
\[
\fS^\signature \shiftedShuffle \fS^{\signature'} = \fS^{\signature\signature'}
\qquad\text{while}\qquad
\fS_\signature \convolution \fS_{\signature'} = \fS_{\signature\signature'}.
\]
In any case, both~$\shiftedShuffle$ and~$\convolution$ are compatible with the distribution of positive and negative signs,~\ie
\[
|\tau \shiftedShuffle \tau'|_+ = |\tau|_+ + |\tau'|_+ = |\tau \convolution \tau'|_+
\qquad\text{and}\qquad
|\tau \shiftedShuffle \tau'|_- = |\tau|_- + |\tau'|_- = |\tau \convolution \tau'|_-.
\]

%%%%%%%%%%%%%%%%%

\subsection{Subalgebra of $\FQSym_\pm$}
\label{subsec:subalgebra}

We denote by~$\FQSym_\pm$ the Hopf algebra with basis~$(\F_\tau)_{\tau \in \fS_\pm}$ and whose product and coproduct are defined by
\[
\F_\tau \product \F_{\tau'} = \sum_{\sigma \in \tau \shiftedShuffle \tau'} \F_\sigma
\qquad\text{and}\qquad
\coproduct \F_\sigma = \sum_{\sigma \in \tau \convolution \tau'} \F_\tau \otimes \F_{\tau'}.
\]
This Hopf algebra is bigraded by the size and the number of positive signs of the signed permutations.
It naturally extends to signed permutations the Hopf algebra~$\FQSym$ on permutations defined by C.~Malvenuto and C.~Reute\-nauer~\cite{MalvenutoReutenauer}.

We denote by~$\Camb$ the vector subspace of~$\FQSym_\pm$ generated by the elements
\[
\PCamb_{\tree} \eqdef \sum_{\substack{\tau \in \fS_\pm \\ \surjectionPermAsso(\tau) = \tree}} \F_\tau = \sum_{\tau \in \linearExtensions(\tree)} \F_\tau,
\]
for all Cambrian trees~$\tree$.
For example, for the Cambrian tree of \fref{fig:leveledCambrianTree}\,(left), we have
\[
\PCamb_{\!\!\Tex} = \begin{array}[t]{c}
\phantom{+}\F_{\down{21}\up{37}\down{54}\up{6}} + \F_{\down{21}\up{73}\down{54}\up{6}} + \F_{\down{21}\up{7}\down{5}\up{3}\down{4}\up{6}} + \F_{\down{2}\up{7}\down{1}\up{3}\down{54}\up{6}} + \F_{\down{2}\up{7}\down{15}\up{3}\down{4}\up{6}} \\
+ \; \F_{\down{2}\up{7}\down{51}\up{3}\down{4}\up{6}} + \F_{\up{7}\down{21}\up{3}\down{54}\up{6}} + \F_{\up{7}\down{215}\up{3}\down{4}\up{6}} + \F_{\up{7}\down{251}\up{3}\down{4}\up{6}} + \F_{\up{7}\down{521}\up{3}\down{4}\up{6}}.
\end{array}
\]

\begin{theorem}
\label{thm:cambSubalgebra}
$\Camb$ is a Hopf subalgebra of~$\FQSym_\pm$.
\end{theorem}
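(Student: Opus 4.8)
The plan is to show that $\Camb$ is closed under both the product and the coproduct of $\FQSym_\pm$, since being a Hopf subalgebra of a combinatorial Hopf algebra of this type reduces to these two stability statements (the unit, counit, and antipode come along automatically, the latter because $\Camb$ is graded connected). The key tool is the fact, established in Proposition~\ref{prop:CambrianClass} and the surrounding discussion, that the fiber $\surjectionPermAsso^{-1}(\tree)$ is exactly the set $\linearExtensions(\tree)$ of linear extensions of a $\signature$-Cambrian tree, and that this set is an \emph{interval} of the weak order on $\fS^\signature$ whose elements are related by the elementary Cambrian rewriting moves of Definition~\ref{def:CambrianCongruence}.

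For the product, I would fix two Cambrian trees $\tree \in \CambTrees(\signature)$ and $\tree' \in \CambTrees(\signature')$ and expand $\PCamb_{\tree} \product \PCamb_{\tree'} = \sum_{\tau \in \linearExtensions(\tree),\, \tau' \in \linearExtensions(\tree')} \sum_{\sigma \in \tau \shiftedShuffle \tau'} \F_\sigma$. The goal is to prove that the multiset of $\sigma$'s appearing is a disjoint union of full fibers $\linearExtensions(\tree'')$ for various $\signature\signature'$-Cambrian trees $\tree''$, each occurring exactly once. The natural way is to show that the shifted shuffle $\linearExtensions(\tree) \shiftedShuffle \linearExtensions(\tree')$ is a union of Cambrian classes, i.e.\ that it is saturated under the Cambrian congruence $\equiv_{\signature\signature'}$. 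This is the standard argument (as in \cite{HivertNovelliThibon-algebraBinarySearchTrees}): one checks that if $\sigma \in \linearExtensions(\tree)\shiftedShuffle\linearExtensions(\tree')$ and $\sigma \equiv_{\signature\signature'} \sigma'$ via one rewriting step, then $\sigma'$ is again in the shuffle. The rewriting step moves a pair of letters $a,c$ past each other on one side of a letter $b$ with $a<b<c$; one argues by cases according to whether $a,b,c$ all come from the $\tree$-block, all from the shifted $\tree'$-block, or are split between the two blocks, and in each case traces how the move descends to a (possibly trivial) Cambrian move within one of the two factors, using that $\linearExtensions(\tree)$ and $\linearExtensions(\tree')$ are themselves Cambrian-saturated. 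The split case is where the shuffle structure does the work: the letters of the two blocks are interleaved freely, so sliding $a$ and $c$ across $b$ when they lie in different blocks is always permitted by the shuffle itself. Granting saturation, $\PCamb_{\tree}\product\PCamb_{\tree'}$ is a sum of $\PCamb_{\tree''}$'s, and a cardinality/multiplicity check (each $\sigma$ arises from a \emph{unique} pair $(\tau,\tau')$ by deleting the last $n'$ values, as recalled in the discussion of \fref{fig:shuffleConvolution}) confirms each coefficient is $1$.

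For the coproduct, I would expand $\coproduct \PCamb_{\tree} = \sum_{\tau \in \linearExtensions(\tree)} \sum_{\sigma \convolution \sigma' \ni \tau} \F_\sigma \otimes \F_{\sigma'}$ and show the right-hand side is a sum of terms $\PCamb_{\tree_1} \otimes \PCamb_{\tree_2}$. Here the relevant combinatorial fact is that deconcatenating a linear extension $\tau = \tau_{(1)} \tau_{(2)}$ of $\tree$ at position $k$ (then standardizing each piece) produces a linear extension of the tree obtained by the order-theoretic restriction of $\tree$ to its first $k$ values and last $n-k$ values respectively; and conversely every pair of linear extensions of such a restricted pair arises this way, compatibly with the convolution. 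Equivalently, one shows $\linearExtensions(\tree)$, viewed as a subset of $\fS_\signature$, is stable under the convolution-deconcatenation in the sense needed: it is a union of sets of the form $\fS_{\signature_1} \convolution \fS_{\signature_2}$ intersected appropriately, so that slicing it gives unions of Cambrian fibers. Again a multiplicity check (each $\tau$ determines $(\sigma,\sigma')$ uniquely given the cut position $k$) gives coefficients $1$.

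The main obstacle is the product, specifically the case analysis proving that $\linearExtensions(\tree) \shiftedShuffle \linearExtensions(\tree')$ is closed under the signed Cambrian rewriting rules — one must be careful that the sign $\signature_b$ controlling which rewriting rule applies is inherited correctly under shifting (signs travel with values in $\shiftedShuffle$), and that when $b$ lies in one block and the moving pair straddles the two blocks, no pattern obstruction is created. A clean way to organize this, rather than brute-force rewriting, would be to use the geometric characterization from Section~\ref{subsec:geometricRealizations}: linear extensions of $\tree$ are the permutations $\tau$ with $\Cone\polar(\tree) \subseteq \Cone\polar(\tau)$, and the shifted shuffle corresponds to a product of cones, so one could instead verify that $\Cone\polar(\tree)$ and $\Cone\polar(\tree')$ assemble into the braid cones of exactly the Cambrian trees $\tree''$ refining the product — but the direct rewriting argument is more self-contained and is the one I would actually write out.
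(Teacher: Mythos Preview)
Your proposal is correct and follows essentially the same approach as the paper: for the product you show that $\linearExtensions(\tree)\shiftedShuffle\linearExtensions(\tree')$ is saturated under the Cambrian rewriting rules via the same three-case analysis (both letters in the left block, both in the shifted right block, or one in each), and for the coproduct you argue the analogous compatibility with deconcatenation. The only cosmetic difference is that your coproduct sketch leans toward the combinatorial cut/forest picture (which the paper postpones to Proposition~\ref{prop:coproduct}), whereas the paper's proof of the theorem itself runs the same rewriting-saturation argument symmetrically on both sides; either organization works.
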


\begin{proof}
We first prove that $\Camb$ is a subalgebra of~$\FQSym_\pm$. To do this, we just need to show that the Cambrian congruence is compatible with the shuffle product, \ie that the product of two Cambrian classes can be decomposed into a sum of Cambrian classes. Consider two signatures~${\signature \in \pm^n}$ and~$\signature' \in \pm^{n'}$, two Cambrian trees~$\tree \in \CambTrees(\signature)$ and~$\tree' \in \CambTrees(\signature')$, and two congruent permutations~$\sigma \equiv_{\signature\signature'} \tilde\sigma \in \fS^{\signature\signature'}$. We want to show that~$\F_\sigma$ appears in the product~$\PCamb_{\tree} \product \PCamb_{\tree'}$ if and only if~$\F_{\tilde\sigma}$ does. We can assume that~$\sigma = UacVbW$ and~$\tilde\sigma = UcaVbW$ for some letters~$a < b < c$ and words~$U,V,W$ with~$(\signature\signature')_b = -$. Suppose moreover that~$\F_\sigma$ appears in the product~$\PCamb_{\tree} \product \PCamb_{\tree'}$, and let~$\tau \in \linearExtensions(\tree)$ and~$\tau' \in \linearExtensions(\tree')$ such that~$\sigma \in \tau \shiftedShuffle \tau'$. We distinguish three cases:
\begin{enumerate}[(i)]
\item If~$a \le n$ and~$n < c$, then~$\tilde\sigma$ also belongs~$\tau \shiftedShuffle \tau'$, and thus~$\F_{\tilde\sigma}$ appears in the product~$\PCamb_{\tree} \product \PCamb_{\tree'}$.
\item If~$a < b < c \le n$, then~$\tau = \hat U ac \hat V b \hat W$ is $\signature$-congruent to~$\tilde\tau = \hat U ca \hat V b \hat W$, and thus~$\tilde\tau \in \linearExtensions(\tree)$. Since~$\tilde\sigma \in \tilde\tau \shiftedShuffle \tau'$, we obtain that~$\F_{\tilde\sigma}$ appears in the product~$\PCamb_{\tree} \product \PCamb_{\tree'}$.
\item If~$n < a < b < c$, the argument is similar, exchanging~$ac$ to~$ca$ in~$\tau'$.
\end{enumerate}
The proof for the other rewriting rule of Definition~\ref{def:CambrianCongruence} is symmetric, and the general case for~${\sigma \equiv_{\signature\signature'} \tilde\sigma}$ follows by transitivity.

We now prove that~$\Camb$ is a subcoalgebra of~$\FQSym^{\pm}$. We just need to show that the Cambrian congruence is compatible with the deconcatenation coproduct, \ie that the coproduct of a Cambrian class is a sum of tensor products of Cambrian classes. Consider a Cambrian tree~$\tree \in \CambTrees(\eta)$, and Cambrian congruent permutations~$\tau \equiv_\signature \tilde\tau \in \fS^\signature$ and~$\tau' \equiv_{\signature'} \tilde\tau' \in \fS^{\signature'}$. We want to show that~$\F_\tau \otimes \F_{\tau'}$ appears in the coproduct~$\coproduct(\PCamb_{\tree})$ if and only if~$\F_{\tilde\tau} \otimes \F_{\tilde\tau'}$ does. We can assume that~$\tau = UacVbW$ and~$\tilde\tau = UcaVbW$ for some letters~$a < b < c$ and words~$U,V,W$ with~$\signature_b = -$, while~$\tau' = \tilde\tau'$. Suppose moreover that~$\F_\tau \otimes \F_{\tau'}$ appears in the coproduct~$\coproduct(\PCamb_{\tree})$, \ie that there exists~$\sigma \in (\tau \convolution \tau') \cap \linearExtensions(\tree)$. Since~$\sigma \in \tau \convolution \tau'$, it can be written as~$\sigma = \hat U \hat a \hat c \hat V \hat b \hat W \hat \tau'$ for some letters~$\hat a < \hat b < \hat c$ and words~$\hat U,\hat V,\hat W,\hat \tau'$ with~$\eta_{\hat b} = -$. Therefore~$\tilde\sigma = \hat U \hat c \hat a \hat V \hat b \hat W \hat \tau'$ is $\eta$-congruent to~$\sigma$ and in the convolution product~$\tilde\tau \convolution \tilde\tau'$. It follows that~$\F_{\tilde\tau} \otimes \F_{\tilde\tau'}$ also appears in the coproduct~$\coproduct(\PCamb_{\tree})$. The proofs for the other rewriting rule on~$\tau$, as well as for both rewriting rules on~$\tau'$, are symmetric, and the general case for~$\tau \equiv_\signature \tilde\tau$ and~$\tau' \equiv_{\signature'} \tilde\tau'$ follows by transitivity.
\end{proof}

Another proof of this statement would be to show that the Cambrian congruence yields a $\varphi$-good mono\"id~\cite{Priez}. In the remaining of this section, we provide direct descriptions of the product and coproduct of $\PCamb$-basis elements of~$\Camb$ in terms of combinatorial operations on Cambrian trees.

\para{Product}
The product in the Cambrian algebra can be described in terms of intervals in Cambrian lattices.
Given two Cambrian trees~$\tree, \tree'$, we denote by~$\raisebox{-6pt}{$\tree$}\nearrow \raisebox{4pt}{$\bar \tree'$}$ the tree obtained by grafting the rightmost outgoing leaf of~$\tree$ on the leftmost incoming leaf of~$\tree$ and shifting all labels of~$\tree'$. Note that the resulting tree is~$\signature\signature'$-Cambrian, where~$\signature\signature'$ is the concatenation of the signatures~$\signature = \signature(\tree)$ and~$\signature' = \signature(\tree')$. We define similarly~$\raisebox{4pt}{$\tree$} \nwarrow \raisebox{-6pt}{$\bar \tree'$}$. Examples are given in \fref{fig:exampleProduct}.

\begin{figure}[h]
  \centerline{\includegraphics{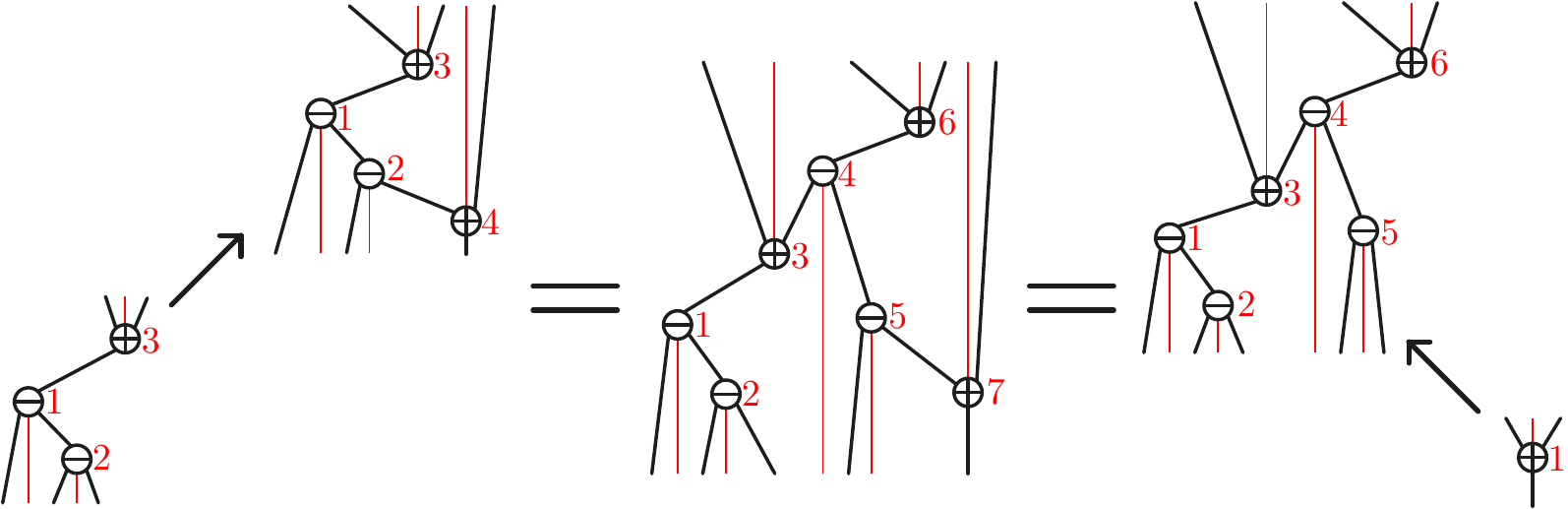}}
  \vspace{-.2cm}
  \caption{Grafting Cambrian trees.}
  \label{fig:exampleProduct}
\end{figure}

\begin{proposition}
\label{prop:product}
For any Cambrian trees~$\tree, \tree'$, the product~$\PCamb_{\tree} \product \PCamb_{\tree'}$ is given by
\[
\PCamb_{\tree} \product \PCamb_{\tree'}  = \sum_{\tree[S]} \PCamb_{\tree[S]},
\]
where~$\tree[S]$ runs over the interval between~$\raisebox{-6pt}{$\tree$}\nearrow \raisebox{4pt}{$\bar \tree'$}$ and~$\raisebox{4pt}{$\tree$} \nwarrow \raisebox{-6pt}{$\bar \tree'$}$ in the $\signature(\tree)\signature(\tree')$-Cambrian lattice.
\end{proposition}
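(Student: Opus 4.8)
The plan is to translate the product rule for $\PCamb$-basis elements into a statement about linear extensions and then apply Proposition~\ref{prop:rotation} and the lattice structure from N.~Reading. By the definition of $\Camb$ as a subalgebra of $\FQSym_\pm$, we have $\PCamb_{\tree} \product \PCamb_{\tree'} = \sum_{\tau \in \linearExtensions(\tree),\ \tau' \in \linearExtensions(\tree')} \sum_{\sigma \in \tau \shiftedShuffle \tau'} \F_\sigma$. By Theorem~\ref{thm:cambSubalgebra} this sum reorganizes as $\sum_{\tree[S]} c_{\tree[S]} \PCamb_{\tree[S]}$ for nonnegative integer multiplicities $c_{\tree[S]}$, where $\tree[S]$ ranges over certain $\signature\signature'$-Cambrian trees. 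So the real content is twofold: (a) identify \emph{which} trees $\tree[S]$ occur, and (b) check that each occurs with multiplicity exactly one.

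First I would establish the two extreme trees. The tree $\raisebox{-6pt}{$\tree$}\nearrow \raisebox{4pt}{$\bar \tree'$}$ has as linear extensions exactly those shuffles $\sigma = \tau \bar\tau'$ with $\tau \in \linearExtensions(\tree)$ placed entirely before (below) the shifted $\tau' \in \linearExtensions(\tree')$; indeed, grafting the rightmost leaf of $\tree$ to the leftmost leaf of $\bar\tree'$ forces every vertex of $\tree$ below every vertex of $\bar\tree'$ in the transitive closure, and conversely imposes no other new relations, so its linear extensions are precisely the concatenations. This is the minimal element of the interval of $\tree[S]$'s in the $\signature\signature'$-Cambrian lattice, since concatenation is the lexicographically/weak-order-smallest way to interleave; dually $\raisebox{4pt}{$\tree$} \nwarrow \raisebox{-6pt}{$\bar \tree'$}$ collects the shuffles where all of $\bar\tree'$ sits below $\tree$, giving the maximal element. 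I would make this precise using Proposition~\ref{prop:rotation}: the edge cuts of $\raisebox{-6pt}{$\tree$}\nearrow \raisebox{4pt}{$\bar \tree'$}$ are exactly the edge cuts of $\tree$ (with $\ground(\bar\tree')$ appended to each target side), the edge cuts of $\bar\tree'$ (with $\ground(\tree)$ appended to each source side), plus the single cut $\edgecut{[n]}{[n+n']\ssm[n]}$; and similarly for the $\nwarrow$ tree, with that last cut replaced by the opposite-oriented one. So the two extremes differ only in the ``middle'' part, which is exactly what the Cambrian lattice interval between them sweeps out.

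The core combinatorial step is then: a $\signature\signature'$-Cambrian tree $\tree[S]$ has a linear extension in $\bigcup_{\tau,\tau'} \tau \shiftedShuffle \tau'$ if and only if $\tree[S]$ lies in the interval $\big[\, \raisebox{-6pt}{$\tree$}\nearrow \raisebox{4pt}{$\bar \tree'$},\ \raisebox{4pt}{$\tree$} \nwarrow \raisebox{-6pt}{$\bar \tree'$} \,\big]$, and in that case \emph{every} linear extension of $\tree[S]$ lies in that union, so $c_{\tree[S]}=1$. For the ``only if'' and the multiplicity-one claim, I would argue via the commutative diagram with the recoil/canopy map $\surjectionPermPara$ (Proposition~\ref{prop:commutativeDiagram}) or, more robustly, via the known fact (Tonks/LodayRonco-style, here $\surjectionPermAsso$ being a lattice morphism from the weak order) that the weak-order interval $\tau \bar\tau' \le \sigma \le$ (the reverse concatenation) is exactly $\tau \shiftedShuffle \tau'$ for $\tau,\tau'$ the \emph{maximal} representatives of their Cambrian classes, and that $\surjectionPermAsso$ maps weak-order intervals onto Cambrian-lattice intervals. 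Concretely: take $\tau=\maximalLinearExtension(\tree)$, $\tau'=\maximalLinearExtension(\tree')$; then $\tau\shiftedShuffle\tau'$ is a weak-order interval whose $\surjectionPermAsso$-image is the desired Cambrian interval, and since $\surjectionPermAsso$ is a surjective lattice morphism, the image of an interval is an interval and each fiber intersects it in a full sub-interval — giving both the indexing set and multiplicity one. For the ``if'' direction I would run the rewriting-rule argument already used in the proof of Theorem~\ref{thm:cambSubalgebra} to see that the union of $\tau\shiftedShuffle\tau'$ over all representatives is a union of full Cambrian classes and is closed under the covering rotations between the two extremes.

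The main obstacle I anticipate is pinning down that the multiplicity is exactly one rather than merely positive — equivalently, that the union $\bigcup_{\tau \in \linearExtensions(\tree),\, \tau' \in \linearExtensions(\tree')} \tau \shiftedShuffle \tau'$ is a \emph{disjoint} union of Cambrian classes with no class counted twice. The clean way is the lattice-morphism route: $\tau \shiftedShuffle \tau'$ for the maximal representatives $\tau,\tau'$ is itself a single weak-order interval $[\tau\bar\tau',\, \bar\tau'\tau]$ (this is the classical description of the shifted shuffle as a weak-order interval, which I would cite or reprove by a short inversion-set computation), and the restriction of the lattice morphism $\surjectionPermAsso$ to an interval is again surjective onto an interval with connected fibers, so every $\signature\signature'$-Cambrian tree in the target interval is hit by exactly one Cambrian class' worth of permutations. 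Everything else — the identification of the two endpoint trees via their edge cuts, and the ``union of Cambrian classes'' closure property — is routine given Propositions~\ref{prop:rotation}, \ref{prop:CambrianClass}, and~\ref{prop:commutativeDiagram}.
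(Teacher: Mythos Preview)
Your approach is essentially the paper's --- express the product as a sum of $\F_\sigma$ over a weak-order interval, then push through the lattice homomorphism~$\surjectionPermAsso$ to get a Cambrian-lattice interval and identify its endpoints --- but you take an unnecessary detour and worry about a non-issue.

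The paper is more direct on one point: instead of shuffling only the \emph{maximal} representatives $\maximalLinearExtension_\tree, \maximalLinearExtension_{\tree'}$, it shuffles the full Cambrian classes, which are themselves weak-order intervals $[\minimalLinearExtension_\tree, \maximalLinearExtension_\tree]$ and $[\minimalLinearExtension_{\tree'}, \maximalLinearExtension_{\tree'}]$ (Reading). The shifted shuffle of two weak-order intervals is again a weak-order interval, namely
\[
[\minimalLinearExtension_\tree, \maximalLinearExtension_\tree] \shiftedShuffle [\minimalLinearExtension_{\tree'}, \maximalLinearExtension_{\tree'}] = [\minimalLinearExtension_\tree\,\bar\minimalLinearExtension_{\tree'},\ \bar\maximalLinearExtension_{\tree'}\,\maximalLinearExtension_\tree],
\]
so $\PCamb_\tree \product \PCamb_{\tree'}$ is literally $\sum_\sigma \F_\sigma$ over this single interval. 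Then $\surjectionPermAsso$ sends it onto the Cambrian interval between $\surjectionPermAsso(\minimalLinearExtension_\tree\,\bar\minimalLinearExtension_{\tree'}) = \raisebox{-6pt}{$\tree$}\nearrow \raisebox{4pt}{$\bar \tree'$}$ and $\surjectionPermAsso(\bar\maximalLinearExtension_{\tree'}\,\maximalLinearExtension_\tree) = \raisebox{4pt}{$\tree$} \nwarrow \raisebox{-6pt}{$\bar \tree'$}$.

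Your multiplicity-one concern is a red herring. The shifted shuffle produces each permutation at most once, so the coefficients in the $\F$-basis are all $0$ or $1$; since by Theorem~\ref{thm:cambSubalgebra} the product lies in $\Camb$ and the $\PCamb_{\tree[S]}$ have pairwise disjoint $\F$-supports, the coefficients $c_{\tree[S]}$ are forced to be $0$ or $1$. Your argument that ``each fiber intersects the interval $\maximalLinearExtension_\tree \shiftedShuffle \maximalLinearExtension_{\tree'}$ in a full sub-interval'' is true but does \emph{not} by itself give multiplicity one (a proper sub-interval of a fiber is not the whole fiber); you were already done two sentences earlier.
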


\begin{proof}
For any Cambrian tree~$\tree$, the linear extensions~$\linearExtensions(\tree)$ form an interval of the weak order~\cite{Reading-CambrianLattices}. Moreover, the shuffle product of two intervals of the weak order is an interval of the weak order. Therefore, the product~$\PCamb_{\tree} \product \PCamb_{\tree'}$ is a sum of~$\PCamb_{\tree[S]}$ where~$\tree[S]$ runs over an interval of the Cambrian lattice. It remains to characterize the minimal and maximal elements of this interval.

Let~$\minimalLinearExtension_{\tree}$ and~$\maximalLinearExtension_{\tree}$ denote respectively the smallest and the greatest linear extension of~$\tree$ in weak order. The product~$\PCamb_{\tree} \product \PCamb_{\tree'}$ is the sum of~$\PCamb_{\tree[S]}$ over the interval
\[
[\minimalLinearExtension_{\tree}, \maximalLinearExtension_{\tree}] \shiftedShuffle [\minimalLinearExtension_{\tree'}, \omega_{\tree'}] = [\minimalLinearExtension_{\tree} \bar \minimalLinearExtension_{\tree'}, \bar \maximalLinearExtension_{\tree'} \maximalLinearExtension_{\tree}],
\]
where~$\bar~$ denotes as usual the shifting operator on permutations. The result thus follows from the fact that
\[
\surjectionPermAsso(\minimalLinearExtension_{\tree} \bar \mu_{\tree'}) = \raisebox{-6pt}{$\tree$}\nearrow \raisebox{4pt}{$\bar \tree'$}
\qquad\text{and}\qquad
\surjectionPermAsso(\bar \maximalLinearExtension_{\tree'} \maximalLinearExtension_{\tree}) = \raisebox{4pt}{$\tree$} \nwarrow \raisebox{-6pt}{$\bar \tree'$}.
\qedhere
\]
\end{proof}

For example, we can compute the product
\[
\begin{array}{@{}c@{${} = {}$}c@{+}c@{+}c@{}@{}c}
\PCamb_{\!\!\includegraphics{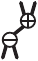}} \product \PCamb_{\includegraphics{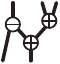}} &
 \multicolumn{3}{l}{\F_{\down{1}\up{2}} \product \big( \F_{\up{2}\down{1}\up{3}} + \F_{\up{2}\up{3}\down{1}} \big)}
 \\[-.4cm]
& \begin{pmatrix} \quad \F_{\down{1}\up{2}\up{4}\down{3}\up{5}} + \F_{\down{1}\up{2}\up{4}\up{5}\down{3}} + \F_{\down{1}\up{4}\up{2}\down{3}\up{5}} \\ + \; \F_{\down{1}\up{4}\up{2}\up{5}\down{3}} + \F_{\down{1}\up{4}\up{5}\up{2}\down{3}} + \F_{\up{4}\down{1}\up{2}\down{3}\up{5}} \\ + \; \F_{\up{4}\down{1}\up{2}\up{5}\down{3}} + \F_{\up{4}\down{1}\up{5}\up{2}\down{3}} + \F_{\up{4}\up{5}\down{1}\up{2}\down{3}} \end{pmatrix}
& \begin{pmatrix} \quad \F_{\down{1}\up{4}\down{3}\up{2}\up{5}} + \F_{\down{1}\up{4}\down{3}\up{5}\up{2}} \\ + \; \F_{\down{1}\up{4}\up{5}\down{3}\up{2}}  + \F_{\up{4}\down{1}\down{3}\up{2}\up{5}} \\ + \; \F_{\up{4}\down{1}\down{3}\up{5}\up{2}} + \F_{\up{4}\down{1}\up{5}\down{3}\up{2}} \\ + \; \F_{\up{4}\up{5}\down{1}\down{3}\up{2}} \end{pmatrix}
& \begin{pmatrix} \quad \F_{\up{4}\down{3}\down{1}\up{2}\up{5}} + \F_{\up{4}\down{3}\down{1}\up{5}\up{2}} \\ + \; \F_{\up{4}\down{3}\up{5}\down{1}\up{2}} + \F_{\up{4}\up{5}\down{3}\down{1}\up{2}} \end{pmatrix}
\\[.8cm]
& \PCamb_{\!\!\includegraphics{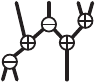}} & \PCamb_{\!\!\includegraphics{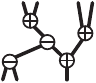}} & \PCamb_{\!\!\includegraphics{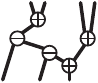}} & .
\end{array}
\]
The first equality is obtained by computing the linear extensions of the two factors, the second by computing the shuffle product and grouping terms according to their $\PSymbol$-symbol, displayed in the last line. Proposition~\ref{prop:product} enables us to shortcut the computation by avoiding to resort to the~$\F$-basis.

\para{Coproduct}
The coproduct in the Cambrian algebra can also be described in combinatorial terms. Define a \defn{cut} of a Cambrian tree~$\tree[S]$ to be a set~$\gamma$ of edges such that any geodesic vertical path in~$\tree[S]$ from a down leaf to an up leaf contains precisely one edge of~$\gamma$. Such a cut separates the tree~$\tree$ into two forests, one above~$\gamma$ and one below~$\gamma$, denoted~$A(\tree[S], \gamma)$ and~$B(\tree[S],\gamma)$, respectively. An example is given in \fref{fig:exampleCoproduct}.

\begin{figure}[h]
  \centerline{\includegraphics{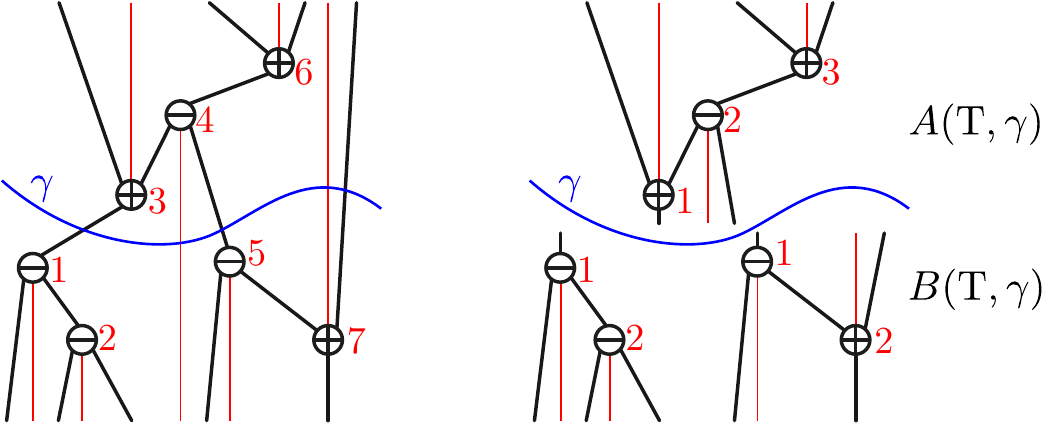}}
  \caption{A cut~$\gamma$ of a Cambrian tree~$\tree$ defines two forests~$A(\tree, \gamma)$ and~$B(\tree,\gamma)$.}
  \label{fig:exampleCoproduct}
\end{figure}

\begin{proposition}
\label{prop:coproduct}
For any Cambrian tree~$\tree[S]$, the coproduct~$\coproduct \PCamb_{\tree[S]}$ is given by
\[
\coproduct \PCamb_{\tree[S]} = \sum_{\gamma} \bigg( \prod_{\tree \in B(\tree[S],\gamma)} \PCamb_{\tree} \bigg) \otimes \bigg( \prod_{\tree' \in A(\tree[S], \gamma)} \PCamb_{\tree'} \bigg),
\]
where~$\gamma$ runs over all cuts of~$\tree[S]$.
\end{proposition}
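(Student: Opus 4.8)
The plan is to expand both sides in the $\F$-basis of $\FQSym_\pm$. Recall that $\PCamb_{\tree[S]} = \sum_{\sigma \in \linearExtensions(\tree[S])} \F_\sigma$, and note that the coproduct of $\FQSym_\pm$ is the deconcatenation coproduct: since $\tau \convolution \tau'$ consists precisely of the permutations whose restrictions to the first $|\tau|$ and last $|\tau'|$ positions standardize to $\tau$ and $\tau'$, one has $\coproduct \F_\sigma = \sum_{k=0}^{n} \F_{\mathrm{std}(\sigma_1 \cdots \sigma_k)} \otimes \F_{\mathrm{std}(\sigma_{k+1} \cdots \sigma_n)}$, where $\mathrm{std}$ denotes standardization (each position keeping its sign). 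Hence $\coproduct \PCamb_{\tree[S]} = \sum_{\sigma \in \linearExtensions(\tree[S])} \sum_{k=0}^{n} \F_{\mathrm{std}(\sigma_1 \cdots \sigma_k)} \otimes \F_{\mathrm{std}(\sigma_{k+1} \cdots \sigma_n)}$.

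First I would group these terms by the value set $P \eqdef \{\sigma_1, \dots, \sigma_k\}$ of the first block. As $\sigma$ is a linear extension of the transitive closure of $\tree[S]$, the set $P$ must be an order ideal of $\tree[S]$, and $\{\sigma_{k+1}, \dots, \sigma_n\}$ is its complement $\bar P$. Conversely, for an order ideal $P$ with $k = |P|$, a linear extension $\sigma$ of $\tree[S]$ satisfies $\{\sigma_1, \dots, \sigma_k\} = P$ if and only if $\sigma = uv$ with $u$ a linear extension of the sub-forest $\tree[S]|_P$ induced on $P$ and $v$ a linear extension of $\tree[S]|_{\bar P}$ --- the ``only if'' because $P$ being an order ideal forbids arcs from $\bar P$ to $P$, the ``if'' because any such concatenation is a linear extension. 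Identifying order ideals of $\tree[S]$ with its cuts (the ideal $P$ corresponds to the cut $\gamma$ with $B(\tree[S], \gamma) = \tree[S]|_P$ and $A(\tree[S], \gamma) = \tree[S]|_{\bar P}$, which is immediate from the definition of a cut, the cut edges becoming leaf edges of the two forests), the grouped sum becomes
\[
\coproduct \PCamb_{\tree[S]} = \sum_{\gamma} \bigg( \sum_{u \in \linearExtensions(B(\tree[S], \gamma))} \F_{\mathrm{std}(u)} \bigg) \otimes \bigg( \sum_{v \in \linearExtensions(A(\tree[S], \gamma))} \F_{\mathrm{std}(v)} \bigg).
\]

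It then remains to identify, for $F$ either forest $B(\tree[S], \gamma)$ or $A(\tree[S], \gamma)$, the sum $\sum_{u \in \linearExtensions(F)} \F_{\mathrm{std}(u)}$ with the product $\prod_{\tree \in F} \PCamb_{\tree}$ over the connected components of $F$, each standardized to a Cambrian tree. Relabelling the vertex set of $F$ to an initial interval of integers identifies the left-hand side with the analogous sum over linear extensions of the relabelled forest; as a linear extension of a forest is exactly a shuffle of linear extensions of its connected components, this matches the iterated shifted shuffle product computing $\prod_{\tree \in F} \PCamb_{\tree}$ \emph{provided} the components occupy consecutive increasing blocks of labels. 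So the proposition reduces to the following structural statement, which I expect to be the main obstacle: \emph{for every order ideal $P$ of a Cambrian tree $\tree[S]$, the label set of each connected component of $\tree[S]|_P$ is an interval of $P$, \ie consists of consecutive elements of $P$.}

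To prove this structural lemma I would combine two facts. First, consecutive integers $j$ and $j+1$ are always comparable in a Cambrian tree (the observation underlying the canopy in Section~\ref{subsec:canopy}); hence if $j, j+1 \in P$ the unique path between them in $\tree[S]$ lies below whichever of the two is larger in the tree order, so entirely inside the order ideal $P$, placing $j$ and $j+1$ in the same component of $\tree[S]|_P$. Second, condition~(ii) in the definition of a Cambrian tree controls left and right subtrees, which excludes an interleaving $a < m < b$ with $a, b$ in one component of $\tree[S]|_P$ and $m$ in another: following $\tree[S]$ from $m$ toward $a$ and toward $b$ and confronting the relevant vertices with~(ii) yields a contradiction. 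Granting the lemma, the standardized components of $F$ sit on consecutive label blocks, so the shuffle of their linear extensions is exactly the shifted shuffle product, and the displayed formula becomes the claimed one.
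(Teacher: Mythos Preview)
Your approach is correct and is essentially the paper's: expand $\coproduct\PCamb_{\tree[S]}$ as deconcatenations of linear extensions, group the terms by the order ideal $P$ of $\tree[S]$ recording the values in the first block (equivalently by cuts~$\gamma$), and identify the sum over linear extensions of each forest $B(\tree[S],\gamma)$, $A(\tree[S],\gamma)$ with the product of $\PCamb$'s over its components. The paper's proof does exactly this, very tersely.

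You are in fact more explicit than the paper about what makes the last step work: in order for $\prod_{\tree\in F}\PCamb_{\tree}$ (a \emph{shifted} shuffle) to coincide with the sum over linear extensions of the forest $F$, the connected components of $F$ must occupy consecutive blocks of labels after standardization. The paper leaves this implicit. Your Part~1 is correct: consecutive integers are always comparable, so if $j,j+1\in P$ the chain between them lies below the larger one and hence in $P$. Your Part~2 (no interleaving $a<m<b$ with $a,b$ in one component and $m$ in another) is the heart of the matter, but the sentence ``following $\tree[S]$ from $m$ toward $a$ and toward $b$ and confronting the relevant vertices with~(ii)'' is not yet a proof: you have not said which vertex you apply the Cambrian condition to, nor why the resulting inequality contradicts $a<m<b$. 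One clean way to finish is to use Remark~\ref{rem:triangulation}: each edge of $\tree[S]$ is dual to a diagonal of $\polygon$, and one side of every edge cut is an interval of~$[n]$; applying this to the cut edge where the path from~$m$ enters the component $C\ni a,b$ gives $a,b$ on one side and $m$ on the other, yielding the contradiction when the $a,b$-side is the interval (and a short extra argument when it is not). Alternatively, the direct argument at the branch vertex $u$ on the $a$--$b$ path works, but it requires a genuine case analysis on the sign of $u$ and which of its three neighbors points toward~$m$.
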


\begin{proof}
Let~$\sigma$ be a linear extension of~$\tree[S]$ and~$\tau, \tau' \in \fS_\pm$ such that~$\sigma \in \tau \convolution \tau'$. As discussed in Section~\ref{subsec:products}, the tables of~$\tau$ and~$\tau'$ respectively appear in the bottom and top rows of the table of~$\sigma$. We can therefore associate a cut of~$\tree[S]$ to each element which appears in the coproduct~$\coproduct \PCamb_{\tree[S]}$.

Reciprocally, given a cut~$\gamma$ of~$\tree[S]$, we are interested in the linear extensions of~$\tree[S]$ where all indices below~$\gamma$ appear before all indices above~$\gamma$. These linear extensions are precisely the permutations formed by a linear extension of~$B(\tree, \gamma)$ followed by a linear extension of~$A(\tree, \gamma)$. But the linear extensions of a forest are obtained by shuffling the linear extensions of its connected components. The result immediately follows since the product~$\PCamb_{\tree} \product \PCamb_{\tree'}$ precisely involves the shuffle of the linear extensions of~$\tree$ with the linear extensions of~$\tree'$.
\end{proof}

For example, we can compute the coproduct
\[
\begin{array}{@{}c@{${} = {}$}c@{$\,+\,$}c@{$\,+\,$}c@{$\,+\,$}c@{$\,+\,$}c@{$\,+\,$}c@{}}
\coproduct \PCamb_{\includegraphics{exmProductB}} & \multicolumn{6}{l}{\coproduct \big( \F_{\up{2}\down{1}\up{3}} + \F_{\up{2}\up{3}\down{1}} \big)}
\\
& 1 \otimes \big( \F_{\up{2}\down{1}\up{3}} + \F_{\up{2}\up{3}\down{1}} \big)
& \F_{\up{1}} \otimes \F_{\down{1}\up{2}}
& \F_{\up{1}} \otimes \F_{\up{2}\down{1}}
& \F_{\up{2}\down{1}} \otimes \F_{\up{1}}
& \F_{\up{1}\up{2}} \otimes \F_{\down{1}}
& \big( \F_{\up{2}\down{1}\up{3}} + \F_{\up{2}\up{3}\down{1}} \big) \otimes 1
\\[.2cm]
& 1 \otimes \PCamb_{\!\includegraphics{exmProductB}}
& \PCamb_{\includegraphics{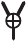}} \otimes \PCamb_{\!\!\includegraphics{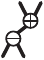}}
& \PCamb_{\includegraphics{exmTreeY}} \otimes \PCamb_{\!\includegraphics{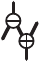}}
& \PCamb_{\!\includegraphics{exmTreeYAg}} \otimes \PCamb_{\includegraphics{exmTreeY}}
& \PCamb_{\includegraphics{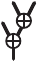}} \otimes \PCamb_{\!\includegraphics{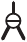}}
& \PCamb_{\!\includegraphics{exmProductB}} \otimes 1
\\
& 1 \otimes \PCamb_{\!\includegraphics{exmProductB}}
& \multicolumn{2}{c@{$\,+\,$}}{\PCamb_{\includegraphics{exmTreeY}} \otimes \big( \PCamb_{\!\includegraphics{exmTreeA}} \product \PCamb_{\includegraphics{exmTreeY}} \big)}
& \PCamb_{\!\includegraphics{exmTreeYAg}} \otimes \PCamb_{\includegraphics{exmTreeY}}
& \PCamb_{\includegraphics{exmTreeYYd}} \otimes \PCamb_{\!\includegraphics{exmTreeA}}
& \PCamb_{\!\includegraphics{exmProductB}} \otimes 1.
\end{array}
\]
\enlargethispage{-.2cm}
Proposition~\ref{prop:coproduct} enables us to shortcut the computation by avoiding to resort to the $\F$-basis. We compute directly the last line, which corresponds to the five possible cuts of the Cambrian tree~\raisebox{-.3cm}{\includegraphics{exmProductB}}.

\para{Matriochka algebras}
To conclude, we connect the Cambrian algebra to the recoils algebra~$\Rec$, defined as the Hopf subalgebra of~$\FQSym_\pm$ generated by the elements
\[
\XRec_\chi \eqdef \sum_{\substack{\tau \in \fS_\pm \\ \surjectionPermPara(\tau) = \chi}} \F_\tau
\]
for all sign vectors~$\chi \in \pm^{n-1}$. The commutative diagram of Proposition~\ref{prop:commutativeDiagram} ensures~that
\[
\XRec_\chi = \sum_{\substack{\tree \in \CambTrees \\ \surjectionAssoPara(\tree) = \chi}} \PCamb_{\tree},
\]
and thus that~$\Rec$ is a subalgebra of~$\Camb$. In other words, the Cambrian algebra is sandwiched between the signed permutation algebra and the recoils algebra~$\Rec \subset \Camb \subset \FQSym_\pm$. This property has to be compared with the polytope inclusions discussed in Section~\ref{subsec:geometricRealizations}.

%%%%%%%%%%%%%%%%%

\subsection{Quotient algebra of~$\FQSym_\pm^*$}
\label{subsec:quotientAlgebra}

We switch to the dual Hopf algebra~$\FQSym_\pm^*$ with basis $(\G_\tau)_{\tau \in \fS_\pm}$ and whose product and coproduct are defined by
\[
\G_\tau \product \G_{\tau'} = \sum_{\sigma \in \tau \convolution \tau'} \G_\sigma
\qquad\text{and}\qquad
\coproduct \G_\sigma = \sum_{\sigma \in \tau \shiftedShuffle \tau'} \G_\tau \otimes \G_{\tau'}.
\]
The following statement is automatic from Theorem~\ref{thm:cambSubalgebra}.

\begin{theorem}
The graded dual~$\Camb^*$ of the Cambrian algebra is isomorphic to the image of~$\FQSym_\pm^*$ under the canonical projection
\[
\pi : \C\langle A \rangle \longrightarrow \C\langle A \rangle / \equiv,
\]
where~$\equiv$ denotes the Cambrian congruence. The dual basis~$\QCamb_{\tree}$ of~$\PCamb_{\tree}$ is expressed as~$\QCamb_{\tree} = \pi(\G_\tau)$, where~$\tau$ is any linear extension of~$\tree$.
\end{theorem}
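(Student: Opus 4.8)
The plan is to invoke the standard duality between a graded subalgebra and a graded quotient algebra, applied to the inclusion $\Camb \hookrightarrow \FQSym_\pm$ established in Theorem~\ref{thm:cambSubalgebra}. Recall that for a graded connected Hopf algebra $H$ of finite type, a graded Hopf subalgebra $K \subseteq H$ dualizes to a surjection $H^* \twoheadrightarrow K^*$ of graded Hopf algebras, namely the transpose of the inclusion. So $\Camb^*$ is automatically realized as a quotient of $\FQSym_\pm^*$. The content of the statement is to identify this quotient concretely: the kernel of the transposed map $\FQSym_\pm^* \to \Camb^*$ is the annihilator $\Camb^\perp$ of $\Camb$ inside $\FQSym_\pm^*$, and the claim is that $\FQSym_\pm^*/\Camb^\perp$ coincides with $\C\langle A\rangle / \!\equiv$, with $\QCamb_{\tree}$ the image of $\G_\tau$ for any $\tau \in \linearExtensions(\tree)$.

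**First I would** set up the pairing: $\FQSym_\pm$ and $\FQSym_\pm^*$ are dual via $\langle \F_\tau, \G_{\tau'}\rangle = \delta_{\tau,\tau'}$. Since $\PCamb_{\tree} = \sum_{\tau \in \linearExtensions(\tree)} \F_\tau$ and the fibers $\linearExtensions(\tree)$ of $\surjectionPermAsso$ partition $\fS_\pm$ (Proposition~\ref{prop:CambrianClass}), the family $(\PCamb_{\tree})_{\tree \in \CambTrees}$ is a basis of $\Camb$, and its dual basis in $\Camb^* = \FQSym_\pm^*/\Camb^\perp$ is represented by $\G_\tau$ for any single chosen $\tau \in \linearExtensions(\tree)$ — indeed $\langle \PCamb_{\tree[S]}, \G_\tau \rangle = [\surjectionPermAsso(\tau) = \tree[S]]$. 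This shows $\G_\tau$ and $\G_{\tau'}$ have the same image in the quotient whenever $\surjectionPermAsso(\tau) = \surjectionPermAsso(\tau')$, i.e.\ whenever $\tau \equiv \tau'$, and conversely that distinct Cambrian trees give linearly independent images. Hence as a graded vector space the quotient $\FQSym_\pm^*/\Camb^\perp$ is precisely $\C\langle A\rangle/\!\equiv$, and the projection $\pi$ is exactly the transpose of the inclusion $\Camb \hookrightarrow \FQSym_\pm$.

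**The remaining point** is compatibility with the algebra (and coalgebra) structures: one must check that $\Camb^\perp$ is a Hopf ideal of $\FQSym_\pm^*$, so that the quotient is a Hopf algebra and $\pi$ is a Hopf morphism. But this is formal: the annihilator of a Hopf subalgebra is always a Hopf ideal (the product on $\FQSym_\pm^*$ is dual to the coproduct on $\FQSym_\pm$, which preserves $\Camb$ by Theorem~\ref{thm:cambSubalgebra}, and dually for the coproduct on $\FQSym_\pm^*$), so $\FQSym_\pm^*/\Camb^\perp \cong \Camb^*$ as Hopf algebras. Combined with the identification of the underlying space, this gives the isomorphism $\Camb^* \cong \C\langle A\rangle/\!\equiv$ with the stated description of the dual basis.

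**I do not expect a serious obstacle here** — the statement is essentially a dictionary translation of Theorem~\ref{thm:cambSubalgebra}, which is why the authors call it ``automatic''. The only mild care needed is bookkeeping: verifying that the combinatorial congruence $\equiv$ on $\C\langle A\rangle$ really matches the linear congruence induced by $\Camb^\perp$ (this is exactly Proposition~\ref{prop:CambrianClass}, which says the Cambrian congruence classes are the fibers of $\surjectionPermAsso$), and making sure the grading (by size and number of positive signs) is respected throughout so that all duals are well defined on finite-dimensional graded pieces. Explicit formulas for the product and coproduct of the $\QCamb_{\tree}$ in terms of Cambrian trees would then be derived afterward by dualizing Propositions~\ref{prop:product} and~\ref{prop:coproduct}, but that is beyond what this theorem asserts.
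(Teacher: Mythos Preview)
Your proposal is correct and is exactly the standard argument the paper is tacitly invoking: the paper gives no proof at all for this theorem, simply declaring it ``automatic from Theorem~\ref{thm:cambSubalgebra}'', and your write-up supplies precisely the routine duality bookkeeping (annihilator of a Hopf subalgebra is a Hopf ideal, the fibers of~$\surjectionPermAsso$ are the Cambrian classes by Proposition~\ref{prop:CambrianClass}, dual basis identification via the pairing) that justifies that word.
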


Similarly as in the previous section, we can describe combinatorially the product and coproduct of $\QCamb$-basis elements of~$\Camb^*$ in terms of operations on Cambrian trees.

\para{Product}
Call \defn{gaps} the $n+1$ positions between two consecutive integers of~$[n]$, including the position before~$1$ and the position after~$n$. A gap~$\gamma$ defines a \defn{geodesic vertical path}~$\lambda(\tree,\gamma)$ in a Cambrian tree~$\tree$ from the bottom leaf which lies in the same interval of consecutive negative labels as~$\gamma$ to the top leaf which lies in the same interval of consecutive positive labels as~$\gamma$. See \fref{fig:exampleCoproductDual}. A multiset~$\Gamma$ of gaps therefore defines a \defn{lamination}~$\lambda(\tree,\Gamma)$ of~$\tree$, \ie a multiset of pairwise non-crossing geodesic vertical paths in~$\tree$ from down leaves to up leaves. When cut along the paths of a lamination, the Cambrian tree~$\tree$ splits into a forest.

Consider two Cambrian trees~$\tree$ and~$\tree'$ on~$[n]$ and~$[n']$ respectively. For any shuffle~$s$ of their signatures~$\signature$ and~$\signature'$, consider the multiset~$\Gamma$ of gaps of~$[n]$ given by the positions of the negative signs of~$\signature'$ in~$s$ and the multiset~$\Gamma'$ of gaps of~$[n']$ given by the positions of the positive signs of~$\signature$ in~$s$. We denote by~$\tree \,{}_s\!\backslash \tree'$ the Cambrian tree obtained by connecting the up leaves of the forest defined by the lamination~$\lambda(\tree,\Gamma)$ to the down leaves of the forest defined by the lamination~$\lambda(\tree',\Gamma')$.

\begin{example}
Consider the Cambrian trees~$\tree^\blueCirc$ and~$\tree^\redSquare$ of \fref{fig:exampleProductDual}. To distinguish signs in~$\tree^\blueCirc$ and~$\tree^\redSquare$, we circle the signs in~$\signature(\tree^\blueCirc) = \blueMinus\blueMinus\bluePlus$ and square the signs in~$\signature(\tree^\redSquare) = \redMinus\redMinus\redPlus\redMinus$. Consider now an arbitrary shuffle~$s = \redMinus\blueMinus\blueMinus\redMinus\redPlus\bluePlus\redMinus$ of these two signatures. The resulting laminations of~$\tree^\blueCirc$ and~$\tree^\redSquare$, as well as the Cambrian tree~$\tree^\blueCirc {}_s\!\backslash \tree^\redSquare$ are represented in \fref{fig:exampleProductDual}.

\begin{figure}[h]
  \centerline{\includegraphics{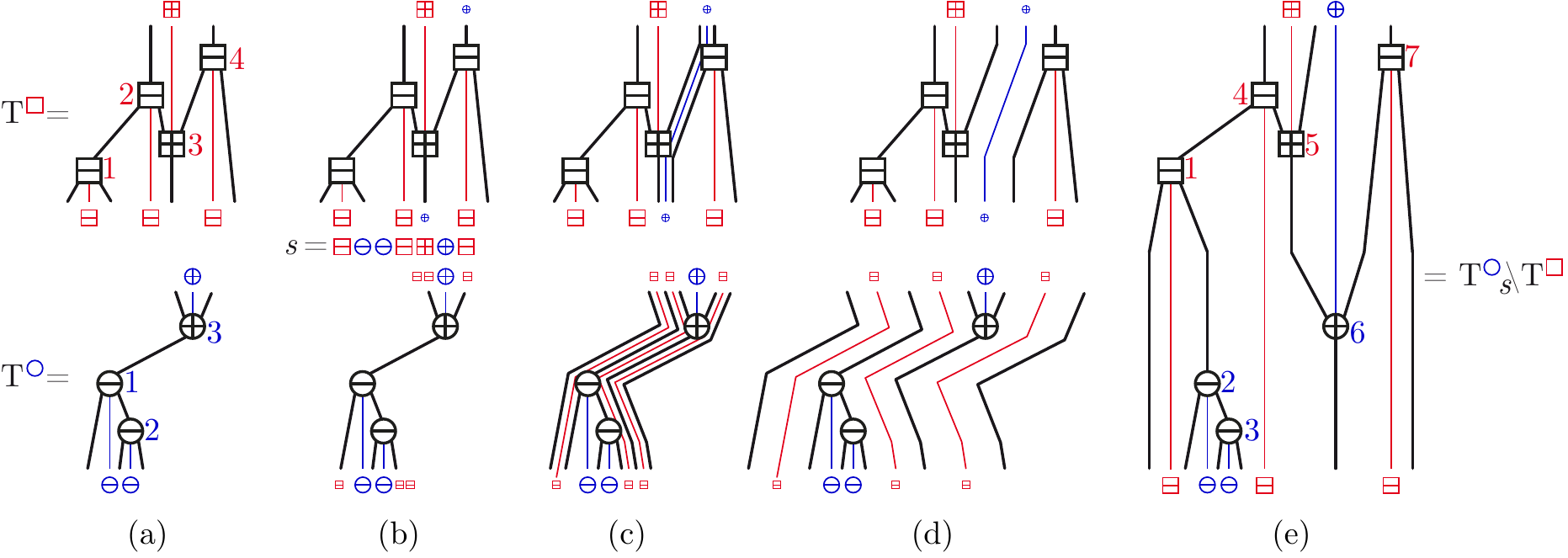}}
  \caption[Combinatorial interpretation of the product in the~$\QCamb$-basis of~$\Camb^*$.]{(a) The two Cambrian trees~$\tree^\blueCirc$ and~$\tree^\redSquare$. (b) Given the shuffle $s = \redMinus\blueMinus\blueMinus\redMinus\redPlus\bluePlus\redMinus$, the positions of the~$\redMinus$ are reported in~$\tree^\blueCirc$ and the positions of the~$\bluePlus$ are reported in~$\tree^\redSquare$. (c) The corresponding laminations. (d) The trees are split according to the laminations. (e) The resulting Cambrian tree~$\tree^\blueCirc {}_s\!\backslash \tree^\redSquare$.}
  \label{fig:exampleProductDual}
\end{figure}
\end{example}

\begin{proposition}
\label{prop:productDual}
For any Cambrian trees~$\tree, \tree'$, the product~$\QCamb_{\tree} \product \QCamb_{\tree'}$ is given by
\[
\QCamb_{\tree} \product \QCamb_{\tree'} = \sum_s \QCamb_{\tree \,{}_s\!\backslash \tree'},
\]
where~$s$ runs over all shuffles of the signatures of~$\tree$ and~$\tree'$.
\end{proposition}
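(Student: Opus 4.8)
The plan is to compute $\QCamb_{\tree} \product \QCamb_{\tree'}$ directly in the $\G$-basis of $\FQSym_\pm^*$, transport the result through the quotient map~$\pi$, and then recognize the Cambrian trees that appear as the trees $\tree \,{}_s\!\backslash \tree'$. First I would fix arbitrary linear extensions $\tau \in \linearExtensions(\tree)$ and $\tau' \in \linearExtensions(\tree')$, so that $\QCamb_{\tree} = \pi(\G_\tau)$ and $\QCamb_{\tree'} = \pi(\G_{\tau'})$. Since $\pi$ is an algebra morphism and the product of $\FQSym_\pm^*$ in the $\G$-basis is the signed convolution, this gives
\[
\QCamb_{\tree} \product \QCamb_{\tree'} = \pi(\G_\tau \product \G_{\tau'}) = \sum_{\sigma \in \tau \convolution \tau'} \pi(\G_\sigma) = \sum_{\sigma \in \tau \convolution \tau'} \QCamb_{\surjectionPermAsso(\sigma)},
\]
where the last equality uses that $\pi(\G_\sigma) = \QCamb_{\surjectionPermAsso(\sigma)}$ because $\sigma$ is a linear extension of its $\PSymbol$-symbol.

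The next step is to identify $\tau \convolution \tau'$ with the set of shuffles of $\signature \eqdef \signature(\tree)$ and $\signature' \eqdef \signature(\tree')$. Writing $\sigma = \pi_0 \circ (\tau\bar\tau')$ with $\pi_0 \in \fS^{(n,n')}$, the values of $\sigma$ coming from~$\tau$ are $\pi_0(1) < \dots < \pi_0(n)$ and carry, in this order, the signs of~$\signature$, while those coming from $\tau'$ are $\pi_0(n+1) < \dots < \pi_0(n+n')$ and carry the signs of~$\signature'$. Hence $\vsignature(\sigma)$ is exactly the shuffle~$s$ of $\signature$ and $\signature'$ recorded by the set $\{\pi_0(1), \dots, \pi_0(n)\} \subseteq [n+n']$, and $\pi_0 \mapsto s$ is a bijection from $\fS^{(n,n')}$ onto the set of shuffles. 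So the display above becomes $\sum_s \QCamb_{\surjectionPermAsso(\sigma_s)}$, where $\sigma_s$ denotes the unique element of $\tau \convolution \tau'$ with $\vsignature(\sigma_s) = s$.

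It then remains to prove $\surjectionPermAsso(\sigma_s) = \tree \,{}_s\!\backslash \tree'$. Since the fiber of $\surjectionPermAsso$ over a Cambrian tree is its set of linear extensions, this reduces to checking that $\sigma_s$ is a linear extension of $\tree \,{}_s\!\backslash \tree'$. Both have signature $s$ (in $\tree \,{}_s\!\backslash \tree'$ every vertex keeps the sign it carried in $\tree$ or $\tree'$, and those signs occupy exactly the $\signature$- and $\signature'$-slots of $s$), so it suffices to see that $\sigma_s$ respects every arc of $\tree \,{}_s\!\backslash \tree'$. Here I would use the structural description coming from the construction: after slicing $\tree$ and $\tree'$ along the laminations $\lambda(\tree,\Gamma)$ and $\lambda(\tree',\Gamma')$ and grafting up leaves of the first forest onto down leaves of the second, the arcs of $\tree \,{}_s\!\backslash \tree'$ among the labels inherited from $\tree$ form a subforest of $\tree$, those among the labels inherited from $\tree'$ form a subforest of $\tree'$, and every remaining arc is directed from a $\tree$-label to a $\tree'$-label. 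Since in $\sigma_s$ all $\tree$-labels precede all $\tree'$-labels, the $\tree$-labels appear in the order dictated by $\tau$, and the $\tree'$-labels in the order dictated by $\tau'$, the permutation $\sigma_s$ respects all three families of arcs, so $\sigma_s \in \linearExtensions(\tree \,{}_s\!\backslash \tree')$ and the proof is complete.

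The transport through the quotient and the identification of $\tau \convolution \tau'$ with shuffles are routine. The one step I would write out carefully — and the main obstacle — is the structural description of $\tree \,{}_s\!\backslash \tree'$ used in the last paragraph: one must handle the cut-and-graft along a possibly non-reduced multiset lamination, track the relabeling of both forests by $\pi_0$, and in particular be sure that no arc of $\tree \,{}_s\!\backslash \tree'$ runs from a $\tree'$-label down to a $\tree$-label. It may help to verify the statement on \fref{fig:exampleProductDual} first.
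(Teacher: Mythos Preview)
Your proposal is correct and follows essentially the same route as the paper: fix linear extensions~$\tau,\tau'$, expand the convolution in the $\G$-basis, biject $\tau \convolution \tau'$ with shuffles~$s$ of the signatures, and identify $\surjectionPermAsso(\sigma_s)$ with $\tree \,{}_s\!\backslash \tree'$. The only minor variation is in that last identification: the paper argues it directly from the insertion algorithm (since convolution shuffles columns while keeping the rows of~$\tau$ below those of~$\tau'$, running~$\CambCorresp$ on~$\sigma$ produces~$\tree$ below and~$\tree'$ above, split by the walls from the negative vertices of~$\tree'$ and the positive vertices of~$\tree$ --- which is precisely the lamination description of~$\tree \,{}_s\!\backslash \tree'$), whereas you verify that~$\sigma_s$ is a linear extension of~$\tree \,{}_s\!\backslash \tree'$ and invoke the fiber characterization of~$\surjectionPermAsso$; both arguments rest on the same structural fact and are equally valid.
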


\begin{proof}
Let~$\tau$ and~$\tau'$ be linear extensions of~$\tree$ and~$\tree'$ respectively, let~$\sigma \in \tau \convolution \tau'$ and let~$\tree[S] = \surjectionPermAsso(\sigma)$. As discussed in Section~\ref{subsec:products}, the convolution~$\tau \convolution \tau'$ shuffles the columns of the tables of~$\tau$ and~$\tau'$ while preserving the order of their rows. According to the description of the insertion algorithm~$\CambCorresp$, the tree~$\tree[S]$ thus consists in~$\tree$ below and~$\tree'$ above, except that the vertical walls falling from the negative nodes of~$\tree'$ split~$\tree$ and similarly the vertical walls rising from the positive nodes of~$\tree$ split~$\tree'$. This corresponds to the description of~$\tree \,{}_s\!\backslash \tree'$, where~$s$ is the shuffle of the signatures of~$\tree$ and~$\tree'$ given by~$\sigma$.
\end{proof}

For example, we can compute the product
\[
\hspace*{-1cm}\begin{array}{@{}c@{${} = {}$}c@{$\,+\,$}c@{$\,+\,$}c@{$\,+\,$}c@{$\,+\,$}c@{$\,+\,$}c@{$\,+\,$}c@{$\,+\,$}c@{$\,+\,$}c@{$\,+\,$}c@{\;}c@{}}
\QCamb_{\includegraphics{exmProductA}} \product \QCamb_{\includegraphics{exmProductB}} &
 \multicolumn{10}{l}{\G_{\down{1}\up{2}} \product \G_{\up{2}\down{1}\up{3}}}
 \\[-.2cm]
& \G_{\down{1}\up{2}\up{4}\down{3}\up{5}} & \G_{\down{1}\up{3}\up{4}\down{2}\up{5}} & \G_{\down{1}\up{4}\up{3}\down{2}\up{5}} & \G_{\down{1}\up{5}\up{3}\down{2}\up{4}} & \G_{\down{2}\up{3}\up{4}\down{1}\up{5}} & \G_{\down{2}\up{4}\up{3}\down{1}\up{5}} & \G_{\down{2}\up{5}\up{3}\down{1}\up{4}} & \G_{\down{3}\up{4}\up{2}\down{1}\up{5}} & \G_{\down{3}\up{5}\up{2}\down{1}\up{4}} & \G_{\down{4}\up{5}\up{2}\down{1}\up{3}}
\\[.2cm]
& \QCamb_{\!\includegraphics{exmProduct1}} & \QCamb_{\!\includegraphics{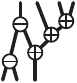}} & \QCamb_{\!\includegraphics{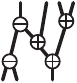}} & \QCamb_{\!\includegraphics{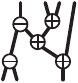}} & \QCamb_{\!\includegraphics{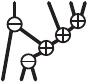}} & \QCamb_{\!\includegraphics{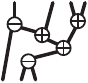}} & \QCamb_{\!\includegraphics{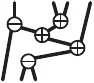}} & \QCamb_{\!\includegraphics{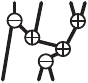}} & \QCamb_{\!\includegraphics{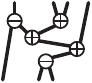}} & \QCamb_{\!\includegraphics{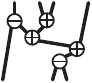}} & .
\end{array}
\]
Note that the~$10$ resulting Cambrian trees correspond to the~$10$ possible shuffles of~${-}{+}$ and~${-}{+}{+}$.

\para{Coproduct}
To describe the coproduct of~$\QCamb$-basis elements of~$\Camb^*$, we also use gaps and vertical paths in Cambrian trees. Namely, for a gap~$\gamma$, we denote by~$L(\tree[S],\gamma)$ and~$R(\tree[S],\gamma)$ the left and right Cambrian subtrees of~$\tree[S]$ when split along the path~$\lambda(\tree[S], \gamma)$. An example is given in \fref{fig:exampleCoproductDual}.

\begin{figure}[b]
  \centerline{\includegraphics{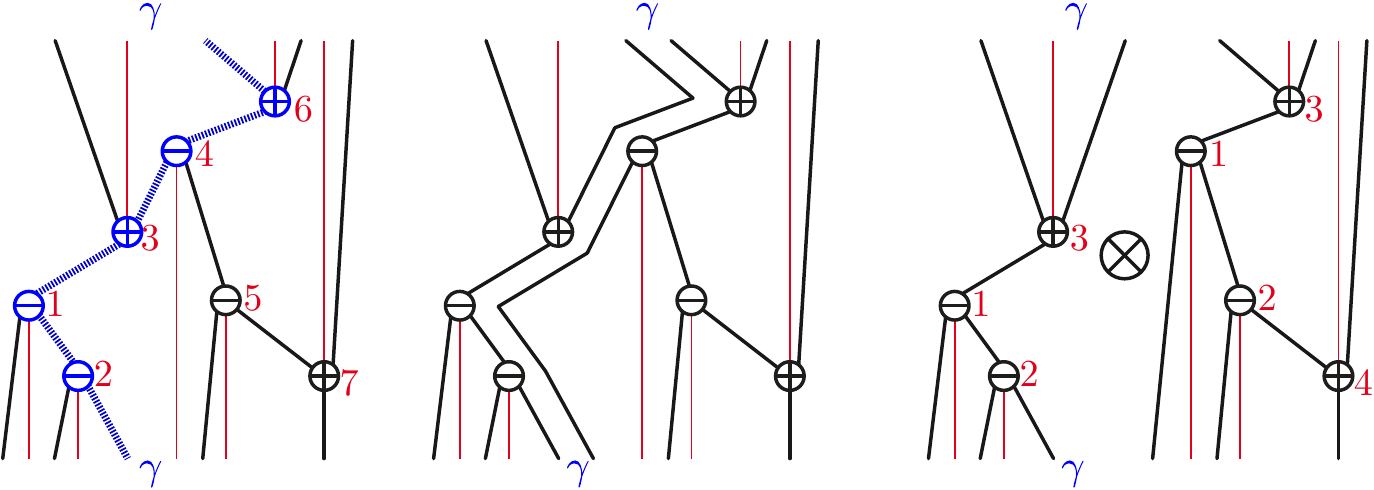}}
  \caption{A gap~$\gamma$ between~$3$ and~$4$ (left) defines a vertical cut (middle) which splits the Cambrian tree (right).}
  \label{fig:exampleCoproductDual}
\end{figure}

\begin{proposition}
\label{prop:coproductDual}
For any Cambrian tree~$\tree[S]$, the coproduct~$\coproduct\QCamb_{\tree[S]}$ is given by
\[
\coproduct\QCamb_{\tree[S]} = \sum_{\gamma} \QCamb_{L(\tree[S],\gamma)} \otimes \QCamb_{R(\tree[S],\gamma)},
\]
where~$\gamma$ runs over all gaps between vertices of~$\tree[S]$.
\end{proposition}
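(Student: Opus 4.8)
The plan is to pull the coproduct of~$\QCamb_{\tree[S]}$ back from the coproduct of~$\G_\sigma$ in~$\FQSym_\pm^*$ through the quotient map~$\pi$, and then to identify each resulting $\QCamb$-factor combinatorially. Fix any linear extension~$\sigma \in \linearExtensions(\tree[S])$, so that~$\QCamb_{\tree[S]} = \pi(\G_\sigma)$ and therefore~$\coproduct \QCamb_{\tree[S]} = (\pi \otimes \pi)\big(\coproduct \G_\sigma\big)$. As recalled in Section~\ref{subsec:products}, the pairs~$(\tau,\tau')$ with~$\sigma \in \tau \shiftedShuffle \tau'$ are in bijection with the gaps~$\gamma$ of~$[n]$: if~$\gamma$ lies between~$k$ and~$k+1$, then~$\tau$ is the signed subword of~$\sigma$ formed by the entries of value~$\le k$ (signs travelling with their values) and~$\tau'$ is the standardization of the signed subword formed by the entries of value~$>k$. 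Hence
\[
\coproduct \QCamb_{\tree[S]} = \sum_{\gamma} \QCamb_{\surjectionPermAsso(\tau)} \otimes \QCamb_{\surjectionPermAsso(\tau')},
\]
the sum being over the~$n+1$ gaps~$\gamma$ between vertices of~$\tree[S]$, where for each~$\gamma$ the permutations~$\tau$ and~$\tau'$ are obtained from~$\sigma$ as above. It remains to prove that~$\surjectionPermAsso(\tau) = L(\tree[S],\gamma)$ and~$\surjectionPermAsso(\tau') = R(\tree[S],\gamma)$.

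By the characterization of the fibers of~$\surjectionPermAsso$ (the permutations~$\rho$ with~$\surjectionPermAsso(\rho) = \tree$ are exactly the linear extensions of~$\tree$), this reduces to checking that~$\tau$ is a linear extension of~$L(\tree[S],\gamma)$ and~$\tau'$ a linear extension of~$R(\tree[S],\gamma)$. Two facts are needed. First, the geodesic vertical path~$\lambda(\tree[S],\gamma)$ separates the vertex set of~$\tree[S]$ into the vertices of label~$\le k$ on its left and the vertices of label~$>k$ on its right: this is forced by condition~(ii) in the definition of Cambrian trees, and it is precisely what makes~$L(\tree[S],\gamma)$ and~$R(\tree[S],\gamma)$ Cambrian trees on~$[k]$ and~$\{k+1,\dots,n\}$ respectively. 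Second, since~$\tree[S]$ is a tree, the unique path between any two vertices lying on the same side of the cut~$\lambda(\tree[S],\gamma)$ stays on that side; consequently the transitive closure of~$L(\tree[S],\gamma)$ is the restriction to~$[k]$ of the transitive closure of~$\tree[S]$, and likewise~$R(\tree[S],\gamma)$ on the complementary labels. Therefore restricting a linear extension of~$\tree[S]$ to the values~$\le k$ yields a linear extension of~$L(\tree[S],\gamma)$, and standardizing its restriction to the values~$>k$ yields a linear extension of~$R(\tree[S],\gamma)$, which is what we wanted.

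The only delicate point is this last identification: one must be sure that cutting along the vertical path~$\lambda(\tree[S],\gamma)$ is simultaneously compatible with the splitting of the labels at~$k$ and with the transitive closure, so that restriction of linear extensions corresponds exactly to passing to the two Cambrian subtrees~$L(\tree[S],\gamma)$ and~$R(\tree[S],\gamma)$. An equivalent and more hands-on route, parallel to the proof of Proposition~\ref{prop:productDual}, is to run the insertion algorithm~$\CambCorresp$ directly on~$\tau$ and on~$\tau'$ and observe that the strands and walls it produces are those of the two pieces into which~$\lambda(\tree[S],\gamma)$ divides~$\tree[S]$; this is the ``coproduct'' counterpart of the column-splitting argument used there, now splitting the rows (values) of~$\sigma$ rather than its columns.
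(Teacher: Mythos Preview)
Your proof is correct and follows essentially the same approach as the paper. The paper's own proof is terser: it simply notes that the pairs~$(\tau,\tau')$ with~$\sigma \in \tau \shiftedShuffle \tau'$ correspond to the vertical gaps~$\gamma$ (since~$\tau$ and~$\tau'$ sit in the left and right columns of~$\sigma$), and then asserts that running the insertion algorithm on~$\tau$ and~$\tau'$ separately produces~$L(\tree[S],\gamma)$ and~$R(\tree[S],\gamma)$ --- this is exactly the ``hands-on route'' you sketch at the end. Your primary argument via linear extensions and the restriction of the transitive closure to each side of~$\lambda(\tree[S],\gamma)$ is a perfectly valid (and slightly more explicit) justification of that same step.
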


\begin{proof}
Let~$\sigma$ be a linear extension of~$\tree[S]$ and~$\tau, \tau' \in \fS_\pm$ such that~$\sigma \in \tau \shiftedShuffle \tau'$. As discussed in Section~\ref{subsec:products}, $\tau$ and~$\tau'$ respectively appear on the left and right columns of~$\sigma$. Let~$\gamma$ denote the vertical gap separating~$\tau$ from~$\tau'$. Applying the insertion algorithm to~$\tau$ and~$\tau'$ separately then yields the trees~$L(\tree[S],\gamma)$ and~$R(\tree[S],\gamma)$. The description follows.
\end{proof}

For example, we can compute the coproduct
\[
\hspace*{-1cm}\begin{array}{@{}c@{${} = {}$}c@{${} + {}$}c@{${} + {}$}c@{${} + {}$}c@{}}
\coproduct \QCamb_{\includegraphics{exmProductB}} &
 \multicolumn{4}{l}{\coproduct \G_{\up{2}\down{1}\up{3}}}
 \\[-.2cm]
& 1 \otimes \G_{\up{2}\down{1}\up{3}}
& \G_{\down{1}} \otimes \G_{\up{1}\up{2}}
& \G_{\up{2}\down{1}} \otimes \G_{\up{1}}
& \G_{\up{2}\down{1}\up{3}} \otimes 1
\\[.2cm]
& 1 \otimes \QCamb_{\includegraphics{exmProductB}}
& \QCamb_{\includegraphics{exmTreeA}} \otimes \QCamb_{\includegraphics{exmTreeYYd}}
& \QCamb_{\includegraphics{exmTreeYAg}} \otimes \QCamb_{\includegraphics{exmTreeY}}
& \QCamb_{\includegraphics{exmProductB}} \otimes 1.
\end{array}
\]
Note that the last line can indeed be directly computed using the paths defined by the four possible gaps of the Cambrian tree~\raisebox{-.3cm}{\includegraphics{exmProductB}}.

%%%%%%%%%%%%%%%%%

\subsection{Duality}

As proven in~\cite{HivertNovelliThibon-algebraBinarySearchTrees}, the duality~$\tau \mapsto \tau^{-1}$ between the Hopf algebras~$\FQSym$ and~$\FQSym^*$ induces a duality between the Hopf algebras~$\PBT$ and~$\PBT^*$. That is to say that the composition~$\Psi$ of the applications
\[
\begin{array}{ccccccc}
\PBT & \verylonghookrightarrow & \FQSym & \verylongleftrightarrow & \FQSym^* & \verylongtwoheadrightarrow & \PBT^* \\
& \PCamb_{\tree} \mapsto \!\!\sum\limits_{\tau \in \linearExtensions(\tree)} \F_\tau & & \tau \mapsto \tau^{-1} & & \G_\tau \mapsto \QCamb_{\surjectionPermAsso(\tau)}
\end{array}
\]
is an isomorphism between~$\PBT$ and~$\PBT^*$. This property is no longer true for the Cambrian algebra~$\Camb$ and its dual~$\Camb^*$. Namely, the composition~$\Psi$ of the applications
\[
\begin{array}{ccccccc}
\Camb & \verylonghookrightarrow & \FQSym_\pm & \verylongleftrightarrow & \FQSym_\pm^* & \verylongtwoheadrightarrow & \Camb^* \\
& \PCamb_{\tree} \mapsto \!\!\sum\limits_{\tau \in \linearExtensions(\tree)} \F_\tau & & \F_\tau \mapsto \G_\tau^{-1} & & \G_\tau \mapsto \QCamb_{\surjectionPermAsso(\tau)}
\end{array}
\]
is not an isomorphism. It is indeed not injective as
\[
\Psi \big( \PCamb_{\raisebox{-.25cm}{\includegraphics{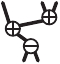}}} \big) = \QCamb_{\raisebox{-.25cm}{\includegraphics{exmProductB}}} = \Psi \big( \PCamb_{\raisebox{-.25cm}{\includegraphics{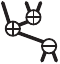}}} \big).
\]
Indeed, their images along the three maps are given by
\[
\begin{array}{ccccccc@{\,}l}
\PCamb_{\raisebox{-.25cm}{\includegraphics{ctrexmDualityA}}} & \longmapsto & \F_{\down{2}\up{13}} & \longmapsto & \G_{\up{2}\down{1}\up{3}} & \longmapsto & \QCamb_{\raisebox{-.25cm}{\includegraphics{exmProductB}}} & , \text{ and}\\
\PCamb_{\raisebox{-.25cm}{\includegraphics{ctrexmDualityB}}} & \longmapsto & \F_{\down{3}\up{12}} & \longmapsto & \G_{\up{23}\down{1}} & \longmapsto & \QCamb_{\raisebox{-.25cm}{\includegraphics{exmProductB}}} & .
\end{array}
\]

%%%%%%%%%%%%%%%%%%%%%%%%%%%%%%%%%%%%%%

\section{Multiplicative bases}
\label{sec:multiplicativeBases}

In this section, we define multiplicative bases of~$\Camb$ and study the indecomposable elements of~$\Camb$ for these bases. We prove in Sections~\ref{subsec:structuralProp} and~\ref{subsec:enumerativeProp} both structural and enumerative properties of the set of indecomposable elements.

%%%%%%%%%%%%%%%%%

\subsection{Multiplicative bases and indecomposable elements}

For a Cambrian tree~$\tree$, we define
\[
\ECamb^{\tree} \eqdef \sum_{\tree \le \tree'} \PCamb_{\tree'}
\qquad\text{and}\qquad
\HCamb^{\tree} \eqdef \sum_{\tree' \le \tree} \PCamb_{\tree'}.
\]
To describe the product of two elements of the~$\ECamb$- or $\HCamb$-basis, remember that the Cambrian trees
\[
\raisebox{-6pt}{$\tree$}\nearrow \raisebox{4pt}{$\bar \tree'$}
\qquad\text{and}\qquad
\raisebox{4pt}{$\tree$} \nwarrow \raisebox{-6pt}{$\bar \tree'$}
\]
are defined to be the trees obtained by shifting all labels of~$\tree'$ and grafting for the first one the rightmost outgoing leaf of~$\tree$ on the leftmost incoming leaf of~$\tree'$, and for the second one the rightmost incoming leaf of~$\tree$ on the leftmost outgoing leaf of~$\tree'$. Examples are given in \fref{fig:exampleProduct}.

\begin{proposition}
\label{prop:productMultiplicativeBases}
$(\ECamb^{\tree})_{\tree \in \CambTrees}$ and~$(\HCamb^{\tree})_{\tree \in \CambTrees}$ are multiplicative bases of~$\Camb$:
\[
\ECamb^{\tree} \product \ECamb^{\tree'} = \ECamb^{\raisebox{-5pt}{\scriptsize$\tree$}\nearrow \raisebox{4pt}{\scriptsize$\bar \tree'$}}
\qquad\text{and}\qquad
\HCamb^{\tree} \product \HCamb^{\tree'} = \HCamb^{\raisebox{4pt}{\scriptsize$\tree$}\nwarrow \raisebox{-5pt}{\scriptsize$\bar \tree'$}}.
\]
\end{proposition}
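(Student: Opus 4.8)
The plan is to transport everything into the $\F$-basis of $\FQSym_\pm$ and reduce to Proposition~\ref{prop:product}. First, $(\ECamb^{\tree})_{\tree \in \CambTrees}$ and $(\HCamb^{\tree})_{\tree \in \CambTrees}$ are bases of $\Camb$: for a fixed signature, the transition matrices between $(\ECamb^{\tree})$ and $(\PCamb_{\tree})$, and between $(\HCamb^{\tree})$ and $(\PCamb_{\tree})$, are unitriangular for any linear extension of the $\signature$-Cambrian lattice. Second, since the $\signature$-Cambrian classes are intervals of the weak order on $\fS^{\signature}$ (Section~\ref{subsec:CambrianClasses}) and $\surjectionPermAsso$ is a lattice quotient~\cite{Reading-CambrianLattices}, the preimage under $\surjectionPermAsso$ of a principal up-set (resp.~down-set, interval) of the $\signature$-Cambrian lattice is the principal up-set (resp.~down-set, interval) of the weak order on $\fS^{\signature}$ spanned by the minimal and maximal linear extensions of the bounding trees. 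In particular
\[
\ECamb^{\tree} = \sum_{\tau \ge \minimalLinearExtension_{\tree}} \F_\tau
\qquad\text{and}\qquad
\HCamb^{\tree} = \sum_{\tau \le \maximalLinearExtension_{\tree}} \F_\tau ,
\]
where the sums run over $\tau \in \fS^{\signature(\tree)}$ with respect to the weak order.

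Given this, the product computation is routine. From $\F_\tau \product \F_{\tau'} = \sum_{\sigma \in \tau \shiftedShuffle \tau'} \F_\sigma$ and the fact that every term of a shifted shuffle of two permutations of fixed sizes determines the two factors uniquely (as the signed subwords on its small and on its large values), I obtain that $\ECamb^{\tree} \product \ECamb^{\tree'}$ is the multiplicity-free sum of $\F_\sigma$ over the set $\{\tau \ge \minimalLinearExtension_{\tree}\} \shiftedShuffle \{\tau' \ge \minimalLinearExtension_{\tree'}\}$. As recalled in the proof of Proposition~\ref{prop:product}, the shifted shuffle of two weak-order intervals is the weak-order interval between the shifted concatenations of their endpoints; since the shifted concatenation of the top permutations of $\fS^{\signature}$ and $\fS^{\signature'}$ is the top permutation of $\fS^{\signature\signature'}$, this set is exactly the principal up-set of $\minimalLinearExtension_{\tree} \bar \minimalLinearExtension_{\tree'}$. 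Dually, $\HCamb^{\tree} \product \HCamb^{\tree'}$ is the sum of $\F_\sigma$ over the principal down-set of $\bar \maximalLinearExtension_{\tree'} \maximalLinearExtension_{\tree}$ (using that the shifted concatenation of the two identities is the identity).

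It remains to identify these endpoints. Comparing with the first paragraph, it suffices to show that $\minimalLinearExtension_{\tree} \bar \minimalLinearExtension_{\tree'}$ is the minimal linear extension of $\tree \nearrow \bar\tree'$ and that $\bar \maximalLinearExtension_{\tree'} \maximalLinearExtension_{\tree}$ is the maximal linear extension of $\tree \nwarrow \bar\tree'$. I would obtain this by expanding $\PCamb_{\tree} \product \PCamb_{\tree'}$ in the $\F$-basis in two ways: unfolding the definitions and using again the shuffle-of-intervals identity writes it as the sum of $\F_\sigma$ over the weak-order interval $[\minimalLinearExtension_{\tree} \bar \minimalLinearExtension_{\tree'}, \bar \maximalLinearExtension_{\tree'} \maximalLinearExtension_{\tree}]$, while Proposition~\ref{prop:product} combined with the first paragraph writes it as the sum of $\F_\sigma$ over the weak-order interval $[\minimalLinearExtension_{\tree \nearrow \bar\tree'}, \maximalLinearExtension_{\tree \nwarrow \bar\tree'}]$; since $(\F_\sigma)$ is a basis, these intervals coincide, hence so do their endpoints. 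Substituting back into the second paragraph gives $\ECamb^{\tree} \product \ECamb^{\tree'} = \ECamb^{\tree \nearrow \bar\tree'}$ and $\HCamb^{\tree} \product \HCamb^{\tree'} = \HCamb^{\tree \nwarrow \bar\tree'}$.

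The step I expect to require the most care is the first paragraph, namely that $\surjectionPermAsso$ sends principal up-/down-sets of the Cambrian lattice to principal up-/down-sets of the weak order: this is exactly where the lattice-congruence structure of the Cambrian relation~\cite{Reading-CambrianLattices} is used, and once it is available the rest is bookkeeping with shifted shuffles plus an appeal to Proposition~\ref{prop:product}. A more self-contained alternative avoiding the $\F$-basis would be to check directly that the Cambrian intervals $[\tree_1 \nearrow \bar\tree_2, \tree_1 \nwarrow \bar\tree_2]$, as $\tree_1$ ranges over $\signature$-Cambrian trees above $\tree$ and $\tree_2$ over $\signature'$-Cambrian trees above $\tree'$, partition the principal up-set of $\tree \nearrow \bar\tree'$ in the $\signature\signature'$-Cambrian lattice (and dually for $\HCamb$); but proving this partition amounts to exhibiting a canonical left/right factorisation of every Cambrian tree lying above $\tree \nearrow \bar\tree'$, which seems more delicate than the route above.
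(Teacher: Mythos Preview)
Your proof is correct and follows essentially the same route as the paper: rewrite $\ECamb^{\tree}$ and $\HCamb^{\tree}$ as sums of $\F_\tau$ over principal up- and down-sets of the weak order, use that shifted shuffles of weak-order intervals are weak-order intervals, and identify the resulting endpoint. The only difference is in that last identification: the paper simply invokes the fact $\surjectionPermAsso(\bar\maximalLinearExtension_{\tree'}\maximalLinearExtension_{\tree}) = \raisebox{4pt}{$\tree$}\nwarrow\raisebox{-6pt}{$\bar\tree'$}$ (and its dual), which was already established inside the proof of Proposition~\ref{prop:product}, whereas you recover the same equality indirectly by expanding $\PCamb_{\tree}\product\PCamb_{\tree'}$ in the $\F$-basis two ways and matching interval endpoints---a valid but slightly roundabout detour.
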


\begin{proof}
Let~$\maximalLinearExtension_{\tree}$ denote the maximal linear extension of~$\tree$ in weak order. Since~$\bigset{\linearExtensions(\tilde\tree)}{\tilde\tree \le \tree}$ partitions the weak order interval~$[12 \cdots n, \maximalLinearExtension_{\tree}]$, we have
\[
\HCamb^{\tree} = \sum_{\tilde\tree \le \tree} \PCamb_{\tilde\tree} = \sum_{\tilde\tree \le \tree} \sum_{\tau \in \linearExtensions(\tilde\tree)} \F_\tau = \sum_{\tau \le \maximalLinearExtension_{\tree}} \F_\tau.
\]
Since the shuffle product of two intervals of the weak order is an interval of the weak order, the product~$\HCamb^{\tree} \product \HCamb^{\tree'}$ is the sum of~$\F_\tau$ over the interval
\[
[12 \cdots n, \maximalLinearExtension_{\tree}] \shiftedShuffle [12 \cdots n', \maximalLinearExtension_{\tree'}] = [12 \cdots (n+n'), \bar \maximalLinearExtension_{\tree'}\maximalLinearExtension_{\tree}].
\]
The result thus follows from the fact that
\[
\surjectionPermAsso(\bar\maximalLinearExtension_{\tree'}\maximalLinearExtension_{\tree}) = \raisebox{4pt}{$\tree$} \nwarrow \raisebox{-6pt}{$\bar \tree'$}.
\]
The proof is symmetric for~$\ECamb^{\tree}$, replacing lower interval and~$[12 \cdots n, \maximalLinearExtension_{\tree}]$ by the upper interval~$[\minimalLinearExtension_{\tree}, n \cdots 21]$.
\end{proof}

As the multiplicative bases~$(\ECamb^{\tree})_{\tree \in \CambTrees}$ and~$(\HCamb^{\tree})_{\tree \in \CambTrees}$ have symmetric properties, we focus our analysis on the $\ECamb$-basis. The reader is invited to translate the results below to the $\HCamb$-basis. We consider multiplicative decomposability. Remember that an \defn{edge cut} in a Cambrian tree~$\tree[S]$ is the ordered partition~$\edgecut{X}{Y}$ of the vertices of~$\tree[S]$ into the set~$X$ of vertices in the source set and the set~$Y$ of vertices in the target set of an oriented edge~$e$ of~$\tree[S]$.

\begin{proposition}
The following properties are equivalent for a Cambrian tree~$\tree[S]$:
\begin{enumerate}[(i)]
\item $\ECamb^{\tree[S]}$ can be decomposed into a product~$\ECamb^{\tree[S]} = \ECamb^{\tree} \product \ECamb^{\tree'}$ for non-empty Cambrian trees~$\tree, \tree'$; \label{enum:decomposable}
\item $\edgecut{[k]}{[n] \ssm [k]}$ is an edge cut of~$\tree[S]$ for some~$k \in [n]$; \label{enum:cut}
\item at least one linear extension~$\tau$ of~$\tree[S]$ is decomposable, \ie $\tau([k]) = [k]$ for some~$k \in [n]$. \label{enum:perm}
\end{enumerate}
The tree~$\tree[S]$ is then called \defn{$\ECamb$-decomposable} and the edge cut~$\edgecut{[k]}{[n] \ssm [k]}$ is called \defn{splitting}.
\end{proposition}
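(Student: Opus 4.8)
The plan is to prove the three-way equivalence by showing \ref{enum:decomposable}~$\Rightarrow$~\ref{enum:cut}~$\Rightarrow$~\ref{enum:perm}~$\Rightarrow$~\ref{enum:decomposable}, using the product formula from Proposition~\ref{prop:productMultiplicativeBases} together with the translation between edge cuts and decomposable permutations that is already implicit in the definition of~$\nwarrow$ and~$\nearrow$.

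\medskip
\noindent\emph{\ref{enum:decomposable}~$\Rightarrow$~\ref{enum:cut}.} Suppose $\ECamb^{\tree[S]} = \ECamb^{\tree} \product \ECamb^{\tree'}$ with $\tree$ on $k$ vertices and $\tree'$ on $n-k$ vertices. By Proposition~\ref{prop:productMultiplicativeBases} this forces $\tree[S] = \raisebox{-5pt}{\scriptsize$\tree$}\nearrow \raisebox{4pt}{\scriptsize$\bar \tree'$}$. Now I would simply read off from the definition of the grafting operation~$\nearrow$ that the (unique) edge joining the rightmost outgoing leaf region of~$\tree$ to the leftmost incoming leaf region of~$\bar\tree'$ realizes the ordered partition $\edgecut{[k]}{[n]\ssm[k]}$: every vertex of the copy of~$\tree$ carries a label in~$[k]$ and lies in the source set of that edge, while every (shifted) vertex of~$\tree'$ carries a label in $[n]\ssm[k]$ and lies in the target set. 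Hence $\edgecut{[k]}{[n]\ssm[k]}$ is an edge cut of~$\tree[S]$.

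\medskip
\noindent\emph{\ref{enum:cut}~$\Rightarrow$~\ref{enum:perm}.} Suppose $\edgecut{[k]}{[n]\ssm[k]}$ is the edge cut of an oriented edge $e$ of~$\tree[S]$. Removing~$e$ splits~$\tree[S]$ into two Cambrian forests whose vertex sets are $[k]$ and $[n]\ssm[k]$; I would pick any linear extension~$\tau$ of the transitive closure of~$\tree[S]$ in which all vertices of the source side are listed before all vertices of the target side (such an extension exists because $e$ is the only edge between the two sides and it points from source to target, so no transitive relation forces a target-side vertex before a source-side vertex). This $\tau$ satisfies $\tau([k]) = [k]$, so it is decomposable. (Concretely one may take $\minimalLinearExtension_{\tree[S]}$ itself after checking that the cut $\edgecut{[k]}{[n]\ssm[k]}$ forces it to split, or build $\tau$ by concatenating a linear extension of the bottom forest with one of the top forest.)

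\medskip
\noindent\emph{\ref{enum:perm}~$\Rightarrow$~\ref{enum:decomposable}.} Suppose some linear extension $\tau$ of $\tree[S]$ satisfies $\tau([k])=[k]$. Write $\tau = \tau_1 \bar\tau_2$ where $\tau_1 \in \fS_k$ and $\tau_2 \in \fS_{n-k}$, and set $\tree \eqdef \surjectionPermAsso(\tau_1)$ and $\tree' \eqdef \surjectionPermAsso(\tau_2)$. I would argue that $\tau$ being a linear extension of $\tree[S]$ with $\tau([k])=[k]$ implies $\tree[S]$ is obtained from $\tree$ and $\tree'$ by one of the graftings; more precisely, because $\tau$ is \emph{some} linear extension and the product $\ECamb^{\tree}\product\ECamb^{\tree'}$ is a single basis element $\ECamb^{\tree\nearrow\bar\tree'}$ whose defining interval contains $\surjectionPermAsso$ of every decomposable permutation of the appropriate shape, we get $\tree[S]$ lies in the interval $[\raisebox{-5pt}{\scriptsize$\tree$}\nearrow \raisebox{4pt}{\scriptsize$\bar \tree'$},\, \raisebox{4pt}{\scriptsize$\tree$}\nwarrow\raisebox{-5pt}{\scriptsize$\bar\tree'$}]$. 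To pin down that $\tree[S]$ is exactly $\raisebox{-5pt}{\scriptsize$\tree$}\nearrow \raisebox{4pt}{\scriptsize$\bar \tree'$}$ (and thus \ref{enum:decomposable} holds), I would instead directly invoke the insertion algorithm: inserting $\tau_1\bar\tau_2$ first processes $\tau_1$, producing $\tree$, and then processes the shifted $\tau_2$; since all values of $\tau_2$ exceed all values of $\tau_1$ and the strands of the $\tree$-part are all to the left, the second phase grafts $\surjectionPermAsso(\bar\tau_2)=\bar\tree'$ onto the rightmost outgoing leaf of~$\tree$, which is precisely $\raisebox{-5pt}{\scriptsize$\tree$}\nearrow \raisebox{4pt}{\scriptsize$\bar \tree'$}$. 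Then $\ECamb^{\tree[S]} = \ECamb^{\tree}\product\ECamb^{\tree'}$ by Proposition~\ref{prop:productMultiplicativeBases}.

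\medskip
\noindent The step I expect to be the main obstacle is the implication \ref{enum:perm}~$\Rightarrow$~\ref{enum:decomposable}: one must be careful that a \emph{single} decomposable linear extension suffices, i.e.\ that once \emph{one} linear extension splits at position~$k$, the tree~$\tree[S]$ is genuinely a grafted product and not merely dominated by one. The clean way around this is the observation that the edge cut $\edgecut{[k]}{[n]\ssm[k]}$ is forced: if $\tau([k])=[k]$ then in $\tree[S]=\surjectionPermAsso(\tau)$ there can be no edge between a vertex labeled $\le k$ and one labeled $>k$ other than the single edge separating the two blocks, because such an edge would impose a transitive relation violated by $\tau$ or would create a cycle — so one actually proves \ref{enum:perm}~$\Rightarrow$~\ref{enum:cut} directly and reuses \ref{enum:cut}~$\Rightarrow$~\ref{enum:decomposable} via the grafting identification above, closing the cycle.
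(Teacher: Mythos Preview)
Your proposal is correct and follows essentially the same route as the paper. The paper argues \eqref{enum:decomposable}~$\Leftrightarrow$~\eqref{enum:cut} directly from Proposition~\ref{prop:productMultiplicativeBases}, gets \eqref{enum:cut}~$\Rightarrow$~\eqref{enum:perm} from the general fact that any cut~$\edgecut{X}{Y}$ of a DAG admits a linear extension listing~$X$ before~$Y$, and for \eqref{enum:perm}~$\Rightarrow$~\eqref{enum:cut} observes that the insertion algorithm applied to a decomposable~$\tau$ produces two blocks joined by a single splitting edge---precisely your ``fallback'' argument. Your hesitation in the direct step \eqref{enum:perm}~$\Rightarrow$~\eqref{enum:decomposable} is unwarranted: the insertion claim~$\surjectionPermAsso(\tau_1\bar\tau_2)=\surjectionPermAsso(\tau_1)\nearrow\overline{\surjectionPermAsso(\tau_2)}$ is correct (and is in fact equivalent to the paper's ``two blocks joined by a splitting edge'' observation), so either phrasing closes the cycle.
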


\begin{proof}
The equivalence~\eqref{enum:decomposable} $\iff$ \eqref{enum:cut} is an immediate consequence of the description of the product~$\ECamb^{\tree} \product \ECamb^{\tree'}$ in Proposition~\ref{prop:productMultiplicativeBases}. The implication \eqref{enum:cut} $\implies$ \eqref{enum:perm} follows from the fact that for any cut~$\edgecut{X}{Y}$ of a directed acyclic graph~$G$, there exists a linear extension of~$G$ which starts with~$X$ and finishes with~$Y$. Reciprocally, if~$\tau$ is a decomposable linear extension of~$\tree[S]$, then the insertion algorithm creates two blocks and necessarily relates the bottom-left block to the top-right block by a splitting edge.
\end{proof}

For example, \fref{fig:exampleProduct} shows that~$\surjectionPermAsso(\down{2}\up{7}\down{51}\up{3}\down{4}\up{6})$ is both $\ECamb$- and $\HCamb$-decomposable. In the remaining of this section, we study structural and enumerative properties of $\ECamb$-indecompo\-sable elements of~$\CambTrees$. We denote by~$\indecomposables_\signature$ the set of $\ECamb$-indecomposable elements of~$\CambTrees(\signature)$.

\begin{example}
\label{exm:rightTilting}
For~$\signature = (-)^n$, the $\ECamb$-indecomposable $\signature$-Cambrian trees are \defn{right-tilting} binary trees, \ie binary trees whose root has no left child. Similarly, for~$\signature = (+)^n$, the $\ECamb$-indecomposable $\signature$-Cambrian trees are \defn{left-tilting} binary trees oriented upwards. See \fref{fig:minIndecomposable} for illustrations.
\end{example}

%%%%%%%%%%%%%%%%%

\subsection{Structural properties}
\label{subsec:structuralProp}

The objective of this section is to prove the following property of the $\ECamb$-indecomposable elements of~$\CambTrees(\signature)$.

\begin{proposition}
\label{prop:upperIdeal}
For any signature~$\signature \in \pm^n$, the set~$\indecomposables_\signature$ of~$\ECamb$-indecomposable $\signature$-Cambrian trees forms a principal upper ideal of the $\signature$-Cambrian lattice.
\end{proposition}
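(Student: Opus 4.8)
\emph{Structure of the proof.} The statement has two halves: that $\indecomposables_\signature$ is an upper set of the $\signature$-Cambrian lattice, and that it has a minimum (an upper set having a minimum being exactly a principal upper ideal). The first half I would obtain from Proposition~\ref{prop:rotation}. Let $\tree[S]\to\tree[S]'$ be an increasing rotation, reversing an edge $i\to j$ with $i<j$. By Proposition~\ref{prop:rotation}, $\tree[S]$ and $\tree[S]'$ carry the same edge cuts, except that the cut of $i\to j$ in $\tree[S]$ is traded for the cut of the reversed edge in $\tree[S]'$. Now a \emph{splitting} edge cut $\edgecut{[k]}{[n]\ssm[k]}$ is realized only by an edge whose source endpoint lies in $[k]$ and whose target endpoint lies in $[n]\ssm[k]$, hence by an edge directed from a smaller to a larger label; the reversed edge in $\tree[S]'$ has its larger endpoint $j$ on the source side, so it realizes no splitting cut (that would force $j\le k<i$ against $i<j$). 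Thus every splitting cut of $\tree[S]'$ is already one of $\tree[S]$, so $\tree[S]$ $\ECamb$-indecomposable forces $\tree[S]'$ $\ECamb$-indecomposable, and iterating along chains of increasing rotations shows $\indecomposables_\signature$ is an upper set. It is nonempty, since the maximal $\signature$-Cambrian tree has all its edge cuts of the form $\edgecut{[n]\ssm[k]}{[k]}$, none splitting.

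\emph{The minimum.} For the second half I would produce the minimum by hand. Let $\tree[S]^{\min}_\signature$ be the $\signature$-Cambrian tree obtained from the minimal $\signature$-Cambrian tree (the oriented path $1\to 2\to\cdots\to n$ with the prescribed pendant leaves) by the increasing rotations that slide the separable extreme vertex along the spine; for $\signature=(-)^n$ this is the directed path $2\to 3\to\cdots\to n\to 1$, whose edge cuts are $\edgecut{\{2,\dots,\ell\}}{\{1,\ell+1,\dots,n\}}$ for $2\le\ell\le n-1$ together with $\edgecut{\{2,\dots,n\}}{\{1\}}$, none of which is splitting, so $\tree[S]^{\min}_\signature\in\indecomposables_\signature$; for constant signatures Example~\ref{exm:rightTilting} identifies $\indecomposables_\signature$ and hence its minimum directly. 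It then remains to prove $\tree[S]^{\min}_\signature\le\tree[S]$ for every $\tree[S]\in\indecomposables_\signature$, and this is the real content.

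\emph{The crux.} Because the Cambrian order satisfies $\tree[S]^{\min}_\signature\le\tree[S]\iff\minimalLinearExtension_{\tree[S]^{\min}_\signature}\le\minimalLinearExtension_{\tree[S]}$ in the weak order, and because $\tree[S]$ is $\ECamb$-indecomposable exactly when $\minimalLinearExtension_{\tree[S]}$ is an indecomposable permutation — which follows from the preceding characterization of $\ECamb$-decomposable trees, using that the permutations $\tau$ with $\tau([k])=[k]$ form a lower set of the weak order and that the linear extensions of a Cambrian tree form an interval — the desired inequality reduces to the inclusion $\inv(\minimalLinearExtension_{\tree[S]^{\min}_\signature})\subseteq\inv(\tau)$ for every $\signature$-Cambrian-minimal indecomposable permutation $\tau$ (for $\signature=(-)^n$ one has $\minimalLinearExtension_{\tree[S]^{\min}_\signature}=[2,3,\dots,n,1]$). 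Proving this inclusion is the heart of the matter: such a $\tau$ has, for each $k$, a value $>k$ occurring before a value $\le k$, and the pattern-avoidance forced on minimal representatives of Cambrian classes (the minimum-analogue of the characterization in Section~\ref{subsec:CambrianClasses}) confines these crossings to the bottom of $\tau$, which is exactly what yields the inclusion. I expect this confinement argument to be the main obstacle; by contrast the upper-set part is routine. (One should resist the tempting shortcut of proving $\indecomposables_\signature$ closed under the Cambrian meet via minimal linear extensions: the minimal-element section of a lattice quotient does \emph{not} preserve meets in general, so that route genuinely needs the tree combinatorics above.)
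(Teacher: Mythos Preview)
Your upper-set argument is correct and matches the paper's Lemma~\ref{lem:rotationIndecomposable}(i). The gap is in the minimum half: you construct $\tree[S]^{\min}_\signature$ only for constant signature (``slide the separable extreme vertex along the spine'' does not specify a tree for general~$\signature$, and the minimum genuinely depends on the full signature --- see \fref{fig:minIndecomposable}), and you explicitly leave the crucial inversion inclusion unproven. Your reduction to weak order on minimal linear extensions is sound --- in particular your equivalence ``$\tree$ is $\ECamb$-indecomposable $\iff$ $\minimalLinearExtension_\tree$ is an indecomposable permutation'' is correct, via the observation that $\{\tau:\tau([k])=[k]\}$ is a weak-order lower set for each~$k$ --- but it cannot conclude without knowing $\minimalLinearExtension_{\tree[S]^{\min}_\signature}$ explicitly, which you do not have.

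The paper sidesteps this by working from the other direction. A tree $\tree_\bullet\in\indecomposables_\signature$ is minimal exactly when every decreasing rotation of it is decomposable, and Lemma~\ref{lem:rotationIndecomposable}(ii) sharply constrains which edges can behave this way: if rotating $i\to j$ in a decomposable tree yields an indecomposable one, then $\signature_i=+$ or $i=1$, and $\signature_j=-$ or $j=n$. Read contrapositively, this forces any minimal $\tree_\bullet$ to be a path with pendant leaves (negative vertices $>1$ have no right child, positive vertices $<n$ no left child), and two short further claims pin down its unique linear extension~$\tau_\bullet$. Uniqueness of~$\tau_\bullet$ gives uniqueness of the minimal element, hence principality. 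This delivers precisely the explicit generator your approach was missing, and it falls out of the rotation analysis rather than a guess-and-verify.
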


To prove this statement, we need the following result.

\begin{lemma}
\label{lem:rotationIndecomposable}
Let~$\tree$ be a $\signature$-Cambrian tree, let~$i \to j$ be an edge of~$\tree$ with~$i < j$, and let~$\tree'$ be the $\signature$-Cambrian tree obtained by rotating~$i \to j$ in~$\tree$. Then
\begin{enumerate}[(i)]
\item if~$\tree$ is $\ECamb$-indecomposable, then so is~$\tree'$;
\item if~$\tree$ is $\ECamb$-decomposable while~$\tree'$ is not, then~$\signature_i = +$ or~$i = 1$, and~$\signature_j = -$ or~$j = n$.
\end{enumerate}
\end{lemma}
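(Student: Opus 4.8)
The plan is to compare the edge cuts of $\tree$ and $\tree'$ using Proposition~\ref{prop:rotation}: these two trees carry exactly the same edge cuts, except that the cut $\edgecut{X}{Y}$ of the edge $i \to j$ in $\tree$ is replaced in $\tree'$ by the cut $\edgecut{X'}{Y'}$ of the reversed edge $j \to i$ in $\tree'$. With the notation of the rotation (see \fref{fig:rotation}), the source and target sets are $X = \{i\} \cup L \cup B$ and $Y = \{j\} \cup R \cup A$ in $\tree$, while $X' = \{j\} \cup B \cup R$ and $Y' = \{i\} \cup L \cup A$ in $\tree'$. Throughout, I will use that $L$ contains only labels smaller than~$i$ and $R$ only labels larger than~$j$ (condition~(ii) of the definition of Cambrian trees), and that a Cambrian tree~$\tree[S]$ on~$[n]$ is $\ECamb$-decomposable precisely when $\edgecut{[k]}{[n]\ssm[k]}$ is one of its edge cuts for some~$k$ (necessarily $k \in [n-1]$, since every edge cut has two non-empty sides).

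For~(i), I assume $\tree$ is $\ECamb$-indecomposable and suppose for contradiction that $\tree'$ is $\ECamb$-decomposable, say $\edgecut{[k]}{[n]\ssm[k]}$ is an edge cut of~$\tree'$. Since every edge cut of~$\tree'$ other than $\edgecut{X'}{Y'}$ is also an edge cut of~$\tree$, and $\tree$ has no splitting cut, we must have $X' = [k]$, \ie $\{j\} \cup B \cup R = [k]$. But then $j \le k$, while $i \notin [k]$ forces $i > k$, contradicting $i < j$. Hence $\tree'$ is $\ECamb$-indecomposable. (In a word: the source set of the new cut contains~$j$ while its target set contains the strictly smaller label~$i$, so the new cut cannot collect the $k$ smallest labels on one side.)

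For~(ii), I assume $\tree$ is $\ECamb$-decomposable and $\tree'$ is not. As every edge cut of~$\tree$ other than $\edgecut{X}{Y}$ is an edge cut of~$\tree'$, the only edge cut of~$\tree$ that can be splitting is $\edgecut{X}{Y}$; since $\tree$ is decomposable it has a splitting cut, which must therefore be $\edgecut{X}{Y}$, so $X = \{i\} \cup L \cup B = [k]$ and $Y = \{j\} \cup R \cup A = [n]\ssm[k]$ for some~$k$, whence $i \le k < j$. Suppose first $\signature_i = -$: then $i$ has two children, $L$ is its left descendant subtree and $B$ its right one, so $B$ avoids all labels~$\le i$; consequently the labels $1, \dots, i-1$ of $X = [k]$ all lie in~$L$, and $L = [i-1]$. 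If moreover $i \ge 2$, the edge from the root of~$L$ into~$i$ induces the edge cut $\edgecut{[i-1]}{[n]\ssm[i-1]}$ of~$\tree$, which is splitting ($1 \le i-1 \le n-1$) and distinct from $\edgecut{X}{Y}$ (as $i-1 < i \le k$); hence it is also an edge cut of~$\tree'$, contradicting the $\ECamb$-indecomposability of~$\tree'$. So $\signature_i = +$ or $i = 1$. The mirror argument gives $\signature_j = -$ or $j = n$: if $\signature_j = +$ and $j \le n-1$, then $j$ has two parents, $R$ is its right ancestor subtree and $A$ its left one, so $A$ avoids all labels~$\ge j$ and $R = \{j+1, \dots, n\}$; the edge from~$j$ to the root of~$R$ then induces the splitting edge cut $\edgecut{[j]}{[n]\ssm[j]}$ of~$\tree$, distinct from $\edgecut{X}{Y}$ (as $j > k$), hence surviving in~$\tree'$ — again a contradiction.

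The only point requiring genuine care is the case analysis on the signs $\signature_i$ and $\signature_j$, needed to decide whether the subtree~$L$ (resp.~$R$) is glued to~$i$ (resp.~$j$) along an incoming or an outgoing edge, and whether the companion subtree $B$ (resp.~$A$) is label-constrained by condition~(ii): it is exactly this asymmetry — $L$ hanging below a negative~$i$ yielding the down-set source $[i-1]$, versus $R$ hanging above a positive~$j$ yielding the down-set source $[j]$ — that produces the lopsided conclusion ``$\signature_i = +$ or $i=1$'' on one side and ``$\signature_j = -$ or $j=n$'' on the other. Beyond this bookkeeping, the whole argument reduces to the interplay between the two edge cuts exchanged by the rotation and the inequality $i < j$, and I foresee no further obstacle.
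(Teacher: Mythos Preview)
Your proof is correct and follows essentially the same approach as the paper: both arguments rest on Proposition~\ref{prop:rotation} to compare edge cuts, observe that the new cut $\edgecut{\{j\}\cup B\cup R}{\{i\}\cup L\cup A}$ cannot be splitting since $i<j$, and then for part~(ii) use the sign of~$i$ (resp.~$j$) to force $L=[i-1]$ (resp.~$R=\{j+1,\dots,n\}$), producing a second splitting cut that survives the rotation. The only cosmetic difference is that the paper locates the surviving splitting cut directly in~$\tree'$, while you first identify it in~$\tree$ and then transport it; the content is identical.
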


\begin{proof}
As observed in Proposition~\ref{prop:rotation}, the Cambrian trees~$\tree$ and~$\tree'$ have the same edge cuts, except the cut defined by edge~$i \to j$. Using notations of \fref{fig:rotation}, the edge cut~$C \eqdef \edgecut{i \cup L \cup B}{j \cup R \cup A}$ of~$\tree$ is replaced by the edge cut~$C' \eqdef \edgecut{j \cup R \cup B}{i \cup L \cup A}$ of~$\tree'$. Since $i < j$, the edge cut~$C'$ cannot be splitting. Therefore, $\tree'$ is always $\ECamb$-indecomposable when~$\tree$ is $\ECamb$-indecomposable.

Assume conversely that~$\tree$ is $\ECamb$-decomposable while $\tree'$ is not. This implies that~$C$ is splitting while~$C'$ is not. Since~$C$ is splitting we have~$i \cup L \cup B < j \cup R \cup A$ (where we write~$X < Y$ if~$x < y$ for all~$x \in X$ and~$y \in Y$). If~$\signature_i = -$, then~$L < i < B$, and thus~$L < \{i,j\} \cup R \cup A \cup B$. If moreover~$1 < i$, then~$1 < \{i,j\} \cup R \cup A \cup B$ and thus~$1 \in L \ne \varnothing$. This would imply that the cut of~$\tree'$ defined by the edge~$L \to i$ would be splitting. Contradiction. We prove similarly that~$\signature_j = -$ or~$j = n$.
\end{proof}

\begin{proof}[Proof or Proposition~\ref{prop:upperIdeal}]
We already know from Lemma~\ref{lem:rotationIndecomposable}\,(i) that~$\indecomposables_\signature$ is an upper set of the $\signature$-Cambrian lattice. To see that this upper set is a principal upper ideal, we characterize the unique $\ECamb$-indecomposable $\signature$-Cambrian tree~$\tree_\bullet$ whose decreasing rotations all create a splitting edge cut. We proceed in three steps.

\para{Claim~A} All negative vertices~$i > 1$ of~$\tree_\bullet$ have no right child, while all positive vertices~$j < n$ of~$\tree_\bullet$ have no left child. \\[.1cm]
\textit{Proof.} Assume by means of contradiction that a negative vertex~$i > 1$ has a right child~$j$. Let~$\tree$ be the Cambrian tree obtained by rotation of the edge~$i \leftarrow j$ in~$\tree_\bullet$. Since this rotation is decreasing (because~$i < j$), $\tree$ is decomposable while~$\tree_\bullet$ is not. This contradicts Lemma~\ref{lem:rotationIndecomposable}\,(ii).

\medskip
Claim~A ensures that the Cambrian tree~$\tree_\bullet$ is a path with additional leaves incoming at negative vertices and outgoing at positive vertices. Therefore, $\tree_\bullet$ admits a unique linear extension~$\tau_\bullet$. The next two claims determine~$\tau_\bullet$ and thus~$\tree_\bullet = \surjectionPermAsso(\tau_\bullet)$.

As vertex~$1$ has no left child and vertex~$n$ has no right child, we consider that~$1$ behaves as a positive vertex and~$n$ behaves as a negative vertex. We thus define~${N \eqdef \{n_1 < \dots < n_{N-1} < n_N = n\}}$ and~$P \eqdef \{1 = p_1 < p_2 < \dots < p_P\}$, where~$n_1 < \dots < n_{N-1}$ are the negative vertices and~$p_2 < \dots < p_P$ are the positive vertices among~$\{2, \dots, n-1\}$.

\para{Claim~B} The sets~$N$ and~$P$ both appear in increasing order in~$\tau_\bullet$. \\[.1cm]
\textit{Proof.} If~~$i$ appears in~$\tau_\bullet$ before~$j \in N$, then~$i$ lies in the left child of~$j$ (since~$j$ has no right child), so that~$i < j$. In particular, $N$ is sorted in~$\tau_\bullet$. The proof is symmetric for positive vertices.

\para{Claim~C} In~$\tau_\bullet$, vertex~$p_k$ appears immediately after the first vertex in~$N$ larger than~$p_{k+1}$. \\[.1cm]
\textit{Proof.} Let~$n_\ell$ denote the first vertex in~$N$ larger than~$p_{k+1}$. If~$p_k$ appears before~$n_\ell$ in~$\tau_\bullet$, then~$\tau_\bullet$ is a decomposable permutation (since~$\tau([p_{k+1}-1]) = [p_{k+1}-1]$). If~$p_k$ appears after~$n_{\ell+1}$ in~$\tau_\bullet$, then the Cambrian tree obtained by rotation of the incoming edge at~$p_k$ in~$\tree_\bullet$ remains indecomposable. Therefore, $p_k$ appears precisely in between~$n_\ell$ and~$n_{\ell+1}$.
\end{proof}

\enlargethispage{.1cm}
For example, \fref{fig:minIndecomposable} illustrates the generator of the $\ECamb$-indecomposable $\signature$-Cambrian trees for~$\signature = {-}{-}{+}{-}{-}{+}{+}$, $\signature = (-)^7$, and~$\signature = (+)^7$. The last two are right- and left-tilting trees respectively.

\begin{figure}[h]
  \centerline{\includegraphics{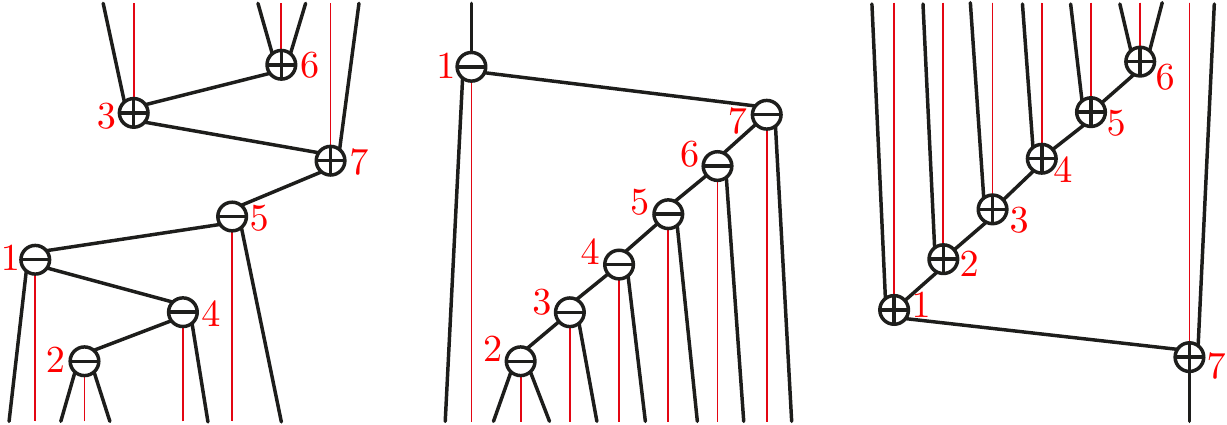}}
  \caption{The generators of the principal upper ideals of $\ECamb$-indecomposable $\signature$-Cambrian trees for~$\signature = {-}{-}{+}{-}{-}{+}{+}$ (left), $\signature = (-)^7$ (middle), $\signature = (+)^7$~(right).}
  \label{fig:minIndecomposable}
\end{figure}

%%%%%%%%%%%%%%%%%

\subsection{Enumerative properties}
\label{subsec:enumerativeProp}

We now consider enumerative properties of $\ECamb$-indecomposable elements. We want to show that the number of $\ECamb$-indecomposable $\signature$-Cambrian trees is independent of the signature~$\signature$.

\begin{proposition}
\label{prop:numberIndecomposables}
For any signature~$\signature \in \pm^n$, there are~$C_{n-1}$ $\ECamb$-indecomposable $\signature$-Cambrian trees. Therefore, there are~$2^n C_{n-1}$ $\ECamb$-indecomposable Cambrian trees on~$n$ vertices.
\end{proposition}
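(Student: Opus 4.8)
The plan is to peel $\signature$-Cambrian trees apart along their splitting edge cuts, turn this into a numerical recursion using the (signature-independent) Catalan count of Cambrian trees, and finish by induction on~$n$. The key structural input is the product formula $\ECamb^{\tree} \product \ECamb^{\tree'} = \ECamb^{\tree \nearrow \bar \tree'}$ of Proposition~\ref{prop:productMultiplicativeBases} together with the characterization of $\ECamb$-decomposability (a $\signature$-Cambrian tree $\tree[S]$ on $[n]$ is $\ECamb$-decomposable if and only if $\edgecut{[k]}{[n]\ssm[k]}$ is an edge cut of $\tree[S]$ for some~$k$). These imply that $\tree[S]$ is $\ECamb$-decomposable exactly when $\tree[S] = \tree \nearrow \bar \tree'$ for non-empty Cambrian trees. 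I would first promote this to a \emph{unique} factorization: if $k$ is minimal such that $\edgecut{[k]}{[n]\ssm[k]}$ is an edge cut of $\tree[S]$, then the source-side component is an $\ECamb$-indecomposable $(\signature_1, \dots, \signature_k)$-Cambrian tree~$\tree$, the target-side component (relabelled on $[n-k]$) is an arbitrary $(\signature_{k+1}, \dots, \signature_n)$-Cambrian tree~$\tree'$, and $\tree[S] = \tree \nearrow \bar \tree'$. One then checks that $\tree[S] \mapsto (k, \tree, \tree')$ is a bijection from the set of $\ECamb$-decomposable $\signature$-Cambrian trees onto $\bigsqcup_{k=1}^{n-1} \indecomposables_{(\signature_1,\dots,\signature_k)} \times \CambTrees(\signature_{k+1}, \dots, \signature_n)$.

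Next I would invoke the fact that $|\CambTrees(\eta)| = C_{|\eta|}$ for \emph{every} signature~$\eta$. Splitting $\CambTrees(\signature)$ into its $\ECamb$-indecomposable elements and its $\ECamb$-decomposable elements, the bijection above gives the recursion
\[
C_n = |\indecomposables_\signature| + \sum_{k=1}^{n-1} |\indecomposables_{(\signature_1,\dots,\signature_k)}| \, C_{n-k}.
\]
I then prove $|\indecomposables_\signature| = C_{n-1}$ by strong induction on~$n$. For $n=1$, the unique Cambrian tree has no edge, hence no splitting edge cut, so $|\indecomposables_\signature| = 1 = C_0$. For $n \ge 2$, assuming $|\indecomposables_\eta| = C_{|\eta|-1}$ whenever $|\eta| < n$, the recursion becomes
\[
C_n = |\indecomposables_\signature| + \sum_{k=1}^{n-1} C_{k-1} \, C_{n-k} = |\indecomposables_\signature| + \sum_{j=0}^{n-2} C_j \, C_{n-1-j},
\]
and since the Catalan recursion reads $\sum_{j=0}^{n-1} C_j \, C_{n-1-j} = C_n$, the last sum equals $C_n - C_{n-1}$, whence $|\indecomposables_\signature| = C_{n-1}$. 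Summing over the $2^n$ signatures of $\pm^n$ then gives $2^n C_{n-1}$ $\ECamb$-indecomposable Cambrian trees on $n$ vertices.

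I expect the main obstacle to be the unique-factorization step, specifically the claim that the \emph{minimal} splitting edge cut of $\tree[S]$ cuts off an $\ECamb$-\emph{indecomposable} tree and that $(k,\tree,\tree')$ is recoverable from $\tree[S]$. Concretely one must show that in $\tree \nearrow \bar\tree'$ with $\tree$ indecomposable the grafted (shifted) copy of $\tree'$ lies on the target side of every internal edge of $\tree$, so that no splitting edge cut of $\tree \nearrow \bar\tree'$ lies strictly inside $\tree$; this is a finite check on the interaction of the grafting edge with edge cuts, in the spirit of Proposition~\ref{prop:rotation} and the proof of Proposition~\ref{prop:productMultiplicativeBases}. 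Alternatively, the unique factorization would follow from freeness of the algebra $\Camb$ over its $\ECamb$-indecomposable generators (as for $\PBT$), but that statement is not available in the present excerpt. Everything else is a routine manipulation of the Catalan recursion.
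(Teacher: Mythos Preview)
Your proof is correct and takes a genuinely different route from the paper's.

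The paper does \emph{not} argue via a factorization recursion. Instead it builds, for each $i$, an explicit bijection~$\Psi_i$ from $\signature$-Cambrian trees to $\switchSign_i(\signature)$-Cambrian trees (where $\switchSign_i$ swaps two consecutive signs, or flips the first or last sign), and checks by a direct case analysis that although $\Psi_i$ does not preserve indecomposability pointwise, it preserves the \emph{number} of $\ECamb$-indecomposable trees. Since any signature can be reached from $(-)^n$ by a composition of such $\switchSign_i$, this reduces everything to the right-tilting binary tree case, where the count $C_{n-1}$ is immediate. Your argument, by contrast, fixes one signature and uses the minimal splitting cut to get the recursion $C_n = |\indecomposables_\signature| + \sum_{k=1}^{n-1} |\indecomposables_{\signature_{|[k]}}|\, C_{n-k}$, which together with strong induction and the Catalan recursion forces $|\indecomposables_\signature| = C_{n-1}$. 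Your approach is arguably more economical (no auxiliary bijections), and as a bonus it essentially \emph{is} the freeness statement the paper derives afterwards; the paper's approach, on the other hand, yields the sign-switching bijections~$\Psi_i$ as a byproduct, which are of independent interest (they also give an alternative proof of the signature-independent Catalan count).

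Regarding the obstacle you flag: the unique-factorization step is cleanest via the linear-extension characterization (Proposition~28\,(iii)) rather than a direct edge-cut check. In one direction, if $\tree$ has a splitting cut $\edgecut{[j]}{[k]\ssm[j]}$, pick a linear extension $\sigma$ of $\tree$ with $\sigma([j])=[j]$ and any linear extension $\tau'$ of $\tree'$; then $\sigma\bar\tau'$ is a linear extension of $\tree[S]=\tree\nearrow\bar\tree'$ with $(\sigma\bar\tau')([j])=[j]$, so $\edgecut{[j]}{[n]\ssm[j]}$ is a splitting cut of $\tree[S]$, contradicting minimality of~$k$. Conversely, if $\tree[S]$ has a splitting cut at some $j<k$, the corresponding edge has its source in $[j]\subset[k]$ and cannot be the grafting edge (whose cut is $\edgecut{[k]}{[n]\ssm[k]}$), hence lies entirely in $\tree$; since the grafting edge is the unique edge joining $[k]$ to $[n]\ssm[k]$, connectivity inside $[k]$ is unchanged by removing it, and the cut restricts to $\edgecut{[j]}{[k]\ssm[j]}$ in~$\tree$. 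Note that your stronger claim, that $\bar\tree'$ lies on the target side of \emph{every} internal edge of $\tree$, is false in general and not needed; only splitting edges matter.
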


This result is immediate for the signature~$\signature = (-)^n$ as $\ECamb$-indecomposable elements are right-tilting binary trees (see Example~\ref{exm:rightTilting}), which are clearly counted by the Catalan number~$C_{n-1}$. To show Proposition~\ref{prop:numberIndecomposables}, we study the behavior of Cambrian trees and their decompositions under local transformations on signatures of~$[n]$. We believe that these transformations are interesting \perse. For example, they provide an alternative proof that there are $C_n$ $\signature$-Cambrian trees for any signature~$\signature \in \pm^n$.

Let~$\switchSign_0 : \pm^n \to \pm^n$ and~$\switchSign_n : \pm^n \to \pm^n$ denote the transformations which switch the signs of~$1$ and~$n$, respectively. Denote by~$\Psi_0(\tree)$ and~$\Psi_n(\tree)$ the trees obtained from a Cambrian tree~$\tree$ by changing the direction of the leftmost and rightmost leaf of~$\tree$ respectively. For~$i \in [n-1]$, let~$\switchSign_i : \pm^n \to \pm^n$ denote the transformation which switches the signs at positions~$i$ and~$i+1$. The transformation~$\signature \to \switchSign_i(\signature)$ is only relevant when~$\signature_i \ne \signature_{i+1}$. In this situation, we denote by~$\Psi_i(\tree)$ the tree obtained from a $\signature$-Cambrian tree~$\tree$ by
\begin{itemize}
\item reversing the arc from the positive to the negative vertex of~$\{i,i+1\}$ if it exists,
\item exchanging the labels of~$i$ and~$i+1$ otherwise.
\end{itemize}
This transformation is illustrated on \fref{fig:switchSign} when~$\signature_i = +$ and~$\signature_{i+1} = -$.

\begin{figure}[h]
  \centerline{\includegraphics{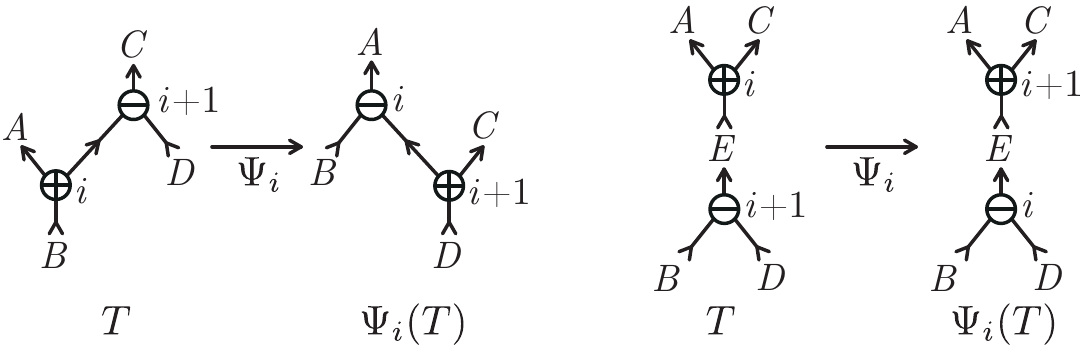}}
  \caption{The transformation~$\Psi_i$ when~$\signature_i = +$ and~$\signature_{i+1} = -$. The tree~$\Psi_i(\tree)$ is obtained by reversing the arc from ~$i$ to~$i+1$ if it exists (left), and just exchanging the labels of~$i$ and~$i+1$ otherwise (right).}
  \label{fig:switchSign}
\end{figure}

To show that~$\Psi_i$ transforms $\signature$-Cambrian trees to $\switchSign_i(\signature)$-Cambrian trees and preserves the number of $\ECamb$-indecomposable elements, we need the following lemma. Note that this lemma also explains why \fref{fig:switchSign} covers all possibilities when~$\signature_i = +$ and~$\signature_{i+1} = -$.

\begin{lemma}
If~$\signature_i = +$ and~$\signature_{i+1} = -$, then the following assertions are equivalent for a $\signature$-Cambrian tree~$\tree$:
\begin{enumerate}[(i)]
\item $\edgecut{[i]}{[n] \ssm [i]}$ is an edge cut of~$\tree$; \label{enum:cut2}
\item $i$ is smaller than~$i+1$ in~$\tree$; \label{enum:smaller}
\item $i$ is in the left subtree of~$i+1$ and~$i+1$ is in the right subtree of~$i$; \label{enum:subtrees}
\item $i$ is the left child of~$i+1$ and~$i+1$ is the right parent of~$i$. \label{enum:edge}
\end{enumerate}
A similar statement holds in the case when~$\signature_i = -$ and~$\signature_{i+1} = +$.
\end{lemma}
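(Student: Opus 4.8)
The plan is to establish the four conditions equivalent by running the cycle \eqref{enum:edge}~$\Rightarrow$~\eqref{enum:subtrees}~$\Rightarrow$~\eqref{enum:smaller}~$\Rightarrow$~\eqref{enum:edge} among the three ``tree-shape'' conditions and then grafting \eqref{enum:cut2} onto it via \eqref{enum:edge}~$\Rightarrow$~\eqref{enum:cut2}~$\Rightarrow$~\eqref{enum:smaller}. Throughout I would rely on the standard fact (already used for the canopy) that $i$ and $i+1$ are comparable in~$\tree$: otherwise they would lie in two distinct subtrees of some vertex~$j$, forcing $i < j < i+1$. The implications \eqref{enum:edge}~$\Rightarrow$~\eqref{enum:subtrees}~$\Rightarrow$~\eqref{enum:smaller} are unwindings of the definitions, dispatched in a line each: if $i$ is the left child of $i+1$ it lies in the left descendant subtree of $i+1$, and if $i+1$ is the right parent of $i$ it lies in the right ancestor subtree of $i$; and any vertex lying in a descendant subtree of $i+1$ is joined to $i+1$ by a directed path ending at $i+1$, hence is smaller than $i+1$ in~$\tree$.

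The substantive step is \eqref{enum:smaller}~$\Rightarrow$~\eqref{enum:edge}. I would start from a directed path $i = v_0 \to v_1 \to \dots \to v_k = i+1$. Since $\signature_i = +$, the vertex $v_1$ is a parent of $i$, and it cannot be the left parent --- the left ancestor subtree of $i$ carries only labels $< i$, so the path could never reach $i+1$ --- hence $v_1$ is the right parent of $i$ and all of $v_1, \dots, v_k$ carry labels $> i$, in particular $\geq i+1$. On the other hand $i$ is smaller than $i+1$, so it lies in a descendant subtree of $i+1$, and as $i < i+1$ it lies in the \emph{left} one. Now if $k \geq 2$ then $v_1$ is a descendant of $i+1$ distinct from $i+1$ with label $> i+1$, so it lies in the \emph{right} descendant subtree of $i+1$; but $v_1$ is adjacent to $i$ in~$\tree$, whereas the left and right descendant subtrees of $i+1$ are distinct connected components of $\tree \ssm \{i+1\}$, so no edge can join them. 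Hence $k = 1$, the edge $i \to i+1$ belongs to~$\tree$, and comparing labels shows $i$ is its left child and $i+1$ its right parent, which is \eqref{enum:edge}.

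To close the loop I would first note, for \eqref{enum:edge}~$\Rightarrow$~\eqref{enum:cut2}, that the edge $i \to i+1$ cuts $\tree$ into a source part, which is the left descendant subtree of $i+1$ and so carries only labels $< i+1$, and a target part, which is the right ancestor subtree of $i$ and so carries only labels $> i$; since these parts partition the $n$ vertices, the source part must be exactly $[i]$, so $\edgecut{[i]}{[n] \ssm [i]}$ is an edge cut. For \eqref{enum:cut2}~$\Rightarrow$~\eqref{enum:smaller}, suppose $\edgecut{[i]}{[n] \ssm [i]}$ is the cut of an edge $u \to v$, so $u \in [i]$ and $v \in [n] \ssm [i]$. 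The directed path between $i$ and $i+1$ (one exists by comparability) must traverse this edge, since deleting it separates $i$ from $i+1$; as the path can only cross the arc from $u$ to $v$ while $i$ sits on the $u$-side and $i+1$ on the $v$-side, it must run from $i$ to $i+1$, which is \eqref{enum:smaller}. Finally, the case $\signature_i = -$, $\signature_{i+1} = +$ should follow from the directly analogous argument, interchanging children with parents, left with right, and below with above throughout. I expect the only genuinely delicate point to be the ``no-detour'' observation in \eqref{enum:smaller}~$\Rightarrow$~\eqref{enum:edge} --- namely that the two descendant subtrees of $i+1$ really are separated by the removal of $i+1$, so that $i$ and $v_1$ cannot be adjacent; once this is secured, the remaining arguments are routine manipulations of the left/right subtree labelling conditions.
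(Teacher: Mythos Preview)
Your argument is correct and uses the same key observation as the paper: any intermediate vertex on the path between $i$ and $i+1$ would have to carry a label strictly between $i$ and $i+1$, which is impossible. The only real difference is organizational. The paper runs a single clean cycle \eqref{enum:cut2}~$\Rightarrow$~\eqref{enum:smaller}~$\Rightarrow$~\eqref{enum:subtrees}~$\Rightarrow$~\eqref{enum:edge}~$\Rightarrow$~\eqref{enum:cut2}. In particular, the paper isolates the step \eqref{enum:subtrees}~$\Rightarrow$~\eqref{enum:edge} and dispatches it in one sentence: since the (unique) path from $i$ to $i+1$ lies entirely in the right subtree of $i$ and in the left subtree of $i+1$, every intermediate label is simultaneously $> i$ and $< i+1$, hence there are none. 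Your \eqref{enum:smaller}~$\Rightarrow$~\eqref{enum:edge} rederives the content of \eqref{enum:subtrees} along the way and then reaches the same conclusion by the connected-components contradiction; this is fine but a little heavier than necessary. The remaining implications --- your \eqref{enum:edge}~$\Rightarrow$~\eqref{enum:cut2} and \eqref{enum:cut2}~$\Rightarrow$~\eqref{enum:smaller} --- match the paper's \eqref{enum:edge}~$\Rightarrow$~\eqref{enum:cut2} and \eqref{enum:cut2}~$\Rightarrow$~\eqref{enum:smaller} essentially verbatim.
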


\begin{proof}
Since~$i$ and~$i+1$ are comparable in~$\tree$ (see Section~\ref{subsec:canopy}), the fact that~$\edgecut{[i]}{[n] \ssm [i]}$ is an edge cut of~$\tree$ implies that~$i$ is smaller than~$i+1$ in~$\tree$. This shows that~\eqref{enum:cut2} $\implies$ \eqref{enum:smaller}.

If~$i$ is smaller than~$i+1$ in~$\tree$, then~$i$ is in a subtree of~$i+1$, and thus in the left one, and similarly, $i+1$ is in the right subtree of~$i$. This shows that~\eqref{enum:smaller} $\implies$ \eqref{enum:subtrees}.

Assume now that~$i$ is in the left subtree of~$i+1$ and~$i+1$ is in the right subtree of~$i$, and consider the path from~$i$ to~$i+1$ in~$\tree$. Since it lies in the right subtree of~$i$ and in the left subtree of~$i+1$, any label along this path should be greater than~$i$ and smaller than~$i+1$. This path is thus a single arc. This shows that~\eqref{enum:subtrees} $\implies$ \eqref{enum:edge}.

Finally, assume that $i$ is the left child of~$i+1$ and~$i+1$ is the right parent of~$i$ in~$\tree$. Then the cut corresponding to the arc~$e$ of~$\tree$ from~$i$ to~$i+1$ is~$\edgecut{[i]}{[n] \ssm [i]}$. Indeed, all elements in the source of~$e$ are in the left subtree and thus smaller than~$i+1$, while all elements in the target of~$e$ are in the right subtree and thus greater than~$i$. This shows that~\mbox{\eqref{enum:edge} $\implies$ \eqref{enum:cut2}}.
\end{proof}

\begin{lemma}
\label{lem:switchSign}
For~$0 \le i \le n$, the map~$\Psi_i$ defines a bijection from~$\signature$-Cambrian trees to~$\switchSign_i(\signature)$-Cambrian trees and preserves the number of $\ECamb$-indecomposable elements.
\end{lemma}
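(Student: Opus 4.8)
The plan is to treat the three kinds of local sign moves separately, settle bijectivity first, and only then control $\ECamb$-decomposability. The moves $\Psi_0$ and $\Psi_n$ are immediate: each only reverses the pendant leaf at vertex $1$ (resp.\ vertex~$n$) and flips its sign, leaving the directed tree induced on the labelled vertices $[n]$ completely unchanged. Hence $\Psi_0$ (resp.\ $\Psi_n$) is an involution exchanging $\signature$- and $\switchSign_0(\signature)$- (resp.\ $\switchSign_n(\signature)$-) Cambrian trees, and since it preserves every edge cut between labelled vertices it preserves $\ECamb$-decomposability, hence $|\indecomposables_\signature|$, verbatim.

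Now fix $1\le i\le n-1$ and assume $\signature_i=+$, $\signature_{i+1}=-$ (the opposite case is symmetric). By the preceding lemma (the characterization of when $\edgecut{[i]}{[n]\ssm[i]}$ is an edge cut), the $\signature$-Cambrian trees split into those carrying the arc $i\to i+1$ — equivalently, those having $\edgecut{[i]}{[n]\ssm[i]}$ as an edge cut — and those in which $i+1$ lies strictly below~$i$. On the first family $\Psi_i$ reverses the arc; using the label ranges of the four relevant subtrees furnished by that lemma, one checks that the valences of $i$ and $i+1$ flip correctly and that the new left and right subtrees still satisfy condition~(ii), so the result is a genuine $\switchSign_i(\signature)$-Cambrian tree. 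On the second family $\Psi_i$ swaps the labels $i$ and $i+1$; because no vertex label lies strictly between these two consecutive integers, only the subtrees straddled by the $i$--$(i+1)$ path are touched and condition~(ii) survives there. In both families the inverse of $\Psi_i$ is the analogous move for $\switchSign_i(\signature)$ (arc-reversal matches arc-reversal, label-swap matches label-swap), so $\Psi_i$ is a bijection.

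For the count, write $K(\tree)$ for the set of $k$ with $\edgecut{[k]}{[n]\ssm[k]}$ an edge cut of~$\tree$, so $\tree$ is $\ECamb$-indecomposable exactly when $K(\tree)=\varnothing$. The key point is that $\Psi_i$ modifies $K$ only at the entry~$i$. In the arc-reversal family the only edge whose cut changes is the reversed arc, whose cut is precisely $\edgecut{[i]}{[n]\ssm[i]}$ (turning into its reverse), so $K(\Psi_i(\tree))=K(\tree)\ssm\{i\}$. In the label-swap family every edge cut off the $i$--$(i+1)$ path is unchanged as a labelled partition, and a short argument shows that a new cut $\edgecut{[i]}{[n]\ssm[i]}$ can appear only when the left child subtree of the lower of $\{i,i+1\}$ equals $[i-1]$ exactly — but that configuration already forces $\edgecut{[i-1]}{[n]\ssm[i-1]}$ to be an edge cut, hence $\tree$ was decomposable anyway. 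Consequently $\Psi_i$ restricts to a bijection from indecomposable $\signature$-Cambrian trees with no arc $i\to i+1$ onto indecomposable $\switchSign_i(\signature)$-Cambrian trees with no arc $i\to i+1$, and it only remains to compare the indecomposable trees carrying the arc $i+1\to i$ on the two sides. Here the arc-reversal bijection identifies the indecomposable $\switchSign_i(\signature)$-trees with the arc $i+1\to i$ with the $\signature$-trees $\tree$ carrying $i\to i+1$ and $K(\tree)=\{i\}$; by the factorization of Proposition~\ref{prop:productMultiplicativeBases} those are exactly the trees $\tree|_{[i]}\nearrow\overline{\tree|_{>i}}$ with both factors indecomposable, so there are $|\indecomposables_{\signature|_{[i]}}|\cdot|\indecomposables_{\signature|_{>i}}|$ of them, and symmetrically for the other side with $\switchSign_i(\signature)$ in place of $\signature$. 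Since $\signature|_{[i]}$, $\signature|_{>i}$, $\switchSign_i(\signature)|_{[i]}$, $\switchSign_i(\signature)|_{>i}$ all have length $<n$, induction on~$n$ (whose base case, and the value $C_{n-1}$ for the signature $(-)^n$, are immediate) makes these two products equal, whence $|\indecomposables_\signature|=|\indecomposables_{\switchSign_i(\signature)}|$.

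The step I expect to be the real obstacle is the bookkeeping in that last paragraph: pinning down exactly which edge cuts of the form $\edgecut{[k]}{[n]\ssm[k]}$ are created or destroyed by the label swap — in particular ruling out that an indecomposable tree becomes decomposable — and then matching up the "extra" decomposable trees produced on the two sides. This is what forces the proof to engage with the recursive grafting structure of Cambrian trees, and thus to be organized as an induction on~$n$ rather than as a single bijection between the two sets of indecomposable trees.
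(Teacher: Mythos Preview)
Your bijectivity argument is fine and matches the paper. The gap is in the decomposability bookkeeping.

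You assert that in the label-swap family an indecomposable $\signature$-tree stays indecomposable, because a new splitting cut $\edgecut{[i]}{[n]\ssm[i]}$ in $\Psi_i(\tree)$ would force the left descendant subtree of $i+1$ in $\tree$ to be exactly $[i-1]$, hence force $\edgecut{[i-1]}{[n]\ssm[i-1]}$ to be a cut of $\tree$. That implication collapses when $i=1$: then $[i-1]=\varnothing$ and there is no edge cut at position $0$ to fall back on. Concretely, take $n=3$, $\signature=({+},{-},{-})$, $i=1$, and the chain $2\to 3\to 1$. Its only edge cuts are $\edgecut{\{2\}}{\{1,3\}}$ and $\edgecut{\{2,3\}}{\{1\}}$, so it is $\ECamb$-indecomposable; there is no arc $1\to 2$, so you are in the label-swap family; yet swapping labels $1\leftrightarrow 2$ produces $1\to 3\to 2$, which has the splitting cut $\edgecut{\{1\}}{\{2,3\}}$. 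So the restriction of $\Psi_i$ to indecomposables is \emph{not} a bijection onto the indecomposables of the label-swap family on the other side, and the remainder of your counting (matching up ``arc $i+1\to i$'' trees on the two sides via induction) never gets off the ground.

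The paper's proof anticipates exactly this: it states outright that $\Psi_i$ does \emph{not} preserve $\ECamb$-indecomposability, and instead shows that the number of trees flipping from decomposable to indecomposable equals the number flipping the other way. Both counts are identified with $|\indecomposables_{\underline\signature}|\cdot|\indecomposables_{\overline\signature}|$, where $\underline\signature=\signature|_{[i]}$ and $\overline\signature=\signature|_{[n]\ssm[i]}$: a decomposable $\tree$ becomes indecomposable exactly when it carries the arc $i\to i+1$ with both halves indecomposable, and an indecomposable $\tree$ becomes decomposable exactly when $\Psi_i(\tree)$ acquires $\edgecut{[i]}{[n]\ssm[i]}$ as its \emph{unique} splitting cut with both halves indecomposable. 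This is a direct counting argument, with no induction on $n$ needed. Your inductive scheme could be repaired along these lines, but as written the crucial ``label-swap preserves indecomposability'' step is false.
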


\begin{proof}
The result is immediate for~$i = 0$ and~$i = n$. Assume thus that~$i \in [n-1]$ and that~$\signature_i = +$ while~$\signature_{i+1} = -$. We first prove that~$\Psi_i$ sends~$\signature$-Cambrian trees to~$\switchSign_i(\signature)$-Cambrian trees. It clearly transforms trees to trees. To see that~$\Psi_i(\tree)$ is $\switchSign_i(\signature)$-Cambrian, we distinguish two cases:
\begin{itemize}
\item \fref{fig:switchSign}\,(left) illustrates the case when $\tree$ has an arc in from~$i$ to~$i+1$. All labels in~$B$ are smaller than~$i$ since they are distinct from~$i$ and in the left subtree of~$i+1$, and all labels in the right subtree of~$i$ in~$\Psi_i(\tree)$ are greater than~$i$ since they were in the right subtree of~$i$ in~$\tree$. Therefore, the labels around vertex~$i$ of~$\Psi_i(\tree)$ respect the Cambrian rules. We argue similarly around~$i+1$. All other vertices have the same signs and subtrees.
\item \fref{fig:switchSign}\,(right) illustrates the case when $\tree$ has no arc in from~$i$ to~$i+1$. All labels in~$B$ (resp.~$D$) are smaller (resp.~greater) than~$i$ since they are distinct from~$i$ and in the left (resp.~ right) subtree of~$i+1$, so the labels around vertex~$i$ of~$\Psi_i(\tree)$ respect the Cambrian rules. We argue similarly around~$i+1$. All other vertices have the same signs and subtrees.
\end{itemize}
Alternatively, it is also easy to see~$\Psi_i$ transforms~$\signature$-Cambrian trees to~$\switchSign_i(\signature)$-Cambrian trees using the interpretation of Cambrian trees as dual trees of triangulations (see Remark~\ref{rem:triangulation}).

Although~$\Psi_i$ does not preserve $\ECamb$-indecomposable elements, we now check that~$\Psi_i$ preserves the number of $\ECamb$-indecomposable elements. Write~$\signature = \underline{\signature}\overline{\signature}$ with~$\underline{\signature} : [i] \to \{\pm\}$ and~$\overline{\signature} : [n] \ssm [i] \to \{\pm\}$, and let~$\underline{I} = |\indecomposables_{\underline{\signature}}|$ and~$\overline{I} = |\indecomposables_{\overline{\signature}}|$. We claim that
\begin{itemize}
\item the map~$\Psi_i$ transforms precisely $\underline{I} \cdot \overline{I}$\, $\ECamb$-decomposable $\signature$-Cambrian trees to $\ECamb$-indecomposable $\switchSign_i(\signature)$-Cambrian trees. Indeed, $\tree$ is $\ECamb$-decomposable while $\Psi_i(\tree)$ is $\ECamb$-indecomposable if and only of~$\tree$ has an arc from~$i$ to~$i+1$ whose source and target subtrees are $\ECamb$-indecomposable $\underline{\signature}$- and $\overline{\signature}$-Cambrian trees, respectively.
\item the map~$\Psi_i$ transforms precisely $\underline{I} \cdot \overline{I}$\, $\ECamb$-indecomposable $\signature$-Cambrian trees to $\ECamb$-decomposable $\switchSign_i(\signature)$-Cambrian trees. Indeed, assume that~$\tree$ is $\ECamb$-indecomposable while $\Psi_i(\tree)$ is $\ECamb$-decomposable.  We claim that~$\edgecut{[i]}{[n] \ssm [i]}$ is the only splitting edge cut of~$\Psi_i(\tree)$. Indeed, for~$j \ne i$, both $i$ and~$i+1$ belong either to~$[j]$ or to~$[n] \ssm [j]$, and~$\edgecut{[j]}{[n] \ssm [j]}$ is an edge cut of~$\Psi_i(\tree)$ if and only if it is an edge cut of~$\tree$. Moreover, the $\underline{\signature}$- and~$\overline{\signature}$-Cambrian trees~$\underline{\tree[S]}$ and~$\overline{\tree[S]}$ induced by~$\Psi_i(\tree)$ on~$[i]$ and~$[n] \ssm [i]$ are both $\ECamb$-indecomposable. Otherwise, a splitting edge cut~$\edgecut{[j]}{[i] \ssm [j]}$ of~$\underline{\tree[S]}$ would define a splitting edge cut~$\edgecut{[j]}{[n] \ssm [j]}$ of~$\Psi_i(\tree)$. Conversely, if~$\underline{\tree[S]}$ and~$\overline{\tree[S]}$ are both $\ECamb$-indecomposable, then so is~$\tree$.
\end{itemize}
We conclude that~$\Psi_i$ globally preserves the number of $\ECamb$-indecomposable Cambrian trees.
\end{proof}

\begin{proof}[Proof of Proposition~\ref{prop:numberIndecomposables}]
Starting from the fully negative signature~$(-)^n$, we can reach any signature~$\signature$ by the transformations~$\switchSign_0, \dots, \switchSign_{n-1}$: we can make positive signs appear on vertex~$1$ (using the map~$\switchSign_0$) and make these positive signs travel towards their final position in~$\signature$ (using the maps~$\switchSign_i$). More precisely, if~$p_1 < \dots < p_P$ denote the positions of the positive signs of~$\signature$, then~$\signature = \big( \prod_{j \in [P]} \switchSign_{p_j} \circ \switchSign_{p_{j-1}} \circ \dots \circ \switchSign_{p_1} \circ \switchSign_0 \big) \big( (-)^n \big)$. The result thus follows from Lemma~\ref{lem:switchSign}.
\end{proof}

\begin{proposition}
The Cambrian algebra~$\Camb$ is free.
\end{proposition}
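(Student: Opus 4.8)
The plan is to read freeness off the multiplicative basis $(\ECamb^{\tree})_{\tree \in \CambTrees}$ of Proposition~\ref{prop:productMultiplicativeBases}. Since $\ECamb^{\tree} \product \ECamb^{\tree'} = \ECamb^{\tree \nearrow \bar\tree'}$ and the $\ECamb^{\tree}$ form a linear basis, associativity of $\Camb$ makes the grafting $\nearrow$ an associative operation on $\CambTrees$ with the empty tree as a unit, and $\Camb$ is a free associative algebra exactly when the monoid $(\CambTrees, \nearrow)$ is free, its free generators being then the $\ECamb$-indecomposable Cambrian trees, i.e.\ the elements of $\indecomposables \eqdef \bigsqcup_{n \ge 1} \bigsqcup_{\signature \in \pm^n} \indecomposables_\signature$. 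So the whole proof amounts to showing that every Cambrian tree $\tree[S]$ can be written \emph{uniquely} as $\tree_1 \nearrow \bar\tree_2 \nearrow \dots \nearrow \bar\tree_r$ with all $\tree_i$ in $\indecomposables$. Existence is a one-line induction on the number of vertices: if $\tree[S]$ is $\ECamb$-indecomposable there is nothing to do, and otherwise the characterization of $\ECamb$-decomposable trees yields a splitting edge cut $\edgecut{[k]}{[n] \ssm [k]}$, whence $\tree[S] = \tree \nearrow \bar\tree'$ for the strictly smaller Cambrian trees $\tree$ and $\tree'$ induced by $\tree[S]$ on $[k]$ and $[n] \ssm [k]$, to which we apply induction.

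For uniqueness, the point is to describe the shape of an iterated graft and the behaviour of edge cuts under it. Setting $k_0 \eqdef 0$ and $k_j \eqdef |\tree_1| + \dots + |\tree_j|$, the tree $\tree[S] = \tree_1 \nearrow \bar\tree_2 \nearrow \dots \nearrow \bar\tree_r$ has vertex set the shifted disjoint union of those of the $\tree_i$, and its edges are the internal edges of the blocks together with $r-1$ connecting edges, the $j$-th joining (the copies of) $\tree_j$ and $\tree_{j+1}$; in particular each block $\tree_i$ occurs as the induced subtree of $\tree[S]$ on $(k_{i-1}, k_i]$, the edge cut of the $j$-th connecting edge is $\edgecut{[k_j]}{[n] \ssm [k_j]}$, and the edge cut of an internal edge of $\tree_i$ restricts on $(k_{i-1}, k_i]$ to the corresponding edge cut of $\tree_i$. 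I would then prove that the splitting edge cuts of $\tree[S]$ are \emph{exactly} the $\edgecut{[k_j]}{[n] \ssm [k_j]}$ for $1 \le j \le r-1$: each of these is splitting, and conversely a splitting cut $\edgecut{[k]}{[n] \ssm [k]}$ with $k_{i-1} < k < k_i$ is necessarily realized by an internal edge of $\tree_i$ and restricts to the splitting cut $\edgecut{[k - k_{i-1}]}{[k_i - k_{i-1}] \ssm [k - k_{i-1}]}$ of $\tree_i$, contradicting $\tree_i \in \indecomposables$. Hence the composition $(k_j)$ — and then the blocks themselves, recovered as the induced subtrees on the intervals $(k_{j-1}, k_j]$ — is determined by $\tree[S]$, which is precisely the desired uniqueness. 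I expect this edge-cut bookkeeping (pinning down the edge set of an iterated graft and checking that a front cut of the whole tree forces a front cut of a single block) to be the only real work; everything else is formal.

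It then follows that $(\CambTrees, \nearrow)$ is the free monoid on $\indecomposables$, so $\Camb$ is the free associative algebra on $(\ECamb^{\tree})_{\tree \in \indecomposables}$; in particular $\Camb$ is free. As a consistency check, Proposition~\ref{prop:numberIndecomposables} gives $|\indecomposables_\signature| = C_{n-1}$ for $\signature \in \pm^n$, i.e.\ $2^n C_{n-1}$ generators in degree $n$, and indeed
\[
\sum_{n \ge 0} 2^n C_n \, t^n = \Big( 1 - \sum_{n \ge 1} 2^n C_{n-1} \, t^n \Big)^{\!-1}.
\]
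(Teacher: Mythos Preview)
Your argument is correct and takes a genuinely different route from the paper's proof. The paper does not prove unique factorization combinatorially; instead it argues by a Hilbert-series count: knowing from Proposition~\ref{prop:numberIndecomposables} that there are $2^n C_{n-1}$ indecomposables in degree~$n$, the identity
\[
\frac{1}{1 - \sum_{n \ge 1} 2^n C_{n-1}\, t^n} \;=\; \sum_{n \ge 0} 2^n C_n\, t^n
\]
(obtained from $B(u) = 1 + u B(u)^2$ at $u = 2t$) shows that the surjection from the free algebra on the indecomposables onto~$\Camb$ is a graded bijection. Your approach instead proves directly that $(\CambTrees, \nearrow)$ is a free monoid by showing that the splitting edge cuts of an iterated graft are exactly those coming from the connecting edges; this makes the generating-function identity a \emph{consequence} (your consistency check) rather than the engine of the proof. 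The trade-off is clear: the paper's argument is a one-line consequence of the enumerative work already done in Proposition~\ref{prop:numberIndecomposables} (which in turn relied on the nontrivial bijections~$\Psi_i$), whereas your argument is self-contained and bypasses that enumeration entirely. One small point worth making explicit in your write-up: the claim that a splitting cut $\edgecut{[k]}{[n] \ssm [k]}$ with $k_{i-1} < k < k_i$ must come from an internal edge of~$\tree_i$ (rather than of some other block~$\tree_j$) deserves a sentence --- it follows because for $j \ne i$ the block $\tree_j$ would lie entirely on one side of the cut, which is impossible for an edge internal to~$\tree_j$.
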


\begin{proof}
As the generating function~$B(u)$ of the Catalan numbers satisfies the functional equation~$B(u) = 1 + uB(u)^2$, we obtain by substitution~$u = 2t$ that
\[
\frac{1}{1 - \sum_{n \ge 1} 2^n C_{n-1} t^n} = \sum_{n \ge 0} 2^n C_n t^n.
\]
The result immediately follows from Proposition~\ref{prop:numberIndecomposables}.
\end{proof}

%%%%%%%%%%%%%%%%%%%%%%%%%%%%%%%%%%%%%%

%\newpage
\part{The Baxter-Cambrian Hopf Algebra}
\label{part:BaxterCambrian}

%%%%%%%%%%%%%%%%%%%%%%%%%%%%%%%%%%%%%%

\section{Twin Cambrian trees}
\label{sec:BaxterTrees}

We now consider twin Cambrian trees and the resulting Baxter-Cambrian algebra. It provides a straightforward generalization to the Cambrian setting of the work of S.~Law and N.~Reading on quadrangulations~\cite{LawReading} and S.~Giraudo on twin binary trees~\cite{Giraudo}. The bases of these algebras are counted by the Baxter numbers. In Section~\ref{subsec:BaxterCambrianNumbers} we provide references for the various Baxter families and their bijective correspondences, and we discuss the Cambrian counterpart of these numbers. Definitions and combinatorial properties of twin Cambrian trees are given in this section, while the algebraic aspects are treated in the next section.

%%%%%%%%%%%%%%%%%

\subsection{Twin Cambrian trees}
\label{subsec:twinCambrianTrees}

This section deals with the following pairs of Cambrian trees.

\begin{definition}
\label{def:twinCambrianTrees}
Two $\signature$-Cambrian trees~$\tree_\circ, \tree_\bullet$ are \defn{twin} if the union~$\graphTwin$ of~$\tree_\circ$ with the reverse of~$\tree_\bullet$ (reversing the orientations of all edges) is acyclic.
\end{definition}

\begin{definition}
Let~$\tree_{\circ}, \tree_{\bullet}$ be two leveled~$\signature$-Cambrian trees with labelings~$p_{\circ}, q_{\circ}$ and $p_{\bullet}, q_{\bullet}$ respectively. We say that they are \defn{twin} if $q_{\circ}(p_{\circ}^{-1}(i)) = n - q_{\bullet}(p_{\bullet}^{-1}(i))$ for all~$i \in [n]$. In other words, when labeled as Cambrian trees, the bottom-up order of the vertices of~$\tree_{\circ}$ and $\tree_{\bullet}$ are opposite.
\end{definition}

Examples of twin Cambrian trees and twin leveled Cambrian trees are represented in \fref{fig:twinCambrianTrees}. Note that twin leveled Cambrian trees are twin Cambrian trees~$\tree_\circ, \tree_\bullet$ endowed with a linear extension of the transitive closure of~$\graphTwin$.

\begin{figure}[h]
  \centerline{\includegraphics{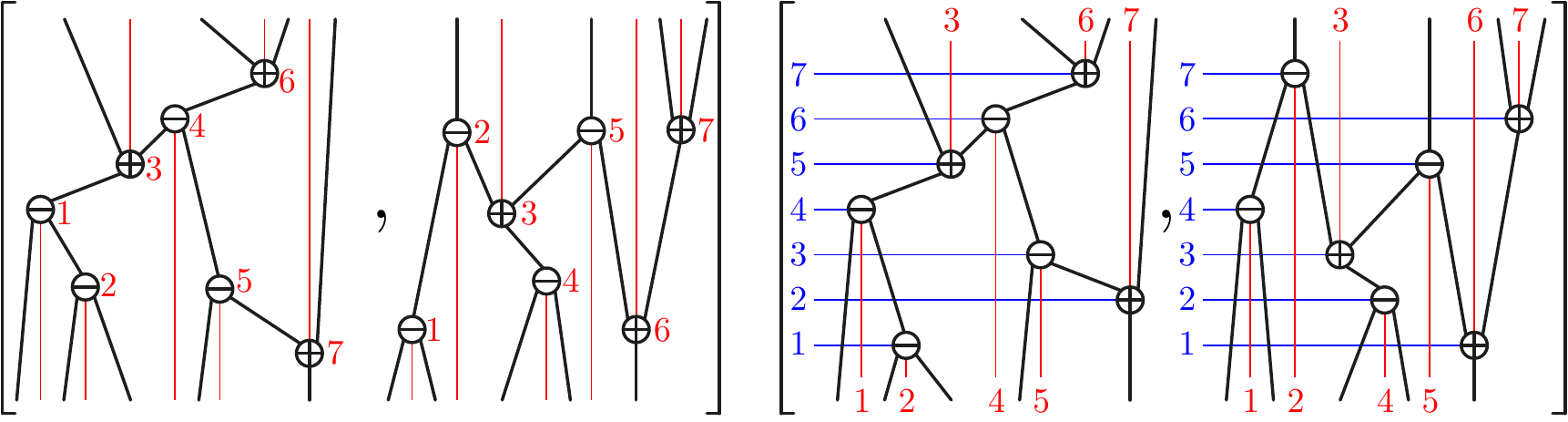}}
  \caption{A pair of twin Cambrian trees (left), and a pair of twin leveled Cambrian trees (right).}
  \label{fig:twinCambrianTrees}
\end{figure}

If~$\tree_\circ, \tree_\bullet$ are two $\signature$-Cambrian trees, they necessarily have opposite canopy (see Section~\ref{subsec:canopy}), meaning that $\surjectionAssoPara(\tree_\circ)_i = -\surjectionAssoPara(\tree_\bullet)_i$ for all~$i \in [n-1]$. The reciprocal statement for the constant signature~$(-)^n$ is proved by S.~Giraudo in~\cite{Giraudo}.

\begin{proposition}[\cite{Giraudo}]
\label{prop:twinBinaryTrees}
Two binary trees are twin if and only if they have opposite canopy.
\end{proposition}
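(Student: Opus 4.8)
The ``only if'' direction is the implication noted just before the statement. If $\tree_\circ$ and $\tree_\bullet$ are twin, then for each $i \in [n-1]$ the vertices $i$ and $i+1$ are comparable both in $\tree_\circ$ and in $\tree_\bullet$ (Section~\ref{subsec:canopy}); were the canopies to agree at $i$, the same one of $i$ and $i+1$ would be an ancestor of the other in both trees, so reversing all edges of $\tree_\bullet$ would create in $\graphTwin$ a directed path opposite to the one inherited from $\tree_\circ$, hence a directed $2$-cycle, contradicting acyclicity. So only the converse requires work, and the plan is to trade acyclicity of $\graphTwin$ for a condition on ancestries.

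A finite digraph is acyclic iff it admits a topological order, so $\graphTwin = \tree_\circ \cup \overline{\tree_\bullet}$, where $\overline{\tree_\bullet}$ is $\tree_\bullet$ with its edges reversed (a $(+)^n$-Cambrian tree), is acyclic iff $\linearExtensions(\tree_\circ) \cap \linearExtensions(\overline{\tree_\bullet}) \neq \varnothing$. Word reversal $\tau \mapsto \tau w_0$ is an anti-automorphism of the weak order taking topological orders of a digraph to those of its reverse, so it sends the interval $\linearExtensions(\tree_\bullet) = [\mu_{\tree_\bullet}, \omega_{\tree_\bullet}]$ (all $\signature$-Cambrian classes are weak-order intervals, Section~\ref{subsec:CambrianClasses}) to $\linearExtensions(\overline{\tree_\bullet}) = [\omega_{\tree_\bullet}w_0, \mu_{\tree_\bullet}w_0]$. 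Two intervals of a lattice meet iff each of their bottoms lies below each of their tops; so, writing $\linearExtensions(\tree_\circ) = [\mu_{\tree_\circ}, \omega_{\tree_\circ}]$ and using the (standard) descriptions of $\mu_{\tree}$ and $\omega_{\tree}$ as the two preorder readings of a Cambrian tree (root, then left then right subtree, resp.\ root, then right then left subtree), which give $\coinv(\mu_{\tree}) = \set{(i,j)}{i<j \text{ and } j \text{ is an ancestor of } i}$ and the complementary set for $\omega_{\tree}$, one computes that $\graphTwin$ is acyclic if and only if $\tree_\circ$ and $\tree_\bullet$ \emph{reverse the ancestry of every pair comparable in both}: for $i < j$ comparable in both trees, $i$ is an ancestor of $j$ in $\tree_\circ$ iff $j$ is an ancestor of $i$ in $\tree_\bullet$.

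Opposite canopy is exactly this reversal condition for the consecutive pairs $(i,i+1)$, which are always comparable in both trees; so the proposition reduces to the implication ``reversal on consecutive pairs $\implies$ reversal on all pairs comparable in both'', which I would prove by induction on $j - i$. The base case $j = i+1$ is the hypothesis. For $j > i+1$, suppose for contradiction that $i < j$ is comparable in both trees with the \emph{same} ancestry, say $j$ an ancestor of $i$ in both (the other case is symmetric, using $(j-1,j)$ in place of $(i,i+1)$ below). Inspect $i$ and $i+1$ in $\tree_\circ$. If $i$ is an ancestor of $i+1$, then $j \to \dots \to i \to \dots \to i+1$ shows $j$ is an ancestor of $i+1$ in $\tree_\circ$; if instead $i+1$ is an ancestor of $i$, then $i+1$ cannot be an ancestor of $j$ (that would put $i$, a descendant of $j$, in the right subtree of $i+1$, forcing $i > i+1$), so again $j$ is an ancestor of $i+1$ in $\tree_\circ$. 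In both cases the canopy hypothesis at $i$ flips the $i,i+1$ ancestry in $\tree_\bullet$, and then transitivity (together with the same ``wrong side'' argument) makes $j$ an ancestor of $i+1$ in $\tree_\bullet$ as well. Thus $(i+1,j)$ is a shorter bad pair, a contradiction, and the induction closes.

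The main obstacle is not conceptual but the careful pinning down of conventions that makes the second paragraph legitimate: the edge orientations of $(-)^n$- and $(+)^n$-Cambrian trees, the dictionary between ``above/below'' in the canopy and ancestry in the tree, the fact that edge reversal interchanges the two families and reverses linear extensions, and the preorder formulas for $\mu_{\tree}$, $\omega_{\tree}$ together with their coinversion sets. A route bypassing the weak order altogether is induction on $n$: one checks that opposite canopy forces the root of $\tree_\circ$ to be a childless vertex of $\tree_\bullet$, hence a source of $\graphTwin$, and deletes it; but there the relabeling, and the asymmetry that $\tree_\circ$ splits into two subtrees at its root while $\tree_\bullet$ does not, make the bookkeeping heavier than the argument above.
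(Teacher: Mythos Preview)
The paper does not prove this proposition; it is quoted from Giraudo, and the surrounding text only sketches his method: inductively extract a linear extension of~$\graphTwin$ by observing that, for the constant signature, the root of~$\tree_\bullet$ is necessarily minimal in~$\tree_\circ$ and can therefore be peeled off first. That is precisely the alternative you outline in your final paragraph (up to exchanging source for sink: the root of~$\tree_\circ$ is a sink of~$\tree_\circ$, and if it is a leaf of~$\tree_\bullet$ it is also a sink of~$\overline{\tree_\bullet}$, hence a sink of~$\graphTwin$; one then peels from the top rather than the bottom).

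Your main argument is a genuinely different route and it is sound. Trading acyclicity for non-empty intersection of weak-order intervals, and then for the ``ancestry reversal'' condition on all pairs comparable in both trees, is clean; the induction on~$j-i$ that promotes reversal from consecutive pairs to arbitrary comparable pairs is correct as written (the ancestor-chain argument rules out~$i{+}1$ being above~$j$ exactly as you say, and the canopy flip transports the conclusion to~$\tree_\bullet$). What this buys over Giraudo's approach is conceptual: you never have to track how deleting a vertex interacts with the BST labelings of the two trees, and the reversal criterion you isolate is interesting in its own right. What Giraudo's approach buys is that it is shorter and more algorithmic once the ``root of one is a leaf of the other'' lemma is in hand.

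One bookkeeping point worth fixing rather than merely flagging: your stated formula~$\coinv(\mu_\tree) = \{(i,j): j \text{ ancestor of } i\}$ and the description of~$\mu_\tree$ as a preorder reading are both off for $(-)^n$-Cambrian trees. In that orientation the root is the unique sink, so linear extensions are \emph{postorder}-type readings, and one gets~$\coinv(\mu_\tree) = \{(i,j): i \text{ ancestor of } j\}$ and~$\coinv(\omega_\tree) = \{(i,j): j \text{ not ancestor of } i\}$. Fortunately the reversal condition you extract is symmetric in the two ancestry directions, so this slip does not propagate; but since you identify the conventions as ``the main obstacle'', it is worth getting them right in the writeup.
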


We conjecture that this statement holds for general signatures. Consider two $\signature$-Cambrian trees~$\tree_\circ, \tree_\bullet$ with opposite canopies. It is easy to show that~$\graphTwin$ cannot have trivial cycles, meaning that~$\tree_\circ$ and~$\tree_\bullet$ cannot both have a path from~$i$ to~$j$ for~$i \ne j$. To prove that~$\graphTwin$ has no cycles at all, a good method is to design an algorithm to extract a linear extension of~$\graphTwin$. This approach was used in~\cite{Giraudo} for the signature~$(-)^n$. In this situation, it is clear that the root of~$\tree_\bullet$ is minimal in~$\tree_\circ$ (by the canopy assumption), and we therefore pick it as the first value of a linear extension of~$\graphTwin$. The remaining of the linear extension is constructed inductively. In the general situation, it turns out that not all maximums in~$\tree_\bullet$ are minimums in~$\tree_\circ$ (and reciprocally). It is thus not clear how to choose the first value of a linear extension of~$\graphTwin$.

\begin{remark}[Reversing~$\tree_\bullet$]
\label{rem:reversing}
\enlargethispage{.2cm}
It is sometimes useful to reverse the second tree~$\tree_\bullet$ in a pair~$[\tree_\circ,\tree_\bullet]$ of twin Cambrian trees. The resulting Cambrian trees have opposite signature and their union is acyclic. In this section, we have chosen the orientation of Definition~\ref{def:twinCambrianTrees} to fit with the notations and results in~\cite{Giraudo}. We will have to switch to the opposite convention in Section~\ref{sec:tuples} when we will extend our results on twin Cambrian trees to arbitrary tuples of Cambrian trees.
\end{remark}

%%%%%%%%%%%%%%%%%

\subsection{Baxter-Cambrian correspondence}

We obtain the \defn{Baxter-Cambrian correspondence} between permutations of~$\fS^\signature$ and pairs of twin leveled $\signature$-Cambrian trees by inserting with the map~$\CambCorresp$ from Section~\ref{subsec:CambrianCorrespondence} a permutation~$\tau = \tau_1 \cdots \tau_n \in \fS^\signature$ and its \defn{mirror}~${\mirror{\tau} = \tau_n \cdots \tau_1 \in \fS^\signature}$.

\begin{proposition}
The map~$\BaxCorresp$ defined by~$\BaxCorresp(\tau) = \big[ \CambCorresp(\tau), \CambCorresp(\mirror{\tau}) \big]$ is a bijection from signed permutations to pairs of twin leveled Cambrian trees.
\end{proposition}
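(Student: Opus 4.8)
The statement asserts that the map $\BaxCorresp \colon \tau \mapsto \big[ \CambCorresp(\tau), \CambCorresp(\mirror{\tau}) \big]$ is a bijection from $\fS^\signature$ onto pairs of twin leveled $\signature$-Cambrian trees. Since we already know (Proposition attributed to \cite{LangePilaud}) that $\CambCorresp$ itself is a bijection from signed permutations to leveled Cambrian trees, the work is entirely in identifying the image of $\BaxCorresp$ and checking injectivity. I would organize the proof around three claims: (i) $\BaxCorresp(\tau)$ really is a pair of \emph{twin} leveled $\signature$-Cambrian trees for every $\tau \in \fS^\signature$; (ii) $\BaxCorresp$ is injective; (iii) $\BaxCorresp$ is surjective onto this set.

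\emph{Well-definedness.} For (i), note first that $\tau$ and $\mirror{\tau}$ have the same v-signature (mirroring permutes positions, not values), so both $\CambCorresp(\tau)$ and $\CambCorresp(\mirror{\tau})$ are $\signature$-Cambrian when decorated with their $\PSymbol$-labelings. For the leveled (increasing) labelings: the $\QSymbol$-symbol records the order of insertion, i.e.\ the position of each value in $\tau$ read left to right; since $\mirror{\tau}$ reverses this order, the insertion rank of the vertex labeled $i$ in $\CambCorresp(\mirror{\tau})$ is $n+1$ minus its insertion rank in $\CambCorresp(\tau)$ — which, after the standard shift, is exactly the relation $q_\circ(p_\circ^{-1}(i)) = n - q_\bullet(p_\bullet^{-1}(i))$ defining twin leveled trees. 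It remains to see that the underlying (unleveled) trees $\tree_\circ = \surjectionPermAsso(\tau)$ and $\tree_\bullet = \surjectionPermAsso(\mirror{\tau})$ are twin, i.e.\ that $\graphTwin$ is acyclic. This is the one point that needs a genuine argument: I would observe that $\tau$ is a common linear extension, in the appropriate sense, of $\tree_\circ$ and of the reverse of $\tree_\bullet$. Concretely, by Proposition~\ref{prop:CambrianClass} the permutation $\tau$ is a linear extension of $\tree_\circ$, and $\mirror{\tau}$ is a linear extension of $\tree_\bullet$, hence $\tau$ is a linear extension of the reverse of $\tree_\bullet$; a single permutation that simultaneously linearly extends $\tree_\circ$ and the reverse of $\tree_\bullet$ forces their union $\graphTwin$ to be acyclic, since any directed cycle in $\graphTwin$ would have to be respected in both directions by $\tau$, which is impossible.

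\emph{Injectivity and surjectivity.} Injectivity of $\BaxCorresp$ is inherited for free: if $\BaxCorresp(\tau) = \BaxCorresp(\tau')$ then in particular $\CambCorresp(\tau) = \CambCorresp(\tau')$, and since $\CambCorresp$ is a bijection, $\tau = \tau'$. For surjectivity, let $[\tree_\circ, \tree_\bullet]$ be a pair of twin leveled $\signature$-Cambrian trees. Set $\tau \eqdef \CambCorresp^{-1}(\tree_\circ) \in \fS^\signature$; it suffices to show $\CambCorresp(\mirror{\tau}) = \tree_\bullet$. By the computation above, $\CambCorresp(\mirror{\tau})$ is a leveled $\signature$-Cambrian tree whose increasing labeling is the level-reversal of that of $\tree_\circ = \CambCorresp(\tau)$, which by the twin condition is exactly the increasing labeling of $\tree_\bullet$. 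So $\CambCorresp(\mirror{\tau})$ and $\tree_\bullet$ have the same $\QSymbol$-symbol (increasing tree); since the $\QSymbol$-symbol determines the shape of the underlying tree and the shape together with the $\signature$-Cambrian labeling determines the leveled tree, we may also invoke the bijectivity of $\CambCorresp$ one more time: the leveled Cambrian tree $\CambCorresp(\mirror{\tau})$ is the unique one with this increasing tree and the vertex set/labels forced by $\mirror{\tau}$, and this must coincide with $\tree_\bullet$. (Alternatively, and perhaps more cleanly: a leveled Cambrian tree is determined by its list of (value, level) pairs together with the signature, and $\mirror{\tau}$ produces precisely the list obtained from $\tree_\circ$'s list by $q \mapsto n+1-q$, which is $\tree_\bullet$'s list by hypothesis.)

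\emph{Main obstacle.} The only step with real content is the acyclicity of $\graphTwin$ in part (i) — everything else is a bookkeeping consequence of the bijectivity of $\CambCorresp$ and the definition of the mirror. The clean way to dispatch it is the ``common linear extension'' observation: $\tau$ is a linear extension of $\tree_\circ$ and $\mirror{\tau}$ is a linear extension of $\tree_\bullet$, so $\tau$ simultaneously orders the vertices compatibly with all arcs of $\tree_\circ$ and oppositely to all arcs of $\tree_\bullet$; hence no arc of $\tree_\circ$ and (reversed) arc of $\tree_\bullet$ can close a directed cycle, as $\tau$ would be forced to be both increasing and decreasing along it. I would also remark in passing that this gives, for free, that $\tau$ itself is a linear extension of $\graphTwin$, which is the natural seed for the Baxter-Cambrian analogue of Proposition~\ref{prop:CambrianClass} in the following subsection.
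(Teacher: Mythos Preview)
Your proof is correct and follows essentially the same route as the paper: both rest on the bijectivity of~$\CambCorresp$ and the observation that the twin-leveled condition~$q_\circ \circ p_\circ^{-1}(i) = n - q_\bullet \circ p_\bullet^{-1}(i)$ is exactly the statement that the two underlying signed permutations are mirrors of one another. The paper compresses this into a single sentence (recover~$\tau$ from the labelings, hence the pair is twin and the map is bijective); you have unpacked it into well-definedness, injectivity, and surjectivity, which is fine.

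Two small remarks. First, the acyclicity of~$\graphTwin$ is not formally part of the definition of twin \emph{leveled} Cambrian trees in the paper --- that definition only asks for the level-reversal relation --- so your acyclicity paragraph is extra work. It is, however, exactly what justifies the sentence immediately following the definitions (``twin leveled Cambrian trees are twin Cambrian trees endowed with a linear extension of~$\graphTwin$''), so it is useful to have written down. Second, your first pass at surjectivity (``the $\QSymbol$-symbol determines the shape\dots'') is a bit roundabout; your parenthetical alternative is the clean argument and is effectively what the paper does: a leveled $\signature$-Cambrian tree is determined by~$q \circ p^{-1}$ together with~$\signature$, and the twin condition forces~$q_\bullet \circ p_\bullet^{-1}$ to be the reversal of~$q_\circ \circ p_\circ^{-1}$, hence~$\CambCorresp^{-1}(\tree_\bullet) = \mirror{\tau}$.
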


\begin{proof}
If~$p,q : V \to [n]$ denote the Cambrian and increasing labelings of the Cambrian tree~$\CambCorresp(\tau)$, then~$\tau = q \circ p$. This yields that the leveled $\signature$-Cambrian trees~$\CambCorresp(\tau)$ and~$\CambCorresp(\mirror{\tau})$ are twin and the map~$\BaxCorresp$ is bijective. 
\end{proof}

As for Cambrian trees, we focus on the $\PSymbol\Bax$-symbol of this correspondence.

\begin{proposition}
The map~$\surjectionPermAssoBax$ defined by~$\surjectionPermAssoBax(\tau) = \big[ \surjectionPermAsso(\tau), \surjectionPermAsso(\mirror{\tau}) \big]$ is a surjection from signed permutations to pairs of twin Cambrian trees.
\end{proposition}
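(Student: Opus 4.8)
The plan is to deduce this statement directly from the bijection $\BaxCorresp$ established in the previous proposition, exactly as $\surjectionPermAsso$ was obtained from $\CambCorresp$. Recall that $\surjectionPermAsso(\tau)$ is by definition the $\PSymbol$-symbol of $\CambCorresp(\tau)$, \ie the insertion Cambrian tree obtained by forgetting the increasing labeling. Since $\BaxCorresp(\tau) = \big[ \CambCorresp(\tau), \CambCorresp(\mirror{\tau}) \big]$ and $\surjectionPermAssoBax(\tau) = \big[ \surjectionPermAsso(\tau), \surjectionPermAsso(\mirror{\tau}) \big]$, the map $\surjectionPermAssoBax$ is precisely the composition of the bijection $\BaxCorresp$ with the map that forgets both increasing labelings. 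First I would observe that this forgetful map is a surjection from pairs of twin leveled Cambrian trees onto pairs of twin Cambrian trees: given twin $\signature$-Cambrian trees~$\tree_\circ, \tree_\bullet$, the union~$\graphTwin$ is acyclic by Definition~\ref{def:twinCambrianTrees}, hence admits a linear extension~$\ell$ of its transitive closure; restricting~$\ell$ to the vertices of~$\tree_\circ$ (resp.\ reading it backwards on~$\tree_\bullet$, using the mirror convention in the definition of twin leveled Cambrian trees) produces increasing labelings~$q_\circ, q_\bullet$ making $\tree_\circ, \tree_\bullet$ into twin leveled Cambrian trees. Thus every pair of twin Cambrian trees is in the image, so $\surjectionPermAssoBax$ is surjective.

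It remains to check that $\surjectionPermAssoBax(\tau)$ is indeed always a pair of twin Cambrian trees, \ie that the image lands in the claimed codomain. This follows from the proof of the preceding proposition: writing $p, q$ for the Cambrian and increasing labelings of $\CambCorresp(\tau)$, one has $\tau = q \circ p$, and the argument there shows $\CambCorresp(\tau)$ and $\CambCorresp(\mirror{\tau})$ are twin leveled Cambrian trees; forgetting the levels, $\surjectionPermAsso(\tau)$ and $\surjectionPermAsso(\mirror{\tau})$ are twin Cambrian trees. Combining with the previous paragraph, $\surjectionPermAssoBax$ is a well-defined surjection from $\fS^\signature$ (and hence from all signed permutations~$\fS_\pm$) onto pairs of twin Cambrian trees.

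Since this is essentially a bookkeeping argument chaining together results already in hand (the bijectivity of $\BaxCorresp$ and the twin property from its proof), there is no real obstacle; the only point requiring a half-sentence of care is the surjectivity of the level-forgetting map, which rests on the acyclicity in Definition~\ref{def:twinCambrianTrees} guaranteeing the existence of a compatible pair of increasing labelings. One could alternatively note that $\surjectionPermAssoBax = \surjectionPermAsso \times \surjectionPermAsso \circ (\mathrm{id}, {\leftarrow})$ composed coordinatewise, each $\surjectionPermAsso$ being surjective by Proposition~\ref{prop:CambrianClass} and its surrounding discussion, but one still needs the twin compatibility to identify the image with \emph{pairs of twin} Cambrian trees rather than arbitrary pairs, so the route through $\BaxCorresp$ is cleanest.
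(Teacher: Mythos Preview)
Your proof is correct and takes essentially the same approach as the paper: both arguments rest on the acyclicity of~$\graphTwin$ guaranteeing the existence of a linear extension, which then furnishes a preimage. The paper is slightly more direct, simply identifying the fiber $(\surjectionPermAssoBax)^{-1}([\tree_\circ, \tree_\bullet])$ with the set~$\linearExtensions(\graphTwin)$ of linear extensions of~$\graphTwin$ and noting this set is non-empty by acyclicity, whereas you route the same observation through the bijection~$\BaxCorresp$ and a level-forgetting map; the content is the same.
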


\begin{proof}
The fiber~$(\surjectionPermAssoBax)^{-1}([\tree_\circ, \tree_\bullet])$ of a pair of twin $\signature$-Cambrian trees~$\tree_\circ, \tree_\bullet$ is the set~$\linearExtensions(\graphTwin)$ of linear extensions of the graph~$\graphTwin$. This set is non-empty since~$\graphTwin$ is acyclic by definition of twin Cambrian trees.
\end{proof}

%%%%%%%%%%%%%%%%%

\subsection{Baxter-Cambrian congruence}
\label{subsec:BaxterCongruence}

We now characterize by a congruence relation the signed permutations~$\tau \in \fS^{\signature}$ which have the same $\PSymbol\Bax$-symbol.

\begin{definition}
For a signature~${\signature \in \pm^n}$, the \defn{$\signature$-Baxter-Cambrian congruence} is the equivalence relation on $\fS^{\signature}$ defined as the transitive closure of the rewriting rules
\begin{gather*}
UbVadWcX \equiv\Bax_\signature UbVdaWcX \quad\text{if } a < b, c < d \text{ and } \signature_b = \signature_c, \\
UbVcWadX \equiv\Bax_\signature UbVcWdaX \quad\text{if } a < b, c < d \text{ and } \signature_b \not= \signature_c, \\
UadVbWcX \equiv\Bax_\signature UdaVbWcX \quad\text{if } a < b, c < d \text{ and } \signature_b \not= \signature_c,
\end{gather*}
where~$a, b, c, d$ are elements of~$[n]$ while~$U, V, W, X$ are words on~$[n]$. The \defn{Baxter-Cambrian congruence} is the equivalence relation on all signed permutations~$\fS_{\pm}$ obtained as the union of all~$\signature$-Baxter-Cambrian congruences:
\[
\equiv\Bax \; \eqdef \bigsqcup_{\substack{n \in \N \\ \signature \in \pm^n}} \!\! \equiv\Bax_\signature.
\]
\end{definition}

\begin{proposition}
Two signed permutations~$\tau, \tau' \in \fS^{\signature}$ are~$\signature$-Baxter-Cambrian congruent if and only if they have the same $\PSymbol\Bax$-symbol:
\[
\tau \equiv\Bax_\signature \tau' \iff \surjectionPermAssoBax(\tau) = \surjectionPermAssoBax(\tau').
\]
\end{proposition}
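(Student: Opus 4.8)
The plan is to factor the $\PSymbol\Bax$-symbol through the two Cambrian symbols and then compare congruences. Since $\surjectionPermAssoBax(\tau) = \big[ \surjectionPermAsso(\tau), \surjectionPermAsso(\mirror{\tau}) \big]$ and reversing a word leaves its v-signature unchanged (signs travel with values), the map $\tau \mapsto \mirror{\tau}$ preserves $\fS^\signature$, and Proposition~\ref{prop:CambrianClass} gives
\[
\surjectionPermAssoBax(\tau) = \surjectionPermAssoBax(\tau') \iff \tau \equiv_\signature \tau' \ \text{ and }\ \mirror{\tau} \equiv_\signature \mirror{\tau'}.
\]
Introducing the mirror congruence $\equiv_\signature^{\mathrm{op}}$ on $\fS^\signature$ defined by $\tau \equiv_\signature^{\mathrm{op}} \tau' \iff \mirror{\tau} \equiv_\signature \mirror{\tau'}$, it thus suffices to show that $\equiv\Bax_\signature$ coincides with ${\equiv_\signature} \cap {\equiv_\signature^{\mathrm{op}}}$.

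One inclusion is a direct check: each of the three generating rewriting rules of $\equiv\Bax_\signature$ exchanges two adjacent letters $a$ and $d$, and its side conditions exhibit $b$ and $c$ as two values strictly between $a$ and $d$. By the argument in the proof of Proposition~\ref{prop:CambrianClass}, two adjacent letters of a linear extension may be exchanged, staying a linear extension, exactly when they lie in distinct subtrees of a common vertex $v$, which occurs \emph{after} the pair if $\signature_v = -$ and \emph{before} it if $\signature_v = +$. Applying this to $\surjectionPermAsso(\tau)$ and to $\surjectionPermAsso(\mirror{\tau})$ (where ``before'' and ``after'' exchange roles), one sees that $b$ witnesses the exchange on one side and $c$ on the other; the three rules are precisely the cases where the two witnesses lie on opposite sides of the transposed pair (whence $\signature_b = \signature_c$) or on the same side (whence $\signature_b \ne \signature_c$). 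Hence ${\equiv\Bax_\signature} \subseteq {\equiv_\signature} \cap {\equiv_\signature^{\mathrm{op}}}$.

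For the reverse inclusion I would use two facts: the $\signature$-Cambrian classes are intervals of the weak order on $\fS^\signature$ (stated above, after N.~Reading), hence convex sublattices; and $\tau \mapsto \mirror{\tau}$ is an anti-automorphism of the weak order on $\fS^\signature$, since it replaces the set of coinversions by its complement. Assume $\tau \equiv_\signature \tau'$ and $\mirror{\tau} \equiv_\signature \mirror{\tau'}$. First reduce to the comparable case: the join $\tau \vee \tau'$ lies in the $\equiv_\signature$-class of $\tau$, and $\mirror{(\tau \vee \tau')} = \mirror{\tau} \wedge \mirror{\tau'}$ lies in the $\equiv_\signature$-class of $\mirror{\tau}$, so it is enough to relate $\tau$ and $\tau'$ each to $\tau \vee \tau'$. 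When $\tau \le \tau'$, pick a saturated chain $\tau = \rho_0 \lessdot \dots \lessdot \rho_m = \tau'$; by convexity each $\rho_i$ is $\equiv_\signature$ to $\tau$, and since $\mirror{\tau'} \le \mirror{\rho_i} \le \mirror{\tau}$, convexity again gives $\mirror{\rho_i} \equiv_\signature \mirror{\tau}$. So every consecutive pair $\rho_i, \rho_{i+1}$ differs by one adjacent transposition and satisfies both $\rho_i \equiv_\signature \rho_{i+1}$ and $\mirror{\rho_i} \equiv_\signature \mirror{\rho_{i+1}}$; by transitivity it then remains to prove the local claim that such a pair is related by a single application of one of the three rules.

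To prove this local claim, write $\rho = W\,x\,y\,Z$ and $\rho' = W\,y\,x\,Z$ with $x < y$. By the proof of Proposition~\ref{prop:CambrianClass}, $\rho \equiv_\signature \rho'$ forces $x$ and $y$ into distinct subtrees of a vertex $b$ with $x < b < y$, lying in $W$ if $\signature_b = +$ and in $Z$ if $\signature_b = -$; symmetrically $\mirror{\rho} \equiv_\signature \mirror{\rho'}$ produces a vertex $c$ with $x < c < y$, lying in $W$ if $\signature_c = -$ and in $Z$ if $\signature_c = +$. A four-way split on the sides of $b$ and $c$ matches the configuration to a rule: opposite sides force $\signature_b = \signature_c$ and give the first rule; both in $W$, or both in $Z$, force $\signature_b \ne \signature_c$ and give the second or third rule, and in each case the inequalities $x < b, c < y$ are exactly the side conditions of the matched rule (with $a = x$, $d = y$). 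I expect the main obstacle to be this last bookkeeping — aligning the witness positions and sign constraints coming from Proposition~\ref{prop:CambrianClass} with the precise shape of the three rewriting rules across all four cases — together with verifying that the reduction to comparable permutations behaves as claimed; the rest is formal, resting only on Proposition~\ref{prop:CambrianClass} and the interval property of Cambrian classes.
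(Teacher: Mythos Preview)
Your proof is correct and follows the same strategy as the paper's one-line sketch: factor $\surjectionPermAssoBax$ through the two Cambrian $\PSymbol$-symbols to reduce to $\equiv\Bax_\signature \,=\, {\equiv_\signature} \cap {\equiv_\signature^{\mathrm{op}}}$, then check that the three rewriting rules are precisely this intersection. The paper simply asserts this ``translation''; your interval-and-saturated-chain argument is a valid way to justify it, though a slightly more direct route is to note that any two linear extensions of the acyclic graph~$\graphTwin$ are connected by adjacent transpositions of incomparable pairs, to which your local claim then applies verbatim.
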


\begin{proof}
The proof of this proposition consists essentially in seeing that~$\surjectionPermAssoBax(\tau) = \surjectionPermAssoBax(\tau')$ if and only if~$\tau \equiv \tau'$ and~$\mirror{\tau} \, \equiv \, \mirror{\tau}\!\!'$ (by definition of $\surjectionPermAssoBax$). The definition of the~$\signature$-Baxter-Cambrian equivalence $\equiv\Bax_{\signature}$ is exactly the translation of this observation in terms of rewriting rules.
\end{proof}

\begin{proposition}
\label{prop:intersectionCambrianClasses}
The~$\signature$-Baxter-Cambrian class indexed by a pair~$[\tree_\circ, \tree_\bullet]$ of twin $\signature$-Cambrian trees is the intersection of the $\signature$-Cambrian class indexed by~$\tree_\circ$ with the $(-\signature)$-Cambrian class indexed by the reverse of~$\tree_\bullet$.
\end{proposition}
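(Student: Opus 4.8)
The plan is to unfold the definition of the $\signature$-Baxter-Cambrian class as a fiber of $\surjectionPermAssoBax$ and then to treat its two ``coordinates'' separately, the second one via the fact that mirroring a permutation corresponds to reversing the orientations of the associated Cambrian tree.

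First I would note that, since $\surjectionPermAssoBax(\tau) = \big[ \surjectionPermAsso(\tau), \surjectionPermAsso(\mirror{\tau}) \big]$, the $\signature$-Baxter-Cambrian class indexed by $[\tree_\circ, \tree_\bullet]$, namely the fiber $(\surjectionPermAssoBax)^{-1}([\tree_\circ, \tree_\bullet])$, is exactly
\[
\set{\tau \in \fS^\signature}{\surjectionPermAsso(\tau) = \tree_\circ} \;\cap\; \set{\tau \in \fS^\signature}{\surjectionPermAsso(\mirror{\tau}) = \tree_\bullet}.
\]
By the characterization of the fibers of $\surjectionPermAsso$ recalled after the definition of the $\PSymbol$-symbol, together with Proposition~\ref{prop:CambrianClass}, the first set is precisely the $\signature$-Cambrian class indexed by $\tree_\circ$, that is, $\linearExtensions(\tree_\circ)$.

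For the second set, the key observation is that mirroring a word reverses all the order relations it encodes: a word $\rho$ is a linear extension of a directed acyclic graph~$\graphG$ if and only if $\mirror{\rho}$ is a linear extension of the graph obtained from $\graphG$ by reversing all its arcs. Combining this with the identity ``$\surjectionPermAsso(\rho) = \tree \iff \rho \in \linearExtensions(\tree)$'' applied to $\tree = \tree_\bullet$, I get that $\surjectionPermAsso(\mirror{\tau}) = \tree_\bullet$ if and only if $\tau$ is a linear extension of the reverse of $\tree_\bullet$. Here I would also check the small point that reversing all arcs of a $\signature$-Cambrian tree exchanges the roles of the $\ominus$ and $\oplus$ vertices while preserving condition~(ii) of the definition, so that the reverse of $\tree_\bullet$ is a $(-\signature)$-Cambrian tree; consequently $\linearExtensions(\text{reverse of }\tree_\bullet)$ is — after the harmless identification of $\fS^\signature$ with $\fS^{-\signature}$ obtained by flipping all signs — exactly the $(-\signature)$-Cambrian class indexed by the reverse of $\tree_\bullet$.

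Intersecting the two descriptions then gives the statement. I expect the only delicate points to be bookkeeping: verifying that the reverse of a $\signature$-Cambrian tree is genuinely a $(-\signature)$-Cambrian tree, and keeping track of the sign convention when regarding a permutation of $\fS^\signature$ inside a $(-\signature)$-Cambrian class. It is also worth recording that, since a common linear extension of several directed acyclic graphs is the same thing as a linear extension of their union, the intersection $\linearExtensions(\tree_\circ) \cap \linearExtensions(\text{reverse of }\tree_\bullet)$ coincides with $\linearExtensions(\graphTwin)$; this recovers and refines the earlier description of the fibers of $\surjectionPermAssoBax$ and could alternatively serve as the starting point of the proof.
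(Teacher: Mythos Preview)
Your proof is correct and follows essentially the same argument as the paper: both identify the Baxter-Cambrian class as $\linearExtensions(\tree_\circ) \cap \linearExtensions(\text{reverse of }\tree_\bullet)$ via the definition of $\surjectionPermAssoBax$ and the characterization of Cambrian fibers by linear extensions. You are simply more explicit about the bookkeeping (mirroring versus reversing orientations, and why the reverse of~$\tree_\bullet$ is $(-\signature)$-Cambrian), which the paper leaves implicit.
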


\begin{proof}
The~$\signature$-Baxter-Cambrian class indexed by~$[\tree_\circ, \tree_\bullet]$ is the set of linear extensions of~$\graphTwin$, \ie of permutations which are both linear extensions of~$\tree_\circ$ and linear extensions of the reverse of~$\tree_\bullet$. The former form the $\signature$-Cambrian class indexed by~$\tree_\circ$ while the latter form the $(-\signature)$-Cambrian class indexed by the reverse of~$\tree_\bullet$. This is illustrated in \fref{fig:twoOppositeCambrians}.
\begin{figure}[h]
  \centerline{\includegraphics[width=\textwidth]{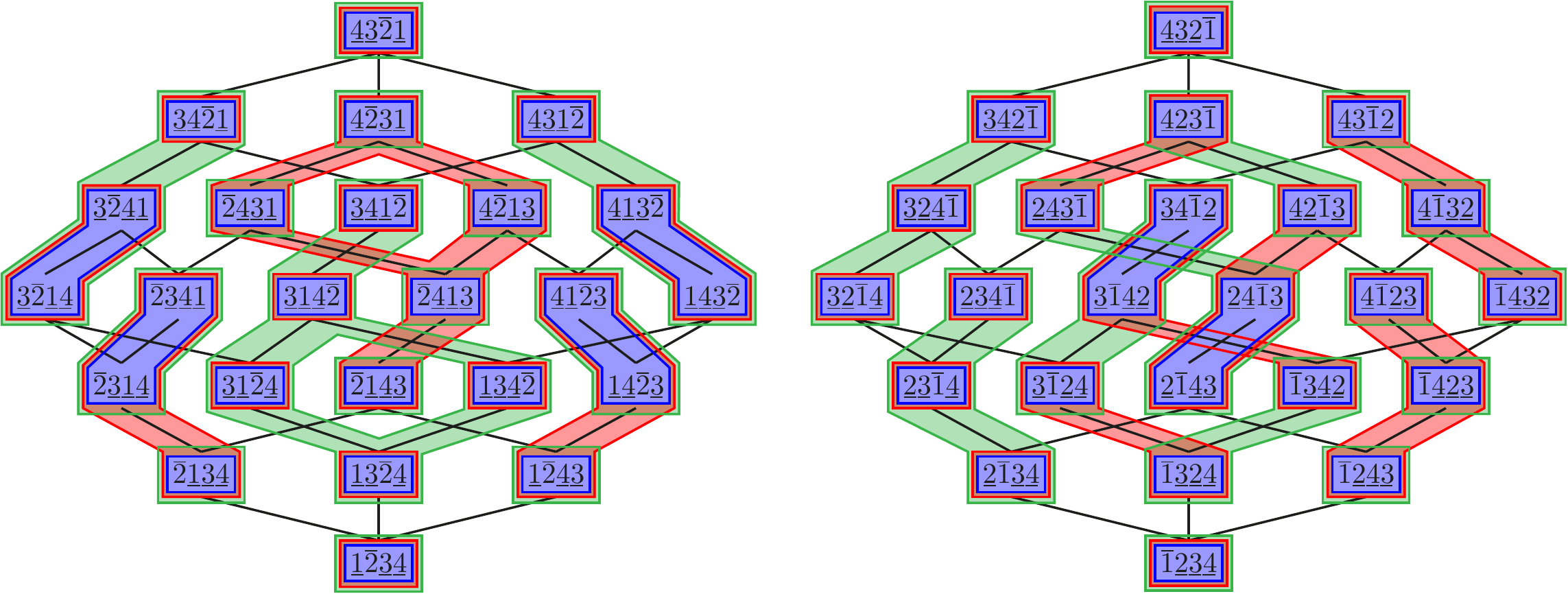}}
  \caption{The Baxter-Cambrian classes of~$\equiv_\signature\Bax$ (blue) are the intersections of the Cambrian classes of~$\equiv_\signature$ (red) and~$\equiv_{-\signature}$ (green). Illustrated for the signatures $\signature = {-}{+}{-}{-}$ (left) and ${\signature = {+}{-}{-}{-}}$~(right).}
  \label{fig:twoOppositeCambrians}
\end{figure}
\end{proof}

%%%%%%%%%%%%%%%%%

\subsection{Rotations and Baxter-Cambrian lattices}

We now present the rotation operation on pairs of twin $\signature$-Cambrian trees.

\begin{definition}
\label{def:rotationBaxter}
Let~$[\tree_\circ, \tree_\bullet]$ be a pair of $\signature$-Cambrian trees and~$i \to j$ be an edge of~$\graphTwin$. We say that the edge~$i \to j$ is \defn{rotatable} if
\begin{itemize}
\item either~$i \to j$ is an edge in~$\tree_\circ$ and~$j \to i$ is an edge in~$\tree_\bullet$,
\item or~$i \to j$ is an edge in~$\tree_\circ$ while~$i$ and~$j$ are incomparable in~$\tree_\bullet$,
\item or $i$ and~$j$ are incomparable in~$\tree_\circ$ while $j \to i$ is an edge in~$\tree_\bullet$.
\end{itemize}
If~$i \to j$ is rotatable in~$[\tree_\circ, \tree_\bullet]$, its \defn{rotation} transforms~$[\tree_\circ, \tree_\bullet]$ to the pair of trees~$[\tree'_\circ, \tree'_\bullet]$,~where
\begin{itemize}
\item $\tree'_\circ$ is obtained by rotation of~$i \to j$ in~$\tree_\circ$ if possible and~$\tree'_\circ = \tree_\circ$ otherwise, and
\item $\tree'_\bullet$ is obtained by rotation of~$j \to i$ in~$\tree_\bullet$ if possible and~$\tree'_\bullet = \tree_\bullet$ otherwise.
\end{itemize}
\end{definition}

\begin{proposition}
Rotating a rotatable edge~$i \to j$ in a pair~$[\tree_\circ, \tree_\bullet]$ of twin $\signature$-Cambrian trees yields a pair~$[\tree'_\circ, \tree'_\bullet]$ of twin $\signature$-Cambrian trees.
\end{proposition}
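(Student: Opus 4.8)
The plan is in two parts, corresponding to the two things to check: that $\tree'_\circ$ and $\tree'_\bullet$ are $\signature$-Cambrian trees, and that they are twin. The first part is free: $\tree'_\circ$ is a $\signature$-Cambrian tree by Proposition~\ref{prop:rotation} when it is obtained from $\tree_\circ$ by rotating an edge, and $\tree'_\circ = \tree_\circ$ otherwise; symmetrically for $\tree'_\bullet$. So the whole content is that $[\tree'_\circ, \tree'_\bullet]$ is twin, i.e.\ that the graph $\unionOp{\tree'_\circ}{\tree'_\bullet}$ is acyclic. Since a finite directed graph is acyclic exactly when it admits a linear extension, and since by Proposition~\ref{prop:intersectionCambrianClasses} the linear extensions of $\unionOp{\tree'_\circ}{\tree'_\bullet}$ are precisely the permutations lying simultaneously in the $\signature$-Cambrian class of $\tree'_\circ$ and in the $(-\signature)$-Cambrian class of the reverse of $\tree'_\bullet$, it suffices to produce a single permutation in $\linearExtensions(\tree'_\circ) \cap \linearExtensions(\up{\tree'_\bullet})$, where $\up{\tree}$ denotes the reverse of a tree~$\tree$.

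First I would build such a permutation by perturbing a linear extension $\tau$ of $\graphTwin$, which exists because $[\tree_\circ, \tree_\bullet]$ is twin. Note that $i \to j$ is an arc of $\graphTwin$ (in case~A of Definition~\ref{def:rotationBaxter} it is an arc of both $\tree_\circ$ and the reverse of $\tree_\bullet$, in case~B of $\tree_\circ$ only, in case~C of the reverse of $\tree_\bullet$ only), so $i$ precedes $j$ in $\tau$. I would then treat the three cases of Definition~\ref{def:rotationBaxter} in turn. In each, the explicit local description of a rotation (Figure~\ref{fig:rotation}) shows that passing from $\tree_\circ$ to $\tree'_\circ$, and in case~A also from $\tree_\bullet$ to $\tree'_\bullet$, only modifies comparabilities ``crossing'' the arc $i \to j$: the subtrees $L, B, R, A$ around that arc (and their analogues in $\tree_\bullet$) are merely reattached, so their vertex sets are unchanged and every vertex outside $\{i,j\}$ keeps its comparabilities with everything outside the affected region. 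Using this, I would show that $\tau$ can be re-chosen so that $j$ immediately follows $i$ --- after sliding $j$, and possibly a few other vertices, leftward past a suitable prefix, a motion legitimate exactly because $i \to j$ is rotatable --- and then that exchanging $i$ and $j$ yields a permutation respecting both $\tree'_\circ$ and the reverse of $\tree'_\bullet$, hence an element of $\linearExtensions(\tree'_\circ) \cap \linearExtensions(\up{\tree'_\bullet})$.

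The hard part, and the only place where rotatability is genuinely used (as opposed to $i \to j$ being merely an arc of $\graphTwin$), is the claim that $i$ and $j$ can be brought together and swapped without creating any new directed relation in $\unionOp{\tree'_\circ}{\tree'_\bullet}$ beyond the reversed arc $j \to i$; equivalently, that $i \to j$ is the only directed path from $i$ to $j$ in $\graphTwin$, and likewise after rotation. The three clauses of Definition~\ref{def:rotationBaxter} are precisely tailored so that $i$ and $j$ are ``immediately adjacent'' --- joined by an arc, or incomparable --- in each of $\tree_\circ$ and $\tree_\bullet$, which forbids any vertex from being forced strictly between them. Checking this, and tracking the reattachment of the subtrees through the four sign patterns of Figure~\ref{fig:rotation} crossed with the three cases, is routine casework but the technical heart of the argument and the one step I expect to be awkward to write down cleanly; it is a relative of the linear-extension extraction used in~\cite{Giraudo} for the constant signature.

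Finally, a structural remark that would streamline the interval bookkeeping above: the $\signature$-Baxter-Cambrian congruence $\equiv\Bax_\signature$ is a lattice congruence of the weak order on $\fS^\signature$, being the intersection of $\equiv_\signature$ with the pullback of $\equiv_\signature$ along the weak-order anti-automorphism $\tau \mapsto \mirror{\tau}$ (itself a lattice congruence, as the dual of one), and the intersection of two lattice congruences is again one. Hence its classes are weak-order intervals, indexed by the pairs of twin $\signature$-Cambrian trees, and the quotient is a lattice; from this viewpoint the proposition reduces to identifying the cover relations of this quotient with the rotations of rotatable edges, which is once more the local analysis of the preceding paragraph.
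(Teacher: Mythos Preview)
Your approach matches the paper's: confirm that $\tree'_\circ, \tree'_\bullet$ are $\signature$-Cambrian via Proposition~\ref{prop:rotation}, then witness acyclicity of $\unionOp{\tree'_\circ}{\tree'_\bullet}$ by swapping $i$ and $j$ in a linear extension of $\graphTwin$. The paper compresses this to a single sentence (``switching~$i$ and~$j$ in a linear extension of~$\graphTwin$ yields a linear extension of~$\unionOp{\tree'_\circ}{\tree'_\bullet}$''), leaving implicit exactly the point you flag as the technical heart --- that rotatability lets one choose a linear extension with $i$ and $j$ consecutive.
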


\begin{proof}
By Proposition~\ref{prop:rotation}, the trees~$\tree_\circ, \tree_\bullet$ are $\signature$-Cambrian trees. To see that they are twins, observe that switching~$i$ and~$j$ in a linear extension of~$\graphTwin$ yields a linear extension of~$\unionOp{\tree'_\circ}{\tree'_\bullet}$.
\end{proof}

\begin{remark}[Number of rotatable edges]
Note that a pair~$[\tree_\circ, \tree_\bullet]$ of~$\signature$-Cambrian trees has always at least~$n-1$ rotatable edges. This will be immediate from the considerations of Section~\ref{subsec:geometricRealizationsBaxter}.
\end{remark}

Consider the \defn{increasing rotation graph} whose vertices are pairs of twin $\signature$-Cambrian trees and whose arcs are increasing rotations~$[\tree_\circ, \tree_\bullet] \to [\tree'_\circ, \tree'_\bullet]$, \ie for which~$i < j$ in Definition~\ref{def:rotationBaxter}. This graph is illustrated on \fref{fig:BaxterCambrianLattices} for the signatures $\signature = {-}{+}{-}{-}$ and ${\signature = {+}{-}{-}{-}}$.

\hvFloat[floatPos=p, capWidth=h, capPos=r, capAngle=90, objectAngle=90, capVPos=c, objectPos=c]{figure}
{\includegraphics[width=1.65\textwidth]{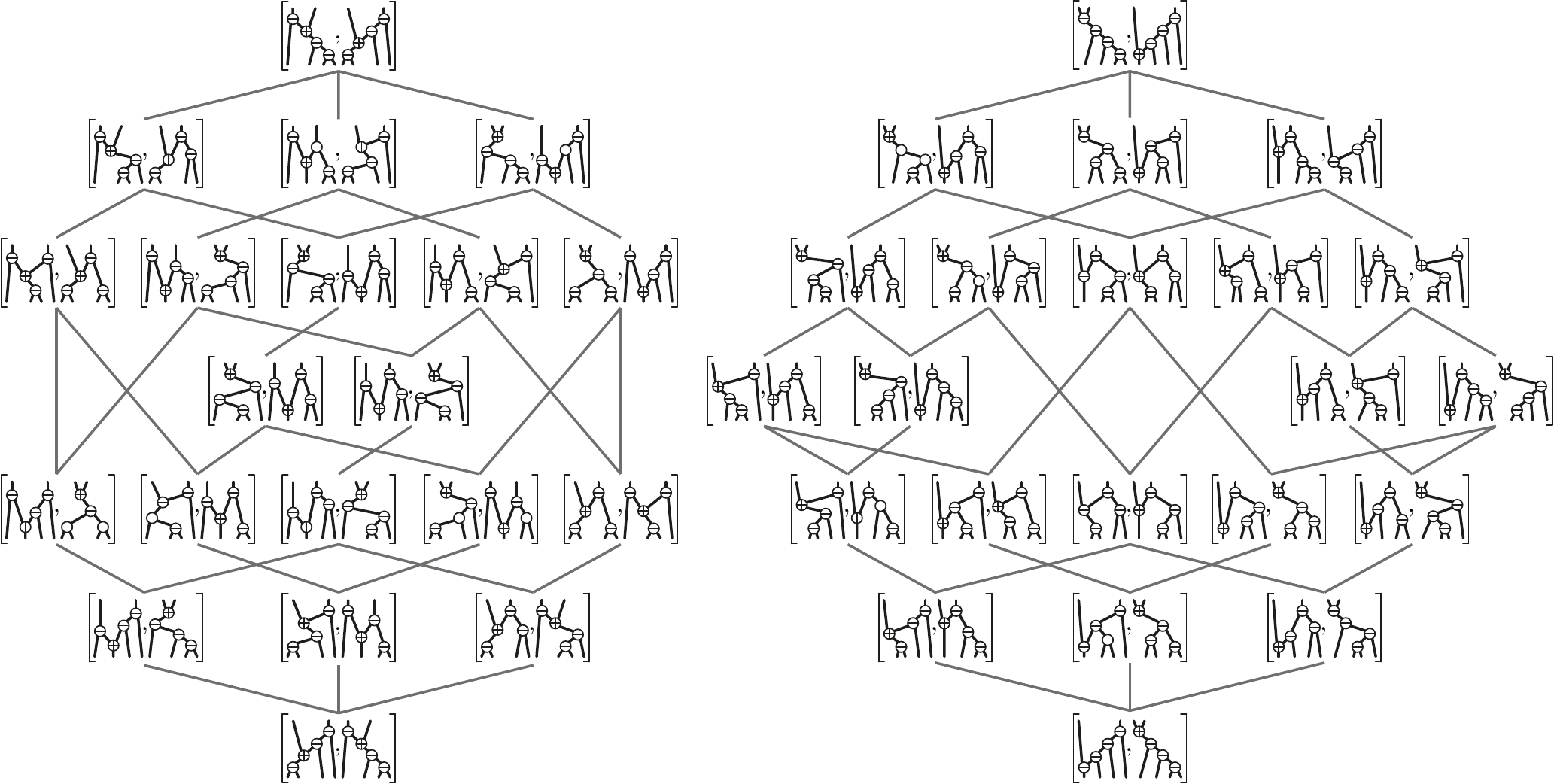}}
{The $\signature$-Baxter-Cambrian lattices on pairs of twin $\signature$-Cambrian trees, for the signatures $\signature = {-}{+}{-}{-}$ (left) and ${\signature = {+}{-}{-}{-}}$~(right).}
{fig:BaxterCambrianLattices}

\begin{proposition}
For any cover relation~$\tau < \tau'$ in the weak order on~$\fS^\signature$, either~$\surjectionPermAssoBax(\tau) = \surjectionPermAssoBax(\tau')$ or~$\surjectionPermAssoBax(\tau) \to \surjectionPermAssoBax(\tau')$ in the increasing rotation graph.
\end{proposition}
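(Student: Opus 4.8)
The plan is to deduce the statement from the corresponding (known) behaviour of the Cambrian quotient~$\surjectionPermAsso$, applied at once to~$\tau$ and to its mirror~$\mirror\tau$, using that $\surjectionPermAssoBax(\sigma) = \big[ \surjectionPermAsso(\sigma), \surjectionPermAsso(\mirror\sigma) \big]$ for every signed permutation~$\sigma$.

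First I would record the combinatorial set‑up. A cover relation $\tau \lessdot \tau'$ in the weak order on~$\fS^\signature$ is the transposition of two consecutive letters $a < b$ of~$\tau$, with~$a$ immediately before~$b$ in~$\tau$ and~$b$ immediately before~$a$ in~$\tau'$. Since reversing a word is an anti‑automorphism of the weak order and preserves~$\fS^\signature$ (recall $\mirror\sigma \in \fS^\signature$ for all $\sigma \in \fS^\signature$), the permutation~$\mirror{\tau'}$ is covered by~$\mirror\tau$ in the weak order, and again~$a$ is immediately before~$b$ in~$\mirror{\tau'}$ while~$b$ is immediately before~$a$ in~$\mirror\tau$. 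Set $[\tree_\circ, \tree_\bullet] \eqdef \surjectionPermAssoBax(\tau)$ and $[\tree'_\circ, \tree'_\bullet] \eqdef \surjectionPermAssoBax(\tau')$, so that $\tree_\circ = \surjectionPermAsso(\tau)$, $\tree'_\circ = \surjectionPermAsso(\tau')$, $\tree_\bullet = \surjectionPermAsso(\mirror\tau)$ and $\tree'_\bullet = \surjectionPermAsso(\mirror{\tau'})$.

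The next step is a Cambrian‑level dichotomy for each coordinate. Because the $\signature$‑Cambrian lattice is a lattice quotient of the weak order on~$\fS^\signature$ (the fibers of~$\surjectionPermAsso$ being weak‑order intervals), the cover $\tau \lessdot \tau'$ is mapped by~$\surjectionPermAsso$ either to the equality $\tree_\circ = \tree'_\circ$ or to a cover of the Cambrian lattice, that is, to a single increasing rotation~$\tree_\circ \to \tree'_\circ$. In the first case, the proof of Proposition~\ref{prop:CambrianClass} shows that~$a$ and~$b$ lie in distinct subtrees of some vertex of~$\tree_\circ$, hence are incomparable in~$\tree_\circ$. In the second case, I claim the rotated edge is exactly~$a \to b$: since~$\tau \in \linearExtensions(\tree_\circ)$ while~$\tau' \notin \linearExtensions(\tree_\circ)$ and~$\tau, \tau'$ differ only in the relative order of the consecutive letters~$a, b$, the tree~$\tree_\circ$ forces~$a$ before~$b$ through a directed path, which must be the single edge~$a \to b$ as~$a$ and~$b$ are consecutive in~$\tau$; symmetrically~$b \to a$ is an edge of~$\tree'_\circ$; and since rotating an edge alters exactly its own edge cut (Proposition~\ref{prop:rotation}) while a Cambrian tree cannot simultaneously contain the edge~$b \to a$ and realize as an edge cut the partition induced by~$a \to b$, the rotation taking~$\tree_\circ$ to~$\tree'_\circ$ is the one at~$a \to b$. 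Applying the same analysis to the cover $\mirror{\tau'} \lessdot \mirror\tau$ yields: either~$\tree_\bullet = \tree'_\bullet$, in which case~$a$ and~$b$ are incomparable in~$\tree_\bullet$; or~$a \to b$ is an edge of~$\tree'_\bullet$ with~$\tree_\bullet$ its increasing rotation at that edge, equivalently~$b \to a$ is an edge of~$\tree_\bullet$ whose rotation returns~$\tree'_\bullet$ (a rotation at a fixed edge being an involution).

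Finally I would assemble the four cases according to whether~$\tree_\circ = \tree'_\circ$ and whether~$\tree_\bullet = \tree'_\bullet$. If both equalities hold then $\surjectionPermAssoBax(\tau) = \surjectionPermAssoBax(\tau')$, which is the first alternative. In the three remaining cases the directed edge~$a \to b$ belongs to~$\graphTwin$ (it is an edge of~$\tree_\circ$, or~$b \to a$ is an edge of~$\tree_\bullet$, or both), and these three cases match precisely the three bullets of Definition~\ref{def:rotationBaxter} making~$a \to b$ rotatable in~$[\tree_\circ, \tree_\bullet]$: $(\tree_\circ \neq \tree'_\circ,\ \tree_\bullet \neq \tree'_\bullet)$ gives $a \to b$ in~$\tree_\circ$ and $b \to a$ in~$\tree_\bullet$; $(\tree_\circ \neq \tree'_\circ,\ \tree_\bullet = \tree'_\bullet)$ gives $a \to b$ in~$\tree_\circ$ and $a, b$ incomparable in~$\tree_\bullet$; and $(\tree_\circ = \tree'_\circ,\ \tree_\bullet \neq \tree'_\bullet)$ gives $a, b$ incomparable in~$\tree_\circ$ and $b \to a$ in~$\tree_\bullet$. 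In each of these, performing the rotation of~$a \to b$ (which is increasing since $a < b$) replaces the first coordinate by~$\tree_\circ$ or by its rotation at~$a \to b$ and the second by~$\tree_\bullet$ or by its rotation at~$b \to a$, which by the previous paragraph is exactly~$[\tree'_\circ, \tree'_\bullet]$; hence $\surjectionPermAssoBax(\tau) \to \surjectionPermAssoBax(\tau')$ in the increasing rotation graph. The one genuinely delicate point is the identification of the rotated edge with~$a \to b$ on each coordinate; everything else is bookkeeping across the mirror involution.
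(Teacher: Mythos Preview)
Your proof is correct and follows essentially the same approach as the paper: analyze the cover relation coordinatewise via~$\surjectionPermAsso$ on~$\tau$ and on~$\mirror\tau$, obtain in each coordinate either equality or a rotation at the edge between the two swapped values, and match the resulting cases to Definition~\ref{def:rotationBaxter}. The paper's proof is terser and simply asserts the edge identification you take care to justify, but the structure is the same.
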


\begin{proof}
Let~$i,j \in [n]$ be such that~$\tau'$ is obtained from~$\tau$ by switching two consecutive values~$ij$ to~$ji$. If~$i$ and~$j$ are incomparable in~$\surjectionPermAsso(\tau)$, then~$\surjectionPermAsso(\tau) = \surjectionPermAsso(\tau')$. Otherwise, there is an edge~$i \to j$ in~$\surjectionPermAsso(\tau)$, and~$\surjectionPermAsso(\tau')$ is obtained by rotating~$i \to j$ in~$\surjectionPermAsso(\tau)$. The same discussion is valid for the trees~$\surjectionPermAsso(\mirror{\tau})$ and~$\surjectionPermAsso(\mirror{\tau}\!\!')$ and edge~$j \to i$. The result immediately follows.
\end{proof}

It follows that the increasing rotation graph on pairs of twin $\signature$-Cambrian trees is acyclic and we call \defn{$\signature$-Baxter-Cambrian poset} its transitive closure. In other words, the previous statement says that the map~$\surjectionPermAssoBax$ defines a poset homomorphism from the weak order on~$\fS^\signature$ to the $\signature$-Baxter-Cambrian poset. The following statement extends the results of N.~Reading~\cite{Reading-CambrianLattices} on Cambrian lattices and S.~Law and N.~Reading~\cite{LawReading} on the lattice of diagonal rectangulations.

\begin{proposition}
The~$\signature$-Baxter-Cambrian poset is a lattice quotient of the weak order on~$\fS^\signature$.
\end{proposition}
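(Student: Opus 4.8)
The plan is to exhibit the $\signature$-Baxter-Cambrian congruence as a lattice congruence of the weak order on $\fS^\signature$, and then to identify the resulting quotient lattice with the $\signature$-Baxter-Cambrian poset. The bridge between the two pictures is Proposition~\ref{prop:intersectionCambrianClasses}: read at the level of equivalence relations on $\fS^\signature$, it says that $\equiv\Bax_\signature$ is the common refinement of the $\signature$-Cambrian congruence $\equiv_\signature$ (the fibers of $\surjectionPermAsso$, by Proposition~\ref{prop:CambrianClass}) and of the equivalence relation $\equiv'_\signature$ whose classes are the fibers of $\tau \mapsto \surjectionPermAsso(\mirror{\tau})$. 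Indeed $\surjectionPermAssoBax(\tau) = \surjectionPermAssoBax(\tau')$ if and only if both $\surjectionPermAsso(\tau) = \surjectionPermAsso(\tau')$ and $\surjectionPermAsso(\mirror{\tau}) = \surjectionPermAsso(\mirror{\tau'})$.

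I would then argue that $\equiv_\signature$, $\equiv'_\signature$, and hence their intersection $\equiv\Bax_\signature$, are all lattice congruences of the weak order on $\fS^\signature$. That $\equiv_\signature$ is one is exactly the cited result of N.~Reading that $\surjectionPermAsso$ is a surjective lattice homomorphism onto the $\signature$-Cambrian lattice. For $\equiv'_\signature$, the key observation is that the mirror map $\tau \mapsto \mirror{\tau}$ is an \emph{anti}-automorphism of the weak order on $\fS^\signature$: it preserves $\fS^\signature$ (values carry their signs), and it sends a permutation to the one whose set of coinversions is the complementary one, hence it reverses the weak order. Pulling a lattice congruence back along an order-reversing bijection again yields a lattice congruence, because such a bijection interchanges $\vee$ and $\wedge$; and the intersection of two lattice congruences is a lattice congruence. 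Consequently $(\fS^\signature, \le)/\equiv\Bax_\signature$ is a lattice, the canonical projection is a lattice homomorphism, and its elements are indexed through $\surjectionPermAssoBax$ by the pairs of twin $\signature$-Cambrian trees.

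It then remains to check that the order of this quotient lattice coincides with the transitive closure of the increasing rotation graph, i.e.\ with the $\signature$-Baxter-Cambrian poset. For a lattice congruence of a finite lattice, the cover relations of the quotient are exactly the images of the non-collapsed cover relations of the lattice; combined with the proposition just proved (a non-collapsed weak-order cover $\tau \lessdot \tau'$ maps to an increasing rotation $\surjectionPermAssoBax(\tau) \to \surjectionPermAssoBax(\tau')$), this shows that every cover of the quotient is an increasing rotation. Conversely, each increasing rotation should be realized by a weak-order cover: given a rotatable edge $i \to j$ of $\graphTwin$ with $i < j$, pick a linear extension $\tau$ of $\graphTwin$ in which $i$ and $j$ appear as two consecutive letters, and let $\tau'$ be obtained by transposing them; then $\tau \lessdot \tau'$ in the weak order while $\surjectionPermAssoBax(\tau) = [\tree_\circ,\tree_\bullet]$ and $\surjectionPermAssoBax(\tau') = [\tree'_\circ,\tree'_\bullet]$. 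Hence the Hasse diagram of the quotient lattice is precisely the increasing rotation graph, so the $\signature$-Baxter-Cambrian poset is this quotient, which is a lattice quotient of the weak order.

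The lattice-theoretic ingredients (pullback of a congruence along an anti-automorphism, intersection of congruences, description of the covers of a quotient) are routine. I expect the only delicate point to be this last matching step, and inside it the small combinatorial claim that a rotatable edge of $\graphTwin$ always occurs between two consecutive letters of some linear extension of $\graphTwin$ — equivalently, that no vertex of $\graphTwin$ is forced strictly between the endpoints of a rotatable edge. This is the same fact that underlies the well-definedness of the rotation operation on pairs of twin Cambrian trees, so it should be settled there or alongside it.
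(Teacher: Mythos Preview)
Your argument is correct and follows essentially the paper's approach: invoke Proposition~\ref{prop:intersectionCambrianClasses} to write $\equiv\Bax_\signature$ as the meet of two lattice congruences of the weak order and conclude. The only cosmetic difference is that the paper identifies the second congruence directly as the $(-\signature)$-Cambrian congruence and applies Reading's theorem to it, whereas you obtain it as the pullback of $\equiv_\signature$ along the mirror anti-automorphism; these are the same equivalence relation, and your extra paragraph verifying that the quotient order coincides with the increasing-rotation poset fills in a step the paper leaves implicit.
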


\begin{proof}
By Proposition~\ref{prop:intersectionCambrianClasses}, the $\signature$-Baxter-Cambrian congruence is the intersection of two Cambrian congruences. The statement follows since the Cambrian congruences are lattice congruences of the weak order~\cite{Reading-CambrianLattices} and an intersection of lattice congruences is a lattice congruence.
\end{proof}

\begin{remark}[Cambrian \versus Baxter-Cambrian lattices]
\enlargethispage{.1cm}
Using the definition of~$\BaxCorresp$, we also notice that the~$\signature$-Cambrian classes are unions of~$\signature$-Baxter-Cambrian classes, therefore the Cambrian lattice is a lattice quotient of the Baxter-Cambrian lattice. \fref{fig:latticesBis} illustrates the Baxter-Cambrian, Cambrian, and boolean congruence classes on the weak orders of~$\fS_\signature$ for the signatures $\signature = {-}{+}{-}{-}$ and ${\signature = {+}{-}{-}{-}}$.

\begin{figure}[h]
  \centerline{\includegraphics[width=\textwidth]{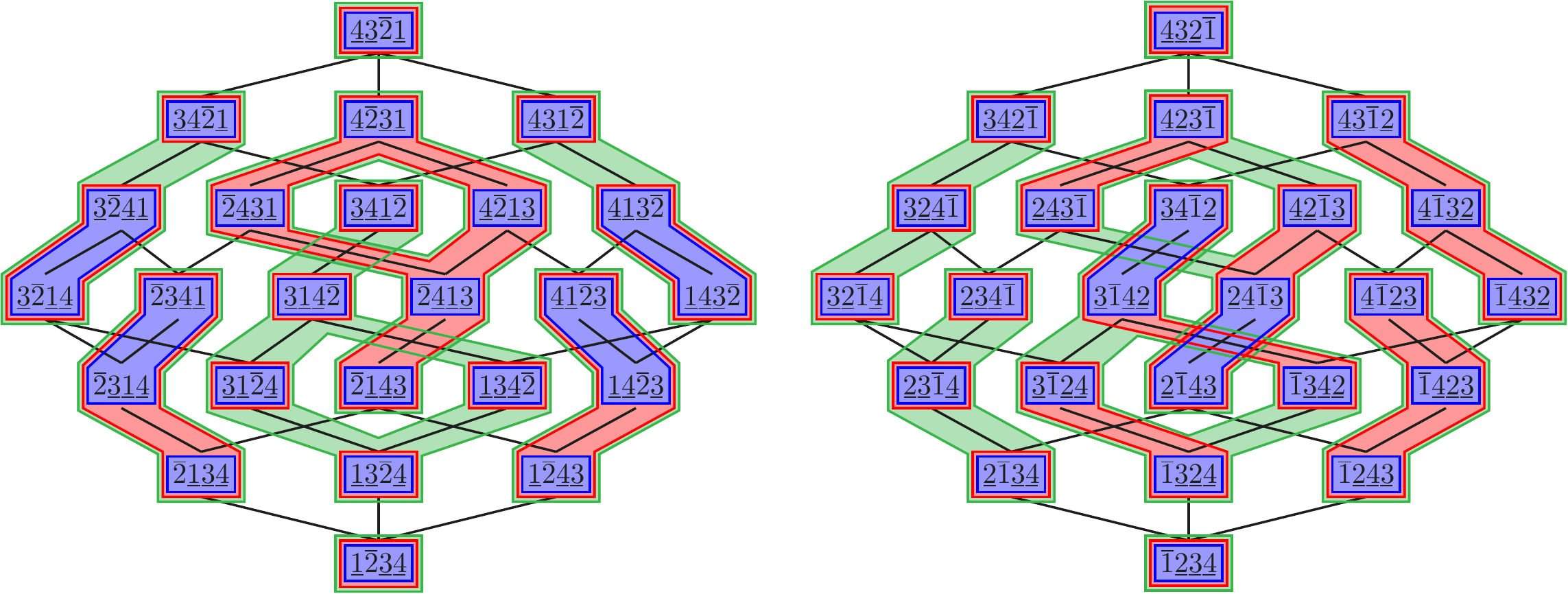}}
  \caption{Baxter-Cambrian (blue), Cambrian (red), and boolean (green) congruence classes on the weak orders of~$\fS_\signature$ for the signatures $\signature = {-}{+}{-}{-}$ (left) and ${\signature = {+}{-}{-}{-}}$~(right). The number of Baxter-Cambrian classes is not constant.}
  \label{fig:latticesBis}
\end{figure}
\end{remark}

\begin{remark}[Extremal elements and pattern avoidance]
\label{rem:patternsBaxterCambrian}
Since the Baxter-Cambrian classes are generated by rewriting rules, we immediately obtain that the minimal elements of the Baxter-Cambrian classes are precisely the signed permutations avoiding the patterns:
\[
\down{b} \dash da \dash \down{c}, \; \up{b} \dash da \dash \up{c}, \; \down{c}  \dash da \dash \down{b}, \; \up{c} \dash da \dash \up{b}, \; \down{b} \dash \up{c} \dash da, \; \up{b} \dash \down{c} \dash da, \; \down{c} \dash \up{b} \dash da, \; \up{c} \dash \down{b} \dash da, \; da \dash \down{b} \dash \up{c}, \; da \dash \up{b} \dash \down{c}, \; da \dash \down{c} \dash \up{b}, \; da \dash \up{c} \dash \down{b}.
\]
Similarly, the maximal elements of the Baxter-Cambrian classes are precisely the signed permutations avoiding the patterns:
\begin{equation}
\label{eqn:patternsBaxterCambrian}\tag{$\star$}
\down{b} \dash ad \dash \down{c}, \; \up{b} \dash ad \dash \up{c}, \; \down{c} \dash ad \dash \down{b}, \; \up{c} \dash ad \dash \up{b}, \; \down{b} \dash \up{c} \dash ad, \; \up{b} \dash \down{c} \dash ad, \; \down{c} \dash \up{b} \dash ad, \; \up{c} \dash \down{b} \dash ad, \; ad \dash \down{b} \dash \up{c}, \; ad \dash \up{b} \dash \down{c}, \; ad \dash \down{c} \dash \up{b}, \; ad \dash \up{c} \dash \down{b}.
\end{equation}
\end{remark}

%%%%%%%%%%%%%%%%%

\subsection{Baxter-Cambrian numbers}
\label{subsec:BaxterCambrianNumbers}

In contrast to the number of $\signature$-Cambrian trees, the number of pairs of twin $\signature$-Cambrian trees does depend on the signature~$\signature$. For example, there are~$22$ pairs of twin $({-}{-}{-}{-})$-Cambrian trees and only~$20$ pairs of twin $({-}{+}{-}{-})$-Cambrian trees. See Figures~\ref{fig:BaxterCambrianLattices}, \ref{fig:latticesBis} and~\ref{fig:GeneratingTreeBaxterCambrian}.

\enlargethispage{-.2cm}
For a signature~$\signature$, we define the \defn{$\signature$-Baxter-Cambrian number}~$\BC_\signature$ to be the number of pairs of twin $\signature$-Cambrian trees. We immediately observe that~$\BC_\signature$ is preserved when we change the first and last sign of~$\signature$, inverse simultaneously all signs of~$\signature$, or reverse the signature~$\signature$:
\[
\BC_\signature = \BC_{\switchSign_0(\signature)} = \BC_{\switchSign_n(\signature)} = \BC_{-\signature} = \BC_{\mirror{\signature}},
\]
where~$\switchSign_0$ and~$\switchSign_n$ change the first and last sign,~$(-\signature)_i = -\signature_i$ and $(\mirror{\signature})_i = \signature_{n+1-i}$. Table~\ref{table:numberBaxterCambrianEachSignature} shows the $\signature$-Baxter-Cambrian number~$\BC_\signature$ for all small signatures~$\signature$ up to these transformations. Table~\ref{table:numbersBaxterCambrian} records all possible $\signature$-Baxter-Cambrian numbers~$\BC_\signature$ for signatures~$\signature$ of sizes~$n \le 10$.

\begin{table}[h]
	\centerline{
	\begin{tabular}{c|l@{\qquad}l@{\qquad}l}
		$n = 4$ & $\BC_{{+}{+}{+}{+}} = 22$ & $\BC_{{+}{+}{-}{+}} = 20$ & \\[.1cm]
		\hline \\[-.3cm]
		$n = 5$ & $\BC_{{+}{+}{+}{+}{+}} = 92$ & $\BC_{{+}{+}{+}{-}{+}} = 78$ & $\BC_{{+}{+}{-}{+}{+}} = 70$ \\[.1cm]
		\hline \\[-.3cm]
		$n = 6$ & $\BC_{{+}{+}{+}{+}{+}{+}} = 422$ & $\BC_{{+}{+}{+}{+}{-}{+}} = 342$ & $\BC_{{+}{+}{+}{-}{-}{+}} = 316$ \\
				& $\BC_{{+}{+}{-}{-}{+}{+}} = 284$  & $\BC_{{+}{+}{+}{-}{+}{+}} = 282$ & $\BC_{{+}{+}{-}{+}{-}{+}} = 252$ \\[.1cm]
		\hline \\[-.3cm]
		$n = 7$ & $\BC_{{+}{+}{+}{+}{+}{+}{+}} = 2074$ & $\BC_{{+}{+}{+}{+}{+}{-}{+}} = 1628$ & $\BC_{{+}{+}{+}{+}{-}{-}{+}} = 1428$ \\
				& $\BC_{{+}{+}{-}{-}{-}{+}{+}} = 1298$ & $\BC_{{+}{+}{+}{+}{-}{+}{+}} = 1270$ & $\BC_{{+}{+}{+}{-}{-}{+}{+}} = 1172$ \\
				& $\BC_{{+}{+}{+}{-}{+}{+}{+}} = 1162$ & $\BC_{{+}{+}{-}{+}{+}{-}{+}} = 1044$ & $\BC_{{+}{+}{+}{-}{+}{-}{+}} = 1036$ \\
				& 											 & $\BC_{{+}{+}{-}{+}{-}{+}{+}} = 924$  & 										   \\
	\end{tabular}
	}
	\caption{The number~$\BC_\signature$ of twin $\signature$-Cambrian trees for all small signatures~$\signature$ (up to first/last sign change, simultaneous inversion of all signs, and reverse).}
	\label{table:numberBaxterCambrianEachSignature}
\end{table}

\begin{table}[h]
	\centerline{
	\begin{tabular}{c|l}
		$n = 4$ & $22$ {\scriptsize $(1)$}, $20$ {\scriptsize $(1)$} \\[.1cm]
		\hline \\[-.3cm]
		$n = 5$ & $92$ {\scriptsize $(1)$}, $78$ {\scriptsize $(2)$}, $70$ {\scriptsize $(1)$} \\[.1cm]
		\hline \\[-.3cm]
		$n = 6$ & $422$ {\scriptsize $(1)$}, $342$ {\scriptsize $(2)$}, $316$ {\scriptsize $(1)$}, $284$ {\scriptsize $(1)$}, $282$ {\scriptsize $(2)$}, $252$ {\scriptsize $(1)$} \\[.1cm]
		\hline \\[-.3cm]
		$n = 7$ & $2074$ {\scriptsize $(1)$}, $1628$ {\scriptsize $(2)$}, $1428$ {\scriptsize $(2)$}, $1298$ {\scriptsize $(1)$}, $1270$ {\scriptsize $(2)$}, $1172$ {\scriptsize $(2)$}, $1162$ {\scriptsize $(1)$}, $1044$ {\scriptsize $(2)$}, $1036$ {\scriptsize $(2)$}, $924$ {\scriptsize $(1)$} \\[.1cm]
		\hline \\[-.3cm]
		$n = 8$ & $10754$ {\scriptsize $(1)$}, $8244$ {\scriptsize $(2)$}, $6966$ {\scriptsize $(2)$}, $6612$ {\scriptsize $(1)$}, $6388$ {\scriptsize $(1)$}, $6182$ {\scriptsize $(2)$}, $5498$ {\scriptsize $(2)$}, $5380$ {\scriptsize $(2)$}, $5334$ {\scriptsize $(2)$}, $4902$ {\scriptsize $(1)$}, \\
				& $4884$ {\scriptsize $(2)$}, $4748$ {\scriptsize $(2)$}, $4392$ {\scriptsize $(1)$}, $4362$ {\scriptsize $(2)$}, $4356$ {\scriptsize $(2)$}, $4324$ {\scriptsize $(1)$}, $3882$ {\scriptsize $(1)$}, $3880$ {\scriptsize $(2)$}, $3852$ {\scriptsize $(2)$}, $3432$ {\scriptsize $(1)$}\\[.1cm]
		\hline \\[-.3cm]
		$n = 9$ & $58202$ {\scriptsize $(1)$}, $43812$ {\scriptsize $(2)$}, $35998$ {\scriptsize $(2)$}, $33240$ {\scriptsize $(1)$}, $32908$ {\scriptsize $(2)$}, $31902$ {\scriptsize $(2)$}, $27660$ {\scriptsize $(2)$}, $26602$ {\scriptsize $(2)$}, $26392$ {\scriptsize $(2)$}, \\
				& $25768$ {\scriptsize $(2)$}, $24888$ {\scriptsize $(1)$}, $24528$ {\scriptsize $(2)$}, $23530$ {\scriptsize $(1)$}, $23466$ {\scriptsize $(2)$}, $22768$ {\scriptsize $(2)$}, $20888$ {\scriptsize $(2)$}, $20886$ {\scriptsize $(2)$}, $20718$ {\scriptsize $(2)$}, \\
				& $20244$ {\scriptsize $(2)$}, $20218$ {\scriptsize $(2)$}, $20082$ {\scriptsize $(2)$}, $18544$ {\scriptsize $(1)$}, $18518$ {\scriptsize $(2)$}, $18430$ {\scriptsize $(2)$}, $18376$ {\scriptsize $(2)$}, $17874$ {\scriptsize $(2)$}, $16470$ {\scriptsize $(2)$}, \\
				& $16454$ {\scriptsize $(1)$}, $16358$ {\scriptsize $(2)$}, $16344$ {\scriptsize $(2)$}, $16342$ {\scriptsize $(2)$}, $16234$ {\scriptsize $(1)$}, $14550$ {\scriptsize $(4)$}, $14454$ {\scriptsize $(2)$}, $12870$ {\scriptsize $(1)$} \\[.1cm]
		\hline \\[-.3cm]
		$n = 10$ & $326240$ {\scriptsize $(1)$}, $242058$ {\scriptsize $(2)$}, $194608$ {\scriptsize $(2)$}, $180678$ {\scriptsize $(1)$}, $172950$ {\scriptsize $(2)$}, $172304$ {\scriptsize $(2)$}, $166568$ {\scriptsize $(1)$}, $146622$ {\scriptsize $(2)$}, \\ 
				& $139100$ {\scriptsize $(2)$}, $138130$ {\scriptsize $(2)$}, $131994$ {\scriptsize $(2)$}, $129870$ {\scriptsize $(2)$}, $129600$ {\scriptsize $(2)$}, $124896$ {\scriptsize $(2)$}, $122716$ {\scriptsize $(2)$}, $120800$ {\scriptsize $(1)$}, \\
				& $113754$ {\scriptsize $(2)$}, $111274$ {\scriptsize $(2)$}, $107072$ {\scriptsize $(2)$}, $106854$ {\scriptsize $(1)$}, $106382$ {\scriptsize $(2)$}, $105606$ {\scriptsize $(2)$}, $101084$ {\scriptsize $(3)$}, $101028$ {\scriptsize $(2)$}, \\
				& $100426$ {\scriptsize $(2)$}, $98730$ {\scriptsize $(2)$}, $97524$ {\scriptsize $(2)$}, $94908$ {\scriptsize $(1)$}, $94372$ {\scriptsize $(1)$}, $93854$ {\scriptsize $(2)$}, $89952$ {\scriptsize $(2)$}, $89324$ {\scriptsize $(2)$}, $89276$ {\scriptsize $(2)$}, \\
				& $88966$ {\scriptsize $(2)$}, $86638$ {\scriptsize $(2)$}, $86034$ {\scriptsize $(2)$}, $86026$ {\scriptsize $(2)$}, $79826$ {\scriptsize $(2)$}, $79384$ {\scriptsize $(2)$}, $79226$ {\scriptsize $(2)$}, $79076$ {\scriptsize $(2)$}, $79018$ {\scriptsize $(2)$}, \\
				& $78580$ {\scriptsize $(1)$}, $78528$ {\scriptsize $(2)$}, $76542$ {\scriptsize $(2)$}, $76526$ {\scriptsize $(2)$}, $76484$ {\scriptsize $(2)$}, $76072$ {\scriptsize $(2)$}, $70450$ {\scriptsize $(2)$}, $70316$ {\scriptsize $(1)$}, $69866$ {\scriptsize $(4)$}, \\
				& $69838$ {\scriptsize $(2)$}, $69810$ {\scriptsize $(2)$}, $69400$ {\scriptsize $(2)$}, $69314$ {\scriptsize $(1)$}, $67694$ {\scriptsize $(2)$}, $62124$ {\scriptsize $(3)$}, $62120$ {\scriptsize $(1)$}, $62096$ {\scriptsize $(2)$}, $61766$ {\scriptsize $(2)$}, \\
				& $61746$ {\scriptsize $(2)$}, $61706$ {\scriptsize $(2)$}, $61682$ {\scriptsize $(2)$}, $61376$ {\scriptsize $(1)$}, $54956$ {\scriptsize $(2)$}, $54920$ {\scriptsize $(2)$}, $54892$ {\scriptsize $(1)$}, $54626$ {\scriptsize $(2)$}, $48620$ {\scriptsize $(1)$}
	\end{tabular}
	}
	\caption{All possible $\signature$-Baxter-Cambrian numbers~$\BC_\signature$ for signatures~$\signature$ of sizes~$n \le 10$. Numbers in parenthesis indicate the multiplicity of each Baxter number: for example, the second line indicates that there are~$8$ (resp.~$16$, resp.~$8$) signatures~$\signature$ of~$\pm^5$ such that~$\BC_\signature = 92$ (resp.~$78$, resp.~$70$).}
	\label{table:numbersBaxterCambrian}
\end{table}

In the following statements, we provide an inductive formula to compute all $\signature$-Baxter-Cambrian numbers, using a two-parameters refinement. The proof is based on ideas similar to Proposition~\ref{prop:GeneratingTree}. The pairs of twin $\signature$-Cambrian trees are in bijection with the weak order maximal permutations of $\signature$-Baxter-Cambrian classes. These permutations are precisely the permutations avoiding the patterns~\eqref{eqn:patternsBaxterCambrian} in Remark~\ref{rem:patternsBaxterCambrian}. We consider the  generating tree~$\generatingTree_\signature\Bax$ for these permutations. This tree has~$n$ levels, and the nodes at level~$\level$ are labeled by permutations of~$[\level]$ whose values are signed by the restriction of~$\signature$ to~$[\level]$ and avoiding the patterns~\eqref{eqn:patternsBaxterCambrian}. The parent of a permutation in~$\generatingTree_\signature\Bax$ is obtained by deleting its maximal value. See \fref{fig:GeneratingTreeBaxterCambrian}. 

As in the proof of Proposition~\ref{prop:GeneratingTree}, we consider the possible positions of~$\level + 1$ in the children of a permutation~$\tau$ at level~$\level$ in this generating tree~$\generatingTree_\signature\Bax$. Index by~$\{0, \dots, \level\}$ from left to right the gaps before the first letter, between two consecutive letters, and after the last letter of~$\tau$. \defn{Free gaps} are those where placing~$\level + 1$ does not create a pattern of~\eqref{eqn:patternsBaxterCambrian}. Free gaps are marked with a blue dot in \fref{fig:GeneratingTreeBaxterCambrian}. It is important to observe that gap~$0$ as well as the gaps immediately after~$\level-1$ and~$\level$ are always free, no matter~$\tau$ or the signature~$\signature$.

Define the \defn{free-gap-type} of~$\tau$ to be the pair~$(\ell,r)$ where~$\ell$ (resp.~$r$) denote the number of free gaps on the left (resp.~right) of~$\level$ in~$\tau$. For a signature~$\signature$, let~$\BC_\signature(\ell,r)$ denote the number of free-gap-type~$(\ell,r)$ weak order maximal permutations of $\signature$-Baxter-Cambrian classes. These refined Baxter-Cambrian numbers enables us to write inductive equations.

\begin{proposition}
\label{prop:inductionBaxterCambrian}
Consider two signatures~$\signature \in \pm^n$ and~$\signature' \in \pm^{n-1}$, where~$\signature'$ is obtained by deleting the last sign of~$\signature$. Then
\[
\BC_\signature(\ell,r) = 
\begin{cases}
\sum\limits_{\ell' \ge \ell} \BC_{\signature'}(\ell',r-1) \; + \; \sum\limits_{r' \ge r} \BC_{\signature'}(\ell-1,r')
& \text{if } \signature_{n-1} = \signature_{n}, \vspace*{.2cm}\quad(=) \\
\delta_{\ell = 1} \cdot \delta_{r \ge 2} \cdot\!\!\! \sum\limits_{\substack{\ell' \ge r-1 \\ r' \ge 1}} \BC_{\signature'}(\ell',r') \; + \; \delta_{\ell \ge 2} \cdot \delta_{r = 1} \cdot\!\!\! \sum\limits_{\substack{\ell' \ge 1 \\ r' \ge \ell-1}} \BC_{\signature'}(\ell',r') \;
& \text{if } \signature_{n-1} \ne \signature_{n}, \quad(\ne)
\end{cases}
\]
where~$\delta$ denote the Kronecker~$\delta$ (defined by~$\delta_X = 1$ if~$X$ is satisfied and~$0$ otherwise).
\end{proposition}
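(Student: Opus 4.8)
The plan is to prove the recursion by analyzing, for a weak-order maximal permutation $\tau$ of a $\signature'$-Baxter-Cambrian class (hence a permutation of $[n-1]$ avoiding the patterns~\eqref{eqn:patternsBaxterCambrian}), which of its children in $\generatingTree_\signature\Bax$ are again maximal, i.e.\ still avoid~\eqref{eqn:patternsBaxterCambrian} after inserting $n$ at one of the free gaps, and then to track how the free-gap-type changes under each such insertion. By Lemma~\ref{lem:GeneratingTree}-style reasoning (transported to the Baxter-Cambrian setting via the pattern characterization of Remark~\ref{rem:patternsBaxterCambrian}), inserting $n$ into a non-free gap produces a permutation that contains one of the forbidden patterns and is therefore not counted, so $\BC_\signature(\ell,r)$ is obtained by summing over the free gaps of all the $\BC_{\signature'}$-counted permutations. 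The key computation is: \emph{given} a maximal $\signature'$-permutation $\tau$ of free-gap-type $(\ell',r')$, and a free gap $j$ of $\tau$, what is the free-gap-type of the child $\tau\cdot_j(n)$ with respect to $\signature$?

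First I would set up the bookkeeping: recall that gap $0$, the gap immediately after the value $n-2$, and the gap immediately after $n-1$ are always free regardless of $\tau$ or $\signature$ (this is stated in the excerpt), and note that in a child of $\tau$ the position of the maximal value is now $n$, so the ``left of the maximum'' and ``right of the maximum'' counts refer to gaps relative to $n$. The sign $\signature_n$ of the inserted letter governs which of the new incident gaps are free, and $\signature_{n-1}$ enters because the mutual position of $n-1$ and $n$ is exactly the kind of configuration constrained by the length-$3$ patterns in~\eqref{eqn:patternsBaxterCambrian}. I would split into the two cases of the statement.

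In the case $\signature_{n-1}=\signature_n$ (the $(=)$ branch), inserting $n$ at a free gap $j$ of $\tau$ does not interact destructively with $n-1$ (the pair $n-1,n$ has equal sign, mirroring the second rewriting rule of $\equiv\Bax_\signature$ which only triggers when $\signature_b=\signature_c$ in the ``adjacent'' configuration, here rendering insertion into any free gap legal). The new letter $n$ becomes the maximum; the gaps that remain free in the child are, on one side, gap $0$ and all the gaps of $\tau$ that lay on the far side of $j$ from the old maximum $n-1$, together with the two gaps newly created around $n$ — this is precisely the behavior described in the proof of Lemma~\ref{lem:GeneratingTree} for ordinary Cambrian trees, now applied to both of the opposite Cambrian structures simultaneously. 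Bookkeeping the two sub-cases (insert to the left of $n-1$ versus to the right) yields the two sums $\sum_{\ell'\ge\ell}\BC_{\signature'}(\ell',r-1)$ and $\sum_{r'\ge r}\BC_{\signature'}(\ell-1,r')$: inserting on the side of $n-1$ keeps $n-1$ on that side of $n$ (so $r-1$ or $\ell-1$ free gaps survive there, hence the shift by one and the constraint $\ell'\ge\ell$ or $r'\ge r$), while on the opposite side only gap $0$ (resp.\ the always-free gaps past $n-1$) plus the gap adjacent to $n$ survive, pinning that coordinate.

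In the case $\signature_{n-1}\ne\signature_n$ (the $(\ne)$ branch), the patterns of~\eqref{eqn:patternsBaxterCambrian} involving $n-1$ and $n$ with opposite signs are much more restrictive: among the free gaps of $\tau$, only those adjacent to the always-free ones survive, forcing either $\ell=1$ (and then necessarily $r\ge2$) or $\ell\ge2$ with $r=1$, which explains the two Kronecker-$\delta$ prefactors. For the surviving insertions, the number of free gaps of $\tau$ on each side of the old maximum $n-1$ translates, after insertion, into the lower bounds $\ell'\ge r-1$, $r'\ge1$ (resp.\ $\ell'\ge1$, $r'\ge\ell-1$) on the free-gap-type of $\tau$ — one checks directly that a $\signature'$-permutation with too few free gaps on the relevant side cannot admit a legal insertion into the gap that would realize free-gap-type $(\ell,r)$. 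Summing over all such $\tau$ gives the double sums in the statement.

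The main obstacle is the case analysis in the $\signature_{n-1}\ne\signature_n$ branch: one must verify carefully, pattern by pattern among the twelve patterns in~\eqref{eqn:patternsBaxterCambrian}, exactly which gaps of $\tau$ remain free after inserting $n$, and exactly how the count of free gaps on each side of $n-1$ before insertion converts into the bounds $\ell'\ge r-1$ and $r'\ge\ell-1$ afterward — this is where the asymmetry of the recursion genuinely comes from, and it is easy to be off by one. The $(=)$ branch is a more direct adaptation of the argument already given for Lemma~\ref{lem:GeneratingTree}. I would present the $(=)$ case first as a warm-up and then treat the $(\ne)$ case in detail, possibly citing \fref{fig:GeneratingTreeBaxterCambrian} to let the reader check the small cases against Table~\ref{table:numberBaxterCambrianEachSignature}.
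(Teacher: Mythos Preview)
Your approach is the same as the paper's: work in the generating tree~$\generatingTree_\signature\Bax$, insert~$n$ into a free gap of a level-$(n-1)$ maximal permutation, and track how the free-gap-type changes, splitting on whether~$\signature_{n-1}=\signature_n$. You reach the correct formulas, but your description of \emph{which} free gaps survive is incomplete in the~$(=)$ branch and incorrect in the~$(\ne)$ branch.

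The paper's argument rests on one clean dichotomy that you are missing. Write~$\alpha'$ for the gap immediately after~$n-1$ in the parent and~$\beta'$ for the free gap where~$n$ is inserted. Then, \emph{besides} the three always-free gaps~$0$, $\alpha$ (after~$n-1$) and~$\beta$ (after~$n$) in the child, the free gaps of the child are precisely the free gaps of the parent lying \emph{outside} the interval between~$\alpha'$ and~$\beta'$ when~$\signature_{n-1}=\signature_n$, and precisely those lying \emph{inside} that interval when~$\signature_{n-1}\ne\signature_n$. In your~$(=)$ description, ``on the far side of~$j$ from~$n-1$'' only names one of the two outer regions; the free gaps of the parent on the far side of~$n-1$ also survive, and this is exactly why~$r'=r-1$ (resp.~$\ell'=\ell-1$) is an equality rather than an inequality. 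In your~$(\ne)$ description, ``only those adjacent to the always-free ones survive'' is not the right mechanism: what actually happens is that every free gap of the parent strictly between the insertion point and~$n-1$ survives, and since all of these lie on a single side of~$n$ in the child, one immediately gets~$\ell=1$ or~$r=1$; the other coordinate counts (up to the three forced gaps) how many parent free gaps fell in that interval, which is where the bounds~$\ell'\ge r-1$ and~$r'\ge\ell-1$ come from. Once you replace your characterization by this inside/outside dichotomy, the bookkeeping you sketch goes through without further work, and the twelve-pattern case check you anticipate collapses to the single observation that a gap between~$n-1$ and~$n$ produces a forbidden pattern with~$b=n-1$, $c=n$ exactly when~$\signature_{n-1}=\signature_n$, and a gap outside that interval does so exactly when~$\signature_{n-1}\ne\signature_n$.
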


\begin{proof}
\hvFloat[floatPos=p, capWidth=h, capPos=r, capAngle=90, objectAngle=90, capVPos=c, objectPos=c]{figure}
{\scalebox{.9}{
\begin{tikzpicture}
  \node(T2) at (0,0) {
    \begin{tikzpicture}[xscale=1.3, yscale=1.5]
	  % Children of 321
	  \node(P4321)  at (0,0) {$\freeGap\down{4}\freeGap\down{3}\freeGap\up{2}\bannedGap\down{1}\bannedGap$};
	  \node(P3421)  at (1,0) {$\freeGap\down{3}\freeGap\down{4}\freeGap\up{2}\bannedGap\down{1}\bannedGap$};
	  \node(P3241)  at (2,0) {$\freeGap\down{3}\freeGap\up{2}\freeGap\down{4}\freeGap\down{1}\bannedGap$};
	  % Children of 231
	  \node(P4231)  at (3,0) {$\freeGap\down{4}\freeGap\up{2}\freeGap\down{3}\freeGap\down{1}\bannedGap$};
	  \node(P2431)  at (4,0) {$\freeGap\up{2}\bannedGap\down{4}\freeGap\down{3}\freeGap\down{1}\bannedGap$};
	  \node(P2341)  at (5,0) {$\freeGap\up{2}\bannedGap\down{3}\freeGap\down{4}\freeGap\down{1}\bannedGap$};
	  % Children of 213
	  \node(P4213)  at (6,0) {$\freeGap\down{4}\freeGap\up{2}\freeGap\down{1}\freeGap\down{3}\freeGap$};
	  \node(P2413)  at (7,0) {$\freeGap\up{2}\bannedGap\down{4}\freeGap\down{1}\freeGap\down{3}\freeGap$};
	  \node(P2143)  at (8,0) {$\freeGap\up{2}\bannedGap\down{1}\bannedGap\down{4}\freeGap\down{3}\freeGap$};
	  \node(P2134)  at (9,0) {$\freeGap\up{2}\bannedGap\down{1}\bannedGap\down{3}\freeGap\down{4}\freeGap$};
	  % Children of 312
	  \node(P4312)  at (10,0) {$\freeGap\down{4}\freeGap\down{3}\freeGap\down{1}\bannedGap\up{2}\bannedGap$};
	  \node(P3412)  at (11,0) {$\freeGap\down{3}\freeGap\down{4}\freeGap\down{1}\bannedGap\up{2}\bannedGap$};
	  \node(P3142)  at (12,0) {$\freeGap\down{3}\freeGap\down{1}\freeGap\down{4}\freeGap\up{2}\bannedGap$};
	  \node(P3124)  at (13,0) {$\freeGap\down{3}\freeGap\down{1}\freeGap\up{2}\freeGap\down{4}\freeGap$};
	  % Children of 132
	  \node(P4132)  at (14,0) {$\freeGap\down{4}\freeGap\down{1}\bannedGap\down{3}\freeGap\up{2}\bannedGap$};
	  \node(P1342)  at (15,0) {$\freeGap\down{1}\bannedGap\down{3}\freeGap\down{4}\freeGap\up{2}\bannedGap$};
	  \node(P1324)  at (16,0) {$\freeGap\down{1}\bannedGap\down{3}\freeGap\up{2}\freeGap\down{4}\freeGap$};
	  % Children of 123
	  \node(P4123)  at (17,0) {$\freeGap\down{4}\freeGap\down{1}\bannedGap\up{2}\freeGap\down{3}\freeGap$};
	  \node(P1243)  at (18,0) {$\freeGap\down{1}\bannedGap\up{2}\bannedGap\down{4}\freeGap\down{3}\freeGap$};
	  \node(P1234)  at (19,0) {$\freeGap\down{1}\bannedGap\up{2}\bannedGap\down{3}\freeGap\down{4}\freeGap$};
	  % Children of 21
	  \node(P321)   at (1,1) {$\freeGap\down{3}\freeGap\up{2}\freeGap\down{1}\bannedGap$};
	  \node(P231)   at (4,1) {$\freeGap\up{2}\freeGap\down{3}\freeGap\down{1}\bannedGap$};
	  \node(P213)   at (7.5,1) {$\freeGap\up{2}\freeGap\down{1}\freeGap\down{3}\freeGap$};
	  % Children of 12
	  \node(P312)   at (11.5,1) {$\freeGap\down{3}\freeGap\down{1}\freeGap\up{2}\freeGap$};
	  \node(P132)   at (15,1) {$\freeGap\down{1}\bannedGap\down{3}\freeGap\up{2}\freeGap$};
	  \node(P123)   at (18,1) {$\freeGap\down{1}\bannedGap\up{2}\freeGap\down{3}\freeGap$};
	  % Children of 1
	  \node(P21)    at (4,2) {$\freeGap\up{2}\freeGap\down{1}\freeGap$};
	  \node(P12)    at (15,2) {$\freeGap\down{1}\freeGap\up{2}\freeGap$};
	  % root
	  \node(P1) at (9.5,3) {$\freeGap\down{1}\freeGap$};
	
	  \draw (P21) -- (P1);
	  \draw (P12) -- (P1);
	
	  \draw (P321) -- (P21);
	  \draw (P231) -- (P21);
	  \draw (P213) -- (P21);
	
	  \draw (P312) -- (P12);
	  \draw (P132) -- (P12);
	  \draw (P123) -- (P12);
	
	  \draw (P4321) -- (P321);
	  \draw (P3421) -- (P321);
	  \draw (P3241) -- (P321);
	
	  \draw (P4231) -- (P231);
	  \draw (P2431) -- (P231);
	  \draw (P2341) -- (P231);
	
	  \draw (P4213) -- (P213);
	  \draw (P2413) -- (P213);
	  \draw (P2143) -- (P213);
	  \draw (P2134) -- (P213);
	
	  \draw (P4312) -- (P312);
	  \draw (P3412) -- (P312);
	  \draw (P3142) -- (P312);
	  \draw (P3124) -- (P312);
	
	  \draw (P4132) -- (P132);
	  \draw (P1342) -- (P132);
	  \draw (P1324) -- (P132);
	
	  \draw (P4123) -- (P123);
	  \draw (P1243) -- (P123);
	  \draw (P1234) -- (P123);
	\end{tikzpicture}
  };

  \node(T2) at (0.3, -7) {
    \begin{tikzpicture}[xscale=1.3, yscale=1.5]
	  % Children of 321
	  \node(P4321)  at (0,0) {$\freeGap\down{4}\freeGap\down{3}\freeGap\down{2}\freeGap\up{1}\freeGap$};
	  \node(P3421)  at (1,0) {$\freeGap\down{3}\freeGap\down{4}\freeGap\down{2}\freeGap\up{1}\freeGap$};
	  \node(P3241)  at (2,0) {$\freeGap\down{3}\freeGap\down{2}\bannedGap\down{4}\freeGap\up{1}\freeGap$};
	  \node(P3214)  at (3,0) {$\freeGap\down{3}\freeGap\down{2}\bannedGap\up{1}\bannedGap\down{4}\freeGap$};
	  % Children of 231
	  \node(P4231)  at (4,0) {$\freeGap\down{4}\freeGap\down{2}\bannedGap\down{3}\freeGap\up{1}\freeGap$};
	  \node(P2431)  at (5,0) {$\freeGap\down{2}\freeGap\down{4}\freeGap\down{3}\freeGap\up{1}\freeGap$};
	  \node(P2341)  at (6,0) {$\freeGap\down{2}\freeGap\down{3}\freeGap\down{4}\freeGap\up{1}\freeGap$};
	  \node(P2314)  at (7,0) {$\freeGap\down{2}\freeGap\down{3}\freeGap\up{1}\bannedGap\down{4}\freeGap$};
	  % Children of 213
	  \node(P4213)  at (8,0) {$\freeGap\down{4}\freeGap\down{2}\bannedGap\up{1}\bannedGap\down{3}\freeGap$};
	  \node(P2413)  at (9,0) {$\freeGap\down{2}\freeGap\down{4}\freeGap\up{1}\bannedGap\down{3}\freeGap$};
	  \node(P2134)  at (10,0) {$\freeGap\down{2}\freeGap\up{1}\bannedGap\down{3}\freeGap\down{4}\freeGap$};
	  % Children of 312
	  \node(P4312)  at (11,0) {$\freeGap\down{4}\freeGap\down{3}\freeGap\up{1}\bannedGap\down{2}\freeGap$};
	  \node(P3412)  at (12,0) {$\freeGap\down{3}\freeGap\down{4}\freeGap\up{1}\bannedGap\down{2}\freeGap$};
	  \node(P3124)  at (13,0) {$\freeGap\down{3}\freeGap\up{1}\bannedGap\down{2}\bannedGap\down{4}\freeGap$};
	  % Children of 132
	  \node(P4132)  at (14,0) {$\freeGap\down{4}\freeGap\up{1}\bannedGap\down{3}\freeGap\down{2}\freeGap$};
	  \node(P1432)  at (15,0) {$\freeGap\up{1}\freeGap\down{4}\freeGap\down{3}\freeGap\down{2}\freeGap$};
	  \node(P1342)  at (16,0) {$\freeGap\up{1}\freeGap\down{3}\freeGap\down{4}\freeGap\down{2}\freeGap$};
	  \node(P1324)  at (17,0) {$\freeGap\up{1}\freeGap\down{3}\freeGap\down{2}\bannedGap\down{4}\freeGap$};
	  % Children of 123
	  \node(P4123)  at (18,0) {$\freeGap\down{4}\freeGap\up{1}\bannedGap\down{2}\bannedGap\down{3}\freeGap$};
	  \node(P1423)  at (19,0) {$\freeGap\up{1}\freeGap\down{4}\freeGap\down{2}\bannedGap\down{3}\freeGap$};
	  \node(P1243)  at (20,0) {$\freeGap\up{1}\freeGap\down{2}\freeGap\down{4}\freeGap\down{3}\freeGap$};
	  \node(P1234)  at (21,0) {$\freeGap\up{1}\freeGap\down{2}\freeGap\down{3}\freeGap\down{4}\freeGap$};
	  % Children of 21
	  \node(P321)   at (1.5,1) {$\freeGap\down{3}\freeGap\down{2}\freeGap\up{1}\freeGap$};
	  \node(P231)   at (5.5,1) {$\freeGap\down{2}\freeGap\down{3}\freeGap\up{1}\freeGap$};
	  \node(P213)   at (9,1) {$\freeGap\down{2}\freeGap\up{1}\bannedGap\down{3}\freeGap$};
	  % Children of 12
	  \node(P312)   at (12,1) {$\freeGap\down{3}\freeGap\up{1}\bannedGap\down{2}\freeGap$};
	  \node(P132)   at (15.5,1) {$\freeGap\up{1}\freeGap\down{3}\freeGap\down{2}\freeGap$};
	  \node(P123)   at (19.5,1) {$\freeGap\up{1}\freeGap\down{2}\freeGap\down{3}\freeGap$};
	  % Children of 1
	  \node(P21)    at (5.5,2) {$\freeGap\down{2}\freeGap\up{1}\freeGap$};
	  \node(P12)    at (15.5,2) {$\freeGap\up{1}\freeGap\down{2}\freeGap$};
	  % root
	  \node(P1) at (10.5,3) {$\freeGap\up{1}\freeGap$};
	
	  \draw (P21) -- (P1);
	  \draw (P12) -- (P1);
	
	  \draw (P321) -- (P21);
	  \draw (P231) -- (P21);
	  \draw (P213) -- (P21);
	
	  \draw (P312) -- (P12);
	  \draw (P132) -- (P12);
	  \draw (P123) -- (P12);
	
	  \draw (P4321) -- (P321);
	  \draw (P3421) -- (P321);
	  \draw (P3241) -- (P321);
	  \draw (P3214) -- (P321);
	
	  \draw (P4231) -- (P231);
	  \draw (P2431) -- (P231);
	  \draw (P2341) -- (P231);
	  \draw (P2314) -- (P231);
	
	  \draw (P4213) -- (P213);
	  \draw (P2413) -- (P213);
	  \draw (P2134) -- (P213);
	
	  \draw (P4312) -- (P312);
	  \draw (P3412) -- (P312);
	  \draw (P3124) -- (P312);
	
	  \draw (P4132) -- (P132);
	  \draw (P1432) -- (P132);
	  \draw (P1342) -- (P132);
	  \draw (P1324) -- (P132);
	
	  \draw (P4123) -- (P123);
	  \draw (P1423) -- (P123);
	  \draw (P1243) -- (P123);
	  \draw (P1234) -- (P123);
	\end{tikzpicture}
  };

  \node(T3) at (0.3, -10) {
	};
\end{tikzpicture}
}}
{The generating trees~$\generatingTree_\signature\Bax$ for the signatures~$\signature = {-}{+}{-}{-}$ (top) and~$\signature = {+}{-}{-}{-}$ (bottom). Free gaps are marked with a blue dot.}
{fig:GeneratingTreeBaxterCambrian}
Assume first that~$\signature_{n-1} = \signature_n$. Consider two permutations~$\tau$ and~$\tau'$ at level~$n$ and~$n-1$ in~$\generatingTree_\signature\Bax$ such that~$\tau'$ is obtained by deleting~$n$ in~$\tau$. Denote by~$\alpha$ and~$\beta$ the gaps immediately after~$n-1$ and~$n$ in~$\tau$, by~$\alpha'$ the gap immediately after~$n-1$ in~$\tau'$, and by~$\beta'$ the gap in~$\tau'$ where we insert~$n$ to get~$\tau$. Then, besides gaps~$0$, $\alpha$ and~$\beta$, the free gaps of~$\tau$ are precisely the free gaps of~$\tau'$ not located between gaps~$\alpha'$ and~$\beta'$. Indeed,
\begin{itemize}
\item inserting~$d \eqdef n+1$ just after a value~$a$ located between~$b \eqdef n-1$ and~$c \eqdef n$ in~$\tau$ would create a pattern~$b \dash ad \dash c$ or~$c \dash ad \dash b$ with~$\signature_b = \signature_c$;
\item conversely, consider a gap~$\gamma$ of~$\tau$ not located between~$\alpha$ and~$\beta$. If inserting~$n+1$ at~$\gamma$ in~$\tau$ creates a forbidden pattern of~\eqref{eqn:patternsBaxterCambrian} with~$c = n$, then inserting~$n$ at~$\gamma$ in~$\tau'$ would also create the same forbidden pattern of~\eqref{eqn:patternsBaxterCambrian} with~$c = n-1$. Therefore, all free gaps not located between gaps~$\alpha'$ and~$\beta'$ remain free.
\end{itemize}
Let~$(\ell,r)$ denote the free-gap-type of~$\tau$ and~$(\ell',r')$ denote the free-gap-type of~$\tau'$. We obtain that
\begin{itemize}
\item $\ell' \ge \ell$ and~$r' = r-1$ if~$n$ is inserted on the left of~$n-1$;
\item $\ell' = \ell-1$ and~$r' \ge r$ if~$n$ is inserted on the right of~$n-1$.
\end{itemize}
The formula follows immediately when~$\signature_{n-1} = \signature_n$.

Assume now that~$\signature_{n-1} = -\signature_n$, and keep the same notations as before. Using similar arguments, we observe that besides gaps~$0$, $\alpha$ and~$\beta$, the free gaps of~$\tau$ are precisely the free gaps of~$\tau'$ located between gaps~$\alpha'$ and~$\beta'$. Therefore, we obtain that
\begin{itemize}
\item $\ell = 1$, $r \ge 2$, and~$\ell' \ge r-1$ if~$n$ is inserted on the left of~$n-1$;
\item $\ell \ge 2$, $r = 1$, and~$r' \ge \ell-1$ if~$n$ is inserted on the right of~$n-1$.
\end{itemize}
The formula follows for~$\signature_{n-1} = -\signature_n$.
\end{proof}

\enlargethispage{.3cm}
Before applying these formulas to obtain bounds on~$\BC_\signature$ for arbitrary signatures~$\signature$, let us consider two special signatures: the constant and the alternating signature.

\para{Alternating signature}
Since it is the easiest, we start with the \defn{alternating signature}~$({+}{-})^{\frac{n}{2}}$ (where we define~$({+}{-})^{\frac{n}{2}}$ to be~$({+}{-})^{m}{+}$ when~$n = 2m+1$ is odd).

\begin{proposition}
\label{prop:BaxterAlternating}
The Baxter-Cambrian numbers for alternating signatures are central binomial coefficients (see \href{https://oeis.org/A000984}{\cite[A000984]{OEIS}}):
\[
\BC_{({+}{-})^{\frac{n}{2}}} = \binom{2n-2}{n-1}.
\]
\end{proposition}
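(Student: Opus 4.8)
The plan is to exploit the recursion of Proposition~\ref{prop:inductionBaxterCambrian} directly for the alternating signature. Write $\signature^{(n)}$ for the alternating signature on $[n]$ (starting with $+$, say), so $\signature^{(n-1)}$ is obtained from $\signature^{(n)}$ by deleting the last sign, and crucially $\signature^{(n)}_{n-1} \ne \signature^{(n)}_{n}$ for every $n \ge 2$. Thus only the second case $(\ne)$ of the recursion is ever used. The idea is to guess a closed form for the refined numbers $\BC_{\signature^{(n)}}(\ell,r)$ and verify it by induction on $n$, then sum over $(\ell,r)$ to obtain $\BC_{\signature^{(n)}} = \binom{2n-2}{n-1}$.

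First I would set up the base case: for $n=1$ there is a single (one-vertex) Cambrian tree, contributing $\BC_{\signature^{(1)}} = 1 = \binom{0}{0}$, and one should pin down its free-gap-type (the single vertex has free gaps $0$ and the gap after it, both always free, so the type is $(1,1)$ in the convention of the paper, or whatever small base value makes the induction start). For $n=2$, $\BC = 2 = \binom{2}{1}$. Then, examining the $(\ne)$ branch, note that $\BC_{\signature^{(n)}}(\ell,r)$ is supported only on $\ell=1,\ r\ge 2$ together with $\ell \ge 2,\ r=1$; by the symmetry $\BC_\signature = \BC_{\mirror{\signature}}$ (and the fact that reversing the alternating signature gives, up to a first/last sign flip, the alternating signature again), the two families are mirror images of each other, so it suffices to understand $\BC_{\signature^{(n)}}(1,r)$ for $r \ge 2$. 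The recursion then reads, for $r \ge 2$,
\[
\BC_{\signature^{(n)}}(1,r) = \sum_{\substack{\ell' \ge r-1 \\ r' \ge 1}} \BC_{\signature^{(n-1)}}(\ell',r'),
\]
and the analogous formula with roles swapped for $\BC_{\signature^{(n)}}(\ell,1)$, $\ell \ge 2$. Feeding in the mirror symmetry, this becomes a single recursion on a one-parameter family of partial sums, which I expect to solve to something like $\BC_{\signature^{(n)}}(1,r) = \binom{2n-3}{n-1-?}$ — i.e.\ a binomial coefficient with the ballot-type shift dictated by $r$. The main computational step is to identify the right closed form so that the summation $\sum_{r'} \BC_{\signature^{(n-1)}}(1,r') + \sum_{\ell'} \BC_{\signature^{(n-1)}}(\ell',1)$ telescopes via the hockey-stick identity $\sum_{k \ge m}\binom{k}{j} = \binom{?}{?}$ (the usual $\sum_{i}\binom{i}{j}=\binom{\cdot+1}{j+1}$) to reproduce the next row; then $\BC_{\signature^{(n)}} = \sum_{\ell,r}\BC_{\signature^{(n)}}(\ell,r)$ is one more hockey-stick sum collapsing to the central binomial coefficient $\binom{2n-2}{n-1}$.

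An alternative, possibly cleaner route I would keep in reserve: by Proposition~\ref{prop:intersectionCambrianClasses}, a pair of twin $\signature$-Cambrian trees is the same as the intersection of a $\signature$-Cambrian class with a $(-\signature)$-Cambrian class; for the alternating signature $-\signature$ equals $\mirror{\signature}$ up to endpoint flips, which are irrelevant by Lemma~\ref{lem:switchSign}-type arguments, so one is intersecting a Cambrian congruence with its own mirror. One could then try a direct bijection between pairs of twin alternating-Cambrian trees and, say, lattice paths from $(0,0)$ to $(n-1,n-1)$ with unit steps (no constraint), which are counted by $\binom{2n-2}{n-1}$; the pattern-avoidance description of maximal elements in Remark~\ref{rem:patternsBaxterCambrian} restricted to the alternating signature should collapse to a particularly simple local condition making such a bijection transparent. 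I expect the \textbf{main obstacle} to be purely bookkeeping: correctly tracking the free-gap-type conventions and the off-by-one shifts in the $(\ne)$ branch of the recursion (which gap counts as "after $n-1$", the $\delta_{\ell=1}$ versus $\delta_{r=1}$ casework, and the exact indexing of the inner sums) so that the inductive hypothesis plugs in cleanly; the combinatorial identities themselves (hockey stick) are routine once the right closed form is in hand.
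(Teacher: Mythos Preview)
Your approach is essentially identical to the paper's: it proves by induction on~$n$, using only the $(\ne)$ branch of Proposition~\ref{prop:inductionBaxterCambrian}, that $\BC_{({+}{-})^{n/2}}(\ell,r) = \delta_{\ell=1}\,\delta_{r\ge 2}\binom{2n-2-r}{n-r} + \delta_{\ell\ge 2}\,\delta_{r=1}\binom{2n-2-\ell}{n-\ell}$, with the hockey-stick identity carrying the inductive step, and then sums over~$(\ell,r)$ to obtain~$\binom{2n-2}{n-1}$. The only missing ingredient in your outline is this explicit closed form (your guess $\binom{2n-3}{\,\cdot\,}$ has the top index off --- it must depend on~$r$), and the paper starts the induction at~$n=2$ rather than~$n=1$.
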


\begin{proof}
We prove by induction on~$n$ that the refined Baxter-Cambrian numbers are
\[
\BC_{({+}{-})^{\frac{n}{2}}}(\ell,r) = \delta_{\ell = 1} \cdot \delta_{r \ge 2} \cdot \binom{2n-2-r}{n-r} + \delta_{\ell \ge 2} \cdot \delta_{r = 1} \cdot \binom{2n-2-\ell}{n-\ell}.
\]
This is true for~$n = 2$ since~$\BC_{{+}{-}}(1,2) = 1$ (counting the permutation~$21$) and~$\BC_{{+}{-}}(2,1) = 1$ (counting the permutation~$12$). Assume now that it is true for some~$n \in \N$. Then Equation~$(\ne)$ of Proposition~\ref{prop:inductionBaxterCambrian} shows that
\begin{align*}
\BC_{({+}{-})^{\frac{n+1}{2}}}(\ell,r) 
& = \delta_{\ell = 1} \cdot \delta_{r \ge 2} \cdot\!\!\! \sum_{\ell' \ge r-1} \binom{2n-2-\ell'}{n-\ell'} + \delta_{\ell \ge 2} \cdot \delta_{r = 1} \cdot\!\!\!  \sum_{r' \ge \ell-1} \binom{2n-2-r'}{n-r'} \\
& = \delta_{\ell = 1} \cdot \delta_{r \ge 2} \cdot \binom{2n - r}{n + 1 - r} + \delta_{\ell \ge 2} \cdot \delta_{r = 1} \cdot \binom{2n - \ell}{n + 1 - \ell},
\end{align*}
since a sum of binomial coefficients along a diagonal~$\sum_{i = 0}^p \binom{q+i}{i}$ simplifies to the binomial coefficient~$\binom{q+p+1}{p}$ by multiple applications of Pascal's rule. Finally, we conclude observing that
\[
\BC_{({+}{-})^{\frac{n}{2}}} = \sum_{\ell,r \in [n]} \BC_{({+}{-})^{\frac{n}{2}}}(\ell,r) = 2 \sum_{u \ge 2} \binom{2n-2-u}{n-u} = 2 \binom{2n-3}{n-2} = \binom{2n-2}{n-1}.
\]
Remark~\ref{rem:BaxterGeneratingFunctions} provides an alternative analytic proof for this result.
\end{proof}

\begin{remark}[Properties of the generating tree~$\generatingTree_{({+}{-})^{\frac{n}{2}}}\Bax$] Observe that:
\begin{enumerate}[(i)]
\item A permutation at level~$\level$ with $k$ free gaps has~$k$ children, whose numbers of free gaps are~$3, 3, 4, 5, \dots, k+1$ respectively (compare to Lemma~\ref{lem:GeneratingTree}). This can already be observed on the generating tree~$\generatingTree_{{+}{-}{+}{-}}\Bax$ of \fref{fig:GeneratingTreeBaxterCambrian}.
\item For a permutation~$\tau$ at level~$\level$ with $k$ free gaps, there are precisely~$\binom{k + 2p -2}{p}$ permutations at level~$\level + p$ that have~$\tau$ as a subword, for any~$p \in \N$.
\item The number of permutations at level~$\level$ with~$k$ free gaps is~$2 \binom{2\level - 1 - k}{\level + 1 - k}$. Counting permutations at level~$\level$ and~$\level+1$ according to their number of free gaps gives
\[
\binom{2\level-2}{\level-1} = \sum_{k \ge 3} 2 \binom{2\level - 1 - k}{\level + 1 - k}
\qquad\text{and}\qquad
\binom{2\level}{\level} = \sum_{k \ge 3} 2k \binom{2\level - 1 - k}{\level + 1 - k}.
\]
\item Slight perturbations of the alternating signature yields interesting signatures for which we can give closed formulas for the Baxter-Cambrian number. For example, consider the signature~${+}{+}({+}{-})^{\frac{n}{2}-1}$ obtained from the alternating one by switching the second sign. Its Baxter-Cambrian number is given by a sum of three almost-central binomial coefficients:
\[
\BC_{{+}{+}({+}{-})^{\frac{n}{2}-1}} = 2\binom{2n - 6}{n - 4} + \binom{2n - 2}{n - 1}.
\]
\end{enumerate}
\end{remark}

\para{Constant signature}
We now consider the \defn{constant signature}~$(+)^n$. The number~$\BC_{({+})^n}$ is the classical \defn{Baxter number} (see \href{https://oeis.org/A001181}{\cite[A001181]{OEIS}}) defined by
\[
\BC_{(+)^n} = B_n = \binom{n+1}{1}^{-1} \binom{n+1}{2}^{-1} \sum_{k=1}^n \binom{n+1}{k-1} \binom{n+1}{k} \binom{n+1}{k+1}.
\]
These numbers have been extensively studied, see in particular~\cite{ChungGrahamHoggattKleiman, Mallows, DulucqGuibert1, DulucqGuibert2, YaoChenChengGraham, FelsnerFusyNoyOrden, BonichonBousquetMelouFusy, LawReading, Giraudo}. The Baxter number~$B_n$ counts several families:
\begin{itemize}
\item Baxter permutations of~$[n]$, \ie permutations avoiding the patterns~$b \dash da \dash c$ and~$c \dash ad \dash b$,
\item weak order maximal (resp.~minimal) permutations of Baxter congruence classes on~$\fS_n$, \ie permutations avoiding the patterns~$b \dash ad \dash c$ and~$c \dash ad \dash b$ (resp.~$b \dash da \dash c$ and~$c \dash da \dash b$),
\item pairs of twin binary trees on~$n$ nodes,
\item diagonal rectangulations of an~$n \times n$ grid,
\item plane bipolar orientations with~$n$ edges,
\item non-crossing triples of path with~$k-1$ north steps and~$n-k$ east steps, for all~$k \in [n]$,
\item etc.
\end{itemize}
Bijections between all these \defn{Baxter families} are discussed in~\cite{DulucqGuibert1, DulucqGuibert2, FelsnerFusyNoyOrden, BonichonBousquetMelouFusy}.

\begin{remark}[Two proofs of the summation formula]
\label{rem:BaxterGeneratingFunctions}
There are essentially two ways to obtain the above summation formula for Baxter numbers: it was first proved analytically in~\cite{ChungGrahamHoggattKleiman}, and then bijectively in~\cite{Viennot-Baxter, DulucqGuibert2, FelsnerFusyNoyOrden}. Let us shortly comment on these two techniques and discuss the limits of their extension to the Baxter-Cambrian setting.
\begin{enumerate}[(i)]
\item The \defn{bijective proofs} in~\cite{DulucqGuibert2, FelsnerFusyNoyOrden} transform pairs of binary trees to triples of non-crossing paths, and then use the Gessel-Viennot determinant lemma~\cite{GesselViennot} to get the summation formula. The middle path of these triples is given by the canopy of the twin binary trees, while the other two paths are given by the structure of the trees. We are not yet able to adapt this technique to provide summation formulas for all Baxter-Cambrian numbers.
\item The \defn{analytic proof} in~\cite{ChungGrahamHoggattKleiman} is based on Equation~$(=)$ of Proposition~\ref{prop:inductionBaxterCambrian} and can be partially adapted to arbitrary signatures as follows. Define the \defn{extension} of a signature~${\signature \in \pm^n}$ by a signature~$\update \in \pm^m$ to be the signature~$\signature \extension \update \in \pm^{n+m}$ such that~$(\signature \extension \update)_i = \signature_i$ for~$i \in [n]$ and~$(\signature \extension \update)_{n+j} = \update_j \cdot (\signature \extension \update)_{n+j-1}$ for~$j \in m$. For example, ${+}{+}{-} \extension {+}{-}{-}{+} = {+}{+}{-}{-}{+}{-}{-}$. Then for any~$\signature \in \pm^n$ and~$\update \in \pm^m$, we have
\[
\BC_{\signature\extension\update} = \sum_{\ell, r \ge 1} X_\update(\ell, r) \, \BC_\signature(\ell,r),
\]
where the coefficients~$X_\update(\ell,r)$ are obtained inductively from the formulas of Proposition~\ref{prop:inductionBaxterCambrian}. Namely, for any~$\ell, r \ge 1$, we have~${X_\varnothing(\ell,r) = 1}$ and
\begin{align*}
X_{(+\update)}(\ell,r) & = \sum\limits_{1 \le \ell' \le \ell} X_{\update}(\ell',r+1) \; + \; \sum\limits_{1 \le r' \le r} X_{\update}(\ell+1,r'), \\
X_{(-\update)}(\ell,r) & = \sum\limits_{2 \le \ell' \le r+1} X_{\update}(\ell',1) \; + \; \sum\limits_{2 \le r' \le \ell+1} X_{\update}(1,r').
\end{align*}
These equations translate on the generating function~$\fX_\update(u,v) \eqdef \sum_{\ell, r \ge 1} X_\update(\ell,r) u^{\ell-1} v^{r-1}$ to the formulas~$\fX_\varnothing(u,v) = \frac{1}{(1-u)(1-v)}$ and
\begin{align*}
\fX_{(+\update)}(u,v) & = \frac{\fX_\update(u,v) - \fX_\update(u,0)}{(1-u) v} + \frac{\fX_\update(u,v)-\fX_\update(0,v)}{u (1-v)}, \\
\fX_{(-\update)}(u,v) & = \frac{\fX_\update(v,0) - \fX_\update(0,0)}{(1-u) (1-v) v} + \frac{\fX_\update(0,u) - \fX_\update(0,0)}{u (1-u) (1-v)}.
\end{align*}
Note that the~$u/v$-symmetry of~$\fX_\update(u,v)$ is reflected in a symmetry on these inductive equations. We can thus write this generating function~$\fX_\update(u,v)$ as
\[
\fX_\update(u,v) = \sum_{\substack{i,j \ge 0 \\ k \in [|\update|+1]}} Y_\update^{i, j, k} \frac{(-u)^i \, (-v)^j}{(1-u)^{|\update|+2-k} (1-v)^k},
\]
where the non-vanishing coefficients~$Y_\update^{i,j,k}$ are computed inductively by~$Y_\varnothing^{0,0,1} = 1$~and
\begin{align*}
Y_{(+\update)}^{i,j,k} & = \binom{k}{j+1} Y_\update^{i,0,k} - Y_\update^{i,j+1,k} + \binom{|\update|+3-k}{i+1} Y_\update^{0,j,k-1} - Y_\update^{i+1,j,k-1}, \\
Y_{(-\update)}^{i,j,k} & = \binom{k-1}{j} \! \bigg[ \! \binom{|\update|+2-k}{i+1} Y_\update^{0,0,k} - Y_\update^{i+1,0,k} \bigg] + \binom{|\update|+2-k}{i} \! \bigg[ \! \binom{k-1}{j+1} Y_\update^{0,0,k-1} - Y_\update^{0,j+1,k-1} \bigg].
\end{align*}
We used that~$Y_\update^{i,j,k} = Y_\update^{j,i,|\update|+2-k}$ to simplify the second equation. Note that this decomposition of~$\fX_\update$ is not unique and the inductive equations on~$Y_\update^{i,j,k}$ follow from a particular choice of such a decomposition.

At that stage, F.~Chung, R.~Graham, V.~Hoggatt, and M.~Kleiman~\cite{ChungGrahamHoggattKleiman}, guess and check that the first equation is always satisfied by
\[
Y_{(+)^{n-1}}^{i,j,k} = \frac{\binom{n+1}{k} \binom{n+1}{k+i+1} \binom{n+1}{k-j-1} \big[ \! \binom{k+i-2}{i} \binom{n+j-k-1}{j} - \binom{k+i-2}{i-1} \binom{n+j-k-1}{j-1} \! \big]}{\binom{n+1}{1} \binom{n+1}{2}} 
\]
from which they derive immediately that
\begin{align*}
\BC_{(+)^n} & = \BC_{+\extension(+)^{n-1}} = \sum_{\ell, r \ge 1} X_{(+)^{n-1}}(\ell, r) \, \BC_+(\ell,r) = X_{(+)^{n-1}}(1,1) = \fX_{(+)^{n-1}}(0,0) \\
& = \sum_{k \in [n]} Y_{(+)^{n-1}}^{0,0,k} = \binom{n+1}{1}^{-1} \binom{n+1}{2}^{-1} \sum_{k=1}^n \binom{n+1}{k-1} \binom{n+1}{k} \binom{n+1}{k+1}.
\end{align*}

Unfortunately, we have not been able to guess a closed formula for the coefficients~$Y_{(-)^n}^{i,j,k}$. Note that it would be sufficient to understand the coefficients~$Y_{(-)^n}^{i,0,k}$ for which we observed empirically that
\[
Y_{(-)^n}^{0,0,k} = C_n, \quad Y_{(-)^n}^{i,0,0} = Y_{(-)^n}^{i,0,1} = \binom{2n}{n-1-i}\binom{n-1+i}{i}\bigg/n \quad\text{and}\quad Y_{(-)^n}^{i,0,n+1-i} = \sum_{p = i}^{n-1} C_{n-1-p}C_p.
\]
See~\href{https://oeis.org/A000108}{\cite[A000108]{OEIS}}, \href{https://oeis.org/A234950}{\cite[A234950]{OEIS}}, and \href{https://oeis.org/A028364}{\cite[A028364]{OEIS}}.
This would provide an alternative proof of Proposition~\ref{prop:BaxterAlternating} as we would obtain that
\[
\BC_{({+}{-})^{\frac{n}{2}}} = \BC_{+\extension(-)^{n-1}} = \fX_{(-)^{n-1}}(0,0) = \sum_{k \in [n]} Y_{(-)^{n-1}}^{0,0,k} = n C_{n-1} = \binom{2n-2}{n-1}.
\]
However, even if we were not able to work out the coefficients~$Y_{(-)^n}^{i,0,k}$, we still obtain another proof Proposition~\ref{prop:BaxterAlternating} by checking directly on the inductive equations on~$\fX_\update(u,v)$ that
\[
\fX_{(-)^n}(u,v) = \sum_{k \in [n]} \binom{2n-1-k}{n-1} \bigg[ \frac{1}{(1-u) (1-v)^{k+1}} + \frac{1}{(1-u)^{k+1} (1-v)} \bigg],
\]
from which we obtain
\begin{align*}
\BC_{({+}{-})^{\frac{n}{2}}} & = \BC_{+\extension(-)^{n-1}} = \fX_{(-)^{n-1}}(0,0) = \sum_{k \in [n-1]} 2\binom{2n-3-k}{n-2} \\
& = 2 \sum_{k = 2}^{n} \binom{2n-2-k}{n-2} = 2\binom{2n-2}{n-1} - 2\binom{2n-3}{n-2} = \binom{2n-2}{n-1}.
\end{align*}
For the prior to last equality, choose $n-1$ positions among~$2n-2$ and group according to the first position~$k$.
\end{enumerate}
\end{remark}

\para{Arbitrary signatures}
We now come back to an arbitrary signature~$\signature$. We were not able to derive summation formulas for arbitrary signatures using the techniques presented in Remark~\ref{rem:BaxterGeneratingFunctions} above. However, we use here the inductive formulas of Proposition~\ref{prop:inductionBaxterCambrian} to bound the Baxter-Cambrian number~$\BC_\signature$ for an arbitrary signature~$\signature$.

For this, we consider the matrix~$\BCMat_\signature \eqdef \big( \BC_\signature(\ell, r) \big)_{\ell, r \in [n]}$. The inductive formulas of Proposition~\ref{prop:inductionBaxterCambrian} provide an efficient inductive algorithm to compute this matrix~$\BCMat_\signature$ and thus the~$\signature$-Baxter-Cambrian number~${\BC_\signature = \sum_{\ell, r \in [n]} \BC_\signature(\ell,r)}$. Namely, if~$\signature$ is obtained by adding a sign at the end of~$\signature'$, then each entry of~$\BCMat_\signature$ is the sum of entries of~$\BCMat_{\signature'}$ in a region depending on whether~$\signature_n = \signature_{n-1}$. These regions are sketched in \fref{fig:rulesMatrixComputation} and examples of such computations appear in \fref{fig:examplesComputationsMatrices}.

\begin{figure}[h]
  \medskip
  \centerline{
  	\begin{overpic}[scale=1]{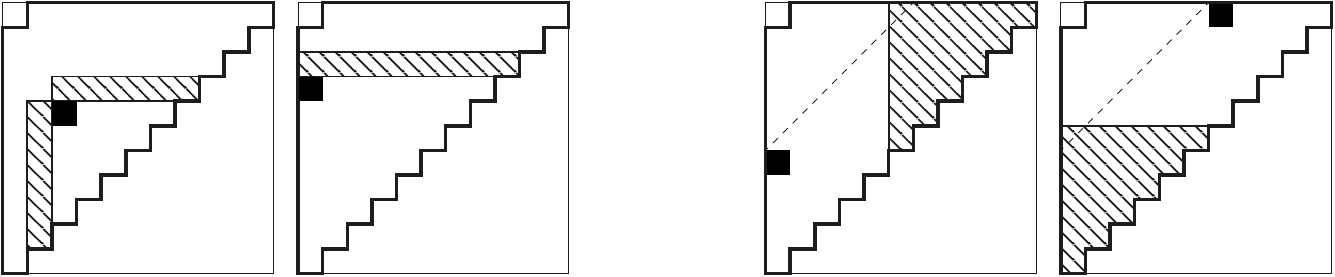}
		\put(15,-2.5){$\signature_n = \signature_{n-1}$}
		\put(72,-2.5){$\signature_n = -\signature_{n-1}$}
	\end{overpic}
  }
  \medskip
  \caption{Inductive computation of~$\BCMat_\signature$: the black entry of~$\BCMat_\signature$ is the sum of the entries of~$\BCMat_{\signature'}$ over the shaded region. Entries outside the upper triangular region always vanish. When~$\signature_n = -\signature_{n-1}$, the only non-vanishing entries of~$\BCMat_\signature$ are in the first row or in the first column.}
  \label{fig:rulesMatrixComputation}
\end{figure}

\begin{figure}[h]
  \centerline{\includegraphics{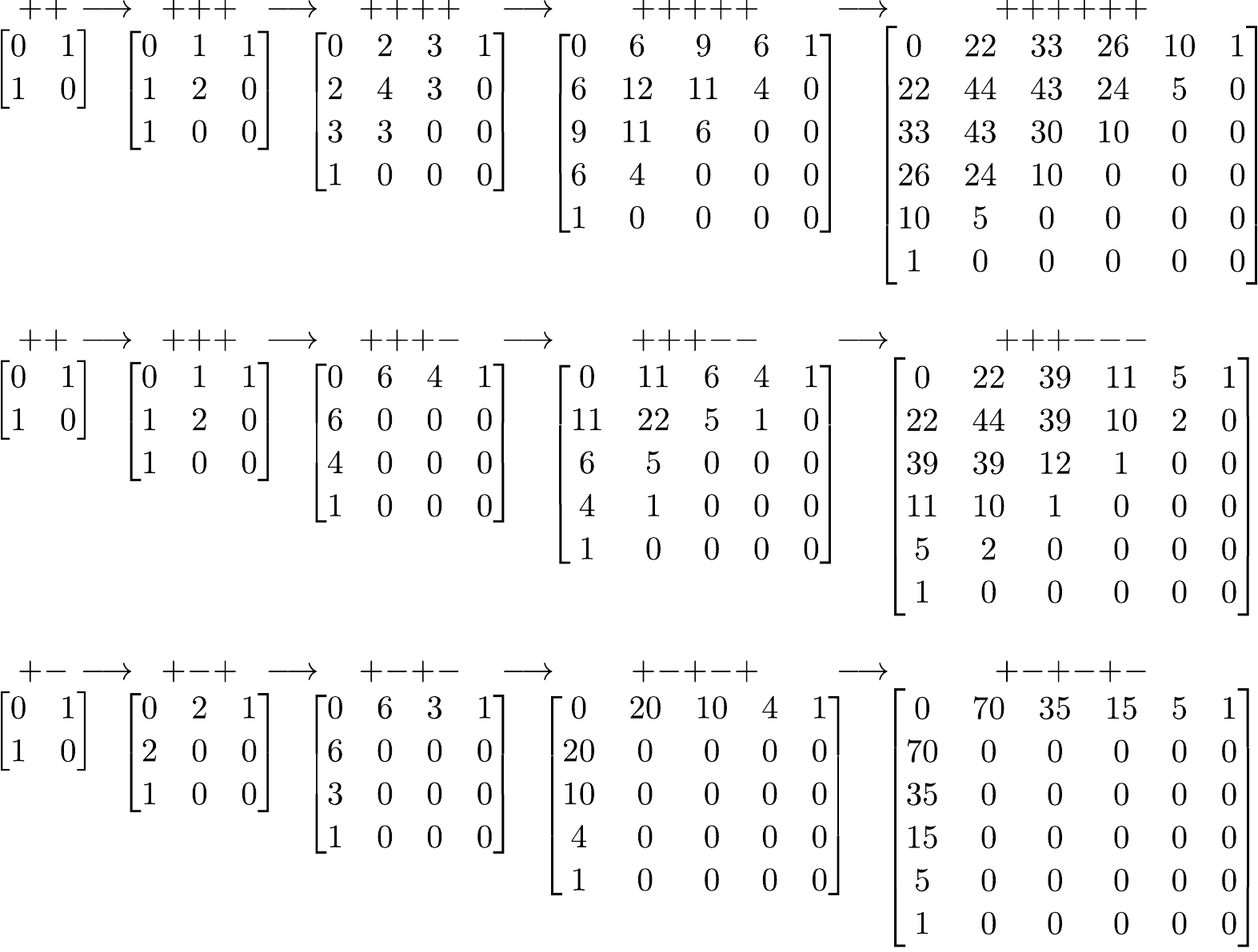}}
  \vspace{-.3cm}
  \caption{Inductive computation of~$\BCMat_\signature$, for~$\signature = ({+})^6$, $({+})^3({-})^3$ and~$({+}{-})^3$.}
  \label{fig:examplesComputationsMatrices}
\end{figure}

We observe that the transformations of \fref{fig:rulesMatrixComputation} are symmetric with respect to the diagonal of the matrix. Since~$\BCMat_{\signature_1\signature_2} = \begin{bmatrix} 0 & 1 \\ 1 & 0 \end{bmatrix}$ is symmetric, and~$\BCMat_\signature$ is obtained from~$\BCMat_{\signature_1\signature_2}$ by successive applications of these symmetric transformations, we obtain that~$\BCMat_\signature$ is always symmetric. Although this fact may seem natural to the reader, it is not at all immediate as there is an asymmetry on the three forced free gaps: for example gap~$0$ is always free.

\newcommand{\SE}{^\textsc{se}}
For a matrix~$M \eqdef (m_{i,j})$, we consider the matrix~$M\SE \eqdef \big( m\SE_{i,j} \big)$ where
\[
m\SE_{i,j} \eqdef \sum_{p \ge i, \; q \ge j} m_{p,q}
\]
is the sum of all entries located south-east of~$(i,j)$ (in matrix notation). Observe that~$(\BCMat_\signature)\SE_{1,1}$ is the sum of all entries of~$\BCMat_\signature$, and thus equals the $\signature$-Baxter-Cambrian number~$\BC_\signature$. Using \fref{fig:rulesMatrixComputation}, we obtain a similar rule to compute the entries of~$\BC_\signature\SE$ as sums of entries of~$\BC_{\signature'}\SE$ when~$\signature$ is obtained by adding a sign at the end of~$\signature'$. This rule is presented in \fref{fig:rulesSESMatrixComputation}.
 
\begin{figure}[h]
  \medskip
  \centerline{
  	\begin{overpic}[scale=1]{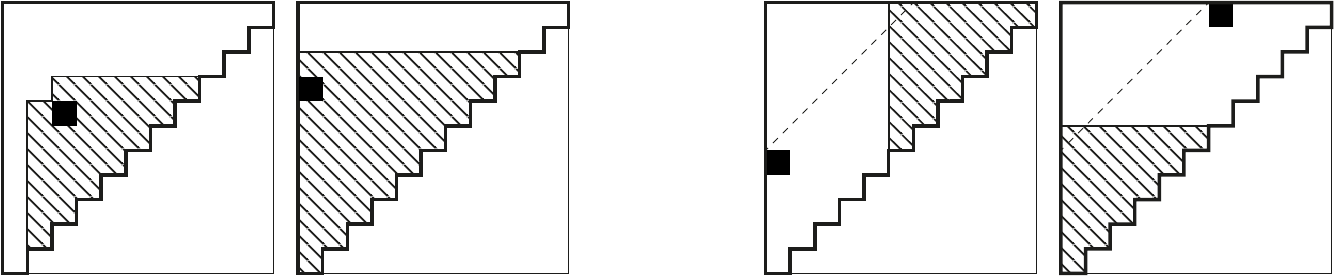}
		\put(15,-2.5){$\signature_n = \signature_{n-1}$}
		\put(72,-2.5){$\signature_n = -\signature_{n-1}$}
	\end{overpic}
  }
  \medskip
  \caption{Inductive computation of~$\BCMat_\signature\SE$: the black entry of~$\BCMat_\signature\SE$ is the sum of the entries of~$\BCMat_{\signature'}\SE$ over the shaded region. Entries outside the triangular shape always vanish. When~$\signature_n = -\signature_{n-1}$, the only non-vanishing entries of~$\BCMat_\signature\SE$ are in the first row or in the first column.}
  \label{fig:rulesSESMatrixComputation}
\end{figure}

This matrix interpretation of the formulas of Proposition~\ref{prop:inductionBaxterCambrian} provides us with tools to bound the Baxter-Cambrian numbers. For a signature~$\signature$, we denote by~$\switch(\signature)$ the set of gaps where~$\signature$ switches sign.

\begin{proposition}
\label{prop:BCSE}
For any two signatures~$\signature, \tilde\signature \in  \pm^n$, if~$\switch(\signature) \subset \switch(\tilde\signature)$ then~$\BC_\signature > \BC_{\tilde\signature}$.
\end{proposition}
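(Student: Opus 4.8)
The plan is to read the inequality off the inductive computation of the matrices $\BCMat_\signature$ via the $\SE$-transform of \fref{fig:rulesSESMatrixComputation}. First I would reduce to the case of a single extra sign change. Since $\BC_\signature = \BC_{\switchSign_0(\signature)} = \BC_{\switchSign_n(\signature)} = \BC_{-\signature}$, the number $\BC_\signature$ depends only on the set of \emph{interior} sign changes of $\signature$, those at the gaps $\{2,\dots,n-2\}$; realizing each subset of $\{2,\dots,n-2\}$ by some signature and using that $\BC_\signature$ depends only on that set, I would choose a chain $\switch(\signature) = S_0 \subsetneq S_1 \subsetneq \dots \subsetneq S_m = \switch(\tilde\signature)$ adding one interior gap at a time and argue by transitivity. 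So it suffices to treat $\switch(\tilde\signature) = \switch(\signature) \sqcup \{i\}$ with $2 \le i \le n-2$, choosing the representatives so that $\signature$ and $\tilde\signature$ agree on positions $1,\dots,i$ while $\tilde\signature_k = -\signature_k$ for all $k > i$.

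With this normalization the two inductive computations of $\BCMat_\signature$ and $\BCMat_{\tilde\signature}$ share their first $i$ steps, producing a common matrix $M \eqdef \BCMat_{\signature|_{[i]}}$; at step $i+1$ the signature $\signature$ invokes rule $(=)$ of Proposition~\ref{prop:inductionBaxterCambrian} and $\tilde\signature$ invokes rule $(\ne)$, giving matrices $M_{(=)}$ and $M_{(\ne)}$; and because $\tilde\signature_k = -\signature_k$ for $k > i$, the \emph{same} rule is used for both signatures at every step $i+2,\dots,n$. As $i \le n-2$ this common tail of steps is nonempty. Thus $\BC_\signature$ (resp.\ $\BC_{\tilde\signature}$) is the $(1,1)$-entry of the $\SE$-matrix obtained by applying this common tail of transforms starting from $M_{(=)}\SE$ (resp.\ $M_{(\ne)}\SE$).

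Next I would exploit two features of the $\SE$-transforms: (a) they are monotone, each entry of the output being a nonnegative-integer combination of entries of the input; and (b) for \emph{either} rule, the $(1,1)$-entry of the output — which is the total of the output matrix — equals the sum of the first row and the first column of the input $\SE$-matrix (a one-line computation from the formulas of Proposition~\ref{prop:inductionBaxterCambrian}). Together these say that the property ``the $\SE$-matrix of $\signature$ dominates that of $\tilde\signature$ entrywise, and strictly at the $(1,1)$-entry'' is preserved by every transform. So the whole statement follows once I establish the key estimate
\[
M_{(=)}\SE \;\ge\; M_{(\ne)}\SE \quad\text{entrywise, with a strict inequality somewhere in the first row,}
\]
after which applying the nonempty tail of transforms yields $\BC_\signature > \BC_{\tilde\signature}$.

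To prove the key estimate I note that rule $(\ne)$ outputs a matrix supported on its first row and column, so $M_{(\ne)}\SE$ vanishes off the first row and column and the inequality is automatic there; in the first row (hence, by symmetry of the $\BCMat$-matrices and of their $\SE$-transforms, in the first column) the formulas of Proposition~\ref{prop:inductionBaxterCambrian} express $\big(M_{(=)}\SE\big)_{1,k}$ and $\big(M_{(\ne)}\SE\big)_{1,k}$ as explicit nonnegative-weighted sums of the entries of $M$, and I would rewrite the difference as $\sum_{p,q} M_{p,q}\,\beta_k(p,q)$ for an integer weight $\beta_k$ satisfying $\beta_k(p,q)+\beta_k(q,p)\ge 0$ and $\beta_k(p,p)\ge 0$; the symmetry $M_{p,q}=M_{q,p}$ then makes the sum nonnegative. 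For $k=2$ this difference simplifies to $\sum_{p,\,q\ge 2}(q-1)M_{p,q}$, which is strictly positive since $M=\BCMat_{\signature|_{[i]}}$ is nonzero with vanishing $(1,1)$-entry and hence (being symmetric) has a nonzero entry in some column $\ge 2$. The main obstacle is exactly this last estimate: the weights $\beta_k(p,q)$ are genuinely negative when $p$ is large and $q$ small, so there is no term-by-term comparison, and one must use the symmetry of $\BCMat_\signature$ to pair each ``bad'' cell $(p,q)$ with its mirror $(q,p)$ and check that the combined weight is nonnegative — everything else (the reduction to one switch, the prefix/tail splitting, the monotonicity of the $\SE$-transforms) is routine bookkeeping.
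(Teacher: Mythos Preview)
Your route is essentially the paper's: compare the $\SE$-matrices entrywise using the rules of \fref{fig:rulesSESMatrixComputation} and the symmetry of $\BCMat_\signature$. The paper inducts directly on~$n$ --- if the last step is $(=)$ for $\signature$ and $(\ne)$ for $\tilde\signature$ then $\succcurlyeq$ upgrades to $\succ$, and if both signatures use the same rule then $\succ$ persists --- whereas you first reduce to a single additional interior switch and then prove the special case $M_{(=)}\SE \succcurlyeq M_{(\ne)}\SE$ explicitly from a common input matrix. Your weight argument via $\beta_k(p,q)+\beta_k(q,p)\ge 0$ and your observation~(b) make precise what the paper leaves to the figure; in fact (b), combined with the nonempty tail of common steps, is exactly what is needed to turn ``$\succ$ somewhere'' into strict inequality at the $(1,1)$ entry, a point the paper's proof does not spell out.

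One caveat: your reduction tacitly assumes that $\switch(\tilde\signature)\smallsetminus\switch(\signature)$ contains an interior gap $2\le i\le n-2$. When the extra switches lie only at gaps~$1$ or~$n-1$, the very invariances $\BC_\signature=\BC_{\switchSign_0(\signature)}=\BC_{\switchSign_n(\signature)}$ you invoke force $\BC_\signature=\BC_{\tilde\signature}$, and the strict inequality of the proposition fails (e.g.\ $\BC_{++++}=\BC_{+++-}=22$). This is an artifact of the statement rather than of your method, and the paper's proof shares the same blind spot.
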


\begin{proof}
For two matrices~$M \eqdef (m_{i,j})$ and~$\tilde M \eqdef (\tilde m_{i,j})$, we write~$M \succcurlyeq \tilde M$ when~$m_{i,j} \ge \tilde m_{i,j}$ for all indices~$i,j$ (entrywise comparison), and we write~$M \succ \tilde M$ when~$M \succcurlyeq \tilde M$ and~$M \ne \tilde M$. Consider four signatures~$\signature, \tilde\signature \in \pm^n$ and~$\signature', \tilde\signature' \in \pm^{n-1}$ such that~$\signature'$ (resp.~$\tilde\signature'$) is obtained by deleting the last sign of~$\signature$ (resp.~$\tilde\signature$). From \fref{fig:rulesSESMatrixComputation}, and using the fact that~$\BCMat_\signature$ is symmetric, we obtain that:
\begin{itemize}
\item if~$\signature_n = \signature_{n-1}$ while~$\tilde\signature_n = -\tilde\signature_{n-1}$, then~$\BCMat_{\signature'}\SE \succcurlyeq \BCMat_{\tilde\signature'}\SE$ implies $\BCMat_{\signature}\SE \succ \BCMat_{\tilde\signature}\SE$.
\item if either both~$\signature_n = \signature_{n-1}$ and~$\tilde\signature_n = \tilde\signature_{n-1}$, or both~$\signature_n = -\signature_{n-1}$ and~$\tilde\signature_n = -\tilde\signature_{n-1}$, then~$\BCMat_{\signature'}\SE \succ \BCMat_{\tilde\signature'}\SE$ implies $\BCMat_{\signature}\SE \succ \BCMat_{\tilde\signature}\SE$.
\end{itemize}
By repeated applications of these observations, we therefore obtain that~$\switch(\signature) \subset \switch(\tilde\signature)$ implies~$\BCMat_{\signature}\SE \succ \BCMat_{\tilde\signature}\SE$, and thus~$\BC_\signature > \BC_{\tilde\signature}$.
\end{proof}

\begin{corollary}
\label{coro:boundsBaxterCambrianNumbers}
Among all signatures of~$\pm^n$, the constant signature maximizes the Baxter-Cambrian number, while the alternating signature minimizes it: for all~$\signature \in \pm^n$,
\[
\binom{2n-2}{n-1} = \BC_{({+}{-})^{\frac{n}{2}}} \le \BC_\signature \le \BC_{(+)^n} = \binom{n+1}{1}^{-1} \binom{n+1}{2}^{-1} \sum_{k=1}^n \binom{n+1}{k-1} \binom{n+1}{k} \binom{n+1}{k-1}.
\]
\end{corollary}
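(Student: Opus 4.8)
The plan is to read off both inequalities from the monotonicity statement of Proposition~\ref{prop:BCSE} together with the two explicit evaluations already available. First I would observe that the constant signature has $\switch\big((+)^n\big) = \varnothing$, which is contained in $\switch(\signature)$ for every signature~$\signature \in \pm^n$, while the alternating signature has $\switch\big(({+}{-})^{\frac{n}{2}}\big) = \{1, \dots, n-1\}$, which contains $\switch(\signature)$ for every~$\signature \in \pm^n$. Thus every signature~$\signature$ satisfies
\[
\switch\big((+)^n\big) \; \subseteq \; \switch(\signature) \; \subseteq \; \switch\big(({+}{-})^{\frac{n}{2}}\big).
\]

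Next I would apply Proposition~\ref{prop:BCSE} to each inclusion. If the first inclusion is proper, it gives $\BC_{(+)^n} > \BC_\signature$; if it is an equality, then $\switch(\signature) = \varnothing$ forces~$\signature$ to be a constant signature, whence $\BC_\signature = \BC_{(+)^n}$ by the invariance $\BC_\signature = \BC_{-\signature}$ recorded in Section~\ref{subsec:BaxterCambrianNumbers}. In both cases $\BC_\signature \le \BC_{(+)^n}$. The symmetric argument applied to the second inclusion yields $\BC_{({+}{-})^{\frac{n}{2}}} \le \BC_\signature$, with equality exactly when $\switch(\signature) = \{1, \dots, n-1\}$, i.e.\ when~$\signature$ is alternating. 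Finally I would substitute the closed form $\BC_{({+}{-})^{\frac{n}{2}}} = \binom{2n-2}{n-1}$ of Proposition~\ref{prop:BaxterAlternating} and the summation formula defining $B_n = \BC_{(+)^n}$ to obtain the displayed chain of inequalities.

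There is essentially no remaining obstacle: the whole content has been packaged into Propositions~\ref{prop:BaxterAlternating} and~\ref{prop:BCSE}. The only point deserving a line of care is the treatment of equality in the two inclusions above, so that the non-strict inequalities of the statement hold for \emph{all}~$\signature$ (and the extreme values are attained precisely at the constant and the alternating signatures, using the symmetries of $\BC_\signature$ to cover $(-)^n$ and $({-}{+})^{\frac{n}{2}}$ as well).
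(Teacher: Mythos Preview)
Your proof is correct and follows exactly the intended approach: the corollary is an immediate consequence of Proposition~\ref{prop:BCSE}, since the constant signature has empty switch set while the alternating signature has full switch set~$\{1,\dots,n-1\}$. Your careful treatment of the equality cases (via the symmetry~$\BC_\signature = \BC_{-\signature}$) is a nice touch that the paper leaves implicit.
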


\begin{remark}
The proof of Proposition~\ref{prop:BCSE} may seem unnecessarily intricate. Observe however that the situation is rather subtle:
\begin{itemize}
\item If~$\switch(\signature) \not\subseteq \switch(\tilde\signature)$, we may have~$\BC_\signature < \BC_{\tilde\signature}$ even if~$|\switch(\signature)| <  |\switch(\tilde\signature)|$. The smallest example is given by~$\BC_{{+}{+}{+}{-}{+}{+}{-}{-}{-}} = 18376 < 18544 = \BC_{{+}{+}{-}{+}{+}{+}{-}{+}{+}}$.
\item We may have~$\BCMat_\signature\SE \succcurlyeq \BCMat_{\tilde\signature}\SE$ but~$\BCMat_\signature \not\succcurlyeq \BCMat_{\tilde\signature}$. See the third column of \fref{fig:examplesComputationsMatrices}.
\end{itemize}
\end{remark}

%%%%%%%%%%%%%%%%%%%%

\subsection{Geometric realizations}
\label{subsec:geometricRealizationsBaxter}

Using similar tools as in Section~\ref{subsec:geometricRealizations} and following~\cite{LawReading}, we present geometric realizations for pairs of twin Cambrian trees, for the Baxter-Cambrian lattice, and for the Baxter-Cambrian $\PSymbol\Bax$-symbol. For a partial order~$\prec$ on~$[n]$, we still define its \defn{incidence cone}~$\Cone(\prec)$ and its \defn{braid cone}~$\Cone\polar(\prec)$ as
\[
\Cone(\prec) \eqdef \cone\set{e_i-e_j}{\text{for all } i \prec j}
\quad\text{and}\quad
\Cone\polar(\prec) \eqdef \set{\b{x} \in \HH}{x_i \le x_j \text{ for all } i \prec j}.
\]
The cones~$\Cone(\graphTwin)$ for all pairs~$[\tree_\circ, \tree_\bullet]$ of twin $\signature$-Cambrian trees form (together with all their faces) a complete polyhedral fan that we call the \defn{$\signature$-Baxter-Cambrian fan}. It is the common refinement of the $\signature$- and~$(-\signature)$-Cambrian fans. It is therefore the normal fan of the Minkowski sum of the associahedra~$\Asso$ and~$\Asso[-\signature]$. We call this polytope \defn{Baxter-Cambrian associahedron} and denote it by~$\BaxAsso$. Note that~$\BaxAsso$ is clearly centrally symmetric (since~$\Asso = -\Asso[-\signature]$) but not necessarily simple. Examples are illustrated on \fref{fig:MinkowskiSums}. The graph of~$\BaxAsso$, oriented in the direction ${(n, \dots, 1)-(1, \dots, n) = \sum_{i \in [n]} (n+1-2i) \, e_i}$, is the Hasse diagram of the $\signature$-Baxter-Cambrian lattice. Finally, the Baxter-Cambrian $\PSymbol\Bax$-symbol can be read geometrically as
\[
[\tree_\circ, \tree_\bullet] = \surjectionPermAssoBax(\tau) \iff \Cone(\graphTwin) \subseteq \Cone(\tau) \iff \Cone\polar(\graphTwin) \supseteq \Cone\polar(\tau).
\]

\begin{figure}[h]
  \vspace*{.5cm}
  \begin{overpic}[scale=.35]{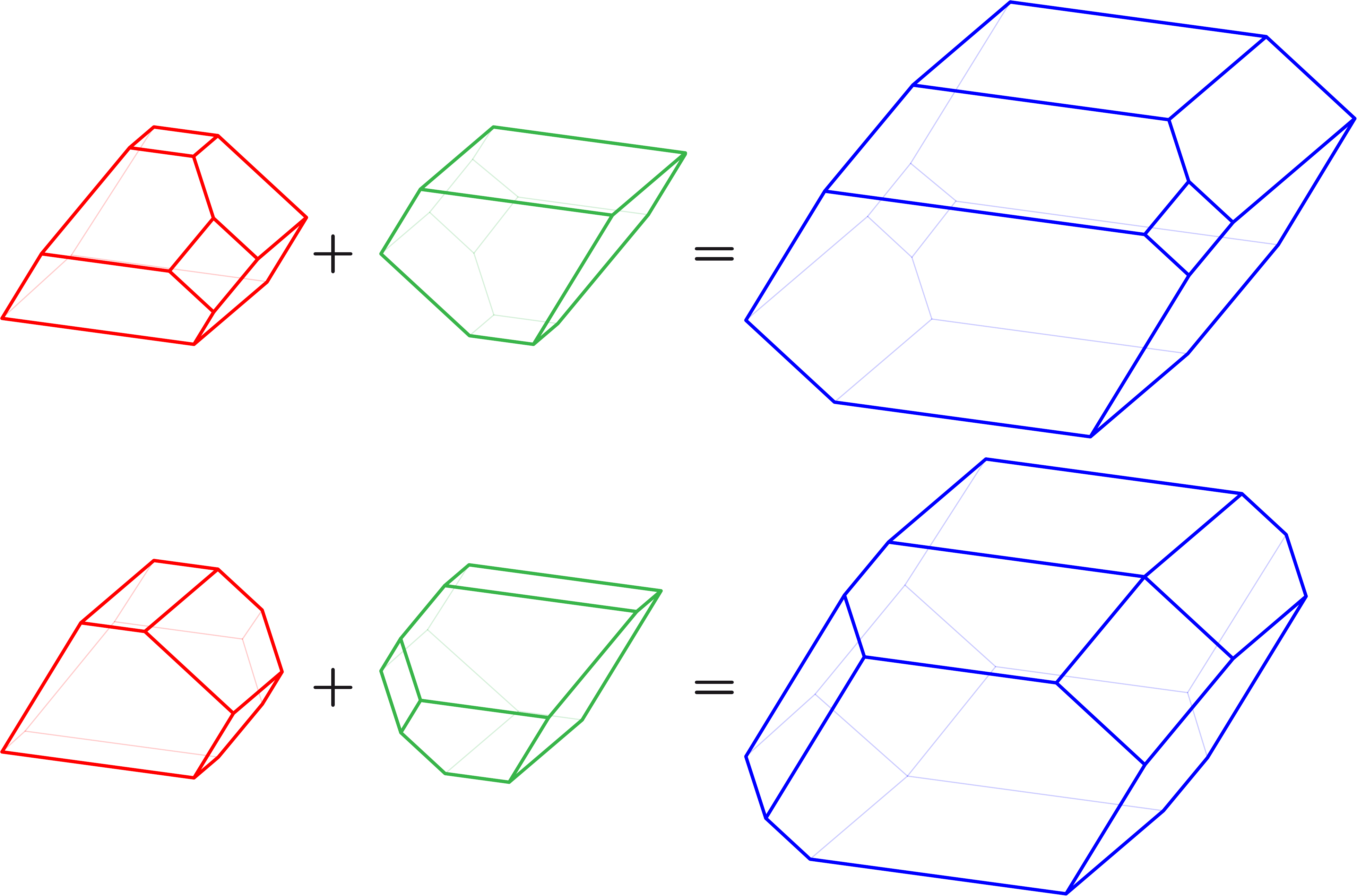}
	\put( 6,59){$\red\Asso[{-}{+}{-}{-}]$}
	\put(32,59){$\green\Asso[{+}{-}{+}{+}]$}
	\put( 6,5){$\red\Asso[{+}{-}{-}{-}]$}
	\put(32,5){$\green\Asso[{-}{+}{+}{+}]$}
	\put(62,68){$\darkblue\Asso[{-}{+}{-}{-}] + \Asso[{+}{-}{+}{+}]$}
	\put(62,-3){$\darkblue\Asso[{+}{-}{-}{-}] + \Asso[{-}{+}{+}{+}]$}
  \end{overpic}
  \vspace{.1cm}
  \caption{The Minkowski sum (blue, right) of the associahedra~$\Asso$ (red, left) and $\Asso[-\signature]$ (green, middle) gives a realization of the $\signature$-Baxter-Cambrian lattice. Illustrated with the signatures $\signature = {-}{+}{-}{-}$ (top) and ${\signature = {+}{-}{-}{-}}$~(bottom) whose $\signature$-Baxter-Cambrian lattice are represented in Figures~\ref{fig:twoOppositeCambrians}, \ref{fig:BaxterCambrianLattices}, and~\ref{fig:latticesBis}.}
  \label{fig:MinkowskiSums}
\end{figure}

%%%%%%%%%%%%%%%%%%%%%%%%%%%%%%%%%%%%%%

\section{Baxter-Cambrian Hopf Algebra}
\label{sec:BaxterCambrianAlgebra}

In this section, we define the Baxter-Cambrian Hopf algebra~$\BaxCamb$, extending simultaneously the Cambrian Hopf algebra and the Baxter Hopf algebra studied by S.~Law and N.~Reading~\cite{LawReading} and S.~Giraudo~\cite{Giraudo}. We present again the construction of~$\BaxCamb$ as a subalgebra of~$\FQSym_\pm$ and that of its dual~$\BaxCamb^*$ as a quotient of~$\FQSym_\pm^*$.

%%%%%%%%%%%%%%%%%

\subsection{Subalgebra of $\FQSym_\pm$}

We denote by~$\BaxCamb$ the vector subspace of~$\FQSym_\pm$ generated by the elements
\[
\PBax_{[\tree_\circ, \tree_\bullet]} \eqdef \sum_{\substack{\tau \in \fS_\pm \\ \surjectionPermAssoBax(\tau) = [\tree_\circ, \tree_\bullet]}} \F_\tau = \sum_{\tau \in \linearExtensions(\unionOp{\tree_\circ\;}{\,\tree_\bullet})} \F_\tau,
\]
for all pairs of twin Cambrian trees~$[\tree_\circ, \tree_\bullet]$. For example, for the pair of twin Cambrian trees of \fref{fig:twinCambrianTrees}\,(left), we have
\[
\PBax_{\left[ \raisebox{-.45cm}{\Tex}, \raisebox{-.45cm}{\TexTwin} \right]} = 
\F_{\down{21}\up{7}\down{5}\up{3}\down{4}\up{6}} + \F_{\down{2}\up{7}\down{15}\up{3}\down{4}\up{6}} + \F_{\down{2}\up{7}\down{51}\up{3}\down{4}\up{6}} + \F_{\up{7}\down{215}\up{3}\down{4}\up{6}} + \F_{\up{7}\down{251}\up{3}\down{4}\up{6}} + \F_{\up{7}\down{521}\up{3}\down{4}\up{6}}.
\]

\begin{theorem}
\label{thm:baxSubalgebra}
$\BaxCamb$ is a Hopf subalgebra of~$\FQSym_\pm$.
\end{theorem}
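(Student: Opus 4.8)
The plan is to mimic exactly the proof of Theorem~\ref{thm:cambSubalgebra}, using the fact (Proposition~\ref{prop:intersectionCambrianClasses}) that the Baxter-Cambrian congruence~$\equiv\Bax$ is, signature by signature, the intersection of the $\signature$-Cambrian congruence with the $(-\signature)$-Cambrian congruence read on the reverse tree. Concretely, I would show directly that~$\equiv\Bax$ is compatible with the signed shifted shuffle product and with the deconcatenation coproduct of~$\FQSym_\pm$; by the same reasoning as in Theorem~\ref{thm:cambSubalgebra} this gives that the subspace~$\BaxCamb$ spanned by the elements~$\PBax_{[\tree_\circ,\tree_\bullet]} = \sum_{\tau \in \linearExtensions(\graphTwin)} \F_\tau$ is a Hopf subalgebra.

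For the product, I would take two signatures~$\signature \in \pm^n$, $\signature' \in \pm^{n'}$, two pairs of twin Cambrian trees~$[\tree_\circ,\tree_\bullet]$ and~$[\tree'_\circ,\tree'_\bullet]$, and two permutations $\sigma \equiv\Bax_{\signature\signature'} \tilde\sigma \in \fS^{\signature\signature'}$; it suffices to treat one application of a generating rewriting rule, say $\sigma = UbVadWcX$ and $\tilde\sigma = UbVdaWcX$ with $a<b,\,c<d$ and $\signature_b = \signature_c$ (the two other rules are handled symmetrically, and the general case follows by transitivity). Assuming $\F_\sigma$ appears in~$\PBax_{[\tree_\circ,\tree_\bullet]} \product \PBax_{[\tree'_\circ,\tree'_\bullet]}$, pick $\tau \in \linearExtensions(\graphTwin)$ and $\tau' \in \linearExtensions(\unionOp{\tree'_\circ}{\tree'_\bullet})$ with $\sigma \in \tau \shiftedShuffle \tau'$. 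As in the proof of Theorem~\ref{thm:cambSubalgebra}, one distinguishes according to whether the swapped letters~$a,d$ both lie in the range of~$\tau$, both in the shifted range of~$\tau'$, or are split across the two factors. In the split case~$\tilde\sigma$ is still a shuffle of the very same~$\tau,\tau'$; in the non-split case one uses that~$\linearExtensions(\graphTwin)$ is a $\signature$-Baxter-Cambrian class, hence closed under the relevant rewriting (this is precisely Proposition~\ref{prop:intersectionCambrianClasses} combined with the fact that Cambrian classes are rewriting-closed), so the swapped permutation~$\tilde\tau$ (or~$\tilde\tau'$) still lies in the appropriate class and $\tilde\sigma \in \tilde\tau \shiftedShuffle \tau'$. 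Hence~$\F_{\tilde\sigma}$ also appears, and the product of two~$\PBax$-classes is a sum of~$\PBax$-classes, so~$\BaxCamb$ is a subalgebra.

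For the coproduct, the argument likewise parallels that of Theorem~\ref{thm:cambSubalgebra}: given a pair of twin $\eta$-Cambrian trees, and Baxter-Cambrian congruent $\tau \equiv\Bax_\signature \tilde\tau$, $\tau' \equiv\Bax_{\signature'} \tilde\tau'$, one assumes $\F_\tau \otimes \F_{\tau'}$ appears in~$\coproduct \PBax$, i.e.\ some $\sigma \in (\tau \convolution \tau') \cap \linearExtensions(\graphTwin)$ exists; writing~$\sigma$ in the convolution form, applying the corresponding rewriting rule at the matching positions produces~$\tilde\sigma$ which is $\eta$-Baxter-Cambrian congruent to~$\sigma$, hence still a linear extension of~$\graphTwin$, and lies in~$\tilde\tau \convolution \tilde\tau'$. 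So~$\F_{\tilde\tau}\otimes\F_{\tilde\tau'}$ appears too, giving that~$\coproduct$ of a~$\PBax$-class is a sum of tensors of~$\PBax$-classes, i.e.\ $\BaxCamb$ is a subcoalgebra. Combining the two, $\BaxCamb$ is a Hopf subalgebra of~$\FQSym_\pm$, which is the claim.

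The main obstacle I anticipate is purely bookkeeping: the Baxter-Cambrian congruence has three families of rewriting rules (not one), each involving four letters $a<b$, $c<d$ rather than three, so the case analysis ``which of the moved letters land in which factor of the shuffle/convolution'' has more branches, and one must carefully track which generating rule of~$\equiv\Bax_\signature$ (resp.\ of~$\equiv\Bax_{\signature'}$, resp.\ of the concatenated signature) applies after the letters have been distributed. A cleaner alternative, which I would mention, is to invoke Proposition~\ref{prop:intersectionCambrianClasses} directly: since each Baxter-Cambrian class is an intersection of a $\signature$-Cambrian class and a $(-\signature)$-Cambrian class, and since both the shuffle product and the deconcatenation coproduct of two intersections of Cambrian classes decompose into intersections of Cambrian classes (using Theorem~\ref{thm:cambSubalgebra} applied to both~$\Camb$ and its image under the signature-reversal/mirror symmetry of~$\FQSym_\pm$, and the compatibility $\BaxCorresp(\tau) = [\CambCorresp(\tau),\CambCorresp(\mirror\tau)]$), one gets the result without re-running the rewriting analysis. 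Either way the theorem is ``automatic'' from the $\Camb$ case together with the intersection description, and I would note this parenthetically just as the paper does for Theorem~\ref{thm:cambSubalgebra} with $\varphi$-good monoids.
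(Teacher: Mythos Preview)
Your proposal is correct and follows essentially the same approach as the paper: the paper's proof simply states that the argument is ``very similar to that of Theorem~\ref{thm:cambSubalgebra}'', noting that exchanges in a permutation of the product come either from exchanges within the linear extensions of~$\graphTwin$ and~$\unionOp{\tree'_\circ}{\tree'_\bullet}$ or from the shuffle, and that the coproduct is treated similarly. Your outline spells this out explicitly, correctly handles the split/non-split cases, and even anticipates the extra bookkeeping from the three four-letter rewriting families; the alternative route via Proposition~\ref{prop:intersectionCambrianClasses} you mention is a legitimate shortcut in the same spirit.
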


\begin{proof}
The proof of this theorem is left to the reader as it is very similar to that of Theorem~\ref{thm:cambSubalgebra}. Exchanges in a permutation~$\tau$ of the product~$\PBax_{[\tree_\circ, \tree_\bullet]} \product \PBax_{[\tree'_\circ, \tree'_\bullet]}$ are due to exchanges either in the linear extensions of~$\unionOp{\tree_\circ}{\tree_\bullet}$ and $\unionOp{\tree'_\circ}{\tree'_\bullet}$ or in the shuffle product of these linear extensions. The coproduct is treated similarly.
\end{proof}

\enlargethispage{-.7cm}
As for the Cambrian algebra, we can describe combinatorially the product and coproduct of $\PBax$-basis elements of~$\BaxCamb$ in terms of operations on pairs of twin Cambrian trees.

\para{Product}
The product in the Baxter-Cambrian algebra~$\BaxCamb$ can be described in terms of intervals in Baxter-Cambrian lattices.

\begin{proposition}
For any two pairs~$[\tree_\circ, \tree_\bullet]$ and~$[\tree_\circ', \tree_\bullet']$ of twin Cambrian trees, the product~$\PBax_{[\tree_\circ, \tree_\bullet]} \product \PBax_{[\tree_\circ', \tree_\bullet']}$ is given by 
\[
\PBax_{[\tree_\circ, \tree_\bullet]} \product \PBax_{[\tree_\circ', \tree_\bullet']} = \sum_{[\tree[S]_\circ, \tree[S]_\bullet]} \PBax_{[\tree[S]_\circ, \tree[S]_\bullet]},
\]
where~$[\tree[S]_\circ, \tree[S]_\bullet]$ runs over the interval between~$\left[ \raisebox{-6pt}{$\tree_\circ$} \nearrow \raisebox{4pt}{$\bar\tree'_\circ$},\raisebox{4pt}{$\tree_\bullet$} \nwarrow \raisebox{-6pt}{$\bar \tree_\bullet'$} \right]$ and~$\left[ \raisebox{4pt}{$\tree_\circ$} \nwarrow \raisebox{-6pt}{$\bar\tree_\circ'$}, \raisebox{-6pt}{$\tree_\bullet$} \nearrow \raisebox{4pt}{$\bar\tree_\bullet'$} \right]$ in the~$\signature(\tree_\circ)\signature(\tree_\circ')$-Baxter-Cambrian lattice.
\end{proposition}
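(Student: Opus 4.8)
The plan is to mimic the proof of Proposition~\ref{prop:product}, carrying the second component of each pair along via the mirror map. Write~$[\minimalLinearExtension, \maximalLinearExtension]$ for the $\signature$-Baxter-Cambrian class indexed by~$[\tree_\circ, \tree_\bullet]$ and~$[\minimalLinearExtension', \maximalLinearExtension']$ for the $\signature(\tree_\circ')$-Baxter-Cambrian class indexed by~$[\tree_\circ', \tree_\bullet']$; these are intervals of the weak order since the Baxter-Cambrian congruence is a lattice congruence (Proposition on the Baxter-Cambrian poset being a lattice quotient). As the shuffle product of two intervals of the weak order is again an interval of the weak order, $\PBax_{[\tree_\circ, \tree_\bullet]} \product \PBax_{[\tree_\circ', \tree_\bullet']} = \sum_\sigma \F_\sigma$ with~$\sigma$ ranging over the weak order interval~$[\minimalLinearExtension\bar{\minimalLinearExtension'}, \bar{\maximalLinearExtension'}\maximalLinearExtension]$. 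By Theorem~\ref{thm:baxSubalgebra} this interval is a union of whole Baxter-Cambrian classes, and since~$\surjectionPermAssoBax$ is a lattice quotient map it carries this interval onto the interval of the~$\signature(\tree_\circ)\signature(\tree_\circ')$-Baxter-Cambrian lattice between~$\surjectionPermAssoBax(\minimalLinearExtension\bar{\minimalLinearExtension'})$ and~$\surjectionPermAssoBax(\bar{\maximalLinearExtension'}\maximalLinearExtension)$. It thus remains only to identify these two pairs of twin trees.

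For the bottom endpoint, $\surjectionPermAssoBax(\minimalLinearExtension\bar{\minimalLinearExtension'}) = \big[ \surjectionPermAsso(\minimalLinearExtension\bar{\minimalLinearExtension'}), \surjectionPermAsso(\mirror{\minimalLinearExtension\bar{\minimalLinearExtension'}}) \big]$, and each component reduces to Proposition~\ref{prop:product}. Note that~$\minimalLinearExtension \in \linearExtensions(\tree_\circ)$ and~$\minimalLinearExtension' \in \linearExtensions(\tree_\circ')$, while~$\mirror{\minimalLinearExtension} \in \linearExtensions(\tree_\bullet)$ and~$\mirror{\minimalLinearExtension'} \in \linearExtensions(\tree_\bullet')$ by the definition of twin trees (since~$\surjectionPermAssoBax(\minimalLinearExtension) = [\tree_\circ,\tree_\bullet]$ forces~$\surjectionPermAsso(\mirror{\minimalLinearExtension}) = \tree_\bullet$). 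The Cambrian congruence is compatible with shifted concatenation — a rewriting step inside either factor of~$u\bar v$ is again a rewriting step in~$u\bar v$, as the letters involved keep their relative order and signs — so~$\surjectionPermAsso(u\bar v)$ depends only on~$\surjectionPermAsso(u)$ and~$\surjectionPermAsso(v)$. Combining this with the identity~$\mirror{u\bar v} = \bar{\mirror v}\,\mirror u$ and the two identities~$\surjectionPermAsso(\minimalLinearExtension_{\tree}\bar{\minimalLinearExtension_{\tree'}}) = \raisebox{-6pt}{$\tree$}\nearrow\raisebox{4pt}{$\bar\tree'$}$ and~$\surjectionPermAsso(\bar{\maximalLinearExtension_{\tree'}}\maximalLinearExtension_{\tree}) = \raisebox{4pt}{$\tree$}\nwarrow\raisebox{-6pt}{$\bar\tree'$}$ established in the proof of Proposition~\ref{prop:product}, one gets~$\surjectionPermAsso(\minimalLinearExtension\bar{\minimalLinearExtension'}) = \raisebox{-6pt}{$\tree_\circ$}\nearrow\raisebox{4pt}{$\bar\tree'_\circ$}$ and~$\surjectionPermAsso(\mirror{\minimalLinearExtension\bar{\minimalLinearExtension'}}) = \surjectionPermAsso(\bar{\mirror{\minimalLinearExtension'}}\,\mirror{\minimalLinearExtension}) = \raisebox{4pt}{$\tree_\bullet$}\nwarrow\raisebox{-6pt}{$\bar\tree'_\bullet$}$, which is precisely the claimed bottom endpoint. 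The top endpoint~$\surjectionPermAssoBax(\bar{\maximalLinearExtension'}\maximalLinearExtension)$ is treated symmetrically, exchanging the roles of~$\minimalLinearExtension$ with~$\maximalLinearExtension$ and of~$\nearrow$ with~$\nwarrow$, yielding~$\big[ \raisebox{4pt}{$\tree_\circ$}\nwarrow\raisebox{-6pt}{$\bar\tree'_\circ$}, \raisebox{-6pt}{$\tree_\bullet$}\nearrow\raisebox{4pt}{$\bar\tree'_\bullet$} \big]$.

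The step that requires the most care is the first paragraph: one must genuinely verify that the shuffle interval~$[\minimalLinearExtension\bar{\minimalLinearExtension'}, \bar{\maximalLinearExtension'}\maximalLinearExtension]$ is a union of entire Baxter-Cambrian classes and that the corresponding set of classes is an interval of the Baxter-Cambrian lattice with exactly the two endpoints above. This follows by combining Theorem~\ref{thm:baxSubalgebra} (the shuffle product of two Baxter-Cambrian classes is a sum of Baxter-Cambrian classes) with the fact that the restriction of~$\surjectionPermAssoBax$ to the weak order is a lattice quotient, hence maps the weak order interval onto the interval between the classes of its minimum and maximum. Everything else amounts to bookkeeping with the grafting operations~$\nearrow$, $\nwarrow$ and the mirror map~$\mirror{\phantom{\tau}}$.
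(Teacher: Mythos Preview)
Your proof is correct and follows exactly the approach the paper takes: it uses that Baxter-Cambrian classes are weak order intervals, that the shifted shuffle of two intervals is an interval, and then identifies the two endpoints via the grafting operations, just as in Proposition~\ref{prop:product}. The paper's own proof is a one-line reference to Proposition~\ref{prop:product}; you have simply written out the endpoint identification in full, including the mirror bookkeeping~$\mirror{u\bar v} = \bar{\mirror v}\,\mirror u$, which is a welcome clarification but not a different argument.
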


\begin{proof}
The result relies on the fact that the $\signature$-Baxter-Cambrian classes are intervals of the weak order on~$\fS^\signature$, and that the shuffle product of two intervals of the weak order is again an interval of the weak order. See the similar proof of Proposition~\ref{prop:product}.
\end{proof}

For example, we can compute the product
\begin{gather*}
\hspace*{-1.7cm}
\PBax_{\left[ \raisebox{-.25cm}{\includegraphics{exmTreeYAg}}, \raisebox{-.25cm}{\includegraphics{exmTreeAYd}} \right]} \product \PBax_{\left[ \raisebox{-.3cm}{\includegraphics{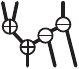}}, \raisebox{-.3cm}{\includegraphics{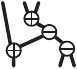}} \right]} = \F_{\up{2}\down{1}} \product \big( \F_{\up{2}\down{34}\up{1}} + \F_{\up{2}\down{3}\up{1}\down{4}} \big) \hspace*{\textwidth}
\\[-.2cm]
\hspace*{-2cm}
\begin{array}{@{$\quad{} = {}$}c@{+}c@{+}c@{+}c@{+}c@{}c@{}c}
\begin{pmatrix} \quad \F_{\up{2}\down{1}\up{4}\down{5}\up{3}\down{6}} + \F_{\up{24}\down{15}\up{3}\down{6}} \\ + \F_{\up{24}\down{51}\up{3}\down{6}} + \F_{\up{2}\down{1}\up{4}\down{56}\up{3}} \\ + \F_{\up{24}\down{156}\up{3}} + \F_{\up{24}\down{516}\up{3}} \\ + \F_{\up{24}\down{561}\up{3}} \end{pmatrix}
& \begin{pmatrix} \F_{\up{24}\down{5}\up{3}\down{16}} + \F_{\up{24}\down{5}\up{3}\down{61}} \\ + \F_{\up{24}\down{56}\up{3}\down{1}} \end{pmatrix}
& \begin{pmatrix} \quad \F_{\up{42}\down{15}\up{3}\down{6}} + \F_{\up{42}\down{51}\up{3}\down{6}} \\ + \F_{\up{42}\down{156}\up{3}} + \F_{\up{42}\down{516}\up{3}} \\ + \F_{\up{42}\down{561}\up{3}} + \F_{\up{4}\down{5}\up{2}\down{1}\up{3}\down{6}} \\ + \F_{\up{4}\down{5}\up{2}\down{16}\up{3}} + \F_{\up{4}\down{5}\up{2}\down{61}\up{3}} \\ + \F_{\up{4}\down{56}\up{2}\down{1}\up{3}} \end{pmatrix}
& \begin{pmatrix} \quad \F_{\up{42}\down{5}\up{3}\down{16}} + \F_{\up{42}\down{5}\up{3}\down{61}} \\ + \F_{\up{42}\down{56}\up{3}\down{1}} + \F_{\up{4}\down{5}\up{23}\down{16}} \\ + \F_{\up{4}\down{5}\up{23}\down{61}} + \F_{\up{4}\down{5}\up{2}\down{6}\up{3}\down{1}} \\ + \F_{\up{4}\down{56}\up{23}\down{1}} \end{pmatrix}
& \begin{pmatrix} \quad \F_{\up{4}\down{5}\up{32}\down{16}} + \F_{\up{4}\down{5}\up{32}\down{61}} \\ + \F_{\up{4}\down{5}\up{3}\down{6}\up{2}\down{1}} + \F_{\up{4}\down{56}\up{32}\down{1}} \end{pmatrix}
\\[.8cm]
\PCamb_{\left[ \raisebox{-.35cm}{\includegraphics{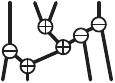}}, \raisebox{-.35cm}{\includegraphics{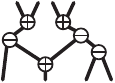}} \right]} & \PCamb_{\left[ \raisebox{-.35cm}{\includegraphics{exmProductTwin1}}, \raisebox{-.35cm}{\includegraphics{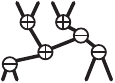}} \right]} & \PCamb_{\left[ \raisebox{-.35cm}{\includegraphics{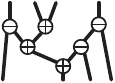}}, \raisebox{-.35cm}{\includegraphics{exmProductTwin2}} \right]} & \PCamb_{\left[ \raisebox{-.35cm}{\includegraphics{exmProductTwin4}}, \raisebox{-.35cm}{\includegraphics{exmProductTwin3}} \right]} & \PCamb_{\left[ \raisebox{-.35cm}{\includegraphics{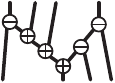}}, \raisebox{-.35cm}{\includegraphics{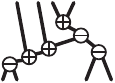}} \right]} & .
\end{array}
\end{gather*}

\begin{remark}[Multiplicative bases]
\label{rem:multiplicativeBasesBaxter}
Similar to the multiplicative bases defined in Section~\ref{sec:multiplicativeBases}, the bases~$\ECamb^{[\tree_\circ, \tree_\bullet]}$ and~$\HCamb^{[\tree_\circ, \tree_\bullet]}$ defined by
\[
\ECamb^{[\tree_\circ, \tree_\bullet]} \eqdef \sum_{[\tree_\circ, \tree_\bullet] \le [\tree'_\circ, \tree'_\bullet]} \PCamb_{[\tree'_\circ, \tree'_\bullet]}
\qquad\text{and}\qquad
\HCamb^{[\tree_\circ, \tree_\bullet]} \eqdef \sum_{[\tree'_\circ, \tree'_\bullet] \le [\tree_\circ, \tree_\bullet]} \PCamb_{[\tree'_\circ, \tree'_\bullet]}
\]
are multiplicative since
\[
\ECamb^{[\tree_\circ, \tree_\bullet]} \product \ECamb^{[\tree'_\circ, \tree'_\bullet]} = \ECamb^{\left[ \raisebox{-5pt}{\scriptsize$\tree_\circ$}\nearrow \raisebox{4pt}{\scriptsize$\bar \tree'_\circ$}, \raisebox{4pt}{\scriptsize$\tree_\bullet$}\nwarrow \raisebox{-5pt}{\scriptsize$\bar \tree'_\bullet$} \right]}
\qquad\text{and}\qquad
\HCamb^{[\tree_\circ, \tree_\bullet]} \product \HCamb^{[\tree'_\circ, \tree'_\bullet]} = \HCamb^{\left[ \raisebox{4pt}{\scriptsize$\tree_\circ$}\nwarrow \raisebox{-5pt}{\scriptsize$\bar \tree'_\circ$}, \raisebox{-5pt}{\scriptsize$\tree_\bullet$}\nearrow \raisebox{4pt}{\scriptsize$\bar \tree'_\bullet$} \right]}.
\]
The $\ECamb$-indecomposable elements are precisely the pairs~$[\tree_\circ, \tree_\bullet]$ for which all linear extensions of~$\graphTwin$ are indecomposable. In particular, $[\tree_\circ, \tree_\bullet]$ is $\ECamb$-indecomposable as soon as~$\tree_\circ$ is \mbox{$\ECamb$-inde}\-composable or~$\tree_\bullet$ is $\HCamb$-indecomposable. This condition is however not necessary. For example $\surjectionPermAssoBax(\down{3142})$ is $\ECamb$-indecomposable while~$\surjectionPermAsso(\down{3142}) = \surjectionPermAsso(\down{1342})$ is $\ECamb$-decomposable and~${\surjectionPermAsso(\down{2413}) = \surjectionPermAsso(\down{4213})}$ is $\HCamb$-decomposable. The enumerative and structural properties studied in Section~\ref{sec:multiplicativeBases} do not hold anymore for the set of $\ECamb$-indecomposable pairs of twin Cambrian trees: they form an ideal of the Baxter-Cambrian lattice, but this ideal is not principal as in Proposition~\ref{prop:upperIdeal}, and they are not counted by simple formulas as in Proposition~\ref{prop:numberIndecomposables}. Let us however mention that 
\begin{itemize}
\item the numbers of $\ECamb$-indecomposable elements with constant signature~$(-)^n$ are given by $1$, $1$, $3$, $11$, $47$, $221$, $\dots$ See~\href{https://oeis.org/A217216}{\cite[A217216]{OEIS}}.
\item the numbers of $\ECamb$-indecomposable elements with constant signature~$({+}{-})^{n/2}$ are given by~$1$, $1$, $3$, $9$, $29$, $97$, $333$, $1165$, $4135$, $\dots$ These numbers are the coefficients of the Taylor series of~$\frac{1}{x + \sqrt{1-4x}}$. See~\href{https://oeis.org/A081696}{\cite[A081696]{OEIS}} for references and details.
\end{itemize}
\end{remark}

\para{Coproduct}
A \defn{cut} of a pair of twin Cambrian trees~$[\tree[S]_\circ, \tree[S]_\bullet]$ is a pair~$\gamma = [\gamma_\circ, \gamma_\bullet]$ where~$\gamma_\circ$ is a cut of~$\tree[S]_\circ$ and~$\gamma_\bullet$ is a cut of~$\tree[S]_\bullet$ such that the labels of~$\tree[S]_\circ$ below~$\gamma_\circ$ coincide with the labels of~$\tree[S]_\bullet$ above~$\gamma_\bullet$. Equivalently, it can be seen as a lower set of~$\graphTwin$. An example is illustrated in \fref{fig:exampleCoproductTwin}.

\begin{figure}[h]
  \centerline{\includegraphics{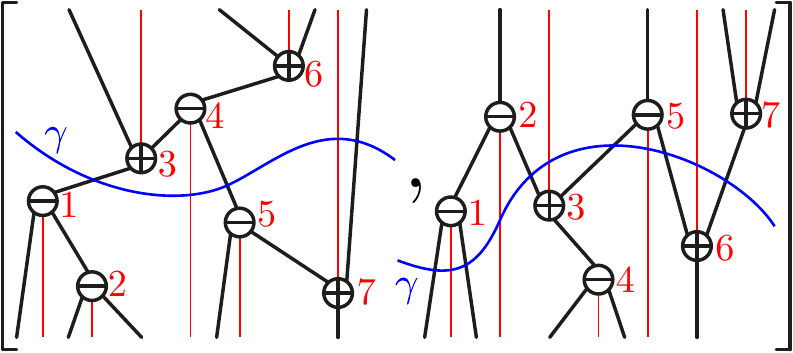}}
  \caption{A cut~$\gamma$ of a pair of twin Cambrian trees.}
  \label{fig:exampleCoproductTwin}
\end{figure}

We denote by~$AB([\tree[S]_\circ, \tree[S]_\bullet], [\gamma_\circ, \gamma_\bullet])$ the set of pairs~$[A_\circ, B_\bullet]$, where~$A_\circ$ appears in the product~$\prod_{\tree \in A(\tree[S]_\circ)} \PCamb_{\tree}$ while~$B_\bullet$ appears in the product~$\prod_{\tree \in B(\tree[S]_\circ)} \PCamb_{\tree}$, and~$A_\circ$ and~$B_\bullet$ are twin Cambrian trees. We define~$BA([\tree[S]_\circ, \tree[S]_\bullet], [\gamma_\circ, \gamma_\bullet])$ similarly exchanging the role of~$A$ and~$B$. We obtain the following description of the coproduct in the Baxter-Cambrian algebra~$\BaxCamb$.

\begin{proposition}
For any pair of twin Cambrian trees~$[\tree[S]_\circ, \tree[S]_\bullet]$, the coproduct~$\coproduct \PBax_{[\tree[S]_\circ, \tree[S]_\bullet]}$ is given~by
\[
\coproduct \PBax_{[\tree[S]_\circ, \tree[S]_\bullet]} = \sum_{\gamma} \bigg( \sum_{[B_\circ, A_\bullet]} \PBax_{[B_\circ, A_\bullet]} \bigg) \otimes \bigg( \sum_{[A_\circ, B_\bullet]} \PBax_{[A_\circ, B_\bullet]} \bigg),
\]
where~$\gamma$ runs over all cuts of~$[\tree[S]_\circ, \tree[S]_\bullet]$, $[B_\circ, A_\bullet]$ runs over~$BA([\tree[S]_\circ, \tree[S]_\bullet], [\gamma_\circ, \gamma_\bullet])$ and $[A_\circ, B_\bullet]$ runs over~$AB([\tree[S]_\circ, \tree[S]_\bullet], [\gamma_\circ, \gamma_\bullet])$.
\end{proposition}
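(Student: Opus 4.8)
The plan is to follow the proof of Proposition~\ref{prop:coproduct}, adapting it to the twin setting just as Theorem~\ref{thm:baxSubalgebra} adapts Theorem~\ref{thm:cambSubalgebra}. Abbreviate~$\graphG := \unionOp{\tree[S]_\circ}{\tree[S]_\bullet}$, so that~$\PBax_{[\tree[S]_\circ, \tree[S]_\bullet]} = \sum_{\sigma \in \linearExtensions(\graphG)} \F_\sigma$. Expanding the deconcatenation coproduct of~$\FQSym_\pm$,
\[
\coproduct \PBax_{[\tree[S]_\circ, \tree[S]_\bullet]} = \sum_{\sigma \in \linearExtensions(\graphG)} \; \sum_{\sigma \in \tau \convolution \tau'} \F_\tau \otimes \F_{\tau'}.
\]
As recalled in Section~\ref{subsec:products}, choosing a decomposition~$\sigma \in \tau \convolution \tau'$ with~$|\tau| = k$ amounts to choosing~$k \in \{0, \dots, n\}$, and then~$\tau$ (resp.~$\tau'$) is the standardization of the word formed by the~$k$ first (resp.~$n-k$ last) entries of~$\sigma$.

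First I would reorganize the sum according to the set~$V \subseteq [n]$ of values occupying the first~$k$ positions of~$\sigma$. Since~$\sigma \in \linearExtensions(\graphG)$, the set~$V$ must be a lower set of~$\graphG$, and conversely every lower set of~$\graphG$ arises in this way. By the observation made right after the definition of a cut, the lower sets of~$\graphG$ are exactly the cuts~$\gamma = [\gamma_\circ, \gamma_\bullet]$ of~$[\tree[S]_\circ, \tree[S]_\bullet]$: here~$\gamma_\circ$ is the cut of~$\tree[S]_\circ$ for which~$B(\tree[S]_\circ, \gamma_\circ)$ carries the labels in~$V$, and~$\gamma_\bullet$ is the cut of~$\tree[S]_\bullet$ for which~$A(\tree[S]_\bullet, \gamma_\bullet)$ carries the labels in~$V$. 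So the outer sum becomes a sum over all cuts~$\gamma$.

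Next, for a fixed cut~$\gamma$, I would describe the linear extensions~$\sigma$ of~$\graphG$ whose first~$k$ values form exactly~$V$: these are precisely the concatenations of a linear extension of the restriction~$\graphG|_V$ with a shift of a linear extension of~$\graphG|_{[n] \ssm V}$. Since a cut restricts each Cambrian tree to a forest of Cambrian trees and~$\graphG$ is acyclic, one reads off that~$\graphG|_V = \unionOp{B(\tree[S]_\circ, \gamma_\circ)}{A(\tree[S]_\bullet, \gamma_\bullet)}$ and~$\graphG|_{[n] \ssm V} = \unionOp{A(\tree[S]_\circ, \gamma_\circ)}{B(\tree[S]_\bullet, \gamma_\bullet)}$, both being pairs of ``twin forests''. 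Hence the contribution of~$\gamma$ is
\[
\bigg( \sum_{\pi \in \linearExtensions(\graphG|_V)} \F_{\mathrm{std}(\pi)} \bigg) \otimes \bigg( \sum_{\rho \in \linearExtensions(\graphG|_{[n] \ssm V})} \F_{\mathrm{std}(\rho)} \bigg).
\]
It then remains to identify the left factor with~$\sum_{[B_\circ, A_\bullet]} \PBax_{[B_\circ, A_\bullet]}$ over~$BA([\tree[S]_\circ, \tree[S]_\bullet], [\gamma_\circ, \gamma_\bullet])$, the right factor being symmetric. For~$\pi \in \linearExtensions(\graphG|_V)$, set~$[B_\circ, A_\bullet] := \surjectionPermAssoBax(\mathrm{std}(\pi))$. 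Because~$\pi$ is a linear extension of the forest~$B(\tree[S]_\circ, \gamma_\circ)$, its insertion tree~$B_\circ = \surjectionPermAsso(\mathrm{std}(\pi))$ refines this forest, \ie occurs in~$\prod_{\tree \in B(\tree[S]_\circ, \gamma_\circ)} \PCamb_\tree$ (the product of the~$\PCamb$ over the components of a Cambrian forest being the sum of the~$\PCamb_\tree$ over the Cambrian trees~$\tree$ refining it, as in the proof of Proposition~\ref{prop:coproduct}); likewise~$\mirror{\pi}$ is a linear extension of~$A(\tree[S]_\bullet, \gamma_\bullet)$, so~$A_\bullet$ refines it. Thus~$[B_\circ, A_\bullet] \in BA([\tree[S]_\circ, \tree[S]_\bullet], [\gamma_\circ, \gamma_\bullet])$, and conversely each such pair occurs since~$\linearExtensions(\unionOp{B_\circ}{A_\bullet}) \subseteq \linearExtensions(\graphG|_V)$ is nonempty; as the fibers~$(\surjectionPermAssoBax)^{-1}([B_\circ, A_\bullet]) = \linearExtensions(\unionOp{B_\circ}{A_\bullet})$ partition~$\linearExtensions(\graphG|_V)$, the left factor equals~$\sum_{[B_\circ, A_\bullet]} \PBax_{[B_\circ, A_\bullet]}$. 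Summing over all cuts~$\gamma$ gives the formula.

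The step I expect to be the main obstacle is this last identification, namely imposing the~$\circ$- and the~$\bullet$-forest refinement conditions simultaneously on the linear extensions of~$\graphG|_V$. The clean way to handle it is via Proposition~\ref{prop:intersectionCambrianClasses}: the Baxter-Cambrian class of~$[B_\circ, A_\bullet]$ is the intersection of the Cambrian class of~$B_\circ$ with the opposite Cambrian class of the reverse of~$A_\bullet$, so that~$\linearExtensions(\graphG|_V)$ splits as the disjoint union of these intersections as~$[B_\circ, A_\bullet]$ ranges over~$BA(\cdot)$. The remaining ingredients — that a cut of a Cambrian tree restricts its components to Cambrian trees, and that cutting a twin pair yields twin forests whose union is~$\graphG|_V$ — are immediate from the acyclicity of~$\graphG$ and the definition of a cut.
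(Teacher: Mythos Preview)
Your proposal is correct and follows essentially the same approach as the paper, which simply says the proof is similar to that of Proposition~\ref{prop:coproduct} and notes that the only new difficulty --- describing the linear extensions of the union of the forest~$B(\tree[S]_\circ,\gamma_\circ)$ with the reverse of the forest~$A(\tree[S]_\bullet,\gamma_\bullet)$ --- is absorbed into the very definition of~$BA([\tree[S]_\circ,\tree[S]_\bullet],[\gamma_\circ,\gamma_\bullet])$. You have spelled out exactly this point: the fibers of~$\surjectionPermAssoBax$ partition~$\linearExtensions(\graphG|_V)$ into the pieces indexed by~$BA(\cdot)$, which is precisely what the paper leaves implicit.
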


\begin{proof}
The proof is similar to that of Proposition~\ref{prop:coproduct}. The difficulty here is to describe the linear extensions of the union of the forest~$A(\tree[S]_\circ, \gamma_\circ)$ with the opposite of the forest~$B(\tree[S]_\bullet, \gamma_\bullet)$. This difficulty is hidden in the definition of~$AB([\tree[S]_\circ, \tree[S]_\bullet], [\gamma_\circ, \gamma_\bullet])$.
\end{proof}

For example, we can compute the coproduct
\begin{align*}
& \hspace*{-1cm} \coproduct \PBax_{\left[ \raisebox{-.3cm}{\includegraphics{exmProductTwinA}}, \raisebox{-.3cm}{\includegraphics{exmProductTwinB}} \right]} = \coproduct \big( \F_{\up{2}\down{3}\up{1}\down{4}} + \F_{\up{2}\down{34}\up{1}} \big) \hspace*{\textwidth}
\\
& \hspace*{-.5cm} = 1 \otimes \big( \F_{\up{2}\down{3}\up{1}\down{4}} + \F_{\up{2}\down{34}\up{1}} \big)
+ \F_{\up{1}} \otimes \big( \F_{\down{2}\up{1}\down{3}} + \F_{\down{23}\up{1}} \big)
+ \F_{\up{1}\down{2}} \otimes \big( \F_{\up{1}\down{2}} + \F_{\down{2}\up{1}} \big)
+ \F_{\up{2}\down{3}\up{1}} \otimes \F_{\down{1}}
+ \F_{\up{1}\down{2}\down{3}} \otimes \F_{\up{1}}
+ \big( \F_{\up{2}\down{3}\up{1}\down{4}} + \F_{\up{2}\down{34}\up{1}} \big) \otimes 1
\\
& \hspace*{-.5cm} = 1 \otimes \PBax_{\left[ \raisebox{-.3cm}{\includegraphics{exmProductTwinA}}, \raisebox{-.3cm}{\includegraphics{exmProductTwinB}} \right]}
\;\; + \;\; \PBax_{\left[ \raisebox{-.15cm}{\includegraphics{exmTreeY}}, \raisebox{-.15cm}{\includegraphics{exmTreeY}} \right]} \otimes \big( \PBax_{\left[ \raisebox{-.25cm}{\includegraphics{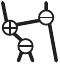}}, \raisebox{-.25cm}{\includegraphics{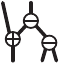}} \right]} + \PBax_{\left[ \raisebox{-.25cm}{\includegraphics{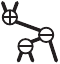}}, \raisebox{-.25cm}{\includegraphics{exmCoproductTwin2}} \right]} \big)
\;\; + \;\; \PBax_{\left[ \raisebox{-.25cm}{\includegraphics{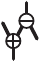}}, \raisebox{-.25cm}{\includegraphics{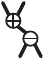}} \right]} \otimes \big( \PBax_{\left[ \raisebox{-.25cm}{\includegraphics{exmTreeYAd}}, \raisebox{-.25cm}{\includegraphics{exmTreeAYg}} \right]} + \PBax_{\left[ \raisebox{-.25cm}{\includegraphics{exmTreeAYg}}, \raisebox{-.25cm}{\includegraphics{exmTreeYAd}} \right]} \big)
\\[.2cm]
& \qquad\qquad\qquad + \PBax_{\left[ \raisebox{-.25cm}{\includegraphics{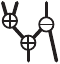}}, \raisebox{-.25cm}{\includegraphics{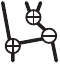}} \right]} \otimes \PBax_{\left[ \raisebox{-.15cm}{\includegraphics{exmTreeA}}, \raisebox{-.15cm}{\includegraphics{exmTreeA}} \right]}
\;\; + \;\; \PBax_{\left[ \raisebox{-.25cm}{\includegraphics{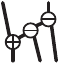}}, \raisebox{-.25cm}{\includegraphics{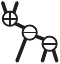}} \right]} \otimes \PBax_{\left[ \raisebox{-.15cm}{\includegraphics{exmTreeY}}, \raisebox{-.15cm}{\includegraphics{exmTreeY}} \right]}
\;\; + \;\; \PBax_{\left[ \raisebox{-.3cm}{\includegraphics{exmProductTwinA}}, \raisebox{-.3cm}{\includegraphics{exmProductTwinB}} \right]} \otimes 1.
\end{align*}

\medskip
In the result line, we have grouped the summands according to the six possible cuts of the pair of twin Cambrian trees~$\left[ \raisebox{-.3cm}{\includegraphics{exmProductTwinA}}, \raisebox{-.3cm}{\includegraphics{exmProductTwinB}} \right]$.

\medskip
\para{Matriochka algebras}
As the Baxter-Cambrian classes refine the Cambrian classes, the Baxter-Cambrian Hopf algebra is sandwiched between the Hopf algebra on signed permutations and the Cambrian Hopf algebra. It completes our sequence of subalgebras:
\[
\Rec \subset \Camb \subset \BaxCamb \subset \FQSym_\pm.
\]

%%%%%%%%%%%%%%%%%

\subsection{Quotient algebra of $\FQSym_\pm^*$}

As for the Cambrian algebra, the following result is automatic from Theorem~\ref{thm:baxSubalgebra}.

\begin{theorem}
The graded dual~$\BaxCamb^*$ of the Baxter-Cambrian algebra is isomorphic to the image of~$\FQSym_\pm^*$ under the canonical projection
\[
\pi : \C\langle A \rangle \longrightarrow \C\langle A \rangle / \equiv\Bax,
\]
where~$\equiv\Bax$ denotes the Baxter-Cambrian congruence. The dual basis~$\QBax_{[\tree_\circ, \tree_\bullet]}$ of~$\PCamb_{[\tree_\circ, \tree_\bullet]}$ is expressed as~$\QBax_{[\tree_\circ, \tree_\bullet]} = \pi(\G_\tau)$, where~$\tau$ is any linear extension of~$\graphTwin$.
\end{theorem}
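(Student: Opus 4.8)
The plan is to deduce this statement from Theorem~\ref{thm:baxSubalgebra} by the standard duality between a graded connected Hopf algebra of finite type and its graded dual, exactly along the lines of the corresponding result for $\Camb^*$ in Section~\ref{subsec:quotientAlgebra}. First I would equip $\FQSym_\pm$ and $\FQSym_\pm^*$ with the pairing $\dotprod{\F_\sigma}{\G_\tau} = \delta_{\sigma, \tau}$, for which the product of $\FQSym_\pm^*$ is dual to the coproduct of $\FQSym_\pm$ and conversely. Since all graded components are finite dimensional and $\BaxCamb$ is a Hopf subalgebra of $\FQSym_\pm$ by Theorem~\ref{thm:baxSubalgebra}, the transpose of the inclusion $\BaxCamb \hookrightarrow \FQSym_\pm$ is a surjective morphism of graded Hopf algebras $\FQSym_\pm^* \twoheadrightarrow \BaxCamb^*$, whose kernel is the Hopf ideal
\[
\BaxCamb^\perp \eqdef \bigset{\phi \in \FQSym_\pm^*}{\dotprod{\phi}{\PBax_{[\tree_\circ, \tree_\bullet]}} = 0 \text{ for every pair of twin Cambrian trees } [\tree_\circ, \tree_\bullet]}.
\]
This already gives $\BaxCamb^* \cong \FQSym_\pm^* / \BaxCamb^\perp$, so it remains to identify $\BaxCamb^\perp$ with the kernel of $\pi$.

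Next I would compute $\BaxCamb^\perp$ explicitly. The fibers $\linearExtensions(\graphTwin)$ of the map $\surjectionPermAssoBax$ partition $\fS_\pm$, and by definition $\PBax_{[\tree_\circ, \tree_\bullet]} = \sum_{\tau \in \linearExtensions(\graphTwin)} \F_\tau$ is the indicator vector of one such fiber in the $\F$-basis. Hence a functional $\sum_\tau c_\tau \G_\tau$ lies in $\BaxCamb^\perp$ if and only if the scalars $c_\tau$ are constant along each fiber, that is, if and only if it belongs to $\vect\set{\G_\sigma - \G_\tau}{\surjectionPermAssoBax(\sigma) = \surjectionPermAssoBax(\tau)}$. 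By the proposition characterizing the $\signature$-Baxter-Cambrian classes, $\surjectionPermAssoBax(\sigma) = \surjectionPermAssoBax(\tau)$ is equivalent to $\sigma \equiv\Bax \tau$, so that
\[
\BaxCamb^\perp = \vect\set{\G_\sigma - \G_\tau}{\sigma \equiv\Bax \tau}.
\]
Under the polynomial realization of $\FQSym_\pm^*$ inside $\C\langle A \rangle$, this span is precisely the kernel of the congruence projection $\pi : \C\langle A \rangle \to \C\langle A \rangle / \equiv\Bax$ restricted to $\FQSym_\pm^*$ — the same verification as for $\Camb^*$. Consequently $\BaxCamb^* \cong \FQSym_\pm^* / \BaxCamb^\perp = \pi(\FQSym_\pm^*)$.

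Finally I would read off the dual basis. Because $\pi(\G_\tau) = \pi(\G_{\tau'})$ whenever $\tau \equiv\Bax \tau'$, the element $\pi(\G_\tau)$ depends only on $\surjectionPermAssoBax(\tau)$, so $\QBax_{[\tree_\circ, \tree_\bullet]} \eqdef \pi(\G_\tau)$ is well defined for any linear extension $\tau$ of $\graphTwin$, and these elements span $\pi(\FQSym_\pm^*) \cong \BaxCamb^*$. With respect to the pairing induced on the quotient one has $\dotprod{\pi(\G_\tau)}{\PBax_{[\tree'_\circ, \tree'_\bullet]}} = \sum_{\sigma \in \linearExtensions(\unionOp{\tree'_\circ}{\tree'_\bullet})} \dotprod{\G_\tau}{\F_\sigma}$, which is $1$ if $[\tree'_\circ, \tree'_\bullet] = [\tree_\circ, \tree_\bullet]$ and $0$ otherwise, so $(\QBax_{[\tree_\circ, \tree_\bullet]})$ is dual to $(\PBax_{[\tree_\circ, \tree_\bullet]})$, as asserted. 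The only step that will need genuine care is the identification $\BaxCamb^\perp = \ker(\pi|_{\FQSym_\pm^*})$ — that the abstract orthogonal of the Hopf subalgebra coincides term by term with the Baxter-Cambrian congruence ideal — but this is the verbatim analogue of the computation already carried out for $\Camb^*$, and it is the dual face of the compatibility of $\equiv\Bax$ with the convolution product and the shifted-shuffle coproduct that underlies Theorem~\ref{thm:baxSubalgebra}.
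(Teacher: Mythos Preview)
Your approach is correct and is precisely the standard duality argument that the paper has in mind when it says this statement is ``automatic from Theorem~\ref{thm:baxSubalgebra}'' (the paper gives no further proof). One slip to fix: the condition for $\sum_\tau c_\tau \G_\tau$ to lie in $\BaxCamb^\perp$ is that the \emph{sum} of the $c_\tau$ over each Baxter-Cambrian class vanishes, not that the $c_\tau$ are constant along each class; constancy along classes characterises the image of $\BaxCamb$ in $\FQSym_\pm$, not its orthogonal. Your identification $\BaxCamb^\perp = \vect\set{\G_\sigma - \G_\tau}{\sigma \equiv\Bax \tau}$ is nonetheless correct, since the span of the differences $\G_\sigma - \G_\tau$ over each class is exactly the space of vectors with zero class-sums.
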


We now describe the product and coproduct in~$\BaxCamb^*$ by combinatorial operations on pairs of twin Cambrian trees. We use the definitions and notations introduced in Section~\ref{subsec:quotientAlgebra}.

\para{Product}
The product in~$\BaxCamb^*$ can be described using gaps and laminations similarly to Proposition~\ref{prop:productDual}. An example is illustrated on \fref{fig:exampleProductDualTwin}. For two Cambrian trees~$\tree$ and~$\tree'$ and a shuffle~$s$ of the signatures~$\signature(\tree)$ and~$\signature(\tree')$, we still denote by~$\tree \,{}_s\!\backslash \tree'$ the tree described in Section~\ref{subsec:quotientAlgebra}.

\begin{proposition}
For any two pairs of twin Cambrian trees~$[\tree_\circ, \tree_\bullet]$ and~$[\tree'_\circ, \tree'_\bullet]$, the product~$\QBax_{[\tree_\circ, \tree_\bullet]} \product \QBax_{[\tree'_\circ, \tree'_\bullet]}$ is given by
\[
\QBax_{[\tree_\circ, \tree_\bullet]} \product \QBax_{[\tree'_\circ, \tree'_\bullet]} = \sum_s \QBax_{[\tree_\circ \,{}_s\!\backslash \tree'_\circ, \tree'_\bullet \,{}_s\!\backslash \tree_\bullet]},
\]
where~$s$ runs over all shuffles of the signatures~$\signature(\tree_\circ) = \signature(\tree_\bullet)$ and~$\signature(\tree'_\circ) = \signature(\tree'_\bullet)$.
\end{proposition}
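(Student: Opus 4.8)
The plan is to mimic the proof of Proposition~\ref{prop:productDual} for the Cambrian algebra, lifting it to pairs of twin trees by keeping track of both trees simultaneously. Recall that $\BaxCamb^*$ is the quotient of $\FQSym_\pm^*$ by the Baxter-Cambrian congruence, with $\QBax_{[\tree_\circ,\tree_\bullet]} = \pi(\G_\tau)$ for any linear extension $\tau$ of $\graphTwin$. Since the product in $\FQSym_\pm^*$ is given by $\G_\tau \product \G_{\tau'} = \sum_{\sigma \in \tau \convolution \tau'} \G_\sigma$, the product $\QBax_{[\tree_\circ,\tree_\bullet]} \product \QBax_{[\tree'_\circ,\tree'_\bullet]}$ is $\pi$ applied to $\sum_{\sigma \in \tau \convolution \tau'} \G_\sigma$ for fixed linear extensions $\tau \in \linearExtensions(\graphTwin)$ and $\tau' \in \linearExtensions(\unionOp{\tree'_\circ}{\tree'_\bullet})$; so what must be shown is that the multiset of classes $[\surjectionPermAsso(\sigma), \surjectionPermAsso(\mirror{\sigma})]$ obtained as $\sigma$ ranges over $\tau \convolution \tau'$ is exactly $\set{[\tree_\circ \,{}_s\!\backslash \tree'_\circ,\; \tree'_\bullet \,{}_s\!\backslash \tree_\bullet]}{s \text{ a shuffle of the two signatures}}$, each shuffle $s$ occurring with the right multiplicity.

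The key step is to analyze a single $\sigma \in \tau \convolution \tau'$. As recalled in Section~\ref{subsec:products}, $\sigma$ is obtained by shuffling the \emph{columns} of the tables of $\tau$ and $\tau'$ while preserving the order of their rows; the interleaving pattern of these columns is precisely a shuffle $s$ of $\signature(\tree_\circ)$ and $\signature(\tree'_\circ)$ (signs travel with positions under $\convolution$). By the proof of Proposition~\ref{prop:productDual}, $\surjectionPermAsso(\sigma) = \tree_\circ \,{}_s\!\backslash \tree'_\circ$. For the second component, observe that $\mirror{\sigma}$ reverses the rows of the table of $\sigma$; this turns the column-shuffle of $\tau,\tau'$ with interleaving $s$ into the column-shuffle of $\mirror{\tau'},\mirror{\tau}$ with the \emph{reversed} interleaving, and since $\surjectionPermAsso(\mirror{\tau}) = \tree_\bullet$ and $\surjectionPermAsso(\mirror{\tau'}) = \tree'_\bullet$ (these are exactly the second trees of the twin pairs), applying Proposition~\ref{prop:productDual} again gives $\surjectionPermAsso(\mirror{\sigma}) = \tree'_\bullet \,{}_{s'}\!\backslash \tree_\bullet$ for the appropriate shuffle $s'$ of $\signature(\tree'_\bullet) = \signature(\tree'_\circ)$ and $\signature(\tree_\bullet) = \signature(\tree_\circ)$. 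One then checks that $s'$ is determined by $s$ in exactly the way required so that the pair $[\tree_\circ \,{}_s\!\backslash \tree'_\circ,\; \tree'_\bullet \,{}_s\!\backslash \tree_\bullet]$ of the statement is recovered (here the convention fixes the indexing of negative/positive signs used in the definition of $\,{}_s\!\backslash$). Conversely, every shuffle $s$ arises from some $\sigma \in \tau \convolution \tau'$, and the map $\sigma \mapsto s$ is a bijection from $\tau \convolution \tau'$ onto the set of shuffles, so no multiplicities are lost; hence all elements of $\tau \convolution \tau'$ lying in a fixed Baxter-Cambrian class are identified by $\pi$ and contribute a single term $\QBax_{[\tree_\circ \,{}_s\!\backslash \tree'_\circ,\; \tree'_\bullet \,{}_s\!\backslash \tree_\bullet]}$.

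It remains to argue that the resulting pair is genuinely a pair of \emph{twin} trees, i.e. that $\surjectionPermAssoBax(\sigma) = [\surjectionPermAsso(\sigma), \surjectionPermAsso(\mirror{\sigma})]$ is a valid Baxter-Cambrian symbol — but this is automatic since $\surjectionPermAssoBax$ lands in pairs of twin Cambrian trees by the surjectivity statement proved earlier, so $[\tree_\circ \,{}_s\!\backslash \tree'_\circ,\; \tree'_\bullet \,{}_s\!\backslash \tree_\bullet]$ is twin for every shuffle $s$. The proof then concludes by invoking that $\pi$ is an algebra morphism and collecting terms. The main obstacle I anticipate is bookkeeping the correspondence between the shuffle $s$ controlling the split of $\tree_\circ$ against $\tree'_\circ$ and the shuffle controlling the split of $\tree'_\bullet$ against $\tree_\bullet$ under the mirror operation: one must verify carefully that reversing rows sends ``positions of negative signs of $\signature'$ in $s$'' to ``positions of positive signs of $\signature$ in the reversed shuffle'' and vice versa, so that the same symbol $\,{}_s\!\backslash$ (with the stated argument order) describes both components. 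Once this matching is pinned down — most transparently via the triangulation picture of Remark~\ref{rem:triangulation}, where mirroring corresponds to a reflection of the polygon — the rest is a routine transcription of the $\QCamb$-product proof.
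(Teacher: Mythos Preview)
Your proposal is correct and follows essentially the same approach as the paper's own proof: both reduce to Proposition~\ref{prop:productDual} applied to each component, with the key observation that when reading~$\sigma$ from right to left (i.e., inserting~$\mirror{\sigma}$) the roles of the two trees are exchanged, so that~$\tree'_\bullet$ appears below~$\tree_\bullet$ in~$\surjectionPermAsso(\mirror{\sigma})$. The paper's argument is considerably terser --- it does not spell out the bijection between $\tau \convolution \tau'$ and shuffles, nor the twinness check, both of which you make explicit --- but the substance is the same.
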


\begin{proof}
The proof follows the same lines as that of Proposition~\ref{prop:productDual}. The only difference is that if~$\tau \in \linearExtensions(\unionOp{\tree_\circ}{\tree_\bullet})$, $\tau' \in \linearExtensions(\unionOp{\tree'_\circ}{\tree'_\bullet})$, and~$\sigma \in \tau \convolution \tau'$, then~$\tree_\circ = \surjectionPermAsso(\tau)$ appears below~$\tree'_\circ = \surjectionPermAsso(\tau')$ in~$\surjectionPermAsso(\sigma)$ since~$\sigma$ is inserted from left to right in~$\surjectionPermAsso(\sigma)$, while $\tree_\bullet = \surjectionPermAsso(\mirror{\tau})$ appears above~$\tree'_\bullet = \surjectionPermAsso(\mirror{\tau}\!\!')$ in~$\surjectionPermAsso(\mirror{\sigma})$ since~$\sigma$ is inserted from right to left in~$\surjectionPermAsso(\mirror{\sigma})$.
\end{proof}

\begin{figure}[h]
  \centerline{\includegraphics{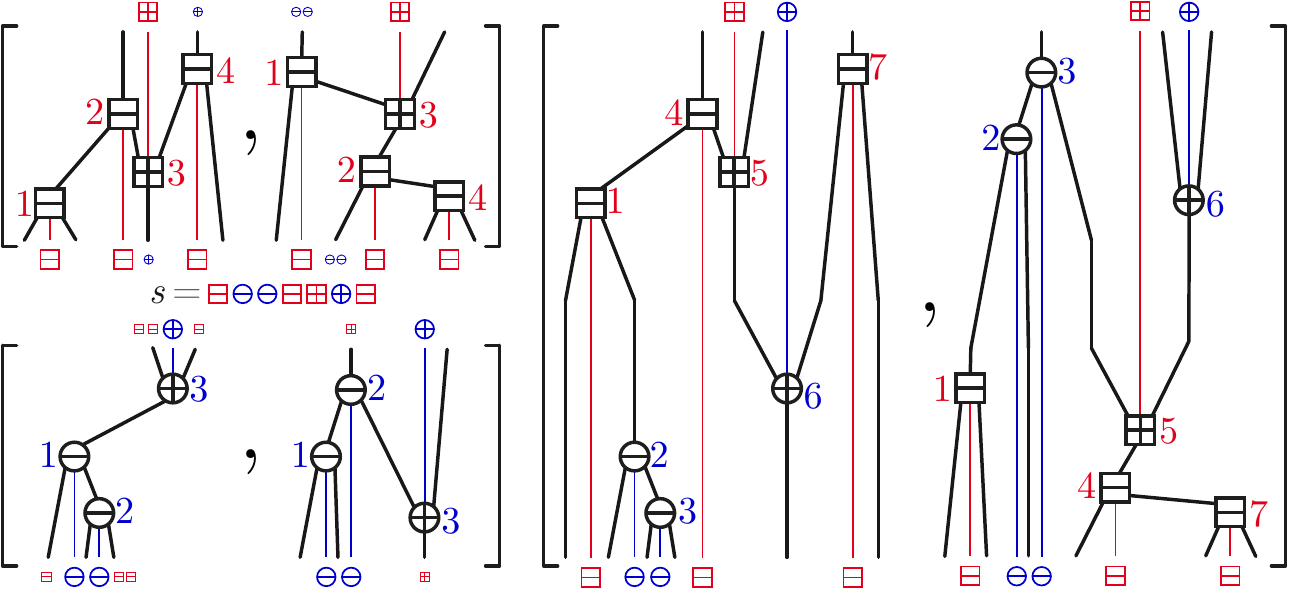}}
  \caption{Two pairs of twin Cambrian trees~$[\tree_\circ, \tree_\bullet]$ and~$[\tree'_\circ, \tree'_\bullet]$ (left), and a pair of twin Cambrian tree which appear in the product~$\QBax_{[\tree_\circ, \tree_\bullet]} \product \QBax_{[\tree'_\circ, \tree'_\bullet]}$ (right).}
  \label{fig:exampleProductDualTwin}
\end{figure}

For example, we can compute the product
\begin{align*}
& \hspace*{-.5cm} \QBax_{\left[ \raisebox{-.25cm}{\includegraphics{exmTreeYAg}}, \raisebox{-.25cm}{\includegraphics{exmTreeAYd}} \right]} \product \QBax_{\left[ \raisebox{-.25cm}{\includegraphics{exmTreeYAd}}, \raisebox{-.25cm}{\includegraphics{exmTreeAYg}} \right]} = \G_{\up{2}\down{1}} \product \G_{\up{1}\down{2}} \hspace*{\textwidth}
\\
& \hspace*{-.5cm} 
\begin{array}{@{$\quad{} = {}$}c@{+\,}c@{+\,}c@{+\,}c@{+\,}c@{+\,}c}
\G_{\up{2}\down{1}\up{3}\down{4}}
& \G_{\up{3}\down{1}\up{2}\down{4}}
& \G_{\up{4}\down{1}\up{2}\down{3}}
& \G_{\up{3}\down{2}\up{1}\down{4}}
& \G_{\up{4}\down{2}\up{1}\down{3}}
& \G_{\up{4}\down{3}\up{1}\down{2}}
\\
\QBax_{\left[ \raisebox{-.28cm}{\includegraphics{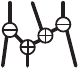}}, \raisebox{-.28cm}{\includegraphics{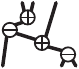}} \right]}
& \QBax_{\left[ \raisebox{-.28cm}{\includegraphics{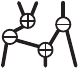}}, \raisebox{-.28cm}{\includegraphics{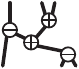}} \right]}
& \QBax_{\left[ \raisebox{-.28cm}{\includegraphics{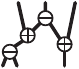}}, \raisebox{-.28cm}{\includegraphics{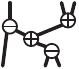}} \right]}
& \QBax_{\left[ \raisebox{-.28cm}{\includegraphics{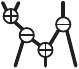}}, \raisebox{-.28cm}{\includegraphics{exmProductTwin10}} \right]}
& \QBax_{\left[ \raisebox{-.28cm}{\includegraphics{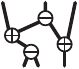}}, \raisebox{-.28cm}{\includegraphics{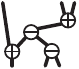}} \right]}
& \QBax_{\left[ \raisebox{-.28cm}{\includegraphics{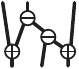}}, \raisebox{-.28cm}{\includegraphics{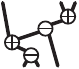}} \right]}.
\end{array}
\end{align*}

\para{Coproduct}
The coproduct in~$\BaxCamb^*$ can be described combinatorially as in Proposition~\ref{prop:coproductDual}. For a Cambrian tree~$\tree[S]$ and a gap~$\gamma$ between two consecutive vertices of~$\tree[S]$, we still denote by~$L(\tree[S], \gamma)$ and~$R(\tree[S], \gamma)$ the left and right Cambrian subtrees of~$\tree[S]$ when split along the path~$\lambda(\tree[S], \gamma)$.

\begin{proposition}
For any pair of twin Cambrian trees~$[\tree[S]_\circ, \tree[S]_\bullet]$, the coproduct~$\coproduct\QCamb_{[\tree[S]_\circ, \tree[S]_\bullet]}$ is given~by
\[
\coproduct\QCamb_{[\tree[S]_\circ, \tree[S]_\bullet]} = \sum_{\gamma} \QCamb_{[L(\tree[S]_\circ,\gamma), L(\tree[S]_\bullet, \gamma)]} \otimes \QCamb_{[R(\tree[S]_\circ,\gamma), R(\tree[S]_\bullet, \gamma)]},
\]
where~$\gamma$ runs over all gaps between consecutive positions in~$[n]$.
\end{proposition}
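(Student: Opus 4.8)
The plan is to dualize the subalgebra description exactly as in the single-tree case (proof of Proposition~\ref{prop:coproductDual}), the only new ingredient being that restriction of a signed permutation to an initial segment of values commutes with mirroring. First I would fix a linear extension $\sigma \in \linearExtensions(\unionOp{\tree[S]_\circ}{\tree[S]_\bullet})$, so that $\QCamb_{[\tree[S]_\circ, \tree[S]_\bullet]} = \pi(\G_\sigma)$ by the description of $\BaxCamb^*$. Since $\coproduct \G_\sigma = \sum_{\sigma \in \tau \shiftedShuffle \tau'} \G_\tau \otimes \G_{\tau'}$ in $\FQSym_\pm^*$ and $\pi$ is a morphism of Hopf algebras, this gives $\coproduct \QCamb_{[\tree[S]_\circ, \tree[S]_\bullet]} = \sum_{\sigma \in \tau \shiftedShuffle \tau'} \QCamb_{\surjectionPermAssoBax(\tau)} \otimes \QCamb_{\surjectionPermAssoBax(\tau')}$. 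As recalled in Section~\ref{subsec:products}, a decomposition $\sigma \in \tau \shiftedShuffle \tau'$ amounts to choosing a value $k \in \{0, \dots, n\}$, equivalently a gap $\gamma$ between two consecutive integers of $[n]$: the permutation $\tau$ is the subword of $\sigma$ formed by the letters $\le k$ (with their signs), and $\tau'$ is the subword formed by the letters $> k$, shifted down by $k$. This puts the outer sum in bijection with the gaps $\gamma$, matching the right-hand side of the statement.

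It then remains to identify, for each gap $\gamma$ corresponding to the value $k$, the pair $\surjectionPermAssoBax(\tau) = [\surjectionPermAsso(\tau), \surjectionPermAsso(\mirror{\tau})]$ with $[L(\tree[S]_\circ,\gamma), L(\tree[S]_\bullet,\gamma)]$, and likewise $\surjectionPermAssoBax(\tau')$ with $[R(\tree[S]_\circ,\gamma), R(\tree[S]_\bullet,\gamma)]$. For the first component, $\tree[S]_\circ = \surjectionPermAsso(\sigma)$ and $\tau$ is precisely the left-column restriction of $\sigma$ across gap $\gamma$ in the table picture, so the proof of Proposition~\ref{prop:coproductDual} already gives $\surjectionPermAsso(\tau) = L(\tree[S]_\circ, \gamma)$. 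For the second component, $\tree[S]_\bullet = \surjectionPermAsso(\mirror{\sigma})$; the key observation is that restricting to the letters $\le k$ commutes with mirroring, i.e. $\mirror{\tau} = \mirror{(\sigma|_{\le k})} = (\mirror{\sigma})|_{\le k}$, so $\mirror{\tau}$ is the left-column restriction of $\mirror{\sigma}$ across the \emph{same} gap $\gamma$ (gaps are indexed by values, which are unchanged under mirroring). Applying Proposition~\ref{prop:coproductDual} to $\mirror{\sigma}$ then yields $\surjectionPermAsso(\mirror{\tau}) = L(\tree[S]_\bullet, \gamma)$. In particular $[L(\tree[S]_\circ, \gamma), L(\tree[S]_\bullet, \gamma)]$ is genuinely a pair of twin Cambrian trees, being equal to $\surjectionPermAssoBax(\tau)$. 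The same argument applied to $\tau'$ (carrying along the shift of labels by $k$) identifies $\surjectionPermAssoBax(\tau')$ with $[R(\tree[S]_\circ, \gamma), R(\tree[S]_\bullet, \gamma)]$, and summing over all gaps $\gamma$ gives the announced formula.

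The bookkeeping is entirely routine once Proposition~\ref{prop:coproductDual} is available; the single point I would state explicitly is the compatibility $\mirror{(\sigma|_{\le k})} = (\mirror{\sigma})|_{\le k}$, together with the remark that the geodesic path $\lambda(\cdot,\gamma)$ depends only on the value-gap $\gamma$, so that one and the same $\gamma$ splits $\tree[S]_\circ$ and $\tree[S]_\bullet$ in the mirrored fashion required by the notion of cut. I do not expect any serious obstacle beyond making this symmetry precise; unlike the product (Proposition above) or the coproduct in $\BaxCamb$, no delicate analysis of linear extensions of a union of forests is needed here, since each factor $\QCamb$ of the coproduct is indexed by an \emph{honest} pair of twin Cambrian trees arising as $\surjectionPermAssoBax$ of a restriction of $\sigma$.
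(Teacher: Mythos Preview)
Your proposal is correct and follows exactly the approach the paper intends: its own proof reads in full ``The proof is identical to that of Proposition~\ref{prop:coproductDual}.'' You have simply unpacked what ``identical'' means in the twin setting, making explicit the one extra observation that value-restriction commutes with mirroring so that the same gap~$\gamma$ governs both components; this is precisely the bookkeeping the paper leaves to the reader.
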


\begin{proof}
The proof is identical to that of Proposition~\ref{prop:coproductDual}.
\end{proof}

For example, we can compute the coproduct
\begin{align*}
\coproduct \QBax_{\left[ \raisebox{-.3cm}{\includegraphics{exmProductTwinA}}, \raisebox{-.3cm}{\includegraphics{exmProductTwinB}} \right]}
& = \coproduct \G_{\up{2}\down{34}\up{1}}
\\[-.3cm]
& = 1 \otimes \G_{\up{2}\down{34}\up{1}}
+ \G_{\up{1}} \otimes \G_{\up{1}\down{23}}
+ \G_{\up{21}} \otimes \G_{\down{12}}
+ \G_{\up{2}\down{3}\up{1}} \otimes \G_{\down{1}}
+ \G_{\up{2}\down{34}\up{1}} \otimes 1
\\[.2cm]
& = 1 \otimes \QBax_{\left[ \raisebox{-.3cm}{\includegraphics{exmProductTwinA}}, \raisebox{-.3cm}{\includegraphics{exmProductTwinB}} \right]}
\;\; + \;\; \QBax_{\left[ \raisebox{-.15cm}{\includegraphics{exmTreeY}}, \raisebox{-.15cm}{\includegraphics{exmTreeY}} \right]} \otimes \QBax_{\left[ \raisebox{-.25cm}{\includegraphics{exmCoproductTwin6}}, \raisebox{-.25cm}{\includegraphics{exmCoproductTwin7}} \right]}
\;\; + \;\; \QBax_{\left[ \raisebox{-.25cm}{\includegraphics{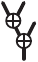}}, \raisebox{-.25cm}{\includegraphics{exmTreeYYd}} \right]} \otimes \QBax_{\left[ \raisebox{-.25cm}{\includegraphics{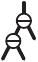}}, \raisebox{-.25cm}{\includegraphics{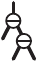}} \right]} \\
& \qquad\qquad\qquad + \;\; \QBax_{\left[ \raisebox{-.3cm}{\includegraphics{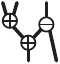}}, \raisebox{-.3cm}{\includegraphics{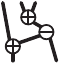}} \right]} \otimes \QBax_{\left[ \raisebox{-.15cm}{\includegraphics{exmTreeA}}, \raisebox{-.15cm}{\includegraphics{exmTreeA}} \right]}
\;\; + \;\; \QBax_{\left[ \raisebox{-.3cm}{\includegraphics{exmProductTwinA}}, \raisebox{-.3cm}{\includegraphics{exmProductTwinB}} \right]} \otimes 1.
\end{align*}

\smallskip

%%%%%%%%%%%%%%%%%%%%%%%%%%%%%%%%%%%%%%

\section{Cambrian tuples}
\label{sec:tuples}

This section is devoted to a natural extension of our results on twin Cambrian trees and the Baxter-Cambrian algebra to arbitrary intersections of Cambrian congruences.
Since the results presented here are straightforward generalizations of that of Sections~\ref{sec:BaxterTrees} and~\ref{sec:BaxterCambrianAlgebra}, all proofs of this section are left to the reader.

%%%%%%%%%%%%%%%%%

\subsection{Combinatorics of Cambrian tuples}
As observed in Remark~\ref{rem:reversing}, pairs of twin Cambrian trees can as well be thought of as pairs of Cambrian trees on opposite signature whose union is acyclic. We extend this idea to arbitrary signatures. For an $\ell$-tuple~$\tuple$ and~$k \in [\ell]$, we denote by~$\tuple_{[k]}$ the $k$th element of~$\tuple$.

\begin{definition}
A \defn{Cambrian $\ell$-tuple} is a $\ell$-tuple~$\tuple$ of Cambrian trees~$\tuple_{[k]}$ on the same vertex set, and whose union forms an acyclic graph. The \defn{signature} of~$\tuple$ is the $\ell$-tuple of signatures $\signatures(\tuple) \eqdef \big[ \signature \big( \tuple_{[1]} \big), \dots, \signature \big( \tuple_{[\ell]} \big) \big]$. Let~$\CambTrees(\signatures)$ denote the set of Cambrian $\ell$-tuples of signature~$\signatures$.
\end{definition}

\begin{definition}
A \defn{leveled Cambrian $\ell$-tuple} is a $\ell$-tuple~$\tuple$ of leveled Cambrian trees~$\tuple_{[k]}$ with labelings~$p_{[k]}, q_{[k]}$, and such that~$q_{[k]} \circ {p_{[k]}}^{-1}$ is independent of~$k$. In other words, it is a Cambrian $\ell$-tuple endowed with a linear extension of the union of its trees.
\end{definition}

For example, pairs of twin (leveled) Cambrian trees are particular (leveled) Cambrian $2$-tuples. A Cambrian $2$-tuple and a leveled Cambrian $2$-tuple with signature~$[{-}{-}{+}{-}{-}{+}{+}, {+}{+}{-}{+}{-}{-}{+}]$ are represented in \fref{fig:CambrianPair}.

\begin{figure}[t]
  \centerline{\includegraphics{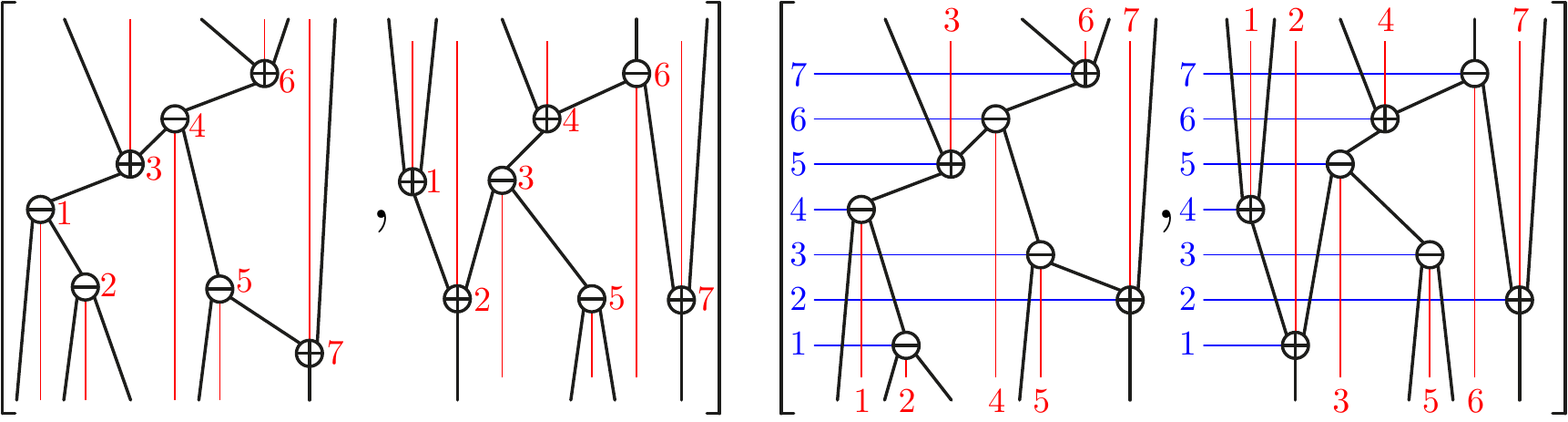}}
  \caption{A Cambrian $2$-tuple (left), and a leveled Cambrian $2$-tuple (right).}
  \label{fig:CambrianPair}
\end{figure}

We now want to define an analogue of the Cambrian correspondence. For this, we need permutations recording~$\ell$ different signatures. Call \defn{$\ell$-signed permutation} a permutation table where each dot receives an~$\ell$-tuple of signs. In other words, it is an element of the wreath product of~$\fS$ by~$(\Z_2)^\ell$. For example,
\[
\uptilde{\downw{2}} \uptilde{\upw{7}} \downtilde{\downw{5}} \uptilde{\downw{1}} \downtilde{\upw{3}} \uptilde{\downw{4}} \downtilde{\upw{6}}
\]
is a $2$-signed permutation whose signatures are marked with~$\up{\phantom{1}}/\down{\phantom{1}}$ and~$\uptilde{\phantom{1}}/\downtilde{\phantom{1}}$ respectively. For a $\ell$-signed permutation~$\tau$ and~$k \in [\ell]$, we denote by~$\tau_{[k]}$ the signed permutation where we only keep the~$k$-th signature. For example
\[
\uptilde{\downw{2}} \uptilde{\upw{7}} \downtilde{\downw{5}} \uptilde{\downw{1}} \downtilde{\upw{3}} \uptilde{\downw{4}} \downtilde{\upw{6}}_{[1]} = \down{2}\up{7}\down{51}\up{3}\down{4}\up{6}
\qquad\text{and}\qquad
\uptilde{\downw{2}} \uptilde{\upw{7}} \downtilde{\downw{5}} \uptilde{\downw{1}} \downtilde{\upw{3}} \uptilde{\downw{4}} \downtilde{\upw{6}}_{[2]} = \simtilde{2} \simtilde{7} \downtilde{5} \simtilde{1} \downtilde{3} \simtilde{4} \downtilde{6}.
\]
We denote by~$\fS_{\pm^\ell}$ the set of all $\ell$-signed permutations and by~$\fS_\signatures$ (resp.~$\fS^\signatures$) the set of $\ell$-signed permutations with p-signatures (resp.~v-signatures)~$\signatures$. Applying the Cambrian correspondences in parallel yields a map form $\ell$-signed permutations to Cambrian $\ell$-tuples.

\begin{proposition}
The map~$\CambCorresp_\ell$ defined by~$\CambCorresp_\ell(\tau) \eqdef \big[\CambCorresp \big( \tau_{[1]} \big), \dots, \CambCorresp \big( \tau_{[\ell]} \big) \big]$ is a bijection from $\ell$-signed permutations to leveled Cambrian $\ell$-tuples.
The map~$\surjectionPermAsso_\ell$ defined by~$\surjectionPermAsso_\ell(\tau) \eqdef \big[ \surjectionPermAsso \big( \tau_{[1]} \big), \dots, \surjectionPermAsso \big( \tau_{[\ell]} \big) \big]$ is a surjection from $\ell$-signed permutations to Cambrian $\ell$-tuples.
\end{proposition}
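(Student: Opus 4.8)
The plan is to mirror the structure of the $\ell=2$ case treated in Section~\ref{sec:BaxterTrees}, since the statement is exactly the parallel application of the (already established) bijection $\CambCorresp$ and surjection $\surjectionPermAsso$ of Section~\ref{subsec:CambrianCorrespondence}. First I would observe that for a leveled Cambrian $\ell$-tuple $\tuple$ with labelings $p_{[k]}, q_{[k]}$, the common value $\tau \eqdef q_{[k]} \circ {p_{[k]}}^{-1}$ (independent of $k$ by definition) is a permutation of $[n]$, and each entry receives the $k$th sign from the signature of $\tuple_{[k]}$ at the corresponding vertex; this assembles $\tau$ into an $\ell$-signed permutation with $\tau_{[k]}$ being the signed permutation recovered by keeping only the $k$th signature. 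Conversely, given $\tau \in \fS_{\pm^\ell}$, applying the bijection $\CambCorresp$ of Section~\ref{subsec:CambrianCorrespondence} to each $\tau_{[k]}$ yields leveled Cambrian trees $\CambCorresp(\tau_{[k]})$ whose increasing labelings $q_{[k]}$ satisfy $q_{[k]} \circ {p_{[k]}}^{-1} = \tau_{[k]}$ as unsigned permutations, hence all equal to the common underlying permutation; thus $\CambCorresp_\ell(\tau)$ is indeed a leveled Cambrian $\ell$-tuple. These two constructions are mutually inverse because $\CambCorresp$ is a bijection in each coordinate, so $\CambCorresp_\ell$ is a bijection.

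Next I would address $\surjectionPermAsso_\ell$. By definition $\surjectionPermAsso_\ell(\tau) = \big[ \surjectionPermAsso(\tau_{[1]}), \dots, \surjectionPermAsso(\tau_{[\ell]}) \big]$, where each $\surjectionPermAsso(\tau_{[k]})$ is the $\PSymbol$-symbol of the leveled Cambrian tree $\CambCorresp(\tau_{[k]})$. The key point is that the union $\bigcup_k \surjectionPermAsso(\tau_{[k]})$ is acyclic: indeed the common permutation $\sigma \eqdef q_{[k]} \circ {p_{[k]}}^{-1}$ associated with $\CambCorresp_\ell(\tau)$ is simultaneously a linear extension of every $\surjectionPermAsso(\tau_{[k]})$ (since, by Proposition from Section~\ref{subsec:CambrianCorrespondence}, the linear extensions of $\tree$ are exactly the permutations $\tau$ with $\surjectionPermAsso(\tau) = \tree$), so it is a common linear extension of the whole tuple, which forces the union to be acyclic. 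Hence $\surjectionPermAsso_\ell(\tau)$ is a genuine Cambrian $\ell$-tuple. Surjectivity follows because any Cambrian $\ell$-tuple $\tuple$ of signature $\signatures$ has, by acyclicity, a linear extension of $\bigcup_k \tuple_{[k]}$; that linear extension, signed coordinatewise by $\signatures$, is an $\ell$-signed permutation mapped by $\surjectionPermAsso_\ell$ to $\tuple$.

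The one genuinely substantive step, which I expect to be the main obstacle, is verifying that for a Cambrian $\ell$-tuple the union of the trees being acyclic is equivalent to the existence of a common linear extension — i.e.\ that the fiber $(\surjectionPermAsso_\ell)^{-1}(\tuple)$ is precisely $\linearExtensions\big(\bigcup_k \tuple_{[k]}\big)$ and is non-empty. Non-emptiness is immediate since acyclicity of a finite directed graph is equivalent to admitting a topological sort; the content is matching this with the coordinatewise characterization of $\surjectionPermAsso$-fibers. This is the same argument used for the $\ell=2$ case in the proof that $\surjectionPermAssoBax$ is a surjection (Section~\ref{sec:BaxterTrees}), and it carries over verbatim: a permutation $\tau$ lies in $(\surjectionPermAsso_\ell)^{-1}(\tuple)$ iff $\surjectionPermAsso(\tau_{[k]}) = \tuple_{[k]}$ for all $k$, iff $\tau$ is a linear extension of each $\tuple_{[k]}$, iff $\tau \in \linearExtensions\big(\bigcup_k \tuple_{[k]}\big)$. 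Everything else is a routine coordinatewise lift of the single-tree results already proved, which is why, as the section preamble notes, the detailed proofs are left to the reader.
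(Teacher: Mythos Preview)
Your proposal is correct and is exactly the argument the paper intends: the section preamble explicitly states that all proofs are left to the reader as straightforward generalizations of the $\ell=2$ case, and your coordinatewise lift of the bijection~$\CambCorresp$ together with the observation that a common linear extension exists precisely because the union is acyclic is the expected content. There is nothing to compare against, since the paper provides no proof of its own for this proposition.
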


For example, the Cambrian $2$-tuple and the leveled Cambrian $2$-tuple on \fref{fig:CambrianPair} are
\[
\surjectionPermAsso_2 \left( \uptilde{\downw{2}} \uptilde{\upw{7}} \downtilde{\downw{5}} \uptilde{\downw{1}} \downtilde{\upw{3}} \uptilde{\downw{4}} \downtilde{\upw{6}} \right)
\qquad\text{and}\qquad
\CambCorresp_2 \left( \uptilde{\downw{2}} \uptilde{\upw{7}} \downtilde{\downw{5}} \uptilde{\downw{1}} \downtilde{\upw{3}} \uptilde{\downw{4}} \downtilde{\upw{6}} \right).
\]
As in the Cambrian and Baxter-Cambrian situation, the permutations with the same $\PSymbol\Bax$-symbol define the Cambrian congruence classes on $\ell$-signed permutations.

\begin{definition}
For a signature $\ell$-tuple~$\signatures$, the \defn{$\signatures$-Cambrian congruence} on~$\fS^\signatures$ is the intersection~$\equiv_\signatures \eqdef \bigcap_{k \in [\ell]} \equiv_{\signatures_{[k]}}$ of all $\signatures_{[k]}$-Cambrian congruences. In other words, it is the transitive closure of the rewriting rule $UacV \equiv_\signatures UcaV$ if for all~$k \in [\ell]$, there exists~$a < b_{[k]} < c$ such that~$(\signatures_{[k]})_{b_{[k]}} = +$ and~$b_{[k]}$ appears in~$U$, or~$(\signatures_{[k]})_{b_{[k]}} = -$ and~$b_{[k]}$ appears in~$V$. The \defn{Cambrian congruence} on~$\fS_{\pm^\ell}$ is the equivalence relation~$\equiv_\ell$ on all $\ell$-signed permutations obtained as the union of all $\signatures$-Cambrian congruences.
\end{definition}

\enlargethispage{-.4cm}
For example, the $[{-}{+}{-}{-}, {+}{-}{-}{-}]$-Cambrian classes are represented on \fref{fig:latticesTer}.

\begin{figure}[h]
  \centerline{\includegraphics[width=.48\textwidth]{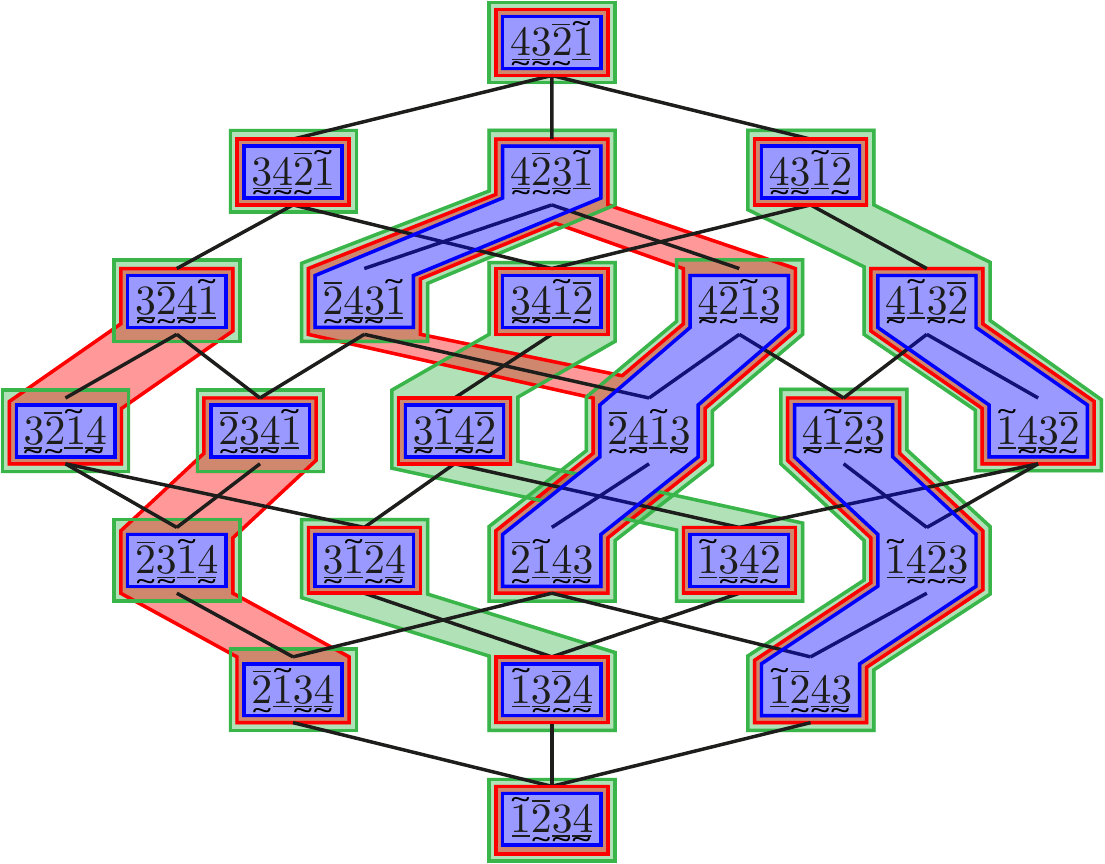}\quad\includegraphics[width=.48\textwidth]{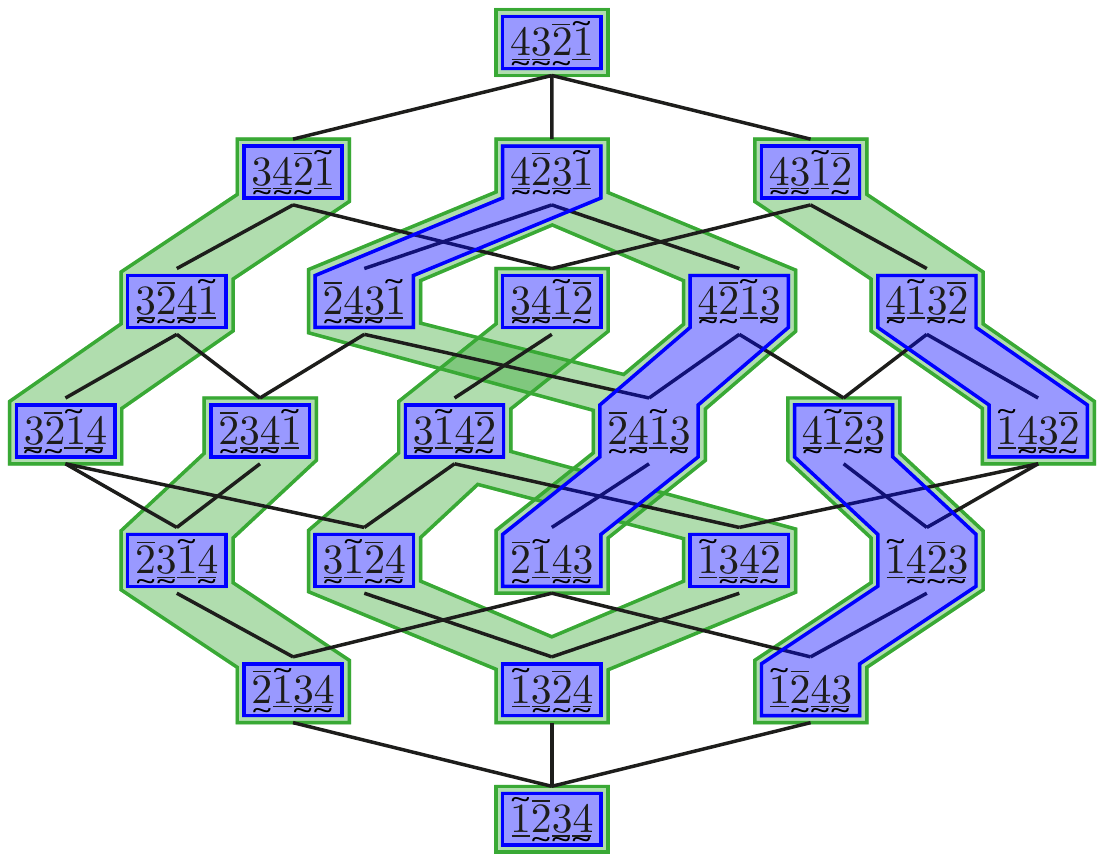}}
  \caption{(Left)~The $[{-}{+}{-}{-}, {+}{-}{-}{-}]$-Cambrian classes (blue) are the intersections of the $({-}{+}{-}{-})$-Cambrian classes (red) and the $({+}{-}{-}{-})$-Cambrian classes (green). (Right) $[{-}{+}{-}{-}, {+}{-}{-}{-}]$-Cambrian (blue) and boolean (green) congruence classes on the weak order.}
  \label{fig:latticesTer}
\end{figure}

\begin{proposition}
Two $\ell$-signed permutations~$\tau, \tau' \in \fS_{\pm^\ell}$ are Cambrian congruent if and only if they have the same~$\PSymbol_\ell$-symbol:
\[
\tau \equiv_\ell \tau' \iff \surjectionPermAsso_\ell(\tau) = \surjectionPermAsso_\ell(\tau').
\]
\end{proposition}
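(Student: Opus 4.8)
The plan is to reduce this statement to the corresponding fact for ordinary (one-signature) Cambrian congruence, namely Proposition~\ref{prop:CambrianClass}, applied component by component. The key observation is that the $\PSymbol_\ell$-symbol is by definition the $\ell$-tuple $\surjectionPermAsso_\ell(\tau) = \big[ \surjectionPermAsso(\tau_{[1]}), \dots, \surjectionPermAsso(\tau_{[\ell]}) \big]$, so two $\ell$-signed permutations $\tau, \tau' \in \fS^\signatures$ have the same $\PSymbol_\ell$-symbol if and only if $\surjectionPermAsso(\tau_{[k]}) = \surjectionPermAsso(\tau'_{[k]})$ for every $k \in [\ell]$. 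Here I must first note that $\tau_{[k]}$ and $\tau'_{[k]}$ have the same underlying permutation as $\tau$ and $\tau'$ respectively and the same v-signature $\signatures_{[k]}$, so Proposition~\ref{prop:CambrianClass} applies and gives $\surjectionPermAsso(\tau_{[k]}) = \surjectionPermAsso(\tau'_{[k]}) \iff \tau_{[k]} \equiv_{\signatures_{[k]}} \tau'_{[k]}$; and since the Cambrian congruence only depends on the underlying permutation and the relevant v-signature, $\tau_{[k]} \equiv_{\signatures_{[k]}} \tau'_{[k]}$ is literally the condition that $\tau$ and $\tau'$ are related by $\equiv_{\signatures_{[k]}}$ when viewed as signed permutations with that single signature.

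First I would record the above translation, obtaining
\[
\surjectionPermAsso_\ell(\tau) = \surjectionPermAsso_\ell(\tau') \iff \forall k \in [\ell], \; \tau \equiv_{\signatures_{[k]}} \tau' \iff \tau \; \Big( \bigcap_{k \in [\ell]} \equiv_{\signatures_{[k]}} \Big) \; \tau',
\]
which is exactly $\tau \equiv_\signatures \tau'$ by the definition of the $\signatures$-Cambrian congruence. Then I would globalize over all signature tuples: since both $\surjectionPermAsso_\ell$ and $\equiv_\ell \eqdef \bigsqcup_\signatures \equiv_\signatures$ are defined signature-tuple by signature-tuple (a fiber of $\surjectionPermAsso_\ell$ never mixes two different v-signatures, and neither does an $\equiv_\ell$-class), the equivalence extends from fixed $\signatures$ to all of $\fS_{\pm^\ell}$, giving $\tau \equiv_\ell \tau' \iff \surjectionPermAsso_\ell(\tau) = \surjectionPermAsso_\ell(\tau')$.

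There is one genuine subtlety worth spelling out, which is the only place any real work is needed: the rewriting-rule description of $\equiv_\signatures$ given in the Definition (the single rule $UacV \equiv_\signatures UcaV$ conditioned on the existence, for each $k$, of a suitable witness $b_{[k]}$) must be checked to coincide with the set-theoretic intersection $\bigcap_{k} \equiv_{\signatures_{[k]}}$. The inclusion of the rewriting closure into the intersection is immediate: a single rewriting step $UacV \to UcaV$ with witnesses $b_{[k]}$ is, for each fixed $k$, precisely an elementary step of $\equiv_{\signatures_{[k]}}$ (after matching the witness $b_{[k]}$ appearing in $U$ or $V$ against the two cases $\signature_b = -$ or $\signature_b = +$ of Definition~\ref{def:CambrianCongruence}), and an edge that is simultaneously an elementary step of each $\equiv_{\signatures_{[k]}}$ certainly lies in the intersection. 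The reverse inclusion is the point that needs the structural input: two permutations lying in the same class of $\bigcap_k \equiv_{\signatures_{[k]}}$ need not a priori be joined by a path each of whose edges is an elementary step in \emph{every} $\equiv_{\signatures_{[k]}}$ at once. This is where I would invoke the lattice-congruence property: each $\equiv_{\signatures_{[k]}}$ is a lattice congruence of the weak order on $\fS^{\signatures_{[k]}}$ (Proposition attributed to Reading~\cite{Reading-CambrianLattices}), hence so is their intersection, hence each class of the intersection is an interval of the weak order and any two elements of it are joined by a saturated chain in the weak order that stays inside the class; each covering step of such a chain is an adjacent transposition $UacV \to UcaV$ which, being inside the class, is an elementary step of each $\equiv_{\signatures_{[k]}}$ and therefore carries all the required witnesses $b_{[k]}$. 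This argument is identical in spirit to the proof of Proposition~\ref{prop:CambrianClass} and to the Baxter-Cambrian case in Section~\ref{subsec:BaxterCongruence}, and is exactly why the paper remarks that the proofs of this section are left to the reader. I expect this verification that the rewriting description matches the intersection to be the only real obstacle; everything else is bookkeeping with the component maps.
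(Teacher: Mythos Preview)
Your proposal is correct and follows exactly the approach the paper intends: reduce to Proposition~\ref{prop:CambrianClass} component by component, using that~$\surjectionPermAsso_\ell$ is defined coordinatewise and that~$\equiv_\signatures$ is defined as the intersection~$\bigcap_k \equiv_{\signatures_{[k]}}$. The paper gives no proof here at all (it announces at the start of Section~\ref{sec:tuples} that all proofs are left to the reader as straightforward generalizations of Sections~\ref{sec:BaxterTrees} and~\ref{sec:BaxterCambrianAlgebra}), and the Baxter-Cambrian analogue is proved in one line by exactly this reduction.

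One small remark on scope: the ``subtlety'' you carefully work out---that the rewriting-rule description and the intersection description of~$\equiv_\signatures$ coincide---is not actually needed for the Proposition as stated, since the paper \emph{defines}~$\equiv_\signatures$ as the intersection and only offers the rewriting rule as an ``in other words''. Your lattice-congruence argument is a valid way to justify that parenthetical equivalence, but an alternative closer to the paper's style (and to the proof of Proposition~\ref{prop:CambrianClass}) is to note that the fiber~$(\surjectionPermAsso_\ell)^{-1}(\tuple)$ is the set of linear extensions of the acyclic graph~$\bigcup_k \tuple_{[k]}$, that any two linear extensions of a finite DAG are connected by adjacent transpositions of incomparable elements, and that~$a,c$ are incomparable in the union iff they are incomparable in every~$\tuple_{[k]}$, which by the argument of Proposition~\ref{prop:CambrianClass} produces a witness~$b_{[k]}$ for each~$k$. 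Either route works; yours is perhaps more robust, the linear-extension route is more elementary.
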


\begin{proposition}
The $\signature$-Cambrian class indexed by the Cambrian $\ell$-tuple~$\tuple$ is the intersection of the $\signatures_{[k]}$-Cambrian classes indexed by~$\tuple_{[k]}$ over~$k \in [\ell]$.
\end{proposition}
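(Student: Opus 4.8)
The plan is to generalize verbatim the proof of Proposition~\ref{prop:intersectionCambrianClasses}, simply unwinding the relevant definitions. First I would recall that the $\signatures$-Cambrian class indexed by a Cambrian $\ell$-tuple~$\tuple$ is, by definition, the fiber $(\surjectionPermAsso_\ell)^{-1}(\tuple)$, \ie the set of $\ell$-signed permutations $\tau \in \fS^\signatures$ such that $\surjectionPermAsso_\ell(\tau) = \tuple$. Since $\surjectionPermAsso_\ell(\tau) = \big[ \surjectionPermAsso(\tau_{[1]}), \dots, \surjectionPermAsso(\tau_{[\ell]}) \big]$ by definition of $\surjectionPermAsso_\ell$, this equality holds if and only if $\surjectionPermAsso(\tau_{[k]}) = \tuple_{[k]}$ for every $k \in [\ell]$. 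Because passing from $\tau$ to $\tau_{[k]}$ only discards the signatures other than the $k$-th while keeping the underlying permutation, this last condition says precisely that the underlying permutation of $\tau$, equipped with the single signature $\signatures_{[k]}$, lies simultaneously in the $\signatures_{[k]}$-Cambrian class indexed by~$\tuple_{[k]}$ for all $k \in [\ell]$. This already yields the claimed identity of classes.

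To make the description concrete I would then invoke the characterization of the fibers of $\surjectionPermAsso$ from Section~\ref{subsec:CambrianCorrespondence}: a signed permutation belongs to the $\signatures_{[k]}$-Cambrian class indexed by~$\tuple_{[k]}$ exactly when it is a linear extension of the transitive closure of~$\tuple_{[k]}$. Intersecting over $k \in [\ell]$, the $\signatures$-Cambrian class indexed by~$\tuple$ is the set of permutations that are simultaneously linear extensions of all the trees~$\tuple_{[k]}$, that is, the set of linear extensions of their union $\bigcup_{k \in [\ell]} \tuple_{[k]}$. This union is acyclic by the very definition of a Cambrian $\ell$-tuple, so the class is non-empty, which is exactly the role played by the acyclicity of $\graphTwin$ in Definition~\ref{def:twinCambrianTrees}. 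Alternatively, the same statement can be read off the congruence side: by definition $\equiv_\signatures = \bigcap_{k \in [\ell]} \equiv_{\signatures_{[k]}}$, and two $\ell$-signed permutations have the same $\PSymbol_\ell$-symbol if and only if $\tau_{[k]} \equiv_{\signatures_{[k]}} \tau'_{[k]}$ for all $k \in [\ell]$, so that $\signatures$-Cambrian classes are precisely the common refinements of the $\signatures_{[k]}$-Cambrian classes.

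I do not expect a genuine obstacle here: the statement is a formal consequence of the definition of $\surjectionPermAsso_\ell$, of the definition of $\equiv_\signatures$ as an intersection, and of the already established description of Cambrian classes as fibers of $\surjectionPermAsso$. The only point deserving a word of care, and it is handled by the definition itself, is that $\bigcup_{k \in [\ell]} \tuple_{[k]}$ is acyclic, which guarantees that the intersection of the component classes is non-empty, hence that the indexing of $\signatures$-Cambrian classes by Cambrian $\ell$-tuples is genuinely well defined.
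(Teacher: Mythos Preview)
Your proposal is correct and follows exactly the approach the paper intends: the paper leaves this proof to the reader as a straightforward generalization of Proposition~\ref{prop:intersectionCambrianClasses}, and you have carried out precisely that generalization by unwinding the definition of~$\surjectionPermAsso_\ell$ componentwise and identifying the fiber with the set of linear extensions of~$\bigcup_{k \in [\ell]} \tuple_{[k]}$. Your additional remark on the congruence side ($\equiv_\signatures = \bigcap_{k} \equiv_{\signatures_{[k]}}$) is a welcome complement but not needed for the argument.
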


We now present the rotation operation on Cambrian $\ell$-tuples.

\begin{definition}
\label{def:rotationTuples}
Let~$\tuple$ be a Cambrian $\ell$-tuple and consider an edge~$i \to j$ of the union~$\bigcup_{k \in [\ell]} \tuple_{[k]}$. We say that the edge~$i \to j$ is \defn{rotatable} if either~$i \to j$ is an edge or $i$ and~$j$ are incomparable in each tree~$\tuple_{[k]}$ (note that $i \to j$ is an edge in at least one of these trees since it belongs to their union). If~$i \to j$ is rotatable in~$\tuple$, its \defn{rotation} transforms~$\tuple$ to the $\ell$-tuple of trees~$\tuple' \eqdef \big[ \tuple'_{[1]}, \dots, \tuple'_{[\ell]} \big]$,~where~$\tuple'_{[k]}$ is obtained by rotation of~$i \to j$ in~$\tuple_{[k]}$ if possible and~$\tuple'_{[k]} = \tuple_{[k]}$ otherwise.
\end{definition}

\begin{proposition}
Rotating a rotatable edge~$i \to j$ in a Cambrian $\ell$-tuple~$\tuple$ yields a Cambrian $\ell$-tuple~$\tuple'$ with the same signature.
\end{proposition}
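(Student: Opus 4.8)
The plan is to reduce the statement to the two-tree case already established for pairs of twin Cambrian trees (Proposition preceding Definition~\ref{def:rotationTuples}), together with the rotation result for single Cambrian trees (Proposition~\ref{prop:rotation}). First I would check that each~$\tuple'_{[k]}$ is a Cambrian tree with signature~$\signature(\tuple_{[k]})$: this is immediate from Proposition~\ref{prop:rotation} when~$i \to j$ is an edge of~$\tuple_{[k]}$, and trivial when~$\tuple'_{[k]} = \tuple_{[k]}$. So the only real content is that the union~$\bigcup_{k \in [\ell]} \tuple'_{[k]}$ remains acyclic, \ie that~$\tuple'$ is again a Cambrian $\ell$-tuple. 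Since the signatures are unchanged, this will give the claim.

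The key step is the acyclicity of the union after rotation, and I would argue it exactly as in the twin case: exhibit a linear extension of~$\bigcup_{k} \tuple'_{[k]}$. Since~$\tuple$ is a Cambrian $\ell$-tuple, its union~$\bigcup_k \tuple_{[k]}$ is acyclic, so it admits a linear extension~$\tau$. Because~$i \to j$ is rotatable, in every tree~$\tuple_{[k]}$ the vertices~$i$ and~$j$ are either joined by the edge~$i \to j$ or incomparable; in particular~$i$ and~$j$ are \emph{consecutive} in~$\tau$ (no vertex can lie strictly between them, for such a vertex would have to be comparable to both~$i$ and~$j$ in some tree in a way incompatible with rotatability — this is the same argument as in the proof of Proposition~\ref{prop:CambrianClass} and the canopy observation of Section~\ref{subsec:canopy}). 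Let~$\tau'$ be obtained from~$\tau$ by swapping the consecutive entries~$i$ and~$j$. I claim~$\tau'$ is a linear extension of~$\bigcup_k \tuple'_{[k]}$: for each~$k$, either the edge between~$i$ and~$j$ was reversed by the rotation (and the local analysis of Figure~\ref{fig:rotation} shows that swapping~$i,j$ in a linear extension of~$\tuple_{[k]}$ gives a linear extension of~$\tuple'_{[k]}$, since the subtrees~$L,A$ versus~$B,R$ are redistributed exactly so that this works), or~$i,j$ stay incomparable in~$\tuple'_{[k]} = \tuple_{[k]}$ (in which case swapping two incomparable consecutive vertices still yields a linear extension). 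Hence~$\bigcup_k \tuple'_{[k]}$ has a linear extension and is acyclic.

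The main obstacle — though a mild one — is justifying cleanly that~$i$ and~$j$ are consecutive in~$\tau$ and that the swap behaves correctly tree-by-tree. For the consecutiveness, I would spell out: if~$v$ lies strictly between~$i$ and~$j$ in~$\tau$, pick any tree~$\tuple_{[k]}$ in which~$i \to j$ is an edge; then~$v$ would have to be incomparable to both~$i$ and~$j$ in~$\tuple_{[k]}$, but any vertex incomparable to the source~$i$ of an edge~$i \to j$ lies in the ancestor subtree of~$i$ not containing~$j$, hence is incomparable to~$j$ only if it also lies outside the descendant subtree of~$j$, which forces it on the side of~$i$ in~$\tau$ — contradicting betweenness. (This is precisely the reasoning behind Proposition~\ref{prop:rotation} and the edge-cut description.) For the per-tree swap, the case~$\tuple'_{[k]} = \tuple_{[k]}$ is obvious, and the case of a genuine rotation is covered verbatim by the proof of the rotation proposition for pairs of twin trees, where the same swap was shown to send a linear extension of~$\graphTwin$ to one of~$\unionOp{\tree'_\circ}{\tree'_\bullet}$; nothing in that argument used~$\ell = 2$. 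I would therefore simply write: ``The proof is identical to that of the rotation proposition for pairs of twin Cambrian trees: by Proposition~\ref{prop:rotation} each~$\tuple'_{[k]}$ is a Cambrian tree with signature~$\signature(\tuple_{[k]})$, and switching~$i$ and~$j$ in a linear extension of~$\bigcup_k \tuple_{[k]}$ yields a linear extension of~$\bigcup_k \tuple'_{[k]}$, so the latter is acyclic.''
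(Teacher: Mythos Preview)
Your final one-sentence writeup is exactly what the paper intends: the paper explicitly leaves all proofs of this section to the reader as straightforward generalizations of the twin case, and your proposed sentence is verbatim the twin proof with~$\ell$ trees in place of two.

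The detailed justification you sketch, however, contains a real error. You assert that~$i$ and~$j$ are consecutive in an \emph{arbitrary} linear extension~$\tau$ of the union, arguing that any vertex~$v$ strictly between them in~$\tau$ would be incomparable to both~$i$ and~$j$ in any tree containing the edge~$i \to j$. This is false already for~$\ell = 1$: in the $({-})^3$-Cambrian tree with edges~$1 \to 2$ and~$3 \to 2$, the edge~$1 \to 2$ is rotatable, $\tau = (1,3,2)$ is a linear extension with~$3$ strictly between~$1$ and~$2$, and~$3$ is comparable to~$j = 2$. Swapping~$1$ and~$2$ in this~$\tau$ yields~$(2,3,1)$, which is \emph{not} a linear extension of the rotated tree~$3 \to 2 \to 1$. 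The sentence from the twin proof must therefore be read existentially: one chooses a linear extension in which~$i$ and~$j$ are adjacent (here~$\tau = (3,1,2)$), and then the swap works tree-by-tree exactly as you describe in your last paragraph. So your overall plan and final writeup are fine; only the claim that consecutiveness is automatic for every~$\tau$ needs to be replaced by the observation that a suitable~$\tau$ can be chosen.
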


Consider the \defn{increasing rotation graph} whose vertices are $\signatures$-Cambrian tuples and whose arcs are increasing rotations~$\tuple \to \tuple'$, \ie for which~$i < j$ in Definition~\ref{def:rotationTuples}. This graph is illustrated on \fref{fig:tupleCambrian} for the signature $2$-tuple~$\signatures = [{-}{+}{-}{-}, {+}{-}{-}{-}]$.

\begin{figure}[t]
  \centerline{\includegraphics[width=.92\textwidth]{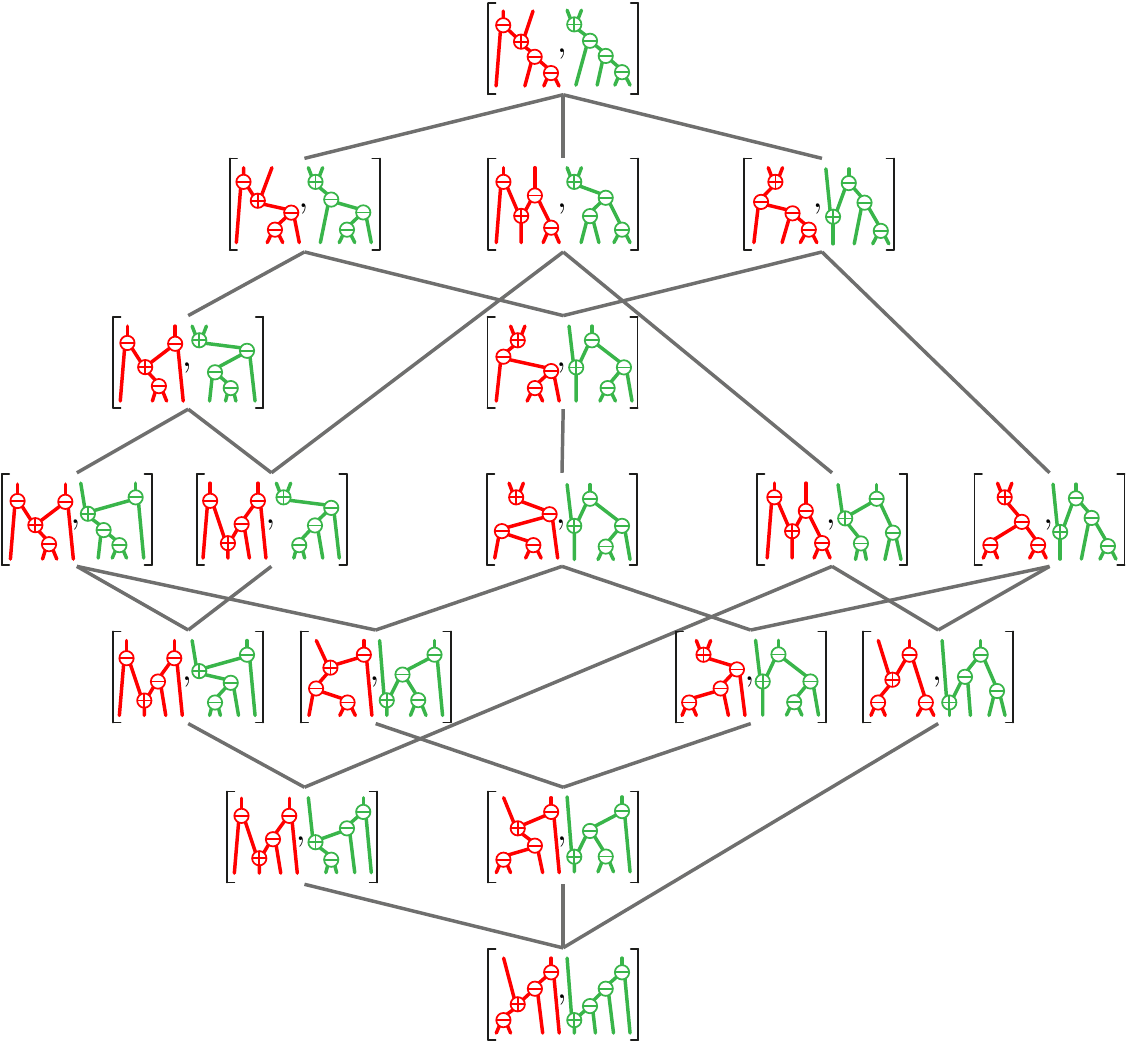}}
  \caption{The $[{-}{+}{-}{-}, {+}{-}{-}{-}]$-Cambrian lattice on Cambrian tuples. See also \fref{fig:latticesTer}.}
  \label{fig:tupleCambrian}
\end{figure}

\begin{proposition}
For any cover relation~$\tau < \tau'$ in the weak order on~$\fS^\signatures$, either~$\surjectionPermAsso_\ell(\tau) = \surjectionPermAsso_\ell(\tau')$ or~$\surjectionPermAsso_\ell(\tau) \to \surjectionPermAsso_\ell(\tau')$ in the increasing rotation graph.
\end{proposition}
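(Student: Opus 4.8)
The plan is to reduce this statement to the corresponding fact for each individual Cambrian tree, exactly as the analogous Baxter-Cambrian statement was handled earlier in the excerpt. The key observation is that both the weak order cover relation and the map $\surjectionPermAsso_\ell$ are defined componentwise: a cover relation $\tau < \tau'$ in the weak order on $\fS^\signatures$ corresponds to switching two consecutive values $i < j$ in $\tau$ to $ji$ in $\tau'$, and $\surjectionPermAsso_\ell(\tau) = \big[ \surjectionPermAsso(\tau_{[1]}), \dots, \surjectionPermAsso(\tau_{[\ell]}) \big]$, where each $\tau_{[k]}$ differs from $\tau'_{[k]}$ by the very same transposition of consecutive values.

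First I would invoke the already-established Cambrian case: for each $k \in [\ell]$, either $i$ and $j$ are incomparable in $\surjectionPermAsso(\tau_{[k]})$, in which case $\surjectionPermAsso(\tau_{[k]}) = \surjectionPermAsso(\tau'_{[k]})$, or there is an edge $i \to j$ in $\surjectionPermAsso(\tau_{[k]})$ and $\surjectionPermAsso(\tau'_{[k]})$ is obtained from it by rotation of $i \to j$ (this is precisely the content recalled in the proof of the earlier proposition on the Baxter-Cambrian rotation graph, itself following from Proposition~\ref{prop:rotation} and the fact that $\surjectionPermAsso$ is a lattice homomorphism). Second, I would note that the edge $i \to j$ is rotatable in $\surjectionPermAsso_\ell(\tau)$ in the sense of Definition~\ref{def:rotationTuples}: indeed, in each component $\surjectionPermAsso(\tau_{[k]})$ the vertices $i$ and $j$ are either incomparable or joined by the edge $i \to j$ (they cannot be joined by $j \to i$ since $i < j$ and $i$ appears before $j$ in the linear extension $\tau_{[k]}$, so if they are comparable then $i$ is below $j$). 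Moreover $i \to j$ is an edge in at least one component, because $i,j$ are consecutive values and consecutive in $\tau$, so in the tree they cannot be separated in every component without forcing some vertex strictly between $i$ and $j$ which is impossible.

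Third, I would combine these: if $i$ and $j$ are incomparable in every component then $\surjectionPermAsso_\ell(\tau) = \surjectionPermAsso_\ell(\tau')$; otherwise $i \to j$ is rotatable in $\surjectionPermAsso_\ell(\tau)$, and performing this (increasing, since $i < j$) rotation produces exactly $\surjectionPermAsso_\ell(\tau')$ componentwise — rotating $i \to j$ where it is an edge and leaving the component unchanged where $i,j$ are incomparable — which is the definition of an arc $\surjectionPermAsso_\ell(\tau) \to \surjectionPermAsso_\ell(\tau')$ in the increasing rotation graph. I do not anticipate a serious obstacle here; the statement is a routine componentwise bookkeeping argument, and the only point requiring a little care is verifying that the transposition $ij \mapsto ji$ in $\tau$ cannot have inconsistent effects across components (e.g.\ an edge $i \to j$ in one tree and $j \to i$ in another), which is ruled out by the order of $i,j$ in the common linear extension together with the comparability of consecutive labels noted in Section~\ref{subsec:canopy}. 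Since the excerpt explicitly says all proofs in this section are left to the reader, the expected write-up is simply: ``The proof is identical to that of the corresponding statement for twin Cambrian trees, applied componentwise.''

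\begin{proof}
Identical to the proof of the corresponding statement for pairs of twin Cambrian trees, applied componentwise: if~$\tau'$ is obtained from~$\tau$ by switching two consecutive values~$ij$ to~$ji$ (with~$i<j$), then for each~$k \in [\ell]$ either~$i$ and~$j$ are incomparable in~$\surjectionPermAsso(\tau_{[k]})$, so that~$\surjectionPermAsso(\tau_{[k]}) = \surjectionPermAsso(\tau'_{[k]})$, or there is an edge~$i \to j$ in~$\surjectionPermAsso(\tau_{[k]})$ and~$\surjectionPermAsso(\tau'_{[k]})$ is obtained by rotating it. Since~$i$ and~$j$ are consecutive values appearing consecutively in~$\tau$, the edge~$i \to j$ appears in at least one component, hence it is rotatable in~$\surjectionPermAsso_\ell(\tau)$ in the sense of Definition~\ref{def:rotationTuples}, and its rotation yields~$\surjectionPermAsso_\ell(\tau')$. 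Therefore either~$\surjectionPermAsso_\ell(\tau) = \surjectionPermAsso_\ell(\tau')$ or~$\surjectionPermAsso_\ell(\tau) \to \surjectionPermAsso_\ell(\tau')$ is an increasing rotation.
\end{proof}
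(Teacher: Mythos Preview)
Your overall approach --- reducing to the componentwise Cambrian case exactly as in the Baxter-Cambrian proof --- is precisely what the paper intends (it explicitly leaves all proofs in this section to the reader as straightforward adaptations). The componentwise dichotomy you state is correct: for each $k$, either $i$ and $j$ are incomparable in $\surjectionPermAsso(\tau_{[k]})$ (so this component is unchanged) or $i \to j$ is an edge there and rotating it gives $\surjectionPermAsso(\tau'_{[k]})$.

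There is, however, one genuine error. You claim that ``since $i$ and $j$ are consecutive values appearing consecutively in $\tau$, the edge $i \to j$ appears in at least one component'', and in your discussion you justify this via the canopy observation of Section~\ref{subsec:canopy} that consecutive integers are always comparable in a Cambrian tree. But in a weak order cover relation, $i$ and $j$ are adjacent \emph{letters} in the word $\tau$ --- they are consecutive in \emph{position}, not consecutive \emph{integers}. There is no reason for $j = i+1$, and in general $i$ and $j$ can be incomparable in every component (already for $\ell=1$, the Cambrian congruence identifies $\tau$ and $\tau'$ whenever a witness $b$ with $i<b<j$ exists --- this is exactly the equality case $\surjectionPermAsso(\tau)=\surjectionPermAsso(\tau')$ in the paper's Baxter-Cambrian proof). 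Fortunately this false claim is also unnecessary: the correct argument is simply the dichotomy itself. If $i$ and $j$ are incomparable in every component, then $\surjectionPermAsso_\ell(\tau) = \surjectionPermAsso_\ell(\tau')$; otherwise $i \to j$ is an edge of the union $\bigcup_k \tuple_{[k]}$ and is rotatable in the sense of Definition~\ref{def:rotationTuples} (each component having either the edge $i \to j$ or $i,j$ incomparable, by the first step), and its rotation yields $\surjectionPermAsso_\ell(\tau')$. Drop the assertion that the edge must appear in some component, and your proof is complete.
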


It follows that the increasing rotation graph on $\signatures$-Cambrian tuples is acyclic. We call \defn{\mbox{$\signatures$-Cambrian} poset} its transitive closure. In other words, the previous statement says that the map~$\surjectionPermAsso_\ell$ defines a poset homomorphism from the weak order on~$\fS^\signatures$ to the $\signatures$-Cambrian poset. This homomorphism is in fact a lattice homomorphism.

\begin{proposition}
The~$\signatures$-Cambrian poset is a lattice quotient of the weak order on~$\fS^\signatures$.
\end{proposition}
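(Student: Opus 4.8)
The plan is to follow the template of the $\signature$-Baxter--Cambrian case: first establish that the $\signatures$-Cambrian congruence is a lattice congruence of the weak order on $\fS^\signatures$, then appeal to the general principle that a lattice modulo a lattice congruence is a lattice (with the quotient map a lattice homomorphism), and finally identify the resulting quotient lattice with the $\signatures$-Cambrian poset.

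For the first step, I would start from the definition of the $\signatures$-Cambrian congruence as the intersection $\bigcap_{k \in [\ell]} \equiv_{\signatures_{[k]}}$. Since the position of an $\ell$-signed permutation in the weak order depends only on the underlying (unsigned) permutation, each $\signatures_{[k]}$-Cambrian congruence --- a priori living on $\fS^{\signatures_{[k]}}$ --- transports along the weak-order isomorphism $\fS^\signatures \simeq \fS_n \simeq \fS^{\signatures_{[k]}}$ (forgetting signs) to an equivalence relation on $\fS^\signatures$ that is a lattice congruence, because each individual Cambrian congruence is a lattice congruence of the weak order~\cite{Reading-CambrianLattices}. I would then invoke the elementary fact that an intersection of lattice congruences of a lattice $L$ is again a lattice congruence of $L$: an equivalence relation compatible with both $\vee$ and $\wedge$ stays compatible after intersecting with another such relation, and order-convexity of classes is preserved under intersection. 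This yields that $\equiv_\signatures$ is a lattice congruence of the weak order on $\fS^\signatures$, hence $\fS^\signatures / {\equiv_\signatures}$ is a lattice and the projection to classes is a lattice homomorphism.

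For the last step, I would observe that the elements of $\fS^\signatures / {\equiv_\signatures}$ are the $\equiv_\signatures$-classes, which by the proposition identifying these classes with the fibers of $\surjectionPermAsso_\ell$ are exactly the Cambrian $\ell$-tuples of signature $\signatures$. The cover relations of the quotient lattice are the images of the weak-order covers $\tau \lessdot \tau'$ with $\surjectionPermAsso_\ell(\tau) \ne \surjectionPermAsso_\ell(\tau')$, and by the proposition comparing weak-order covers with the increasing rotation graph these are precisely the increasing rotations of $\signatures$-Cambrian tuples. Thus the order of $\fS^\signatures / {\equiv_\signatures}$ is the transitive closure of the increasing rotation graph, i.e. the $\signatures$-Cambrian poset, which is therefore a lattice quotient of the weak order on $\fS^\signatures$, with quotient map $\surjectionPermAsso_\ell$.

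I do not expect a genuine obstacle here --- this is presumably why the paper leaves the proof to the reader --- but the one point deserving attention is the bookkeeping in the first step: one must be careful that all the congruences $\equiv_{\signatures_{[k]}}$ are genuinely regarded as congruences of a single, fixed weak order (that of $\fS^\signatures$) before intersecting them, so that Reading's theorem and the stability of lattice congruences under intersection apply verbatim. Everything beyond that is formal.
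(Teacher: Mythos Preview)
Your proposal is correct and follows essentially the same approach as the paper. The paper leaves this proof to the reader, and the intended argument is precisely the Baxter--Cambrian template you invoke: the $\signatures$-Cambrian congruence is by definition the intersection $\bigcap_{k \in [\ell]} \equiv_{\signatures_{[k]}}$ of Cambrian congruences, each of which is a lattice congruence of the weak order by Reading's result, and intersections of lattice congruences are lattice congruences.
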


The $\signatures$-Cambrian lattice has natural geometric realizations, similar to the geometric realizations of the Baxter-Cambrian lattice. Namely, for a signature $\ell$-tuple~$\signatures$, the cones~$\Cone(\tuple) \eqdef \Cone(\bigcup_{k \in [\ell]} \tuple_{[k]})$ for all~$\signatures$-Cambrian tuples~$\tuple$ form (together with all their faces) a complete polyhedral fan that we call the $\signatures$-Cambrian fan.  It is the common refinement of the $\signatures_{[k]}$-Cambrian fans for~$k \in [\ell]$. It is therefore the normal fan of the Minkowski sum of the associahedra~$\Asso[\signatures_{[k]}]$ for~$k \in [\ell]$. An example is illustrated on \fref{fig:MinkowskiSumsTuple}. The $1$-skeleton of this polytope, oriented in the direction of~$(n, \dots, 1)-(1, \dots, n) = \sum_{i \in [n]} (n+1-2i) \, e_i$, is the Hasse diagram of the $\signatures$-Cambrian lattice. Finally, the $\signatures$-Cambrian $\PSymbol_\ell$-symbol can be read geometrically~as
\[
\tuple = \surjectionPermAsso_\ell(\tau) \iff \Cone(\tuple) \subseteq \Cone(\tau) \iff \Cone\polar(\tuple) \supseteq \Cone\polar(\tau).
\]

\begin{figure}[h]
  \vspace*{.8cm}
  \begin{overpic}[scale=.35]{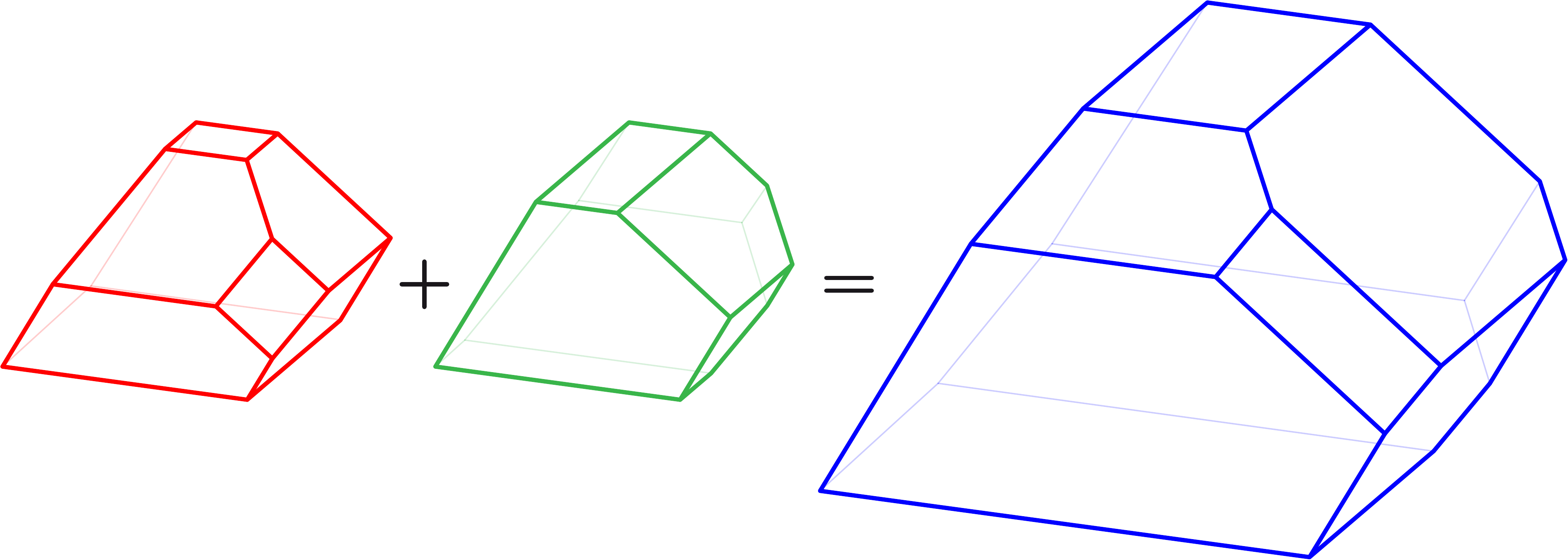}
	\put( 6,29){$\red\Asso[{-}{+}{-}{-}]$}
	\put(32,29){$\green\Asso[{+}{-}{-}{-}]$}
	\put(62,37){$\darkblue\Asso[{-}{+}{-}{-}] + \Asso[{+}{-}{-}{-}]$}
  \end{overpic}
  \vspace{.1cm}
  \caption{The Minkowski sum (blue, right) of the associahedra~$\Asso[{-}{+}{-}{-}]$ (red, left) and~$\Asso[{+}{-}{-}{-}]$ (green, middle) gives a realization of the $[{-}{+}{-}{-}, {+}{-}{-}{-}]$-Cambrian lattice, represented in Figures~\ref{fig:latticesTer} and~\ref{fig:tupleCambrian}.}
  \label{fig:MinkowskiSumsTuple}
\end{figure}

%%%%%%%%%%%%%%%%%

\subsection{Cambrian tuple Hopf Algebra}
\label{subsec:CambrianTupleAlgebra}

In this section, we construct a Hopf algebra indexed by Cambrian $\ell$-tuples, similar to the Baxter-Cambrian algebra. Exactly as we needed to consider the Hopf algebra~$\FQSym_\pm$ on signed permutations when constructing the Cambrian algebra to keep track of the signature, we now need to consider a natural extension of~$\FQSym$ on $\ell$-signed permutation to keep track of the $\ell$ signatures of~$\signatures$.

The \defn{shifted shuffle product}~$\tau \shiftedShuffle \tau'$ (resp.~\defn{convolution product}~$\tau \convolution \tau'$) of two $\ell$-signed permutations~$\tau, \tau'$ is still defined as the shifted product (resp.~convolution product) where signs travel with their values (resp.~stay at their positions). When~$\ell = 2$ and the two signatures are marked with~$\up{\phantom{1}}/\down{\phantom{1}}$ and~$\uptilde{\phantom{1}}/\downtilde{\phantom{1}}$ respectively, we have for example
\begin{align*}
{\red \uptilde{\upw{1}}}{\red \uptilde{\downw{2}}} \shiftedShuffle {\darkblue \downtilde{\downw{2}}}{\darkblue \uptilde{\downw{3}}}{\darkblue \downtilde{\upw{1}}} & =
\{
	{\red \uptilde{\upw{1}}}{\red \uptilde{\downw{2}}}{\darkblue \downtilde{\downw{4}}\uptilde{\downw{5}}}{\darkblue \downtilde{\upw{3}}},
	{\red \uptilde{\upw{1}}}{\darkblue \downtilde{\downw{4}}}{\red \uptilde{\downw{2}}}{\darkblue \uptilde{\downw{5}}}{\darkblue \downtilde{\upw{3}}},
	{\red \uptilde{\upw{1}}}{\darkblue \downtilde{\downw{4}}\uptilde{\downw{5}}}{\red \uptilde{\downw{2}}}{\darkblue \downtilde{\upw{3}}},
	{\red \uptilde{\upw{1}}}{\darkblue \downtilde{\downw{4}}\uptilde{\downw{5}}}{\darkblue \downtilde{\upw{3}}}{\red \uptilde{\downw{2}}},
	{\darkblue \downtilde{\downw{4}}}{\red \uptilde{\upw{1}}}{\red \uptilde{\downw{2}}}{\darkblue \uptilde{\downw{5}}}{\darkblue \downtilde{\upw{3}}},
	{\darkblue \downtilde{\downw{4}}}{\red \uptilde{\upw{1}}}{\darkblue \uptilde{\downw{5}}}{\red \uptilde{\downw{2}}}{\darkblue \downtilde{\upw{3}}},
	{\darkblue \downtilde{\downw{4}}}{\red \uptilde{\upw{1}}}{\darkblue \uptilde{\downw{5}}}{\darkblue \downtilde{\upw{3}}}{\red \uptilde{\downw{2}}},
	{\darkblue \downtilde{\downw{4}}\uptilde{\downw{5}}}{\red \uptilde{\upw{1}}}{\red \uptilde{\downw{2}}}{\darkblue \downtilde{\upw{3}}},
	{\darkblue \downtilde{\downw{4}}\uptilde{\downw{5}}}{\red \uptilde{\upw{1}}}{\darkblue \downtilde{\upw{3}}}{\red \uptilde{\downw{2}}},
	{\darkblue \downtilde{\downw{4}}\uptilde{\downw{5}}}{\darkblue \downtilde{\upw{3}}}{\red \uptilde{\upw{1}}}{\red \uptilde{\downw{2}}}
\}, \\
{\red \uptilde{\upw{1}}}{\red \uptilde{\downw{2}}} \convolution {\darkblue \downtilde{\downw{2}}}{\darkblue \uptilde{\downw{3}}}{\darkblue \downtilde{\upw{1}}} & =
\{
	{\red \uptilde{\upw{1}}}{\red \uptilde{\downw{2}}}{\darkblue \downtilde{\downw{4}}}{\darkblue \uptilde{\downw{5}}}{\darkblue \downtilde{\upw{3}}}, 
	{\red \uptilde{\upw{1}}}{\red \uptilde{\downw{3}}}{\darkblue \downtilde{\downw{4}}}{\darkblue \uptilde{\downw{5}}}{\darkblue \downtilde{\upw{2}}}, 
	{\red \uptilde{\upw{1}}}{\red \uptilde{\downw{4}}}{\darkblue \downtilde{\downw{3}}}{\darkblue \uptilde{\downw{5}}}{\darkblue \downtilde{\upw{2}}}, 
	{\red \uptilde{\upw{1}}}{\red \uptilde{\downw{5}}}{\darkblue \downtilde{\downw{3}}}{\darkblue \uptilde{\downw{4}}}{\darkblue \downtilde{\upw{2}}}, 
	{\red \uptilde{\upw{2}}}{\red \uptilde{\downw{3}}}{\darkblue \downtilde{\downw{4}}}{\darkblue \uptilde{\downw{5}}}{\darkblue \downtilde{\upw{1}}}, 
	{\red \uptilde{\upw{2}}}{\red \uptilde{\downw{4}}}{\darkblue \downtilde{\downw{3}}}{\darkblue \uptilde{\downw{5}}}{\darkblue \downtilde{\upw{1}}}, 
	{\red \uptilde{\upw{2}}}{\red \uptilde{\downw{5}}}{\darkblue \downtilde{\downw{3}}}{\darkblue \uptilde{\downw{4}}}{\darkblue \downtilde{\upw{1}}}, 
	{\red \uptilde{\upw{3}}}{\red \uptilde{\downw{4}}}{\darkblue \downtilde{\downw{2}}}{\darkblue \uptilde{\downw{5}}}{\darkblue \downtilde{\upw{1}}}, 
	{\red \uptilde{\upw{3}}}{\red \uptilde{\downw{5}}}{\darkblue \downtilde{\downw{2}}}{\darkblue \uptilde{\downw{4}}}{\darkblue \downtilde{\upw{1}}}, 
	{\red \uptilde{\upw{4}}}{\red \uptilde{\downw{5}}}{\darkblue \downtilde{\downw{2}}}{\darkblue \uptilde{\downw{3}}}{\darkblue \downtilde{\upw{1}}}
\}.
\end{align*}

We denote by~$\FQSym_{\pm^\ell}$ the Hopf algebra with basis~$(F_\tau)_{\tau \in \fS_{\pm^\ell}}$ indexed by $\ell$-signed permutations and whose product and coproduct are defined by
\[
\F_\tau \product \F_{\tau'} = \sum_{\sigma \in \tau \shiftedShuffle \tau'} \F_\sigma
\qquad\text{and}\qquad
\coproduct \F_\sigma = \sum_{\sigma \in \tau \convolution \tau'} \F_\tau \otimes \F_{\tau'}.
\]

\begin{remark}[Cambrian algebra \versus $G$-colored binary tree algebra]
Checking that these product and coproduct produce a Hopf algebra is standard. It even extends to a Hopf algebra~$\FQSym_G$ on $G$-colored permutations for an arbitrary semigroup~$G$, see \eg \cite{NovelliThibon-coloredHopfAlgebras, BaumannHohlweg, BergeronHohlweg}. In these papers, the authors use this algebra~$\FQSym_G$ to defined $G$-colored subalgebras from congruence relations on permutations, see in particular~\cite{BergeronHohlweg}. Note that our construction of the Cambrian algebra and of the tuple Cambrian algebra really differs from the construction of~\cite{BergeronHohlweg} as our congruence relations depend on the signs, while their congruences do not.
\end{remark}

We denote by~$\Camb_\ell$ the vector subspace of~$\FQSym_{\pm^\ell}$ generated by the elements
\[
\PCamb_{\tuple} \eqdef \sum_{\substack{\tau \in \fS_{\pm^\ell} \\ \surjectionPermAsso_\ell(\tau) = \tuple}} \F_\tau = \sum_{\tau \in \linearExtensions\big(\bigcup\limits_{k \in [\ell]} \tuple_{[k]}\big)} \F_\tau,
\]
for all Cambrian $\ell$-tuples~$\tuple$.
\renewcommand{\uptilde}[1]{\accentset{\vspace{-.05cm}\scalebox{.55}{$\sim$}}{#1}}
\renewcommand{\downtilde}[1]{\underaccent{\,\scalebox{.55}{$\sim$}}{#1}}
For example, for the Cambrian tuple of \fref{fig:CambrianPair}\,(left), we have
\[
\PCamb_{\left[ \raisebox{-.45cm}{\Tex}, \raisebox{-.45cm}{\TexPair} \right]} = \F_{\uptilde{\downw{2}} \uptilde{\downw{1}} \uptilde{\upw{7}} \downtilde{\downw{5}} \downtilde{\upw{3}} \uptilde{\downw{4}} \downtilde{\upw{6}}} + \F_{\uptilde{\downw{2}} \uptilde{\upw{7}} \uptilde{\downw{1}} \downtilde{\downw{5}} \downtilde{\upw{3}} \uptilde{\downw{4}} \downtilde{\upw{6}}} + \F_{\uptilde{\downw{2}} \uptilde{\upw{7}} \downtilde{\downw{5}} \uptilde{\downw{1}} \downtilde{\upw{3}} \uptilde{\downw{4}} \downtilde{\upw{6}}} + 
\F_{\uptilde{\upw{7}} \uptilde{\downw{2}} \uptilde{\downw{1}} \downtilde{\downw{5}} \downtilde{\upw{3}} \uptilde{\downw{4}} \downtilde{\upw{6}}} + 
\F_{\uptilde{\upw{7}} \uptilde{\downw{2}} \downtilde{\downw{5}} \uptilde{\downw{1}} \downtilde{\upw{3}} \uptilde{\downw{4}} \downtilde{\upw{6}}} + 
\F_{\uptilde{\upw{7}} \downtilde{\downw{5}} \uptilde{\downw{2}} \uptilde{\downw{1}} \downtilde{\upw{3}} \uptilde{\downw{4}} \downtilde{\upw{6}}}.
\]

\begin{theorem}
\label{thm:cambTupleSubalgebra}
$\Camb_\ell$ is a Hopf subalgebra of~$\FQSym_{\pm^\ell}$.
\end{theorem}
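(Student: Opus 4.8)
The plan is to mirror, coordinate by coordinate, the proof of Theorem~\ref{thm:cambSubalgebra}. Recall that $\PCamb_{\tuple} = \sum_{\tau} \F_\tau$, where $\tau$ ranges over $\linearExtensions\big(\bigcup_{k\in[\ell]}\tuple_{[k]}\big) = \bigcap_{k\in[\ell]}\linearExtensions\big(\tuple_{[k]}\big)$, which is precisely the $\signatures(\tuple)$-Cambrian class of~$\tuple$, and that this class is the transitive closure of the elementary rewritings $UacV \equiv_\ell UcaV$ admitting, for every $k\in[\ell]$, a witness $a<b_{[k]}<c$ with $b_{[k]}$ positive and appearing in~$U$, or negative and appearing in~$V$ (for the signature~$\signatures_{[k]}$). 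As in Theorem~\ref{thm:cambSubalgebra}, it then suffices to check that $\equiv_\ell$ is compatible with the shifted shuffle product (yielding that $\Camb_\ell$ is a subalgebra) and with the deconcatenation coproduct (yielding that it is a subcoalgebra), since the $\PCamb$-basis elements are exactly the sums over $\equiv_\ell$-classes.

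For the subalgebra part, fix Cambrian tuples $\tuple, \tuple'$ of signatures $\signatures \in (\pm^n)^\ell$ and $\signatures' \in (\pm^{n'})^\ell$, and two congruent $\ell$-signed permutations $\sigma \equiv_{\signatures\signatures'} \tilde\sigma$ in $\fS^{\signatures\signatures'}$; the goal is that $\F_\sigma$ occurs in $\PCamb_{\tuple} \product \PCamb_{\tuple'}$ if and only if $\F_{\tilde\sigma}$ does. Reducing to a single rewriting $\sigma = UacV$, $\tilde\sigma = UcaV$ with witnesses $b_{[k]}$ for all~$k$, pick $\tau \in \linearExtensions(\bigcup_k \tuple_{[k]})$ and $\tau' \in \linearExtensions(\bigcup_k \tuple'_{[k]})$ with $\sigma \in \tau \shiftedShuffle \tau'$, and distinguish three cases according to the position of the consecutive pair~$ac$ relative to the shift value~$n$. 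If $a \le n < c$, then $\tilde\sigma$ is again a shuffle of~$\tau$ and~$\tau'$, since swapping two adjacent letters coming from different factors preserves the shuffle. If $a,c \le n$, then (as $b_{[k]} < c \le n$ for every~$k$) the letters $a, b_{[k]}, c$ all come from~$\tau$; the pair $ac$ is consecutive in~$\tau$, each witness $b_{[k]}$ survives on the correct side, so $\tau \equiv_{\signatures_{[k]}} \tilde\tau$ for every~$k$, whence $\tilde\tau \in \linearExtensions(\bigcup_k \tuple_{[k]})$ and $\tilde\sigma \in \tilde\tau \shiftedShuffle \tau'$. The case $n < a,c$ is symmetric, the swap taking place inside~$\tau'$. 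The other rewriting rule and the general case (by transitivity) are handled exactly as in Theorem~\ref{thm:cambSubalgebra}. The subcoalgebra part is entirely parallel: starting from a linear extension~$\sigma$ of $\bigcup_k \tuple_{[k]}$ with $\sigma \in \tau \convolution \tau'$, a rewriting on~$\tau$ (or on~$\tau'$) equipped with witnesses for all~$k$ lifts through the convolution --- which only monotonically relabels values, preserving relative order and the signs attached to positions, hence the witnesses --- to a rewriting on~$\sigma$ producing $\tilde\sigma$ that is $\signatures_{[k]}$-congruent to~$\sigma$ for every~$k$, thus still a linear extension of $\bigcup_k \tuple_{[k]}$, and that lies in $\tilde\tau \convolution \tilde\tau'$.

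The only point deserving attention --- and the only place where the $\ell$-tuple setting differs from the single-tree case --- is the step where, after performing a swap inside one factor, one must re-certify that the result is simultaneously a linear extension of all $\ell$ trees. This is exactly what the universally quantified witness condition in the definition of $\equiv_\ell$ delivers: the presence of a witness $b_{[k]}$ for each coordinate~$k$ makes the swap a $\signatures_{[k]}$-Cambrian rewriting, which preserves membership in the $k$-th Cambrian class; intersecting over $k \in [\ell]$ gives the claim. Everything else is a verbatim transcription of the proof of Theorem~\ref{thm:cambSubalgebra}, so I expect no further obstacle. Alternatively, as noted after Theorem~\ref{thm:cambSubalgebra}, one may instead verify that $\equiv_\ell$ defines a $\varphi$-good mono\"id.
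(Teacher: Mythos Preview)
Your proof is correct and is exactly the argument the paper has in mind: the authors explicitly state at the start of Section~\ref{sec:tuples} that ``all proofs of this section are left to the reader'' as straightforward generalizations of the single-tree case, and your coordinate-by-coordinate transcription of the proof of Theorem~\ref{thm:cambSubalgebra} is precisely that generalization. One cosmetic remark: in the tuple setting there is really only \emph{one} rewriting scheme $UacV \equiv_\ell UcaV$ (the witness $b_{[k]}$ may sit in~$U$ or in~$V$ according to its sign), so the phrase ``the other rewriting rule'' is a leftover from the $\ell=1$ formulation and can be dropped.
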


As for the Cambrian algebra, the product and coproduct of~$\PCamb$-basis elements of the $\ell$-Cambrian algebra~$\Camb_\ell$ can be described directly in terms of combinatorial operations on Cambrian \mbox{$\ell$-tuples}.

\para{Product}
The product in the $\ell$-Cambrian algebra~$\Camb_\ell$ can be described in terms of intervals in $\ell$-Cambrian lattices. We denote by~$\signatures\signatures' \eqdef [\signatures_{[1]}\signatures'_{[1]}, \dots, \signatures_{[\ell]}\signatures'_{[\ell]} ]$ the componentwise concatenation of two signature $\ell$-tuples~$\signatures,\signatures'$. Similarly, for two Cambrian $\ell$-tuples~$\tuple, \tuple'$, we define
\[
\raisebox{-6pt}{$\tuple$} \nearrow \raisebox{4pt}{$\bar\tuple'$} \eqdef \bigg[ \raisebox{-6pt}{$\tuple_{[1]}$} \nearrow \raisebox{4pt}{$\bar\tuple'_{[1]}$}, \dots, \raisebox{-6pt}{$\tuple_{[\ell]}$} \nearrow \raisebox{4pt}{$\bar\tuple'_{[\ell]}$} \bigg]
\qquad\text{and}\qquad
\raisebox{4pt}{$\tuple$} \nwarrow \raisebox{-6pt}{$\bar\tuple'$} \eqdef \bigg[ \raisebox{4pt}{$\tuple_{[1]}$} \nwarrow \raisebox{-6pt}{$\bar\tuple'_{[1]}$}, \dots, \raisebox{4pt}{$\tuple_{[\ell]}$} \nwarrow \raisebox{-6pt}{$\bar\tuple'_{[\ell]}$} \bigg].
\]

\begin{proposition}
For any two Cambrian $\ell$-tuples~$\tuple$ and~$\tuple'$, the product~$\PBax_{\tuple} \product \PBax_{\tuple'}$ is given by 
\[
\PBax_{\tuple} \product \PBax_{\tuple'} = \sum_{\tuple[S]} \PBax_{\tuple[S]},
\]
where~$\tuple[S]$ runs over the interval between~$\raisebox{-6pt}{$\tuple$} \nearrow \raisebox{4pt}{$\bar\tuple'$}$ and~$\raisebox{4pt}{$\tuple$} \nwarrow \raisebox{-6pt}{$\bar\tuple'$}$ in the~$\signatures(\tuple)\signatures(\tuple')$-Cambrian lattice.
\end{proposition}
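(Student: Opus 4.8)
The plan is to mirror the proof of Proposition~\ref{prop:product} componentwise, using the observation that $\signatures$-Cambrian classes are intervals of the weak order on $\fS^\signatures$. First I would recall the two facts that make the argument work: (1) for any Cambrian $\ell$-tuple $\tuple$, its associated Cambrian congruence class $\linearExtensions\big(\bigcup_{k\in[\ell]}\tuple_{[k]}\big)$ is an interval of the weak order on $\fS^{\signatures(\tuple)}$, since it is an intersection of the Cambrian classes indexed by the $\tuple_{[k]}$ (each an interval by Reading~\cite{Reading-CambrianLattices}) and an intersection of weak order intervals along a lattice congruence is again an interval; and (2) the shifted shuffle product of two intervals of the weak order is an interval of the weak order (this is the same classical fact used in the proof of Proposition~\ref{prop:product}, and it is insensitive to the signs since the shuffle carries signs along with their values). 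Combining these, $\PBax_{\tuple} \product \PBax_{\tuple'}$ is a sum of $\PBax_{\tuple[S]}$ where $\tuple[S]$ ranges over some interval of the $\signatures(\tuple)\signatures(\tuple')$-Cambrian lattice, so only the endpoints remain to be identified.

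Next I would pin down the endpoints. Let $\minimalLinearExtension_{\tuple}$ and $\maximalLinearExtension_{\tuple}$ denote the minimal and maximal linear extensions of $\bigcup_{k}\tuple_{[k]}$ in weak order. As in the proof of Proposition~\ref{prop:product}, the product is the sum of $\F_\sigma$ over
\[
[\minimalLinearExtension_{\tuple}, \maximalLinearExtension_{\tuple}] \shiftedShuffle [\minimalLinearExtension_{\tuple'}, \maximalLinearExtension_{\tuple'}] = [\minimalLinearExtension_{\tuple} \bar\minimalLinearExtension_{\tuple'}, \bar\maximalLinearExtension_{\tuple'} \maximalLinearExtension_{\tuple}].
\]
It then suffices to check that $\surjectionPermAsso_\ell(\minimalLinearExtension_{\tuple} \bar\minimalLinearExtension_{\tuple'}) = \raisebox{-6pt}{$\tuple$} \nearrow \raisebox{4pt}{$\bar\tuple'$}$ and $\surjectionPermAsso_\ell(\bar\maximalLinearExtension_{\tuple'} \maximalLinearExtension_{\tuple}) = \raisebox{4pt}{$\tuple$} \nwarrow \raisebox{-6pt}{$\bar\tuple'$}$. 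But $\surjectionPermAsso_\ell$ is applied componentwise, and $\minimalLinearExtension_{\tuple}$ (resp.\ $\maximalLinearExtension_{\tuple}$) is a linear extension of each $\tuple_{[k]}$, hence in particular $\surjectionPermAsso(\minimalLinearExtension_{\tuple} \bar\minimalLinearExtension_{\tuple'})$ on the $k$th component is governed by the same computation as in Proposition~\ref{prop:product}, giving $\raisebox{-6pt}{$\tuple_{[k]}$} \nearrow \raisebox{4pt}{$\bar\tuple'_{[k]}$}$; assembling over $k\in[\ell]$ yields the claimed grafted tuple. The maximal case is symmetric. One small point to verify is that $\minimalLinearExtension_{\tuple} \bar\minimalLinearExtension_{\tuple'}$ is indeed the minimal element of $[\minimalLinearExtension_{\tuple}, \maximalLinearExtension_{\tuple}] \shiftedShuffle [\minimalLinearExtension_{\tuple'}, \maximalLinearExtension_{\tuple'}]$ and that this minimal element is simultaneously a linear extension of every $\tuple_{[k]}$ grafted with $\tuple'_{[k]}$ — this follows because the minimal linear extension of a union of acyclic graphs restricts to the minimal linear extension of each, and concatenation of minimal linear extensions is the minimal linear extension of the union of the shifted graphs.

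I expect the only genuine obstacle to be a bookkeeping one: checking carefully that the componentwise application of $\surjectionPermAsso$ is compatible with grafting, i.e.\ that the shifted concatenation of minimal linear extensions of the $\ell$-tuples $\tuple$ and $\tuple'$ has $\PSymbol_\ell$-symbol equal to the componentwise graft. Since everything reduces to the already-proven single-signature statement (Proposition~\ref{prop:product}) applied in parallel on each of the $\ell$ coordinates, and since the sign data only travels passively along with the values, there is no new combinatorial phenomenon; this is why the authors are content to leave it to the reader. I would present the proof as: invoke that $\signatures$-Cambrian classes are intervals (intersection of lattice-congruence classes), invoke that shifted shuffle preserves intervals, identify the endpoints via the componentwise reduction to Proposition~\ref{prop:product}, and conclude — roughly three or four lines, exactly parallel to the proof of Proposition~\ref{prop:product}.
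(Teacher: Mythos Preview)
Your approach is exactly the one intended by the paper (which leaves all proofs in this section to the reader, as straightforward generalizations of the Baxter-Cambrian case): reduce to Proposition~\ref{prop:product} componentwise, using that $\signatures$-Cambrian classes are weak order intervals and that shifted shuffle preserves intervals.

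One small correction: your parenthetical claim that ``the minimal linear extension of a union of acyclic graphs restricts to the minimal linear extension of each'' is false in general (the union imposes extra constraints). Fortunately you do not need it. What you actually need is only that $\minimalLinearExtension_{\tuple}$ is \emph{some} linear extension of each $\tuple_{[k]}$, which is immediate, together with the fact that $\surjectionPermAsso(\tau\bar\tau') = \raisebox{-6pt}{$\tree$}\nearrow \raisebox{4pt}{$\bar\tree'$}$ holds for \emph{any} $\tau \in \linearExtensions(\tree)$ and $\tau' \in \linearExtensions(\tree')$ (not just the minimal ones), since $\tau\bar\tau'$ is automatically a linear extension of the grafted tree. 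With that adjustment your argument is complete.
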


\begin{remark}[Multiplicative bases]
\enlargethispage{.2cm}
Similar to the multiplicative bases defined in Section~\ref{sec:multiplicativeBases} and Remark~\ref{rem:multiplicativeBasesBaxter}, the bases~$\ECamb^{\tuple}$ and~$\HCamb^{\tuple}$ defined by
\[
\ECamb^{\tuple} \eqdef \sum_{\tuple \le \tuple'} \PCamb_{\tuple'}
\qquad\text{and}\qquad
\HCamb^{\tuple} \eqdef \sum_{\tuple' \le \tuple} \PCamb_{\tuple'}
\]
are multiplicative since
\[
\ECamb^{\tuple} \product \ECamb^{\tuple'} = \ECamb^{\raisebox{-5pt}{\scriptsize$\tuple$}\nearrow \raisebox{4pt}{\scriptsize$\bar \tuple'$}}
\qquad\text{and}\qquad
\HCamb^{\tuple} \product \HCamb^{\tuple'} = \HCamb^{\raisebox{4pt}{\scriptsize$\tuple$}\nwarrow \raisebox{-5pt}{\scriptsize$\bar \tuple'$}}.
\]
The $\ECamb$-indecomposable elements are precisely the Cambrian $\ell$-tuples~$\tuple$ such that all linear extensions of the union~$\bigcup_{k \in [\ell]} \tuple_{[k]}$ are indecomposable. In particular, $\tuple$ is $\ECamb$-indecomposable as soon as one of the~$\tuple_{[k]}$ is $\ECamb$-indecomposable, but this condition is not necessary. The $\ECamb$-indecomposable $\signatures$-Cambrian tuples form an ideal of the $\signatures$-Cambrian lattice, but this ideal is not principal.
\end{remark}

\para{Coproduct}
A \defn{cut}~$\gamma$ of a Cambrian $\ell$-tuple~$\tuple[S]$ is a cut of the union~$\bigcup_{k \in [\ell]} \tuple[S]_{[k]}$. It defines a cut~$\gamma_{[k]}$ on each Cambrian tree~$\tuple[S]_{[k]}$. We denote by
\[
A(\tuple[S], \gamma) \eqdef A(\tuple[S]_{[1]}, \gamma_{[1]}) \times \dots \times A(\tuple[S]_{[\ell]}, \gamma_{[\ell]})
\qquad\text{and}\qquad
B(\tuple[S], \gamma) \eqdef B(\tuple[S]_{[1]}, \gamma_{[1]}) \times \dots \times B(\tuple[S]_{[\ell]}, \gamma_{[\ell]}).
\]

\begin{proposition}
For any Cambrian $\ell$-tuple~$\tuple[S]$, the coproduct~$\coproduct \PBax_{\tuple[S]}$ is given~by
\[
\coproduct \PBax_{\tuple[S]} = \sum_{\gamma} \bigg( \sum_{\tuple[B] \in B(\tuple[S], \gamma)} \PBax_{\tuple[B]} \bigg) \otimes \bigg( \sum_{\tuple[A] \in A(\tuple[S], \gamma)} \PBax_{\tuple[A]} \bigg),
\]
where~$\gamma$ runs over all cuts of~$\tuple[S]$.
\end{proposition}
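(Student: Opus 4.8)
The plan is to transcribe, almost verbatim, the proof of Proposition~\ref{prop:coproduct} (the coproduct formula in the Cambrian algebra), replacing the single Cambrian tree by the union graph $\graphG \eqdef \bigcup_{k \in [\ell]} \tuple[S]_{[k]}$. First I would expand in the $\F$-basis and apply the coproduct of $\FQSym_{\pm^\ell}$:
\[
\coproduct \PCamb_{\tuple[S]} = \sum_{\sigma \in \linearExtensions(\graphG)} \coproduct \F_\sigma = \sum_{\sigma \in \linearExtensions(\graphG)} \; \sum_{\sigma \in \tau \convolution \tau'} \F_\tau \otimes \F_{\tau'}.
\]
As recalled in Sections~\ref{subsec:products} and~\ref{subsec:CambrianTupleAlgebra}, a decomposition $\sigma \in \tau \convolution \tau'$ with $\tau$ of size $m$ is the same data as a splitting position $m$: $\tau$ (resp.~$\tau'$) is the standardization of the first $m$ (resp.~last $n-m$) positions of $\sigma$, with the $\ell$ signatures staying at their positions.

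Next I would observe that, since $\sigma$ is a linear extension of $\graphG$, the set $I$ of vertices placed in positions $1,\dots,m$ is a lower set of the transitive closure of $\graphG$, hence corresponds to a cut $\gamma = \gamma(I)$ of $\tuple[S]$ with source set $I$; conversely every cut of $\tuple[S]$ comes from a unique such lower set. For a fixed cut $\gamma$, the linear extensions of $\graphG$ whose length-$|I|$ prefix uses exactly the vertices of $I$ are exactly the concatenations of a linear extension of $\graphG|_I = \bigcup_k B(\tuple[S]_{[k]}, \gamma_{[k]})$ with a linear extension of $\graphG|_{\ground \ssm I} = \bigcup_k A(\tuple[S]_{[k]}, \gamma_{[k]})$. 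Since the linear extensions of a forest are the shuffles of the linear extensions of its connected components, and since by Theorem~\ref{thm:cambTupleSubalgebra} the shuffle product of $\PCamb$-elements is the $\PCamb$-expansion of the product in $\Camb_\ell$, standardizing gives
\[
\sum_{\tau} \F_\tau = \prod_{k \in [\ell]} \; \prod_{\tree \in B(\tuple[S]_{[k]}, \gamma_{[k]})} \PCamb_{\tree} = \sum_{\tuple[B] \in B(\tuple[S],\gamma)} \PCamb_{\tuple[B]},
\]
the sum on the left being over the prefixes $\tau$ associated with the ideal $I$, and symmetrically for the suffix and $A(\tuple[S], \gamma)$. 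Summing over all cuts $\gamma$ of $\tuple[S]$ then yields the stated formula, exactly as for Proposition~\ref{prop:coproduct} and its Baxter-Cambrian analogue.

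The step I expect to be the main obstacle is the bookkeeping already present (and hidden in the definition of $AB(\cdot,\cdot)$) in the twin case: one must verify that a deconcatenation of a linear extension of $\graphG$ really does factor through a genuine cut of the tuple, and that the product sets $B(\tuple[S],\gamma)$ and $A(\tuple[S],\gamma)$ in the statement are precisely the Cambrian $\ell$-tuples arising in the above products of lower/upper sub-forests. Here the uniform-signature convention of Section~\ref{sec:tuples} (as opposed to the reversed convention of Section~\ref{sec:BaxterTrees}) is what makes this manageable: ``$I$ below and $\ground \ssm I$ above'' holds simultaneously in all $\ell$ coordinates, so there is no orientation-reversal to track and the remaining checks are routine.
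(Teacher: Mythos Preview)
Your approach is the intended one: the paper explicitly leaves all proofs in Section~\ref{sec:tuples} to the reader as straightforward generalizations of Propositions~\ref{prop:coproduct} and its Baxter-Cambrian analogue, and your outline follows that template exactly.

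There is one genuine slip, however. The displayed middle expression
\[
\prod_{k \in [\ell]} \; \prod_{\tree \in B(\tuple[S]_{[k]}, \gamma_{[k]})} \PCamb_{\tree}
\]
is not correct. The inner product (over the trees of a single forest) is a Hopf-algebra product and does give the sum over linear extensions of that forest; but the outer product over~$k$ would then be a further shifted-shuffle product in~$\FQSym_{\pm^\ell}$, which concatenates the vertex sets rather than superimposing them. What you actually need is the sum over linear extensions of the single graph~$\graphG|_I = \bigcup_k B(\tuple[S]_{[k]}, \gamma_{[k]})$, and this does \emph{not} factor as an iterated product over~$k$. The correct argument goes directly from the left-hand side to the right-hand side: the standardized prefixes~$\tau$ are exactly the linear extensions of~$\graphG|_I$, and these partition into~$\signatures|_I$-Cambrian classes, each class being a fiber~$(\surjectionPermAsso_\ell)^{-1}(\tuple[B])$ for some Cambrian $\ell$-tuple~$\tuple[B]$. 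The set of such~$\tuple[B]$ is precisely (the intended meaning of)~$B(\tuple[S],\gamma)$. You correctly flag in your final paragraph that this identification is where the work lies; just delete the spurious double product and make that identification the content of the proof.
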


%\para{Matriochka algebras}

%%%%%%%%%%%%%%%%%

\subsection{Dual Cambrian tuple Hopf Algebra}
\label{subsec:dualCambrianTupleAlgebra}

We now consider the dual Hopf algebra of~$\Camb_\ell$. Again, the following statement is automatic from Theorem~\ref{thm:cambTupleSubalgebra}.

\begin{theorem}
The graded dual~$\Camb_\ell^*$ of the $\ell$-Cambrian algebra is isomorphic to the image of~$\FQSym_{\pm^\ell}^*$ under the canonical projection
\[
\pi : \C\langle A \rangle \longrightarrow \C\langle A \rangle / \equiv_\ell,
\]
where~$\equiv_\ell$ denotes the $\ell$-Cambrian congruence. The dual basis~$\QCamb_{\tuple}$ of~$\PCamb_{\tuple}$ is expressed as~${\QCamb_{\tuple} = \pi(\G_\tau)}$, where~$\tau$ is any linear extension of~$\bigcup_{k \in [\ell]} \tuple_{[k]}$.
\end{theorem}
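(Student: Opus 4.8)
The plan is to deduce this theorem from Theorem~\ref{thm:cambTupleSubalgebra} exactly as the dual statements for $\Camb^*$ and $\BaxCamb^*$ are deduced from Theorems~\ref{thm:cambSubalgebra} and~\ref{thm:baxSubalgebra}. The key structural fact is that $\Camb_\ell$ is, by construction, the subspace of $\FQSym_{\pm^\ell}$ spanned by the sums $\PCamb_{\tuple} = \sum_{\surjectionPermAsso_\ell(\tau) = \tuple} \F_\tau$ over the classes of the $\ell$-Cambrian congruence $\equiv_\ell$, and that these classes are precisely the fibers of $\surjectionPermAsso_\ell$ (by the Proposition characterizing $\equiv_\ell$). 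Since $\FQSym_{\pm^\ell}$ is identified with the free associative algebra $\C\langle A\rangle$ on the signed alphabet, and $\equiv_\ell$ is a congruence compatible with concatenation (being an intersection of Cambrian congruences, each of which is such a congruence on words), the quotient $\C\langle A\rangle / \equiv_\ell$ is a well-defined algebra, and the canonical projection $\pi$ is an algebra morphism.

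First I would recall the standard duality principle: if $H \subseteq \FQSym_{\pm^\ell}$ is a graded Hopf subalgebra whose homogeneous components are spanned by sums of basis elements over the blocks of an equivalence relation $\equiv_\ell$ on $\fS_{\pm^\ell}$, then the graded dual $H^*$ is canonically isomorphic to the quotient $\FQSym_{\pm^\ell}^* / (H)^\perp$, and the latter is isomorphic to the image of $\FQSym_{\pm^\ell}^*$ under the projection sending $\G_\tau$ to its class. Concretely, the pairing $\langle \PCamb_{\tuple}, \G_\tau\rangle = \delta_{\surjectionPermAsso_\ell(\tau), \tuple}$ shows that the dual basis element $\QCamb_{\tuple}$ is the image $\pi(\G_\tau)$ for any $\tau$ in the fiber $\surjectionPermAsso_\ell^{-1}(\tuple)$, i.e.\ any linear extension of $\bigcup_{k\in[\ell]}\tuple_{[k]}$; this is well-defined precisely because $\G_\tau$ and $\G_{\tau'}$ have the same image under $\pi$ whenever $\tau \equiv_\ell \tau'$. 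That $\pi$ is a bialgebra (hence Hopf) morphism transporting the product and coproduct of $\FQSym_{\pm^\ell}^*$ to those of $\Camb_\ell^*$ is then automatic, since these are dual to the coproduct and product of $\Camb_\ell$, and the latter are the restrictions of those of $\FQSym_{\pm^\ell}$ by Theorem~\ref{thm:cambTupleSubalgebra}.

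The argument is entirely formal once Theorem~\ref{thm:cambTupleSubalgebra} is in hand, and the only point requiring a word of care is that $\equiv_\ell$ really is a congruence for the concatenation product on $\C\langle A\rangle$ (so that the quotient algebra makes sense): this follows because each $\equiv_{\signatures_{[k]}}$ is generated by local rewriting rules that are stable under surrounding a word by arbitrary prefixes and suffixes, hence so is their intersection $\equiv_\ell = \bigcap_k \equiv_{\signatures_{[k]}}$. I therefore expect no genuine obstacle; if there is a delicate point it is purely notational — tracking that the bigrading by size and by each of the $\ell$ sign statistics is preserved, and that "linear extension of $\bigcup_{k\in[\ell]}\tuple_{[k]}$" is the correct description of the fiber — but both are immediate from the definitions. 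The proof can thus be left as a one-line reference to the proof of the corresponding statement for $\Camb^*$, exactly as the paper does for $\BaxCamb^*$.

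\begin{proof}
This is the exact analogue of the dual statement for the Cambrian algebra in Section~\ref{subsec:quotientAlgebra}, and it follows automatically from Theorem~\ref{thm:cambTupleSubalgebra} by the standard duality between Hopf subalgebras and Hopf quotient algebras. Indeed, $\Camb_\ell$ is spanned by the sums $\PCamb_{\tuple}$ over the fibers of~$\surjectionPermAsso_\ell$, which are precisely the classes of the congruence~$\equiv_\ell$; dualizing the inclusion $\Camb_\ell \hookrightarrow \FQSym_{\pm^\ell}$ yields the surjection $\pi : \FQSym_{\pm^\ell}^* = \C\langle A\rangle \twoheadrightarrow \C\langle A\rangle/\!\equiv_\ell \; \cong \; \Camb_\ell^*$, and the pairing $\langle \PCamb_{\tuple}, \G_\tau\rangle = \delta_{\surjectionPermAsso_\ell(\tau),\tuple}$ shows that the dual basis element $\QCamb_{\tuple}$ equals $\pi(\G_\tau)$ for any linear extension $\tau$ of $\bigcup_{k \in [\ell]} \tuple_{[k]}$.
\end{proof}
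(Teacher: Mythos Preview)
Your proposal is correct and matches the paper's approach exactly: the paper gives no proof at all for this statement, merely noting that it ``is automatic from Theorem~\ref{thm:cambTupleSubalgebra},'' just as the analogous dual statements for $\Camb^*$ and $\BaxCamb^*$ were declared automatic from Theorems~\ref{thm:cambSubalgebra} and~\ref{thm:baxSubalgebra}. You have simply spelled out the standard duality argument that the paper leaves implicit.
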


We now describe the product and coproduct in~$\Camb_\ell^*$ in terms of combinatorial operations on Cambrian $\ell$-tuples. We use the definitions and notations introduced in Section~\ref{subsec:quotientAlgebra}.

\para{Product}
The product in~$\Camb_\ell^*$ can be described using gaps and laminations similarly to Proposition~\ref{prop:productDual}. For two Cambrian trees~$\tree$ and~$\tree'$ and a shuffle~$s$ of the signatures~$\signature(\tree)$ and~$\signature(\tree')$, we still denote by~$\tree \,{}_s\!\backslash \tree'$ the tree described in Section~\ref{subsec:quotientAlgebra}. For two Cambrian $\ell$-tuples~$\tuple$ and~$\tuple'$, with trees of size~$n$ and~$n'$ respectively, and for a shuffle~$s$ of~$[n]$ and~$[n']$, we write
\[
\tuple \,{}_s\!\backslash \tuple' \eqdef \big[ \tuple_{[1]} \,{}_s\!\backslash \tuple'_{[1]}, \dots, \tuple_{[\ell]} \,{}_s\!\backslash \tuple'_{[\ell]} \big],
\]
where we see~$s$ as a shuffle of the signatures~$\signature(\tuple_{[k]})$ and~$\signature(\tuple'_{[k]})$.

\begin{proposition}
For any two Cambrian $\ell$-tuples~$\tuple, \tuple'$, the product~$\QBax_{\tuple} \product \QBax_{\tuple'}$ is given by
\[
\QBax_{\tuple} \product \QBax_{\tuple'} = \sum_s \QBax_{\tuple \,{}_s\!\backslash \tuple'},
\]
where~$s$ runs over all shuffles of~$[n]$ and~$[n']$ (where~$n$ and~$n'$ denote the respective sizes of the trees of~$\tuple$ and~$\tuple'$).
\end{proposition}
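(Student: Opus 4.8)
The plan is to reduce the statement to the already-established description of the product in $\Camb^\star$ given by Proposition~\ref{prop:productDual}, exactly as the proof of Proposition~\ref{prop:productDual} itself was reduced to the combinatorics of the convolution product in $\FQSym_\pm^\star$. First I would recall that by the previous theorem $\QBax_{\tuple} = \pi(\G_\tau)$ for any linear extension $\tau$ of $\bigcup_{k \in [\ell]} \tuple_{[k]}$, and similarly $\QBax_{\tuple'} = \pi(\G_{\tau'})$ for any $\tau' \in \linearExtensions\big(\bigcup_{k} \tuple'_{[k]}\big)$. Since $\pi$ is an algebra morphism, $\QBax_{\tuple} \product \QBax_{\tuple'} = \pi(\G_\tau \product \G_{\tau'}) = \sum_{\sigma \in \tau \convolution \tau'} \pi(\G_\sigma) = \sum_{\sigma \in \tau \convolution \tau'} \QBax_{\surjectionPermAsso_\ell(\sigma)}$, so it remains only to identify, for each shuffle $s$ of $[n]$ and $[n']$ realized by some $\sigma \in \tau \convolution \tau'$, the Cambrian tuple $\surjectionPermAsso_\ell(\sigma)$ with $\tuple \,{}_s\!\backslash \tuple'$, and to check that distinct such $s$ give distinct summands and that every shuffle $s$ arises.

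\textbf{Key steps.} The second step is the componentwise analysis: for each $k \in [\ell]$, the permutation $\sigma_{[k]}$ lies in $\tau_{[k]} \convolution \tau'_{[k]}$ and $\surjectionPermAsso\big(\sigma_{[k]}\big) = \tree_{[k]} \,{}_s\!\backslash \tree'_{[k]}$ by exactly the argument of the proof of Proposition~\ref{prop:productDual} — the convolution shuffles the columns of the tables of $\tau_{[k]}$ and $\tau'_{[k]}$, so in the insertion algorithm $\CambCorresp$ the tree $\tuple_{[k]}$ sits below $\tuple'_{[k]}$, split by the walls of the negative nodes of $\tuple'_{[k]}$ and the positive nodes of $\tuple_{[k]}$, which is the definition of $\tuple_{[k]} \,{}_s\!\backslash \tuple'_{[k]}$. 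The crucial point specific to tuples is that the shuffle $s$ used is the same for all $k$: this holds because the underlying (unsigned) permutation $\sigma$ is a single convolution $\tau \convolution \tau'$, and the positions of the columns coming from $\tau'$ in $\sigma$ — which is precisely the data of $s$ — do not depend on $k$. Hence $\surjectionPermAsso_\ell(\sigma) = \big[\tuple_{[1]} \,{}_s\!\backslash \tuple'_{[1]}, \dots, \tuple_{[\ell]} \,{}_s\!\backslash \tuple'_{[\ell]}\big] = \tuple \,{}_s\!\backslash \tuple'$. Finally, since $\tau \convolution \tau' = \{\pi \circ (\tau \bar\tau') \mid \pi \in \fS^{(n,n')}\}$ is in bijection with the set $\fS^{(n,n')}$ of shuffles, and the induced map $\sigma \mapsto s$ is exactly the natural bijection between this set and shuffles of $[n]$ and $[n']$, every shuffle $s$ occurs exactly once, giving the stated sum with no multiplicities.

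\textbf{Main obstacle.} The only genuine subtlety — and the step I would write most carefully — is the claim that the split of each $\tuple_{[k]}$ is governed by \emph{the same} $s$, together with the well-definedness check that $\surjectionPermAsso_\ell(\sigma)$ does not depend on the choice of representatives $\tau, \tau'$. The latter follows because any two linear extensions of $\bigcup_k \tuple_{[k]}$ are $\equiv_\ell$-congruent and $\equiv_\ell$-congruent permutations have convolution products that are unions of $\equiv_\ell$-classes (this is the coalgebra compatibility underlying the $\ell$-analogue of Theorem~\ref{thm:cambSubalgebra}, i.e. Theorem~\ref{thm:cambTupleSubalgebra}), so $\pi$ applied to the convolution is independent of the representative. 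Everything else is a routine transcription of the proof of Proposition~\ref{prop:productDual} applied coordinate by coordinate, so, following the convention of this section, I would state the proposition and leave the verification to the reader, or include only the two-line reduction via $\pi$ together with the observation that $s$ is common to all coordinates.
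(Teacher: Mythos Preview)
Your proposal is correct and matches the paper's intended approach exactly: the paper leaves all proofs in this section to the reader as straightforward generalizations of Sections~\ref{sec:BaxterTrees} and~\ref{sec:BaxterCambrianAlgebra}, and your reduction---applying the argument of Proposition~\ref{prop:productDual} componentwise while observing that the shuffle~$s$ is common to all~$\ell$ coordinates---is precisely that generalization. The only remark is that your ``main obstacle'' paragraph on well-definedness, while not wrong, is more than the paper would require here, since independence of the representative is already built into the quotient construction of~$\Camb_\ell^*$ stated in the preceding theorem.
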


\para{Coproduct}
The coproduct in~$\Camb_\ell^*$ can be described combinatorially as in Proposition~\ref{prop:coproductDual}. For a Cambrian $\ell$-tuple~$\tuple[S]$, with trees of size~$n$, and a gap~$\gamma \in \{0,\dots,n\}$, we define
\[
L(\tuple[S], \gamma) = \big[ L(\tuple_{[1]}, \gamma), \dots, L(\tuple_{[\ell]}, \gamma) \big]
\qquad\text{and}\qquad
R(\tuple[S], \gamma) = \big[ R(\tuple_{[1]}, \gamma), \dots, R(\tuple_{[\ell]}, \gamma) \big].
\]

\begin{proposition}
For any Cambrian $\ell$-tuple~$\tuple[S]$, the coproduct~$\coproduct\QCamb_{\tuple[S]}$ is given~by
\[
\coproduct\QCamb_{\tuple[S]} = \sum_{\gamma} \QCamb_{L(\tuple[S], \gamma)} \otimes \QCamb_{R(\tuple[S], \gamma)},
\]
where~$\gamma$ runs over all gaps between consecutive positions in~$[n]$ (where~$n$ denotes the size of the trees of~$\tuple$).
\end{proposition}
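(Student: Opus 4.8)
The plan is to mirror the proof of Proposition~\ref{prop:coproductDual}, using that both the insertion surjection~$\surjectionPermAsso_\ell$ and the shifted shuffle on $\ell$-signed permutations act componentwise, so that the computation reduces to the single-tree case already established.

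First I would fix a linear extension~$\sigma$ of~$\bigcup_{k \in [\ell]} \tuple[S]_{[k]}$, so that~$\QCamb_{\tuple[S]} = \pi(\G_\sigma)$ by the description of~$\Camb_\ell^*$. Applying the coproduct of~$\FQSym_{\pm^\ell}^*$ and the projection~$\pi$ gives
\[
\coproduct \QCamb_{\tuple[S]} = \sum_{\sigma \in \tau \shiftedShuffle \tau'} \QCamb_{\surjectionPermAsso_\ell(\tau)} \otimes \QCamb_{\surjectionPermAsso_\ell(\tau')}.
\]
As recalled in Section~\ref{subsec:products}, the pairs~$(\tau,\tau')$ of $\ell$-signed permutations with~$\sigma \in \tau \shiftedShuffle \tau'$ are in bijection with the gaps~$\gamma$ between consecutive values of~$[n]$: for such a gap, $\tau$ is the subword of~$\sigma$ on the values at most~$\gamma$ and~$\tau'$ is the subword of~$\sigma$ on the values greater than~$\gamma$, shifted down by~$\gamma$, all~$\ell$ signs being carried along with their values; and~$\sigma$ lies in exactly one shifted shuffle for each choice of~$\gamma$.

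Then I would invoke Proposition~\ref{prop:coproductDual} componentwise. For every~$k \in [\ell]$, the signed permutations~$\tau_{[k]}$ and~$\tau'_{[k]}$ are obtained from~$\sigma_{[k]}$ by precisely the value splitting used in the proof of Proposition~\ref{prop:coproductDual}, so~$\surjectionPermAsso(\tau_{[k]}) = L(\tuple[S]_{[k]}, \gamma)$ and~$\surjectionPermAsso(\tau'_{[k]}) = R(\tuple[S]_{[k]}, \gamma)$, where~$\tuple[S]_{[k]} = \surjectionPermAsso(\sigma_{[k]})$. By the definitions of~$\surjectionPermAsso_\ell$, $L(\tuple[S],\gamma)$ and~$R(\tuple[S],\gamma)$ this yields~$\surjectionPermAsso_\ell(\tau) = L(\tuple[S],\gamma)$ and~$\surjectionPermAsso_\ell(\tau') = R(\tuple[S],\gamma)$; substituting into the displayed sum gives the claimed formula.

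There is essentially no obstacle: the only point needing a word is that cutting~$\sigma$ along the value-gap~$\gamma$ induces on each tree~$\tuple[S]_{[k]}$ exactly the geodesic vertical path~$\lambda(\tuple[S]_{[k]},\gamma)$, which is what the proof of Proposition~\ref{prop:coproductDual} already records. Once this is granted, the tuple statement is a purely formal componentwise assembly; in particular no acyclicity or linear-extension-shuffling subtlety arises, since~$L(\tuple[S],\gamma)$ and~$R(\tuple[S],\gamma)$ are the sub-$\ell$-tuples induced by~$\tuple[S]$ on the vertices with label at most~$\gamma$ and greater than~$\gamma$ respectively, whose unions are induced subgraphs of the acyclic graph~$\bigcup_{k \in [\ell]} \tuple[S]_{[k]}$.
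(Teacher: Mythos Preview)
Your proposal is correct and follows exactly the approach the paper intends: the paper leaves all proofs in the Cambrian tuple section to the reader as straightforward componentwise generalizations of the single-tree arguments, and your reduction to Proposition~\ref{prop:coproductDual} via the componentwise nature of~$\surjectionPermAsso_\ell$ and of the signed shifted shuffle is precisely that generalization. Your closing remark that $L(\tuple[S],\gamma)$ and $R(\tuple[S],\gamma)$ are genuine Cambrian $\ell$-tuples (their unions being induced subgraphs of an acyclic graph) is a useful sanity check that the paper does not spell out.
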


%%%%%%%%%%%%%%%%%%%%%%%%%%%%%%%%%%%%%%

%\newpage
\part{The Schr\"oder-Cambrian Hopf Algebra}
\label{part:SchroderCambrian}

%%%%%%%%%%%%%%%%%%%%%%%%%%%%%%%%%%%%%%

\section{Schr\"oder-Cambrian trees}

We already insisted on the fact that the bases of M.~Malvenuto and C.~Reutenauer's algebra on permutations, of J.-L.~Loday and M.~Ronco's algebra on binary trees, and of L.~Solomon's descent algebra correspond to the vertices of the permutahedra, of the associahedra, and of the cubes respectively. In~\cite{Chapoton}, F.~Chapoton generalized these algebras to three Hopf algebras with bases indexed by all faces of the permutahedra, of the associahedra, and of the cubes. To conclude the paper, we show that F.~Chapoton's construction extends as well to the Cambrian setting. We obtain the Schr\"oder-Cambrian Hopf algebra with basis indexed by all faces of all associahedra of C.~Hohlweg and C.~Lange. It is also a good occasion to observe relevant combinatorial properties of Schr\"oder-Cambrian trees, which correspond to the faces of these associahedra.

%%%%%%%%%%%%%%%%%

\subsection{Schr\"oder-Cambrian trees}

The faces of J.-L.~Loday's $n$-dimensional associahedron correspond to \defn{Schr\"oder trees} with $n+1$ leaves, \ie trees where each internal node has at least $2$ children. We first recall the Cambrian counterpart of these trees, which correspond to all faces of C.~Hohlweg and C.~Lange's associahedra (see Section~\ref{subsec:geometrySchroderTrees}). These trees were defined in~\cite{LangePilaud} as ``spines'' of dissections of polygons, see Remark~\ref{rem:dissections}.

\begin{definition}
Consider a signature~$\signature \in \pm^n$. For~$X \subseteq [n]$, we denote by~$X^+ \eqdef \set{x \in X}{\signature_x = +}$ and~$X^- \eqdef \set{x \in X}{\signature_x = -}$. A \defn{Schr\"oder $\signature$-Cambrian tree} is a directed tree~$\tree$ with vertex set~$\ground$ endowed with a vertex labeling~$p : \ground \to 2^{[n]} \ssm \varnothing$ such that
\begin{enumerate}[(i)]
\item the labels of~$\tree$ partition~$[n]$, \ie $v \ne w \in \ground \implies p(v) \cap p(w) = \varnothing$ and~$\bigcup_{v \in \ground} p(v) = [n]$;
\item each vertex~$v \in \ground$ has one incoming (resp.~outgoing) subtree~$\tree_{v,I}$ for each interval~$I$ of~${[n] \ssm p(v)^-}$ (resp.~of~$[n] \ssm p(v)^+$) and all labels of~$\tree_{v,I}$ are subsets of~$I$.
\end{enumerate}
For~$p \ge 0$ and~$\signature \in \pm^n$, we denote by~$\SchrCambTrees^{\ge p}(\signature)$ the set of Schr\"oder $\signature$-Cambrian trees with at most~$n-p$ internal nodes, and we define~$\SchrCambTrees^{\ge p}(n) \eqdef \bigsqcup_{\signature \in \pm^n} \SchrCambTrees^{\ge p}(\signature)$ and $\SchrCambTrees^{\ge p} \eqdef \bigsqcup_{n \in \N} \SchrCambTrees^{\ge p}(n)$. They correspond to associahedron faces of dimension at least~$p$. Finally, we simply omit the superscript~$^{\ge p}$ in the previous notations to denote Schr\"oder-Cambrian trees with arbitrary many internal nodes. Note that this defines a filtration
\[
\SchrCambTrees = \bigcup_{p \ge 1} \SchrCambTrees^{\ge p}
\qquad\text{with}\qquad
\SchrCambTrees^{\ge 0} \supset \SchrCambTrees^{\ge 1} \supset \dots
\]
\end{definition}

\begin{definition}
A \defn{$k$-leveled Schr\"oder $\signature$-Cambrian tree} is a directed tree with vertex set~$\ground$ endowed with two labelings~$p : \ground \to 2^{[n]} \ssm \varnothing$ and~$q : \ground \to [k]$ which respectively define a Schr\"oder $\signature$-Cambrian tree and an increasing tree (meaning that $q$ is surjective and~$v \to w$ in~$\tree$ implies that~$q(v) < q(w)$).
\end{definition}

A Schr\"oder-Cambrian tree and a $3$-leveled Schr\"oder-Cambrian tree are represented in \fref{fig:leveledSchroderCambrianTree}. Note that each level of a $k$-leveled Schr\"oder $\signature$-Cambrian tree may contain more than one node.

\begin{figure}[h]
  \centerline{\includegraphics{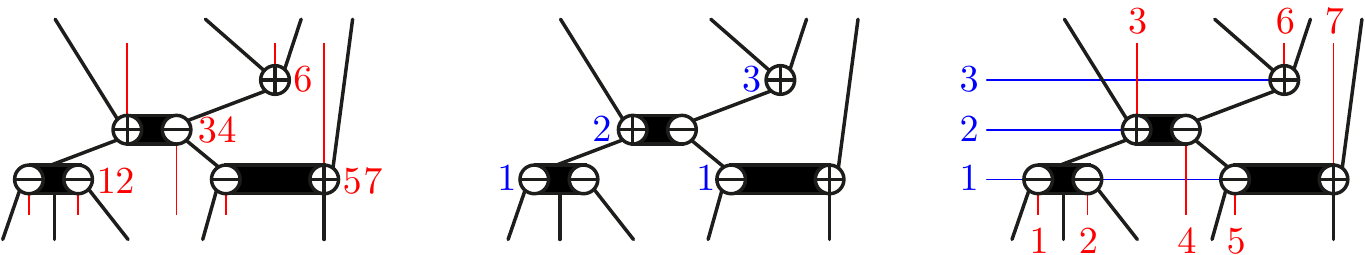}}
  \caption{A Schr\"oder-Cambrian tree (left), an increasing tree (middle), and a $3$-leveled Schr\"oder-Cambrian tree (right).}
  \label{fig:leveledSchroderCambrianTree}
\end{figure}

\begin{remark}[Spines of dissections]
\label{rem:dissections}
\enlargethispage{-.4cm}
Exactly as $\signature$-Cambrian trees correspond to triangulations of the $(n+2)$-gon~$\polygon$ (see Remark~\ref{rem:triangulation}), Schr\"oder $\signature$-Cambrian trees correspond to all dissections of~$\polygon$. See \fref{fig:dissection} and refer to~\cite{LangePilaud} for details.
\begin{figure}[h]
  \centerline{\includegraphics{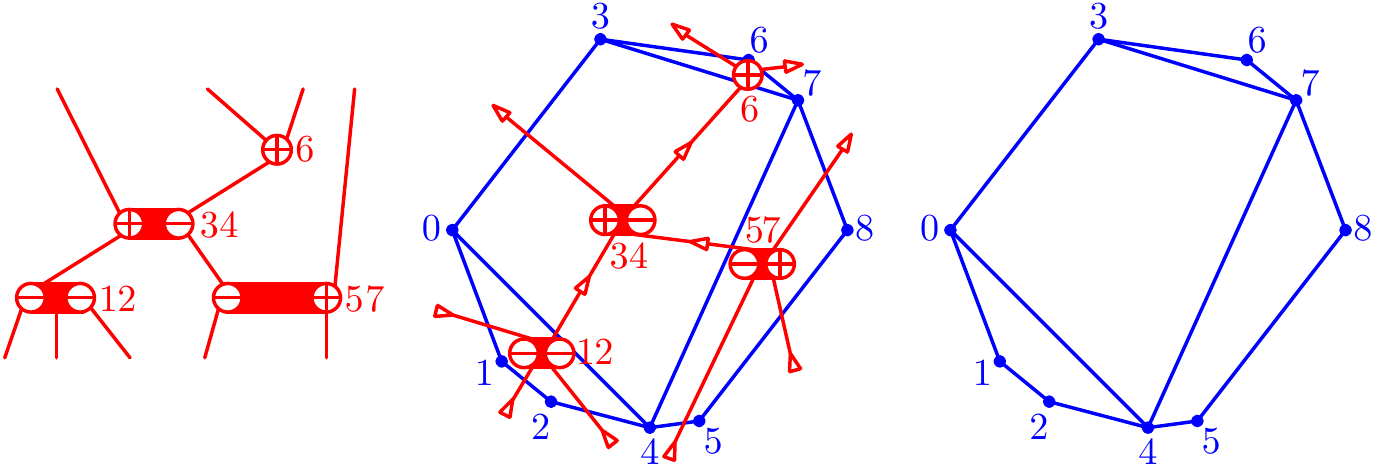}}
  \caption{Schr\"oder-Cambrian trees (left) and dissections (right) are dual to each other (middle).}
  \label{fig:dissection}
\end{figure}
\end{remark}

Remark~\ref{rem:dissections} immediately implies that the number of Schr\"oder $\signature$-Cambrian trees with $k$ nodes is the number of $(n-k)$-dimensional faces of the associahedron, and is therefore independent of the signature~$\signature$. An alternative proof based on generating trees is mentioned in Remark~\ref{rem:patternAvoidanceSchroder}.

\begin{proposition}
\label{prop:SchroderCambrianNumbers}
For any signature~$\signature \in \pm^n$, the number of Schr\"oder $\signature$-Cambrian trees with $k$ internal nodes is
\[
\frac{1}{k+1}\binom{n+2+k}{k+1}\binom{n-1}{k+1},
\]
see~\href{https://oeis.org/A033282}{\cite[A033282]{OEIS}}.
\end{proposition}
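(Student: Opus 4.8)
The plan is to reduce the count to the classical enumeration of polygon dissections. By Remark~\ref{rem:dissections} (established in~\cite{LangePilaud}), the Schr\"oder $\signature$-Cambrian trees are exactly the spines of the dissections of the polygon~$\polygon$, and this correspondence is a bijection. First I would check that under this bijection a Schr\"oder $\signature$-Cambrian tree with $k$ internal nodes corresponds precisely to a dissection of~$\polygon$ into $k$ cells: each internal node of the spine is dual to one cell of the dissection, just as each node of an ordinary $\signature$-Cambrian tree is dual to one triangle in Remark~\ref{rem:triangulation} (here the boundary edge~$[0,n+1]$ of~$\polygon$ is never a diagonal, so it does not interfere with this count). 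Since for every signature~$\signature\in\pm^n$ the polygon~$\polygon$ is a convex $(n+2)$-gon — the signature only dictates on which side of the segment~$[0,n+1]$ the vertices $1,\dots,n$ lie, not the combinatorial type of the polygon — the number of its dissections into $k$ cells depends only on $n$ and $k$. This already proves the signature-independence asserted in the statement and leaves a purely enumerative question.

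It then suffices to invoke the Kirkman--Cayley formula: the number of dissections of a convex $N$-gon by $d$ pairwise non-crossing diagonals is $\frac{1}{d+1}\binom{N-3}{d}\binom{N+d-1}{d}$. Specializing to $N=n+2$ and $d=k-1$ (so that the dissection has $k$ cells) and simplifying the binomial coefficients recovers, after simplification, the displayed closed form, whose values form the triangle~\href{https://oeis.org/A033282}{\cite[A033282]{OEIS}}. For a self-contained argument I would instead introduce the bivariate generating function $F=\sum x^{n}y^{k}$, the sum ranging over all dissections of an $(n+2)$-gon into $k$ cells rooted at the distinguished edge~$[0,n+1]$; decomposing such a dissection according to the cell incident to the root edge yields an algebraic functional equation for $F$, from which Lagrange inversion extracts the coefficients. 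Yet another route, parallel to the proof of Proposition~\ref{prop:GeneratingTree} and indicated in Remark~\ref{rem:patternAvoidanceSchroder}, is to build the generating tree of the weak-order-maximal permutations of the Schr\"oder-Cambrian classes and observe that its shape is again independent of~$\signature$.

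The conceptual content is entirely in the first step; the only genuine obstacle is bookkeeping. One must match ``internal nodes of the spine'' with ``cells of the dual dissection'' uniformly in~$\signature$ (keeping track of the orientation conventions of Remark~\ref{rem:triangulation} and of the special edge~$[0,n+1]$), and then carry out the routine algebraic manipulation — equivalently, the Lagrange inversion — that turns the Kirkman--Cayley expression into the form displayed above. Neither step involves any difficulty beyond careful computation once the dissection model is in place.
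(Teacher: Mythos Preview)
Your approach is exactly the paper's: the paper does not give a separate proof, but simply invokes Remark~\ref{rem:dissections} in the sentence preceding the proposition to identify Schr\"oder $\signature$-Cambrian trees with $k$ internal nodes with dissections of the $(n+2)$-gon into $k$ cells (equivalently, $(n-k)$-dimensional faces of the associahedron), so that the count is signature-independent and given by the classical Kirkman--Cayley numbers in~\href{https://oeis.org/A033282}{\cite[A033282]{OEIS}}. One small caution: the displayed formula in the paper appears to contain a typo (it fails to give integers for small~$n$), so when you carry out the ``routine algebraic manipulation'' you will obtain $\frac{1}{k}\binom{n-1}{k-1}\binom{n+k}{k-1}$ rather than the printed expression---this is a defect of the statement, not of your argument.
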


%%%%%%%%%%%%%%%%%

\subsection{Schr\"oder-Cambrian correspondence}

We now define an analogue of the Cambrian correspondence and Cambrian $\PSymbol$-symbol, which will map the faces of the permutahedron to the faces of C.~Hohlweg and C.~Lange's associahedra. Recall first that the $(n-k)$-dimensional faces of the $n$-dimensional permutahedron correspond to \defn{surjections} from~$[n]$ to~$[k]$, or equivalently to \defn{ordered partitions} of~$[n]$ into $k$ parts. See \fref{fig:permutahedra}. We denote (abusively) by~$\pi^{-1}$ the ordered partition corresponding to a surjection~$\pi : [n] \to [k]$, \ie given by~$\pi^{-1} \eqdef \pi^{-1}(\{1\}) \bsep \pi^{-1}(\{2\}) \bsep \cdots \bsep \pi^{-1}(\{k\})$. Conversely, we denote (abusively) by~$\lambda^{-1}$ the surjection corresponding to an ordered partition~$\lambda = \lambda_1 \sep \lambda_2 \sep \cdots \sep \lambda_k$, \ie such that each~$i$ belongs to the part~$\lambda_{\lambda^{-1}(i)}$. We represent graphically a surjection~$\pi : [n] \to [k]$ by the~$(k \times n)$-table with a dot in row~$\pi(j)$ in each column~$j$. Therefore, we represent an ordered partition~$\lambda \eqdef \lambda_1 \sep \cdots \sep \lambda_k$ of~$[n]$ by the~$(k \times n)$-table with a dot in row~$i$ and column~$j$ for each~$j \in \lambda_i$. See \fref{fig:insertionAlgorithmSchroder}\,(left). In this paper, we work with ordered partitions rather than surjections to match better the presentation of the previous sections: the permutations of~$[n]$ used in the previous sections have to be thought of as ordered partitions of~$[n]$ into~$n$ parts. We denote by~$\fP_n^{\ge p}$ the set of ordered partitions of~$[n]$ into at most~$n-p$ parts, and we set~$\fP^{\ge p} \eqdef \bigsqcup_{n \in \N} \fP_n^{\ge p}$. It correspond to permutahedron faces of dimension at least~$p$. As previously, we omit the superscript~$^{\ge p}$ in these notations to forget this dimension restriction.

\begin{figure}[h]
  \capstart
  \centerline{\includegraphics[width=\textwidth]{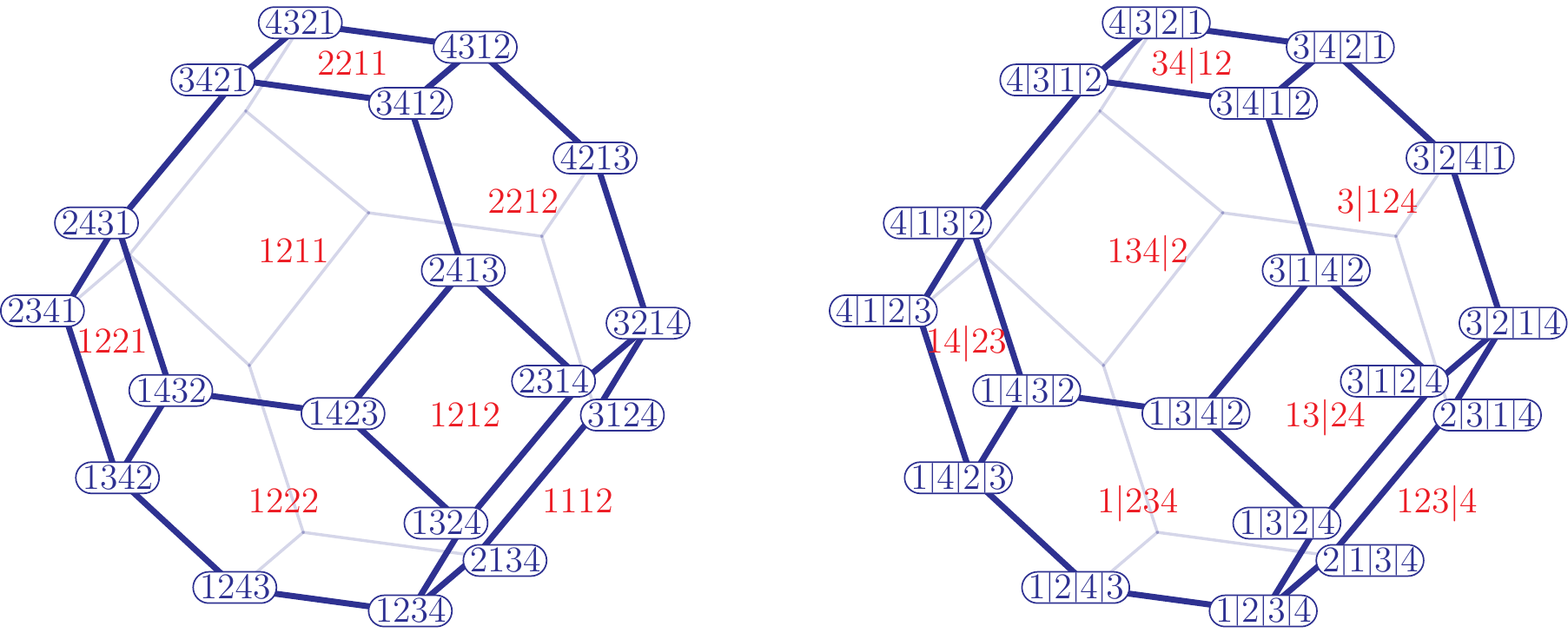}}
  \caption{The $3$-dimensional permutahedron~$\Perm[\protect{[4]}]$. Its $(4-k)$-dimensional faces correspond equivalently to the surjections from~$[4]$ to~$[k]$ (left), or to the ordered partitions of~$[4]$ into~$k$ parts (right). Vertices are in blue and facets in red. The reader is invited to label the edges accordingly.}
  \label{fig:permutahedra}
\end{figure}

A \defn{signed ordered partition} is an ordered partition table where each dot receives a~$+$ or~$-$ sign. For a signature~$\signature \in \pm^n$, we denote by~$\fP_\signature^{\ge p}$ the set of ordered partitions of~$[n]$ into at most~$n-p$ parts signed by~$\signature$, and we set
\[
\fP_\pm^{\ge p} \eqdef \bigsqcup_{n \in \N, \signature \in \pm^n} \fP_\signature^{\ge p}.
\]
We omit again the superscript~$^{\ge p}$ in the previous notations to denote signed ordered partitions with arbitrarily many parts. This gives again a filtration
\[
\fP_\pm = \bigcup_{p \ge 1} \fP_\pm^{\ge p}
\qquad\text{with}\qquad
\fP_\pm^{\ge 0} \supset \fP_\pm^{\ge 1} \supset \dots
\]

Given such a signed ordered partition~$\lambda$, we construct a leveled Schr\"oder-Cambrian tree~$\SchrCambCorresp(\lambda)$ as follows. As a preprocessing, we represent the table of~$\lambda$, we draw a vertical wall below the negative dots and above the positive dots, and we connect into nodes the dots at the same level which are not separated by a wall. Note that we might obtain several nodes per level. We then sweep the table from bottom to top as follows. The procedure starts with an incoming strand in between any two consecutive negative values. At each level, each node~$v$ (connected set of dots) gathers all strands in the region below and visible from~$v$ (\ie not hidden by a vertical wall) and produces one strand in each region above and visible from~$v$. The procedure finished with an outgoing strand in between any two consecutive positive values. See \fref{fig:insertionAlgorithmSchroder}.

\begin{figure}[h]
  \centerline{\includegraphics[width=\textwidth]{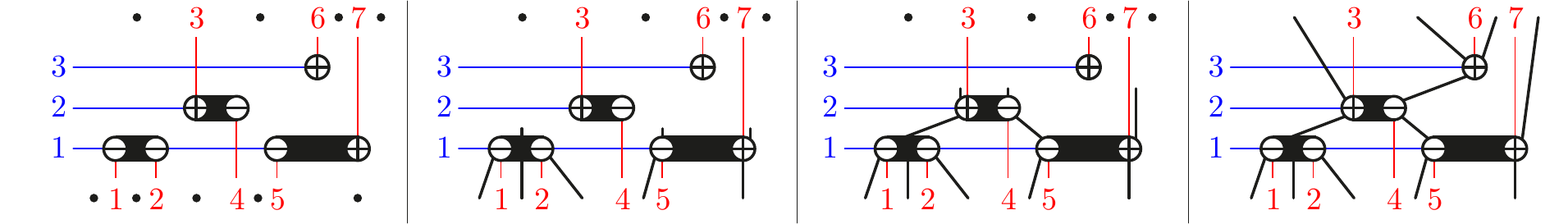}}
  \caption{The insertion algorithm on the signed ordered partition~$\down{125}\up{7} \sep \up{3}\down{4} \sep \up{6}$.}
  \label{fig:insertionAlgorithmSchroder}
\end{figure}

\begin{proposition}[\cite{LangePilaud}]
The map~$\SchrCambCorresp$ is a bijection from signed ordered partitions to leveled Schr\"oder-Cambrian trees.
\end{proposition}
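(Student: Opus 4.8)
The statement to prove is that the map $\SchrCambCorresp$ from signed ordered partitions to leveled Schröder-Cambrian trees is a bijection. This is attributed to Lange--Pilaud, so the plan is to reconstruct their argument by exhibiting an explicit inverse.

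Let me sketch how I would prove it. First, I would check that $\SchrCambCorresp$ is well-defined: given a signed ordered partition $\lambda$ of $[n]$ into $k$ parts, running the sweeping procedure produces a directed tree whose underlying $p$-labeling satisfies conditions (i) and (ii) of the definition of a Schröder $\signature$-Cambrian tree, and whose $q$-labeling (the level of each node) is increasing and surjective onto $[k]$. The key local verification is that at level $i$, the walls coming from the negative dots of $\lambda_i$ split the visible region into exactly the intervals of $[n] \setminus p(v)^-$, so $v$ indeed gathers one incoming strand per such interval; symmetrically the walls above positive dots of $\lambda_i$ produce one outgoing strand per interval of $[n] \setminus p(v)^+$. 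The partition condition (i) is immediate since each element of $[n]$ appears in exactly one column and one level. Surjectivity and increasingness of $q$ follow from the bottom-to-top sweep.

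Then I would describe the inverse map. Given a leveled Schröder $\signature$-Cambrian tree $\tree$ with level labeling $q : \ground \to [k]$, I recover the ordered partition $\lambda = p(q^{-1}(1)) \sep p(q^{-1}(2)) \sep \cdots \sep p(q^{-1}(k))$, signing each element $x$ by $\signature_x$. One checks that applying $\SchrCambCorresp$ to this signed ordered partition reproduces $\tree$: the dots at level $i$ that get connected into a node are precisely those not separated by a wall, which — because of the nesting conditions on the labels $p$ and the interval structure in condition (ii) — are exactly the elements of a single label $p(v)$ with $q(v) = i$; and the strands reconstruct the edges of $\tree$ because a strand passing between level $i$ and the next occupied level records exactly which ancestor subtree it belongs to. Conversely, starting from a signed ordered partition, constructing the tree, and reading off the levels returns the same partition, since the procedure never merges or splits the dots of a given level beyond what the walls dictate and preserves the bottom-up order. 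This two-sided inverse establishes the bijection; moreover it respects the filtration, sending $\fP_\signature^{\ge p}$ to $\SchrCambTrees^{\ge p}(\signature)$ since an ordered partition into at most $n-p$ parts yields a tree with at most $n-p$ internal nodes.

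The main obstacle is the bookkeeping in condition (ii): one must carefully argue that the walls falling below the negative elements of a node $v$ and rising above its positive elements partition the ``visible'' horizontal region into precisely the intervals of $[n] \setminus p(v)^-$ and $[n] \setminus p(v)^+$ respectively, and that strands never cross walls, so that the reconstructed incidences match. This is exactly the content of the insertion algorithm description and is essentially a figure-checking argument (as in \fref{fig:insertionAlgorithmSchroder}); I would organize it by induction on the number of levels, peeling off the bottom level and invoking that the remaining tree is a Schröder-Cambrian tree on the strands issued from level $1$. Since this is a known result from~\cite{LangePilaud}, in the paper itself I would simply cite it, but the inductive peeling argument above is the skeleton one would flesh out for a self-contained proof.
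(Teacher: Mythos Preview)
The paper itself provides no proof of this proposition: it is stated with the citation~\cite{LangePilaud} and nothing more. Your reconstruction of the argument --- exhibit the inverse by reading off~$\lambda_i = \bigcup_{v \in q^{-1}(i)} p(v)$ from a leveled tree, and check both composites are identities via the local wall/strand analysis --- is the natural one and matches how the analogous Proposition for the Cambrian correspondence~$\CambCorresp$ is treated (also cited without proof). Your acknowledgment that the paper would simply cite the result is exactly right.

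One genuine error, though: your final filtration claim is backwards. You assert that an ordered partition into at most~$n-p$ parts yields a tree with at most~$n-p$ internal nodes, but the paper explicitly notes the opposite just after this proposition: ``an ordered partition of~$[n]$ into~$k$ parts is sent to a Schr\"oder-Cambrian tree with \emph{at least}~$k$ internal nodes, since some levels can be split into several nodes.'' So~$\SchrCambCorresp$ does not send~$\fP_\signature^{\ge p}$ into (leveled trees over)~$\SchrCambTrees^{\ge p}(\signature)$; the correct containment, stated in the paper for the~$\PSymbol^\star$-symbol, is~$(\surjectionSchrPermAsso)^{-1}(\SchrCambTrees^{\ge p}) \subseteq \fP_\pm^{\ge p}$. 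This does not affect the bijection itself, but you should drop or reverse that closing remark.
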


We define the $\PSymbol^\star$-symbol of a signed ordered partition~$\lambda$ as the Schr\"oder-Cambrian tree~$\surjectionSchrPermAsso(\lambda)$ defined by~$\SchrCambCorresp(\lambda)$. Note that an ordered partition of~$[n]$ into $k$ parts is sent to a Schr\"oder-Cambrian tree with at least~$k$ internal nodes, since some levels can be split into several nodes. In other words, the fibers of the Schr\"oder-Cambrian $\PSymbol^\star$-symbol respect the filtrations~$(\fP_\pm^{\ge p})_{p \in \N}$ and~$(\SchrCambTrees^{\ge p})_{p \in \N}$, in the sense that
\[
(\surjectionSchrPermAsso)^{-1} \big( \SchrCambTrees^{\ge p} \big) \subseteq \fP_\pm^{\ge p}.
\]

The following characterization of the fibers of the map~$\surjectionSchrPermAsso$ is immediate from the description of the Schr\"oder-Cambrian correspondence. For a Schr\"oder-Cambrian tree~$\tree$, we write~$i \to j$ in~$\tree$ if the node of~$\tree$ containing~$i$ is below the node of~$\tree$ containing~$j$, and~$i \sim j$ in~$\tree$ if~$i$ and~$j$ belong to the same node of~$\tree$. We say that~$i$ and~$j$ are \defn{incomparable} in~$\tree$ when~$i \not\to j$, $j \not\to i$, and~$i \not\sim j$.

\begin{proposition}
\label{prop:mergeLinearExtensions}
For any Schr\"oder $\signature$-Cambrian tree~$\tree$ and signed ordered partition~$\lambda \in \fP_\signature$, we have~$\surjectionSchrPermAsso(\lambda) = \tree$ if and only if~$i \sim j$ in~$\tree$ implies $\lambda^{-1}(i) = \lambda^{-1}(j)$ and $i \to j$ in~$\tree$ implies $\lambda^{-1}(i) < \lambda^{-1}(j)$.
In other words, $\lambda$ is obtained from a linear extension of~$\tree$ by merging parts which label incomparable vertices of~$\tree$.
\end{proposition}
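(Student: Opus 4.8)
The plan is to argue directly from the description of the insertion algorithm~$\SchrCambCorresp$, exactly as Proposition~\ref{prop:mergeLinearExtensions} is phrased as an ``immediate consequence''. First I would fix a Schr\"oder $\signature$-Cambrian tree~$\tree$ and a signed ordered partition~$\lambda \in \fP_\signature$, and unravel what it means for the sweep to produce~$\tree$. When the table of~$\lambda$ is swept from bottom to top, the dots lying at the same level that are not separated by a vertical wall are grouped into a single node, and a node at a lower level is connected by a (transitive) path to a node at a higher level precisely when the corresponding strands meet. Thus if~$i \sim j$ in~$\tree$ then~$i$ and~$j$ must have been grouped into the same node during the sweep, which forces them to lie at the same level of~$\lambda$, \ie $\lambda^{-1}(i) = \lambda^{-1}(j)$; and if~$i \to j$ in~$\tree$ then the node containing~$i$ is strictly below the node containing~$j$, so~$i$ is swept strictly before~$j$, giving~$\lambda^{-1}(i) < \lambda^{-1}(j)$. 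This establishes the ``only if'' direction.

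For the ``if'' direction I would show that the two stated conditions pin down~$\lambda$ among all signed ordered partitions of signature~$\signature$ with the property that~$\SchrCambCorresp(\lambda)$ has Schr\"oder-Cambrian $\PSymbol^\star$-symbol~$\tree$, and that at least one such~$\lambda$ exists. Existence is clear: take any linear extension of (the transitive closure of) the node poset of~$\tree$, refine it to a chain of the singleton-node partition, and observe that the sweep rebuilds~$\tree$ — or, more cleanly, invoke that~$\tree$ itself, read level by level as its own (coarsest) leveling, is the image under~$\SchrCambCorresp$ of the ordered partition obtained by listing the nodes of~$\tree$ in any linear extension order. Then, given that~$\lambda$ satisfies ``$i \sim j \implies \lambda^{-1}(i) = \lambda^{-1}(j)$'' and ``$i \to j \implies \lambda^{-1}(i) < \lambda^{-1}(j)$'', I would run the insertion algorithm on~$\lambda$ and check, level by level from the bottom, that the node structure and the strand connections produced agree with~$\tree$: the first condition guarantees that dots which should be merged are at the same level and (since no wall separates elements of a common node of a Cambrian tree, by the interval condition~(ii) in the definition of Schr\"oder-Cambrian trees) do get merged into one node; the second condition guarantees the levels are consistent with the partial order of~$\tree$, so no two nodes that should be connected get their strands blocked, and conversely nodes that are incomparable in~$\tree$ can be freely placed at equal or unequal levels without creating spurious edges (here one uses that comparability of~$i$ and~$j$ in~$\tree$ is detected by the walls, just as in the binary case of Section~\ref{subsec:canopy}). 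This yields~$\surjectionSchrPermAsso(\lambda) = \tree$, completing the equivalence; the final sentence of the statement is then just a reformulation, since ``merge parts labelling incomparable vertices'' is exactly the operation that relaxes the strict inequalities of a linear extension of~$\tree$ into the weak inequalities allowed by the two conditions.

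The main obstacle I anticipate is the careful bookkeeping in the ``if'' direction showing that the walls do their job correctly at the level of nodes rather than individual dots: one must verify that when several dots at the same level of~$\lambda$ are \emph{not} separated by a wall, the set of elements they carry is exactly a node-label of~$\tree$ and not a proper union of several node-labels — in other words, that the wall pattern of~$\lambda$, determined by the signature~$\signature$ together with the level structure, matches the incoming/outgoing-interval structure required by condition~(ii) in the definition of Schr\"oder-Cambrian trees. This is the Schr\"oder analogue of the observation (used implicitly throughout Section~\ref{sec:CambrianTrees}) that consecutive integers separated by no wall lie in a prescribed subtree; it is routine but needs to be stated with some care, and it is where the hypothesis~$\lambda \in \fP_\signature$ (rather than~$\fP^\signature$) is essential, since the walls are read off positions. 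I would handle it by an induction on the number of levels of~$\lambda$, peeling off the bottom level and using the recursive structure of Schr\"oder-Cambrian trees under removal of the source nodes, mirroring the inductive description of the insertion algorithm given after the definition of~$\SchrCambCorresp$.
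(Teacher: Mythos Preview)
Your approach is correct and matches the paper's, which in fact gives no proof at all beyond declaring the statement ``immediate from the description of the algorithm''; you have simply (and correctly) unpacked that immediacy. The ``only if'' direction is exactly as you say, and your plan for the ``if'' direction --- checking that two values at the same level of~$\lambda$ which are not separated by a wall must lie in the same node of~$\tree$, via a witness~$b$ between them whose wall would otherwise separate them --- is the right one and is precisely the mechanism used in the proof of the Schr\"oder-Cambrian congruence just below.

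One small confusion to flag: your parenthetical about ``$\lambda \in \fP_\signature$ (rather than~$\fP^\signature$)'' and ``walls are read off positions'' is off. The paper does not define a separate~$\fP^\signature$, and the vertical walls in the insertion algorithm are attached to \emph{columns} (values), not rows (positions): a wall sits at column~$b$ and extends downward if~$\signature_b=-$ and upward if~$\signature_b=+$. The row of the dot only determines where the wall begins. This does not affect your argument --- the inductive peeling-off-the-bottom-level verification you describe works exactly because the wall pattern at each level is determined by the values and their signs together with which levels they occupy --- but you should correct the remark.
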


\begin{example}
When~$\signature = ({+})^n$, the Schr\"oder-Cambrian tree~$\surjectionSchrPermAsso(\lambda)$ is the increasing tree of~$\lambda^{-1}$. Here, the \defn{increasing tree}~$\increasingTree(\pi)$ of a surjection~$\pi = \pi^{(1)} 1 \pi^{(2)} 1 \dots 1 \pi^{(p)}$ is defined inductively by grafting the increasing trees~$\increasingTree(\pi^{(1)}), \dots, \increasingTree(\pi^{(p)})$ from left to right on a bottom root labeled by~$1$. Similarly, when~$\signature = ({-})^n$, the Schr\"oder-Cambrian tree~$\surjectionSchrPermAsso(\lambda)$ is the decreasing tree of~$\lambda^{-1}$. Here, the \defn{decreasing tree}~$\decreasingTree(\pi)$ of a surjection~$\pi = \pi^{(1)} k \pi^{(2)} k \dots k \pi^{(p)}$ from~$[n]$ to~$[k]$ is defined inductively by grafting the decreasing trees~$\decreasingTree(\pi^{(1)}), \dots, \decreasingTree(\pi^{(p)})$ from left to right on a top root labeled by~$k$. See \fref{fig:constantSignsSchroder}.

\begin{figure}[h]
  \centerline{\includegraphics{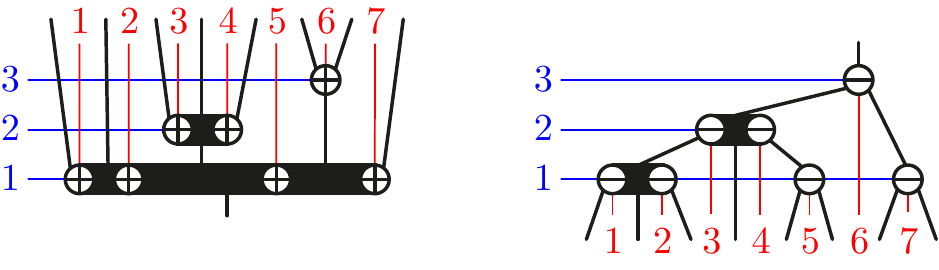}}
  \caption{The insertion procedure produces increasing Schr\"oder trees when the signature is constant positive (left) and decreasing Schr\"oder trees when the signature is constant negative (right).}
  \label{fig:constantSignsSchroder}
\end{figure}
\end{example}

\begin{remark}[Schr\"oder-Cambrian correspondence on dissections]
Similar to Remark~\ref{rem:CambrianCorrespondenceTriangulations}, we can describe the map~$\surjectionSchrPermAsso$ on the dissections of the polygon~$\polygon$. Namely the dissection dual to the Schr\"oder-Cambrian tree~$\surjectionSchrPermAsso(\lambda)$ is the union of the paths~$\pi_0, \dots, \pi_n$ where~$\pi_i$ is the path between vertices~$0$ and~$n+1$ of~$\polygon$ passing through the vertices in the symmetric difference~$\signature^{-1}(-) \symdif \big( \bigcup_{j \in [i]} \lambda_j \big)$.
\end{remark}

%%%%%%%%%%%%%%%%%

\subsection{Schr\"oder-Cambrian congruence}

As for the Cambrian congruence, the fibers of the Schr\"oder-Cambrian $\PSymbol^\star$-symbol define a congruence relation on signed ordered partitions, which can be expressed by rewriting rules.

\begin{definition}
For a signature~$\signature \in \pm^n$, the Schr\"oder $\signature$-Cambrian congruence is the equivalence relation on~$\fP_\signature$ defined as the transitive closure of the rewriting rules
\[
U \sep \b{a} \sep \b{c} \sep V \equiv^\star_\signature U \sep \b{a}\b{c} \sep V \equiv^\star_\signature U \sep \b{c} \sep \b{a} \sep V,
\]
where $\b{a}, \b{c}$ are parts while~$U, V$ are sequences of parts of~$[n]$, and there exists~$\b{a} < b < \b{c}$ such that~$\signature_b = +$ and~$b \in \bigcup U$, or~$\signature_b = -$ and~$b \in \bigcup V$. The Schr\"oder-Cambrian congruence is the equivalence relation~$\equiv^\star$ on~$\fP_{\pm}$ obtained as the union of all Schr\"oder $\signature$-Cambrian congruences.
\end{definition}

For example, $\down{125}\up{7} \sep \up{3}\down{4} \sep \up{6} \equiv^\star \down{12} \sep \down{5}\up{7} \sep \up{3}\down{4} \sep \up{6} \equiv^\star  \down{5}\up{7} \sep \down{12} \sep \up{3}\down{4} \sep \up{6} \not\equiv^\star \down{5}\up{7} \sep \up{3}\down{4} \sep \down{12} \sep \up{6}$.

\begin{proposition}
Two signed ordered partitions~$\lambda, \lambda' \in \fP_{\pm}$ are Schr\"oder-Cambrian congruent if and only if they have the same $\PSymbol^\star$-symbol:
\[
\lambda \equiv^\star \lambda' \iff \surjectionSchrPermAsso(\lambda) = \surjectionSchrPermAsso(\lambda').
\]
\end{proposition}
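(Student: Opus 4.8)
The plan is to mirror the proof of Proposition~\ref{prop:CambrianClass}, showing that the $\equiv^\star$-classes coincide with the fibers of $\surjectionSchrPermAsso$ by establishing the two inclusions. For the implication $\lambda \equiv^\star \lambda' \implies \surjectionSchrPermAsso(\lambda) = \surjectionSchrPermAsso(\lambda')$ it suffices, by transitivity, to treat a single rewriting step, say $\lambda = U \sep \b a \sep \b c \sep V$ rewritten to $U \sep \b a\b c \sep V$ or to $U \sep \b c \sep \b a \sep V$, with a witness~$b$ such that $\b a < b < \b c$ and either $\signature_b = +$ with $b \in \bigcup U$, or $\signature_b = -$ with $b \in \bigcup V$. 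I would argue directly on the insertion algorithm defining $\SchrCambCorresp$: in the first case, the vertical wall rising above the positive dot $b$ (processed at a strictly lower level than $\b a$ and $\b c$) separates, at every higher level, the columns of index~$<b$ from those of index~$>b$; since $\b a < b < \b c$, all dots, strands and regions involved in processing $\b a$ lie strictly left of this wall, and all those involved in processing $\b c$ lie strictly right of it. Hence the nodes created for $\b a$ and for $\b c$ gather and emit pairwise disjoint families of strands, so processing them in either order, or simultaneously on a single level, yields the same Schr\"oder-Cambrian tree. The second case is symmetric, using the wall descending below the negative dot~$b$, which is processed at a strictly higher level.

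For the converse, I would show that each fiber $\surjectionSchrPermAsso^{-1}(\tree)$ is connected under the rewriting moves, hence contained in a single $\equiv^\star$-class. By Proposition~\ref{prop:mergeLinearExtensions}, this fiber is exactly the set of signed ordered partitions obtained from a linear extension of~$\tree$ by merging consecutive parts labelling pairwise incomparable vertices of~$\tree$. I would fix a normal form in the fiber --- for instance the one whose parts are the successive ``layers'' obtained by repeatedly removing the minimal vertices of~$\tree$ that are not pairwise separated by a wall --- and show that any element of the fiber reaches it by a sequence of merge moves (to coarsen the parts into the layer structure) followed by swap moves (to sort within a layer into the canonical order).

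The heart of the argument is the following lemma, which is really the Schr\"oder lift of the observation used for Proposition~\ref{prop:CambrianClass}: two consecutive parts $\b a$ and $\b c$ of a merged linear extension of~$\tree$ may be merged, and swapped, precisely when every $a \in \b a$ and $c \in \b c$ are incomparable in~$\tree$, and this holds if and only if there is a label~$b$ with $\b a < b < \b c$ sitting in a vertex of~$\tree$ that separates the vertices of~$\b a$ from those of~$\b c$, with $\signature_b = +$ when that vertex precedes $\b a, \b c$ and $\signature_b = -$ when it follows them --- exactly the witness appearing in the rewriting rule. I expect this lemma, in particular the forced existence of~$b$ from the mere incomparability in~$\tree$, together with the bookkeeping required because these moves change the number of parts, to be the main obstacle; once it is available, the connectivity of each fiber, and hence the statement, follows.
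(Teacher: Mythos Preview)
Your proposal is correct and follows essentially the same route as the paper. The paper's proof is a single terse paragraph: it observes that two consecutive parts~$\b a, \b c$ of an element of the fiber~$(\surjectionSchrPermAsso)^{-1}(\tree)$ can be merged or swapped while remaining in the fiber precisely when they lie in distinct subtrees of a node of~$\tree$, and that this is in turn equivalent to the existence of a value~$b$ with~$\b a < b < \b c$ such that~$\signature_b = +$ and~$b \in \bigcup U$, or~$\signature_b = -$ and~$b \in \bigcup V$ --- exactly the lemma you isolate as the crux. The paper does not spell out a normal-form argument or make the connectivity of fibers explicit; your plan is simply a more careful unpacking of the same idea. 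One minor slip: to reach the finest normal form (one node label per part) from a general fiber element you need split moves rather than merge moves, but since the rewriting rule is symmetric this is harmless.
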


\begin{proof}
It boils down to observe that two consecutive parts~$\b{a}$ and~$\b{c}$ of an ordered partition~$U \sep \b{a} \sep \b{c} \sep V$ in a fiber~$(\surjectionSchrPermAsso)^{-1}(\tree)$ can be merged to~$U \sep \b{a}\b{c} \sep V$ and even exchanged to~$U \sep \b{c} \sep \b{a} \sep V$ while staying in~$(\surjectionSchrPermAsso)^{-1}(\tree)$ precisely when they belong to distinct subtrees of a node of~$\tree$. They are therefore separated by the vertical wall below (resp.~above) a value~$b$ with~$\b{a} < b < \b{c}$ and such that~$\signature_b = -$ and~$b \in V$ (resp.~$\signature_b = +$ and~$b \in U$).
\end{proof}

%%%%%%%%%%%%%%%%%

\subsection{Weak order on ordered partitions and Schr\"oder-Cambrian lattices}

In order to define the Schr\"oder counterpart of the Cambrian lattice, we first need to extend the weak order on permutations to all ordered partitions. This was done by D.~Krob, M.~Latapy, J.-C.~Novelli, H.~D.~Phan and~S.~Schwer in~\cite{KrobLatapyNovelliPhanSchwer}. See also~\cite{BoulierHivertKrobNovelli} for representation theoretic properties of this order and \cite{PalaciosRonco} for an extension to all finite Coxeter systems.

\begin{definition}
The \defn{coinversion map}~${\coinv(\lambda) : \binom{[n]}{2} \to \{-1,0,1\}}$ of an ordered partition~$\lambda \in \fP_n$ is the map defined for~$i < j$ by
\[
\coinv(\lambda)(i,j) = 
\begin{cases}
-1 & \text{if } \lambda^{-1}(i) < \lambda^{-1}(j), \\
\phantom{-}0 & \text{if } \lambda^{-1}(i) = \lambda^{-1}(j), \\
\phantom{-}1 & \text{if } \lambda^{-1}(i) > \lambda^{-1}(j).
\end{cases}
\]
It is also called the \defn{inversion map} of the surjection~$\lambda^{-1}$.
\end{definition}

\begin{definition}
There are two natural poset structures on~$\fP_n$:
\begin{itemize}
\item The \defn{refinement poset}~$\contractionPoset$ defined by~$\lambda \contractionPoset \lambda'$ if~$|\coinv(\lambda)(i,j)| \ge |\coinv(\lambda')(i,j)|$ for all~${i < j}$. It is isomorphic to the face lattice of the permutahedron~$\Perm$, and respects the filtration~$(\fP_n^{\ge p})_{p \in [n]}$.
\item The \defn{weak order}~$\le$ defined by~$\lambda \le \lambda'$ if~$\coinv(\lambda)(i,j) \le \coinv(\lambda')(i,j)$ for all~$i < j$.
\end{itemize}
\end{definition}

\enlargethispage{.3cm}
These two posets are represented in \fref{fig:latticesOrderedPartitions} for~$n = 3$.

\begin{figure}[h]
  \centerline{\includegraphics[width=.7\textwidth]{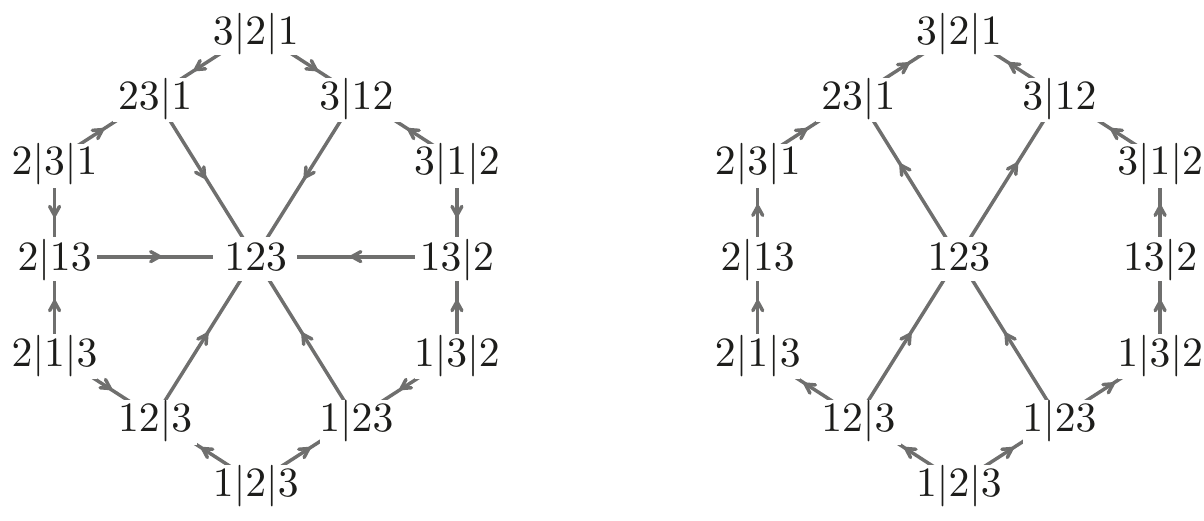}}
  \caption{The refinement poset (left) and the weak order (right) on~$\fP_3$.}
  \label{fig:latticesOrderedPartitions}
\end{figure}

Note that the restriction of the weak order to~$\fS_n$ is the classical weak order on permutations, which is a lattice. This property was extended to the weak order on~$\fP_n$ in~\cite{KrobLatapyNovelliPhanSchwer}.

\begin{proposition}[\cite{KrobLatapyNovelliPhanSchwer}]
The weak order~$<$ on the set of ordered partitions~$\fP_n$  is a lattice.
\end{proposition}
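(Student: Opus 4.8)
The statement to prove is that the weak order $<$ on the set of ordered partitions $\fP_n$ is a lattice. The plan is to follow the same strategy that works for the weak order on $\fS_n$, namely to show that any two ordered partitions admit a join (a least upper bound); the existence of meets then follows by the order-reversing symmetry $\lambda \mapsto \mirror{\lambda}$ (reversing the sequence of parts), which is an antiautomorphism of the weak order, together with the fact that $\fP_n$ has a global minimum $1\sep 2\sep\cdots\sep n$ reversed, i.e.\ $n \sep \cdots \sep 2 \sep 1$, and a global maximum $1\sep 2 \sep \cdots \sep n$ --- wait, more carefully, $\fP_n$ has both a minimum and a maximum in the weak order (the two linear orders $12\cdots n$ and $n\cdots 21$), so it suffices to prove that joins exist.

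\textbf{Key steps.} First I would reformulate the weak order in terms of the coinversion map: each $\lambda \in \fP_n$ is encoded by $\coinv(\lambda) : \binom{[n]}{2} \to \{-1,0,1\}$, and $\lambda \le \lambda'$ iff $\coinv(\lambda)(i,j) \le \coinv(\lambda')(i,j)$ entrywise for all $i<j$. Thus $\fP_n$ embeds as a subposet of the product poset $\{-1,0,1\}^{\binom{[n]}{2}}$, and the crucial point is to characterize \emph{which} maps $c : \binom{[n]}{2} \to \{-1,0,1\}$ arise as $\coinv(\lambda)$ for some ordered partition $\lambda$. The answer (this is the combinatorial heart, and is essentially in~\cite{KrobLatapyNovelliPhanSchwer}) is a transitivity-type condition: writing $i \preceq_c j$ when $c(\min(i,j),\max(i,j))$ has the appropriate sign (i.e.\ $i$ is weakly before $j$), the map $c$ is a valid coinversion map exactly when $\preceq_c$ is a total preorder on $[n]$ --- equivalently, for all triples $i,j,k$ the values of $c$ on the three pairs are ``consistent'' (one cannot have $i$ before $j$, $j$ before $k$, but $k$ strictly before $i$, and the $0$-classes must be transitively closed). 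Second, given $\lambda, \lambda'$, I would define $c^{\vee} := \max(\coinv(\lambda), \coinv(\lambda'))$ entrywise, and show that although $c^\vee$ need not itself be a valid coinversion map, there is a canonical smallest valid coinversion map $\widehat{c^\vee} \ge c^\vee$ obtained by ``closing up'' under the transitivity condition (a closure operator on $\{-1,0,1\}^{\binom{[n]}{2}}$ whose fixed points are exactly the valid coinversion maps, and which is monotone). Third, I would verify that $\widehat{c^\vee}$ is the join: it is an upper bound of $\lambda$ and $\lambda'$ since $\widehat{c^\vee} \ge c^\vee \ge \coinv(\lambda), \coinv(\lambda')$; and it is the least one, because any ordered partition $\mu$ with $\mu \ge \lambda$ and $\mu \ge \lambda'$ has $\coinv(\mu) \ge c^\vee$ and $\coinv(\mu)$ valid, hence $\coinv(\mu) \ge \widehat{c^\vee}$ by minimality of the closure. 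Finally, dualize via $\mirror{\,\cdot\,}$ to get meets, concluding that $(\fP_n, <)$ is a lattice.

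\textbf{Main obstacle.} The delicate part is step two: identifying the precise closure operator and proving it is well-defined and monotone. One must check that repeatedly applying the local transitivity fixes (whenever $c$ forces $i$ weakly-before $j$ and $j$ weakly-before $k$ but records $k$ before $i$, bump the entry on $\{i,k\}$ up toward $i$-before-$k$; and propagate forced equalities) always terminates, always lands on a valid coinversion map, and produces the \emph{same} result regardless of the order of fixes (confluence), so that the closure is genuinely a well-defined monotone operator. This is where a clean invariant is needed --- I would phrase it by passing to the preorder picture: $\max(\coinv(\lambda),\coinv(\lambda'))$ corresponds to the intersection of the two total preorders viewed as sets of weak-precedence pairs, and its closure is the \emph{finest total preorder refining both}, whose existence and uniqueness is a standard fact about total preorders (take the transitive closure of the union of the strict parts; if it is acyclic it defines the join, and acyclicity is exactly the compatibility guaranteed because both came from total preorders on the same ground set with a common refinement, e.g.\ any linear extension). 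Once this order-theoretic reformulation is in place, the lattice property is immediate, so I expect the write-up to be short modulo citing~\cite{KrobLatapyNovelliPhanSchwer} for the characterization of coinversion maps.
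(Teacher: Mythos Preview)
The paper does not give its own proof of this proposition: it is quoted verbatim from~\cite{KrobLatapyNovelliPhanSchwer} and used as a black box. So there is no ``paper's proof'' to compare against; your proposal stands on its own.

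Your overall framework is the natural one and matches the spirit of~\cite{KrobLatapyNovelliPhanSchwer}: encode an ordered partition by its coinversion map, embed $\fP_n$ into $\{-1,0,1\}^{\binom{[n]}{2}}$, characterise the image by a transitivity condition, and build the join of $\lambda,\lambda'$ as a closure of the entrywise maximum $c^\vee$. The antiautomorphism $\lambda\mapsto\mirror{\lambda}$ is indeed order-reversing, so meets follow from joins. This part is fine.

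The gap is in your resolution of the ``main obstacle''. You propose to realise the closure as ``the transitive closure of the union of the strict parts'' of the two total preorders, and you assert this is always acyclic. It is not: take $\lambda = 1\sep 2$ and $\lambda' = 2\sep 1$; the union of the strict parts is $\{1\prec 2,\,2\prec 1\}$, which has a $2$-cycle. More generally, two total preorders on $[n]$ need not have any common linear extension, so your acyclicity justification (``a common refinement, e.g.\ any linear extension'') fails. The weak order on~$\fP_n$ is \emph{not} the refinement order on total preorders, and translating $\coinv(\mu)\ge\coinv(\lambda)$ into a statement about preorders only constrains the ``descending'' pairs $(j,i)$ with $i<j$; this asymmetry is exactly what your preorder reformulation loses. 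What actually needs to be shown is that the upward closure of $c^\vee$ under the transitivity constraints is confluent and terminates at a valid map; equivalently, that among the valid coinversion maps dominating $c^\vee$ there is a minimum one. This requires a genuine argument (for instance, an explicit description of the forced entries, or a biclosedness/rank argument as in the Coxeter-theoretic treatments of the facial weak order), not the preorder shortcut you sketch.
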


In the following statement and throughout the remaining of the paper, we define for~$X, Y \subset \N$
\[
X \ll Y \; \iff \; \max(X) < \min(Y) \; \iff \; x < y \text{ for all~$x \in X$ and~$y \in Y$.}
\]

\begin{proposition}[\cite{KrobLatapyNovelliPhanSchwer}]
The cover relations of the weak order~$<$ on~$\fP_n$ are given by
\begin{gather*}
\lambda_1 \sep \cdots \sep \lambda_i \sep \lambda_{i+1} \sep \cdots \sep \lambda_k \;\; < \;\; \lambda_1 \sep \cdots \sep \lambda_i\lambda_{i+1} \sep \cdots \sep \lambda_k \qquad\text{if } \lambda_i \ll \lambda_{i+1}, \\
\lambda_1 \sep \cdots \sep \lambda_i\lambda_{i+1} \sep \cdots \sep \lambda_k \;\; < \;\; \lambda_1 \sep \cdots \sep \lambda_i \sep \lambda_{i+1} \sep \cdots \sep \lambda_k \qquad\text{if } \lambda_{i+1} \ll \lambda_i.
\end{gather*}
\end{proposition}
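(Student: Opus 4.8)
The plan is to establish two inclusions: that each relation listed is a cover relation of the weak order, and that, conversely, every cover relation is of one of the two listed types.

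\textbf{The listed relations are covers.} Fix $\lambda = \lambda_1 \sep \cdots \sep \lambda_k$ with $\lambda_i \ll \lambda_{i+1}$, and let $\lambda'$ be obtained by merging $\lambda_i$ and $\lambda_{i+1}$. A direct inspection of the coinversion maps shows that $\coinv(\lambda)$ and $\coinv(\lambda')$ coincide on every pair except on the rectangle $R \eqdef \set{(x,y)}{x \in \lambda_i, \; y \in \lambda_{i+1}}$ (note $x < y$ automatically on $R$, since $\lambda_i \ll \lambda_{i+1}$), where $\coinv(\lambda) \equiv -1$ while $\coinv(\lambda') \equiv 0$; hence $\lambda < \lambda'$. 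To see this relation is covering, I would take any ordered partition $\nu$ with $\lambda \le \nu \le \lambda'$ and show $\nu \in \{\lambda, \lambda'\}$. Squeezing forces $\coinv(\nu)$ to agree with $\coinv(\lambda)$ off $R$ and to take values in $\{-1,0\}$ on $R$; in particular every pair inside $\lambda_i$ and every pair inside $\lambda_{i+1}$ has $\coinv(\nu) = 0$, so "lying in the same part of $\nu$" contains all of $\lambda_i$ in one class and all of $\lambda_{i+1}$ in one class. A one-line transitivity argument then shows $\coinv(\nu)$ is \emph{constant} on $R$: writing $x \sim y$ for $\coinv(\nu)(x,y) = 0$, if $(x,y),(x',y') \in R$ and $x \sim y$, then $x' \sim x \sim y \sim y'$ using the already-forced relations $x \sim x'$ and $y \sim y'$, hence $x' \sim y'$. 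So the common value on $R$ is $-1$, giving $\nu = \lambda$, or $0$, giving $\nu = \lambda'$. The split relations $\lambda_i\lambda_{i+1} \to \lambda_i \sep \lambda_{i+1}$ with $\lambda_{i+1} \ll \lambda_i$ are handled symmetrically, the rectangle now carrying the values $0$ and $1$.

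\textbf{Every cover relation is listed.} Given a cover relation $\lambda \lessdot \mu$, the goal is to produce one of the two elementary moves above, applied to $\lambda$, yielding an ordered partition $\le \mu$; by covering that ordered partition is $\mu$, and reading off the coinversion change identifies the move as a merge or a split of the correct type. Let $D$ be the non-empty set of pairs on which $\coinv(\lambda) < \coinv(\mu)$. First I would rule out "double jumps": if some pair has $\coinv(\lambda) = -1$ and $\coinv(\mu) = 1$, one exhibits an ordered partition strictly between $\lambda$ and $\mu$ (e.g.\ by merging the $\mu$-parts of the two elements), contradicting $\lambda \lessdot \mu$. Next, choosing a witness pair $(i,j) \in D$ whose two elements lie in parts of $\lambda$ as close as possible, a minimality argument combined with the same transitivity principle as above shows that these two parts must be \emph{consecutive} (if $\coinv(\lambda)(i,j) = -1$) or \emph{equal} (if $\coinv(\lambda)(i,j) = 0$), that the required $\ll$-condition holds (otherwise the corresponding move would decrease some coinversion and fail to stay $\ge \lambda$), and that performing the merge, resp.\ the split of that part into the appropriate two blocks, stays $\le \mu$.

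\textbf{Main obstacle.} The delicate part is the exhaustiveness direction, specifically the structural claim that the set $D$ of changed pairs of a cover relation is forced to be a single rectangle between two consecutive (or split) parts of $\lambda$, with no double jumps. This rests on combining the transitivity of the "same part" relation with an exchange/minimality argument inside the weak order; once it is in place, the remaining verifications are routine bookkeeping on the values $\{-1,0,1\}$. Alternatively, as the statement is due to~\cite{KrobLatapyNovelliPhanSchwer}, one may simply cite it, the above being the self-contained route.
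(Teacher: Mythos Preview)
The paper does not prove this proposition at all: it is stated with the attribution~\cite{KrobLatapyNovelliPhanSchwer} and left without proof. Your final sentence, offering the citation as an alternative to the self-contained route, is therefore exactly what the paper does.

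As for the self-contained argument you sketch: the first direction (the listed relations are covers) is correct and cleanly handled; the transitivity argument showing that $\coinv(\nu)$ is constant on the rectangle $R$ is the right mechanism. The exhaustiveness direction is, as you yourself flag, only a plan. Ruling out double jumps is fine, but the ``minimality plus transitivity'' argument that forces $D$ to be a single rectangle between two \emph{consecutive} parts (or a split of one part) is not yet a proof: you have not exhibited, given an arbitrary $\lambda<\mu$, a concrete elementary move on $\lambda$ that stays $\le\mu$. One clean way to finish is to pick an index $p$ minimal with $\lambda_1\sep\cdots\sep\lambda_p\neq\mu_1\sep\cdots\sep\mu_p$ (as ordered partitions of the same initial segment of values), and argue by cases on whether $\lambda_p$ properly contains, is contained in, or overlaps $\mu_p$; in each case the comparison $\coinv(\lambda)\le\coinv(\mu)$ forces the $\ll$-condition needed to perform a merge or split at position $p$ that remains $\le\mu$. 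With that step made explicit your argument would be complete; as written, the second half is a credible outline rather than a proof.
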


We now extend the Cambrian lattice on Cambrian trees to a lattice on all Schr\"oder-Cambrian trees. For the constant signature~$\signature = (-)^n$, the order considered below was already defined by P.~Palacios and M.~Ronco in~\cite{PalaciosRonco}, although its lattice structure (Proposition~\ref{prop:SchroderCambrianLattice}) was not discussed in there.

\begin{definition}
Consider a Schr\"oder $\signature$-Cambrian tree~$\tree$, and an edge~$e=\{v,w\}$ of~$\tree$. We denote by~$\tree/e$ the tree obtained by contracting~$e$ in~$\tree$. It is again a Schr\"oder $\signature$-Cambrian tree. We say that the contraction is \defn{increasing} if~$p(u) \ll p(v)$ and \defn{decreasing} if~$p(v) \ll p(u)$. Otherwise, we say that the contraction is \defn{non-monotone}.
\end{definition}

\begin{definition}
There are two natural poset structures on~$\SchrCambTrees_\signature$:
\begin{itemize}
\item The \defn{contraction poset}~$\contractionPoset$ defined as the transitive closure of the relation~$\tree \contractionPoset \tree/e$ for any Schr\"oder $\signature$-Cambrian tree~$\tree$ and edge~$e \in \tree$. It is isomorphic to the face lattice of the associahedron~$\Asso$, and respects the filtration~$(\SchrCambTrees_\signature^{\ge p})_{p \in [n]}$.
\item The \defn{Schr\"oder $\signature$-Cambrian poset}~$\SchrCambPoset$ defined as the transitive closure of the relation~$\tree \SchrCambPoset \tree/e$ (resp.~$\tree/e \SchrCambPoset \tree$) for any Schr\"oder $\signature$-Cambrian tree~$\tree$ and any edge~$e \in \tree$ defining an increasing (resp.~decreasing) contraction.
\end{itemize}
\end{definition}

These two posets are represented in \fref{fig:SchroderCambrianLattices} for the signature~${+}{-}{-}$. Observe that there are two non-monotone contractions. Note also that the restriction of the Schr\"oder $\signature$-Cambrian poset~$\SchrCambPoset$ to the $\signature$-Cambrian trees is the $\signature$-Cambrian lattice.

\begin{figure}[h]
  \centerline{\includegraphics[width=\textwidth]{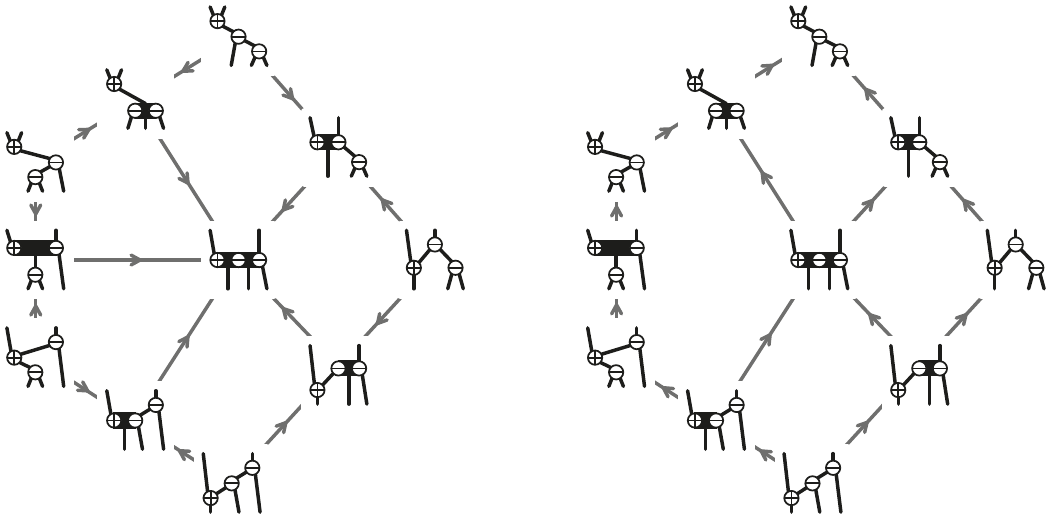}}
  \caption{The contraction poset (left) and the Schr\"oder $({+}{-}{-})$-Cambrian poset (right) on Schr\"oder $({+}{-}{-})$-Cambrian trees.}
  \label{fig:SchroderCambrianLattices}
\end{figure}

\begin{proposition}
The map~$\surjectionSchrPermAsso$ defines a poset homomorphism from the weak order on~$\fP_\signature$ to the Schr\"oder $\signature$-Cambrian poset on~$\SchrCambTrees(\signature)$.
\end{proposition}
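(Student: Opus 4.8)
The plan is to mimic the proof given earlier for Proposition~\ref{prop:commutativeDiagram}/the remark after the Cambrian lattice, adapted to ordered partitions. What we must show is that for any cover relation $\lambda \lessdot \lambda'$ in the weak order on $\fP_\signature$, either $\surjectionSchrPermAsso(\lambda) = \surjectionSchrPermAsso(\lambda')$ or $\surjectionSchrPermAsso(\lambda) \SchrCambPoset \surjectionSchrPermAsso(\lambda')$ in the Schr\"oder $\signature$-Cambrian poset; this is exactly the statement that $\surjectionSchrPermAsso$ is order-preserving, since covers generate the weak order. So first I would invoke the explicit description of cover relations on $\fP_n$ recalled above: $\lambda'$ is obtained from $\lambda$ either by \emph{splitting} a part $\lambda_i\lambda_{i+1}$ (with $\lambda_i \ll \lambda_{i+1}$) into two consecutive parts $\lambda_i \sep \lambda_{i+1}$, or by \emph{merging} two consecutive parts. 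By the symmetry of the problem (reversing the order of parts exchanges $\lessdot$ in one direction with $\lessdot$ in the other, and exchanges increasing with decreasing contractions) it suffices to treat the splitting case $\lambda = U \sep \mathbf{m} \sep V \lessdot U \sep \mathbf{a} \sep \mathbf{b} \sep V = \lambda'$, where $\mathbf{m} = \mathbf{a} \cup \mathbf{b}$ and $\mathbf{a} \ll \mathbf{b}$.

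The core of the argument is a local analysis of what the insertion algorithm $\SchrCambCorresp$ does to the level corresponding to $\mathbf m$. By Proposition~\ref{prop:mergeLinearExtensions}, $\surjectionSchrPermAsso(\lambda) = \tree$ iff $\lambda$ is obtained from a linear extension of $\tree$ by merging parts labelling incomparable vertices. Consider the node(s) of $\surjectionSchrPermAsso(\lambda)$ meeting $\mathbf m$. If, in $\surjectionSchrPermAsso(\lambda)$, all of $\mathbf m$ still lies in a single node $v$, then that node carries both $\mathbf a$ and $\mathbf b$ on distinct incoming/outgoing subtrees, and splitting $\mathbf m$ into $\mathbf a \sep \mathbf b$ with $\mathbf a \ll \mathbf b$ is realized by an increasing contraction's inverse at $v$: concretely $\surjectionSchrPermAsso(\lambda')$ is the tree obtained from $\surjectionSchrPermAsso(\lambda)$ by un-contracting $v$ into two nodes $v_\mathbf a, v_\mathbf b$ with $v_\mathbf a$ below $v_\mathbf b$ (the $\mathbf a \ll \mathbf b$ condition guarantees this is the legal, monotone way to do it), so $\surjectionSchrPermAsso(\lambda) = \surjectionSchrPermAsso(\lambda')/e \SchrCambPoset \surjectionSchrPermAsso(\lambda')$ for the new edge $e$. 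Otherwise $\mathbf a$ and $\mathbf b$ already lie on distinct nodes (or $\mathbf m$ was already spread over several nodes in a way that separates $\mathbf a$ from $\mathbf b$), and then splitting the part does not change the tree at all, i.e.\ $\surjectionSchrPermAsso(\lambda) = \surjectionSchrPermAsso(\lambda')$. One has to check these are the only possibilities: since the relative positions of the strands entering and leaving the $\mathbf m$-region are determined by the signs in $U$ and $V$ (via the vertical walls), and these are identical for $\lambda$ and $\lambda'$, the only freedom is whether $\mathbf a$ and $\mathbf b$ get amalgamated into one node, and the $\mathbf a \ll \mathbf b$ ordering forces the monotonicity direction.

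The step I expect to be the main obstacle is precisely the bookkeeping in this local analysis: making rigorous the claim that the tree $\surjectionSchrPermAsso(\lambda')$ differs from $\surjectionSchrPermAsso(\lambda)$ only by a single (necessarily increasing) edge un-contraction at the node(s) touching $\mathbf m$, and nothing elsewhere. This requires carefully tracking, in the sweep of the insertion algorithm, that processing $U$ produces the same partial configuration of strands for both partitions, that the $\mathbf m$-level versus the $\mathbf a$-then-$\mathbf b$-levels differ only by whether two nodes at consecutive heights get merged, and that processing $V$ afterwards is insensitive to this merge (because the outgoing strands produced are the same as a multiset with the same left-to-right order). A clean way to organize this is to appeal directly to Proposition~\ref{prop:mergeLinearExtensions} rather than re-running the algorithm: take a linear extension $\mu$ of $\surjectionSchrPermAsso(\lambda)$ witnessing $\lambda$ via merging; refine the merge at $\mathbf m$ into $\mathbf a$ then $\mathbf b$ to get an ordered partition witnessing $\lambda'$; observe its $\PSymbol^\star$-symbol is either the same tree or the tree with the corresponding edge un-contracted, and that $\mathbf a \ll \mathbf b$ is exactly the condition distinguishing an increasing un-contraction from a non-monotone one. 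I would also remark, as after Proposition~\ref{prop:rotation} in the non-Schr\"oder case, that the interpretation of Schr\"oder-Cambrian trees as spines of dissections (Remark~\ref{rem:dissections}) gives an alternative, more geometric verification, since splitting a part corresponds to removing one diagonal from a dissection of $\polygon$.
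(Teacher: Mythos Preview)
Your approach is the paper's approach---reduce to cover relations and analyze locally what merging or splitting a part does to the tree---but you have the directions reversed in both orders, and the two errors do not cancel as written. With $\mathbf a \ll \mathbf b$, the cover relation is $U \sep \mathbf a \sep \mathbf b \sep V \lessdot U \sep \mathbf a\mathbf b \sep V$: \emph{merging} two parts in increasing order produces a \emph{larger} ordered partition. Your ``splitting case'' $U \sep \mathbf m \sep V \lessdot U \sep \mathbf a \sep \mathbf b \sep V$ with $\mathbf a \ll \mathbf b$ is therefore not a cover relation at all. Likewise, for an increasing contraction the definition gives $\tree \SchrCambPoset \tree/e$, so the contracted tree is the larger one; your conclusion $\surjectionSchrPermAsso(\lambda')/e \SchrCambPoset \surjectionSchrPermAsso(\lambda')$ is the wrong way around. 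Swap the roles of $\lambda$ and $\lambda'$ throughout and the argument becomes correct.

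Once the direction is fixed, the paper sharpens your local analysis in a way that dissolves the ``bookkeeping obstacle'' you flag. Starting from $\lambda = U \sep \lambda_i \sep \lambda_{i+1} \sep V$ with $\lambda_i \ll \lambda_{i+1}$, it looks specifically at the \emph{rightmost} node $u$ at level $i$ and the \emph{leftmost} node $v$ at level $i+1$ in $\surjectionSchrPermAsso(\lambda)$. Because $\lambda_i \ll \lambda_{i+1}$, any other pair of nodes at these two levels is separated by a wall through some value of $\lambda_i$ or $\lambda_{i+1}$, so $u$ and $v$ are the only nodes that can possibly coalesce when the two levels are merged. Either they are themselves separated by a wall (trees coincide), or there is an edge $u \to v$ whose increasing contraction gives $\surjectionSchrPermAsso(\lambda')$. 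Pinning down $u$ and $v$ replaces your case analysis on ``$\mathbf m$ spread over several nodes'' by a one-line observation.
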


\begin{proof}
Let~$\lambda < \lambda'$ be a cover relation in the weak order on~$\fP_\signature$. Assume that~$\lambda'$ is obtained by merging the parts~$\lambda_i \ll \lambda_{i+1}$ of~$\lambda$ (the other case being symmetric). Let~$u$ denote the rightmost node of~$\surjectionSchrPermAsso(\lambda)$ at level~$i$, and~$v$ the leftmost node of~$\surjectionSchrPermAsso(\lambda)$ at level~$i+1$. If~$u$ and~$v$ are not comparable, then~$\surjectionSchrPermAsso(\lambda) = \surjectionSchrPermAsso(\lambda')$. Otherwise, there is an edge~$u \to v$ in~$\surjectionSchrPermAsso(\lambda)$ and~$\surjectionSchrPermAsso(\lambda')$ is obtained by the increasing contraction of~$u \to v$ in~$\surjectionSchrPermAsso(\lambda)$.
\end{proof}

\begin{proposition}
\label{prop:SchroderCambrianLattice}
For any signature~$\signature \in \pm^n$, the Schr\"oder $\signature$-Cambrian poset on Schr\"oder $\signature$-Cambrian trees is a lattice quotient of the weak order on ordered partitions of~$[n]$.
\end{proposition}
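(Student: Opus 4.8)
The plan is to show that the Schr\"oder-Cambrian congruence~$\equiv^\star_\signature$ is a lattice congruence of the weak order on~$\fP_\signature$, and then invoke the standard fact that the quotient of a lattice by a lattice congruence is a lattice; the resulting quotient lattice is then exactly the Schr\"oder $\signature$-Cambrian poset by the previous proposition (fibers of~$\surjectionSchrPermAsso$ are the congruence classes, and the quotient order is generated by increasing contractions). The notion of ``lattice congruence'' we use is the order-theoretic one: an equivalence~$\equiv$ on a finite lattice~$L$ such that each class is an interval, and both the map sending~$x$ to the bottom of its class and the map sending~$x$ to the top of its class are order-preserving. So there are really two things to verify: (a) each Schr\"oder-Cambrian class is an interval of the weak order on~$\fP_\signature$, and (b) the projections~$\pi^\downarrow$ and~$\pi^\uparrow$ onto the minimal and maximal elements of the classes are order-preserving.

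First I would establish (a). The class~$(\surjectionSchrPermAsso)^{-1}(\tree)$ is described combinatorially by Proposition~\ref{prop:mergeLinearExtensions}: it consists of the ordered partitions~$\lambda$ such that~$i\to j$ in~$\tree$ forces~$\lambda^{-1}(i)<\lambda^{-1}(j)$ and~$i\sim j$ in~$\tree$ forces~$\lambda^{-1}(i)=\lambda^{-1}(j)$. Equivalently, thinking in terms of the coinversion map~$\coinv(\lambda)$, the class is cut out by prescribing~$\coinv(\lambda)(i,j)$ on all pairs~$i\sim j$ (it must be~$0$) and on all pairs~$i,j$ in a common directed path of~$\tree$ (it must have the sign of the orientation), while leaving it free (in~$\{-1,0,1\}$, subject to the transitivity constraints that make it a genuine ordered partition) on incomparable pairs. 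Since the weak order is precisely the componentwise order on coinversion maps, the minimal element~$\minimalLinearExtension_\tree$ of the class is obtained by setting all free entries to their smallest admissible value and the maximal element~$\maximalLinearExtension_\tree$ by setting them to their largest; one checks these assignments do yield valid ordered partitions in the class (this is the Schr\"oder analogue of the statement that Cambrian classes are weak-order intervals, due to Reading, and the argument parallels the proof of Proposition~\ref{prop:CambrianClass}). Then any~$\lambda$ in the class satisfies~$\minimalLinearExtension_\tree\le\lambda\le\maximalLinearExtension_\tree$, and conversely monotonicity of coinversion entries shows any~$\lambda$ in that weak-order interval still satisfies the forced relations, hence lies in the class.

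For (b) I would use the local characterization of lattice congruences via cover relations: it suffices to check that if~$\lambda\lessdot\lambda'$ is a cover in the weak order, then~$\pi^\downarrow(\lambda)$ and~$\pi^\downarrow(\lambda')$ are comparable (and similarly for~$\pi^\uparrow$), and moreover that the congruence is compatible with the two types of cover moves (merging~$\lambda_i\ll\lambda_{i+1}$ and splitting~$\lambda_i\lambda_{i+1}$ with~$\lambda_{i+1}\ll\lambda_i$). Concretely, I would show: whenever~$\lambda\equiv^\star_\signature\tilde\lambda$ via one application of the rewriting rule~$U\sep\b a\sep\b c\sep V\equiv^\star U\sep\b a\b c\sep V\equiv^\star U\sep\b c\sep\b a\sep V$, and~$\lambda\lessdot\lambda'$ is a weak-order cover, then~$\lambda'$ is congruent to some~$\tilde\lambda'$ with~$\tilde\lambda\le\tilde\lambda'$ or~$\tilde\lambda'\le\tilde\lambda$ appropriately — a case analysis on whether the cover move involves the parts~$\b a,\b c,\b a\b c$ or is disjoint from them, exactly mirroring the three-case argument in the proof of Theorem~\ref{thm:cambSubalgebra} and the one-line proof of Proposition~\ref{prop:CambrianClass}. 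This propagates to show the projections are order-preserving.

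The main obstacle I anticipate is the bookkeeping in (b) when a cover move interacts non-trivially with a merged part~$\b a\b c$: unlike the permutation case, a single weak-order cover can merge a part with~$\b a\b c$ or split~$\b a\b c$ off from an adjacent part, and one must check that after rewriting one lands in the same or comparable class. A clean way to sidestep much of this is the strategy already used for~$\FQSym$-subalgebras in the paper: instead of verifying the congruence axioms by hand, one can note that~$\equiv^\star_\signature$ is the restriction to~$\fP_\signature$ of the fibers of a lattice quotient map and appeal to the fact that the Schr\"oder-Cambrian fan coarsens the normal fan of the permutahedron~$\Perm[n]$ with convex cones (from Section~\ref{subsec:geometrySchroderTrees}), which forces the quotient to be a lattice; but since that geometric input is developed later, the self-contained route is the cover-relation case analysis sketched above. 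Either way, once (a) and (b) are in place, the Schr\"oder $\signature$-Cambrian poset, being the image of the weak order under the congruence~$\equiv^\star_\signature$, is a lattice quotient of the weak order on~$\fP_\signature$, which is the claim.
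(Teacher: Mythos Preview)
Your proposal is correct and takes essentially the same approach as the paper: both verify the two standard lattice-congruence conditions (classes are intervals, and the projections onto class extrema are order-preserving) via explicit description of~$\minimalLinearExtension_\tree,\maximalLinearExtension_\tree$ and a case analysis on how a weak-order cover interacts with a congruence cover. The paper's case analysis for (b) is somewhat more intricate than your sketch suggests---it involves three parts~$\b a\ll\b b\ll\b c$ and eight subcases depending on which adjacent pair is merged/split by each of the two competing cover moves---but you correctly anticipate this as the main bookkeeping obstacle.
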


This proposition is proved by the following two lemmas, similar to N.~Reading's approach~\cite{Reading-CambrianLattices}.

\begin{lemma}
The Schr\"oder $\signature$-Cambrian classes are intervals of the weak order.
\end{lemma}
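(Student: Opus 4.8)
The plan is to mimic N.~Reading's classical argument \cite{Reading-CambrianLattices} that Cambrian classes are weak order intervals, lifting it from permutations to ordered partitions. Concretely, I would first show that each Schr\"oder $\signature$-Cambrian class is \emph{connected} in the Hasse diagram of the weak order on $\fP_\signature$: this is automatic from the fact that $\equiv^\star_\signature$ is generated by the rewriting rules $U \sep \b{a} \sep \b{c} \sep V \equiv^\star_\signature U \sep \b{a}\b{c} \sep V \equiv^\star_\signature U \sep \b{c} \sep \b{a} \sep V$, since each elementary rewriting step (merging $\b{a}\b{c}$, or splitting, or transposing two adjacent blocks) is either a single cover relation of the weak order or a composition of two covers through the merged partition $U \sep \b{a}\b{c} \sep V$. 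Hence any two partitions in the same class are joined by a path in the weak order that stays inside the class (possibly dipping to a coarser partition).

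Next I would prove the crucial local statement: if $\lambda < \mu$ is a cover relation of the weak order and $\surjectionSchrPermAsso(\lambda) \ne \surjectionSchrPermAsso(\mu)$, then every $\nu$ with $\lambda \le \nu \le \mu$ satisfies $\surjectionSchrPermAsso(\nu) \in \{\surjectionSchrPermAsso(\lambda), \surjectionSchrPermAsso(\mu)\}$; and more to the point, that the class is \emph{order-convex}, i.e. $\lambda \le \nu \le \mu$ with $\lambda \equiv^\star_\signature \mu$ forces $\nu \equiv^\star_\signature \lambda$. For this I would use Proposition~\ref{prop:mergeLinearExtensions}: $\surjectionSchrPermAsso(\lambda)=\tree$ is equivalent to the conditions ``$i \sim j$ in $\tree \implies \lambda^{-1}(i)=\lambda^{-1}(j)$'' and ``$i \to j$ in $\tree \implies \lambda^{-1}(i)<\lambda^{-1}(j)$''. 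Translating these into the coinversion map $\coinv(\lambda)$, membership of $\lambda$ in the class of $\tree$ is a conjunction of constraints of the form $\coinv(\cdot)(i,j) = 0$ (for $i \sim j$) and $\coinv(\cdot)(i,j) \in \{-1,0,\ldots\}$ or similar sign constraints dictated by the Cambrian rule around the separating vertex $b$ with $\signature_b = \pm$. Since the weak order is precisely the componentwise order on coinversion maps, and the constraints defining a class carve out an order-convex region (each constraint is itself order-convex in the $\coinv$ lattice, being of the form ``coordinate $\le$ something'', ``$\ge$ something'', or ``$=0$'' on a sublattice), the intersection is order-convex, hence an interval once we know it is connected and the weak order is a lattice.

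Putting it together: a subset of a finite lattice that is both connected in the Hasse diagram and order-convex is an interval; combined with Proposition~\cite{KrobLatapyNovelliPhanSchwer} that $(\fP_n, <)$ is a lattice, this gives that each class is a weak order interval. The main obstacle I anticipate is the bookkeeping in the local step, specifically handling the \emph{merge/split} moves (which change the number of parts and hence move within the filtration $\fP_\signature^{\ge p}$) alongside the \emph{transposition} moves: one must check that the separating value $b$ witnessing $\b{a} < b < \b{c}$, $\signature_b = \pm$, $b \in \bigcup U$ or $\bigcup V$, persists as one moves to any intermediate ordered partition $\nu$, and that no \emph{new} forbidden adjacency is created. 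I would organize this by arguing directly on coinversion maps rather than on block sequences, which makes the order-convexity transparent: the only subtlety is that the condition ``there exists $b$ with $\b a < b < \b c$, $\signature_b=+$, $b$ above'' is a disjunction, so I would instead use the contrapositive characterization of when two blocks can be adjacent (equivalently, the pattern-avoidance description analogous to that of Cambrian maximal elements), which is a conjunction and visibly order-convex. Once that lemma is in place, the proof of the present lemma is a one-line combination of the three ingredients.
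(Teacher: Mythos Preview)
Your order-convexity argument via coinversion maps is correct and close in spirit to the paper's proof: membership in the fiber of~$\tree$ is indeed the conjunction of the coordinate constraints $\coinv(\cdot)(i,j)=0$ for $i\sim j$, $\coinv(\cdot)(i,j)=-1$ for $i\to j$, $\coinv(\cdot)(i,j)=1$ for $j\to i$, and no constraint for incomparable pairs; each such constraint is order-convex, hence so is their intersection.

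The gap is in your assembly step. The assertion ``a subset of a finite lattice that is both connected in the Hasse diagram and order-convex is an interval'' is \emph{false}. Take the boolean lattice $\{\hat 0,a,b,\hat 1\}$ with $a,b$ incomparable, and the subset $S=\{\hat 0,a,b\}$: it is Hasse-connected (via covers $\hat 0\lessdot a$ and $\hat 0\lessdot b$) and trivially order-convex, yet it has no maximum and is not an interval. Order-convexity only tells you the class is a union of intervals glued along covers; nothing forces a unique top and bottom.

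What is missing is precisely the construction of the extremal elements, and this is the entire content of the paper's proof. The paper observes that any two incomparable nodes $v,w$ of~$\tree$ are separated by a wall, so that $p(v)\ll p(w)$ or $p(w)\ll p(v)$; it then takes $\lambda_{\min}$ (resp.~$\lambda_{\max}$) to be the linear extension of~$\tree$ in which incomparable node labels appear in increasing (resp.~decreasing) $\ll$-order. A direct computation shows that $\coinv(\lambda_{\min})$ and $\coinv(\lambda_{\max})$ realise exactly the componentwise minimum and maximum of your constraint box, after which your order-convexity argument yields that the fiber is $[\lambda_{\min},\lambda_{\max}]$. So your coinversion framework is the right one, but you must exhibit these two explicit ordered partitions rather than appeal to a non-theorem about lattices; the connectedness ingredient is then superfluous.
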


\begin{proof}
Let~$\tree$ be a Schr\"oder $\signature$-Cambrian tree, with vertex labeling~$p : \ground \to 2^{[n]} \ssm \varnothing$. Consider a linear extension of~$\tree$, \ie an ordered partition~$\lambda$ whose parts are the labels of~$\tree$ and such that~$p(v)$ appears before~$p(w)$ for~$v \to w$ in~$\tree$. If~$v$ and~$w$ are two incomparable nodes of~$\tree$, then either~$p(v) \ll p(w)$ or~$p(w) \ll p(v)$ since they are separated by a wall. By successive exchanges, there exists a linear extension~$\lambda_{\min}$ (resp.~$\lambda_{\max}$) of~$\tree$ such that~$p(v)$ appears before (resp.~after) $p(w)$ for any two incomparable nodes~$v$ and~$w$ such that~$p(v) \ll p(w)$. By construction, the $(i,j)$-entries of the coinversion tables of~$\lambda_{\min}$ and~$\lambda_{\max}$ are given for~$i < j$ by
\[
\coinv(\lambda_{\min})(i,j) =
\begin{cases}
1 & \text{if $j \to i$ in $\tree$,} \\
0 & \text{if $i \sim j$ in $\tree$,} \\
-1 & \text{otherwise,}
\end{cases}
\qquad\text{and}\qquad
\coinv(\lambda_{\max})(i,j) =
\begin{cases}
-1 & \text{if $i \to j$ in $\tree$,} \\
0 & \text{if $i \sim j$ in $\tree$,} \\
1 & \text{otherwise.}
\end{cases}
\]
It follows that the fiber of~$\tree$ under~$\surjectionSchrPermAsso$ is the weak order interval~$[\lambda_{\min}, \lambda_{\max}]$.
\end{proof}

\begin{lemma}
Let~$\lambda$ and~$\lambda'$ be two signed ordered partitions from distinct Schr\"oder $\signature$-Cambrian classes~$C$ and~$C'$. If~$\lambda < \lambda'$ then~$\min(C) < \min(C')$ and~$\max(C) < \max(C')$ (all in weak order).
\end{lemma}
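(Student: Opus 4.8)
The statement to prove is that for two signed ordered partitions $\lambda < \lambda'$ lying in distinct Schr\"oder $\signature$-Cambrian classes $C$ and $C'$, we have $\min(C) < \min(C')$ and $\max(C) < \max(C')$ in weak order. This is the standard ``projection property'' ensuring that a set-partition into intervals of a lattice defines a lattice quotient, and the plan is to imitate N.~Reading's argument for Cambrian lattices, as already used in the non-Schr\"oder setting of this paper. The plan is to prove the two inequalities separately but by symmetric arguments; I will describe the $\max$ statement, the $\min$ statement being dual (replace ``above'' by ``below'', exchange the roles of $+$ and $-$, reverse the weak order). By the previous lemma, $\max(C)$ and $\max(C')$ are the maximal linear extensions $\lambda_{\max}$ and $\lambda'_{\max}$ of the Schr\"oder-Cambrian trees $\tree = \surjectionSchrPermAsso(\lambda)$ and $\tree' = \surjectionSchrPermAsso(\lambda')$, whose coinversion tables were computed explicitly there: $\coinv(\lambda_{\max})(i,j) = -1$ if $i \to j$ in $\tree$, $0$ if $i \sim j$ in $\tree$, and $1$ otherwise.

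\textbf{Reduction to a cover relation.} Since the weak order is a lattice and $\surjectionSchrPermAsso$ is a poset homomorphism (shown earlier), it suffices to treat the case where $\lambda \lessdot \lambda'$ is a cover relation: the general case follows by induction along a saturated chain from $\lambda$ to $\lambda'$, using transitivity of the weak order and the fact that consecutive terms of the chain either stay in the same class (nothing to prove) or jump to a new class (one application of the cover case). So assume $\lambda'$ is obtained from $\lambda$ by one of the two cover moves of the weak order on ordered partitions. By the symmetry of the two moves (merging $\lambda_i \ll \lambda_{i+1}$ versus splitting a block into $\lambda_{i+1} \ll \lambda_i$), and using that $\surjectionSchrPermAsso(\lambda) \SchrCambPoset \surjectionSchrPermAsso(\lambda')$ is an increasing contraction $u \to v$ of $\tree$ with $p(u) \ll p(v)$ (as in the proof of the homomorphism property), I reduce to understanding how the coinversion table of $\lambda_{\max}$ changes under a single increasing contraction of an edge $u \to v$ of $\tree$.

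\textbf{Comparing $\lambda_{\max}$ and $\lambda'_{\max}$ entrywise.} Here is the crux. Write $\tree' = \tree/e$ where $e$ is the edge from $u$ to $v$. The node sets of $\tree$ and $\tree'$ differ only in that $p(u)$ and $p(v)$ are merged into a single node $w$ of $\tree'$; all comparability relations among the other nodes, and all the subtree structure, are unchanged by Proposition-style reasoning about contraction (the contraction poset is the face lattice, so contracting a cover edge only identifies the two incident nodes). Now compare $\coinv(\lambda_{\max})(i,j)$ with $\coinv(\lambda'_{\max})(i,j)$ for $i<j$ using the explicit formulas. If neither $i$ nor $j$ lies in $p(u) \cup p(v)$, the two entries agree. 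If $i \in p(u)$ and $j \in p(v)$ (the interesting case), then in $\tree$ we had $i \to j$ so the entry was $-1$, while in $\tree'$ we have $i \sim j$ so the entry is $0$; thus the entry goes up. For the remaining mixed cases ($i \in p(u)\cup p(v)$, $j$ elsewhere, or vice versa), one checks that passing from $\tree$ to $\tree'$ can only weaken an arc into an ``incomparable'' or keep the relation, which (reading off the sign conventions of $\lambda_{\max}$) never decreases the entry and strictly increases at least one. The one subtlety is ruling out that some entry decreases: this is exactly where $p(u) \ll p(v)$, i.e.\ the increasing nature of the contraction, is needed, together with the fact that $i<j$ is forced whenever $i \in p(u), j \in p(v)$. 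Concluding, $\coinv(\lambda_{\max})(i,j) \le \coinv(\lambda'_{\max})(i,j)$ for all $i<j$ with strict inequality somewhere, so $\lambda_{\max} < \lambda'_{\max}$, that is $\max(C) < \max(C')$. The $\min$ statement is the mirror argument applied to $\lambda_{\min}$, whose coinversion table has the opposite sign pattern.

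\textbf{Expected main obstacle.} The genuine work is the case analysis in the third paragraph: one must check carefully, for \emph{all} index pairs $(i,j)$ straddling the contracted edge in the various ways, that the coinversion entry does not drop, handling both the new $i \sim j$ relations and the inherited comparabilities, and verifying that the ``otherwise'' branch of the $\lambda_{\max}$ formula is correctly aligned with $p(u)\ll p(v)$. Everything else (lattice reduction to covers, the mirror symmetry, the induction along chains) is formal. I would present the entry-comparison as a short table matching the four regions of index pairs against the change in $\coinv$, which makes the monotonicity transparent without grinding through every subcase in prose.
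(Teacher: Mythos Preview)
Your strategy --- directly comparing the coinversion tables of $\lambda_{\max}(\tree)$ and $\lambda_{\max}(\tree')$ using the explicit formula from the previous lemma --- differs from the paper's. The paper instead inducts on the weak-order distance from $\lambda$ to $\max(C)$: given the cover $\lambda \lessdot \lambda'$ at position $i$ and an upward step $\lambda \lessdot \mu$ inside $C$ at some position $j$, it manufactures, by a case analysis on $(i,j)$ and on merge-versus-split, an ordered partition $\mu' \equiv^\star \lambda'$ with $\mu < \mu'$, then recurses. Your route is more structural and can be made to work, but two points in your write-up are not right.

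First, the two cover types do not both yield ``$\tree' = \tree/e$ with an increasing contraction''. A merge cover does, but a split cover gives $\tree = \tree'/e'$ with $e'$ a \emph{decreasing} contraction in $\tree'$. You therefore need two parallel computations (an increasing contraction raises $\lambda_{\max}$; a decreasing contraction lowers it), not one. The ``symmetry'' you invoke between them is not the min/max duality and has to be argued separately.

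Second, and more importantly, your stated mechanism for the mixed cases is wrong. Contracting $u \to v$ into $w$ does \emph{not} only ``weaken arcs into incomparables''; it can turn an incomparable pair into a comparable one. Concretely, if $i \in p(v)$ and $j$ lies in a node $y$ on the $u$-side with $u \to y$, then $i,j$ are incomparable in $\tree$ (entry $+1$) but satisfy $i \to j$ in $\tree'$ (entry $-1$), an apparent decrease. What actually rules this out is the \emph{interval} condition on Schr\"oder-Cambrian trees: since $p(u) \ll p(v)$, the node $v$ lies in the rightmost outgoing subtree of $u$ (the one for the interval $[\max p(u)^+ + 1, n]$), so any such $y$ sits in a different outgoing subtree, all of whose labels are $< \max p(u)^+ < \min p(v) \le i$; hence $j < i$, contradicting $i < j$. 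The mirror dangerous case ($j \in p(u)$ and $i$ in another incoming subtree of $v$) is excluded the same way via the incoming-subtree intervals at $v$. This interval argument is the real content of your entrywise comparison; the hypothesis $p(u) \ll p(v)$ alone does not suffice, and your table of cases will not close without it.
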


\begin{proof}
We prove the result for maximums, the proof for the minimums being similar. We first observe that we can assume that $\lambda'$ covers~$\lambda$ in weak order, so that there exists a position~$i$ such that either~$\lambda'_i = \lambda_i \cup \lambda_{i+1}$ and~$\lambda_i \ll \lambda_{i+1}$, or $\lambda_i = \lambda'_i \cup \lambda'_{i+1}$ and~$\lambda'_{i+1} \ll \lambda'_i$. The proof then works by induction on the weak order distance between~$\lambda$ and~$\max(C)$. If~$\lambda = \max(C)$, the result is immediate as~$\max(C) = \lambda < \lambda' \le \max(C')$. Otherwise, consider an ordered partition~$\mu$ in~$C$ covering~$\lambda$ in weak order. There exists a position~$j \ne i$ such that~$\mu_j = \lambda_j \cup \lambda_{j+1}$ and~$\lambda_j \ll \lambda_{j+1}$, or $\lambda_j = \mu_j \cup \mu_{j+1}$ and~$\mu_{j+1} \ll \mu_j$. We now distinguish four cases, according to the relative positions of~$i$ and~$j$:
\begin{enumerate}[(1)]
\item If~$|i - j| > 1$, then the local changes from~$\lambda$ to~$\lambda'$ at position~$i$ and from~$\lambda$ to~$\mu$ at position~$j$ are independent. Define~$\mu'$ to be the ordered partition obtained from~$\lambda$ by performing both local changes at~$i$ and at~$j$. We then check that~$\lambda' \equiv^\star \mu'$ since any witness for the equivalence~$\lambda \equiv^\star \mu$ is also a witness for the equivalence~$\lambda' \equiv^\star \mu'$. Moreover,~$\mu < \mu'$.
\item Otherwise, the local changes at~$i$ and~$j$ are not independent anymore. We therefore need to treat various cases separately, depending on whether the local changes from~$\lambda$ to~$\lambda'$ and from~$\lambda$ to~$\mu$ are merging or splitting, and on the respective positions of these local changes. In all cases below, $\b{a}, \b{b}, \b{c}$ are parts of~$[n]$ such that~$\b{a} \ll \b{b} \ll \b{c}$, while~$U, V$ are sequences of parts of~$[n]$.
	\begin{itemize}
	\item If~$\lambda = U \sep \b{a} \sep \b{b} \sep \b{c} \sep V$, $\lambda' = U \sep \b{a}\b{b} \sep \b{c} \sep V$, and~$\mu = U \sep \b{a} \sep \b{b}\b{c} \sep V$, then define~$\mu' \eqdef U \sep \b{a}\b{b}\b{c} \sep V$. Any witness for the Schr\"oder-Cambrian congruence~$\lambda \equiv^\star \mu$ is also a witness for the Schr\"oder-Cambrian congruence~$\lambda' \equiv^\star \mu'$. Moreover, we have~$\mu < \mu'$ since~$\b{a} \ll \b{bc}$. The same arguments yield the same conclusions in the following cases:
		\item[--] if~$\lambda = U \sep \b{a} \sep \b{b} \sep \b{c} \sep V$, $\lambda' = U \sep \b{a} \sep \b{b}\b{c} \sep V$, and~$\mu = U \sep \b{a}\b{b} \sep \b{c} \sep V$, then define~$\mu' \eqdef U \sep \b{a}\b{b}\b{c} \sep V$.
		\item[--] if~$\lambda = U \sep \b{a}\b{b}\b{c} \sep V$, $\lambda' = U \sep \b{c} \sep \b{a}\b{b} \sep V$, and~$\mu = U \sep \b{b}\b{c} \sep \b{a} \sep V$, then define~$\mu' \eqdef U \sep \b{c} \sep \b{b} \sep \b{a} \sep V$.
		\item[--] if~$\lambda = U \sep \b{a}\b{b}\b{c} \sep V$, $\lambda' = U \sep \b{b}\b{c} \sep \b{a} \sep V$, and~$\mu = U \sep \b{c} \sep \b{a}\b{b} \sep V$, then define~$\mu' \eqdef U \sep \b{c} \sep \b{b} \sep \b{a} \sep V$.
	\item If~$\lambda = U \sep \b{a} \sep \b{b}\b{c} \sep V$, $\lambda' = U \sep \b{a}\b{b}\b{c} \sep V$, and~$\mu = U \sep \b{a} \sep \b{c} \sep \b{b} \sep V$, then define~$\mu' \eqdef U \sep \b{c} \sep \b{a}\b{b} \sep V$. Any witness for the Schr\"oder-Cambrian congruence~$\lambda \equiv^\star \mu$ is also a witness for the Schr\"oder-Cambrian congruence~$\lambda' \equiv^\star \mu'$. Moreover~$\mu < \mu'$ by comparison of the coinversion tables. The same arguments yield the same conclusions in the case:
		\item[--] if~$\lambda = U \sep \b{a}\b{b} \sep \b{c} \sep V$, $\lambda' = U \sep \b{a}\b{b}\b{c} \sep V$, and~$\mu = U \sep \b{b} \sep \b{a} \sep \b{c} \sep V$, then define~$\mu' \eqdef U \sep \b{b}\b{c} \sep \b{a} \sep V$.
	\item If~$\lambda = U \sep \b{a} \sep \b{b}\b{c} \sep V$, $\lambda' = U \sep \b{a} \sep \b{c} \sep \b{b} \sep V$, and~$\mu = U \sep \b{a}\b{b}\b{c} \sep V$, then define~$\mu' \eqdef U \sep \b{c} \sep \b{a}\b{b} \sep V$. Let~$d$ be a witness for the Schr\"oder-Cambrian congruence~$\lambda \equiv^\star \mu$, that is, $\b{a} < d < \b{bc}$ and either~$\signature_d = -$ and~$d \in V$, or~$\signature_d = +$ and~$d \in U$. Then~$d$ is also a witness for the Schr\"oder-Cambrian congruences~$\lambda' = U \sep \b{a} \sep \b{c} \sep \b{b} \sep V \equiv^\star U \sep \b{a}\b{c} \sep \b{b} \sep V \equiv^\star U \sep \b{c} \sep \b{a} \sep \b{b} \sep V \equiv^\star U \sep \b{c} \sep \b{a}\b{b} \sep V = \mu'$. Moreover, we have~$\mu < \mu'$ since~$\b{ab} \ll \b{c}$. The same arguments yield the same conclusions in the case:
		\item[--] if~$\lambda = U \sep \b{a}\b{b} \sep \b{c} \sep V$, $\lambda' = U \sep \b{b} \sep \b{a} \sep \b{c} \sep V$, and~$\mu = U \sep \b{a}\b{b}\b{c} \sep V$, then define~$\mu' \eqdef U \sep \b{b}\b{c} \sep \b{a} \sep V$.
	\end{itemize}
\end{enumerate}
In all cases, we have~$\lambda \equiv^\star \mu < \mu' \equiv^\star \lambda'$. Since~$\mu$ is closer to~$\max(C)$ than~$\lambda$, we obtain that~$\max(C) < \max(C')$ by induction hypothesis. The proof for minimums is identical.
\end{proof}

\begin{remark}[Extremal elements and pattern avoidance]
\label{rem:patternAvoidanceSchroder}
Since the Schr\"oder-Cambrian classes are generated by rewriting rules, their minimal elements are precisely the ordered partitions avoiding the patterns~$\b{c} \sep \b{a} \text{ -- } \down{b}$ and~$\up{b} \text{ -- } \b{c} \sep \b{a}$, while their maximal elements are precisely the ordered partitions avoiding the patterns~$\b{a} \sep \b{c} \text{ -- } \down{b}$ and~$\up{b} \text{ -- } \b{a} \sep \b{c}$. This enables us to construct a generating tree for these permutations. Similar arguments as in Section~\ref{subsec:CambrianClasses} could thus provide an alternative proof of Proposition~\ref{prop:SchroderCambrianNumbers}.
\end{remark}

%%%%%%%%%%%%%%%%%

\subsection{Canopy}

We define the canopy of a Schr\"oder-Cambrian tree using the same observation as for Cambrian trees: in any Sch\"oder-Cambrian tree, the numbers $i$ and~$i+1$ appear either in the same label, or in two comparable labels.

\begin{definition}
The \defn{canopy} of a Schr\"oder-Cambrian tree~$\tree$ is the sequence~$\surjectionSchrAssoPara(\tree) \in \{-,0,+\}$ defined by
\[
\surjectionSchrAssoPara(\tree)_i = 
\begin{cases}
- & \text{if~$i$ appears above~$i+1$ in~$\tree$,} \\
0 & \text{if~$i$ and~$i+1$ appear in the same label in~$\tree$,} \\
+ & \text{if~$i$ is below~$i+1$ in~$\tree$.}
\end{cases}
\]
\end{definition}

For example, the canopy of the Schr\"oder-Cambrian tree of \fref{fig:leveledSchroderCambrianTree}\,(left) is~$0{+}0{-}{+}{-}$. The following statement provides an immediate analogue of Proposition~\ref{prop:commutativeDiagram} in the Schr\"oder-Cambrian setting. We define the \defn{recoil} sequence of an ordered partition~$\lambda \in \fP_n$ as~${\surjectionSchrPermPara(\lambda) \in \{-,0,+\}^{n-1}}$, where~${\surjectionSchrPermPara(\lambda)_i = \coinv(\lambda)(i,i+1)}$.

\begin{proposition}
\label{prop:commutativeDiagramSchroder}
The maps~$\surjectionSchrPermAsso$, $\surjectionSchrAssoPara$, and~$\surjectionSchrPermPara$ define the following commutative diagram of lattice homomorphism
\[
\begin{tikzpicture}
  \matrix (m) [matrix of math nodes,row sep=1.5em,column sep=5em,minimum width=2em]
  {
     \fP_\signature  	&					& \{-,0,+\}^{n-1}	\\
						& \SchrCambTrees(\signature) &			\\
  };
  \path[->>]
    (m-1-1) edge node [above] {$\surjectionSchrPermPara$} (m-1-3)
                 edge node [below] {$\surjectionSchrPermAsso\quad$} (m-2-2.west)
    (m-2-2.east) edge node [below] {$\quad\surjectionSchrAssoPara$} (m-1-3);
\end{tikzpicture}
\]
\end{proposition}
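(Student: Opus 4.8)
The plan is to mimic the proof of Proposition~\ref{prop:commutativeDiagram}, upgrading each of the three constituent facts from the permutation/Cambrian-tree setting to the ordered-partition/Schr\"oder-Cambrian-tree setting. Concretely, I would establish three things: (a) $\surjectionSchrPermAsso : \fP_\signature \to \SchrCambTrees(\signature)$ is a surjective lattice homomorphism; (b) $\surjectionSchrPermPara : \fP_\signature \to \{-,0,+\}^{n-1}$ is a surjective lattice homomorphism; and (c) the diagram commutes, i.e.\ $\surjectionSchrAssoPara \circ \surjectionSchrPermAsso = \surjectionSchrPermPara$. The fact that $\surjectionSchrPermAsso$ is a surjective lattice homomorphism is precisely the content of Proposition~\ref{prop:SchroderCambrianLattice} together with the poset-homomorphism statement proved just above it; in particular the Schr\"oder-Cambrian poset is by construction a lattice quotient, so $\surjectionSchrPermAsso$ sends joins to joins and meets to meets. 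For (b), I would note that $\surjectionSchrPermPara$ has exactly the same structure: the fibers of $\surjectionSchrPermPara$ are the classes of the congruence on $\fP_n$ that merges/swaps adjacent parts whenever doing so does not change any $\coinv(\cdot)(i,i+1)$, and by the description of the cover relations of the weak order on $\fP_n$ (the merging/splitting moves $\lambda_i \ll \lambda_{i+1}$) these fibers are weak-order intervals, so $\surjectionSchrPermPara$ is an order-preserving surjection with interval fibers onto the face lattice of the parallelepiped with its natural order, hence a lattice homomorphism. Alternatively one can observe that $\{-,0,+\}^{n-1}$ with this order is itself a lattice quotient of $\fP_n$ obtained by intersecting, coordinate by coordinate, the congruences that forget all but the $(i,i+1)$ coinversion.

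\textbf{The commutativity step.} The substantive point is (c): for every signed ordered partition $\lambda \in \fP_\signature$ and every $i \in [n-1]$, we must check $\surjectionSchrAssoPara(\surjectionSchrPermAsso(\lambda))_i = \surjectionSchrPermPara(\lambda)_i = \coinv(\lambda)(i,i+1)$. By Proposition~\ref{prop:mergeLinearExtensions}, in the tree $\tree = \surjectionSchrPermAsso(\lambda)$ we have $i \sim j$ forces $\lambda^{-1}(i) = \lambda^{-1}(j)$, and $i \to j$ forces $\lambda^{-1}(i) < \lambda^{-1}(j)$. Apply this with $j = i+1$, using the observation recalled before the definition of the canopy that $i$ and $i+1$ are never incomparable in a Schr\"oder-Cambrian tree: either $i \sim i+1$, in which case $\surjectionSchrAssoPara(\tree)_i = 0$ and $\lambda^{-1}(i) = \lambda^{-1}(i+1)$ so $\coinv(\lambda)(i,i+1) = 0$; or $i \to i+1$, in which case $\surjectionSchrAssoPara(\tree)_i = +$ and $\lambda^{-1}(i) < \lambda^{-1}(i+1)$ so $\coinv(\lambda)(i,i+1) = -1$ --- wait, I must align sign conventions here: checking the definitions, $\surjectionSchrPermPara(\lambda)_i = \coinv(\lambda)(i,i+1)$ equals $+1$ exactly when $\lambda^{-1}(i) > \lambda^{-1}(i+1)$, i.e.\ when $i$ is strictly below $i+1$ in the level structure, which is exactly the condition $\surjectionSchrAssoPara(\tree)_i = +$; and symmetrically for $-$. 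So the three cases match up, and $\surjectionSchrAssoPara \circ \surjectionSchrPermAsso = \surjectionSchrPermPara$ holds pointwise.

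\textbf{Putting it together and the main obstacle.} Once (a), (b), (c) are in place, the diagram is a commutative triangle of surjective lattice homomorphisms, and since $\surjectionSchrPermPara = \surjectionSchrAssoPara \circ \surjectionSchrPermAsso$ is a lattice homomorphism and $\surjectionSchrPermAsso$ is a surjective lattice homomorphism, a routine argument shows $\surjectionSchrAssoPara$ is itself a lattice homomorphism: given $\tree, \tree'$ pick $\lambda, \lambda'$ with $\surjectionSchrPermAsso(\lambda) = \tree$, $\surjectionSchrPermAsso(\lambda') = \tree'$, then $\surjectionSchrAssoPara(\tree \vee \tree') = \surjectionSchrAssoPara(\surjectionSchrPermAsso(\lambda \vee \lambda')) = \surjectionSchrPermPara(\lambda \vee \lambda') = \surjectionSchrPermPara(\lambda) \vee \surjectionSchrPermPara(\lambda') = \surjectionSchrAssoPara(\tree) \vee \surjectionSchrAssoPara(\tree')$, and dually for meets. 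I expect the main obstacle to be purely bookkeeping: getting the sign conventions consistent between $\coinv$ (which is $+1$ for an inversion, i.e.\ $\lambda^{-1}(i) > \lambda^{-1}(i+1)$) and the ``above/below'' language used for the canopy, and making sure the weak order on $\fP_n$ restricts correctly so that the cover relations (merging when $\lambda_i \ll \lambda_{i+1}$, splitting otherwise) are the ones that $\surjectionSchrPermAsso$ and $\surjectionSchrPermPara$ respect. No genuinely new idea beyond Proposition~\ref{prop:mergeLinearExtensions} and Proposition~\ref{prop:SchroderCambrianLattice} is needed; the proof is a direct transcription of the proof of Proposition~\ref{prop:commutativeDiagram}.
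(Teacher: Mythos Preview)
The paper states this proposition without proof (as it also does for the non-Schr\"oder analogue, Proposition~\ref{prop:commutativeDiagram}), so your detailed verification goes beyond what the paper supplies. Your overall architecture is exactly right: Proposition~\ref{prop:SchroderCambrianLattice} gives (a), the fibers of $\surjectionSchrPermPara$ are weak-order intervals for the same structural reason, and the commutativity (c) is the only thing requiring a real check, via Proposition~\ref{prop:mergeLinearExtensions} and the comparability of $i$ and $i+1$.

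There is, however, a genuine sign error in your resolution of the convention issue in step (c). You write that $\lambda^{-1}(i) > \lambda^{-1}(i+1)$ means ``$i$ is strictly below $i+1$ in the level structure'', but this is backwards: $\lambda^{-1}(i) > \lambda^{-1}(i+1)$ means that $i$ lies in a \emph{later} part of the ordered partition, hence in a \emph{higher} row of the table, hence its node is \emph{above} the node containing $i+1$ in the tree (which is swept bottom-to-top). So this case gives $\surjectionSchrAssoPara(\tree)_i = -$, not $+$, while $\surjectionSchrPermPara(\lambda)_i = +1$. One can check this concretely on the paper's own example: for $\lambda = \down{125}\up{7} \sep \up{3}\down{4} \sep \up{6}$ the paper states the canopy is $0{+}0{-}{+}{-}$, whereas computing $\coinv(\lambda)(i,i+1)$ directly gives $(0,-1,0,+1,-1,+1)$; these are componentwise opposite. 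In fact the paper's Schr\"oder definition $\surjectionSchrPermPara(\lambda)_i \eqdef \coinv(\lambda)(i,i+1)$ is sign-reversed relative to its own non-Schr\"oder convention (where $\surjectionPermPara(\tau)_i = -$ for $\tau^{-1}(i) > \tau^{-1}(i+1)$). So what you have actually discovered, but then talked yourself out of, is a genuine sign inconsistency in the paper's definitions; the diagram commutes only after negating one of the two maps to $\{-,0,+\}^{n-1}$. Your instinct to flag this as the main obstacle was correct---you should keep the flag raised rather than resolve it in the wrong direction.
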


\fref{fig:latticesQua}\,(left) illustrates this proposition for the signature~${+}{-}{-}$.

\begin{figure}[h]
  \centerline{\includegraphics[width=.9\textwidth]{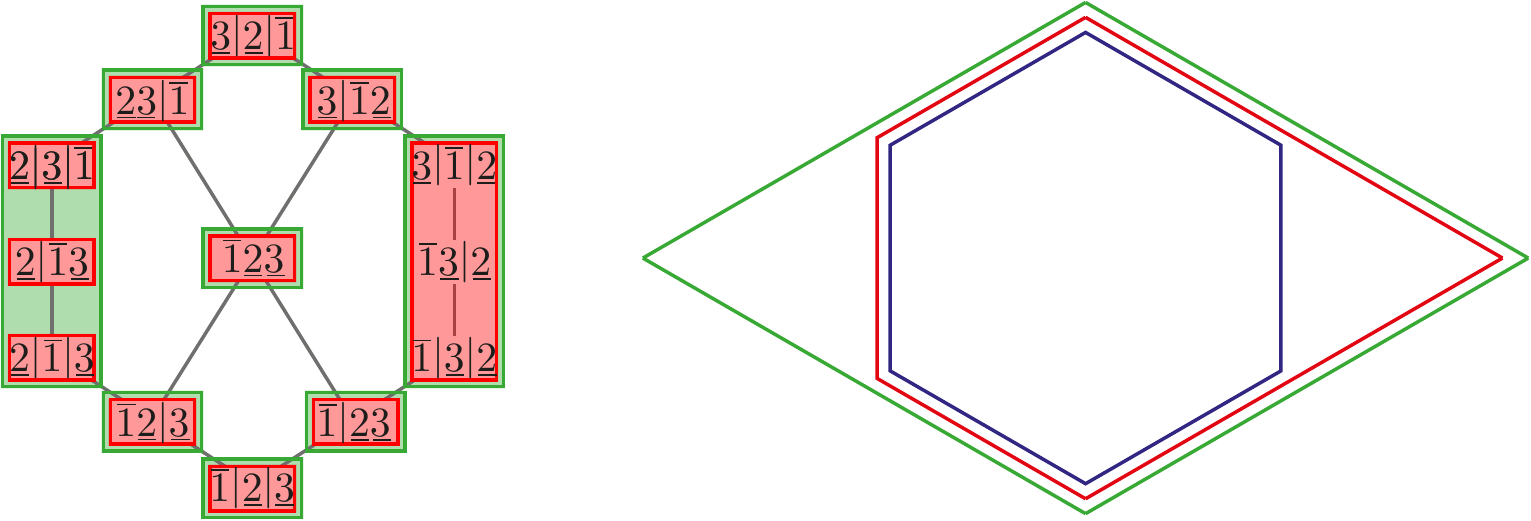}}
  \caption{The fibers of the maps~$\surjectionSchrPermAsso$ (red) and~$\surjectionSchrPermPara$ (green) on the weak orders of~$\fS_{{+}{-}{-}}$ (left), and the geometric realization of these maps (right).}
  \label{fig:latticesQua}
\end{figure}

%%%%%%%%%%%%%%%%%

\subsection{Geometric realizations}
\label{subsec:geometrySchroderTrees}

\enlargethispage{.1cm}
We close this section with the geometric motivation of Schr\"oder-Cambrian trees. More details can be found in~\cite{LangePilaud}. We still denote by~$e_1, \dots, e_n$ the canonical basis of~$\R^n$ and by~$\HH$ the hyperplane of~$\R^n$ orthogonal to~$\sum e_i$. We define the \defn{incidence cone}~$\Cone(\tree)$ and the \defn{braid cone}~$\Cone\polar(\tree)$ of a directed tree~$\tree$ with vertices labeled by subsets of~$[n]$ as
\begin{align*}
\Cone(\tree) & \eqdef \cone\set{e_i-e_j}{\text{for all } i \to j \text{ or } i \sim j \text{ in } \tree} \quad\text{and} \\
\Cone\polar(\tree) & \eqdef \set{\b{x} \in \HH}{x_i \le x_j \text{ for all } i \to j \text{ or } i \sim j \text{ in } \tree}.
\end{align*}
These two cones lie in the space~$\HH$ and are polar to each other. Note that if~$\tree$ has~$k$ nodes, then~$\dim(\Cone\polar(\tree)) = k-1$. For an ordered partition~${\lambda \in \fP_n}$, we denote by~$\Cone(\lambda)$ and~$\Cone\polar(\lambda)$ the incidence and braid cone of the chain~$\lambda_1 \to \dots \to \lambda_n$. Finally, for a vector~$\chi \in \{-, 0, +\}^{n-1}$, we denote by~$\Cone(\tau)$ and~$\Cone\polar(\tau)$ the incidence and braid cone of the oriented path~$1 - \dots - n$, where~$i \to i+1$ if~$\chi_i = +$, $i \leftarrow i+1$ if~$\chi_i = -$, and $i$ and~$i+1$ are merged to the same~node~if~$\chi_i = 0$.

As explained in Section~\ref{subsec:geometricRealizations}, the collections of cones
\[
\set{\Cone\polar(\lambda)}{\lambda \in \fP_n},
\qquad
\set{\Cone\polar(\tree)}{\tree \in \SchrCambTrees_\signature},
\qquad\text{and}\qquad
\set{\Cone\polar(\chi)}{\chi \in \{-,0,+\}^{n-1}}
\]
form complete simplicial fans, which are the normal fans of the classical permutahedron~$\Perm$, of C.~Hohlweg and C.~Lange's associahedron~$\Asso$, and of the parallelepiped~$\Para$ respectively. See Figures~\ref{fig:latticesQua}\,(right) and~\ref{fig:permutahedraAssociahedraCubes} for $2$- and $3$-dimensional examples of these polytopes.

The incidence and braid cones also characterize the maps~$\surjectionSchrPermAsso$, $\surjectionSchrAssoPara$, and~$\surjectionSchrPermPara$ as follows
\begin{gather*}
\tree = \surjectionSchrPermAsso(\lambda) \iff \Cone(\tree) \subseteq \Cone(\lambda) \iff \Cone\polar(\tree) \supseteq \Cone\polar(\lambda), \\
\chi = \surjectionSchrAssoPara(\tree) \iff \Cone(\chi) \subseteq \Cone(\tree) \iff \Cone\polar(\chi) \supseteq \Cone\polar(\tree), \\
\chi = \surjectionSchrPermPara(\lambda) \iff \Cone(\chi) \subseteq \Cone(\lambda) \iff \Cone\polar(\chi) \supseteq \Cone\polar(\lambda).
\end{gather*}

Finally, the weak order, the Schr\"oder-Cambrian lattice and the boolean lattice correspond to the lattice of faces of the permutahedron~$\Perm$, associahedron~$\Asso$ and parallelepiped~$\Para$, oriented in the direction~$(n, \dots, 1) - (1, \dots, n) = \sum_{i \in [n]} (n+1-2i) \, e_i$.

%%%%%%%%%%%%%%%%%

\section{Schr\"oder-Cambrian Hopf Algebra}
\label{sec:SchroderCambrianAlgebra}

In this section, we define the Schr\"oder-Cambrian Hopf algebra~$\SchrCamb$, extending simultaneously the Cambrian Hopf algebra and F.~Chapoton's Hopf algebra on Schr\"oder trees~\cite{Chapoton}. We construct the algebra~$\SchrCamb$ as a subalgebra of a signed version of F.~Chapoton's Hopf algebra on ordered partitions~\cite{Chapoton}. We then also consider the dual algebra~$\SchrCamb^*$ as a quotient of the dual Hopf algebra on signed ordered partitions.

%%%%%%%%%%%%%%%%%

\subsection{Shuffle and convolution products on signed ordered partitions}

We define here a natural analogue of the shifted shuffle and convolution products of Section~\ref{subsec:products} on ordered partitions. Equivalent definitions in the world of surjections can be found in~\cite{Chapoton}. Here, we stick to ordered partitions to match our presentation of the Cambrian algebra in Section~\ref{sec:CambrianAlgebra}.

We first define two restrictions on ordered partitions. Consider an ordered partition~$\mu$ of~$[n]$ into~$k$ parts. As already mentioned earlier, we represent graphically~$\mu$ by the $(k \times n)$-table with a dot in row~$i$ and column~$j$ for each~$j \in \mu_i$. For~${I \subseteq [k]}$, we let~$n_I \eqdef |\set{j \in [n]}{\exists \, i \in I, \; j \in \mu_i}|$ and we denote by~$\mu_{|I}$ the ordered partition of~$[n_I]$ into $|I|$ parts whose table is obtained from the table of~$\mu$ by deleting all rows not in~$I$ and standardizing to get a~$(|I| \times n_I)$-table. Similarly, for~$J \subseteq [n]$, we let~$k_J \eqdef |\set{i \in [k]}{\exists \, j \in J, \; j \in \mu_i}|$ and we denote by~$\mu^{|J}$ the ordered partition of~$[|J|]$ into~$k_J$ parts whose table is obtained from the table of~$\mu$ by deleting all columns not in~$J$ and standardizing to get a~$(k_J \times |J|)$-table. These restrictions are illustrated in \fref{fig:restrictionOrderedPartition}.

\begin{figure}[h]
  \centerline{\includegraphics{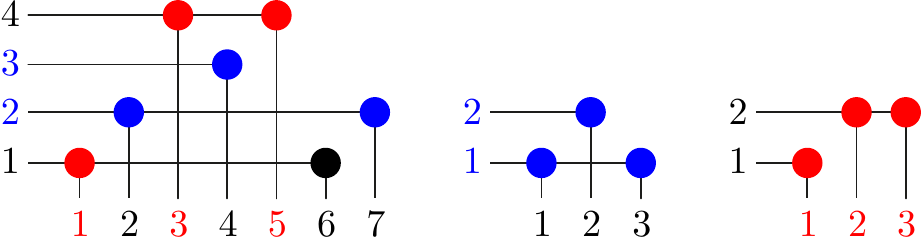}}
  \caption{The tables of the ordered partitions~$\lambda = 16 \sep 27 \sep 4 \sep 35$ (left) and of its restrictions~$\lambda_{|\{2,3\}}$ (middle) and~$\lambda^{|\{1,3,5\}}$ (right).}
  \label{fig:restrictionOrderedPartition}
\end{figure}

We define the \defn{shifted concatenation}~$\lambda \bar\lambda'$, the \defn{shifted shuffle product}~$\lambda \shiftedShuffle \lambda'$, and the \defn{convolution product}~$\lambda \convolution \lambda'$ of two (unsigned) ordered partitions~$\lambda \in \fP_n$ and~$\lambda' \in \fP_{n'}$ as
\begin{align*}
\lambda \bar\lambda' & \eqdef \lambda_1 \bsep \cdots \bsep \lambda_k \bsep n + \lambda'_1 \bsep \cdots \bsep n + \lambda'_{k'}, \qquad \text{where } n + \lambda'_i \eqdef \set{n+j}{j \in \lambda'_i}\\
\lambda \shiftedShuffle \lambda' & \eqdef \set{\mu \in \fP_{n+n'}}{\mu^{|\{1, \dots, n\}} = \lambda \text{ and } \mu^{|\{n+1, \dots, n+n'\}} = \lambda'}, \\
\text{and}\qquad \lambda \convolution \lambda' & \eqdef \set{\mu \in \fP_{n+n'}}{\mu_{|\{1, \dots, n\}} = \lambda \text{ and } \mu_{|\{n+1, \dots, n+n'\}} = \lambda'}
\end{align*}

For example,
\begin{align*}
{\red 1} \sep {\red 2} \shiftedShuffle {\darkblue 2} \sep {\darkblue 31} = \{ &
	{\red 1} \sep {\red 2} \sep {\darkblue 4} \sep {\darkblue 53}, \;
	{\red 1} \sep {\red 2}{\darkblue 4} \sep {\darkblue 53}, \;
	{\red 1} \sep {\darkblue 4} \sep {\red 2} \sep {\darkblue 53}, \;
	{\red 1} \sep {\darkblue 4} \sep {\red 2}{\darkblue 53}, \;
	{\red 1} \sep {\darkblue 4} \sep {\darkblue 53} \sep {\red 2}, \;
	{\red 1}{\darkblue 4} \sep {\red 2} \sep {\darkblue 53}, \;
	{\red 1}{\darkblue 4} \sep {\red 2}{\darkblue 53}, \\[-.1cm]
&	{\red 1}{\darkblue 4} \sep {\darkblue 53} \sep {\red 2}, \;
	{\darkblue 4} \sep {\red 1} \sep {\red 2} \sep {\darkblue 53}, \;
	{\darkblue 4} \sep {\red 1} \sep {\red 2}{\darkblue 53}, \;
	{\darkblue 4} \sep {\red 1} \sep {\darkblue 53} \sep {\red 2}, \;
	{\darkblue 4} \sep {\red 1}{\darkblue 53} \sep {\red 2}, \;
	{\darkblue 4} \sep {\darkblue 53} \sep {\red 1} \sep {\red 2} \}, \\[.2cm]
{\red 1} \sep {\red 2} \convolution {\darkblue 2} \sep {\darkblue 31} = \{ &
	{\red 1} \sep {\red 2} \sep {\darkblue 4} \sep {\darkblue 53}, \;
	{\red 1} \sep {\red 3} \sep {\darkblue 4} \sep {\darkblue 52}, \;
	{\red 1} \sep {\red 4} \sep {\darkblue 3} \sep {\darkblue 52}, \;
	{\red 1} \sep {\red 5} \sep {\darkblue 3} \sep {\darkblue 42}, \;
	{\red 2} \sep {\red 3} \sep {\darkblue 4} \sep {\darkblue 51}, \\[-.1cm]
&	{\red 2} \sep {\red 4} \sep {\darkblue 3} \sep {\darkblue 51}, \;
	{\red 2} \sep {\red 5} \sep {\darkblue 3} \sep {\darkblue 41}, \;
	{\red 3} \sep {\red 4} \sep {\darkblue 2} \sep {\darkblue 51}, \;
	{\red 3} \sep {\red 5} \sep {\darkblue 2} \sep {\darkblue 41}, \;
	{\red 4} \sep {\red 5} \sep {\darkblue 2} \sep {\darkblue 31} \}.
\end{align*}

Graphically, the table of the shifted concatenation~$\lambda \bar\lambda'$ contains the table of~$\lambda$ as the bottom left block and the table of~$\lambda'$ as the top right block. The tables in the shifted shuffle product~$\lambda \shiftedShuffle \lambda'$ (resp.~in the convolution product~$\lambda \convolution \lambda'$) are obtained by shuffling the rows (resp.~columns) of the table of~$\lambda \bar\lambda'$. See \fref{fig:shuffleConvolutionOrderedPartitions}.

\begin{figure}[h]
  \centerline{\includegraphics{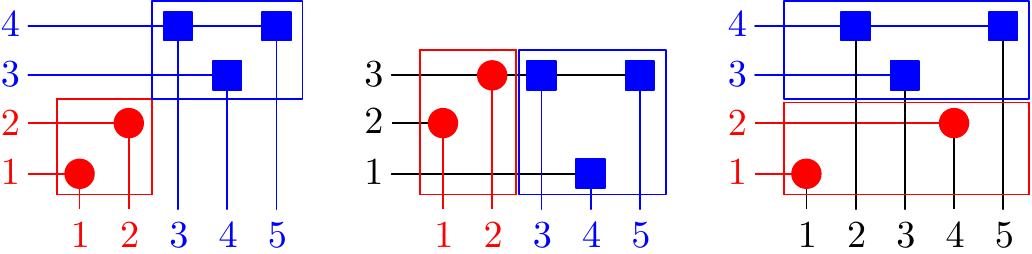}}
  \caption{The table of the shifted concatenation~$\lambda\bar\lambda'$ (left) has two blocks containing the tables of the ordered partitions~$\lambda = 1 \sep 2$ and~$\lambda' = 2 \sep 31$. Elements of the shifted shuffle product~$\lambda \shiftedShuffle \lambda'$ (middle) and of the convolution product~${\lambda \convolution \lambda'}$~(right) are obtained by shuffling respectively the rows and columns of the table of~$\lambda\bar\lambda'$.}
  \label{fig:shuffleConvolutionOrderedPartitions}
\end{figure}

\begin{remark}
\begin{enumerate}[(i)]
\item Note that the shifted shuffle and convolution products are compatible with the filtration~$(\fP_n^{\ge p})_{p \in [n]}$:
\[
\fP_n^{\ge p} \shiftedShuffle \fP_{n'}^{\ge p'} \subseteq \fP_{n+n'}^{\ge p+p'}
\qquad\text{and}\qquad
\fP_n^{\ge p} \convolution \fP_{n'}^{\ge p'} \subseteq \fP_{n+n'}^{\ge p+p'}.
\]
\item By projection on the quotient~$\fP / \fP^{\ge 1} \simeq \fS$, the (signed) shifted shuffle and convolution products coincide with the description of Section~\ref{subsec:products}.
\item The shifted product of ordered partitions preserves intervals of the weak order. Namely,
\[
[\lambda, \mu] \shiftedShuffle [\lambda', \mu'] = [\lambda \bar\lambda', \bar\mu' \mu].
\]
\end{enumerate}
\end{remark}

These definitions extend to signed ordered partitions: signs travel with their values in the signed shifted shuffle product, and stay at their positions in the signed convolution product.

%\begin{align*}
%\upr{1}\downr{2} \shiftedShuffle \downb{23}\upb{1} & = \{ \upr{1}\downr{2}\downb{45}\upb{3}, \upr{1}\downb{4}\downr{2}\downb{5}\upb{3}, \upr{1}\downb{45}\downr{2}\upb{3}, \upr{1}\downb{45}\upb{3}\downr{2}, \downb{4}\upr{1}\downr{2}\downb{5}\upb{3}, \downb{4}\upr{1}\downb{5}\downr{2}\upb{3}, \downb{4}\upr{1}\downb{5}\upb{3}\downr{2}, \downb{45}\upr{1}\downr{2}\upb{3}, \downb{45}\upr{1}\upb{3}\downr{2}, \downb{45}\upb{3}\upr{1}\downr{2} \}, \\
%\upr{1}\downr{2} \convolution \downb{23}\upb{1} & = \{ \upr{1}\downr{2}\downb{45}\upb{3}, \upr{1}\downr{3}\downb{45}\upb{2}, \upr{1}\downr{4}\downb{35}\upb{2}, \upr{1}\downr{5}\downb{34}\upb{2}, \upr{2}\downr{3}\downb{45}\upb{1}, \upr{2}\downr{4}\downb{35}\upb{1}, \upr{2}\downr{5}\downb{34}\upb{1}, \upr{3}\downr{4}\downb{25}\upb{1}, \upr{3}\downr{5}\downb{24}\upb{1}, \upr{4}\downr{5}\downb{23}\upb{1} \}.
%\end{align*}

%%%%%%%%%%%%%%%%%

\subsection{Subalgebra of~$\OrdPart_\pm$}
\label{subsec:SchroderCambrianAlgebra}

We denote by~$\OrdPart_\pm$ the Hopf algebra with basis~$(\F_\lambda)_{\lambda \in \fP_{\pm}}$ and whose product and coproduct are defined by
\[
\F_\lambda \product \F_{\lambda'} = \sum_{\mu \in \lambda \shiftedShuffle \lambda'} \F_\mu
\qquad\text{and}\qquad
\coproduct \F_\mu = \sum_{\mu \in \lambda \convolution \lambda'} \F_\lambda \otimes \F_{\lambda'}.
\]
Note that the Hopf algebra~$\FQSym_\pm$ is isomorphic to the quotient~$\OrdPart_\pm / \OrdPart_\pm^{\ge 1}$. Note also that the unsigned version of~$\OrdPart_\pm$ is the dual of the algebra~$\WQSym$ of word quasi-symmetric functions (also denoted~$\NCQSym$ for non-commutative quasi-symmetric functions), see~\cite{BergeronZabrocki, NovelliThibon-trigebres}.

\begin{remark}
The proof that~$\OrdPart_\pm$ is indeed a Hopf algebra is left to the reader: it consists in translating F.~Chapoton's proof~\cite{Chapoton} from surjections to signed ordered partitions. In fact, F.~Chapoton's Hopf algebras on faces of the permutahedra, associahedra, and cubes could be decorated by an arbitrary group, similar to the constuctions in~\cite{NovelliThibon-coloredHopfAlgebras, BaumannHohlweg, BergeronHohlweg}. Once again the main point here is that the Schr\"oder-congruence relations depend on the decoration.
\end{remark}

We denote by~$\SchrCamb$ the vector subspace of~$\OrdPart_\pm$ generated by the elements
\[
\PCamb_{\tree} \eqdef \sum_{\substack{\lambda \in \fP_\pm \\ \surjectionSchrPermAsso(\lambda) = \tree}} \F_\lambda
\]
for all Schr\"oder-Cambrian trees~$\tree$. For example, for the Schr\"oder-Cambrian tree of \fref{fig:leveledSchroderCambrianTree}\,(left), we have
\[
\PCamb_{\!\!\!\!\TexSchroder} = \F_{\down{12}|\down{5}\up{7}|\up{3}\down{4}|\up{6}} + \F_{\down{125}\up{7}|\up{3}\down{4}|\up{6}} + \F_{\down{5}\up{7}|\down{12}|\up{3}\down{4}|\up{6}}.
\]
Note that the Hopf algebra~$\Camb$ is isomorphic to the quotient~$\SchrCamb_\pm / \SchrCamb_\pm^{\ge 1}$.

\begin{theorem}
\label{thm:SchrCambSubalgebra}
$\SchrCamb$ is a Hopf subalgebra of~$\OrdPart_\pm$.
\end{theorem}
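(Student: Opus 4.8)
The plan is to mirror the proof of Theorem~\ref{thm:cambSubalgebra} exactly, replacing permutations by ordered partitions, the Cambrian congruence by the Schr\"oder-Cambrian congruence, and linear extensions by the \emph{fibers of~$\surjectionSchrPermAsso$}. Concretely, it suffices to show that the Schr\"oder-Cambrian congruence~$\equiv^\star$ is compatible with both the shifted shuffle product and the convolution coproduct on~$\OrdPart_\pm$, since then the subspace~$\SchrCamb$ spanned by the class sums~$\PCamb_{\tree}$ is stable under product and coproduct. As in the binary case, this reduces to two rewriting statements: (a) if~$\mu \equiv^\star \tilde\mu$ are Schr\"oder-Cambrian congruent and~$\F_\mu$ appears in a product~$\PCamb_{\tree} \product \PCamb_{\tree'}$, then so does~$\F_{\tilde\mu}$; and (b) if~$\mu \equiv^\star \tilde\mu$ and~$\F_\mu$ appears in~$\coproduct \PCamb_{\tree[S]}$ as a tensor factor, then so does~$\F_{\tilde\mu}$. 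By transitivity it is enough to treat a single application of one of the two elementary rewriting rules~$U \sep \b{a} \sep \b{c} \sep V \equiv^\star_\signature U \sep \b{a}\b{c} \sep V$ and~$U \sep \b{a}\b{c} \sep V \equiv^\star_\signature U \sep \b{c} \sep \b{a} \sep V$ (with the appropriate witness~$b$), and by the symmetry between~$U$ and~$V$ (positive versus negative witness) only one orientation needs to be written out in detail.

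For the product, I would take~$\mu \in \lambda \shiftedShuffle \lambda'$ with $\surjectionSchrPermAsso(\lambda) = \tree$ and~$\surjectionSchrPermAsso(\lambda') = \tree'$, and write the elementary move as occurring among three parts~$\b{a},\b{c}$ (or the merged part~$\b{a}\b{c}$) with witness~$b$. As in the three-case split of Theorem~\ref{thm:cambSubalgebra}, distinguish according to whether the values in~$\b{a}$ and~$\b{c}$ come entirely from the first block~$\{1,\dots,n\}$, entirely from the second block~$\{n+1,\dots,n+n'\}$, or are split between the two. In the first (resp.\ second) case the move is internal to~$\lambda$ (resp.\ $\lambda'$): the witness~$b$ is also inherited, so applying Proposition~\ref{prop:mergeLinearExtensions} the rewritten ordered partition still lies in the same fiber of~$\surjectionSchrPermAsso$, hence~$\F_{\tilde\mu}$ appears again. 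In the split case, one checks that merging or swapping the parts~$\b{a}$ and~$\b{c}$ does not change the shuffle restriction to either block (the columns of~$\mu$ in each block are reordered the same way, or a shared column moves without changing its block), so~$\tilde\mu \in \lambda \shiftedShuffle \lambda'$ directly. One subtlety not present in the permutation case is the \emph{merging} rewriting~$U \sep \b{a} \sep \b{c} \sep V \equiv^\star U \sep \b{a}\b{c} \sep V$: here I would note that the restrictions~$\mu^{|\{1,\dots,n\}}$ and~$\mu^{|\{n+1,\dots\}}$ are computed by standardization, which collapses an all-first-block part onto itself and likewise for the second block, so merging a first-block part with a second-block part in~$\mu$ corresponds exactly to a parallel merge of the corresponding parts in~$\lambda$ and~$\lambda'$; this is the place where one uses that the shifted shuffle preserves intervals of the weak order on ordered partitions (remark at the end of Section~\ref{sec:SchroderCambrianAlgebra}), or one argues directly on tables.

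For the coproduct I would take~$\mu$ a fiber element of~$\tree[S]$ with~$\mu \in \lambda \convolution \lambda'$, translate the elementary rewriting on~$\mu$ into the corresponding rewriting on~$\lambda$ (or~$\lambda'$) via the fact that~$\lambda = \mu_{|\{1,\dots,n\}}$ and~$\lambda' = \mu_{|\{n+1,\dots,n+n'\}}$ are obtained by deleting and standardizing \emph{columns} (values), so that the move, which permutes or merges \emph{parts} (rows), descends to a move of the same type on whichever of~$\lambda,\lambda'$ inherits all of~$\b{a}\cup\b{c}$; one checks the witness~$b$ survives standardization because~$b$ and the values of~$\b{a},\b{c}$ lie in the same block. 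Then Proposition~\ref{prop:mergeLinearExtensions} again shows that the rewritten factor stays in the appropriate Schr\"oder-Cambrian class, and the rewritten~$\tilde\mu$ still lies in the convolution product of the rewritten factors. The main obstacle I anticipate is bookkeeping around the merging rewriting rule $U\sep\b a\sep\b c\sep V\equiv^\star U\sep\b a\b c\sep V$, which has no analogue in the $\FQSym$ setting: one must be careful that a \emph{cross-block} merge in~$\mu$ really does correspond to simultaneous merges in~$\lambda$ and~$\lambda'$ (and not to some ill-defined operation), and symmetrically that a cross-block part cannot be split by a convolution. Once the table pictures of the restrictions (as in Figures~\ref{fig:restrictionOrderedPartition} and~\ref{fig:shuffleConvolutionOrderedPartitions}) are kept firmly in mind, each of these verifications is routine, so, exactly as the authors did for Theorem~\ref{thm:baxSubalgebra}, it is reasonable to present the argument compactly and leave the case-by-case diagram chase to the reader.
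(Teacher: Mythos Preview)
Your proposal is correct and takes essentially the same approach as the paper, whose proof is literally the single sentence ``Similar to the proof of Theorem~\ref{thm:cambSubalgebra}.'' You have in fact gone further than the paper by spelling out the adaptation and flagging the one genuinely new ingredient---the merging rewriting rule~$U \sep \b{a} \sep \b{c} \sep V \equiv^\star U \sep \b{a}\b{c} \sep V$---which has no analogue in the permutation setting; your handling of it via the table restrictions is the right idea, though the phrase ``parallel merge'' in the cross-block case is slightly misleading (when~$\b{a}\subseteq[n]$ and~$\b{c}\subseteq[n+1,n+n']$ the restrictions~$\mu^{|[n]}$ and~$\mu^{|[n+1,n+n']}$ are in fact unchanged by the merge).
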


\begin{proof}
Similar to the proof of Theorem~\ref{thm:cambSubalgebra}.
\end{proof}

As for the Cambrian algebra, the product and coproduct of~$\PCamb$-basis elements of the Schr\"oder-Cambrian algebra~$\SchrCamb$ can be described directly in terms of combinatorial operations on Schr\"oder-Cambrian trees.

\para{Product}
The product in the Schr\"oder Cambrian algebra~$\SchrCamb$ can again be described in terms of intervals in the Sch\"oder-Cambrian lattice. Given two Schr\"oder-Cambrian trees~$\tree, \tree'$, we denote by~$\raisebox{-6pt}{$\tree$}\nearrow \raisebox{4pt}{$\bar \tree'$}$ the Schr\"oder $\signature(\tree)\signature(\tree')$-Cambrian tree obtained by grafting the rightmost outgoing leaf of~$\tree$ on the leftmost incoming leaf of~$\tree$ and shifting all labels of~$\tree'$. We define similarly~$\raisebox{4pt}{$\tree$} \nwarrow \raisebox{-6pt}{$\bar \tree'$}$.

\begin{proposition}
For any Schr\"oder-Cambrian trees~$\tree, \tree'$, the product~$\PCamb_{\tree} \product \PCamb_{\tree'}$ is given by
\[
\PCamb_{\tree} \product \PCamb_{\tree'}  = \sum_{\tree[S]} \PCamb_{\tree[S]},
\]
where~$\tree[S]$ runs over the interval between~$\raisebox{-6pt}{$\tree$}\nearrow \raisebox{4pt}{$\bar \tree'$}$ and~$\raisebox{4pt}{$\tree$} \nwarrow \raisebox{-6pt}{$\bar \tree'$}$ in the Schr\"oder $\signature(\tree)\signature(\tree')$-Cambrian lattice.
\end{proposition}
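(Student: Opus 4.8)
The proof follows exactly the same pattern as the proof of Proposition~\ref{prop:product}, now using the weak order on ordered partitions from Section~\ref{subsec:geometrySchroderTrees} and the filtration-compatible shifted shuffle product $\shiftedShuffle$ on signed ordered partitions. First I would recall that, by definition, $\PCamb_{\tree} = \sum_{\lambda} \F_\lambda$ where $\lambda$ ranges over the $\surjectionSchrPermAsso$-fiber of $\tree$, and that by the Lemma preceding Proposition~\ref{prop:SchroderCambrianLattice} this fiber is a weak order interval $[\lambda_{\min}^{\tree}, \lambda_{\max}^{\tree}]$ in $\fP_\signature$. Hence $\PCamb_{\tree} \product \PCamb_{\tree'}$ is the sum of $\F_\mu$ over the set $[\lambda_{\min}^{\tree}, \lambda_{\max}^{\tree}] \shiftedShuffle [\lambda_{\min}^{\tree'}, \lambda_{\max}^{\tree'}]$. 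Using the third item of the Remark after \fref{fig:shuffleConvolutionOrderedPartitions} (the shifted shuffle of two weak order intervals of ordered partitions is again a weak order interval), this set equals the interval $[\lambda_{\min}^{\tree} \bar\lambda_{\min}^{\tree'}, \bar\lambda_{\max}^{\tree'} \lambda_{\max}^{\tree}]$ in $\fP_{\signature(\tree)\signature(\tree')}$.

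Next I would check that applying $\surjectionSchrPermAsso$ to this interval of ordered partitions yields an interval in the Schr\"oder $\signature(\tree)\signature(\tree')$-Cambrian lattice. This is because $\surjectionSchrPermAsso$ is a lattice quotient map (Proposition~\ref{prop:SchroderCambrianLattice}), so it sends weak order intervals to Schr\"oder-Cambrian intervals, the endpoints being the images of the endpoints. It then remains only to identify these two endpoint Schr\"oder-Cambrian trees:
\[
\surjectionSchrPermAsso\big(\lambda_{\min}^{\tree} \bar\lambda_{\min}^{\tree'}\big) = \raisebox{-6pt}{$\tree$}\nearrow \raisebox{4pt}{$\bar \tree'$}
\qquad\text{and}\qquad
\surjectionSchrPermAsso\big(\bar\lambda_{\max}^{\tree'} \lambda_{\max}^{\tree}\big) = \raisebox{4pt}{$\tree$} \nwarrow \raisebox{-6pt}{$\bar \tree'$}.
\]
For the first identity I would run the insertion algorithm $\SchrCambCorresp$ on the concatenated ordered partition: since $\lambda_{\min}^{\tree}$ is the minimal linear extension of $\tree$ (all strands through $\tree$-nodes inserted as low as possible) and $\bar\lambda_{\min}^{\tree'}$ is the shifted minimal linear extension of $\tree'$ placed entirely above, the sweep first builds a copy of $\tree$ and then, because no value of $\tree'$ separates any two strands of $\tree$, grafts $\bar\tree'$ onto the rightmost outgoing leaf of $\tree$ — which is precisely the definition of $\raisebox{-6pt}{$\tree$}\nearrow \raisebox{4pt}{$\bar \tree'$}$. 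The second identity is symmetric, reading the maximal linear extensions and the mirror construction.

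\textbf{Main obstacle.} The only genuinely non-routine point is the endpoint computation, and within it the verification that the grafting really produces $\raisebox{-6pt}{$\tree$}\nearrow \raisebox{4pt}{$\bar \tree'$}$ rather than some finer contraction: one must argue that no two nodes get merged across the interface, i.e. that the rightmost outgoing leaf of $\tree$ and the leftmost incoming leaf of $\bar\tree'$ are genuinely distinct strands in the sweep. This follows from the fact that in $\lambda_{\min}^{\tree} \bar\lambda_{\min}^{\tree'}$ the parts of $\tree$ all appear strictly before the parts of $\tree'$ and are never equal to them, so the corresponding nodes land on strictly different levels and $\surjectionSchrPermAsso$ keeps them in separate nodes; the grafting edge is then a genuine edge, not a contraction. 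Everything else — the interval claims, the lattice-quotient property, the compatibility of $\shiftedShuffle$ with the filtration — is quoted directly from the results and remarks already established in the excerpt, so the proof can be kept short, essentially reduced to "mimic the proof of Proposition~\ref{prop:product}, replacing permutations by ordered partitions."
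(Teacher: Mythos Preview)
Your proposal is correct and follows exactly the approach the paper intends: the paper's proof reads in its entirety ``Similar to that of Proposition~\ref{prop:product},'' and you have faithfully spelled out that analogy, invoking the interval lemma for Schr\"oder-Cambrian fibers, the interval-preservation of~$\shiftedShuffle$ on ordered partitions, and the lattice-quotient property (Proposition~\ref{prop:SchroderCambrianLattice}) to identify the endpoints. Your ``main obstacle'' is a valid clarification but not a genuine difficulty, since in the shifted concatenation the parts of~$\tree$ and~$\bar\tree'$ lie on disjoint levels and can never be merged by~$\surjectionSchrPermAsso$.
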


\begin{proof}
Similar to that of Proposition~\ref{prop:product}.
\end{proof}

For example, we can compute the product
\[
\begin{array}{@{}c@{${} = {}$}c@{+}c@{+}c@{}@{}c}
\PCamb_{\!\!\includegraphics{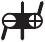}} \product \PCamb_{\includegraphics{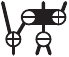}} &
\multicolumn{3}{l}{\F_{\down{1}\up{2}} \product \big( \F_{\up{1} \sep \down{3} \sep \down{2}\up{4}} + \F_{\up{1}\down{3} \sep \down{2}\up{4}} + \F_{\down{3} \sep \up{1} \sep \down{2}\up{4}} \big)}
\\[-.3cm]
& \begin{pmatrix} \quad \F_{\down{1}\up{2} \sep \up{3} \sep \down{5} \sep \down{4}\up{6}} + \F_{\down{1}\up{2} \sep \up{3}\down{5} \sep \down{4}\up{6}} \\
+ \; \F_{\down{1}\up{2} \sep \down{5} \sep \up{3} \sep \down{4}\up{6}} + \F_{\down{1}\up{2}\down{5} \sep \up{3} \sep \down{4}\up{6}} \\
+ \; \F_{\down{5} \sep \down{1}\up{2} \sep \up{3} \sep \down{4}\up{6}} \end{pmatrix}
& \begin{pmatrix} \quad \F_{\down{1}\up{2}\up{3} \sep \down{5} \sep \down{4}\up{6}} \\
+ \; \F_{\down{1}\up{2}\up{3} \down{5} \sep \down{4}\up{6}} \\
+ \; \F_{\down{5} \sep \down{1}\up{2}\up{3} \sep \down{4}\up{6}} \end{pmatrix}
& \begin{pmatrix} \quad \F_{\up{3} \sep \down{1}\up{2} \sep \down{5} \sep \down{4}\up{6}} + \F_{\up{3} \sep \down{1}\up{2}\down{5} \sep \down{4}\up{6}} + \F_{\up{3} \sep \down{5} \sep \down{1}\up{2} \sep \down{4}\up{6}} \\
+ \; \F_{\up{3}\down{5} \sep \down{1}\up{2} \sep \down{4}\up{6}} + \F_{\down{5} \sep \up{3} \sep \down{1}\up{2} \sep \down{4}\up{6}} + \F_{\up{3} \sep \down{5} \sep \down{1}\up{2}\down{4}\up{6}} \\
+ \; \F_{\up{3}\down{5} \sep \down{1}\up{2}\down{4}\up{6}} + \F_{\down{5} \sep \up{3} \sep \down{1}\up{2}\down{4}\up{6}} + \F_{\up{3} \sep \down{5} \sep \down{4}\up{6} \sep \down{1}\up{2}} \\
+ \; \F_{\up{3}\down{5} \sep \down{4}\up{6} \sep \down{1}\up{2}} + \F_{\down{5} \sep \up{3} \sep \down{4}\up{6} \sep \down{1}\up{2}}
\end{pmatrix}
\\[.8cm]
& \PCamb_{\!\!\includegraphics{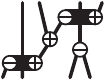}} & \PCamb_{\!\!\includegraphics{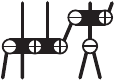}} & \PCamb_{\!\!\includegraphics{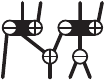}} & .
\end{array}
\]

\para{Coproduct}
The coproduct in the Schr\"oder Cambrian algebra~$\SchrCamb$ can again be described in terms of cuts. A \defn{cut} of a Schr\"oder-Cambrian tree~$\tree[S]$ is a set~$\gamma$ of edges such that any geodesic vertical path in~$\tree[S]$ from a down leaf to an up leaf contains precisely one edge of~$\gamma$. We denote again by~$A(\tree[S],\gamma)$ and~$B(\tree[S],\gamma)$ the two Schr\"oder-Cambrian forests above and below~$\gamma$ in~$\tree[S]$. 

\begin{proposition}
For any Schr\"oder-Cambrian tree~$\tree[S]$, the coproduct~$\coproduct \PCamb_{\tree[S]}$ is given by
\[
\coproduct \PCamb_{\tree[S]} = \sum_{\gamma} \bigg( \prod_{\tree \in B(\tree[S],\gamma)} \PCamb_{\tree} \bigg) \otimes \bigg( \prod_{\tree' \in A(\tree[S], \gamma)} \PCamb_{\tree'} \bigg),
\]
where~$\gamma$ runs over all cuts of~$\tree[S]$.
\end{proposition}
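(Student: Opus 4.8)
The plan is to imitate the proof of Proposition~\ref{prop:coproduct} verbatim, since the combinatorial skeleton is identical and only the underlying objects (Schr\"oder-Cambrian trees and signed ordered partitions) have changed. First I would fix a Schr\"oder-Cambrian tree~$\tree[S]$ and work in the $\F$-basis, writing $\PCamb_{\tree[S]} = \sum_{\lambda} \F_\lambda$ where $\lambda$ ranges over the fiber $(\surjectionSchrPermAsso)^{-1}(\tree[S])$, which by Proposition~\ref{prop:mergeLinearExtensions} consists of the signed ordered partitions obtained from linear extensions of~$\tree[S]$ by merging parts indexing incomparable vertices. Applying the coproduct of~$\OrdPart_\pm$, we get $\coproduct \PCamb_{\tree[S]} = \sum_{\lambda} \sum_{\mu \convolution \mu' \ni \lambda} \F_\mu \otimes \F_{\mu'}$, so I need to organize the pairs $(\mu,\mu')$ with $\lambda \in \mu \convolution \mu'$ for $\lambda$ in the fiber.

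The key step is the bijection between the terms appearing in $\coproduct \PCamb_{\tree[S]}$ and pairs $(\text{cut }\gamma, \text{pair of linear extensions of the forests})$. Given $\lambda$ in the fiber and a decomposition $\lambda \in \mu \convolution \mu'$, the splitting of the table of~$\lambda$ into a bottom block (columns giving~$\mu$) and a top block (columns giving~$\mu'$) corresponds, via the insertion algorithm~$\SchrCambCorresp$, to a cut~$\gamma$ of~$\tree[S]$: every vertical strand of the insertion diagram crosses the horizontal separation exactly once, which is precisely the defining property of a cut. This associates a cut~$\gamma$ to each term of the coproduct. Conversely, given a cut~$\gamma$, the signed ordered partitions~$\lambda$ in the fiber of~$\tree[S]$ all of whose parts below~$\gamma$ precede all parts above~$\gamma$ are exactly those of the form (a linear extension of the forest $B(\tree[S],\gamma)$, possibly with merged parts) followed by (a linear extension of $A(\tree[S],\gamma)$, possibly with merged parts). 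Since the linear extensions (with merges) of a Schr\"oder-Cambrian forest are obtained by shuffling those of its connected components, and since the product $\prod_{\tree \in B(\tree[S],\gamma)} \PCamb_{\tree}$ (resp.\ over $A$) precisely encodes that shuffle by the product description, collecting terms cut-by-cut yields the stated formula
\[
\coproduct \PCamb_{\tree[S]} = \sum_{\gamma} \bigg( \prod_{\tree \in B(\tree[S],\gamma)} \PCamb_{\tree} \bigg) \otimes \bigg( \prod_{\tree' \in A(\tree[S], \gamma)} \PCamb_{\tree'} \bigg).
\]

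I expect the main obstacle to be the bookkeeping around merged parts: unlike in the permutation case, a single part of~$\lambda$ may span several incomparable vertices of~$\tree[S]$, and when we cut we must check that no part of~$\lambda$ straddles the cut, i.e.\ that the merging is compatible with the bottom/top decomposition. This is guaranteed because the parts of~$\lambda$ correspond to connected sets of dots not separated by walls at a common level, and a cut by definition separates the tree along vertical strands, so any vertex-set merged into one part of~$\lambda$ lies entirely on one side of~$\gamma$; but making this precise requires a careful look at the insertion algorithm on ordered partitions. Once that compatibility is established, the remaining verification (that the convolution decompositions of~$\lambda$ are in bijection with choices of cut plus shuffled pairs of linear extensions of the two forests) is routine and parallels Proposition~\ref{prop:coproduct} word for word, so I would simply write ``Similar to the proof of Proposition~\ref{prop:coproduct}'' after indicating the one new point about merged parts.
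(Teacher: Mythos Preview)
Your proposal is correct and follows exactly the paper's approach: the paper's own proof reads in its entirety ``Similar to that of Proposition~\ref{prop:coproduct}.'' Your expansion of this is accurate, and your instinct to close with that one line is spot on.

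One minor remark: the ``obstacle'' you flag about merged parts straddling a cut is not actually an issue. In the forward direction, the cut~$\gamma$ is \emph{defined} by a horizontal split between consecutive parts of~$\lambda$, so no part can straddle it by construction. In the converse direction, you correctly restrict to those~$\lambda$ of the form (fiber element of~$B$) followed by (fiber element of~$A$), which again cannot merge across~$\gamma$. Your justification via vertical strands is a bit roundabout; the cleaner observation is simply that the below-set of any horizontal split of~$\lambda$ is automatically a lower ideal of~$\tree[S]$ (because~$\lambda$ respects the tree order), hence determines a cut, and that this exhausts all cuts. But this is bookkeeping, not a gap.
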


\begin{proof}
Similar to that of Proposition~\ref{prop:coproduct}.
\end{proof}

For example, we can compute the coproduct
\[
\begin{array}{@{}c@{${} = {}$}c@{$\;+\;$}c@{$\;+\;$}c@{$\;+\;$}c@{$\;+\;$}c@{}}
\coproduct \PCamb_{\includegraphics{exmProductSchroderB}} & \multicolumn{5}{l}{\coproduct \big( \F_{\up{1} \sep \down{3} \sep \down{2}\up{4}} + \F_{\up{1}\down{3} \sep \down{2}\up{4}} + \F_{\down{3} \sep \up{1} \sep \down{2}\up{4}} \big)}
\\
& 1 \otimes \begin{pmatrix} \quad \F_{\up{1} \sep \down{3} \sep \down{2}\up{4}} \\ + \; \F_{\up{1}\down{3} \sep \down{2}\up{4}} \\ + \; \F_{\down{3} \sep \up{1} \sep \down{2}\up{4}} \end{pmatrix} 
& \F_{\up{1}} \otimes \F_{\down{2} \sep \down{1}\up{3}}
& \F_{\down{1}} \otimes \F_{\up{1} \sep \down{2}\up{3}}
& \begin{pmatrix} \quad \F_{\up{1} \sep \down{2}} \\ + \; \F_{\up{1}\down{2}} \\ + \; \F_{\down{2} \sep \up{1}} \end{pmatrix} \otimes \F_{\down{1}\up{2}}
& \begin{pmatrix} \quad \F_{\up{1} \sep \down{3} \sep \down{2}\up{4}} \\ + \; \F_{\up{1}\down{3} \sep \down{2}\up{4}} \\ + \; \F_{\down{3} \sep \up{1} \sep \down{2}\up{4}} \end{pmatrix} \otimes 1
\\[.8cm]
& 1 \otimes \PCamb_{\!\includegraphics{exmProductSchroderB}}
& \PCamb_{\includegraphics{exmTreeY}} \otimes \PCamb_{\!\!\includegraphics{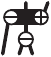}}
& \PCamb_{\includegraphics{exmTreeA}} \otimes \PCamb_{\!\includegraphics{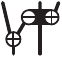}}
& \big( \PCamb_{\!\includegraphics{exmTreeY}} \product \PCamb_{\includegraphics{exmTreeA}} \big)\otimes \PCamb_{\includegraphics{exmProductSchroderA}}
& \PCamb_{\!\includegraphics{exmProductSchroderB}} \otimes 1.
\end{array}
\]

\para{Matriochka algebras}
To conclude, we connect the Schr\"oder-Cambrian algebra to F.~Chapoton's algebra on faces of the cubes defined in~\cite{Chapoton}. We call trilean Hopf algebra the Hopf subalgebra~$\Tril$ of~$\OrdPart_\pm$ generated by the elements
\[
\XRec_\chi \eqdef \sum_{\substack{\lambda \in \fP_\pm \\ \surjectionSchrPermPara(\lambda) = \chi}} \F_\lambda
\]
for all~$\chi \in \{-,0,+\}^{n-1}$. The commutative diagram of Proposition~\ref{prop:commutativeDiagramSchroder} ensures that
\[
\XRec_\chi = \sum_{\substack{\tree \in \SchrCambTrees \\ \surjectionSchrAssoPara(\tree) = \chi}} \PCamb_{\tree},
\]
and thus that~$\Tril$ is a subalgebra of~$\SchrCamb$. In other words, the Schr\"oder-Cambrian algebra is sandwiched between the signed ordered partitions algebra and the trilean algebra~$\Tril \subset \SchrCamb \subset \OrdPart_\pm$.

%%%%%%%%%%%%%%%%%

\subsection{Quotient algebra of~$\OrdPart_\pm^*$}
\label{subsec:dualSchroderCambrianAlgebra}

We switch to the dual Hopf algebra~$\OrdPart_\pm^*$ with basis $(\G_\lambda)_{\lambda \in \fP_\pm}$ and whose product and coproduct are defined by
\[
\G_\lambda \product \G_{\lambda'} = \sum_{\mu \in \lambda \convolution \lambda'} \G_\mu
\qquad\text{and}\qquad
\coproduct \G_\mu = \sum_{\mu \in \lambda \shiftedShuffle \lambda'} \G_\lambda \otimes \G_{\lambda'}.
\]
Note that the unsigned version of~$\OrdPart_\pm^*$ is the algebra~$\WQSym$ of word quasi-symmetric functions (also denoted~$\NCQSym$ for non-commutative quasi-symmetric functions), see~\cite{BergeronZabrocki, NovelliThibon-trigebres}.The following statement is automatic from Theorem~\ref{thm:SchrCambSubalgebra}.

\begin{theorem}
The graded dual~$\SchrCamb^*$ of the Schr\"oder-Cambrian algebra is isomorphic to the image of~$\OrdPart_\pm^*$ under the canonical projection
\[
\pi : \C\langle A \rangle \longrightarrow \C\langle A \rangle / \equiv,
\]
where~$\equiv$ denotes the Schr\"oder-Cambrian congruence. The dual basis~$\QCamb_{\tree}$ of~$\PCamb_{\tree}$ is expressed as~$\QCamb_{\tree} = \pi(\G_\lambda)$, where~$\lambda$ is any ordered partition such that~$\surjectionSchrPermAsso(\lambda) = \tree$.
\end{theorem}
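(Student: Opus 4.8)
The statement asserts two things: (1) the graded dual $\SchrCamb^*$ is isomorphic to the image $\OrdPart_\pm^* / {\equiv}$ of $\OrdPart_\pm^*$ under the canonical projection $\pi$ modulo the Schr\"oder-Cambrian congruence; and (2) the dual basis $\QCamb_\tree$ of $\PCamb_\tree$ is realized as $\pi(\G_\lambda)$ for any $\lambda$ with $\surjectionSchrPermAsso(\lambda) = \tree$. My plan is to follow the template already used for the analogous statements (the dual Cambrian algebra of Section~\ref{subsec:quotientAlgebra} and the dual Baxter-Cambrian algebra), since Theorem~\ref{thm:SchrCambSubalgebra} has just established that $\SchrCamb$ is a Hopf subalgebra of $\OrdPart_\pm$, and the whole point is that this makes the dual statement ``automatic'' by general Hopf-algebra duality.

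\textbf{Key steps.} First I would recall the general principle: if $H$ is a graded connected Hopf algebra with a homogeneous basis, and $K \subseteq H$ is a graded Hopf subalgebra whose basis elements are sums $P_x = \sum_{y \in f^{-1}(x)} F_y$ over the fibers of a surjection $f$ from the indexing set of $H$'s basis to that of $K$'s, then the orthogonal complement $K^\perp$ inside $H^*$ (with respect to the pairing dual to the $F$-basis) is a Hopf ideal, and $H^*/K^\perp \cong K^*$ as graded Hopf algebras. Applying this with $H = \OrdPart_\pm$, $K = \SchrCamb$, $f = \surjectionSchrPermAsso$, and $H^* = \OrdPart_\pm^*$ with basis $(\G_\lambda)$, I get $\SchrCamb^* \cong \OrdPart_\pm^* / \SchrCamb^\perp$. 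Second, I would identify $\SchrCamb^\perp$ with the kernel of $\pi$: since $\PCamb_\tree = \sum_{\surjectionSchrPermAsso(\lambda) = \tree} \F_\lambda$, an element $\sum a_\lambda \G_\lambda$ pairs to zero against every $\PCamb_\tree$ exactly when $\sum_{\surjectionSchrPermAsso(\lambda) = \tree} a_\lambda = 0$ for all $\tree$, i.e.\ exactly when it lies in the span of differences $\G_\lambda - \G_{\lambda'}$ with $\surjectionSchrPermAsso(\lambda) = \surjectionSchrPermAsso(\lambda')$. By the proposition characterizing the fibers of $\surjectionSchrPermAsso$ as Schr\"oder-Cambrian congruence classes, this is precisely the ideal generated by the congruence $\equiv^\star$, so $\OrdPart_\pm^* / \SchrCamb^\perp = \OrdPart_\pm^*/{\equiv^\star} = \C\langle A\rangle /{\equiv^\star}$ in the notation of the statement. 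Third, the dual-basis claim: the image $\pi(\G_\lambda)$ depends only on the congruence class of $\lambda$, hence only on $\tree = \surjectionSchrPermAsso(\lambda)$, so write $\QCamb_\tree \eqdef \pi(\G_\lambda)$; and then $\langle \QCamb_\tree, \PCamb_{\tree'} \rangle = \langle \G_\lambda, \PCamb_{\tree'} \rangle = |\{\mu : \surjectionSchrPermAsso(\mu) = \tree', \ \mu = \lambda\}| = \delta_{\tree, \tree'}$ under the induced pairing, so $(\QCamb_\tree)$ is indeed the basis dual to $(\PCamb_\tree)$.

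\textbf{Main obstacle.} There is no real obstacle here — the work has already been done in Theorem~\ref{thm:SchrCambSubalgebra} and in the proposition on fibers of $\surjectionSchrPermAsso$, and this is why the paper declares the statement ``automatic from Theorem~\ref{thm:SchrCambSubalgebra}.'' The only point requiring a line of care is making sure the graded pieces are all finite-dimensional so that the graded dual behaves well (which holds since there are finitely many signed ordered partitions of $[n]$ for each $n$) and that the bigrading by size is respected by $\pi$ — both are immediate. So the proof I would write is simply: ``Similar to the proof of the corresponding statement in Section~\ref{subsec:quotientAlgebra}; apply the general duality between a Hopf subalgebra and the quotient of the dual by its orthogonal, using that the fibers of $\surjectionSchrPermAsso$ are the Schr\"oder-Cambrian congruence classes.'' I would keep it to two or three sentences, matching the terseness of the parallel proofs earlier in the paper.
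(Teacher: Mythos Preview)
Your proposal is correct and matches the paper's approach exactly: the paper gives no proof at all beyond the sentence ``The following statement is automatic from Theorem~\ref{thm:SchrCambSubalgebra},'' and your plan simply unpacks what ``automatic'' means here via the standard duality between a Hopf subalgebra and the quotient of the dual by its orthogonal. Your suggested two-or-three-sentence write-up is precisely in the spirit of the parallel statements in Sections~\ref{subsec:quotientAlgebra} and the Baxter-Cambrian section, and is in fact more detailed than what the paper provides.
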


Similarly as in the previous section, we can describe combinatorially the product and coproduct of $\QCamb$-basis elements of~$\SchrCamb^*$ in terms of operations on Schr\"oder-Cambrian trees.

\para{Product}
We define \defn{gaps} and \defn{laminations} of Schr\"oder-Cambrian trees exactly as we did for Cambrian trees in Section~\ref{subsec:quotientAlgebra}. Note that laminations may or may not split the nodes of a Schr\"oder-Cambrian tree, see \fref{fig:exampleProductDualSchroder}\,(c) for examples. Given two Schr\"oder-Cambrian trees~$\tree$ and~$\tree'$ on~$[n]$ and~$[n']$ respectively, and a shuffle~$s$ of their signature defining multisets $\Gamma$ of gaps of~$[n]$ and $\Gamma'$ of gaps of~$[n']$, we still denote by~$\tree \,{}_s\!\backslash \tree'$ the Schr\"oder-Cambrian tree obtained by connecting the up leaves of~$\lambda(\tree,\Gamma)$ to the down leaves of the forest defined by the lamination~$\lambda(\tree',\Gamma')$. See \fref{fig:exampleProductDualSchroder}.

\begin{figure}[h]
  \centerline{\includegraphics{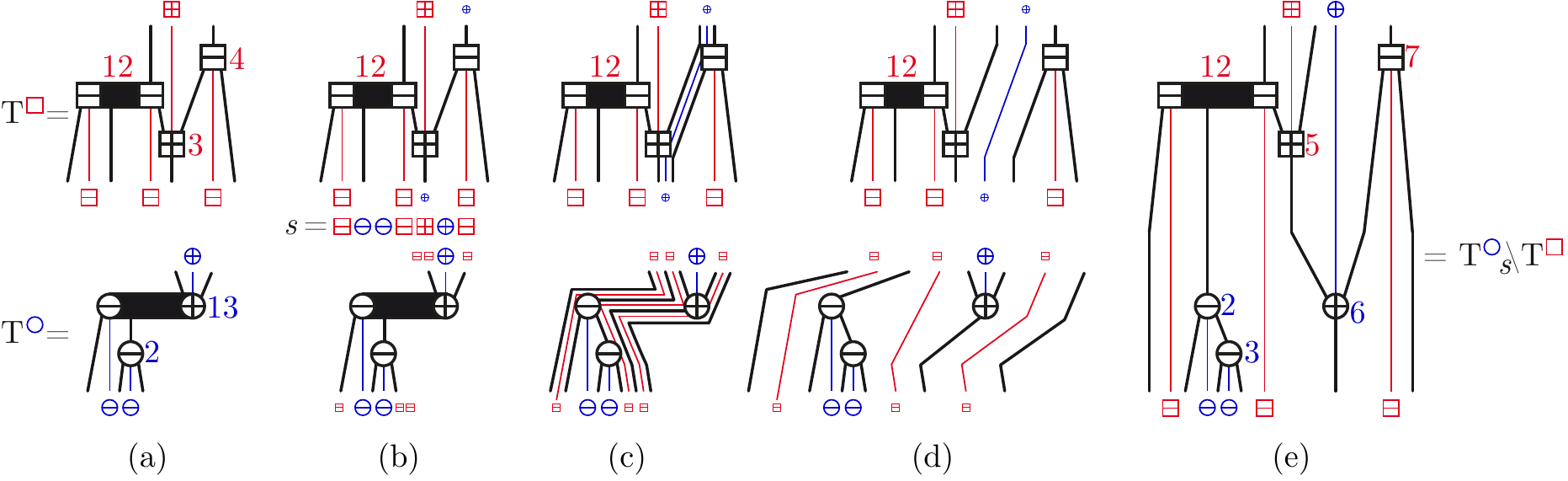}}
  \caption[Combinatorial interpretation of the product in the~$\QCamb$-basis of~$\SchrCamb^*$.]{(a) Two Schr\"oder-Cambrian trees~$\tree^\blueCirc$ and~$\tree^\redSquare$. (b) Given the shuffle $s = \redMinus\blueMinus\blueMinus\redMinus\redPlus\bluePlus\redMinus$, the positions of the~$\redMinus$ are reported in~$\tree^\blueCirc$ and the positions of the~$\bluePlus$ are reported in~$\tree^\redSquare$. (c) The corresponding laminations. (d) The trees are split according to the laminations. (e) The resulting Schr\"oder-Cambrian tree~$\tree^\blueCirc {}_s\!\backslash \tree^\redSquare$.}
  \label{fig:exampleProductDualSchroder}
\end{figure}

\begin{proposition}
For any Schr\"oder-Cambrian trees~$\tree, \tree'$, the product~$\QCamb_{\tree} \product \QCamb_{\tree'}$ is given by
\[
\QCamb_{\tree} \product \QCamb_{\tree'} = \sum_s \QCamb_{\tree \,{}_s\!\backslash \tree'},
\]
where~$s$ runs over all shuffles of the signatures of~$\tree$ and~$\tree'$.
\end{proposition}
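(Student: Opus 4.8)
The plan is to mimic the proof of Proposition~\ref{prop:productDual}, which handles the Cambrian (non-Schr\"oder) case, and to track the extra book-keeping caused by the fact that laminations of Schr\"oder-Cambrian trees may or may not split a node. First I would pick representatives: let $\lambda$ be a signed ordered partition with $\surjectionSchrPermAsso(\lambda) = \tree$ and $\lambda'$ one with $\surjectionSchrPermAsso(\lambda') = \tree'$, and let $\mu \in \lambda \convolution \lambda'$. By definition of the product in $\SchrCamb^*$ (inherited from $\OrdPart_\pm^*$ via the quotient of Theorem~\ref{thm:SchrCambSubalgebra}), every term of $\QCamb_{\tree} \product \QCamb_{\tree'}$ is $\QCamb_{\surjectionSchrPermAsso(\mu)}$ for such a $\mu$, and conversely. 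So it suffices to show that $\surjectionSchrPermAsso(\mu) = \tree \,{}_s\!\backslash \tree'$, where $s$ is the shuffle of the signatures of $\tree$ and $\tree'$ recorded by $\mu$, and that as $\mu$ ranges over $\lambda \convolution \lambda'$ (modulo Schr\"oder-Cambrian congruence on the two factors, which does not change $\tree$ or $\tree'$) the shuffle $s$ ranges over all shuffles of $\signature(\tree)$ and $\signature(\tree')$.

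The heart of the argument is the same as in the Cambrian case. Recall from Section~\ref{subsec:products} (and its extension to ordered partitions in Section~\ref{subsec:SchroderCambrianAlgebra}) that $\mu \in \lambda \convolution \lambda'$ means the table of $\mu$ is obtained by shuffling the columns of the shifted concatenation $\lambda\bar\lambda'$ while preserving the relative order of the rows; thus $\mu_{|\{1,\dots,n\}} = \lambda$ sits in the bottom block of levels and $\mu_{|\{n+1,\dots,n+n'\}} = \lambda'$ sits in the top block. Running the Schr\"oder insertion algorithm $\SchrCambCorresp$ on $\mu$ and tracking strands: the nodes coming from $\lambda$ produce the tree $\tree$ at the bottom, the nodes coming from $\lambda'$ produce $\tree'$ at the top, but the vertical walls are now those of $\mu$, so the walls falling from the negative nodes of the upper block ($\tree'$) cut through $\tree$, and the walls rising from the positive nodes of the lower block ($\tree$) cut through $\tree'$. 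The positions of these cutting walls are exactly the gaps of $[n]$ recording the negative signs of $\signature(\tree')$ in $s$, and the gaps of $[n']$ recording the positive signs of $\signature(\tree)$ in $s$ — that is, the multisets $\Gamma$ and $\Gamma'$ from the definition of $\tree \,{}_s\!\backslash \tree'$. Hence the two forests obtained by splitting $\tree$ along $\lambda(\tree,\Gamma)$ and $\tree'$ along $\lambda(\tree',\Gamma')$ are reconnected in $\surjectionSchrPermAsso(\mu)$ exactly by gluing up leaves to down leaves, which is precisely the description of $\tree \,{}_s\!\backslash \tree'$.

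The main obstacle, and the place where the Schr\"oder case genuinely differs from Proposition~\ref{prop:productDual}, is verifying that $\tree \,{}_s\!\backslash \tree'$ is well-defined and really is a Schr\"oder-Cambrian tree for every shuffle $s$, i.e.\ that the node-merging behaviour of laminations is consistent on both sides of the splice. Concretely: a gap of $[n]$ may pass \emph{through} a node of $\tree$ (not splitting it) while the same sign position, viewed among the columns of $\mu$, must still be compatible with how $\tree'$ is cut; one has to check that the connected components of the two forests, after identifying the appropriate leaves, satisfy condition~(ii) of the definition of Schr\"oder-Cambrian trees around every node, including nodes straddling the splice. I expect this to be handled exactly as the corresponding point in the proof of the product in $\SchrCamb$ and in Proposition~\ref{prop:productDual}: the compatibility is forced because $\surjectionSchrPermAsso(\mu)$ is by construction a genuine Schr\"oder-Cambrian tree, so rather than checking the gluing axiomatically one simply reads $\tree \,{}_s\!\backslash \tree'$ off as $\surjectionSchrPermAsso(\mu)$ for any $\mu$ realizing the shuffle $s$, and notes that such a $\mu$ always exists (shuffle the columns of $\lambda\bar\lambda'$ according to $s$). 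The surjectivity onto all shuffles is then immediate, and the proof concludes as claimed; as in the rest of Section~\ref{sec:SchroderCambrianAlgebra}, the routine verifications are left to the reader.
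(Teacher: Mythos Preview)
Your proposal is correct and follows exactly the approach the paper takes: the paper's entire proof is the single line ``Similar to that of Proposition~\ref{prop:productDual}'', and you have faithfully unpacked that argument, including the one Schr\"oder-specific subtlety (laminations may pass through a node without splitting it) and the right way to dispose of it (read $\tree \,{}_s\!\backslash \tree'$ directly as $\surjectionSchrPermAsso(\mu)$ rather than verifying the gluing axiomatically).
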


\begin{proof}
Similar to that of Proposition~\ref{prop:productDual}.
\end{proof}

For example, we can compute the product
\[
\hspace*{-1cm}\begin{array}{@{}ccc@{$\;+\;$}c@{$\;+\;$}c@{$\;+\;$}c@{$\;+\;$}c@{$\;+\;$}c@{$\;+\;$}c@{$\;+\;$}c@{}}
\QCamb_{\includegraphics{exmProductSchroderA}} \product \QCamb_{\includegraphics{exmProductSchroderB}}
& = & \multicolumn{8}{l}{\G_{\down{1}\up{2}} \product \G_{\up{1}\down{3} \sep \down{2}\up{4}}}
\\[-.2cm]
& = & \quad \G_{\down{1}\up{2} \sep \up{3}\down{5} \sep \down{4}\up{6}} & \G_{\down{1}\up{3} \sep \up{2}\down{5} \sep \down{4}\up{6}} & \G_{\down{1}\up{4} \sep \up{2}\down{5} \sep \down{3}\up{6}} & \G_{\down{1}\up{5} \sep \up{2}\down{4} \sep \down{3}\up{6}} & \G_{\down{1}\up{6} \sep \up{2}\down{4} \sep \down{3}\up{5}} & \G_{\down{2}\up{3} \sep \up{1}\down{5} \sep \down{4}\up{6}} & \G_{\down{2}\up{4} \sep \up{1}\down{5} \sep \down{3}\up{6}} & \G_{\down{2}\up{5} \sep \up{1}\down{4} \sep \down{3}\up{6}}
\\
& & + \; \G_{\down{2}\up{6} \sep \up{1}\down{4} \sep \down{3}\up{5}} & \G_{\down{3}\up{4} \sep \up{1}\down{5} \sep \down{2}\up{6}} & \G_{\down{3}\up{5} \sep \up{1}\down{4} \sep \down{2}\up{6}} & \G_{\down{3}\up{6} \sep \up{1}\down{4} \sep \down{2}\up{5}} & \G_{\down{4}\up{5} \sep \up{1}\down{3} \sep \down{2}\up{6}} & \G_{\down{4}\up{6} \sep \up{1}\down{3} \sep \down{2}\up{5}} & \multicolumn{2}{@{}l}{\G_{\down{5}\up{6} \sep \up{1}\down{3} \sep \down{2}\up{4}}}
\\[.2cm]
& = & \quad \QCamb_{\!\includegraphics{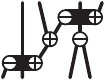}} & \QCamb_{\!\includegraphics{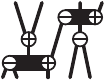}} & \QCamb_{\!\includegraphics{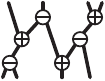}} & \QCamb_{\!\includegraphics{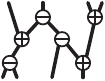}} & \QCamb_{\!\includegraphics{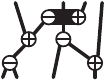}} & \QCamb_{\!\includegraphics{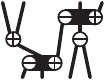}} & \QCamb_{\!\includegraphics{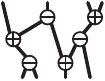}} & \QCamb_{\!\includegraphics{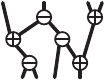}} \\
& & + \; \QCamb_{\!\includegraphics{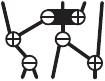}} & \QCamb_{\!\includegraphics{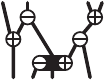}} & \QCamb_{\!\includegraphics{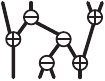}} & \QCamb_{\!\includegraphics{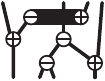}} & \QCamb_{\!\includegraphics{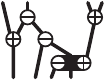}} & \QCamb_{\!\includegraphics{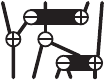}} & \multicolumn{2}{@{}l}{\QCamb_{\!\includegraphics{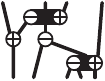}} \; .} 
\end{array}
\]

\para{Coproduct}
For a gap~$\gamma$ of a Schr\"oder-Cambrian tree~$\tree[S]$, we still denote by~$L(\tree[S],\gamma)$ and~$R(\tree[S],\gamma)$ the left and right Schr\"oder-Cambrian subtrees of~$\tree[S]$ when split along the path~$\lambda(\tree[S],\gamma)$.

\begin{proposition}
For any Cambrian tree~$\tree[S]$, the coproduct~$\coproduct\QCamb_{\tree[S]}$ is given by
\[
\coproduct\QCamb_{\tree[S]} = \sum_{\gamma} \QCamb_{L(\tree[S],\gamma)} \otimes \QCamb_{R(\tree[S],\gamma)},
\]
where~$\gamma$ runs over all gaps between vertices of~$\tree[S]$.
\end{proposition}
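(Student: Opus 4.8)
The plan is to mirror the proof of Proposition~\ref{prop:coproductDual}, which handles exactly the same statement for (non-Schr\"oder) Cambrian trees. The statement asserts that $\coproduct\QCamb_{\tree[S]} = \sum_{\gamma} \QCamb_{L(\tree[S],\gamma)} \otimes \QCamb_{R(\tree[S],\gamma)}$, where $\gamma$ ranges over gaps between vertices of $\tree[S]$. Since $\QCamb_{\tree[S]} = \pi(\G_\lambda)$ for any ordered partition $\lambda$ with $\surjectionSchrPermAsso(\lambda) = \tree[S]$, and $\pi$ is a coalgebra morphism, I would start from the deconcatenation coproduct $\coproduct \G_\mu = \sum_{\mu \in \lambda \shiftedShuffle \lambda'} \G_\lambda \otimes \G_{\lambda'}$ in $\OrdPart_\pm^*$ and push it through $\pi$.

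First I would fix a linear extension $\lambda$ of $\tree[S]$ (i.e.\ an ordered partition in the fiber $(\surjectionSchrPermAsso)^{-1}(\tree[S])$, for instance $\lambda_{\min}$ from the proof of Proposition~\ref{prop:SchroderCambrianLattice}). For each term $\G_\lambda \otimes \G_{\lambda'}$ appearing in $\coproduct \G_\lambda$, the ordered partitions $\lambda$ and $\lambda'$ are obtained by restricting $\mu$ to its left columns $\{1,\dots,n\}$ and right columns $\{n+1,\dots,n+n'\}$ respectively; combinatorially, in the table of $\mu$ the values $1,\dots,n$ occupy certain columns and $n+1,\dots,n+n'$ occupy the complementary columns, and these two sets of columns can interleave arbitrarily. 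The key geometric observation — exactly as in the Cambrian case — is that applying the insertion algorithm $\SchrCambCorresp$ to $\mu$ and then reading off which part of the resulting Schr\"oder-Cambrian tree comes from the left columns versus the right columns corresponds to splitting $\tree[S] = \surjectionSchrPermAsso(\mu)$ along the vertical path $\lambda(\tree[S],\gamma)$ associated to the gap $\gamma$ that separates the left value-interval $\{1,\dots,n\}$ from the right value-interval $\{n+1,\dots,n+n'\}$. This gap $\gamma$ is determined by $\mu$, and conversely every gap $\gamma$ between consecutive vertices of $\tree[S]$ arises from exactly those $\mu \in \lambda \shiftedShuffle \lambda'$ whose splitting gap is $\gamma$.

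Concretely, the steps are: (1) invoke $\QCamb_{\tree[S]} = \pi(\G_\lambda)$ and $\pi \circ \coproduct = (\pi \otimes \pi) \circ \coproduct$; (2) expand $\coproduct \G_\lambda = \sum_{\lambda \in \lambda' \shiftedShuffle \lambda''} \G_{\lambda'} \otimes \G_{\lambda''}$; (3) group the summands according to the gap $\gamma$ of $[n]$ (equivalently, the position in $\tree[S]$) at which the left and right columns separate; (4) for each fixed $\gamma$, observe that running $\SchrCambCorresp$ on the left restriction $\lambda'$ produces precisely $L(\tree[S],\gamma)$ and on the right restriction $\lambda''$ produces $R(\tree[S],\gamma)$ — this uses that the lamination path $\lambda(\tree[S],\gamma)$ splits $\tree[S]$ coherently with the column-splitting of the table, which is the content of the description of the insertion algorithm in Section~\ref{sec:SchroderCambrianAlgebra}; (5) conclude that the image under $\pi \otimes \pi$ of the $\gamma$-block is exactly $\QCamb_{L(\tree[S],\gamma)} \otimes \QCamb_{R(\tree[S],\gamma)}$, and that distinct gaps give distinct terms, so summing over all $\gamma$ yields the claimed formula.

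The main obstacle — though it is a mild one, given the machinery already developed — is verifying step~(4): that the split of the table of $\mu$ into left and right columns is intertwined by $\SchrCambCorresp$ with the split of $\tree[S]$ along $\lambda(\tree[S],\gamma)$, including the subtlety that in the Schr\"oder setting a node of $\tree[S]$ can be split by the path or can end up entirely on one side. This is precisely where the Schr\"oder case adds a wrinkle over the binary case, but it is already handled implicitly by the insertion algorithm description and by the product proof (Proposition~\ref{prop:productDual} in the Schr\"oder setting), so the argument is genuinely "similar to that of Proposition~\ref{prop:coproductDual}" and I would simply record it as such, perhaps with a sentence pointing out the node-splitting phenomenon.
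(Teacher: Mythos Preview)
Your approach is correct and matches the paper's, whose proof literally reads ``Similar to that of Proposition~\ref{prop:coproductDual}''. One small slip in your second paragraph: in the shifted shuffle it is the \emph{rows} of the table that interleave, not the columns --- the left and right column blocks occupy positions $\{1,\dots,n\}$ and $\{n+1,\dots,n+n'\}$ in that fixed order, and the gap $\gamma$ simply chooses where to cut; your numbered steps (1)--(5) already state this correctly.
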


\begin{proof}
Similar to that of Proposition~\ref{prop:coproductDual}.
\end{proof}

For example, we can compute the coproduct
\[
\hspace*{-1cm}\begin{array}{@{}c@{${} = {}$}c@{${} + {}$}c@{${} + {}$}c@{${} + {}$}c@{${} + {}$}c@{}}
\coproduct \QCamb_{\includegraphics{exmProductSchroderB}} &
 \multicolumn{5}{l}{\coproduct \G_{\up{1}\down{3} \sep \down{2}\up{4}}}
 \\[-.2cm]
& 1 \otimes \G_{\up{1}\down{3} \sep \down{2}\up{4}}
& \G_{\up{1}} \otimes \G_{\down{2} \sep \down{1}\up{3}}
& \G_{\up{1} \sep \down{2}} \otimes \G_{\down{1} \sep \up{2}}
& \G_{\up{1}\down{3} \sep \down{2}} \otimes \G_{\up{1}}
& \G_{\up{1}\down{3} \sep \down{2}\up{4}} \otimes 1
\\[.2cm]
& 1 \otimes \QCamb_{\includegraphics{exmProductSchroderB}}
& \QCamb_{\includegraphics{exmTreeY}} \otimes \QCamb_{\includegraphics{exmTreeT}}
& \QCamb_{\includegraphics{exmTreeYAd}} \otimes \QCamb_{\includegraphics{exmTreeAYd}}
& \QCamb_{\includegraphics{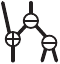}} \otimes \QCamb_{\includegraphics{exmTreeY}}
& \QCamb_{\includegraphics{exmProductSchroderB}} \otimes 1.
\end{array}
\]

%%%%%%%%%%%%%%%%%%%%%%%%%%%%%%%%%%%%%%

\section{Schr\"oder-Cambrian tuples}

\enlargethispage{-.8cm}
As a conclusion, we just want to mention that it would also be possible to extend simultaneously the Cambrian tuple algebra and the Schr\"oder-Cambrian algebra. The objects are \defn{Schr\"oder-Cambrian $\ell$-tuples}, \ie $\ell$-tuples of Schr\"oder-Cambrian trees whose union is acyclic. 

The first step is then to describe the combinatorics of these tuples:
\begin{itemize}
\item applying Schr\"oder-Cambrian correspondences in parallel yields a correspondence~$\SchrCambCorresp_\ell$ between $\ell$-signed ordered partitions and leveled Schr\"oder-Cambrian $\ell$-tuples, and thus defines a surjection~$\surjectionSchrPermAsso_\ell$ from $\ell$-signed ordered partitions to Schr\"oder-Cambrian $\ell$-tuples;
\item the fibers of~$\surjectionSchrPermAsso_\ell$ are intersections of Schr\"oder-Cambrian congruences, and thus define a lattice congruence of the weak order on ordered partitions;
\item the $\ell$-tuples of Schr\"oder-Cambrian trees correspond to all faces of a Minkowski sum of $\ell$ associahedra of~\cite{HohlwegLange}.
\end{itemize}
An interesting combinatorial problem is to count the number of $\ell$-tuples of Schr\"oder-Cambrian trees, in particular the number of Baxter-Schr\"oder-Cambrian trees.

The second step is to define as usual the Schr\"oder-Cambrian $\ell$-tuple Hopf algebra~$\SchrCamb_\ell$ as a subalgebra of the Hopf algebra~$\OrdPart_{\pm^\ell}$ of $\ell$-signed ordered partitions, and its dual~$\SchrCamb_\ell^*$ as a quotient of~$\OrdPart_{\pm^\ell}^*$. The product and coproduct both~$\SchrCamb_\ell$ and~$\SchrCamb_\ell^*$ can then directly be described by the combinatorial operations on Schr\"oder-Cambrian $\ell$-tuples, similar to the operations described in Sections~\ref{subsec:CambrianTupleAlgebra}, \ref{subsec:dualCambrianTupleAlgebra}, \ref{subsec:SchroderCambrianAlgebra} and~\ref{subsec:dualSchroderCambrianAlgebra}.

%%%%%%%%%%%%%%%%%%%%%%%%%%%%%%%%%%%%%%

\section*{Acknowledgements}

We are grateful to the participants of the \emph{Groupe de travail de Combinatoire Alg\'ebrique de l'Universit\'e de Marne-la-Vall\'ee} for helpful discussions and comments on preliminary stages of this work. In particular, we are indebted to J.-Y.~Thibon for showing us relevant research directions and encouraging our ideas, and to J.-C.~Novelli for uncountable technical suggestions and comments (leading in particular to the results presented in Proposition~\ref{prop:GeneratingTree}, in Section~\ref{subsec:BaxterCambrianNumbers}, and in Section~\ref{sec:SchroderCambrianAlgebra}). We also thank V.~Pons, whose suggestions lead to the current presentation of the Cambrian correspondence~$\CambCorresp$ in Section~\ref{subsec:CambrianCorrespondence}, adapted from~\cite{LangePilaud}. The second author thanks C.~Hohlweg for introducing him to this algebraic combinatorics crowd. Finally, we are grateful to M.~Bousquet-Melou, C.~Hohlweg and N.~Reading for pointing out to us various relevant references.

%%%%%%%%%%%%%%%%%%%%%%%%%%%%%%%%%%%%%%

\bibliographystyle{alpha}
\bibliography{CambrianAlgebra}
\label{sec:biblio}

\end{document}